\documentclass[11pt,letterpaper]{article}

\usepackage[left=2.2cm,tmargin=2.5cm,bmargin=2.5cm,right=2.2cm]{geometry}


\usepackage{amsfonts}
\usepackage{amssymb}
\usepackage{amsmath}
\usepackage{amsthm}
\usepackage{graphicx}
\usepackage{tikz}
\usepackage{enumerate, color}
\usepackage{biblatex}
\addbibresource{references.bib}
\usepackage[all]{xy}

\usetikzlibrary{arrows.meta, positioning, shapes, calc}

\newtheorem{theorem}{Theorem}[section]

\newenvironment{manualtheorem}[1]{%
  \IfBlankTF{#1}
    {}
    {}%
  \manualtheoreminner
}{\endmanualtheoreminner}

\newtheorem{corollary}[theorem]{Corollary}
\newtheorem{cor}[theorem]{Corollary}

\newtheorem{lemma}[theorem]{Lemma}

\newtheorem{prop}[theorem]{Proposition}

\newtheorem{non-theorem}[theorem]{Non-theorem}
\theoremstyle{definition}

\theoremstyle{remark}

\newtheorem{example}[theorem]{Example}

\newtheorem{question}[theorem]{Question}

\numberwithin{equation}{section}
\newtheoremstyle{dotless}{}{}{}{}{\bfseries}{}{ }{}
\theoremstyle{dotless}


\newcommand{\C}{\mathbb{C}}

\newcommand{\Z}{\mathbb{Z}}
\newcommand{\N}{\mathbb {N}}
\newcommand{\F}{\mathbb{F}}

\newcommand{\ord}{\mathrm{ord} \,}



\newcommand{\res}[2]{\left(\frac{#1}{#2}\right)}
\newcommand{\Frob}{\operatorname{Frob}}
\newcommand{\Gr}{\operatorname{Gr}}

\newcommand{\hchi}{\psi} 
\newcommand{\gene}{\zeta_1} 
\newcommand{\qq}{\mathbf{ \mathsf{q}}} 
\newcommand{\qv}{v}


\newcommand{\Tr}{{\mathrm{Tr}}}
\newcommand{\Res}{{\mathrm{Res}}}
\newcommand{\Disc}{{\mathrm{Disc}}}

\renewcommand{\tilde}{\widetilde}
\newcommand{\LieH}{H}
\newcommand{\Ext}{\operatorname{Ext}}
\newcommand \coker {\operatorname{coker}}
\newcommand{\tr}{\operatorname{tr}}


\begin{document}
\title{Functional equations of axiomatic multiple Dirichlet series, Weyl groupoids, and quantum algebra}
\author{Will Sawin and Ian Whitehead}

\maketitle

\begin{abstract}
    We prove functional equations for multiple Dirichlet series defined by a collection of five geometric axioms. We find functional equations of two types: one modeled on the functional equations of Dirichlet $L$-functions, and another modeled on the functional equations of Kubota $L$-series with Gauss sums as coefficients. These functional equations generate groupoid structures, which we relate to the Weyl groupoids of arithmetic root systems. From the known classification of arithmetic root systems, we obtain a complete classification of multiple Dirichlet series which can be used to compute moments of $L$-functions via established analytic techniques. Our classification includes all moments of $L$-functions which have appeared in the multiple Dirichlet series literature previously, alongside some new moments. Finally, we give applications of our functional equations to quantum algebra, specifically the cohomology of Nichols algebras. 
\end{abstract}

\tableofcontents

\pagebreak 

\textbf{Index of Notation} \\
$q$: odd prime power \\
$\F_q$: finite field \\
$\chi$: character of $\F_q^*$ \\
$n$: even integer, order of $\chi$ \\
$\xi$: quadratic character of $\F_q^*$ \\
$T$: variable for polynomial ring over $\F_q$ \\
$\mathrm{Res}$: resultant \\
$\mathrm{Disc}$: discriminant \\
$\res{f}{g}_\chi$ residue symbol \\
$f$, $f_i$, $g$, $g_i$, $h$, $h_i$: monic polynomials in $\F_q[T]$\\
$\pi$: prime polynomial in $\F_q[T]$\\
$J$: set of indices \\
$c_j$: integer multiplicity \\
$\alpha_j$: Weil number \\
$d$: weight of Weil number \\
$g_\chi$: finite field Gauss sum \\
$g_\chi(f_1, f_2)$ function field Gauss sum \\
$\exp$: function field additive character \\
$c_{-1}$ Laurent series coefficient of $T^{-1}$ \\
$r$: number of variables for multiple Dirichlet series, rank of arithmetic root system \\
$M=(M_{ij})$: symmetric $r\times r$ integer matrix \\
$x$, $x_i$: variables in $\C$ \\
$\vec{x}=(x_1, \ldots x_r)$: tuple of variables in $\C^r$ \\
$i$, $j$, (rarely) $h$: indices, usually from $1$ to $r$\\
$k$, $k_i$, $\ell$, $\ell_i$, (rarely) $m$: integer mod $n$ or $n_i$ \\
$\vec{k}=(k_1, \ldots k_r)$: tuple of integers mod $n$ \\
$Z(\vec{x}, \vec{k}; q, \chi, M)$: global multiple Dirichlet series \\
$Z_\pi(\vec{x}, \vec{k}; q, \chi, M)$: global multiple Dirichlet series \\
$a(f_1, \ldots f_r; q, \chi, M)$: multiple Dirichlet series coefficient \\
$d$, $d_i$, (rarely) $e$: degree of $f$, $f_i$ or power of $\pi$ \\
$n_i$ : $\frac{n}{\gcd(n, M_{ii}+n/2)}$ \\
$n_{ij}$: least nonnegative integer such that $n_{ij}(M_{ii}+n/2) \equiv -M_{ij} \bmod n$ \\
$\%$: programmer's modulo, i.e. $m\% n$ is the smallest nonnegative integer congruent to $m$ modulo $n$ \\
$\vec{f}$: $r-1$ tuple of polynomials $(f_1, \ldots f_{r-1})$  \\
$F$: $f_1^{n_{r1}}\cdots f_{r-1}^{n_{r \, (r-1)}}$ in Sections 2, 3, or $f_1^{e_{r1}}\cdots f_{r-1}^{e_{r \, (r-1)}}$ in Section 4\\
$\tilde{F}$: $f_1^{M_{1r}}\cdots f_{r-1}^{M_{(r-1) \, r}}$ \\
$\varepsilon$: when $f_1\cdots f_{r-1}$ is squarefree, $\prod_{i=1}^{r-1} \res{f_i'}{f_i}_{\chi}^{M_{ii}} \, \prod_{1 \leq i < j \leq r-1} \res{f_i}{f_j}_{\chi}^{M_{ij}}$  \\
$D(x, \vec{f}; q, \chi, M)$: single-variable subseries of $Z$ \\
$D_\pi(x, \vec{f}; q, \chi, M)$: single-variable subseries of $Z_\pi$ \\
$D^{(\pi)}(x, \vec{f}; q, \chi, M)$: single-variable subseries with one prime removed \\
$\sigma_{i; q, \chi, M}$: transformation in functional equation \\
$\tau_i$: transformation of matrix $M$ \\
$\mu$: M\"obius function \\
$\phi$: Euler phi function \\
$\Lambda_{\pi}(d,e)$: count of monic coprime polynomials \\
$o_{ij}$: multiple of $n$ in equation defining $n_{ij}$  \\
$p_{ij}$: remainder in equation defining $n_{ij}$ \\
$\bar{a}$: hypothetical formula for coefficients in Section 2 \\
$f_\pi$, $\vec{f}_\pi$: largest $\pi$-power factor of $f$, largest $\pi$-power factor of each $f_i$  \\
$f^{(\pi)}$, $\vec{f}^{(\pi)}$: largest $\pi$-coprime factor of $f$, largest $\pi$-coprime factor of each $f_i$  \\
$v_\pi$: valuation at $\pi$ \\
$\varepsilon_\pi$: $\prod\limits_{i=1}^{r-1} \res{f_i^{(\pi)}}{(f_i)_\pi}_{\chi}^{M_{ii}} \res{(f_i)_\pi}{f_i^{(\pi)}}_{\chi}^{M_{ii}} \prod\limits_{i=1}^{r-2}\prod\limits_{j=1+i}^{r-1}\res{f_i^{(\pi)}}{(f_j)_\pi}_{\chi}^{M_{ij}} \res{(f_i)_\pi}{f_j^{(\pi)}}_{\chi}^{M_{ij}}$ \\
$A$: degree or valuation of $f_1\cdots f_{r-1}$ in inductive proofs \\
$\left( \Gamma_{k, \ell}(x, d) \right)_{k, \ell}$: scattering matrix for Kubota-type functional equation \\
$\left( \Gamma_{\pi, k, \ell}(x, d) \right)_{k, \ell}$: scattering matrix for local Kubota-type functional equation \\
$M_m$: augmented matrix \\
$S^{k, n}$ power series projection operator \\
$E_m^n$: matrix expansion \\
$L$: Dirichlet $L$-function \\
$\omega_1$: root number \\
$\omega$: sign part of root number \\
$\omega_\pi$: local sign part of root number \\
$b$: 1 if $n|\deg F$, $0$ otherwise \\
$b_{\pi}$: 1 if $n|v_\pi \tilde{F}$, $0$ otherwise \\
$K$: $\deg F$ mod $n$, i.e. $\sum n_{ij} k_j$ in Section 3, $\sum e_{ij} k_j$ in Section 4 \\
$v$: $\sum_{j \neq i} M_{ij} k_j$, also sometimes used for an arbitrary integer mod $n$ or $n_r$ \\
$v_i$: $\sum_{j > i} M_{ij} k_j$ \\
$\left(\Theta_{k, \ell}(x, K, v)\right)_{k, \ell}$: scattering matrix for Dirichlet-type functional equation \\
$\left(\Theta_{\pi, k, \ell}(x, K, v)\right)_{k, \ell}$: scattering matrix for Dirichlet-type local functional equation \\
$\mu_\infty$: set of all roots of unity in $\mathbb C$ \\
$\hchi$: bicharacter \\
$E$: basis of $\mathbb Z^r$ \\
$s_{i,E}$: reflection in arithmetic root system \\
$m_{ij}$: coefficient used in reflection $s_{i,E}$, equal to $n_{ij}$ in the Kubota case and $e_{ij}$ in the Dirichlet case \\
$B_{\hchi,E}$: set of bases in arithmetic root system \\
$\mathbf \Delta$: set of roots of arithmetic root system \\
$\mathfrak g$: simple Lie algebra \\
$v$, $\qq$: parameters in arithmetic root system associated to a simple Lie algebra, with $\qq=v^2$ \\
$\gene$: root of unity of order $n$ \\
$M^{\hchi,E,\gene}$: symmetric matrix arising from arithmetic root system \\
$C_E$: cone of vectors whose dot products with basis vectors in $E$ is nonnegative \\
$\langle, \rangle_E$: bilinear form used to characterize the relationship between the transformation $\sigma_i$ and $s_{i,E}$ \\
$d^{\mathrm{min}}, d^{'\mathrm{min}}$: minimum exponent of monomial with nonzero coefficient \\
$\tilde{d}$: $\sum_{k\neq i} d_k m_{ik}$\\
$V_{E,\hchi}$: diagonal braided vector space \\
$V'_{E,\hchi}$: twist of $V_{E,\hchi}$ obtained by negating the braiding  \\
$\mathfrak B(V)$: Nichols algebra of braided vector space $V$ \\
$\mathbb A^{d_i}$: affine space of dimension $d_i$, always viewed as moduli space of monic polynomials of degree $d_i$ \\
$F_{d_1,\dots,d_r}$: polynomial function used to define perverse sheaf \\
$\hat{d}$: $\sum_{i=1}^r d_i$ \\
$\mathcal L_\chi$: Kummer sheaf \\
$\mathbb Q_\ell$: $\ell$-adic numbers \\
$K_{d_1,\dots,d_r}$: perverse sheaf used to obtain multiple Dirichlet series coefficients \\
$\operatorname{Sym}^d (\mathbb C)$: $d$th symmetric power of the complex plane \\
$\operatorname{Conf}_{d_1,\dots,d_r} (\mathbb C)$: space parameterizing configurations of $\hat{d}$ colored points in the plane with $d_i$ points of the $i$th color \\
$S_d$: symmetric group \\
$\mathcal L\{W\}$: local system arising from a representation \\
$W(x,u, \vec{d},k)$: series obtained from weight-extraction on $D(x, T^{\vec{d}},k)$ defined as a sum of Betti-numbers
$\overline{W}(x,u, \vec{d},k)$: obtained from $W(x,u, \vec{d},k)$ by a change of variables that simplifies the functional equation\\
$\Omega(x, u, \tilde{d})$: scattering matrix obtained from weight-extraction on $\Gamma(x, \tilde{d})$ \\
$\overline{\Omega}(x, u, \tilde{d})$: obtained from $\Omega(x, u, \tilde{d})$ by a change of variables that simplifies the functional equation 

\pagebreak

\section{Introduction} \label{SectionIntro}

\subsection{Background}

Multiple Dirichlet series are several-variable analogues of zeta and $L$-functions. They are important objects of study for their applications to moments of $L$-functions, their connections to metaplectic Eisenstein series, and their intrinsically interesting combinatorial and analytic properties. The applications to moments of $L$-functions originate in work of Goldfeld and Hoffstein \cite{GoldfeldHoffstein}; a general approach is laid out in \cite{DiaconuGoldfeldHoffstein}. Successful applications include the first three moments of quadratic Dirichlet $L$-functions \cite{DiaconuGoldfeldHoffstein, Diaconu, DiaconuWhitehead}, and the fourth moment in the rational function field setting \cite{BucurDiaconu, DiaconuPasolPopa}. There are also important applications to more general families of $L$-functions, including the first moment of $L$-functions of $n$th-order Dirichlet characters \cite{FriedbergHoffsteinLieman}, and the second moment of absolute values of these  \cite{Diaconu2004}. Another major application is to the second moment of cubic Dirichlet $L$-functions, or the first moment of cubic twists of $\mathrm{GL}(2)$ automorphic $L$-functions \cite{BrubakerThesis, BrubakerFriedbergHoffstein}.

In all these applications, the principal analytic tools used to study multiple Dirichlet series are their functional equations. The multiple Dirichlet series studied in this article and in the above references all possess several functional equations which are analogues of the functional equation of a single-variable zeta or $L$-function. These functional equations can be composed to form a group. The functional equations give substantial information about the analytic behavior of the series, including meromorphic continuation, growth estimates, poles, and residues. This information is then used to extract moments of $L$-functions from multiple Dirichlet series by contour integration methods. 

Groups of functional equations play a central role in the theory of Weyl group multiple Dirichlet series. This type of multiple Dirichlet series is constructed to have a group of functional equations isomorphic to the Weyl group of a finite root system. There are several constructions in the literature, e.g. \cite{BrubakerBumpFriedberg, ChintaGunnellsJAMS}. The local, prime-power coefficients of Weyl group multiple Dirichlet series exhibit an intricate combinatorial structure related to deformations of the Weyl character formula \cite{ChintaOffen} and crystal bases \cite{BrubakerBumpFriedberg2}. A major result is that Weyl group multiple Dirichlet series for a given root system coincide with the Fourier-Whittaker coefficients of Eisenstein series on metaplectic covers of the corresponding algebraic groups \cite{McNamara, PatnaikPuskas, Chen}. 

However, many applications to moments of $L$-functions require multiple Dirichlet series which do not fit in the Weyl group framework. The functional equations of Weyl group multiple Dirichlet series relate the series to itself. But in applications to families of L-functions involving cubic and higher-order characters, we see a different phenomenon: the computations use a finite set of multiple Dirichlet series which are related to one another by functional equations. In this case, as we argue in Section \ref{SectionGroup} below, the functional equations are best understood as a groupoid or small category with inverses, with objects given by the individual multiple Dirichlet series and morphisms given by the functional equations between them. Our work gives a general framework to associate a multiple Dirichlet series to its groupoid of functional equations. 

We prove functional equations in the general setting of axiomatic multiple Dirichlet series. This class of series includes the Weyl group multiple Dirichlet series, all the multiple Dirichlet series that have been used to establish moments of $L$-functions in past work, and many more. We establish the functional equations of these series directly from their axiomatic characterization. The groupoid of functional equations can be read off from the twisted multiplicativity property satisfied by the coefficients of the series, or the heuristic form of the coefficients. We classify all axiomatic multiple Dirichlet series with finite groupoids of functional equations--these are the cases where established analytic techniques give meromorphic continuation and allow for the computation of moments. In fact, we find that the classification of multiple Dirichlet series with finite groupoids of functional equations essentially coincides with a known classification of arithmetic root systems due to Heckenberger \cite{HeckenbergerClassification}. Thus we have a complete list of these series, and of moments that can be computed with known multiple Dirichlet series techniques. This list includes all the examples mentioned above, and some new moments. 

The axiomatic approach to multiple Dirichlet series originates in work of Diaconu and Pa\c{s}ol for series associated to higher moments of quadratic $L$-functions \cite{DiaconuPasol}, and work of the second author for finite and affine root systems \cite{WhiteheadThesis, WhiteheadTypeA}. These constructions were limited to quadratic characters and $L$-functions. Recent work of the first author extends the axiomatic approach much further, to arbitrary characters and more general root systems \cite{s-amds}. The idea behind this approach is to work over the rational function field $\F_q(t)$ and identify the coefficients of multiple Dirichlet series as traces of Frobenius on the cohomology of certain perverse sheaves. Five axioms encode the expected properties of these coefficients. Rather than constructing a multiple Dirichlet series based on its expected functional equations, \cite{s-amds} proves that there exists a unique multiple Dirichlet series satisfying the axioms. The axioms do not mention functional equations, and \cite{s-amds} does not prove them. The motivation for the axiomatic approach is to specify properties which pin down the multiple Dirichlet series in cases where they are not uniquely determined by their functional equations, notably in cases where the groupoid of functional equations is infinite. The infinite groupoid case is relevant to higher moments. The axiomatic approach handles finite and infinite groupoids uniformly (though meromorphic continuation of the series remains a challenge in the infinite case). Moreover, the axiomatic approach allows us to define multiple Dirichlet series without predicting the functional equations in advance. 

The main drawback of the axiomatic approach is that it is limited to the rational function field. However, the axiomatic method can also be used to construct the local, prime parts of multiple Dirichlet series. These local parts are power series which have the same shape regardless of the choice of global field and prime. Then the local parts can be assembled into global multiple Dirichlet series using a twisted multiplicativity rule dependent on the arithmetic of the chosen global field. In this indirect way, the axiomatic approach is relevant to all global fields. Moreover, moment problems in function fields are an important part of the broader theory of moments of $L$-functions. Moments of Dirichlet $L$-functions over function fields have been heavily studied, including the recent proof of the Conrey-Farmer-Keating-Rubinstein-Snaith conjectures for quadratic Dirichlet $L$-functions of rational function fields in \cite{BDPW}, \cite{MPPRW}, as well as other recent results~\cite{David2021,David2022,David2025,Gao2023,Gao2025}. We hope that the geometric tools used here will also suggest new approaches in the number field setting. 

\subsection{Axiomatic Multiple Dirichlet Series}

To state the axioms and our main results, we must introduce some notation. Let $q$ be an odd prime power and $\F_q$ the finite field of order $q$. Let $\chi:\F_q^* \to \C^*$ be a character of even order $n$, and let $\xi:\F_q^* \to \{ \pm 1 \}$ be the unique character of order 2. Let $\F_q[T]$ be the polynomial ring over $\F_q$, and $\F_q[T]^+$ the set of monic polynomials. We call a polynomial prime if it is monic and irreducible. The resultant of $f, \, g \in \F_q[T]$ is defined as 
\begin{equation*}
\Res(f, g) = a^{\deg f} b^{\deg g} \prod_{\substack{ \alpha \in \overline{\F_q} \\ f(\alpha)=0}} \prod_{\substack{ \beta \in \overline{\F_q} \\ g(\beta)=0}} (\beta-\alpha)
\end{equation*}
where $a$ is the leading coefficient of $f$ and $b$ is the leading coefficient of $g$. The discriminant of $f \in \F_q[T]$ is $\Disc(f)=a^{-1}(-1)^{\deg f(\deg f - 1)/2}\Res(f', f)$. The resultant and discriminant lie in $\F_q$; the resultant has a symmetry $\Res(f, g) = (-1)^{(\deg f) (\deg g)} \Res(g, f)$. We define the $n$th power residue symbol as $\res{f}{g}_{\chi} = \chi(\Res(f,g/b))$. This has the expected properties: it is multiplicative in both $f$ and $g$, is zero if and only if $f$ and $g$ have a common factor, and for fixed prime $\pi$, we have $\res{f}{\pi}_\chi=1$ if and only if $f$ is an $n$th power modulo $\pi$. It satisfies the $n$th power reciprocity law, for $f, g \in \F_q[T]^+$:
\begin{equation*}
\res{f}{g}_{\chi} = \chi(-1)^{(\deg f)(\deg g)} \res{g}{f}_\chi .
\end{equation*}

We will need the notion of a compatible system of sums of $q$-Weil numbers. This is a kind of function which models the dependence on $q$ and $\chi$ seen in cohomological formulas. A $q$-Weil number of weight $d$ is simply a complex number $\alpha$ of absolute value $|\alpha|=q^{d/2}$ for some nonnegative integer $d$. For any natural number $e$, we can form the unique field extension of degree $e$, $\F_{q^e}/\F_q$, and we can define a character on $\chi_e$ on $\F_{q^e}^*$ by composing $\chi$ with the norm map $\mathrm{N}_{\F_{q^e}/\F_q}$. A compatible system of $q$-Weil numbers is a function $a$ from pairs $(q, \chi)$ to $q$-Weil numbers such that if $a(q, \chi)=\alpha$ then $a(q^e, \chi_e) = \alpha^e$ for all $e \in \N$. More generally, a compatible system of sums of $q$-Weil numbers is a function $a$ from pairs $(q, \chi)$ to $\C$, such that for each $(q, \chi)$, it takes the values
\begin{equation*}
a(q^e, \chi_e) = \sum_{j \in J} c_j \alpha_j^{e}
\end{equation*}
for all $e \in \N$, where $J$ is a finite set of indices, each $c_j$ is an integer, and each $\alpha_j$ is a $q$-Weil number. $J$, $c_j$, and $\alpha_j$ all depend on $(q, \chi)$, subject to the compatibility property for extensions.

For example, $q^{d/2}$ for any $d \geq 0$ is a compatible system of $q$-Weil numbers, and any polynomial in $\sqrt{q}$ with integer coefficients is a compatible system of sums. For a more interesting example, consider the finite field Gauss sums
\begin{equation*}
g_\chi = \sum_{a \in \F_q} \chi(b) e^{2 \pi i \Tr(a)/p}
\end{equation*}
where $\Tr$ denotes the trace from $\F_q$ to a field of prime order $p$. This is a complex number of absolute value $\sqrt{q}$. The definition extends to $(q^e, \chi_e)$. The Hasse-Davenport lifting relation is 
\begin{equation*}
-g_{\chi_e} = (-g_{\chi})^e
\end{equation*}
which means that $-g_\chi$ is a compatible system of $q$-Weil numbers (though $g_\chi$ is not). This further implies that any polynomial in $g_\chi$ with integer coefficients is a compatible system of sums. Note that for systems of sums, it is not possible to check compatibility or determine the integers $c_j$ and Weil numbers $\alpha_j$ from finitely many values of the function; this requires examining infinitely many extensions of $(q, \chi)$.

We now state the five axioms. In addition to the finite field $\F_q$ and character $\chi$ of even order $n$, our multiple Dirichlet series depend on an $r \times r$ symmetric matrix $M$ with integer entries $M_{ij}$. We set $\vec{x}=(x_1, \ldots x_r) \in \C^r$. The multiple Dirichlet series we study will be denoted 
\begin{equation}
Z(\vec{x}; q, \chi, M) = \sum_{f_1, \ldots f_r \in \F_q[t]^+} a(f_1, \ldots f_r; q, \chi, M) x_1^{\deg f_1}\cdots x_r^{\deg f_r}.
\end{equation}
For a prime $\pi \in \F_q[T]$, the $\pi$-part of the multiple Dirichlet series will be denoted
\begin{equation}
Z_\pi(\vec{x}; q, \chi, M) = \sum_{d_1, \ldots d_r \geq 0} a(\pi^{d_1}, \ldots \pi^{d_r}; q, \chi, M) x_1^{d_1 \deg \pi}\cdots x_r^{d_r \deg \pi}.
\end{equation}
In later sections we will introduce an additional parameter $\vec{k} \in \Z^r$, which limits the terms appearing in the series based on congruences modulo $n$. We will often omit the parameters $q, \, \chi, \, M$ after the semicolon when they are clear from context. Note that in the function field setting, Dirichlet series are equivalent to power series. The variables $x_i$ should be considered as analogues of $q^{-s_i}$ in the number field case. 

The five axioms characterize the coefficients $a(f_1, \ldots f_r; q, \chi, M)$. They are as follows:

\begin{enumerate}
\item \label{Axiom1} \emph{(Twisted Multiplicativity)} If $\gcd(f_1\cdots f_r, g_1\cdots g_r)=1$, then we have 
\begin{equation}
\begin{split}
    &a(f_1g_1, \ldots f_rg_r; q, \chi, M) \\
    &= a(f_1, \ldots f_r; q, \chi, M)a(g_1, \ldots g_r; q, \chi, M) \prod_{i=1}^r \prod_{j=i}^r \res{f_i}{g_j}_\chi^{M_{ij}} \res{g_i}{f_j}_\chi^{M_{ij}}.
    \end{split}
\end{equation}
\item \label{Axiom2} \emph{(Normalization)} We have $a(1, \ldots 1; q, \chi, M) = 1 $ and $a(1, \ldots, \pi, \ldots 1; q, \chi, M)=1$ for all linear polynomials $\pi$. 
\item \label{Axiom3} \emph{(Prime Power Coefficients)} For all primes $\pi \in \F_q[T]$, we have 
\begin{equation}
a(\pi^{d_1}, \ldots \pi^{d_r}; q, \chi, M) = \res{\pi'}{\pi}_\chi^{\sum d_i M_{ii}} \sum_{j \in J(d_1, \ldots d_r; q, \chi, M)} c_j \alpha_j^{\deg \pi} 
\end{equation}
where the sum $\sum\limits_{j \in J(d_1, \ldots d_r; q, \chi, M)} c_j \alpha_j$, considered as a function of $(q, \chi)$, is a compatible system of sums of $q$-Weil numbers.

\item \label{Axiom4} \emph{(Local-to-Global Principle)} We have 
\begin{equation}
 \sum_{\substack{f_1, \ldots f_r \in \F_q[T]^+ \\ \deg f_i = d_i}} a(f_1, \ldots f_r; q, \chi, M) = \sum_{j \in J(d_1, \ldots d_r; q, \chi, M)} c_j \frac{q^{\sum d_i}}{\overline{\alpha_j}}.
\end{equation}

\item \label{Axiom5} \emph{(Dominance Principle)} For each $j \in J(d_1, \ldots d_r; q, \chi, M)$ with $\sum d_i \geq 2$, $\alpha_j$ is a $q$-Weil number of weight less than $\sum d_i - 1$. 
\end{enumerate}

Notice that the only dependence on the matrix $M$ is in Axiom \ref{Axiom1}: $M$ and $\chi$ govern the twisted multiplicativity of the coefficients. The entries of $M$ can be taken as integers modulo $n$. The restriction to $\chi$ of even order is for convenience only; if $\chi$ has odd order, we can replace it with a square root and multiply each entry of $M$ by 2. Axioms \ref{Axiom1}, \ref{Axiom2}, and the simplest case of Axiom \ref{Axiom3} where $\sum d_i = 1$ are sufficient to compute $a(f_1, \ldots f_r; q, \chi, M)$ when $f_1\cdots f_r$ is squarefree. In this case, we have 
\begin{equation} \label{SquarefreeCoefficients}
a(f_1, \ldots f_r; q, \chi, M) = \prod_{i=1}^{r} \res{f_i'}{f_i}_{\chi}^{M_{ii}} \prod_{1\leq i<j \leq r} \res{f_i}{f_j}_{\chi}^{M_{ij}}.
\end{equation}
This is the heuristic form of a multiple Dirichlet series coefficient. The remaining axioms determine the more complicated coefficients which contain higher powers of primes. Some of these are nonzero even when the corresponding product of $n$th power residue symbols would vanish. 

Axiom \ref{Axiom3} states that $a(\pi^{d_1}, \ldots \pi^{d_r}; q, \chi, M)$ for $\pi$ linear, which is a local coefficient of $Z_\pi(\vec{x}; q, \chi, M)$, is a compatible system of sums. Axiom \ref{Axiom4} states that 
\begin{equation*}
\sum\limits_{\substack{f_1, \ldots f_r \in \F_q[T]^+ \\ \deg f_i = d_i}} a(f_1, \ldots f_r; q, \chi, M)
\end{equation*} 
which is a global power series coefficient of $Z(\vec{x}; q, \chi, M)$, is a compatible system of sums as well, and that these two functions are related by a change of variables. Once $a(\pi^{d_1}, \ldots \pi^{d_r}; q, \chi, M)$ is known for $\pi$ linear, it can be extended to arbitrary prime $\pi$ using Axiom \ref{Axiom3}. The prime-power coefficients then determine all coefficients $a(f_1, \ldots f_r; q, \chi, M)$ by Axiom \ref{Axiom1}. Axiom \ref{Axiom5} ensures that any prime-power coefficient $a(\pi^{d_1}, \ldots \pi^{d_r}; q, \chi, M)$ is relatively small compared to the global coefficients it contributes to. 

As a simple example of the axioms, take $M=(0)$ with $q, \chi$ arbitrary. Then the expression $a(f_1; q, \chi, M)=1$ for all $f_1$ satisfies the axioms. $a(\pi; q, \chi, M)=1$ is a compatible system of $q$-Weil numbers of weight 0. The local-to-global principle holds in this case because 
\begin{equation*}
\sum_{\substack{f_1 \in \F_q[T]^+ \\ \deg f_1 = d_1}} a(f_1; q, \chi, M) = \sum_{\substack{f_1 \in \F_q[T]^+ \\ \deg f_1 = d_1}} 1 = q^d.
\end{equation*}
The multiple Dirichlet series in this case is the function field zeta function 
\begin{equation*}
Z(x_1; q, \chi, M) = (1-qx_1)^{-1} = \prod_{\pi \text{ prime}} (1-x_1^{\deg \pi})^{-1}.
\end{equation*}
The next simplest example is for $\chi$ nontrivial and $M=(1+\tfrac{n}{2})$. In this case, Proposition \ref{PropCoeff1} implies 
\begin{equation*}
a(\pi^d; q, \chi, M) = \left\lbrace \begin{array}{cc} \xi(-1)^{d(d-1)/2}q^{d-d/n} g_\chi^{-d} & d \equiv 0 \bmod n \\\xi(-1)^{d(d-1)/2}q^{d-1-(d-1)/n}g_\chi^{1-d} & d \equiv 1 \bmod n \\ 0 & \text{otherwise} \end{array} \right.
\end{equation*}
for $\pi \in \F_q[T]^+$ linear. This is a compatible system of $q$-Weil numbers of weight $d-\frac{2d}{n}$ when $n|d$, or $d-1-\frac{2(d-1)}{n}$ when $n|d-1$. We then obtain a formula for $a(f_1; q, \chi, M)$ for all $f_1 \in \F_q[T]^+$ using Axioms \ref{Axiom1} and \ref{Axiom3}. The evaluation of the corresponding global coefficients 
\begin{equation*}
\sum_{\substack{f_1 \in \F_q[T]^+ \\ \deg f_1 = d_1}} a(f_1; q, \chi, M) = \left\lbrace \begin{array}{cc} \xi(-1)^{d(d-1)/2}q^{d+d/n} g_\chi^{-d} & d \equiv 0 \bmod n \\\xi(-1)^{d(d-1)/2}q^{d+(d-1)/n}g_\chi^{1-d} & d \equiv 1 \bmod n \\ 0 & \text{otherwise} \end{array} \right.
\end{equation*}
confirming Axiom \ref{Axiom4}, is a nontrivial identity of Gauss sums. 

In \cite[Theorem 1.1]{s-amds}, the first author proves the following general theorem:
\begin{theorem}
For any $M$, there exists a unique system of coefficients $a(\vec{f}; q, \chi, M)$ satisfying the axioms.
\end{theorem}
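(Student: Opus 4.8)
I would treat the two assertions quite differently: uniqueness is a soft bookkeeping argument straight from the axioms, whereas existence requires an explicit geometric construction. For \textbf{uniqueness}, the plan is a double reduction followed by an induction. First, Axioms~\ref{Axiom1} and~\ref{Axiom2} reduce an arbitrary coefficient $a(f_1,\dots,f_r)$ to the prime-power coefficients: factoring each $f_i$ into prime powers and applying twisted multiplicativity repeatedly writes $a(f_1,\dots,f_r)$ as a product of local factors $a(\pi^{v_\pi(f_1)},\dots,\pi^{v_\pi(f_r)})$ and explicit residue symbols. Second, Axiom~\ref{Axiom3} writes $a(\pi^{d_1},\dots,\pi^{d_r})=\res{\pi'}{\pi}_\chi^{\sum_i d_i M_{ii}}\,S_{\vec d}(q^{\deg\pi},\chi_{\deg\pi})$, where $S_{\vec d}$ denotes the compatible system of sums of $q$-Weil numbers attached to the exponent vector $\vec d=(d_1,\dots,d_r)$; for linear $\pi$ (so $\pi'=1$) this is just $S_{\vec d}(q,\chi)$. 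Thus everything comes down to showing each $S_{\vec d}$ is forced by the axioms, which I would prove by induction on $A:=\sum_i d_i$, the cases $A\le 1$ being immediate from Axiom~\ref{Axiom2} ($S_{\vec d}\equiv 1$ there).

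For the inductive step, fix $\vec d$ with $A\ge 2$ and assume every $S_{\vec e}$ with $\sum_i e_i<A$ is known. Then every tuple $(f_1,\dots,f_r)$ with $\deg f_i=d_i$ \emph{other than} the $q$ tuples $(\pi^{d_1},\dots,\pi^{d_r})$ with $\pi$ linear already has its coefficient determined, since such a tuple is either divisible by two distinct primes or is a power of a single prime of degree $\ge 2$, and in either case each local factor involves only an $S_{\vec e}$ with $\sum_i e_i<A$. Writing $R$ for the resulting (known) sum of those coefficients, Axiom~\ref{Axiom4} for $\vec d$ becomes
\begin{equation*}
q\,S_{\vec d}(q,\chi)+R \;=\; \sum_{j\in J(d_1,\dots,d_r;q,\chi,M)} c_j\,\frac{q^A}{\overline{\alpha_j}},
\end{equation*}
an identity of compatible systems of sums, valid for $(q,\chi)$ and all of its extensions. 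This is where Axiom~\ref{Axiom5} does its work: it forces $\mathrm{wt}\,\alpha_j\le A-2$, so the left-hand side is a sum of $q$-Weil numbers of weight $\le A$, while each term $q^A/\overline{\alpha_j}=\alpha_j\,q^{A-\mathrm{wt}\,\alpha_j}$ on the right has weight $\ge A+2$. Since a compatible system of sums is determined by its decomposition into Weil numbers (by linear independence of the functions $e\mapsto\beta^e$), I would extract the part of weight $\ge A+2$ from both sides: $q\,S_{\vec d}$ disappears, the right-hand side equals the already-known part of $R$ of weight $\ge A+2$, and since $\alpha\mapsto q^A/\overline\alpha$ is injective on Weil numbers of weight $<A-1$ (the absolute value of the image recovers the weight, hence $\alpha$), this pins down the multiset $\{(\alpha_j,c_j)\}$, i.e.\ $S_{\vec d}$. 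This closes the induction, and the first reduction then recovers every coefficient.

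For \textbf{existence} I would construct the coefficients geometrically and verify the five axioms. On the moduli space $\mathbb A^{d_1}\times\cdots\times\mathbb A^{d_r}$ of tuples of monic polynomials of degrees $d_i$, form the polynomial function $F_{\vec d}=\prod_i\Disc(f_i)^{M_{ii}}\prod_{i<j}\Res(f_i,f_j)^{M_{ij}}$ (with the $M_{ij}$ taken in $\{0,\dots,n-1\}$), pull the Kummer sheaf $\mathcal L_\chi$ back along $F_{\vec d}$ over the open locus where $F_{\vec d}$ is invertible---equivalently where the $f_i$ are squarefree and pairwise coprime---and let $K_{\vec d}$ be a perverse extension of this rank-one local system to the whole space (the intermediate extension, or equivalently a suitably twisted pushforward from a configuration-space cover $\operatorname{Conf}_{\vec d}$), with $a(f_1,\dots,f_r;q,\chi,M)$ the normalized alternating Frobenius trace on the stalk of $K_{\vec d}$ at $(f_1,\dots,f_r)$. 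Then I would expect: Axiom~\ref{Axiom1} to follow from $\Res(fg,h)=\Res(f,h)\Res(g,h)$ and a K\"unneth factorization of $K_{\vec d}$ over the coprime-decomposition locus, the residue symbols being precisely the Frobenius traces of $\mathcal L_\chi$ on the cross-resultants; Axiom~\ref{Axiom2} to be essentially immediate; Axiom~\ref{Axiom3} to follow from Grothendieck--Lefschetz on the stalk complex at a closed point of degree $\deg\pi$, its Frobenius eigenvalues assembling into $S_{\vec d}$ and being $q$-Weil numbers by Deligne's purity theorem; Axiom~\ref{Axiom4} to follow from Grothendieck--Lefschetz for $H^*_c$ of $K_{\vec d}$ on the affine space together with Poincar\'e duality and the monodromicity of $K_{\vec d}$ under scaling, which expresses that cohomology through the stalk at the origin $(T^{d_1},\dots,T^{d_r})$---carrying the conjugate compatible system---and so introduces the substitution $\alpha\mapsto q^A/\overline\alpha$; and Axiom~\ref{Axiom5} to follow from the perverse amplitude together with the Weil~II weight bounds on stalks of a pure perverse sheaf, sharpened by the vanishing of the lowest stalk cohomology along the deep stratum.

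The hard part, I expect, lies entirely on the existence side, and within it the simultaneous control of Axioms~\ref{Axiom3}--\ref{Axiom5}: one must choose the extension of $F_{\vec d}^*\mathcal L_\chi$ across the discriminant and resultant loci so that it is at once monodromic (needed for the duality behind Axiom~\ref{Axiom4}) and has stalks along every stratum that are genuine compatible systems of $q$-Weil numbers of the small weight demanded by Axiom~\ref{Axiom5}. This forces a careful local computation of the nearby and vanishing cycles of $F_{\vec d}$ along those loci---equivalently, of the pushforward from the configuration-space cover---and that, rather than the bookkeeping in the uniqueness argument, is where the substance of the proof sits.
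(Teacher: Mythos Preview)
Your proposal is correct and matches the approach the paper describes (the theorem is cited from \cite{s-amds} rather than proved in this paper, but the summary given---uniqueness by combinatorial induction on $\sum d_i$ using the weight separation from Axioms \ref{Axiom4} and \ref{Axiom5}, existence via the intermediate extension of the Kummer sheaf $\mathcal L_\chi(F_{\vec d})$ with Axiom \ref{Axiom4} as Verdier duality and Axiom \ref{Axiom5} as purity---coincides with what you wrote). One cosmetic point: the paper's $F_{\vec d}$ uses $\Res(f_i',f_i)^{M_{ii}}$ rather than $\Disc(f_i)^{M_{ii}}$, which differs by a sign depending only on $\deg f_i$ and so only tensors $K_{\vec d}$ by a rank-one constant sheaf, changing nothing essential.
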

\noindent The existence part of the theorem is proven by explicit construction of a perverse sheaf on $\mathbb{A}^{d_1+\cdots+d_r}$, viewed as the parameter space of tuples $\vec{f}=(f_1, \ldots f_r)$ with $\deg f_i=d_i$. The trace of Frobenius on the stalk at $\vec{f}$ gives $a(\vec{f}; q, \chi, M)$. Then the axioms translate into statements about the cohomology of this sheaf; in particular, Axiom \ref{Axiom4} is a Verdier duality statement and Axiom \ref{Axiom5} is a cohomological purity statement. The perverse sheaf construction will not be used in this paper to prove anything about the multiple Dirichlet series (though in Section \ref{SectionTopology} we will use it to deduce topological and algebraic consequences from the functional equations satisfied by these series). The uniqueness part of the theorem is proven by a combinatorial induction, similar to the arguments in Section \ref{SectionCoeffs} below.

\cite[Theorem 1.1]{s-amds} generalizes earlier work of Diaconu and Pasol~\cite{DiaconuPasol}, who proved a similar result restricted to the case when $\chi$ is a quadratic character and $M$ has a certain special form. Their existence proof relies on the combinatorics and \'{e}tale cohomology of compactifications of moduli spaces of hyperelliptic curves. This geometry and combinatorics is all encapsulated in the definition of the relevant perverse sheaves. Since the series of \cite{DiaconuPasol} are a special case of the series we study, our results apply, though the functional equations in this case were obtained earlier in \cite{WhiteheadThesis}.

\subsection{Outline and Main Results}

Our goal in this paper is to establish and study the functional equations of the axiomatic multiple Dirichlet series $Z(\vec{x}; q, \chi, M)$. The structure of the paper is as follows. In Section \ref{SectionCoeffs}, we generalize the formula \eqref{SquarefreeCoefficients} to the case when $f_1\cdots f_{r-1}$ is squarefree. It suffices to determine the local coefficients $a(1, \ldots 1, \pi^d; q, \chi, M)$ and $a(1, \ldots \pi, \ldots 1, \pi^d; q, \chi, M)$ for $d \in \N$--then all the coefficients are determined by twisted multiplicativity. The general formula is in Proposition \ref{PropCoeff4}. The method of proof is induction on $d$, using a comparison of local and global coefficients via Axioms \ref{Axiom4} and \ref{Axiom5} to complete the inductive step. We do not state the formula in full generality here--see \eqref{Coeff4}. Instead, we highlight the two cases used in subsequent sections. First, if $M_{rr}=0$, then Equation \eqref{SquarefreeCoefficients} is still valid, i.e. 
\begin{equation*}
a(f_1, \ldots f_r; q, \chi, M) = \prod_{i=1}^{r} \res{f_i'}{f_i}_{\chi}^{M_{ii}} \prod_{1\leq i<j \leq r} \res{f_i}{f_j}_{\chi}^{M_{ij}}
\end{equation*}
for $f_1\cdots f_{r-1}$ squarefree and $f_r$ arbitrary. This is shown to follow from Proposition \ref{PropCoeff4} in Proposition \ref{PropDirichletCoeff}. This means that, as a function of $f_r$, $a(f_1, \ldots f_r; q, \chi, M)$ is a constant times a Dirichlet character. The second case is a bit more complicated. For $i$ between $1$ and $r$, let $n_i=\frac{n}{\gcd(n, M_{ii}+\tfrac{n}{2})}$. For $j\neq i$, let $n_{ij}$ be the least nonnegative integer such that $n_{ij}(M_{ii}+\tfrac{n}{2}) \equiv -M_{ij} \bmod n$, if such an integer exists. Assume that $n_{ri}$ exists, i.e. $\gcd(M_{rr}+\tfrac{n}{2}, n)|M_{ri}$ for $i=1, \ldots r-1$. Let $F=f_1^{n_{r1}}\cdots f_{r-1}^{n_{r \, (r-1)}}$ and 
\begin{equation*}
\varepsilon= \prod_{i=1}^{r-1} \res{f_i'}{f_i}_{\chi}^{M_{ii}} \, \prod_{1 \leq i < j \leq r-1} \res{f_i}{f_j}_{\chi}^{M_{ij}}.
\end{equation*}
Then we have
\begin{equation*}
a(f_1, \ldots f_r) = \, \varepsilon \, \frac{\xi(-1)^{\deg f_r(\deg f_r -1)/2}}{g_{\xi \chi^{M_{rr}}}^{\deg f_r}} \sum_{\substack{u \in \F_q[T]^+ \\ u^{n_r}|f_r}} q^{(n_r-1)\deg u} g_{\xi \chi^{M_{rr}}}(F, f_r/u^{n_r})
\end{equation*}
where $g_{\xi \chi^{M_{rr}}}(F, f_r/u^{n_r})$ is a function field Gauss sum defined in equation \eqref{ff-gauss-sum} below. In this case, as a function of $f_r$, $a(f_1, \ldots f_r; q, \chi, M)$ behaves like a Gauss sum modulo $f_r$. 

In Section \ref{SectionKubota}, we prove functional equations in the setting where $a(f_1, \ldots f_r; q, \chi, M)$ behaves like a Gauss sum. Assume that $q \equiv 1 \bmod 4$, and $M$ is such that $\gcd(n, M_{rr}+n/2) | M_{ri}$, for all $i$. Setting $\vec{f}=(f_1, \ldots f_{r-1})$, we define the single variable subseries
\begin{align}
&D(x, \vec{f}; q, \chi, M) = \sum_{f_r \in \F_q[T]^+} a(\vec{f}, f_r; q, \chi, M) x^{\deg f_r} \\
&D_\pi(x, \vec{f}; q, \chi, M) = \sum_{j\geq 0} a(\vec{f}, \pi^j; q, \chi, M) x^{j\deg \pi}.
\end{align}
Later, we will introduce an additional parameter $k \in \Z$, which limits the terms appearing in the series based on congruences modulo $n$. Because the coefficients are essentially Gauss sums, $D(x, \vec{f})$ for $f_1\cdots f_{r-1}$ squarefree is essentially a Kubota $L$-series, as studied in the function field case by \cite{Hoffstein} and \cite{Patterson}, with a functional equation in $x \mapsto \frac{g_{\xi \chi^{M_{rr}}}^2}{q^2x}$. Theorem \ref{TheoremFE} extends this functional equation to arbitrary $\vec{f}$. The known functional equations of Kubota $L$-series become the input for an induction on $\deg f_1\cdots f_{r-1}$. A comparison of the local series $D_\pi$ and the global series $D$, using Axioms \ref{Axiom4} and \ref{Axiom5}, completes the inductive step. We derive a multivariable functional equation from the single variable version. For this introduction, we will state the multivariable functional equations in an abridged form, emphasizing the underlying transformation of $\vec{x}$. The full functional equations are given in vector form with an explicit scattering matrix, using the parameter $\vec{k}$ which we have not introduced yet. We expect $r$ different functional equations, each one transforming some $x_i$ to a constant divided by $x_i$. We will describe the $i$th functional equation, assuming that $\gcd(n, M_{ii}+n/2) | M_{ij}$, for all $j$. Define a transformation $\sigma_{i; q, \chi, M}: \C^r \to \C^r$ as follows:
\begin{equation}
\sigma_{i; q, \chi, M}(\vec{x})_j = \left\lbrace \begin{array}{cc} 
\frac{g_{\xi \chi^{M_{ii}}}^2}{q^2x_i} & j=i \\
x_j\left(\frac{q x_i}{g_{\xi \chi^{M_{ii}}}}\right)^{n_{ij}} & j \neq i \end{array} \right. 
\end{equation}
with $n_{ij}$ as defined in the previous paragraph. We prove the following:
\begin{manualtheorem}{\ref{TheoremMultiFE}}
Assume that $q \equiv 1 \bmod 4$, and $M$ is such that $\gcd(n, M_{ii}+n/2) | M_{ij}$, for all $j$. There is an explicit functional equation relating $Z(\vec{x}; q, \chi, M)$ to $Z(\sigma_i(\vec{x}); q, \chi, M)$.
\end{manualtheorem}
\noindent Theorem \ref{TheoremLocalMultiFE} gives an analogous local functional equation.

In Section \ref{SectionDirichlet}, we prove functional equations in the setting where $a(f_1, \ldots f_r; q, \chi, M)$ behaves like a Dirichlet character. Assume that $M_{rr}=0$. In this case, $D(x, \vec{f})$ for $f_1\cdots f_{r-1}$ squarefree is essentially a Dirichlet $L$-function, with a functional equation in $x \mapsto \frac{1}{qx}$. Theorem \ref{TheoremDirichletFE} extends this functional equation to arbitrary $\vec{f}$, with a similar proof structure to Theorem \ref{TheoremFE}. Again, we derive a multivariable functional equation from the single variable version. For this introduction, we will state the $i$th multivariable functional equation in abridged form. Assume $M_{ii}=0$. Let $e_{ij}=1$ if $M_{ij} \neq 0$ and $e_{ij}=0$ if $M_{ij}=0$. Define a transformation $\sigma_{i;q, \chi, M}:\C^r \to \C^r$ as follows:
\begin{equation}
(\sigma_{i;q, \chi, M}(\vec{x}))_j=\left\lbrace \begin{array}{cc}
\frac{1}{qx_i} & j=i \\ (g_{\chi^{M_{ij}}}x_i)^{e_{ij}}x_j & j \neq i \end{array} \right. .
\end{equation}
Define a transformation $\tau_i$ on $r \times r$ symmetric integer matrices as follows: 
\begin{equation}
\begin{split}
&(\tau_i(M))_{ii}=M_{ii}=0 \\
&(\tau_i(M))_{ij}=-M_{ij} \text{ for all } j\neq i \\
&(\tau_i(M))_{jj} = M_{jj}+e_{ij}(M_{ij}+n/2) \text{ for all } j\neq i \\
&(\tau_i(M))_{hj} = M_{hj}+e_{ih}e_{ij}(M_{ih}+M_{ij}) \text{ for all } h\neq j \neq i .
\end{split}
\end{equation}
We prove the following:
\begin{manualtheorem}{\ref{dirichlet-multivariable-global}}
Suppose $M_{ii}=0$. There is an explicit functional equation relating $Z(\vec{x}; q, \chi, M)$ and $Z(\sigma_i(\vec{x}); q, \chi, \tau_i(M))$. 
\end{manualtheorem}
\noindent Note that the Dirichlet functional equation transforms the matrix $M$ as well as the variables $\vec{x}$. Theorem \ref{dirichlet-multivariable-local} gives an analogous local functional equation. A similar functional equation was proven, by a very different method, by Hase-Liu \cite{HaseLiu24}. The functional equations proven in Theorems \ref{TheoremMultiFE} and \ref{dirichlet-multivariable-global} coincide in the case when $M_{ii}=0$ and $\frac{n}{2} | M_{ij}$ for all $j \neq i$. In this case, the characters and Gauss sums appearing in the coefficients are quadratic. This is the setting in which the second author proved functional equations in \cite{WhiteheadThesis}. Sections \ref{SectionKubota} and \ref{SectionDirichlet} are generalizations of this proof in two different directions. 

Section \ref{SectionGroup} explains the algebraic structure of the functional equations we have proven and suggests potential applications to moments. Because the Dirichlet functional equation transforms between multiple Dirichlet series for different matrices $M$, we don't just have a group of symmetries relating one series to itself. Instead, we have a groupoid of symmetries acting on a set of multiple Dirichlet series, with functional equations relating one series to another. The groupoids that arise from axiomatic multiple Dirichlet series have already been tabulated in a different context. They are the Weyl groupoids of arithmetic root systems, which arise from the classification of Nichols algebras of diagonal type \cite{HeckenbergerClassification, HeckenbergerRank2}. In \cite{HeckenbergerRank2}, Heckenberger defines a generalization of ordinary root systems, using a bicharacter of $\Z^r \times \Z^r$ in place of a bilinear form. When the resulting collection of roots and Weyl groupoid is finite, he calls this object an arithmetic root system. We give the full definition in Section \ref{SectionGroup}, and explain how to translate the data $(q, \chi, M)$ for a multiple Dirichlet series into a bicharacter. Theorem \ref{translation-key-step} establishes that our functional equations correspond to simple reflections in the Weyl groupoid. We do not have access to all simple reflections because we can only prove the Dirichlet and Kubota functional equations described above. But we conjecture that each multiple Dirichlet series satisfies a functional equation for every simple reflection. 

In the situation where the generalized root system is finite and therefore is an arithmetic root system, and all simple reflections correspond to Dirichlet or Kubota functional equations, we obtain strong results. We will paraphrase these results here to avoid introducing additional notation. 
\begin{manualtheorem}{\ref{rational-fn}} Let $M$ be a symmetric integer matrix corresponding to an arithmetic root system, such that each simple reflection in the Weyl groupoid corresponds to a functional equation of Kubota or Dirichlet type. Then $Z ( \vec{x}; q, \chi, M) $ is in fact a rational function of the variables $x_1,\dots,x_r$.
\end{manualtheorem}
\begin{manualtheorem}{\ref{uniquely-determined}} Under the same hypotheses on $M$, the power series $Z ( \vec{x}; q, \chi, M) $ is determined up to a constant by the functional equations it satisfies. 
\end{manualtheorem}
\noindent This is the situation in which established analytic techniques for multiple Dirichlet series are most successful. For example, in the number field case, we would expect the analogous multiple Dirichlet series to have meromorphic continuation to $\C^r$. We give a complete list of such series in Theorem \ref{ClassificationTheorem}.

Examination of the list shows that members on it include function field analogues of almost all the multiple Dirichlet series which have been used to compute moments of $L$-functions, as listed at the beginning of this introduction. In the initial works, creative ideas and new techniques were required to construct each of these series, but we derive them all uniformly from the same classification. Moreover, it seems likely that any series whose existence, functional equations, and meromorphic continuation can be proven by the same set of methods, based on analysis of Dirichlet characters and Gauss sums, used in these prior works, would also fit into the axiomatic framework and therefore appear on our list (or appear as a twisted form of series on our list, as \cite{BrubakerFriedbergHoffstein} is a twisted form of \cite{BrubakerThesis}). We construct all at once all the series that could have been produced one at a time by these classical methods. Many of the new series we find are applicable to moments which have not been computed previously.  Our uniform framework should enable future results of interest to be, like Theorems \ref{rational-fn} and \ref{uniquely-determined}, proven simultaneously for all these old and new series.


A simple example will illustrate the results of Sections \ref{SectionCoeffs}-\ref{SectionGroup}. This example was already worked out in \cite[Prop. 4.1]{s-amds} and is discussed as part of Example \ref{super-An} below. Take $q$ and $\chi$ arbitrary, and $M=\begin{pmatrix} 0 & 1 \\ 1 & 0 \end{pmatrix}$. This gives rise to a multiple Dirichlet series with the heuristic form 
\begin{equation} \label{ChintaMohlerExample}
Z(x_1, x_2; q, \chi, M) \approx \sum_{f_1, f_2 \in \F_q[T]^+} \res{f_1}{f_2}_\chi x_1^{\deg f_1} x_2^{\deg f_2} = \sum_{f_2 \in \F_q[T]^+} L\left(x_1, \res{*}{f_2}_\chi \right) x_2^{\deg f_2}.
\end{equation}
which is a generating function for Dirichlet $L$-functions associated to characters of order $n$. This multiple Dirichlet series was used to compute the first moment of such $L$-functions in the number field case by Friedberg, Hoffstein, and Lieman in \cite{FriedbergHoffsteinLieman}, and in the function field case by Chinta and Mohler in \cite{ChintaMohler}. Note that equation \eqref{ChintaMohlerExample} is not a true equality because $a(f_1, f_2; q, \chi M)$ is not equal to $\res{f_1}{f_2}_\chi$ in certain cases where $f_1f_2$ is not squarefree. This leads to extra terms which must be sieved out to obtain a moment formula. 

Because $M_{11}=M_{22}=0$, the series $Z(x_1, x_2; q, \chi, M)$ satisfies two functional equations of Dirichlet type, with underlying transformations $\sigma_1, \, \sigma_2$. They relate the series to two other series associated to matrices $\tau_1(M)$, $\tau_2(M)$. Each of these new series satisfies an inverse Dirichlet functional equation relating it back to the original $Z(x_1, x_2; q, \chi, M)$, and another Kubota functional equation relating it to itself. The full groupoid of functional equations is illustrated in Figure \ref{FigureGroupoid}. This is the Weyl groupoid of a rank 2 arithmetic root system with three Dynkin diagrams. Because every simple reflection in the groupoid corresponds to a Dirichlet or Kubota functional equation, Theorem \ref{rational-fn} and Proposition \ref{uniquely-determined} apply. The series $Z(x_1, x_2; q, \chi, M)$ is a rational function, and it can be computed from the functional equations that 
\begin{equation}
Z(x_1, x_2; q, \chi, M)=\frac{1-q^2 x_1 x_2}{(1-q x_1) (1-q x_2) \left(1-q^{n+1} x_1^n x_2^n\right)}.
\end{equation}
This matches the result of \cite{ChintaMohler}. Our work determines the precise set of multiple Dirichlet series for which this type of computation can be carried out.

\begin{figure}
    \centering
    \begin{tikzpicture}
[
    node distance=1cm,
    box/.style={
        draw,
        rectangle,
        rounded corners,
        minimum width=3.8cm,
        minimum height=1cm,
        align=center,
    },
    arrow/.style={-Stealth, thick}
]

\node[box] (node1) {$\tau_1(M)=\begin{pmatrix} 0 & -1 \\ -1 & 1+\tfrac{n}{2} \end{pmatrix}$};
\node[box] (node2) [below=of node1] {$M=\begin{pmatrix} 0 & 1 \\ 1 & 0 \end{pmatrix}$};
\node[box] (node3) [below=of node2] {$\tau_2(M)=\begin{pmatrix} 1+\tfrac{n}{2} & -1 \\ -1 & 0 \end{pmatrix}$};

\draw[arrow] ($(node1.south)-(0.3,0)$) -- node[left,xshift=-2pt] {$\sigma_1:(x_1, x_2) \mapsto \left(\frac{1}{qx_1}, g_{\chi^{-1}} x_1 x_2 \right)$} ($(node2.north)-(0.3,0)$);
\draw[arrow] ($(node2.north)+(0.3,0)$) -- node[right,xshift=2pt] {$\sigma_1:(x_1, x_2) \mapsto \left(\frac{1}{qx_1}, g_\chi x_1 x_2 \right)$} ($(node1.south)+(0.3,0)$);

\draw[arrow] ($(node2.south)-(0.3,0)$) -- node[left,xshift=-2pt] {$\sigma_2:(x_1, x_2) \mapsto \left(g_\chi x_1 x_2, \frac{1}{qx_2} \right)$} ($(node3.north)-(0.3,0)$);
\draw[arrow] ($(node3.north)+(0.3,0)$) -- node[right,xshift=2pt] {$\sigma_2:(x_1, x_2) \mapsto \left(g_{\chi^{-1}} x_1 x_2, \frac{1}{qx_2} \right)$} ($(node2.south)+(0.3,0)$);

\draw[arrow] ($(node1.north)+(0.3,0)$) to[out=90,in=90,looseness=2] 
              node[above] {$\sigma_2:(x_1, x_2) \mapsto \left( \frac{qx_1 x_2}{g_\chi}, \frac{g_\chi^2 }{q^2x_2} \right)$}
              ($(node1.north)-(0.3,0)$);

\draw[arrow] ($(node3.south)-(0.3,0)$) to[out=270,in=270,looseness=2] 
              node[below] {$\sigma_1:(x_1, x_2) \mapsto \left(\frac{g_\chi^2 }{q^2x_1}, \frac{qx_1 x_2}{g_\chi} \right)$}
              ($(node3.south)+(0.3,0)$);

\end{tikzpicture}
    \caption{Weyl groupoid of functional equations.}
    \label{FigureGroupoid}
\end{figure}
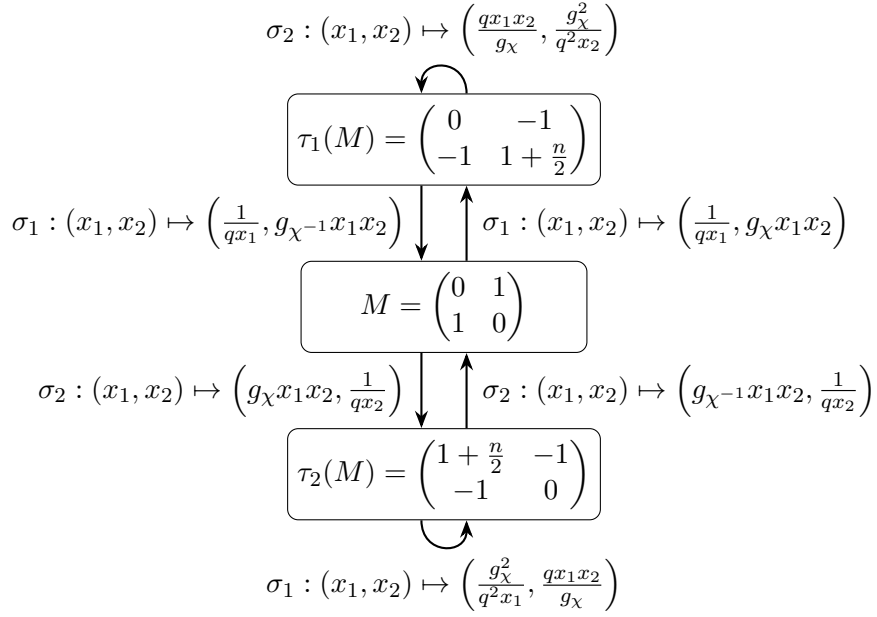

We expect that there should exist a hybrid functional equation unifying the Dirichlet and Kubota types, though we do not conjecture an exact form for this question. Rather than being based on Dirichlet or Kubota $L$-series, this hybrid functional equation should be based on a single-variable series $D(x, \vec{f}; q, \chi, M)$ whose coefficient at $f_r$ is a Dirichlet character of $f_r$ times a Gauss sum modulo $f_r$. To our knowledge, functional equations for these series have not been established. Because the proofs in Sections \ref{SectionKubota} and \ref{SectionDirichlet} require known single-variable functional equations as inputs, we cannot simply imitate these proofs for the hybrid functional equation. However, if the single-variable functional equation of $D(x, \vec{f}; q, \chi, M)$ for $f_1\cdots f_{r-1}$ squarefree can be established by other means, then the same strategy of Sections \ref{SectionKubota} and \ref{SectionDirichlet} should establish general single-variable and multivariable functional equations.

\subsection{Relations with topology and quantum algebra}

The last parts of this paper, which is not necessary for the main results, investigates the connection of our multiple Dirichlet series to several other topics, obtained by a series of correspondences. These topics, expressed with some overlap, include quantum groups, intersection cohomology of local systems on configuration space, Nichols algebras, and perverse sheaves. Our functional equations lead to perhaps-unexpected relations between the dimensions of certain cohomology groups in these different settings, often enabling the computation of these dimensions. We summarize these briefly here, but the interested reader should examine section 6, where these topics are discussed in more depth and, in particular, relevant definitions are given.

One correspondence is due to Kapranov and Schechtman~\cite{KapranovSchechtman}. Kapranov and Schechtman studied perverse sheaves on symmetric powers of the complex plane arising from intermediate extensions of local systems on the configuration space of $n$ points on the plane. They showed an isomorphism between the cohomology groups of the perverse sheaf and cohomology groups ($\operatorname{Ext}$ and $\operatorname{Tor}$) of a Nichols algebra. \cite{s-amds} expressed the coefficients of axiomatic multiple Dirichlet series as traces of Frobenius on perverse sheaves on symmetric powers of the affine line over finite fields. The affine line over finite fields is analogous to the affine line over the complex numbers, whose underlying topological space is the plane. The construction of perverse sheaves in \cite{s-amds} makes equal sense over the complex numbers, and it is possible to relate the perverse sheaves produced in the finite field and complex number setting. The complex analogues of the perverse sheaves of \cite{s-amds} turn out to to correspond, via \cite{KapranovSchechtman}, to the Nichols algebras of diagonal braided vector spaces.

Combining all these correspondences, axiomatic multiple Dirichlet series may be related to the Nichols algebras of diagonal braided vector spaces. It is not immediately obvious which properties of the multiple Dirichlet series correspond to which properties of the Nichols algebra, but the functional equations of the multiple Dirichlet series apparently correspond to the Weyl groupoid of the algebra, and the multiple Dirichlet series may be meromorphic everywhere whenever the algebra is finite-dimensional.

An important special case of Nichols algebras is the subalgebra of Lusztig's small quantum group generated by the positive roots. In this special case, a version of the Kapranov-Schechtman correspondence was obtained earlier by Bezrukavnikov, Finkelberg, and Schechtman~\cite{Bezrukavnikov1998}. The corresponding perverse sheaves are relevant to the Weyl group multiple Dirichlet series. The fact that quantum groups are connected to Weyl group multiple Dirichlet series is well-known, but this seems to give a different connection, because it involves dimensions of cohomology groups on the quantum group side instead of other invariants, and because it involves only a single multiple Dirichlet series at a time and not the whole space of Whittaker functions.

Using our proofs of the functional equations, we prove certain linear relations between the dimensions of cohomology groups in all of these settings: cohomology of Nichols algebras, intersection  cohomology of certain local systems on configuration spaces, and cohomology of certain perverse sheaves. In the case of an arithmetic root system such that each simple reflection corresponds to a functional equation of Kubota or Dirichlet type, these linear relations enable the computation of the dimensions of these cohomology groups. In particular, this applies to quantum groups, where we give a combinatorial expression for the dimensions of the associated graded components of the Ext groups of the trivial module with itself over the subalgebra of Lusztig's small quantum group generated by the positive roots, partially generalizing \cite[Theorem 5.6.1]{Drupieski2012}. 

It would be interesting to understand if there exists a purely algebraic proof of these relations, and we give positive and negative results in this direction.

\subsection{Further Questions}

Our work suggests future problems in the analysis, geometry, and algebra of axiomatic multiple Dirichlet series. 

\begin{enumerate} 

\item \emph{Prove a hybrid functional equation}. As discussed above, this requires proving the functional equation of a hybrid $L$-series whose coefficients are Dirichlet characters multiplied by Gauss sums. The functional equation might arise from an automorphic interpretation of this series.

\item \emph{Construct multiple Dirichlet series over number fields analogous to the ones constructed here over $\mathbb F_q[T]$.}  The local coefficients of these series should match the local coefficients of our series, at least at primes not dividing the number of roots of unity of the base field. One could also ask for analogous series over function fields of higher genus. The strongest possible result in this direction would be the statement that for all finite arithmetic root systems and all global fields containing the necessary roots of unity, the global multiple Dirichlet series has meromorphic continuation to $\C^r$, with an explicit list of poles. 

We will briefly explain why it is reasonable to expect that one can prove functional equations and meromorphic continuation to $\C^r$ for the analogues over arbitrary global fields of the series listed in Theorem \ref{ClassificationTheorem}. The global functional equations will follow from the known local functional equations, via analogues of Propositions \ref{LocalImpliesGlobal} and \ref{LocalDirichletImpliesGlobal} -- similar statements are proven for Weyl group multiple Dirichlet series in \cite{ChintaGunnellsJAMS}. It is also necessary to establish an initial region of convergence and a simple meromorphic continuation using convexity bounds for $L$-functions, so that each functional equation gives an identity of meromorphic functions on overlapping regions of $\C^r$. Then, following \cite{DiaconuGoldfeldHoffstein}, Bochner's principle in several complex variables immediately gives meromorphic continuation to $\C^r$. Each step requires substantial work, but the techniques required are well-established in the multiple Dirichlet series literature.

\item \emph{Extract moment formulas from Examples \ref{new-example-1}, \ref{new-example-2}, and \ref{new-example-3}}. These multiple Dirichlet series are computable but have not appeared in the literature before. Each one has a new arithmetic application, which should be provable over function fields, and, if problem (2) is solved, over number fields as well.

Again, the techniques to extract moments are rather technical, but well-established in the multiple Dirichlet series literature. Moment formulas are obtained from the multiple Dirichlet series by contour integration or Tauberian arguments. This requires explicit computation of the poles and residues, using the functional equations and local information. The final ingredient is a technical sieving step to remove $L$-functions with nonsquarefree conductors. This requires analysis specific to the symmetry type, but has been carried out in many cases, e.g. \cite{Diaconu, DiaconuWhitehead} for the root system $D_4$. 

\item \emph{Extend the axiomatic definition to include twisted multiple Dirichlet series}. The most general Weyl group multiple Dirichlet series constructions allow for twisting. The heuristic form of a twisted multiple Dirichlet series has the coefficient at $(f_1, \ldots f_r)$ multiplied by fixed Dirichlet characters of $f_1, \ldots f_r$. Twisted Weyl group multiple Dirichlet series are general Fourier-Whittaker coefficients of metaplectic Eisenstein series; the untwisted version is the first nonconstant coefficient. At present, the axioms do not allow for twisting, but it is an interesting problem to generalize them and to prove functional equations in this setting.

\item \emph{Show meromorphic continuation of axiomatic multiple Dirichlet series to as large a domain as possible when the groupoid of functional equations is infinite}. Hase-Liu establishes a domain of convergence for all axiomatic multiple Dirichlet series in \cite{HaseLiu24} -- can this be extended using improved estimates for the coefficients? Since each coefficient is a sum of $q$-Weil numbers of bounded weight, the main problem is to bound the number of terms in the sum. This is analogous to estimating the dimension of the underlying cohomology groups, rather than the trace of Frobenius. 

Predicting the poles is also an interesting problem. In the case of a finite groupoid, the poles are completely determined by applying the functional equations to an initial set of poles; they correspond to roots in the arithmetic root system. From the list of poles, we can construct the denominator of the rational function $Z(\vec{x}; q, \chi, M)$ as a product of function field zeta functions. The zeta functions are Eulerian and involve only powers of $x_i^n$, so the numerator of $Z$ is a multivariable Dirichlet polynomial satisfying the same twisted multiplicativity as the full rational function. In the case of an infinite groupoid, we expect to see additional poles besides the ones whose existence is implied by the functional equations. Determining theses poles and their multiplicities is a subtle problem, related to the problem of determining imaginary root multiplicities in Kac-Moody algebras. The denominator of $Z$ will be an infinite product of zeta functions. The numerator will not be a polynomial, but we expect it to be a power series which absolutely converges on the largest possible domain. The bounds of Axiom \ref{Axiom5} are tailored for this convergence. Meromorphic continuation of multiple Dirichlet with infinite groupoids of functional equations would make many new moment results possible.

\item  \emph{Classify affine arithmetic root systems}. One can define an analogue of arithmetic root systems where, as with affine root systems, rather than assuming that the root system has finitely many Weyl chambers, one only assumes that the Weyl chambers form finitely many orbits under an infinite abelian group of linear transformations. Meromorphic continuation for certain multiple Dirichlet series associated to affine root systems was proved by the second author in \cite{WhiteheadThesis}, and it is possible that the method can be generalized to a multiple Dirichlet series defined using an arbitrary affine arithmetic root system, in which case it would be interesting to have a list of these. This would generalize the rank two case handled in \cite{Cuntz2018}.

\end{enumerate}

\textbf{Acknowledgements:} We thank Iv\'an Angiono, Valentin Buciumas, Adrian Diaconu, Michael Finkelberg, Holley Friedlander, Dorian Goldfeld, Matthew Hase-Liu, Jeff Hoffstein, Ivan Losev, Mikhail Kapranov, Dinakar Muthiah, Daniel Nakano, Anna Pusk\'{a}s, Steve Sawin, and Vadim Schechtman for helpful conversations related to this project. While working on this project, Will Sawin was supported by NSF grants DMS-2101491 and DMS-2502029 and a Sloan Research Fellowship.

 \section{Explicit Evaluation of Coefficients} \label{SectionCoeffs}

 \subsection{Further Notation and Lemmas on Gauss Sums}

 Fix an odd prime power $q$. Fix a character $\chi$ of $\F_q^*$, with even order $n$, and let $\xi:\F_q^* \to \lbrace \pm 1 \rbrace$ be the unique character of order 2. Fix an $r\times r$ symmetric integer matrix $M=(M_{ij})$ whose entries we assume lie in $[0,n)$. 

 We have already introduced the finite field Gauss sum
\begin{equation}\label{ff-gauss-sum}
g_\chi=\sum_{a \in \F_q} \chi(b) e^{2 \pi i \mathrm{Tr}(a)/p}
\end{equation}
where $\mathrm{Tr}$ denotes the trace from $\F_q$ to a field of prime order $p$. We will also employ Gauss sums defined in the function field setting as follows, for $f_1, f_2 \in \F_q[T]$:
\begin{equation} \label{GaussSumDefn}
g_\chi(f_1, f_2)=\sum_{h \in \F_q[T]/(f_2)} \res{h}{f_2}_\chi \exp(h f_1/f_2) 
\end{equation}
where $\exp(h f_1/f_2) = e^{2 \pi i \mathrm{Tr}(c_{-1})/p}$, and $c_{-1}$ is the coefficient of $T^{-1}$ in the expansion of $h f_1/f_2$ as a Laurent series in $T^{-1}$. By definition, we have $g_\chi(f_1, 1)=1$ and $g_{\chi}(1, T)=g_{\chi}$. 

We collect some important properties of $g_\chi(f_1, f_2)$, referring to \cite{s-amds} for proofs. First is a twisted multiplicativity property:
\begin{lemma} \label{GaussSumTwistedMult} \cite[Lemma 2.7]{s-amds} 
If $\gcd(f_1f_2, h_1h_2)=1$, then 
\begin{equation*}
g_\chi(f_1h_1, f_2h_2)=g_\chi(f_1, f_2) g_\chi(h_1, h_2) \res{f_1}{h_2}^{-1}_\chi \res{h_1}{f_2}_\chi^{-1} \res{f_2}{h_2}_\chi \res{h_2}{f_2}_\chi.
\end{equation*}
\end{lemma}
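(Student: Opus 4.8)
The plan is to deduce this from two more basic multiplicativity statements, mirroring the classical proof that a Gauss sum over a composite modulus decomposes via the Chinese Remainder Theorem. First I would establish \emph{modulus multiplicativity}: for $\gcd(u,w)=1$ and arbitrary $g \in \F_q[T]$,
\[
g_\chi(g, uw) = \res{u}{w}_\chi \res{w}{u}_\chi \, g_\chi(g, u)\, g_\chi(g, w),
\]
and \emph{numerator multiplicativity}: for $\gcd(g_1, f) = 1$,
\[
g_\chi(g_1 g_2, f) = \res{g_1}{f}_\chi^{-1}\, g_\chi(g_2, f).
\]
Given these, the lemma is immediate: apply modulus multiplicativity with $u = f_2$, $w = h_2$, $g = f_1 h_1$ (using $\gcd(f_2, h_2)=1$, a consequence of the hypothesis) to get $g_\chi(f_1 h_1, f_2 h_2) = \res{f_2}{h_2}_\chi \res{h_2}{f_2}_\chi\, g_\chi(f_1 h_1, f_2)\, g_\chi(f_1 h_1, h_2)$, then apply numerator multiplicativity to each factor -- using $\gcd(h_1, f_2) = 1$ on the first and $\gcd(f_1, h_2) = 1$ on the second -- to rewrite them as $\res{h_1}{f_2}_\chi^{-1} g_\chi(f_1, f_2)$ and $\res{f_1}{h_2}_\chi^{-1} g_\chi(h_1, h_2)$. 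Collecting the residue symbols reproduces the stated identity.

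To prove modulus multiplicativity I would run over residues $h$ modulo $uw$ by the bijection $h \mapsto (a, b)$ given by $h = wa + ub$, with $a$ ranging over residues mod $u$ and $b$ over residues mod $w$; this is a bijection since $\gcd(u,w)=1$. Multiplicativity of the residue symbol in the lower slot gives $\res{h}{uw}_\chi = \res{h}{u}_\chi \res{h}{w}_\chi$, and reducing $h \equiv wa \pmod u$, $h \equiv ub \pmod w$ gives $\res{h}{u}_\chi = \res{w}{u}_\chi \res{a}{u}_\chi$ and $\res{h}{w}_\chi = \res{u}{w}_\chi \res{b}{w}_\chi$. On the additive side, the identity $\frac{hg}{uw} = \frac{ag}{u} + \frac{bg}{w}$ together with the fact that $\exp$ is a homomorphism (the $T^{-1}$-coefficient is additive in the Laurent variable and $\Tr$ is additive) gives $\exp(hg/uw) = \exp(ag/u)\exp(bg/w)$. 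The double sum over $(a,b)$ then factors as $\res{u}{w}_\chi\res{w}{u}_\chi$ times $g_\chi(g,u) g_\chi(g,w)$. For numerator multiplicativity I would substitute $h \mapsto h\, g_1^{-1}$ in the defining sum, where $g_1^{-1}$ denotes a polynomial inverse of $g_1$ modulo $f$ (available since $\gcd(g_1, f) = 1$); then $\res{h g_1^{-1}}{f}_\chi = \res{h}{f}_\chi \res{g_1}{f}_\chi^{-1}$ by multiplicativity, while $\exp(h g_1^{-1} g_1 g_2/f) = \exp(h g_2/f)$ because $g_1^{-1} g_1 - 1$ is divisible by $f$ and $\exp$ annihilates polynomials.

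The one step requiring genuine care -- and the main obstacle, though it is bookkeeping rather than a real difficulty -- is that the residue symbol $\res{\cdot}{g}_\chi$ does not quite descend to $\F_q[T]/(g)$ in its upper argument: from $\res{f}{g}_\chi = \chi(\Res(f, g/\mathrm{lc}(g)))$ and the behaviour of resultants under Euclidean division one finds $\res{f}{g}_\chi = \chi(-1)^{(\deg g)(\deg f - \deg(f \bmod g))}\res{f \bmod g}{g}_\chi$, so a sign $\chi(-1)$ can intervene when $\deg g$ is odd. Consequently one must fix a convention for the representatives $h$ summed over in the definition of $g_\chi$ (say $\deg h < \deg g$), and then in both the CRT parametrization $h = wa + ub$ and the substitution $h \mapsto h g_1^{-1}$ reduce representatives back down and record the resulting signs. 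These signs are controlled entirely by parities of degrees, and the reciprocity law $\res{u}{w}_\chi \res{w}{u}_\chi = \chi(-1)^{(\deg u)(\deg w)}$ is what absorbs them; checking that everything cancels to give exactly the four residue-symbol factors in the statement is the only place attention is needed. All remaining manipulations are multiplicativity of the residue symbol and additivity of $\exp$.
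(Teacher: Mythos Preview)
Your argument is correct and is the standard proof of this identity: factor via the Chinese Remainder Theorem in the modulus and by a change of variable in the numerator. The paper itself does not give a proof of this lemma; it is quoted from \cite[Lemma~2.7]{s-amds} with the remark that proofs are to be found there. So there is nothing to compare against in this paper, but your sketch would reproduce any reasonable proof.

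One small point: the sign bookkeeping you flag as the ``main obstacle'' is in fact absent under the paper's conventions. With the paper's definition $\Res(f,g) = a^{\deg g} b^{\deg f} \prod_{\alpha,\beta}(\beta-\alpha)$, one has for $g$ monic that $\Res(f,g) = \prod_{\beta:g(\beta)=0} f(\beta)$, and this depends only on $f \bmod g$ with no degree-parity correction. Hence $\res{f}{g}_\chi$ genuinely descends to $\F_q[T]/(g)$ in its upper slot, and both the CRT parametrization and the substitution $h \mapsto h g_1^{-1}$ go through without tracking any $\chi(-1)$ factors. Your caution does no harm, but the paragraph devoted to it can simply be dropped.
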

\noindent Next is a formula, a consequence of the Hasse-Davenport relations, which relates function field Gauss sums to the underlying finite field Gauss sums:
\begin{lemma} \label{GaussSumLifting} \cite[Lemma 2.4]{s-amds} 
For $f_2$ squarefree and $\gcd(f_1, f_2)=1$, we have
\begin{equation*}
g_\chi(f_1, f_2) = \xi(-1)^{\deg f_2(\deg f_2-1)/2} \res{f_1}{f_2}_\chi^{-1} \res{f_2'}{f_2}_{\xi \chi} g_\chi^{\deg f_2}.
\end{equation*}
\end{lemma}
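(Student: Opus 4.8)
The plan is to reduce to the case where $f_2=\pi$ is a single prime, and then to recognise $g_\chi(f_1,\pi)$, transported through the isomorphism $\F_q[T]/(\pi)\cong\F_{q^{\deg\pi}}$, as a classical Gauss sum over a finite field, to which the Hasse--Davenport relation applies.

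\emph{Reduction to primes.} Factor $f_2=\pi_1\cdots\pi_s$ into distinct monic primes. Iterating Lemma~\ref{GaussSumTwistedMult} (with $f_1$ coprime to every factor) expresses the left-hand side as a product of the $g_\chi(f_1,\pi_i)$ times a definite product of residue symbols. I claim the right-hand side is multiplicative in exactly the same twisted sense, so the two decompositions match factor by factor and the general squarefree case follows from the prime case. This uses only that $\res{\cdot}{\cdot}_\chi$ is multiplicative in both slots and satisfies reciprocity, that $\Disc(gh)=\Disc(g)\,\Disc(h)\,\Res(g,h)^2$ (so that $\res{f_2'}{f_2}_{\xi\chi}$ expands over the $\pi_i$ with cross terms $\res{\pi_i}{\pi_j}_{\xi\chi}$ arising from $f_2'\equiv\pi_i'\prod_{j\ne i}\pi_j\bmod\pi_i$), and the elementary congruence $\binom{d_1+\dots+d_s}{2}\equiv\sum_i\binom{d_i}{2}+\sum_{i<j}d_id_j$ for the quadratic prefactor. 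This is routine but tedious bookkeeping and reduces the lemma to $f_2=\pi$, $d:=\deg\pi$.

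\emph{The prime case.} Fix a root $\theta$ of $\pi$, so $\F_q[T]/(\pi)=\F_q(\theta)=\F_{q^d}$. Two identifications do the work. First, the standard partial-fraction/residue computation $c_{-1}(g/\pi)=\sum_{\pi(\beta)=0}g(\beta)/\pi'(\beta)=\Tr_{\F_{q^d}/\F_q}\!\big(\bar g/\overline{\pi'}\big)$ for $\deg g<d$ shows that $\exp(hf_1/\pi)=e^{2\pi i\,\Tr_{\F_{q^d}/\F_p}(\bar h\bar f_1/\overline{\pi'})/p}$, i.e. the function-field additive character at $\pi$ is the standard additive character of $\F_{q^d}$ composed with multiplication by $1/\overline{\pi'}$. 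Second, since $\res{\cdot}{\pi}_\chi$ detects $n$th powers modulo $\pi$, we have $\res{h}{\pi}_\chi=\chi\big(\mathrm N_{\F_{q^d}/\F_q}(\bar h)\big)$ for $\gcd(h,\pi)=1$. Substituting and replacing the summation variable $\bar h$ by $\bar h\,\overline{\pi'}/\bar f_1$ (a bijection of $\F_{q^d}$, since $\bar f_1$ is a unit) turns $g_\chi(f_1,\pi)$ into
\[
\chi\!\big(\mathrm N(\overline{\pi'})\big)\,\chi\!\big(\mathrm N(\bar f_1)\big)^{-1}\sum_{x\in\F_{q^d}}\chi\!\big(\mathrm N(x)\big)\,e^{2\pi i\,\Tr_{\F_{q^d}/\F_p}(x)/p}\;=\;\res{\pi'}{\pi}_\chi\,\res{f_1}{\pi}_\chi^{-1}\,g_{\chi_d},
\]
where $\chi_d=\chi\circ\mathrm N_{\F_{q^d}/\F_q}$, $g_{\chi_d}$ is its Gauss sum over $\F_{q^d}$, and we used $\mathrm N_{\F_{q^d}/\F_q}(\overline{\pi'})=\Res(\pi',\pi)$ and $\mathrm N_{\F_{q^d}/\F_q}(\bar f_1)=\Res(f_1,\pi)$ (the leading-coefficient factors in the definition of $\Res$ being harmless since $\pi$ is monic, and $\gcd(\pi,\pi')=1$ by separability).

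\emph{Collecting signs.} Apply the Hasse--Davenport relation $-g_{\chi_e}=(-g_\chi)^e$ with $e=d$ to get $g_{\chi_d}=(-1)^{d+1}g_\chi^{d}$. It then remains to rewrite $(-1)^{d+1}\res{\pi'}{\pi}_\chi$ as $\xi(-1)^{d(d-1)/2}\res{\pi'}{\pi}_{\xi\chi}$. Since $\res{\pi'}{\pi}_{\xi\chi}=\xi\big(\Res(\pi',\pi)\big)\res{\pi'}{\pi}_\chi$ and $\Res(\pi',\pi)=(-1)^{d(d-1)/2}\Disc(\pi)$, this reduces to the identity $\xi(\Disc(\pi))=(-1)^{d+1}$, which holds because Frobenius permutes the $d$ roots of $\pi$ as a single $d$-cycle, of sign $(-1)^{d-1}$, and the quadratic character of the discriminant records the sign of this permutation. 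Substituting back gives $g_\chi(f_1,\pi)=\xi(-1)^{d(d-1)/2}\res{f_1}{\pi}_\chi^{-1}\res{\pi'}{\pi}_{\xi\chi}\,g_\chi^{d}$, which with the reduction step proves the lemma.

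\emph{Main obstacle.} The only genuinely delicate point is the sign and quadratic-twist bookkeeping in the last step: explaining precisely why the character must be upgraded from $\chi$ to $\xi\chi$ and why the prefactor is $\xi(-1)^{d(d-1)/2}$. This hinges on the clean cancellation among three small inputs --- the Hasse--Davenport sign $(-1)^{d+1}$, the relation between $\Res(\pi',\pi)$ and $\Disc(\pi)$, and the Stickelberger-type evaluation $\xi(\Disc(\pi))=(-1)^{d+1}$. It is also the only place where squarefreeness of $f_2$ (separability of $\pi$) and coprimality of $f_1$ and $f_2$ (invertibility of $\bar f_1$, so the change of variables is a bijection) are actually used; the reduction to primes is comparatively mechanical manipulation of residue symbols.
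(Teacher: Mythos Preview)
The paper does not actually prove this lemma; it is quoted from \cite[Lemma 2.4]{s-amds} with the comment ``referring to \cite{s-amds} for proofs.'' Your argument is correct and is the standard route: reduce to the case $f_2=\pi$ prime via twisted multiplicativity (Lemma~\ref{GaussSumTwistedMult}), identify $g_\chi(f_1,\pi)$ with $\res{\pi'}{\pi}_\chi\res{f_1}{\pi}_\chi^{-1}g_{\chi_d}$ by passing to $\F_{q^d}\cong\F_q[T]/(\pi)$, apply Hasse--Davenport, and then collect the signs. Your sign computation at the end is exactly the content of the paper's Lemma~\ref{Pellet} (Pellet's formula) specialized to a prime: $\xi(-1)^{d(d-1)/2}\res{\pi'}{\pi}_\xi=(-1)^d\mu(\pi)=(-1)^{d+1}$, and your Stickelberger argument for $\xi(\Disc(\pi))=(-1)^{d+1}$ is an equivalent way of seeing this. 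One small simplification you could make up front: since $\gcd(f_1,f_2)=1$, a direct change of variable $h\mapsto hf_1^{-1}$ in the defining sum gives $g_\chi(f_1,f_2)=\res{f_1}{f_2}_\chi^{-1}g_\chi(1,f_2)$, so the $f_1$-dependence is disposed of immediately and only $g_\chi(1,f_2)$ needs to be analyzed.
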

\noindent The appearance of $\res{f_2'}{f_2}_{\xi \chi}$ in this formula is some initial evidence that Gauss sums will appear in axiomatic multiple Dirichlet series coefficients when the diagonal entries of the matrix $M$ are nonzero. Finally, we will use a version of Pellet's formula:
\begin{lemma} \label{Pellet} \cite[Lemma 2.3]{s-amds} 
We have
\begin{equation*}
\xi(-1)^{\deg f(\deg f -1)/2} \res{f'}{f}_\xi = \xi(\mathrm{Disc}(f))= (-1)^{\deg f} \mu(f)
\end{equation*}
where $\mu$ is the function field M\"{o}bius function.
\end{lemma}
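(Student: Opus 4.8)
The plan is to read the chain of equalities left to right, unravelling each expression into something about how Frobenius permutes the roots of $f$ in $\overline{\F_q}$. Throughout, $f$ is monic of degree $d=\deg f$, so the definition of the power residue symbol gives $\res{f'}{f}_\xi=\xi(\Res(f',f))$, and the definition of the discriminant gives $\Disc(f)=(-1)^{d(d-1)/2}\Res(f',f)$. Hence the first equality $\xi(-1)^{d(d-1)/2}\res{f'}{f}_\xi=\xi(\Disc(f))$ is immediate from the multiplicativity of $\xi$, interpreting it correctly in the degenerate cases: if $f$ is not squarefree then $\gcd(f,f')\neq 1$, so $\Res(f',f)=0$, the residue symbol vanishes by convention, $\Disc(f)=0$, and also $\mu(f)=0$, so all three quantities are $0$. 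I would dispose of this case first, and from then on assume $f=\pi_1\cdots\pi_k$ with the $\pi_i$ distinct primes, $d_i=\deg\pi_i$, so that $\mu(f)=(-1)^k$ and $\Disc(f)\in\F_q^\ast$.

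Next I would reduce the remaining identity $\xi(\Disc(f))=(-1)^d\mu(f)$ to a square/non-square question. Factoring $f=\prod_{m=1}^d(T-\alpha_m)$ over $\overline{\F_q}$ with the $\alpha_m$ distinct, the classical formula $\Res(f',f)=\prod_m f'(\alpha_m)=\prod_{l\neq m}(\alpha_l-\alpha_m)=(-1)^{d(d-1)/2}\prod_{l<m}(\alpha_l-\alpha_m)^2$ combines with $\Disc(f)=(-1)^{d(d-1)/2}\Res(f',f)$ and $d(d-1)\equiv 0\bmod 2$ to give $\Disc(f)=\delta^2$, where $\delta=\prod_{l<m}(\alpha_l-\alpha_m)$ is the Vandermonde. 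Since $q$ is odd and $\delta\neq 0$ (squarefreeness), $\Disc(f)$ is a square in $\F_q^\ast$ — equivalently $\xi(\Disc(f))=1$ — precisely when its two square roots $\pm\delta$ lie in $\F_q$, i.e. when $\delta$ is fixed by the $q$-power Frobenius $\Frob_q$.

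Finally I would compute the Frobenius sign. Frobenius permutes $\{\alpha_1,\dots,\alpha_d\}$ by a permutation $\sigma\in S_d$, and $\Frob_q(\delta)=\prod_{l<m}(\alpha_{\sigma(l)}-\alpha_{\sigma(m)})=\sgn(\sigma)\,\delta$, so $\delta\in\F_q$ iff $\sgn(\sigma)=1$. On the roots of each irreducible factor $\pi_i$, Frobenius acts as a single $d_i$-cycle, so $\sigma$ is a product of $k$ disjoint cycles of lengths $d_1,\dots,d_k$, whence $\sgn(\sigma)=\prod_i(-1)^{d_i-1}=(-1)^{d-k}$. Therefore $\xi(\Disc(f))=(-1)^{d-k}=(-1)^{d+k}=(-1)^d\mu(f)$, completing the proof. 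Nothing here is deep; the only points demanding care are the sign bookkeeping that turns $\Res(f',f)$ into $\delta^2$, and the (standard, but $q$-odd-dependent) fact that squareness of a nonzero element of $\F_q$ is detected by Galois-invariance of its square root in $\overline{\F_q}$ — so that is the one place I would be most careful to state the hypotheses cleanly.
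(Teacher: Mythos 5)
The paper gives no proof of this lemma at all — it is imported verbatim from \cite[Lemma 2.3]{s-amds} (``referring to \cite{s-amds} for proofs'') — so there is no internal argument to compare yours against. Your proof is the standard proof of Pellet's/Stickelberger's formula and it is correct: the first equality is definitional once one notes that all three quantities vanish for non-squarefree $f$; for squarefree $f$ you correctly identify $\Disc(f)=\delta^2$ with $\delta$ the Vandermonde, reduce $\xi(\Disc(f))=1$ to $\Frob_q$-invariance of $\delta$ (valid since $q$ is odd and $\delta\neq 0$), and compute the sign of the Frobenius permutation as $\prod_i(-1)^{d_i-1}=(-1)^{d-k}=(-1)^{\deg f}\mu(f)$. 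The one place where your write-up interacts nontrivially with this paper's stated conventions is the step $\Res(f',f)=\prod_m f'(\alpha_m)$: that identity holds for the standard resultant normalization $a^{\deg g}b^{\deg f}$, whereas the formula displayed in Section 1 of this paper has the exponents on the leading coefficients swapped, which taken literally would insert a stray factor of $\deg f$ (hence $\xi(\deg f)$) into $\Disc(f)=\delta^2$. Since that normalization already disagrees with the classical discriminant $b^2-4c$ of a monic quadratic, it is evidently a misprint in this paper's recollection of \cite{s-amds}, and your computation matches the intended (standard) convention; it would be worth one sentence in your proof acknowledging which normalization you are using.
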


We will need two more lemmas on Gauss sums to prove the main result of this section.

\begin{lemma}\label{GaussSumEval}
Suppose $f\in \F_q[T]^+$ has degree $d$ and $\chi$ has order $n$. Then
\begin{equation}
g_{\chi}(1, f) = \left\lbrace \begin{array}{cc} g_{\chi^d} \sum\limits_{\deg \nu = d-1} \res{\nu}{f}_{\chi} & n \nmid d  \\ 
-q \sum\limits_{\deg \nu = d-1} \res{\nu}{f}_{\chi} & n\mid d, \, f \neq f_0^n \\  
\phi(f) -q \sum\limits_{\deg \nu = d-1} \res{\nu}{f}_{\chi} & f = f_0^n \\  
\end{array} \right.
\end{equation}
where the sums are over monic polynomials $\nu$, $\phi(f)$ is Euler's totient function, and $f_0$ is the $n$th root of $f$ if $f$ is an $n$th power. 
\end{lemma}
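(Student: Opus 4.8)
The plan is to directly expand the definition \eqref{GaussSumDefn} of $g_\chi(1,f)$ and reorganize the sum by the degree of the numerator. By definition, $g_\chi(1,f) = \sum_{h \in \F_q[T]/(f)} \res{h}{f}_\chi \exp(h/f)$, where $\exp(h/f)$ depends only on the coefficient $c_{-1}$ of $T^{-1}$ in the Laurent expansion of $h/f$. The first observation is that, as $h$ ranges over residues mod $f$, we may instead let $h$ range over all monic and non-monic polynomials of degree $< d = \deg f$ together with $0$; more usefully, performing polynomial division, every $h$ can be adjusted mod $f$ so that $\deg h < d$, and then $h/f$ is already a Laurent series starting at $T^{-1}$ or lower. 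The coefficient $c_{-1}$ of $h/f$ is nonzero (in fact equals the leading coefficient of $h$, since $f$ is monic) precisely when $\deg h = d-1$. So the idea is: group the $h$'s of degree exactly $d-1$ according to their leading coefficient $a \in \F_q^*$, writing $h = a\nu + (\text{lower})$ with $\nu$ monic of degree $d-1$, and sum the additive character $e^{2\pi i \Tr(a)/p}$ over $a$, which produces a finite field Gauss sum $g_{\chi^{?}}$ once the residue symbol $\res{h}{f}_\chi$ has been disentangled from $a$.

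The key computational step is the factorization of the residue symbol. Since $\res{h}{f}_\chi = \chi(\Res(h,f))$ and $\Res(a h_0, f) = a^{\deg f}\Res(h_0,f) = a^d \Res(h_0,f)$ for $h = a h_0$ with $h_0$ monic, we get $\res{h}{f}_\chi = \chi(a)^d \res{h_0}{f}_\chi$. Therefore the contribution of the degree-$(d-1)$ terms factors as
\begin{equation*}
\sum_{\substack{h_0 \text{ monic} \\ \deg h_0 = d-1}} \res{h_0}{f}_\chi \sum_{a \in \F_q^*} \chi(a)^d e^{2\pi i \Tr(a)/p} = g_{\chi^d} \sum_{\deg \nu = d-1} \res{\nu}{f}_\chi,
\end{equation*}
using $\sum_{a \in \F_q^*}\chi^d(a) e^{2\pi i \Tr(a)/p} = g_{\chi^d}$, with the convention that $g_{\chi^d} = -1$ when $n \mid d$ (so $\chi^d$ is trivial). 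Meanwhile, the $h$'s of degree $< d-1$ (and $h=0$) contribute $\sum_{\deg h < d-1} \res{h}{f}_\chi = \sum_{b} (\text{number of } h \text{ of degree } < d-1 \text{ scaling to a fixed monic } h_0) \cdot \res{h_0}{f}_\chi$; more cleanly, $\sum_{\substack{h \bmod f \\ \deg h \le d-1}} \res{h}{f}_\chi - \sum_{\deg h = d-1}\res{h}{f}_\chi$, and the first total sum $\sum_{h \bmod f} \res{h}{f}_\chi$ equals $\phi(f)$ if $f$ is an $n$th power and $0$ otherwise, by the standard orthogonality for the residue symbol (it is a nontrivial multiplicative character mod $f$ unless $f$ is an $n$th power). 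This yields the three cases: when $n \nmid d$ the low-degree terms are absorbed differently — actually, when $n\nmid d$, the additive character sum over $a$ already handles everything and the genuinely lower-degree terms sum to zero against a nontrivial character, giving the first line; when $n \mid d$ but $f$ is not an $n$th power, $g_{\chi^d} = -1$ but one must separately account that $\chi^d$ trivial forces reexamining the $a$-sum as $\sum_{a\in\F_q^*} e^{2\pi i\Tr(a)/p} = -1$, and combining with the lower terms (which sum to $0$) rescales to $-q\sum_\nu \res{\nu}{f}_\chi$ after noting the monic degree-$(d-1)$ sum relates to the full sum; and when $f = f_0^n$ the extra $\phi(f)$ appears.

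The step I expect to be the main obstacle is getting the bookkeeping exactly right in the case $n \mid d$. When $\chi^d$ is trivial, the "Gauss sum" $g_{\chi^d}$ degenerates, and the clean factorization above breaks: one cannot simply separate the $a$-sum from the $h_0$-sum because the leading-coefficient sum $\sum_{a \in \F_q^*} e^{2\pi i \Tr(a)/p} = -1$ no longer carries the character, so the degree-$(d-1)$ and degree-$(<d-1)$ terms must be combined. The right move is to reorganize the \emph{entire} sum $\sum_{\deg h \le d-1}\res{h}{f}_\chi \exp(h/f)$ as a sum over monic $\nu$ of degree $d-1$ of $\res{\nu}{f}_\chi$ times $\sum_{a\in\F_q}\sum_{\deg(\text{tail})<d-1} e^{2\pi i\Tr(a)/p}$ carefully — i.e. fix the monic part and sum over all polynomials with that leading monic "direction", getting $q^{d-1}$ choices weighted by the trivial character and the additive character depending only on $a$; this produces $-q \sum_{\deg\nu = d-1}\res{\nu}{f}_\chi$ plus a correction $\phi(f)\cdot[f \text{ is } n\text{th power}]$ coming from the $h \equiv 0$ direction. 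I would verify this reorganization against small cases ($d = n$, $f$ prime vs. $f = f_0^n$) before committing to the general argument, and I would lean on Lemma \ref{GaussSumTwistedMult} and Lemma \ref{GaussSumLifting} only as consistency checks rather than as inputs, since the cleanest route here is the direct character-sum manipulation.
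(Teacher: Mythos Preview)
Your setup and the $n\nmid d$ case are exactly the paper's argument: write each nonzero $h$ as $a\nu$ with $a\in\F_q^*$ and $\nu$ monic, use $\res{a\nu}{f}_\chi=\chi^d(a)\res{\nu}{f}_\chi$, observe that $\exp(h/f)=1$ for $\deg\nu\le d-2$ and $\exp(h/f)=e^{2\pi i\Tr(a)/p}$ for $\deg\nu=d-1$, and sum over $a$. When $\chi^d$ is nontrivial the low-degree $a$-sum vanishes and the top-degree $a$-sum gives $g_{\chi^d}$, yielding the first line.

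The gap is in your handling of $n\mid d$. Your proposed reorganization ``fix a monic $\nu$ of degree $d-1$ and sum over $a\in\F_q$ and over tails of degree $<d-1$'' does not work, because the residue symbol does not factor that way: $\res{a\nu+\text{tail}}{f}_\chi$ is not $\res{\nu}{f}_\chi$ in general (the resultant depends multiplicatively on $h$, not additively on its leading part). So you cannot pull $\res{\nu}{f}_\chi$ outside a sum over tails.

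The fix is simpler than what you attempt, and you already have the ingredients. Keep the decomposition $h=a\nu$ with $\nu$ monic (of \emph{any} degree $\le d-1$) throughout. When $n\mid d$, the $a$-sum is $q-1$ for $\deg\nu\le d-2$ and $-1$ for $\deg\nu=d-1$, giving
\[
g_\chi(1,f)=(q-1)\sum_{i=0}^{d-2}\sum_{\deg\nu=i}\res{\nu}{f}_\chi-\sum_{\deg\nu=d-1}\res{\nu}{f}_\chi.
\]
Now use the orthogonality fact you already stated: since $\chi^d$ is trivial, $(q-1)\sum_{\nu\text{ monic},\,\deg\nu\le d-1}\res{\nu}{f}_\chi=\sum_{h\bmod f}\res{h}{f}_\chi$, which is $0$ if $f$ is not an $n$th power and $\phi(f)$ if it is. Substituting converts the expression above into $-q\sum_{\deg\nu=d-1}\res{\nu}{f}_\chi$, plus $\phi(f)$ in the $n$th-power case. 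This is exactly how the paper closes the argument.
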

\begin{proof}
By definition
\begin{equation*}
g_{\chi}(1, f) = \sum_{h \bmod f} \res{h}{f} \exp(h/f).
\end{equation*}
Writing each $h$ as  $a \nu$ with $a\in\F_q^*$ and $\nu \in \F_q[T]^+$ of degree $i<d$, we have
\begin{equation*}
g_{\chi}(1, f) = \sum_{i=0}^{d -2} \sum_{\deg \nu=i} \sum_{a \in \F_q^*} \res{a \nu}{f}_{\chi} +\sum_{\deg \nu=d -1} \sum_{a \in \F_q^*} \res{a \nu}{f}_{\chi} \exp(a/T).
\end{equation*}
If $n \nmid d$, then $\chi^d$ is a nontrivial character, and the sum over $a$ in the first part vanishes, while the sum in the second part is $g_{\chi^d}$. In this case,
\begin{equation*} 
g_{\chi}(1, f) = g_{\chi^d} \sum_{\deg \nu=d -1} \res{\nu}{f}_{\chi}.
\end{equation*}
If $n \mid d$, then the sum over $a$ in the first part is $q-1$, while the sum in the second part is $-1$. In this case,
\begin{equation*}
g_{\chi}(1, f) = (q-1)\sum_{i=0}^{d -2} \sum_{\deg \nu=i} \res{\nu}{f}_{\chi} -\sum_{\deg \nu=d -1} \res{\nu}{f}_{\chi}.
\end{equation*}
If $f$ is not an $n$th power, then $\res{*}{f}_{\chi}$ is a nontrivial character of $(\F_q[T]/f)^*$, and $\sum\limits_{\nu \bmod f} \res{\nu}{f}_{\chi} = 0$, giving 
\begin{equation*}
g_{\chi}(1, f) = -q \sum_{\deg \nu=d -1} \res{\nu}{f}_{\chi}.
\end{equation*} 
If $f$ is an $n$th power, then $\res{*}{f}_{\chi}$ is the trivial character of $(\F_q[T]/f)^*$, and $\sum\limits_{\nu\bmod f} \res{\nu}{f}_{\chi} = \phi(f)$, giving 
\begin{equation*}
g_{\chi}(1, f) = \phi(f) -q \sum_{\deg \nu=d -1} \res{\nu}{f}_{\chi}.
\end{equation*} 
\end{proof}

\begin{lemma}\label{GaussSumSum}
Suppose that $\chi$ has order $n$. For $0 \leq m < n-1$ and $\pi \in \F_q[t]^+$ linear, we have 
\begin{equation}
\begin{split}
&\sum_{\substack{f\in \F_q[t]^+ \\ \deg f = d}} g_{\chi}(\pi^{m}, f) = \left\lbrace\begin{array}{cc} 
1 & d=0 \\
q^{1+m}g_{\chi^{1+m}} & d=1+m \\
q^{d+d/n}(1-q^{-1}) & d>0, \, d\equiv 0 \bmod n \\
q^{d+(d-m-1)/n}(1-q^{-1})g_{\chi^{1+m}} & d>1+m, \, d \equiv 1+m \bmod n \\
0 & d \not\equiv 0, 1+m \bmod n \end{array}\right. .
\end{split}
\end{equation}
In the case $m=n-1$,
\begin{equation}
\sum_{\substack{f\in \F_q[t]^+ \\ \deg f = d}} g_{\chi}(\pi^{n-1}, f) = \left\lbrace\begin{array}{cc} 
1 & d=0 \\
-q^n & d=n \\
0 & d \neq 0, n
\end{array}\right. .
\end{equation}
\end{lemma}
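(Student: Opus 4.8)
The plan is to reduce everything to sums of the form $\sum_{h}(\text{residue symbol})\cdot g_\chi(1,h)$, which Lemma~\ref{GaussSumEval} evaluates, and then to carry out the resulting character sums. First I would split the outer sum according to $v:=v_\pi(f)$. For $v=0$ the substitution $h\mapsto h(\pi^m)^{-1}\bmod f$ in \eqref{GaussSumDefn} (legitimate since $\pi\nmid f$) gives $g_\chi(\pi^m,f)=\res{\pi^m}{f}_\chi^{-1}\,g_\chi(1,f)$. For $v\ge 1$, writing $f=\pi^v h$ with $\pi\nmid h$, Lemma~\ref{GaussSumTwistedMult} factors $g_\chi(\pi^m,\pi^v h)=g_\chi(\pi^m,\pi^v)\,g_\chi(1,h)\cdot\prod(\text{explicit residue symbols})$, and the pure prime-power factor $g_\chi(\pi^m,\pi^v)$ is elementary. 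Writing $\pi=T-a$ and parametrising residues mod $\pi^v$ by their $(T-a)$-adic expansion $h\equiv\sum_{l<v}b_l(T-a)^l$ — so that $\res{h}{\pi^v}_\chi=\chi^v(b_0)$, which is exactly where $\pi$ being linear enters — one gets $g_\chi(\pi^m,\pi^v)=0$ for $1\le v\le m$ (here $h\pi^{m-v}$ is a polynomial, so the exponential is $1$ and $\sum_{b_0\in\F_q^\ast}\chi^v(b_0)=0$ as $\chi^v$ is nontrivial), then $g_\chi(\pi^m,\pi^{m+1})=q^m g_{\chi^{m+1}}$, and $g_\chi(\pi^m,\pi^v)=0$ for $v\ge m+2$ (now $\exp(h/\pi^{v-m})=e^{2\pi i\,\Tr(b_{v-m-1})/p}$ and the free variable $b_{v-m-1}$ is summed against a nontrivial additive character). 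Hence only $v\in\{0,m+1\}$ survive, and we are reduced to evaluating
\[
\sum_{\substack{\deg f=d\\ \pi\nmid f}}\res{\pi^m}{f}_\chi^{-1}g_\chi(1,f)
\qquad\text{and}\qquad
q^m g_{\chi^{m+1}}\sum_{\substack{\deg h=d-m-1\\ \pi\nmid h}}\varepsilon(h)\,g_\chi(1,h),
\]
where $\varepsilon(h)$ is the residue-symbol product produced by Lemma~\ref{GaussSumTwistedMult}.

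Next I would evaluate each of these. Substituting the formula of Lemma~\ref{GaussSumEval} for $g_\chi(1,\cdot)$ turns each sum into $g_{\chi^{D}}\sum_{\deg\nu=D-1}\sum_{\deg f=D}(\text{twist})\res{\nu}{f}_\chi$ (with $D=d$ or $d-m-1$; the $n\mid D$ case of Lemma~\ref{GaussSumEval} contributes the $\phi(f)$ terms, to be carried along and which feed into the ``$d\equiv 0$'' family). Applying $n$th power reciprocity to each residue symbol, the inner sum becomes $\sum_{\deg f=D}\psi(f\bmod\mathfrak m)$ for a character $\psi=\res{\cdot}{\mathfrak m}_\chi$ and a modulus $\mathfrak m$ of degree $\le D$ built out of $\pi$ and $\nu$; equidistribution of $f\bmod\mathfrak m$ among monic $f$ of degree $D$ followed by orthogonality of $\psi$ on $(\F_q[T]/\mathfrak m)^\ast$ shows this vanishes unless $\mathfrak m$ is a perfect $n$th power. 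Since $\res{h}{\pi^v}_\chi=\chi^v(h(a))$ is trivial iff $n\mid v$, this forces the exponent of $\pi$ in $\mathfrak m$, and hence — after combining the $v=0$ and $v=m+1$ pieces — the residue of $d$ modulo $n$ to lie in $\{0,\,1+m\}$. Summing the surviving contributions over $\nu=\nu_0^{\,n}$ of each degree, using the Euler-product identity $\sum_{\deg\nu_0=j}\phi(\nu_0^{\,n})=1$ for $j=0$ and $q^{(n+1)j}(1-q^{-1})$ for $j\ge 1$, collapses the geometric series to the stated closed forms, with the exceptional values at $d=0$ (immediate from $g_\chi(\pi^m,1)=1$) and at $d=1+m$ (the $\nu_0=1$ term) emerging separately; I checked the latter by hand, the $v=0$ piece giving $q^m(q-1)g_{\chi^{1+m}}$ and the $v=m+1$ piece $q^m g_{\chi^{1+m}}$, which add to $q^{1+m}g_{\chi^{1+m}}$.

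The hard part will be the bookkeeping that makes the $v=0$ and $v=m+1$ contributions combine cleanly: one must carry the twists $\res{\pi^m}{f}_\chi^{-1}$ and $\varepsilon(h)$ through the reciprocity step, track the $\chi(-1)^{\bullet}$ factors (ultimately trivial, since every surviving $\nu$ has degree divisible by the even integer $n$), and verify that the extra factor $\chi^{-m}$ of $h$ at $a$ merely shifts which power of $\pi$ is allowed to divide $\mathfrak m$. It is precisely here that the hypothesis $0\le m<n-1$ is used: it guarantees $1+m<n$, so that $\chi^{1+m}$ is nontrivial and the two residue classes $d\equiv 0$ and $d\equiv 1+m$ stay distinct, matching the generic statement with its two families and its isolated small case $d=1+m$.

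When $m=n-1$ the situation degenerates: $\chi^{m+1}=\chi^{n}$ is the trivial character, so $g_{\chi^{m+1}}=-1$, the families $d\equiv 0$ and $d\equiv 1+m$ coincide, and the $v=n$ contribution equals $-q^{\,n-1}\sum_{\deg h=d-n,\ \pi\nmid h}\varepsilon(h)g_\chi(1,h)$. Running the same computation, the $v=0$ and $v=n$ contributions are geometric sums of opposite sign that cancel for every $d>n$, leaving only the value $1$ at $d=0$ and $-q^n$ at $d=n$ (from the $\nu_0=1$ term of the $v=n$ piece) and $0$ for all other degrees — which is the stated formula.
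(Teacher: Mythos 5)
Your proposal is correct and follows essentially the same route as the paper's proof: decompose by $v_\pi(f)$ using twisted multiplicativity to isolate $v\in\{0,m+1\}$, substitute Lemma~\ref{GaussSumEval}, and reduce to character sums that force $\pi^{\bullet}\nu$ to be an $n$th power. The only real divergence is cosmetic and occurs in the last stage: the paper packages the surviving counts into the quantity $\Lambda_\pi(d,e)$ (number of pairs $(f,\nu)$ of prescribed degrees with $\gcd(f,\pi\nu)=1$) and evaluates it from its generating function, whereas you propose computing $\sum_{\deg\nu_0=j}\phi(\nu_0^n)$ directly via an Euler product; these are two presentations of the same count, and your version would need a small adjustment to keep track of $\pi$-coprimality (which the paper's $\Lambda_\pi$ has built in), but this is bookkeeping, not a gap in the method. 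You are also right that $\pi$ linear is what lets the explicit $(T-a)$-adic computation of $g_\chi(\pi^m,\pi^v)$ go through, and your hand-check at $d=1+m$ agrees with the paper.
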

\begin{proof}
By Lemma \ref{GaussSumTwistedMult}, if $\pi \nmid f$,  we have $g_{\chi}(\pi^{m}, f) = \res{\pi}{f}_{\chi}^{-m} g_{\chi}(1, f)$. If $f=\pi^{1+m}f^{(\pi)}$ and $\pi \nmid f^{(\pi)}$, we have
\begin{equation*}
\begin{split}
g_{\chi}(\pi^{m}, f) &=\res{\pi}{f^{(\pi)}}_{\chi}^{-m} \res{\pi}{f^{(\pi)}}_{\chi}^{1+m}\res{f^{(\pi)}}{\pi}_{\chi}^{1+m}g_{\chi}(\pi^{m}, \pi^{1+m})g_{\chi}(1, f^{(\pi)}) \\
&=\chi(-1)^{(\deg f-1-m)(1+m)} \res{\pi}{f^{(\pi)}}_{\chi}^{2+m} q^{m} g_{\chi^{1+m}}(1, \pi)g_{\chi}(1, f^{(\pi)}).
\end{split}
\end{equation*}
Outside these two cases, $g_{\chi}(\pi^{m}, f)=0$. Replacing $f^{(\pi)}$ by $f$ in the second case, we write:
\begin{equation} \label{GSSParts}
\begin{split}
&\sum_{\deg f = d} g_{\chi}(\pi^{m}, f) = \sum_{\substack{\deg f = d \\ \pi \nmid f}} \res{\pi}{f}_{\chi}^{-m} g_{\chi}(1, f) \\
&+\chi(-1)^{(d-1-m)(1+m)} q^{m} g_{\chi^{1+m}} \sum_{\substack{\deg f = d-1-m \\ \pi \nmid f}} \res{\pi}{f}_{\chi}^{2+m}  g_{\chi}(1, f).
\end{split}
\end{equation}

Next, we apply Lemma \ref{GaussSumEval} to replace $g_{\chi}(1,f)$ in both sums. After doing this, we will need to evaluate several character sums to complete the proof. We let $\Lambda_{\pi}(d,e)$ denote the number of pairs of monic polynomials $(f, \nu)$ with $\deg f = d$, $\deg \nu =e$, and $\gcd(f, \pi\nu)=1$. For convenience, we set $\Lambda_{\pi}(d,e)=0$ if $d$ or $e$ is negative or is not an integer. We have the double sum evaluation
\begin{equation*}
\sum_{\substack{\deg f = d \\ \deg \nu = d-1}} \res{\pi}{f}_{\chi}^{-m} \res{\nu}{f}_{\chi} = \Lambda_{\pi}\left(d, \frac{d-1- (m \% n)}{n}\right)
\end{equation*}
where $m \% n$ is the smallest nonnegative integer congruent to $m$ modulo $n$. 
This holds because the expression $\res{\pi}{f}_{\chi}^{-m} \res{\nu}{f}_{\chi}$ is a character of $f$ modulo $\pi \nu$, so the sum over $f$ vanishes unless $\pi^{-m} \nu$ is an $n$-th power. This requires that $\nu = \pi^{m \%  n} \nu_0^n$ for some $\nu_0$ of degree $\frac{d-1- (m \% n)}{n}$. In this case, the double sum is equal to the number of pairs $(f, \nu_0)$ with $\gcd(f, \pi \nu_0)=1$. We also have the evaluation 
\begin{equation*}
\sum_{\substack{\deg f = d \\ f=f_0^n}} \res{\pi}{f}_{\chi}^{-m} \phi(f)  = \Lambda_{\pi}\left(\frac{d}{n}, d\right)
\end{equation*}
as this sum is equal to the number of pairs $(f_0, \nu)$ with $\deg f_0 = d/n$, $\deg \nu_0=d$, and $\gcd(f_0, \pi \nu)=1$.

If $d \not \equiv 0, \, 1+m \bmod n$, then both sums in \eqref{GSSParts} vanish. If $d \equiv 0 \not\equiv 1+m \bmod n$, then \eqref{GSSParts} becomes
\begin{equation*}
\Lambda_{\pi}\left(\frac{d}{n}, d\right) + q^{1+m} \Lambda_{\pi}\left(d-1-m, \frac{d}{n}-1\right).
\end{equation*}
Only the terms with $f=f_0^n$ contribute in the first sum, while the second sum becomes a nonvanishing double sum. If $d \equiv 1+m \not \equiv 0 \bmod n$, then \eqref{GSSParts} becomes
\begin{equation*}
g_{\chi^d} \Lambda_{\pi}\left(d, \frac{d-1-m}{n}\right)+q^{m}g_{\chi^d} \Lambda_{\pi}\left(\frac{d-1-m}{n}, d-1-m\right).
\end{equation*}
The first sum becomes a nonvanishing double sum, while only the terms with $f=f_0^n$ contribute in the second sum. Finally, in the special case $m=n-1$, $d \equiv 0 \equiv 1+m \bmod n$, \eqref{GSSParts} becomes
\begin{equation*}
\begin{split}
& \Lambda_{\pi}\left(\frac{d}{n}, d\right) - q\Lambda_{\pi}\left(d, \frac{d-n}{n}\right) -q^{n-1}\Lambda_{\pi}\left(\frac{d-n}{n}, d-n\right)+q^{n}\Lambda_{\pi}\left(d-n, \frac{d}{n}-2\right).
\end{split}
\end{equation*}
In this case, both terms have a contribution from $f=f_0^n$ and a nonvanishing double sum. 

The expression $\Lambda_{\pi}(d,e)$ can be evaluated explicitly from its generating function, as 
\begin{equation*}
\Lambda_{\pi}(d,e) = \left\lbrace \begin{array}{cc} q^e & d=0 \\ \frac{q^{d+e}(1-q^{-1})(1-q^{-2d})}{1+q^{-1}} & e \geq d >0 \\ \frac{q^{d+e}(1-q^{-1})(1+q^{-2e-1})}{1+q^{-1}} & d>e \end{array}\right. .
\end{equation*}
Applying this formula leads to the desired results. 
\end{proof}

\subsection{Coefficient Formulas}
 
Our goal is to prove a formula for the axiomatic multiple Dirichlet series coefficients $a(f_1, \ldots f_r; q, \chi, M)$ for $f_1\cdots f_{r-1}$ squarefree. These involve local coefficients $a(1, \ldots 1, \pi^d; q, \chi, M)$, $a(1, \ldots \pi \ldots 1, \pi^d; q, \chi M)$ for arbitrary $d$. Series with these coefficients form the base case in our inductive proofs of functional equations in the following sections. Our formula generalizes \eqref{SquarefreeCoefficients}, which is valid for $f_1\cdots f_r$ squarefree, but is more complicated and requires a more involved proof. 

Based on our fixed $M$ and $\chi$, for $1 \leq i \leq r$ we set 
\begin{equation} \label{ni}
n_i= \ord \xi \chi^{M_{ii}}=\frac{n}{\gcd(n, M_{ii}+\frac{n}{2})} .
\end{equation} 
Multiplying by $\xi$ does not change the order of a character if that order is $0$ mod $4$. It multiplies or divides the order by $2$ if the order is odd or $2$ mod $4$ respectively. For $i \neq j$, we choose $n_{ij}$, $o_{ij}$ and $p_{ij}$ such that 
\begin{equation}\label{nij}
-M_{ij} = n_{ij}(M_{ii} + \frac{n}{2}) +o_{ij}n+p_{ij}
\end{equation}
and $0 \leq n_{ij} < n_i$, $0 \leq p_{ij} <  \gcd(n, M_{ii}+\frac{n}{2})$. These inequalities make the choice unique.  

Given $f_1, \ldots ,f_{r-1}$ with $f_1\cdots f_{r-1}$ squarefree, let $F=f_1^{n_{r1}}\cdots f_{r-1}^{n_{r \, (r-1)}}$ and
\begin{equation} \label{GlobalRoot}
\varepsilon = \prod_{i=1}^{r-1} \res{f_i'}{f_i}_{\chi}^{M_{ii}} \, \prod_{i=1}^{r-2}\prod_{j=i+1}^{r-1} \res{f_i}{f_j}_{\chi}^{M_{ij}}.
\end{equation}
Warning: $F$ will be defined differently in Section \ref{SectionDirichlet} below. The main result of this section is the following formula:

\begin{prop} \label{PropCoeff4}
For $f_1, \ldots f_{r-1}$ with $f_1\cdots f_{r-1}$ squarefree, we have
\begin{equation} \label{Coeff4}
\begin{split}
a(f_1, \ldots f_r) =& \varepsilon \res{f_1^{p_{r1}}\cdots f_{r-1}^{p_{r \, (r-1)}}}{f_r}_{\chi}^{-1}  \frac{\xi(-1)^{\deg f_r(\deg f_r -1)/2}}{g_{\xi \chi^{M_{rr}}}^{\deg f_r}} \\ &\sum_{\substack{u \in \F_q[T]^+ \\ u^{n_r}|f_r}} q^{(n_r-1)\deg u} g_{\xi \chi^{M_{rr}}}(F, f_r/u^{n_r}).
\end{split}
\end{equation}
\end{prop}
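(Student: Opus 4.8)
\emph{Proof strategy.} Write $\bar a(f_1,\ldots,f_r)$ for the right-hand side of \eqref{Coeff4}. The plan is to prove $a=\bar a$ for $f_1\cdots f_{r-1}$ squarefree by reducing, via twisted multiplicativity, to two families of local coefficients, and then establishing those by induction on the prime-power exponent, the inductive step resting on the local-to-global comparison supplied by Axioms~\ref{Axiom4} and~\ref{Axiom5}. First I would verify that $\bar a$ itself satisfies the twisted multiplicativity relation of Axiom~\ref{Axiom1} whenever both sides are defined, i.e.\ whenever $(f_1g_1)\cdots(f_{r-1}g_{r-1})$ is squarefree. Here the twisted multiplicativity of the function field Gauss sums (Lemma~\ref{GaussSumTwistedMult}) and the $n$th power reciprocity law are the essential tools: one writes each ingredient of \eqref{Coeff4} --- the factor $\varepsilon$, the residue symbol $\res{f_1^{p_{r1}}\cdots f_{r-1}^{p_{r(r-1)}}}{f_r}_\chi^{-1}$, the prefactors $\xi(-1)^{\deg f_r(\deg f_r-1)/2}$ and $g_{\xi\chi^{M_{rr}}}^{-\deg f_r}$, and the Gauss sum $g_{\xi\chi^{M_{rr}}}(F,f_r/u^{n_r})$ --- as a product of $f$-parts, $g$-parts, and cross terms, and checks that the cross terms collect to exactly $\prod_{i\le j}\res{f_i}{g_j}_\chi^{M_{ij}}\res{g_i}{f_j}_\chi^{M_{ij}}$; the defining congruence $n_{ri}(M_{rr}+\tfrac{n}{2})+p_{ri}\equiv -M_{ri}\bmod n$ from \eqref{nij}, together with $\xi(-1)=\chi(-1)^{n/2}$ and $\binom{s+t}{2}=\binom{s}{2}+\binom{t}{2}+st$, are what make the powers of $\chi(-1)$ and the residue symbols cancel. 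Granting this, and since $a$ also satisfies Axiom~\ref{Axiom1}, it suffices to prove \eqref{Coeff4} for the generator tuples $a(1,\ldots,1,\pi^d)$ and $a(1,\ldots,\pi,\ldots,1,\pi^d)$ (the prime $\pi$ placed in some slot $i<r$), for every prime $\pi$ and every $d\ge0$: a general tuple with $f_1\cdots f_{r-1}$ squarefree is built from these by factoring $f_r$, grouping its prime divisors according to whether or not they divide $f_1\cdots f_{r-1}$, and applying Axiom~\ref{Axiom1} to $a$ and Lemma~\ref{GaussSumTwistedMult} to $\bar a$ (the resulting cocycle factors being identical for $a$ and $\bar a$).

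\emph{Base cases.} For $d=0$ both generators reduce to the squarefree coefficient formula \eqref{SquarefreeCoefficients} (a consequence of Axioms~\ref{Axiom1}--\ref{Axiom3}), using $g_{\xi\chi^{M_{rr}}}(\,\cdot\,,1)=1$. For $d=1$ the sum over $u$ in \eqref{Coeff4} still has only the term $u=1$, and I would evaluate $g_{\xi\chi^{M_{rr}}}(F,\pi)$ using Lemma~\ref{GaussSumLifting} and Pellet's formula (Lemma~\ref{Pellet}); the two powers of $\xi(-1)$ and of $g_{\xi\chi^{M_{rr}}}$ then cancel (using that $\deg\pi(\deg\pi-1)$ is even), recovering \eqref{SquarefreeCoefficients}.

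\emph{Inductive step.} Fix $d\ge2$ and assume \eqref{Coeff4} for all generator tuples of exponent $<d$, over every pair $(q,\chi)$. Take first $a(1,\ldots,1,\pi^d)$; by Axiom~\ref{Axiom3} its value at a linear prime $\pi$ does not depend on $\pi$ --- write it $A_d$, and $\bar A_d:=\bar a(1,\ldots,1,\pi^d)$ for $\pi$ linear. I would compute the global coefficient $\sum_{\deg f_r=d}a(1,\ldots,1,f_r)$ in two ways. By Axiom~\ref{Axiom4} it equals $\sum_j c_jq^d/\overline{\alpha_j}$, where the $\alpha_j$ are the $q$-Weil numbers Axiom~\ref{Axiom3} attaches to $a(1,\ldots,1,\pi^d)$; by Axiom~\ref{Axiom5} each $\alpha_j$ has weight $\le d-2$, so each $q^d/\overline{\alpha_j}$ is a $q$-Weil number of weight $\ge d+2$, and hence this global coefficient, regarded as a compatible system of sums of $q$-Weil numbers, has no component of weight $\le d$. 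On the other hand, expanding by twisted multiplicativity and peeling off the unique ``top exponent'' building block gives $\sum_{\deg f_r=d}a(1,\ldots,1,f_r)=q\,A_d+R_d$, where $R_d$ is a combination of products of coefficients $a(1,\ldots,1,\rho^e)$ with $e<d$ and of residue cocycles --- by the inductive hypothesis and Step~1, a known quantity, equal to the corresponding expression $\bar R_d$ formed from $\bar a$. Since $A_d=\sum_jc_j\alpha_j$ is a sum of $q$-Weil numbers of weight $\le d-2$, $q A_d$ has only components of weight $\le d$. Running the same expansion for $\bar a$ (legitimate by Step~1), and using Lemma~\ref{GaussSumEval} to evaluate $\bar a$ at prime powers (which confirms the Axiom~\ref{Axiom3} shape and that $q\bar A_d$ has only components of weight $\le d$) and Lemma~\ref{GaussSumSum} to evaluate $\sum_{\deg f_r=d}\bar a(1,\ldots,1,f_r)$ (which shows this has only components of weight $\ge d+2$), I subtract the two expansions, use $R_d=\bar R_d$, and obtain $q(A_d-\bar A_d)=\sum_{\deg f_r=d}a(1,\ldots,1,f_r)-\sum_{\deg f_r=d}\bar a(1,\ldots,1,f_r)$, whose left side has only components of weight $\le d$ and whose right side has only components of weight $\ge d+2$; both therefore vanish, so $A_d=\bar A_d$. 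Since this holds over every $(q,\chi)$, and the compatible system of Axiom~\ref{Axiom3} for $a(1,\ldots,1,\pi^d)$ is determined by its values at linear primes over all finite extensions, it follows that $a(1,\ldots,1,\pi^d)=\bar a(1,\ldots,1,\pi^d)$ for all primes $\pi$. The second generator family $a(1,\ldots,\pi,\ldots,1,\pi^d)$ is treated identically, now with the degree vector carrying a $1$ in slot $i$ and $d$ in slot $r$ in Axiom~\ref{Axiom4} and with Lemma~\ref{GaussSumSum} applied with $m=n_{ri}$; the ``mixed'' blocks $a(1,\ldots,\pi,\ldots,1,\rho^d)$ with $\rho\ne\pi$ factor through the first family (established at exponent $d$ just above), so again exactly one new unknown appears and the weight separation determines it. This closes the induction and, with Step~1, proves \eqref{Coeff4}.

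\emph{Main obstacle.} The conceptual core --- the weight-separation argument that converts the local-to-global comparison of Axioms~\ref{Axiom4}--\ref{Axiom5} into a determination of the top prime-power coefficient --- is short once arranged correctly. The real work, and the most error-prone step, is Step~1: checking that $\bar a$ obeys twisted multiplicativity, i.e.\ that the residue-symbol cross terms produced by Lemma~\ref{GaussSumTwistedMult} and reciprocity reassemble into precisely the cocycle of Axiom~\ref{Axiom1}; this is where the seemingly arbitrary definitions of $n_{ri},p_{ri}$ in \eqref{nij} and of $F,\varepsilon$ in \eqref{GlobalRoot} earn their keep. A close second is the weight bookkeeping for $\sum_{\deg f_r=d}\bar a$ via Lemma~\ref{GaussSumSum}: the residue classes $d\equiv0$ and $d\equiv1+n_{ri}$ modulo $n_r$, and the exceptional case $n_{ri}=n_r-1$, must each be shown to produce only $q$-Weil numbers of weight $\ge d+2$, the tightest instances being those with $d$ close to $n_r$.
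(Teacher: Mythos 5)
Your proposal follows essentially the same route as the paper: it first verifies that the candidate formula $\bar a$ satisfies twisted multiplicativity (the paper's Proposition \ref{PropTwistedMult}), reduces to the generator families $a(1,\ldots,1,\pi^d)$ and $a(1,\ldots,\pi,\ldots,1,\pi^d)$ (the paper's Propositions \ref{PropCoeff1} and \ref{PropCoeff2}), and closes the induction on $d$ by the same weight-separation comparison of local and global coefficients via Axioms \ref{Axiom4}--\ref{Axiom5}, with Lemmas \ref{GaussSumEval} and \ref{GaussSumSum} supplying the degree-$d$ sums of $\bar a$. The only point you gloss over is the case $p_{ri}>0$, where the extra twist $\res{\pi}{f_r}_{\chi}^{-p_{ri}}$ means Lemma \ref{GaussSumSum} does not apply verbatim and the paper reruns the character-sum computation to show the sums vanish for $d>0$; this is a computational detail, not a conceptual gap.
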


One can check that the formula $\eqref{Coeff4}$ recovers \eqref{SquarefreeCoefficients} in the case when $f_1 \cdots f_r$ is squarefree. This follows from Lemma 2.4 in \cite{s-amds} and some character computations similar to those in the proof of the next proposition.

Let $\bar{a}(f_1, \ldots f_r)$ denote the formula on the right side of \eqref{Coeff4}. If \ref{PropCoeff4} stated a formula valid for all tuples of polynomials $f_1,\ldots, f_r$, to prove \ref{PropCoeff4} it would suffice to verify that $\bar{a}$ satisfies the axioms, as \cite[Theorem 1.1]{s-amds} ensures that the axiom uniquely determines the series. Since this formula is not valid for an arbitrary polynomial, we cannot directly apply \cite[Theorem 1.1]{s-amds}, but we will check $a(f_1,\ldots, f_r) = \bar{a}(f_1,\ldots, f_r)$ by an inductive argument similar to the proof of \cite[Theorem 1.1]{s-amds}. We begin by checking Axiom \ref{Axiom1}.

\begin{prop} \label{PropTwistedMult}
The coefficients $\bar{a}(f_1, \ldots f_r)$ satisfy the twisted multiplicativity axiom, namely 
\begin{equation}\label{TwistedMult}
\bar{a}(f_1g_1, \ldots f_rg_r) = \bar{a}(f_1, \ldots f_r)\bar{a}(g_1, \ldots g_r) \prod_{i=1}^r \prod_{j=i}^r \res{f_i}{g_j}_{\chi}^{M_{ij}} \res{g_i}{f_j}_{\chi}^{M_{ij}}
\end{equation}
for $\gcd(f_1f_2\cdots f_r, g_1g_2\cdots g_r)=1$ and $f_1g_1f_2g_2 \cdots f_{r-1}g_{r-1}$ squarefree. 
\end{prop}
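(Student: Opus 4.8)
The plan is to split the right-hand side of \eqref{Coeff4} as $\bar a(f_1,\ldots,f_r)=\varepsilon\cdot B$, where $\varepsilon$ is the quantity \eqref{GlobalRoot} and $B=B(f_1,\ldots,f_{r-1};f_r)$ collects the remaining factors: the residue prefactor $\res{f_1^{p_{r1}}\cdots f_{r-1}^{p_{r(r-1)}}}{f_r}_\chi^{-1}$, the sign $\xi(-1)^{\deg f_r(\deg f_r-1)/2}$, the power $g_{\xi\chi^{M_{rr}}}^{-\deg f_r}$, and the sum $\sum_{u^{n_r}\mid f_r}q^{(n_r-1)\deg u}g_{\xi\chi^{M_{rr}}}(F,f_r/u^{n_r})$. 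Observe that $\varepsilon$ is exactly formula \eqref{SquarefreeCoefficients} for the top-left $(r-1)\times(r-1)$ block of $M$; since $f_1g_1\cdots f_{r-1}g_{r-1}$ is squarefree we have $\gcd(f_i,g_i)=1$ for $i\le r-1$, and a short computation with the residue symbol -- using multiplicativity and the congruences $(f_ig_i)'\equiv f_i'g_i\pmod{f_i}$, $(f_ig_i)'\equiv f_ig_i'\pmod{g_i}$ -- shows that $\varepsilon$ is twisted-multiplicative with correction factor $\prod_{1\le i\le j\le r-1}\res{f_i}{g_j}_\chi^{M_{ij}}\res{g_i}{f_j}_\chi^{M_{ij}}$. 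Comparing with \eqref{TwistedMult}, it then suffices to prove that $B$ satisfies the analogous identity with correction factor equal to the part of \eqref{TwistedMult} indexed by $j=r$, namely $\prod_{i=1}^{r-1}\res{f_i}{g_r}_\chi^{M_{ir}}\res{g_i}{f_r}_\chi^{M_{ir}}\cdot\res{f_r}{g_r}_\chi^{M_{rr}}\res{g_r}{f_r}_\chi^{M_{rr}}$.

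Write $\psi=\xi\chi^{M_{rr}}$, a character of order $n_r$ by \eqref{ni}. The three easy factors of $B$ cause no trouble: multiplicativity of the residue symbol turns the prefactor into the prefactors of $B(f)$ and $B(g)$ times $\prod_{i=1}^{r-1}\res{f_i}{g_r}_\chi^{-p_{ri}}\res{g_i}{f_r}_\chi^{-p_{ri}}$; the identity $\binom{d+e}{2}=\binom d2+\binom e2+de$ turns the sign into the corresponding signs for $f_r$ and $g_r$ times $\xi(-1)^{\deg f_r\deg g_r}$; and the power of $g_\psi$ is visibly multiplicative. The real work is the sum $\sum_{u^{n_r}\mid f_rg_r}q^{(n_r-1)\deg u}g_\psi(FG,f_rg_r/u^{n_r})$, where $F=\prod_{i<r}f_i^{n_{ri}}$ and $G=\prod_{i<r}g_i^{n_{ri}}$. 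Since $\gcd(f_r,g_r)=1$, every monic $u$ with $u^{n_r}\mid f_rg_r$ factors uniquely as $u=u_1u_2$ with $u_1^{n_r}\mid f_r$, $u_2^{n_r}\mid g_r$, $\gcd(u_1,u_2)=1$, and then $f_rg_r/u^{n_r}=(f_r/u_1^{n_r})(g_r/u_2^{n_r})$ with coprime factors and $\deg u=\deg u_1+\deg u_2$. Apply Lemma \ref{GaussSumTwistedMult} with $f_1=F$, $f_2=f_r/u_1^{n_r}$, $h_1=G$, $h_2=g_r/u_2^{n_r}$ (its hypothesis holds because $\gcd(f_1\cdots f_r,g_1\cdots g_r)=1$): this expresses $g_\psi(FG,\cdot)$ as $g_\psi(F,f_r/u_1^{n_r})\,g_\psi(G,g_r/u_2^{n_r})$ times four residue symbols in $\psi$.

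The point that makes the double sum collapse is that for $\gcd(a,u)=1$ one has $\res{a}{u^{n_r}}_\psi=\res{a}{u}_\psi^{n_r}=1$, since $\res{a}{u}_\psi$ is an $n_r$-th root of unity. Using this together with $\gcd(F,g_r)=\gcd(G,f_r)=\gcd(f_r,u_2)=\gcd(g_r,u_1)=1$, each of the four $\psi$-residue symbols from Lemma \ref{GaussSumTwistedMult} equals the same symbol with $u_1^{n_r}$ and $u_2^{n_r}$ deleted, so their product is the $u$-independent quantity $\res{F}{g_r}_\psi^{-1}\res{G}{f_r}_\psi^{-1}\res{f_r}{g_r}_\psi\res{g_r}{f_r}_\psi$. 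Pulling this constant out of the sum factors it as this quantity times $\bigl(\sum_{u_1^{n_r}\mid f_r}q^{(n_r-1)\deg u_1}g_\psi(F,f_r/u_1^{n_r})\bigr)\bigl(\sum_{u_2^{n_r}\mid g_r}q^{(n_r-1)\deg u_2}g_\psi(G,g_r/u_2^{n_r})\bigr)$, the latter product being exactly the $u$-sums of $B(f)$ and $B(g)$.

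Finally one checks that the accumulated corrections multiply to the target factor. Since $\psi=\chi^{M_{rr}+n/2}$ we get $\res{F}{g_r}_\psi^{-1}=\prod_{i<r}\res{f_i}{g_r}_\chi^{-n_{ri}(M_{rr}+n/2)}$, and \eqref{nij} gives $-n_{ri}(M_{rr}+n/2)\equiv M_{ri}+p_{ri}\pmod n$, so after combining with the prefactor's $\prod_{i<r}\res{f_i}{g_r}_\chi^{-p_{ri}}$ this becomes $\prod_{i<r}\res{f_i}{g_r}_\chi^{M_{ir}}$ (using $M_{ri}=M_{ir}$); the same applies to $\res{G}{f_r}_\psi^{-1}$. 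And $\res{f_r}{g_r}_\psi\res{g_r}{f_r}_\psi=\res{f_r}{g_r}_\chi^{M_{rr}}\res{g_r}{f_r}_\chi^{M_{rr}}\cdot\res{f_r}{g_r}_\xi\res{g_r}{f_r}_\xi$, where quadratic reciprocity gives $\res{f_r}{g_r}_\xi\res{g_r}{f_r}_\xi=\xi(-1)^{\deg f_r\deg g_r}$, which cancels the sign correction found above. Assembling everything gives $B$ the correction factor $\prod_{i<r}\res{f_i}{g_r}_\chi^{M_{ir}}\res{g_i}{f_r}_\chi^{M_{ir}}\cdot\res{f_r}{g_r}_\chi^{M_{rr}}\res{g_r}{f_r}_\chi^{M_{rr}}$, which together with the $\varepsilon$-correction is exactly the right-hand factor of \eqref{TwistedMult}. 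I expect the main obstacle to be the $u$-sum: one has to find the uniform factorization $u=u_1u_2$ and see that the order-$n_r$ character $\psi$ annihilates every $u$-dependent residue symbol coming from Lemma \ref{GaussSumTwistedMult}; once that is in hand, the rest -- the bookkeeping with $n_{ri}$, $p_{ri}$, and quadratic reciprocity -- is routine.
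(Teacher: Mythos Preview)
Your proof is correct and follows essentially the same approach as the paper's: split $\bar a$ into the $\varepsilon$-factor and the remaining $B$-factor, handle the easy pieces (residue prefactor, sign via the $\binom{d+e}{2}$ identity, power of $g_\psi$) separately, then factor the $u$-sum via $u=u_1u_2$ and Lemma~\ref{GaussSumTwistedMult}, and finally combine the resulting characters using \eqref{nij}/\eqref{CharacterEquality} and quadratic reciprocity. Your explicit observation that the four $\psi$-residue symbols coming out of Lemma~\ref{GaussSumTwistedMult} are $u$-independent because $\psi$ has order $n_r$ (so $\res{a}{u^{n_r}}_\psi=1$) is exactly the mechanism behind the paper's displayed identity for $q^{(n_r-1)\deg u}g_{\xi\chi^{M_{rr}}}(FG,f_rg_r/u^{n_r})$, which the paper states without spelling out.
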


\begin{proof}
First we have 
\begin{equation*}
\begin{split}
\varepsilon(f_1g_1, \ldots f_{r-1}g_{r-1}) &= \prod_{i=1}^{r-1} \res{(f_ig_i)'}{f_ig_i}_{\chi}^{M_{ii}} \, \prod_{i=1}^{r-2}\prod_{j=i+1}^{r-1} \res{f_ig_i}{f_jg_j}_{\chi}^{M_{ij}} \\
&=\prod_{i=1}^{r-1} \res{f_i'g_i}{f_i}_{\chi}^{M_{ii}}\res{f_ig_i'}{g_i}_{\chi}^{M_{ii}} \, \prod_{i=1}^{r-2}\prod_{j=i+1}^{r-1} \res{f_ig_i}{f_j}_{\chi}^{M_{ij}}\res{f_ig_i}{g_j}_{\chi}^{M_{ij}}  \\
&=\prod_{i=1}^{r-1}\prod_{j=i}^{r-1} \res{g_i}{f_j}_{\chi}^{M_{ij}}\res{f_i}{g_j}_{\chi}^{M_{ij}} \varepsilon(f_1, \ldots f_{r-1})\varepsilon(g_1, \ldots g_{r-1})
\end{split}
\end{equation*}
so the product of residue symbols $\varepsilon$ accounts for some of the twisted multiplicativity. 

We also have
\begin{equation*}
\res{(f_1g_1)^{p_{r1}}\cdots (f_{r-1}g_{r-1})^{p_{r \, (r-1)}}}{f_rg_r}_{\chi}^{-1} = \res{f_1^{p_{r1}}\cdots f_{r-1}^{p_{r \, (r-1)}}}{f_r}_{\chi}^{-1} \res{g_1^{p_{r1}}\cdots g_{r-1}^{p_{r \, (r-1)}}}{g_r}_{\chi}^{-1} \prod_{i=1}^r \res{f_i}{g_r}^{-p_{ri}}_{\chi} \res{g_i}{f_r}^{-p_{ri}}_{\chi}.
\end{equation*}

Next, we use Lemma \ref{GaussSumTwistedMult}. Letting $F=f_1^{n_{r1}}\cdots f_{r-1}^{n_{r \, (r-1)}}$ and $G=g_1^{n_{r1}}\cdots g_{r-1}^{n_{r \, (r-1)}}$, we have 
\begin{equation*}
g_{\xi \chi^{M_{rr}}}(FG, f_rg_r)=g_{\xi \chi^{M_{rr}}}(F, f_r)g_{\xi \chi^{M_{rr}}}(G, g_r)\res{F}{g_r}_{\xi \chi^{M_{rr}}}^{-1}\res{G}{f_r}_{\xi \chi^{M_{rr}}}^{-1} \res{f_r}{g_r}_{\xi \chi^{M_{rr}}}\res{g_r}{f_r}_{\xi \chi^{M_{rr}}}.
\end{equation*} 
This twisted multiplicativity extends to the sums over $u$ because each $u \in \F_q[T]^+$ with $u^{n_r}|f_rg_r$ factors uniquely as $vw$ with $v^{n_r}|f_r$, $w^{n_r}|g_r$, and 
\begin{equation*} 
\begin{split}
&q^{(n_r-1)\deg u} g_{\xi \chi^{M_{rr}}}(FG, f_rg_r/u^{n_r}) \\
&= q^{(n_r-1)\deg v}g_{\xi \chi^{M_{rr}}}(F, f_r/v^{n_r})q^{(n_r-1)\deg w}g_{\xi \chi^{M_{rr}}}(G, g_r/w^{n_r}) \\
&\cdot \res{F}{g_r}_{\xi \chi^{M_{rr}}}^{-1} \res{G}{f_r}_{\xi \chi^{M_{rr}}}^{-1} \res{f_r}{g_r}_{\xi \chi^{M_{rr}}} \res{g_r}{f_r}_{\xi \chi^{M_{rr}}}.
\end{split}
\end{equation*}
Furthermore, we can expand
\begin{equation*}
\res{F}{g_r}_{\xi \chi^{M_{rr}}}^{-1}\res{G}{f_r}_{\xi \chi^{M_{rr}}}^{-1} = \prod_{i=1}^{r-1} \res{f_i}{g_r}_{\xi \chi^{M_{rr}}}^{-n_{ri}} \res{g_i}{f_r}_{\xi \chi^{M_{rr}}}^{-n_{ri}}.
\end{equation*}

Finally, we have 
\begin{equation*}
\begin{split}
&\xi(-1)^{(\deg f_rg_r)(\deg f_rg_r-1)/2} \\
&= \xi(-1)^{(\deg f_r)(\deg f_r-1)/2}\xi(-1)^{(\deg g_r)(\deg g_r-1)/2} \res{f_r}{g_r}_{\xi}\res{g_r}{f_r}_{\xi}.
\end{split}
\end{equation*}

Note that the formulas above involve two residue symbols of $f_i$ and $g_r$ (or symmetrically $g_i$ and $f_r$): one with character $(\chi)^{-p_{ri}}$ and one with character $(\xi \chi^{M_{rr}})^{-n_{ri}}$. By definition we have $-p_{ri}-n_{ri}(M_{rr}+n/2) \equiv M_{ri} \bmod n$ so that
\begin{equation} \label{CharacterEquality}
(\chi)^{-p_{ri}} (\xi \chi^{M_{rr}})^{-n_{ri}} = \chi^{M_{ri}}.
\end{equation}

Then combining all the formulas above yields the desired twisted multiplicativity formula \eqref{TwistedMult}. 
\end{proof}

We now prove formula \eqref{Coeff4} in the case of a one-by-one, and then a two-by-two matrix $M$. These are sufficient to deduce the general case. 

\begin{prop}\label{PropCoeff1}
Suppose that $M=(M_{11})$. We have 
\begin{equation}\label{Coeff1}
a(f) = \frac{\xi(-1)^{\deg f(\deg f -1)/2}}{g_{\xi \chi^{M_{11}}}^{\deg f}} \sum_{\substack{u \in \F_q[T]^+ \\ u^{n_1}|f}} q^{(n_1-1)\deg u} g_{\xi \chi^{M_{11}}}(1, f/u^{n_1}).
\end{equation}
\end{prop}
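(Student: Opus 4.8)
The plan is to verify that the closed form on the right of \eqref{Coeff1} --- which is the $r=1$ instance of the candidate $\bar a$ defined above --- satisfies all five axioms. Unlike \eqref{Coeff4} in general, this formula is valid for \emph{every} monic $f$, so \cite[Theorem 1.1]{s-amds} will then force $a(f)=\bar a(f)$, with no separate induction needed. Write $\psi:=\xi\chi^{M_{11}}$, of order $n_1$. Axiom \ref{Axiom1} is the $r=1$ case of Proposition \ref{PropTwistedMult}, whose squarefreeness hypothesis becomes vacuous, so nothing further is required. Axiom \ref{Axiom2} is immediate: $\bar a(1)=1$ since $g_\psi(1,1)=1$ by definition, and for linear $\pi$ the only surviving term is $g_\psi(1,\pi)/g_\psi=1$, using $g_\psi(1,\pi)=g_\psi$ for a linear modulus (the $d=1$ case of Lemma \ref{GaussSumEval}).

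The heart of the proof is the evaluation of the prime-power coefficients. First I would record that $g_\psi(1,\pi^e)=0$ for every prime $\pi$ and every $e\ge 2$: this is a short computation from Lemma \ref{GaussSumEval}, since $\res{\cdot}{\pi^e}_\psi=\res{\cdot}{\pi}_{\psi^e}$ factors through $(\F_q[T]/\pi)^*$, so the sum over monic $\nu$ of degree $e\deg\pi-1$ appearing in that lemma either vanishes or is exactly cancelled by its $\phi$-correction. Hence in $\bar a(\pi^d)=\tfrac{\xi(-1)^{d(d-1)/2}}{g_\psi^d}\sum_{n_1 j\le d}q^{(n_1-1)j}g_\psi(1,\pi^{d-n_1 j})$ only $d-n_1 j\in\{0,1\}$ contribute, which for $\pi$ linear gives
\begin{equation*}
\bar a(\pi^d)=\begin{cases}\xi(-1)^{d(d-1)/2}\,q^{\,d-d/n_1}\,g_\psi^{-d} & n_1\mid d,\\ \xi(-1)^{d(d-1)/2}\,q^{\,d-1-(d-1)/n_1}\,g_\psi^{\,1-d} & n_1\mid d-1,\ n_1\ge 2,\\ 0 & \text{otherwise.}\end{cases}
\end{equation*}
For Axiom \ref{Axiom3} I would take $J$ to be a one-element index set with $\alpha=\xi(-1)^{d(d-1)/2}q^{\,d-d/n_1}(-g_\psi)^{-d}$ and $c=(-1)^d$ (respectively the analogue in the middle case), note that $\res{\pi'}{\pi}_\chi=1$ for linear $\pi$ so the Axiom \ref{Axiom3} prefactor is trivial, and verify the compatible-system property over $(q^e,\chi_e)$ using $\xi_e(-1)=\xi(-1)^e$, $(q^e)^{\,d-d/n_1}=(q^{\,d-d/n_1})^e$, and the Hasse--Davenport relation $-g_{\psi_e}=(-g_\psi)^e$; since $n_1\ge 2$, $\alpha$ is a bona fide $q$-Weil number. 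For a prime $\pi$ of degree $\delta>1$ one first applies Lemma \ref{GaussSumLifting} with $f_2=\pi$ (so $\xi\psi=\chi^{M_{11}}$) to extract the factor $\res{\pi'}{\pi}_\chi^{M_{11}}$ from $g_\psi(1,\pi)$; the same computation then gives $\bar a(\pi^d)=\res{\pi'}{\pi}_\chi^{dM_{11}}$ times the level-$\delta$ instance of the displayed formula, as Axiom \ref{Axiom3} demands.

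Axiom \ref{Axiom4} I would handle by writing $f=u^{n_1}g$ in $\sum_{\deg f=d}\bar a(f)$: the sign $\xi(-1)^{\deg f(\deg f-1)/2}$ depends only on $d$ and pulls out, and $\#\{u:\deg u=j\}=q^j$ reduces the sum to $\tfrac{\xi(-1)^{d(d-1)/2}}{g_\psi^d}\sum_j q^{n_1 j}\sum_{\deg g=d-n_1 j}g_\psi(1,g)$. Plugging in Lemma \ref{GaussSumSum} (with $m=0$, character $\psi$, order $n_1$) and summing the telescoping geometric series in $j$ collapses this to $\xi(-1)^{d(d-1)/2}q^{\,d+d/n_1}g_\psi^{-d}$ when $n_1\mid d$ (and the obvious analogue when $n_1\mid d-1$); on the other side, using $g_\psi\overline{g_\psi}=q$, the quantity $\sum_j c_j q^d/\overline{\alpha_j}$ from Axiom \ref{Axiom4} evaluates to exactly the same thing, and likewise at level $\delta$. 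Finally Axiom \ref{Axiom5} is a weight count: the $\alpha$ above has weight $d-2d/n_1$ (resp.\ $d-1-2(d-1)/n_1$), and $n_1\mid d$ with $d\ge 2$ forces $2d/n_1\ge 2$ (resp.\ $n_1\mid d-1$ with $d\ge 2$ forces $2(d-1)/n_1\ge 2>1$), so this is strictly below $d-1$.

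The main obstacle I expect is twofold. First, the bookkeeping for primes of degree $\delta>1$ in Axioms \ref{Axiom3}--\ref{Axiom4}: one must track precisely how $\res{\pi'}{\pi}_\chi^{M_{11}}$, the sign $\xi(-1)^{\delta(\delta-1)/2}$, and the Hasse--Davenport powers of $g_\psi$ interact under $\psi\mapsto\psi_\delta$. Second, the degenerate case $n_1=1$, i.e.\ $M_{11}=n/2$, where $\psi$ is trivial and the symbols $g_\psi$, $g_\psi(1,\cdot)$ and the sum over $u$ must be interpreted carefully; this case should be isolated and dispatched by a separate short computation.
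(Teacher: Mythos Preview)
Your approach is correct and takes a genuinely cleaner organizational route than the paper. The paper verifies Axioms \ref{Axiom1}--\ref{Axiom3} for $\bar a$ and then runs an explicit induction on $d$: assuming $a(\pi^{d'})=\bar a(\pi^{d'})$ for $d'<d$, it uses the Axioms \ref{Axiom4}--\ref{Axiom5} satisfied by $a$ (not $\bar a$) to perform a weight-separation argument on $\sum_{\deg f=d}a(f)-\sum_{\deg\pi=1}a(\pi^d)$. You instead verify all five axioms for $\bar a$ directly and invoke the uniqueness half of \cite[Theorem 1.1]{s-amds} as a black box. The computational content is identical (same prime-power evaluation, same use of Lemma \ref{GaussSumSum} to collapse $\sum_{\deg f=d}\bar a(f)$), but your packaging avoids the explicit induction. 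In fact the paper's own text, just before Proposition \ref{PropCoeff4}, explicitly remarks that your route would work whenever the formula holds for \emph{all} tuples---which in the $r=1$ case it does. The paper presumably takes the inductive route anyway as a warm-up for Proposition \ref{PropCoeff2}, where the formula is only valid for $f_1$ squarefree and the induction is genuinely needed.

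One small gap in your sketch: for primes $\pi$ of degree $\delta>1$ in Axiom \ref{Axiom3}, you only indicate how to extract $\res{\pi'}{\pi}_\chi^{M_{11}}$ via Lemma \ref{GaussSumLifting} when $d\equiv 1\bmod n_1$. When $d\equiv 0\bmod n_1$ there is no $g_\psi(1,\pi)$ factor present; instead one uses the identity $\res{\pi'}{\pi}_\psi^d=1$ (valid since $n_1\mid d$) together with Pellet's formula (Lemma \ref{Pellet}) to manufacture the required $\res{\pi'}{\pi}_\chi^{dM_{11}}$ and the correct sign $(-1)^d$. The paper carries this out explicitly. Your flagging of the degenerate case $n_1=1$ is apt; neither the paper nor your sketch treats it, and the formula as written does need separate interpretation there.
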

Formula \eqref{Coeff1} also holds for $a(1, \ldots 1, f)$ with the indices of $1$ replaced by $r$. 
\begin{proof}
Let $a(f)$ denote the coefficients satisfying the axioms, whose existence and uniqueness is proven in \cite[Theorem 1.1]{s-amds}. Let $\bar{a}(f)$ denote the formulas on the right side of equation \eqref{Coeff1}, so we must prove that $a(f)=\bar{a}(f)$. We will use Axioms \ref{Axiom1} and \ref{Axiom3} to reduce to the case when $f=\pi^d$ for a linear polynomial $\pi$. Then we will prove that $a(\pi^d)=\bar{a}(\pi^d)$ for $\pi$ linear by induction on $d$.

The twisted multiplicativity Axiom \ref{Axiom1} holds for $\bar{a}(f)$ by Proposition \ref{PropTwistedMult}.

For $\pi$ prime, the formula \eqref{Coeff1} specializes to:
\begin{equation}\label{Coeff1Prime}
\bar{a}(\pi^d) = \frac{\xi(-1)^{d \deg \pi(d \deg \pi -1)/2}}{g_{\xi \chi^{M_{11}}}^{d \deg \pi}} \left\lbrace \begin{array}{cc} q^{(d-d/n_1)\deg \pi} & d \equiv 0 \bmod n_1 \\ q^{(d-1-(d-1)/n_1) \deg \pi}g_{\xi \chi^{M_{11}}}(1, \pi) & d \equiv 1 \bmod n_1 \\ 0 & d \not\equiv 0, 1 \bmod n_1 \end{array} \right. .
\end{equation}

In the $d \equiv 0 \bmod n_1$ case, we use the identity $\res{\pi'}{\pi}_{ \xi \chi^{M_{11}}}^d=1$ and Lemma \ref{Pellet} to rewrite formula \eqref{Coeff1Prime} as:
\begin{equation*}
    \bar{a}(\pi^d) =(-1)^d \res{\pi'}{\pi}_{\chi}^{dM_{11}} \left(\frac{\xi(-1)^{d(d-1)/2} q^{d-d/n_1}}{(-g_{\xi \chi^{M_{11}}})^{d}}\right)^{\deg \pi} .
\end{equation*}
This expression matches the form of Axiom \ref{Axiom3}, with a single Weil number of weight $d-\frac{2d}{n_1}$. 
If $d \equiv 1 \bmod n_1$, we may apply those same formulas along with Lemma \ref{GaussSumLifting} to obtain
\begin{equation*}
    \bar{a}(\pi^d) = (-1)^{d-1} \res{\pi'}{\pi}_{\chi}^{dM_{11}} \left( \frac{\xi(-1)^{d(d-1)/2}q^{d-1-(d-1)/n_1}}{(-g_{\xi \chi^{M_{11}}})^{d-1}} \right)^{\deg \pi} .
\end{equation*}

This expression matches the form of Axiom \ref{Axiom3}, with a single Weil number of weight $d-1-\frac{2(d-1)}{n_1}$.

Because the sets of coefficients $a(\pi^d)$ and $\bar{a}(\pi^d)$ both satisfy Axiom \ref{Axiom3}, it suffices to prove $a(\pi^d)=\bar{a}(\pi^d)$ for $\pi$ linear. Then this automatically extends to $\pi$ of arbitrary degree.

For $\pi$ linear and $d=0,1$ we have $a(\pi^d)=1$ by Axiom \ref{Axiom2}. This matches $\bar{a}(\pi^d)$ by \eqref{Coeff1Prime}.

By Lemma \ref{GaussSumSum}, we have
\begin{equation}
\sum_{\deg f = d} g_{\xi \chi^{M_{11}}} (1, f) = \left\lbrace \begin{array}{cc} 
1 & d=0 \\
qg_{\xi \chi^{M_{11}}} & d = 1 \\
q^{d+d/n_1}(1-q^{-1}) & d>0, \, d \equiv 0 \bmod n_1  \\
q^{d+(d-1)/n_1}(1-q^{-1}) g_{\xi \chi^{M_{11}}}  & d>1, \, d \equiv 1 \bmod n_1  \\
0 & d \not \equiv 0, 1 \bmod n_1
\end{array} \right. .
\end{equation}

By an interchange of summation, we obtain
\begin{equation*}
\sum_{\deg f = d} \sum_{u^{n_1}|f} q^{(n_1-1)\deg u} g_{\xi \chi^{M_{11}}}(1, f/u^{n_1}) = \sum_{j=0}^{\lfloor d/{n_1} \rfloor} q^{j n_1 } \sum_{\deg f = d-j n_1} g_{\xi \chi^{M_{11}}}(1, f).
\end{equation*}
If $d \equiv 0 \bmod n_1$, the double sum evaluates to $q^{d+d/n_1}$. If $d \equiv 1 \bmod n_1$, it evaluates to $q^{d+(d-1)/n_1}g_{\xi \chi^{M_{11}}}$. It is $0$ otherwise. Therefore,
\begin{equation*}
\sum_{\deg f = d} \bar{a}(f)=\frac{\xi(-1)^{d(d-1)/2}}{g_{\xi \chi^{M_{11}}}^d} \left\lbrace \begin{array}{cc} 
q^{d+d/n_1} & d \equiv 0 \bmod n_1 \\
q^{d+(d-1)/n_1}g_{\xi \chi^{M_{11}}} & d \equiv 1 \bmod n_1 \\
0 & d \not \equiv 0, 1 \bmod n_1
\end{array} \right. .
\end{equation*}

Now suppose that $a(\pi^{d'})=\bar{a}(\pi^{d'})$ is known for all $d'<d$. By twisted multiplicativity, this implies that $a(f)=\bar{a}(f)$ for all $f$ of degree $d$, except when $f=\pi^d$ for $\pi$ linear. Thus 
\begin{equation*}
\begin{split}
&\sum_{\deg f = d} a(f) - \sum_{\deg \pi = 1} a(\pi^d)  =\sum_{\deg f = d} \bar{a}(f) -\sum_{\deg \pi = 1} \bar{a}(\pi^d) \\
&=\frac{\xi(-1)^{d(d-1)/2}}{g_{\xi \chi^{M_{11}}}^d} 
\left\lbrace \begin{array}{cc} 
q^{d+d/n_1}-q^{d+1-d/n_1} & d \equiv 0 \bmod n_1 \\
q^{d+(d-1)/n_1}g_{\xi \chi^{M_{11}}}-q^{d-(d-1)/n_1}g_{\xi \chi^{M_{11}}} & d \equiv 1 \bmod n_1 \\
0 & d \not \equiv 0, 1 \bmod n_1
\end{array} \right. .
\end{split}
\end{equation*}
By Axioms \ref{Axiom4} and \ref{Axiom5}, assuming $d\geq 2$, $\sum_{\deg f = d} a(f)$ is a linear combination of Weil numbers of weights greater than $d+1$ and $\sum_{\deg \pi = 1} a(\pi^d) = q a(T^d)$ is a linear combination of Weil numbers of weights less than $d+1$, so there is no cancellation between these terms.  We conclude that for $\deg \pi =1$, 
\begin{equation*}
a(\pi^d)  = \bar{a}(\pi^d) = \frac{\xi(-1)^{d(d-1)/2}}{g_{\xi \chi^{M_{11}}}^d} 
\left\lbrace \begin{array}{cc} 
q^{d-d/n_1} & d \equiv 0 \bmod n_1 \\
q^{(d-1-(d-1)/n_1)}g_{\xi \chi^{M_{11}}} & d \equiv 1 \bmod n_1 \\
0 & d \not \equiv 0, 1 \bmod n_1
\end{array} \right.
\end{equation*}
which completes the proof.
\end{proof} 

\begin{prop}\label{PropCoeff2}
Suppose that $M=\begin{pmatrix} M_{11} & M_{21}  \\ M_{21} & M_{22} \end{pmatrix}$. For $f_1$ squarefree, we have 
\begin{equation}\label{Coeff2}
\begin{split}
a(f_1, f_2) = &\res{f_1'}{f_1}^{M_{11}}_{\chi} \res{f_1}{f_2}_{\chi}^{-p_{21}} \frac{\xi(-1)^{\deg f_2(\deg f_2 -1)/2}}{g_{\xi \chi^{M_{22}}}^{\deg f_2}}  \sum_{\substack{u \in \F_q[T]^+ \\ u^{n_2}|f_2}} q^{(n_2-1)\deg u} g_{\xi \chi^{M_{22}}}(f_1^{n_{21}}, f_2/u^{n_2}).
\end{split}
\end{equation}
\end{prop}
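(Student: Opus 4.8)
The plan is to mimic the proof of Proposition~\ref{PropCoeff1} (the $1\times1$ case) step for step, carrying the extra squarefree variable $f_1$ along. Write $\bar a(f_1,f_2)$ for the right-hand side of \eqref{Coeff2}. By Proposition~\ref{PropTwistedMult}, $\bar a$ satisfies the twisted multiplicativity of Axiom~\ref{Axiom1} whenever the first coordinates multiply to a squarefree polynomial, which holds in every factorization we use since $f_1$ is squarefree. As the true coefficients $a$ also satisfy Axiom~\ref{Axiom1}, iterating the prime-by-prime decomposition reduces $a(f_1,f_2)=\bar a(f_1,f_2)$ to the case where $f_1$ and $f_2$ are powers of a single prime $\pi$; since $f_1$ is squarefree this means $f_1\in\{1,\pi\}$. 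The case $f_1=1$ is Proposition~\ref{PropCoeff1} (in the form noted just after it, with the diagonal index $r=2$), so the only new local coefficient to control is $a(\pi,\pi^d)$, and the goal becomes $a(\pi,\pi^d)=\bar a(\pi,\pi^d)$ for all primes $\pi$ and all $d\ge0$.

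Next I reduce to $\pi$ linear via Axiom~\ref{Axiom3}, exactly as in Proposition~\ref{PropCoeff1}. Specializing \eqref{Coeff2} to $(\pi,\pi^d)$, the factor $F=f_1^{n_{21}}$ becomes $\pi^{n_{21}}$ and the $u$-sum runs over powers of $\pi$, so the task reduces to evaluating the prime-power Gauss sums $g_{\xi\chi^{M_{22}}}(\pi^{n_{21}},\pi^{d'})$. Since $\pi^{d'}$ is not squarefree, Lemma~\ref{GaussSumLifting} does not apply; instead I compute these directly, splitting the residue $h\bmod\pi^{d'}$ by its $\pi$-valuation and using that residue symbols between polynomials with the common factor $\pi$ vanish, together with $\sum_{t\bmod\pi}\exp(t/\pi)=0$. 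This shows $g_{\xi\chi^{M_{22}}}(\pi^{n_{21}},\pi^{d'})$ vanishes unless $d'\in\{0,\,n_{21}+1\}$, and otherwise equals $1$ or a power of $q$ times a Gauss sum over the residue field of $\pi$ (or $-1$); in particular the $u$-sum in $\bar a(\pi,\pi^d)$ has at most two nonzero terms. Substituting and then applying Lemmas~\ref{GaussSumLifting} and~\ref{Pellet} and the character identity \eqref{CharacterEquality} recasts $\bar a(\pi,\pi^d)$ in the shape $\res{\pi'}{\pi}_\chi^{M_{11}+dM_{22}}\sum_j c_j\alpha_j^{\deg\pi}$ required by Axiom~\ref{Axiom3}, with the $\alpha_j$ a compatible system of $q$-Weil numbers obeying the weight bound of Axiom~\ref{Axiom5}. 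Since both $a(\pi,\pi^d)$ and $\bar a(\pi,\pi^d)$ now satisfy Axiom~\ref{Axiom3}, it suffices to treat $\pi$ linear.

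For $\pi$ linear I induct on $d$. The base case $d=0$ is $a(\pi,1)=1=\bar a(\pi,1)$ by Axiom~\ref{Axiom2}. For $d\ge1$, twisted multiplicativity, Proposition~\ref{PropCoeff1}, and the inductive hypothesis identify every term of $\sum_{\deg f_1=1,\ \deg f_2=d}a(f_1,f_2)$ with the corresponding value of $\bar a$, except the terms $a(\pi,\pi^d)$ with $\pi$ linear, whose sum is $q\,a(T,T^d)$ (independent of the linear prime by Axiom~\ref{Axiom3}); hence
\[
\sum_{\deg f_1=1,\ \deg f_2=d}a(f_1,f_2)-q\,a(T,T^d)=\sum_{\deg f_1=1,\ \deg f_2=d}\bar a(f_1,f_2)-q\,\bar a(T,T^d).
\]
The right-hand side is computed explicitly: $\sum_{\deg f_2=d}\bar a(\pi,f_2)$ for fixed linear $\pi$ is obtained by interchanging the $u$- and $f_2$-sums, absorbing the twist $\res{\pi}{f_2}_\chi^{-p_{21}}$ into the Gauss sum via \eqref{CharacterEquality}, and invoking Lemma~\ref{GaussSumSum} with first argument $\pi^{n_{21}}$ --- which is precisely why that lemma is stated for general $m$ --- while $\bar a(T,T^d)$ comes from the previous step. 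One finds that $\sum_{\deg f_1=1,\ \deg f_2=d}\bar a(f_1,f_2)$ is a combination of $q$-Weil numbers of weight $>d+2$ while $q\,\bar a(T,T^d)$ has weight $<d+2$; on the left, Axioms~\ref{Axiom4} and~\ref{Axiom5} force $\sum_{\deg f_1=1,\ \deg f_2=d}a(f_1,f_2)$ to have weight $>d+2$ and $q\,a(T,T^d)$ to have weight $<d+2$. Matching the two weight ranges in the displayed identity yields $a(T,T^d)=\bar a(T,T^d)$, completing the induction and hence the proof.

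The main obstacle is the Gauss-sum bookkeeping that is trivial in the $1\times1$ case because there $F=1$: one must evaluate the prime-power sums $g_{\xi\chi^{M_{22}}}(\pi^{n_{21}},\pi^{d'})$ and the global sums $\sum_{\deg f_2=d}\bar a(\pi,f_2)$ with the genuine factor $F=f_1^{n_{21}}$ present, and correctly track the twist $\res{f_1}{f_2}_\chi^{-p_{21}}$ --- in particular verifying, via \eqref{CharacterEquality} and the divisibility relations coming from \eqref{nij}, both that these residue symbols combine with the Gauss sums into the single symbol $\res{\cdot}{\cdot}_\chi^{M_{21}}$ predicted by twisted multiplicativity, and that the coefficient vanishes on both sides when $\gcd(f_1,f_2)\ne1$ and $p_{21}\ne0$. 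A secondary nuisance is keeping the sign and M\"obius contributions of Lemma~\ref{Pellet} organized so that the resulting Weil numbers genuinely form a compatible system and land in the weight range demanded by Axiom~\ref{Axiom5}.
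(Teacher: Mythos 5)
Your proposal follows essentially the same route as the paper's proof: reduce to prime powers via Proposition~\ref{PropTwistedMult}, reduce to $\pi$ linear by matching both sides against the form of Axiom~\ref{Axiom3} (using Lemmas~\ref{GaussSumLifting} and~\ref{Pellet}), then induct on $d$ by comparing the global sum $\sum_{\deg f_1=1,\,\deg f_2=d}$ with $q\,a(T,T^d)$ and separating the two via the weight dichotomy of Axioms~\ref{Axiom4} and~\ref{Axiom5}, with Lemma~\ref{GaussSumSum} supplying the explicit evaluation. One small caveat: when $p_{21}>0$ the combined character $\chi^{-p_{21}}(\xi\chi^{M_{22}})^{-n_{21}}=\chi^{M_{21}}$ is \emph{not} a power of $\xi\chi^{M_{22}}$, so the twisted sum $\sum_{\deg f_2=d}\res{\pi}{f_2}_{\chi}^{M_{21}}g_{\xi\chi^{M_{22}}}(1,f_2)$ is not literally an instance of Lemma~\ref{GaussSumSum}; the paper handles this with a separate orthogonality argument (modifying the proof of that lemma) showing the sum vanishes for $d>0$, which is the one extra computation your outline should make explicit.
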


Formula \eqref{Coeff2} also holds for $a(1, \ldots f_i, \ldots 1, f_r)$ with the index 1 relabeled as $i$ and the index $2$ relabeled as $r$. 

\begin{proof} 
The proof proceeds as before. Let $\bar{a}(f_1, f_2)$ denote the formula on the right side of \eqref{Coeff2}. We will show $a(f_1, f_2)=\bar{a}(f_1, f_2)$. The twisted multiplicativity property for $\bar{a}(f_1, f_2)$ follows from Proposition \ref{PropTwistedMult}.

Now it suffices to prove $a(\pi, \pi^d)=\bar{a}(\pi, \pi^d)$ for $\pi$ prime. We have already shown that $a(1, \pi^d)=\bar{a}(1, \pi^d)$ by Proposition \ref{PropCoeff1}.

For $p_{21}=0$ and $n_{21} < n_2-1$, formula \eqref{Coeff2} specializes to:
\begin{equation}\label{Coeff2Prime}
\begin{split}
\bar{a}(\pi, \pi^d) = & \res{\pi'}{\pi}_{\chi}^{M_{11}} \frac{\xi(-1)^{d \deg \pi(d \deg \pi -1)/2}}{g_{\xi \chi^{M_{22}}}^{d \deg \pi}} \\
& \cdot \left\lbrace \begin{array}{cc} 
q^{(d-d/n_2) \deg \pi} & d \equiv 0 \bmod n_2 \\ 
q^{(d-1 - (d-n_{21}-1)/n_2) \deg \pi }g_{(\xi \chi^{M_{22}})^{n_{21}+1}}(1, \pi) & d \equiv 1+n_{21} \bmod n_2 \\ 
0 & d \not\equiv 0, 1+n_{21} \bmod n_2 
\end{array} \right. .
\end{split}
\end{equation}
In the case $p_{21}=0$, $n_{21} = n_2-1$, formula \eqref{Coeff2} specializes to: 
\begin{equation} \label{Coeff3Prime}
\bar{a}(\pi, \pi^{d}) = \res{\pi'}{\pi}_{\chi}^{M_{11}} \cdot \left\lbrace \begin{array}{cc} 1 & d=0 \\  0 & d>0 \end{array}\right. .
\end{equation}
Finally, in the case $p_{21}>0$, note that the character $(\chi )^{-p_{21}}$ is nontrivial since $0<p_{21}<n$. Therefore the same formula \eqref{Coeff3Prime} holds in this case. 

Using twisted multiplicativity, formula \eqref{Coeff3Prime} implies that $a(f_1, f_2) = 0$ if $\gcd(f_1, f_2) \neq 1$ and $a(f_1, f_2) = \res{f_1'}{f_1}_{\chi}^{M_{11}} \res{f_1}{f_2}_{\chi}^{M_{21}} a(1, f_2)$ if $\gcd(f_1, f_2) = 1$.

In the $p_{21}=0$, $n_{21}<n_2-1$, $d \equiv 0 \bmod n_2$ case, we use the identity $\res{\pi'}{\pi}_{ \xi \chi^{M_{22}}}^d=1$ and Lemma \ref{Pellet} to rewrite formula \eqref{Coeff2Prime} as:
\begin{equation*} 
\bar{a}(\pi, \pi^{d}) = (-1)^d \res{\pi'}{\pi}_{\chi}^{M_{11}+dM_{22}} \left(\frac{\xi(-1)^{d(d-1)/2}q^{d-d/n_2}}{(-g_{\xi \chi^{M_{22}}})^d}\right)^{\deg \pi}.
\end{equation*}
This expression matches the form of Axiom \ref{Axiom3}, with a single Weil number of weight $d-\frac{2d}{n_2}$. 

In the $p_{21}=0$, $n_{21}<n_2-1$, $d \equiv 1+n_{21} \bmod n_2$ case, we may apply the same formulas along with Lemma \ref{GaussSumLifting} to obtain
\begin{equation*} (-1)^{d-1}\res{\pi'}{\pi}_{\chi}^{M_{11}+d M_{22}}
\left( \frac{-g_{(\xi \chi^{M_{22}})^{n_{21}+1}} \xi(-1)^{d(d-1)/2} q^{d-1-(d-n_{21}-1)/n_2}}{(-g_{\xi \chi^{M_{22}}})^d} \right)^{\deg \pi}
\end{equation*}
which matches the form of Axiom \ref{Axiom3}, with a single Weil number of weight $d-1-\frac{2(d-n_{21}-1)}{n_2}$. 

In the case when $n_{21}=n_2-1$ or $p_{21}=0$, formula \eqref{Coeff3Prime} matches the form of Axiom \ref{Axiom3}, with a single Weil number of weight $0$ if $d=0$ and no Weil numbers if $d>0$.

Because the sets of coefficients $a(\pi, \pi^d)$ and $\bar{a}(\pi, \pi^d)$ both satisfy Axiom \ref{Axiom3}, it suffices to prove $a(\pi, \pi^d)=\bar{a}(\pi, \pi^d)$ for $\pi$ linear. Then this automatically extends to $\pi$ of arbitrary degree.

For $\pi$ linear, we have $a(1, 1)=a(1, \pi)=a(\pi, 1)=1$ by Axiom \ref{Axiom2}. This matches $\bar{a}(1, 1)=\bar{a}(1, \pi)=\bar{a}(\pi, 1)=1$ by equations \eqref{Coeff1Prime}, \eqref{Coeff2Prime}, and \eqref{Coeff3Prime}.

For the inductive step, we compute 
\begin{equation*}
\sum_{\substack{\deg f_1 = 1, \\ \deg f_2 = d}} \bar{a}(f_1, f_2).
\end{equation*}
The case $\deg f_1=0$ is already done in Proposition \ref{PropCoeff1}, with the result that equation \eqref{Coeff2} holds when $f_1=1$. Combining this with twisted multiplicativity and the simple evaluation $a(f_1, 1)=\res{f_1'}{f_1}^{M_{11}}$ for all $f_1$ squarefree, we conclude that equation \eqref{Coeff2} holds whenever $\gcd(f_1, f_2)=1$. 

We will now complete the argument in the case $p_{21}=0$, leaving the more technical $p_{21}>0$ case for the end.

By Proposition \ref{GaussSumSum}, if $n_{21}<n_2-1$, 
\begin{equation}
\begin{split}
&\sum_{\deg f_2 = d} g_{\xi \chi^{M_{22}}}(\pi^{n_{21}}, f_2) \\
&= \left\lbrace \begin{array}{cc} 
1 & d=0 \\
q^{1+n_{21}} g_{(\xi \chi^{M_{22}})^{1+n_{21}}} & d = 1+n_{21} \\
q^{d(n_2+1)/n_2}(1-q^{-1}) & d>0, d \equiv 0 \bmod n_2 \\
q^{d+(d-n_{21}-1)/n}(1-q^{-1})g_{(\xi \chi^{M_{22}})^{1+n_{21}}} & d>1+n_{21}, d \equiv 1+n_{21} \bmod n_2 \\
0 & d \not \equiv 0, 1+n_{21} \bmod n_2
\end{array}\right.
\end{split}
\end{equation}
and if $n_{21}=n_2-1$, 
\begin{equation}
\sum_{\deg f_2 = d} g_{\xi \chi^{M_{22}}}(\pi^{n_{21}}, f_2) = \left\lbrace \begin{array}{cc} 
1 & d=0 \\
-q^{n_2} & d = n_2 \\
0 & d \neq 0, n_2
\end{array}\right. .
\end{equation}

By an interchange of summation, we obtain
\begin{equation*}
\sum_{\deg f_2 = d} \sum_{u^{n_2}|f_2} q^{(n_2-1)\deg u} g_{\xi \chi^{M_{22}}}(\pi^{n_{21}}, f_2/u^{n_2}) 
= \sum_{j=0}^{\lfloor d/n_2 \rfloor} q^{j n_2} \sum_{\deg f_2 = d-j n_2} g_{\xi \chi^{M_{22}}}(\pi^{n_{21}}, f_2).
\end{equation*}
If $n_{21}<n_2-1$, $d \equiv 0 \bmod n_2$, the double sum evaluates to $q^{d+d/n_2}$. If $n_{21}<n_2-1$, $d \equiv 1+n_{21} \bmod n_2$, it evaluates to $q^{d+(d-n_{21}-1)/n}g_{(\xi \chi^{M_{22}})^{1+n{21}}}$. If $n_{21}=n_2-1$, $d=0$, it is $1$. It is $0$ otherwise. 

All together, we find that for $n_{21}<n_2-1$,
\begin{equation*}
\begin{split}
& \sum_{\deg f_1 = 1} \sum_{\deg f_2=d} \bar{a}(f_1, f_2) \\
&=\frac{\xi(-1)^{d(d-1)/2}}{g_{\xi \chi^{M_{22}}}^{d}} \left\lbrace \begin{array}{cc} 
q^{d+1+d/n_2} & d \equiv 0 \bmod n_2 \\
q^{d+1+(d-n_{21}-1)/n_2}g_{(\xi \chi^{M_{22}})^{1+n_{21}}} & d \equiv 1+n_{21} \bmod n_2 \\
0 & d_2 \not \equiv 0, 1+n_{21} \bmod n_2
\end{array} \right.
\end{split}
\end{equation*}
and for $n_{21}=n_2-1$,
\begin{equation*}
\sum_{\deg f_1 = 1} \sum_{\deg f_2=d} \bar{a}(f_1, f_2)=\left\lbrace \begin{array}{cc} q & d=0 \\ 0 & d>0 \end{array} \right. .
\end{equation*}

Now suppose that $a(\pi, \pi^{d'})=\bar{a}(\pi, \pi^{d'})$ is known for all $d'<d$. By twisted multiplicativity, this implies that $a(f_1, f_2)=\bar{a}(f_1, f_2)$ for all $f_1$ linear and $f_2$ of degree $d$, except when $f=\pi$ and $f_2=\pi^{d}$ for $\pi$ linear. Thus for $n_{21}<n_2-1$,
\begin{equation*}
\begin{split}
&\sum_{\deg f_1 = 1} \sum_{\deg f_2 = d} a(f_1, f_2) - \sum_{\deg \pi = 1} a(\pi, \pi^{d})  =\sum_{\deg f_1 = 1} \sum_{\deg f_2 = d} \bar{a}(f_1, f_2) - \sum_{\deg \pi = 1} \bar{a}(\pi, \pi^{d})  \\
&=\frac{\xi(-1)^{d(d-1)/2}}{g_{\xi \chi^{M_{22}}}^{d}} \left\lbrace \begin{array}{cc} 
q^{d+1+d/n_2}-q^{d+1-d/n_2} & d \equiv 0 \bmod n_2 \\
(q^{d+1+(d-n_{21}-1)/n_2}-q^{d-(d-n_{21}-1)/n_2})g_{(\xi \chi^{M_{22}})^{1+n_{21}}} & d_2 \equiv 1+n_{21} \bmod n_2 \\
0 & d \not \equiv 0, 1+n_{21} \bmod n_2
\end{array} \right. .
\end{split}
\end{equation*}

By Axioms \ref{Axiom4} and \ref{Axiom5}, assuming $d\geq 1$, the first term is a linear combination of Weil numbers of weights greater than $d+2$ and the second term is a linear combination of Weil numbers of weights less than $d+2$, so there is no cancellation between them.  We conclude that for $\deg \pi =1$, 
\begin{equation*}
\begin{split}
&a(\pi, \pi^{d})  = \bar{a}(\pi, \pi^{d}) = \frac{\xi(-1)^{d(d-1)/2}}{g_{\xi \chi^{M_{22}}}^{d}} \left\lbrace \begin{array}{cc} 
q^{d-d/n_2} & d \equiv 0 \bmod n_2 \\
q^{d-1-(d_2-n_{21}-1)/n_2}g_{(\xi \chi^{M_{22}})^{1+n_{21}}} & d \equiv 1+n_{21} \bmod n_2 \\
0 & d_2 \not \equiv 0, 1+n_{21} \bmod n_2
\end{array} \right. .
\end{split}
\end{equation*}

Similarly, for $n_{21}=n_2-1$, 
\begin{equation*}
\sum_{\deg f_1 = 1} \sum_{\deg f_2 = d} a(f_1, f_2) - \sum_{\deg \pi = 1} a(\pi, \pi^{d})  =\sum_{\deg f_1 = 1} \sum_{\deg f_2 = d} \bar{a}(f_1, f_2) - \sum_{\deg \pi = 1} \bar{a}(\pi, \pi^{d})  =0.
\end{equation*}
Assuming $d \geq 1$, there is no cancellation between the two terms, so we conclude that for $\deg \pi=1$,
\begin{equation*}
a(\pi, \pi^{d})=\bar{a}(\pi, \pi^{d})=0
\end{equation*}
as desired.

We now turn to the case $p_{21}>0$. We will modify the proof of Lemma \ref{GaussSumSum} to compute the sum 
\begin{equation*} 
\sum_{\deg f_2=d} \res{\pi}{f_2}_{\chi}^{-p_{21}} g_{\xi \chi^{M_{22}}}(\pi^{n_{21}}, f_2) = \sum_{\deg f_2=d} \res{\pi}{f_2}_{\chi}^{M_{21}} g_{\xi \chi^{M_{22}}}(1, f_2).
\end{equation*}
This equality is due to the twisted multiplicativity of the Gauss sum and the formula $\chi^{-p_{21}}(\xi \chi^{M_{22}})^{-n_{21}} = \chi^{M_{21}}$ of \eqref{CharacterEquality}.

If $n_2 \nmid d$ we use Lemma \ref{GaussSumEval} to rewrite this sum as
\begin{equation*}
g_{(\xi \chi^{M_{22}})^d} \sum_{\deg f_2=d} \sum_{\deg \nu = d-1} \res{\pi}{f_2}_{\chi}^{M_{21}} \res{\nu}{f_2}_{\xi \chi^{M_{22}}}.
\end{equation*}
Letting $\nu=\pi^e \nu^{(\pi)}$ with $\pi \nmid \nu^{(\pi)}$, we have
\begin{equation*}
g_{(\xi \chi^{M_{22}})^d} \sum_{\deg f_2=d} \sum_{\deg \nu = d-1} \res{\pi}{f_2}_{\xi^e \chi^{M_{21}+eM_{22}}} \res{\nu^{(\pi)}}{f_2}_{\xi \chi^{M_{22}}}.
\end{equation*}
The character $\xi^e \chi^{M_{21}+e M_{22}}$ cannot be trivial, as this would imply that $ M_{21} +e(M_{22}+n/2) \equiv 0 \bmod n$, contradicting our assumption that $p_{21} \neq 0$. Then the polynomials $f_2$ with fixed residue modulo $\nu^{(\pi)}$ are equidistributed modulo $\pi$ by the Chinese remainder theorem, so the $f_2$ sum vanishes.

Similarly, if $n_2|d$, we use Lemma \ref{GaussSumEval} to rewrite the sum as 
\begin{equation*} 
\sum_{\substack{\deg f_2 = d \\ f_2=f_0^{n_2}}} \res{\pi}{f_2}_{\chi}^{M_{21}} \phi(f) - q \sum_{\deg f = d} \sum_{\deg \nu = d-1} \res{\pi}{f}_{\chi}^{M_{21}} \res{\nu}{f_2}_{\xi \chi^{M_{22}}}.
\end{equation*}
The double sum part of this expression vanishes for the same reason as above, leaving
\begin{equation*} 
\sum_{\deg f_0 = d/n_2} \res{\pi}{f_0}_{\chi}^{n_2 M_{21}} q^{d-d/n_2} \phi(f_0).
\end{equation*}
The character $\chi^{n_2 M_{21}}$ cannot be trivial, as this would imply that $n | n_2 M_{21}$, which implies $\gcd(n, M_{22}+n/2)|M_{21}$, again contradicting our assumption that $p_{21} \neq 0$. We rewrite the sum as
\begin{equation*} 
\sum_{\deg f_0 = d/n_2} \res{\pi}{f_0}_{\chi}^{n_2 M_{21}} q^{d-d/n_2} \sum_{\substack{\nu \bmod f_0 \\ \gcd(\nu, f_0)=1}} 1
\end{equation*}
and interchange summation. For any fixed $\nu$ of degree less than $d/n_2$, the polynomials $f_0$ coprime to $\nu$ are equidistributed modulo $\pi$ by the Chinese remainder theorem, so the sum vanishes. In the special case $d=0$, the sum is $1$.

We conclude that if $p_{21}>0$,
\begin{equation*} 
\sum_{\deg f_2=d} \res{\pi}{f_2}_{\chi}^{-p_{21}} g_{\xi \chi^{M_{22}}}(\pi^{n_{21}}, f_2) = \left\lbrace \begin{array}{cc} 1 & d=0 \\ 0 & d>0 \end{array}\right. .
\end{equation*}

By an interchange of summation, we obtain
\begin{equation*}
\begin{split}
&\sum_{\deg f_2 = d} \res{\pi}{f_2}_{\chi}^{-p_{21}} \sum_{u^{n_2}|f_2} q^{(n_2-1)\deg u} g_{\xi \chi^{M_{22}}}(\pi^{n_{21}}, f_2/u^{n_2}) \\
&= \sum_{j=0}^{\lfloor d/n_2 \rfloor} q^{j(n_2-1)} \sum_{\deg u = j}  \sum_{\deg f_2 = d-jn_2} \res{\pi}{u^{n_2}f_2}_{\chi}^{-p_{21}} g_{\xi \chi^{M_{22}}}(\pi^{n_{21}}, f_2).
\end{split}
\end{equation*}
The character $\chi^{-p_{21}n_2}$ cannot be trivial, as this would imply $n| -p_{21}n_2$, which in turn implies $\gcd(n, M_{22}+n/2) | p_{21}$, contradicting our assumption that $p_{21}>0$. Therefore the $u$ sum vanishes unless $j=0$, leaving the previous formula of $1$ if $d=0$, $0$ if $d>0$. Thus
\begin{equation*}
\sum_{\deg f_1 = 1} \sum_{\deg f_2=d} \bar{a}(f_1, f_2)=\left\lbrace \begin{array}{cc} q & d=0 \\ 0 & d>0 .\end{array} \right.
\end{equation*}
The rest of the proof is the same as in the $p_{21}=0$, $n_{21}=n_2-1$ case.

\end{proof}

\begin{proof}[Proof of Proposition \ref{PropCoeff4}]
Using Proposition \ref{PropTwistedMult}, it suffices to check formula \eqref{Coeff4} when $f_1, \ldots f_r$ are all powers of the same prime $\pi$. Since $f_1\cdots f_{r-1}$ is squarefree, this means evaluating either $a(1, \ldots 1, \pi^d)$ or $a(1, \ldots \pi, \ldots 1, \pi^d)$. If some $f_i=1$, then the axiomatic coefficient $a(f_1, \ldots f_r)$ matches the coefficient with $f_i$ removed and the $i$th row and column deleted from the matrix $M$; this follows from the uniqueness of coefficients satisfying the axioms. Thus in the case of  $a(1, \ldots 1, \pi^d)$, the evaluation follows from Proposition \ref{PropCoeff1}. In the case of $a(1, \ldots \pi, \ldots 1, \pi^d)$, it follows from Proposition \ref{PropCoeff2}. 
\end{proof}

\section{Kubota Functional Equation} \label{SectionKubota}
\subsection{Single Variable Functional Equation}

Now we are ready to prove the functional equations of axiomatic multiple Dirichlet series. To find these functional equations, we use single-variable subseries with single-variable functional equations. The setup remains the same: $q$ is an odd prime, $\chi$ is a character of even order $n$ and $M=(M_{ij})$ is an $r \times r$ integer symmetric matrix, but for this section we make the assumption that $p_{ri}=0$, i.e. $\gcd(n, M_{rr}+n/2) | M_{ri}$, for all $i$. 

We also make the assumption that $q\equiv 1 \bmod 4$ in order to match the Kubota $L$-series studied in \cite{Hoffstein} and \cite{Patterson}. Hoffstein makes this assumption to simplify his calculations. Patterson does not make it explicitly, but he cites Hoffstein for the functional equation of the Kubota $L$-series. This assumption means $\xi(-1)=1$, so it removes the powers of $\xi(-1)$ from formulae \eqref{Coeff4}, \eqref{Coeff1}, \eqref{Coeff2}. 

Let $a(f_1, \ldots f_r; q, \chi, M)$ be axiomatic multiple Dirichlet series coefficients. Fix $\vec{f}=(f_1, \ldots f_{r-1})$ and let
\begin{align}
&D(x, \vec{f}, k; q, \chi, M) = \sum_{\substack{f_r \in \F_q[T]^+ \\ \deg f_r \equiv k \bmod n_r}} a(\vec{f}, f_r; q, \chi, M) x^{\deg f_r} \\
&D_\pi(x, \vec{f}, k; q, \chi, M) = \sum_{\substack{d\geq 0 \\ d \equiv k \bmod n_r}} a(\vec{f}, \pi^d; q, \chi, M) x^{d \deg \pi}
\end{align}
for $k \bmod n_r$ and a prime $\pi \in \F_q[T]^+$. We will often factor $f\in \F_q[T]^+$ as $f^{(\pi)} f_\pi$, where $f^{(\pi)}$ is coprime to $\pi$ and $f_\pi$ is a power of $\pi$. We also extend this notation to vectors $\vec{f}_\pi$ and $\vec{f}^{(\pi)}$. Since the Gauss sum $g_{\xi \chi^{M_{rr}}}$ appears frequently throughout this section, we abbreviate it as $g$. Set $F=f_1^{n_{r1}}\cdots f_{r-1}^{n_{r \, (r-1)}}$, as in the previous section. Again, we caution the reader that $F$ will be redefined in the next section.

Assume that $f_1\cdots f_{r-1}$ is squarefree. Let
\begin{equation*}
\varepsilon= \prod_{i=1}^{r-1} \res{f_i'}{f_i}_{\chi}^{M_{ii}} \, \prod_{i=1}^{r-2}\prod_{j=i+1}^{r-1} \res{f_i}{f_j}_{\chi}^{M_{ij}}
\end{equation*}
as in equation \eqref{GlobalRoot}. Proposition \ref{PropCoeff4} implies that we can factor $D(x, \vec{f}, k)$ as 
\begin{equation}\label{GlobalKubotaD}
\varepsilon \left( 1-\frac{q^{n_r} x^{n_r}}{g^{n_r}} \right)^{-1}\left( \sum_{\substack{f_r \in \F_q[T]^+ \\ \deg f_r \equiv k \bmod n_r}}  \frac{g_{\xi \chi^{M_{rr}}}(F, f_r) x^{\deg f_r} }{g^{\deg f_r}} \right).
\end{equation}
This expression is substantially simplified by our $q \equiv 1 \bmod 4$ assumption. The latter factor is Kubota's $L$-series, analyzed in \cite{Hoffstein} and \cite{Patterson} in the rational function field case. 

For any fixed $\vec{f}$ and prime $\pi$, let
\begin{equation}\label{LocalRoot}
\varepsilon_\pi=\prod_{i=1}^{r-1} \res{f_i^{(\pi)}}{(f_i)_\pi}_{\chi}^{M_{ii}} \res{(f_i)_\pi}{f_i^{(\pi)}}_{\chi}^{M_{ii}} \prod_{i=1}^{r-2}\prod_{j=1+i}^{r-1}\res{f_i^{(\pi)}}{(f_j)_\pi}_{\chi}^{M_{ij}} \res{(f_i)_\pi}{f_j^{(\pi)}}_{\chi}^{M_{ij}}.
\end{equation}
By twisted multiplicativity, we have
\begin{equation}\label{LocalSimplification}
D_{\pi}(x, \vec{f}, k) = \varepsilon_\pi a(\vec{f}^{(\pi)}, 1) \res{F^{(\pi)}}{\pi^k}_{\xi \chi^{M_{rr}}}^{-1} D_{\pi}(x, \vec{f}_\pi, k).
\end{equation}
Since $f_1 \cdots f_{r-1}$ has at most one factor of $\pi$, and each $n_{ri}<n_r$, we have $v_{\pi} F<n_r$. If $v_{\pi} F<n_r-1$, formulas \eqref{Coeff1Prime}, \eqref{Coeff2Prime} imply:
\begin{equation} \label{LocalKubotaD1}
\begin{split}
D_\pi(x, \vec{f}_\pi, k)=&\left(1 -\frac{q^{(n_r-1)\deg \pi}x^{n_r \deg \pi}}{g^{n_r \deg \pi}}\right)^{-1} \\
&\cdot \left\lbrace \begin{array}{cc} 1 & k \equiv 0 \bmod n_r \\ \frac{q^{v_{\pi} F \deg \pi} g_{(\xi \chi^{M_{rr}})^{v_{\pi} F+1}}(1, \pi)x^{(v_{\pi} F+1)\deg \pi}}{g^{(v_{\pi} F+1)\deg \pi}} & k \equiv v_{\pi} F+1 \bmod n_r \\ 0 & k \not\equiv 0, v_{\pi} F+1 \bmod n_r \end{array}\right. .
\end{split}
\end{equation}
If $v_{\pi}(F)=n_r-1$, formula \eqref{Coeff3Prime} implies:
\begin{equation}\label{LocalKubotaD2}
D_\pi(x, \vec{f}_\pi, k) = \left\lbrace \begin{array}{cc} 1 & k \equiv 0 \bmod n_r \\ 0 & k \not\equiv 0 \bmod n_r \end{array}\right. .
\end{equation}

The functional equations of $D(x, \vec{f}, k)$ for $f_1\cdots f_r$ squarefree, and $D_\pi(x, \vec{f}_\pi, k)$ for $v_{\pi} (f_1\cdots f_r) \leq 1$ give the base case for our inductive proof. 

\begin{prop} \label{PropGlobalFE}
For $f_1\cdots f_{r-1}$ squarefree, $D(x, \vec{f}, k)$ is a rational function of $x$ with denominator dividing $1-q\left(\frac{q x}{g}\right)^{n_r}$. We have the vector functional equation:
\begin{equation} \label{GlobalFE}
\left( D(x, \vec{f}, k) \right)_k = \left(\frac{qx}{g}\right)^{\deg F} \left( \Gamma_{k, \ell}(x, \deg F) \right)_{k, \ell} \left( D\left(\frac{g^2}{q^2x}, \vec{f}, \ell\right) \right)_{\ell}
\end{equation}
where $\left( \Gamma_{k, \ell}(x, \deg F) \right)_{k, \ell}$ is the $n_r \times n_r$ scattering matrix with
\begin{equation*}
\Gamma_{k,k}(x, \deg F) = \left(\frac{qx}{g}\right)^{1 - (\deg F +1-2k)\% n_r} \frac{1-q}{1-q\left(\frac{q x}{g}\right)^{n_r}}
\end{equation*}
\begin{equation*}
\begin{split}
\Gamma_{k,\ell}(x, \deg F)=&(\xi \chi^{M_{rr}})^{\ell(1+\deg F)}(-1) g_{(\xi \chi^{M_{rr}})^{2k-\deg F -1}} \left(\frac{qx}{g}\right) \frac{1-\left(\frac{qx}{g}\right)^{-n_r}}{1-q\left(\frac{qx}{g}\right)^{n_r}} 
\end{split}
\end{equation*}
for $\ell \equiv 1+\deg F -k \bmod n_r$, and all other entries equal to $0$. In addition, there are special entries when $k \equiv \ell \equiv 1+\deg F -k \bmod n_r$. In this case 
\begin{equation*}
\Gamma_{k,\ell}(x, \deg F)=\left(\frac{qx}{g}\right)^{1-n_r}.
\end{equation*}
\end{prop}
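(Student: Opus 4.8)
The plan is to deduce Proposition~\ref{PropGlobalFE} directly from the single-variable functional equation of the Kubota $L$-series in the rational function field, which is available from \cite{Hoffstein,Patterson}; this is exactly why the statement appears here as the base case for the later induction rather than being proved by the inductive machinery itself. Write $\psi=\xi\chi^{M_{rr}}$, which has order exactly $n_r$ by \eqref{ni}. Equation~\eqref{GlobalKubotaD} already exhibits $D(x,\vec f,k)$, for $f_1\cdots f_{r-1}$ squarefree, as the product of the $x$-independent constant $\varepsilon$, the elementary factor $\bigl(1-q^{n_r}x^{n_r}/g^{n_r}\bigr)^{-1}$, and the ``Kubota piece''
\begin{equation*}
\Psi_k(x)=\sum_{\substack{f_r\in\F_q[T]^+\\ \deg f_r\equiv k\bmod n_r}} g_{\psi}(F,f_r)\,(x/g)^{\deg f_r}.
\end{equation*}
Since $\varepsilon$ is inert under $x\mapsto g^2/(q^2x)$ and $1-q^{n_r}x^{n_r}/g^{n_r}$ is sent to $1-(qx/g)^{-n_r}$ by this substitution, it suffices to prove a vector functional equation for $(\Psi_k(x))_k$ under $x\mapsto g^2/(q^2x)$ with an explicit $n_r\times n_r$ scattering matrix and then reattach those two factors.

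\textbf{Reduction to the twisted Kubota functional equation.} After the harmless change of variable $y=x/g$ — under which $x\mapsto g^2/(q^2x)$ becomes $y\mapsto 1/(q^2y)$ — the vector $(\Psi_k)_k$ is the Kubota $L$-series $\sum_{f_r}g_\psi(F,f_r)y^{\deg f_r}$, twisted by the fixed polynomial $F$ and broken up according to $\deg f_r\bmod n_r$. For $\gcd(F,f_r)=1$, Lemma~\ref{GaussSumTwistedMult} gives $g_\psi(F,f_r)=\res{F}{f_r}_\psi^{-1}g_\psi(1,f_r)$, so this is a Dirichlet twist of the untwisted Kubota series by the character $\res{F}{\,\cdot\,}_\psi^{-1}$, whose conductor divides $F$; the primes dividing $F$ are absorbed by the same Gauss-sum multiplicativity together with the prime-power values $g_\psi(F,\pi^e)$. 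The functional equation of the untwisted series, and hence of its twist, is the content of \cite{Hoffstein,Patterson}: an identity relating $\sum_{f_r}g_\psi(F,f_r)y^{\deg f_r}$ to $\sum_{f_r}g_\psi(F,f_r)(q^2y)^{-\deg f_r}$ with an explicit power of $qy$ and an explicit matrix of Gauss-sum factors. Keeping track of $\deg f_r\bmod n_r$ upgrades this scalar identity to the vector one; and because the functional equation reverses degrees (a term of degree $j$ matches contributions of degree $\equiv\deg F+1-j$, the shift encoding the conductor degree of the twist and the normalizing factor $g^{-\deg f_r}$), the class $k$ is mapped into the class $\ell\equiv 1+\deg F-k\bmod n_r$. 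This fixes the support of the scattering matrix.

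\textbf{Identifying the entries and reassembling.} The two families of entries correspond to the two parts of $g_\psi(1,f_r)$ in Lemma~\ref{GaussSumEval}: the contribution of $n_r$-th power $f_r$ (carrying the large term $\phi(f_r)$) produces, after summation, the pole at $q(qx/g)^{n_r}=1$ and the diagonal entries $\Gamma_{k,k}$, whose power of $qx/g$ records the residue-class offset $(\deg F+1-2k)\%n_r$; the ``primitive'' contribution produces the off-diagonal entries at $\ell\equiv 1+\deg F-k$, carrying the finite-field Gauss sum $g_{\psi^{2k-\deg F-1}}$ (exponent $k-\ell\bmod n_r$, the usual shift for an order-$n_r$ twist) and the root-number sign $\psi^{\ell(1+\deg F)}(-1)$. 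The precise normalizing constants, in particular the $\tfrac{1-q}{1-q(qx/g)^{n_r}}$ and $\tfrac{1-(qx/g)^{-n_r}}{1-q(qx/g)^{n_r}}$, are obtained by passing between the function-field Gauss sums $g_\psi(1,f_r)$ and the finite-field $g$ via Lemma~\ref{GaussSumLifting}, the hypothesis $q\equiv 1\bmod 4$ removing the powers of $\xi(-1)$; when $2k\equiv\deg F+1\bmod n_r$ the diagonal class coincides with its reflection, the two entries merge, and a short direct computation gives the special value $(qx/g)^{1-n_r}$. Finally, multiplying the functional equation for $(\Psi_k)_k$ by $\varepsilon$ and by $\bigl(1-q^{n_r}x^{n_r}/g^{n_r}\bigr)^{-1}$, and inserting $1-(qx/g)^{-n_r}$ for the value of the latter factor at the dual point, yields \eqref{GlobalFE} with the stated $\Gamma_{k,\ell}(x,\deg F)$. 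The rationality claim then follows from \eqref{GlobalKubotaD}: the Kubota series is a rational function of $y$ with a single simple pole, at $q(qy)^{n_r}=1$ in our normalization, and the elementary factor $\bigl(1-q^{n_r}x^{n_r}/g^{n_r}\bigr)^{-1}$ is cancelled by the corresponding zero of $\Psi_k$, so the denominator of $D(x,\vec f,k)$ divides $1-q(qx/g)^{n_r}$.

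\textbf{Main obstacle.} The real work is the entry computation: the Kubota functional equation in the literature is written in a normalization adapted to number-theoretic conventions and is not split along $\deg f_r\bmod n_r$, so extracting the exact scattering matrix — the powers of $qx/g$, the Gauss-sum exponents $2k-\deg F-1$, the sign $\psi^{\ell(1+\deg F)}(-1)$, the constant $\tfrac{1-q}{1-q(qx/g)^{n_r}}$, and the collision entry $(qx/g)^{1-n_r}$ — demands careful bookkeeping of the twist by $F$ (including the non-coprime primes dividing $F$), of the normalizing weight $g^{-\deg f_r}$, and of the reciprocity signs, which is precisely the kind of calculation that becomes mechanical only once the final shape is in hand.
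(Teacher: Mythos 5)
Your proposal is correct and follows essentially the same route as the paper: the paper also deduces both the rationality statement and the functional equation directly from Patterson's results on Kubota $L$-series (citing the rationality property and the functional equation at specific pages of \cite{Patterson}), using the factorization \eqref{GlobalKubotaD} and then rewriting the scalar identity in vector form, with the special entries arising from merging the two terms when $k\equiv\ell\equiv 1+\deg F-k\bmod n_r$. The only point worth noting is the normalization discrepancy you flag at the end, which the paper also addresses explicitly: Patterson's additive character differs from the one used here, so his Gauss sum is $\chi(-1)^{\deg c}$ times theirs, accounting for the power of $(\xi\chi^{M_{rr}})(-1)$ in the off-diagonal entries.
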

\begin{proof}
The rational function statement follows from a general property of Kubota $L$-series \cite[bottom of page 245]{Patterson}. The functional equations of the Kubota $L$-series \cite[top of page 251]{Patterson} imply, for all $k$:
\begin{equation}\label{KubotaFE}
\begin{split}
 &D(x, \vec{f}, k) = \left(\frac{qx}{g}\right)^{\deg F + 1 - (\deg F +1-2k)\% n_r} \frac{1-q}{1-q\left(\frac{q x}{g}\right)^{n_r}} D\left(\frac{g^2}{q^2x}, \vec{f}, k\right) \\
&+(\xi \chi^{M_{rr}})^{\ell(1+\deg F)}(-1) g_{(\xi \chi^{M_{rr}})^{2k-\deg F -1}} \left(\frac{qx}{g}\right)^{\deg F + 1}  \frac{1-\left(\frac{qx}{g}\right)^{-n_r}}{1-q\left(\frac{qx}{g}\right)^{n_r}} D\left(\frac{g^2}{q^2x}, \vec{f}, \ell \right)
\end{split}
\end{equation}
where $\ell=1+\deg F -k$. Rewriting these functional equations in vector form gives the desired statement. The special entries of the scattering matrix come from adding the two terms together. 
\end{proof}

Note that there is a slight discrepancy between the functional equation given in \cite{Patterson} and the functional equation \eqref{KubotaFE} as written: the power of $(\xi\chi^{M_{rr}})(-1)$ does not match. This is because of a subtlety in the definition of the Kubota $L$-series. Patterson uses a different additive character from ours, so that his function field Gauss sum $g(r, \varepsilon, c)$ (with underlying character $\chi$ of order $n$) is our $\chi(-1)^{\deg c} g_{\chi}(r, c)$. This leads to a small difference in the functional equation. 

Importantly for us, $\Gamma_{k, \ell}(x, \deg F)$ depends on $\deg F \bmod n_r$, and the full functional equation depends on $\deg F$, but not on $f_1, \ldots f_{r-1}$ themselves. 

The local generating series $D_{\pi}(x, \vec{f}, k)$ satisfy corresponding local functional equations. 

\begin{prop}\label{PropLocalFE}
For $v_{\pi}(f_1\cdots f_{r-1}) \leq 1$, $D_{\pi}(x, \vec{f}, k)$ is a rational function of $x$ with denominator dividing $1-q^{-\deg \pi} \left( \frac{qx}{g}\right)^{n_r \deg \pi}$. We have the vector functional equation:
\begin{equation} \label{LocalFE}
\left( D_{\pi}(x, \vec{f}, k) \right)_k = \left(\frac{qx}{g} \right)^{v_{\pi}(F) \deg \pi} \left( \res{F^{(\pi)}}{\pi}_{\xi\chi^{M_{rr}}}^{k+\ell} \Gamma_{\pi, k, \ell}(x, v_{\pi} F) \right)_{k, \ell} \left( D_{\pi}\left(\frac{g^2}{q^2x}, \vec{f}, \ell \right) \right)_{\ell}
\end{equation}
where $\left( \Gamma_{\pi, k, \ell}(x, v_{\pi} F) \right)_{k, \ell}$ is the $n_r \times n_r$ scattering matrix with
\begin{equation*}
\begin{split}
\Gamma_{\pi, k,k}(x, v_{\pi} F) =& \left( \frac{qx}{g}\right)^{(1-(v_{\pi} F +1-2k)\% n_r)\deg \pi} \frac{1-q^{-\deg \pi}}{1-q^{-\deg \pi}\left( \frac{q x}{g}\right)^{n_r\deg \pi}}
\end{split}
\end{equation*}
\begin{equation*}
\begin{split}
\Gamma_{\pi, k,\ell}(x, v_{\pi} F) =&(\xi \chi^{M_{rr}})^{\ell(1+v_{\pi} F))\deg \pi}(-1)\frac{g_{(\xi \chi^{M_{rr}})^{2k-v_{\pi} F -1}}(1, \pi)}{q^{\deg \pi}}  \left( \frac{qx}{g}\right)^{\deg \pi} \frac{1-\left(\frac{q x}{g}\right)^{-n_r\deg \pi}}{1-q^{-\deg \pi}\left(\frac{q x}{g}\right)^{n_r\deg \pi}}
 \end{split}
\end{equation*}for $\ell \equiv 1+v_{\pi} F -k \bmod n_r$, and all other entries equal to $0$. In addition, there are special entries in the matrix when $k \equiv \ell \equiv 1+v_\pi F -k \bmod n_r$. In this case 
\begin{equation*}
\Gamma_{\pi, k,\ell}(x, v_{\pi} F)=\left(\frac{qx}{g}\right)^{(1-n_r)\deg \pi}.
\end{equation*}
\end{prop}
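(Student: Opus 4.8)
The plan is to prove Proposition \ref{PropLocalFE} by direct computation from the explicit formulas already assembled above, mirroring -- but being far simpler than -- the derivation of the global Proposition \ref{PropGlobalFE}. The first step is the twisted-multiplicativity reduction \eqref{LocalSimplification}, which writes $D_\pi(x,\vec f,k)=\varepsilon_\pi\,a(\vec f^{(\pi)},1)\,\res{F^{(\pi)}}{\pi^k}_{\xi\chi^{M_{rr}}}^{-1}D_\pi(x,\vec f_\pi,k)$, so that everything is governed by the ``reduced'' series $D_\pi(x,\vec f_\pi,k)$ in which every entry of $\vec f$ is a power of $\pi$ of valuation at most one. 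Since $\varepsilon_\pi\,a(\vec f^{(\pi)},1)$ is independent of $k$ and of $x$, and the remaining factor $\res{F^{(\pi)}}{\pi^k}^{-1}_{\xi\chi^{M_{rr}}}=\res{F^{(\pi)}}{\pi}^{-k}_{\xi\chi^{M_{rr}}}$ is the $k$th power of a fixed root of unity, transporting a functional equation for $D_\pi(x,\vec f_\pi,\cdot)$ through this diagonal rescaling is exactly what produces the residue-symbol weights $\res{F^{(\pi)}}{\pi}^{k+\ell}_{\xi\chi^{M_{rr}}}$ attached to the scattering matrix in \eqref{LocalFE}. The rational-function claim, with denominator dividing $1-q^{-\deg\pi}(qx/g)^{n_r\deg\pi}$, is then immediate from \eqref{LocalKubotaD1} and trivial from \eqref{LocalKubotaD2}.

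Second, I would establish the reduced functional equation by brute force. Set $y=(qx/g)^{\deg\pi}$ and observe that the involution $x\mapsto g^2/(q^2x)$ becomes $y\mapsto y^{-1}$, so that \eqref{LocalKubotaD1} presents $D_\pi(x,\vec f_\pi,k)$ as an explicit rational function of $y$ that vanishes unless $k\equiv 0$ or $k\equiv v_\pi F+1\bmod n_r$. Hence the functional equation is really a $2\times2$ (or, when $v_\pi F=n_r-1$, a $1\times1$) linear identity among $D_\pi(x,\vec f_\pi,0)$, $D_\pi(x,\vec f_\pi,v_\pi F+1)$, $D_\pi(g^2/q^2x,\vec f_\pi,0)$, $D_\pi(g^2/q^2x,\vec f_\pi,v_\pi F+1)$, and one simply reads off the entries of the scattering matrix by clearing denominators. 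Concretely: the pole of $D_\pi(x,\vec f_\pi,k)$ at $(qx/g)^{n_r\deg\pi}=q^{\deg\pi}$ must be balanced against the poles of the right-hand side (which, after the substitution $y\mapsto y^{-1}$, sit at $(qx/g)^{n_r\deg\pi}=q^{-\deg\pi}$), and this forces the constant numerator $1-q^{-\deg\pi}$ in the diagonal entry and the factor $1-(qx/g)^{-n_r\deg\pi}$ off the diagonal, while the powers of $qx/g$ are pinned down by degree bookkeeping, giving the exponents $(1-(v_\pi F+1-2k)\%n_r)\deg\pi$. The collision case $v_\pi F=n_r-1$, where $0\equiv v_\pi F+1\bmod n_r$, is treated separately using \eqref{LocalKubotaD2}: there $D_\pi(x,\vec f_\pi,\cdot)$ is a constant, so the verification is trivial and the lone entry $(qx/g)^{(1-n_r)\deg\pi}$ drops out.

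Third, I would reconcile the Gauss-sum factors. The Gauss sum in \eqref{LocalKubotaD1} is $g_{(\xi\chi^{M_{rr}})^{v_\pi F+1}}(1,\pi)$, whereas the off-diagonal matrix entry carries $g_{(\xi\chi^{M_{rr}})^{2k-v_\pi F-1}}(1,\pi)$; on the support these characters are either equal (when $k\equiv v_\pi F+1$, since $2k-v_\pi F-1\equiv v_\pi F+1\bmod n_r$) or mutually inverse (when $k\equiv 0$), and the passage between them uses the reciprocal-Gauss-sum relation $g_\psi(1,\pi)\,g_{\bar\psi}(1,\pi)=\psi(-1)\,q^{\deg\pi}$ together with the standing hypothesis $q\equiv1\bmod4$ (so that $\xi(-1)=1$, as already exploited to strip the powers of $\xi(-1)$ out of \eqref{Coeff4}, \eqref{Coeff1}, \eqref{Coeff2}). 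The leftover sign is precisely $(\xi\chi^{M_{rr}})^{\ell(1+v_\pi F)\deg\pi}(-1)$, using that $(\xi\chi^{M_{rr}})(-1)=\pm1$ and that $(1+v_\pi F)(2+v_\pi F)$ is even. For $\pi$ of higher degree one additionally invokes Lemma \ref{GaussSumLifting} (Hasse--Davenport) to identify $g_\psi(1,\pi)$ with the classical Gauss sum of the induced character.

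I expect the main obstacle to be purely bookkeeping rather than conceptual: keeping the many roots of unity consistent -- the interplay of $\res{F^{(\pi)}}{\pi}_{\xi\chi^{M_{rr}}}$, the character-power identities coming from \eqref{nij} and the assumption $p_{ri}=0$, the reciprocity law, and Patterson's differing additive-character convention (flagged in the remark after \eqref{KubotaFE}) -- so that the weights $\res{F^{(\pi)}}{\pi}^{k+\ell}_{\xi\chi^{M_{rr}}}$ and the exponents $1-(v_\pi F+1-2k)\%n_r$ emerge in exactly the stated normalization. There is no analytic difficulty: once \eqref{LocalKubotaD1} is in hand the identity is a finite rational-function computation, and the only external input is the elementary reciprocal-Gauss-sum relation.
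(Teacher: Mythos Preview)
Your proposal is correct and takes essentially the same approach as the paper: the paper's own proof consists of the single sentence ``Proposition \ref{PropLocalFE} can be checked by direct computation from the evaluations of $D_{\pi}(x, \vec{f}, k)$ above,'' and your sketch is a detailed roadmap for exactly that computation, using \eqref{LocalSimplification} to reduce to $\vec f_\pi$ and then verifying the identity from the explicit formulas \eqref{LocalKubotaD1}--\eqref{LocalKubotaD2}. Your bookkeeping observations (the diagonal rescaling producing the $\res{F^{(\pi)}}{\pi}^{k+\ell}$ weights, the reciprocal-Gauss-sum reconciliation, and the identification of the nontrivial special-entry case with $v_\pi F=n_r-1$) are all sound.
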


Proposition \ref{PropLocalFE} can be checked by direct computation from the evaluations of $D_{\pi}(x, \vec{f}, k)$ above. 

We remark that the local scattering matrix $\left( \Gamma_{\pi, k, \ell}(x, d)\right)_{k, \ell}$ is obtained from the global scattering matrix $\left( \Gamma_{k, \ell}(x, d) \right)_{k, \ell}$ by the same change of variables which relates the axiomatic local and global multiple Dirichlet series coefficients. The fact that $x \mapsto \frac{g^2}{q^2 x}$ commutes with this change of variables is crucial to the proof below. 

The next Lemma gives the inverse of the scattering matrix. This lemma is not used in the rest of this section, but is used in the proof of Theorem \ref{uniquely-determined}.

\begin{lemma}\label{scattering-matrix-inverse} We have the identity \[(\Gamma(x,K))^{-1}= \Gamma(\frac{g^2}{q^2x} ,K).\]\end{lemma}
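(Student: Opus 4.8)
The plan is a direct, block-by-block verification of the matrix identity $\Gamma(x,K)\,\Gamma\!\left(\tfrac{g^2}{q^2x},K\right)=\mathrm{Id}$, using the explicit entries of $\Gamma$ from Proposition~\ref{PropGlobalFE} (with $\deg F$ there playing the role of the parameter $K$). I would start with two structural reductions. Writing $t=\tfrac{qx}{g}$, the substitution $x\mapsto\tfrac{g^2}{q^2x}$ becomes the involution $t\mapsto t^{-1}$, since $\tfrac{q}{g}\cdot\tfrac{g^2}{q^2x}=\tfrac{g}{qx}=t^{-1}$. Moreover the only nonzero entries of $\Gamma(x,K)$ in row $k$ lie in columns $k$ and $\ell_k:=(1+K-k)\bmod n_r$; since $k\mapsto\ell_k$ is an involution of $\Z/n_r$, the matrix $\Gamma(x,K)$ is block diagonal for the partition of $\Z/n_r$ into orbits of this involution, and $t\mapsto t^{-1}$ preserves this block structure. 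Hence it is enough to prove $B(t)\,B(t^{-1})=\mathrm{Id}$ for each block $B$: the $1\times1$ blocks at the fixed points (where $2k\equiv K+1\bmod n_r$) and the $2\times2$ blocks on the two-element orbits $\{k,\ell_k\}$.

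The $1\times1$ case is immediate, since there the block is $\left(\tfrac{qx}{g}\right)^{1-n_r}=t^{1-n_r}$ and $t^{1-n_r}\cdot t^{-(1-n_r)}=1$. For a two-element orbit $\{k,\ell\}$ I would write $B(t)$ out explicitly in terms of $t$, of $u:=t^{n_r}$, of the exponent $m:=(K+1-2k)\%n_r$ (which satisfies $1\le m\le n_r-1$, so that $(K+1-2\ell)\%n_r=n_r-m$), and of the off-diagonal coefficients $c_k=(\xi\chi^{M_{rr}})^{\ell(1+K)}(-1)\,g_{\psi}$ and $c_\ell=(\xi\chi^{M_{rr}})^{k(1+K)}(-1)\,g_{\bar\psi}$, where $\psi=(\xi\chi^{M_{rr}})^{2k-K-1}$ is a nontrivial character of $\F_q^*$ and the Gauss-sum character in row $\ell$ is $(\xi\chi^{M_{rr}})^{2\ell-K-1}=\bar\psi$. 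Expanding $B(t)B(t^{-1})$ over the common denominator $(1-qu)(1-qu^{-1})$, each off-diagonal entry reduces to a scalar multiple of $(1-u)+(u-1)=0$ and so vanishes, and each diagonal entry reduces to $\bigl((1-q)^2+c_kc_\ell\,(2-u-u^{-1})\bigr)/\bigl((1-qu)(1-qu^{-1})\bigr)$.

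The one step that genuinely needs care is the evaluation $c_kc_\ell=q$, which I would get from the classical relation $g_\psi g_{\bar\psi}=\psi(-1)\,q$ for a nontrivial character $\psi$, together with a root-of-unity computation: since $\xi\chi^{M_{rr}}$ has order $n_r$ and $k+\ell\equiv K+1\bmod n_r$, one finds $c_kc_\ell=(\xi\chi^{M_{rr}})^{(1+K)^2+(2k-K-1)}(-1)\,q=(\xi\chi^{M_{rr}})^{K(K+1)+2k}(-1)\,q$, and here the exponent $K(K+1)+2k$ is even, so the sign is a square and equals $1$. (The section's standing assumption $q\equiv1\bmod4$, i.e.\ $\xi(-1)=1$, is what makes the $\Gamma$-entries of Proposition~\ref{PropGlobalFE} take the stated form in the first place.) Substituting $c_kc_\ell=q$ back in, one checks the elementary identity $(1-q)^2+q(2-u-u^{-1})=1+q^2-q(u+u^{-1})=(1-qu)(1-qu^{-1})$, so every diagonal entry of $B(t)B(t^{-1})$ equals $1$ and $B(t)B(t^{-1})=\mathrm{Id}$, completing the proof. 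I expect this sign computation to be the only real pitfall; the remaining algebra is routine bookkeeping of powers of $t$ and $u$.

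As a cross-check, the identity can also be seen structurally: composing the functional equation of Proposition~\ref{PropGlobalFE} with itself and using that $x\mapsto\tfrac{g^2}{q^2x}$ is an involution shows that $\Gamma(x,\deg F)\,\Gamma\!\left(\tfrac{g^2}{q^2x},\deg F\right)$ fixes every vector $\bigl(D(x,\vec f,\ell)\bigr)_\ell$. Promoting this to the stated matrix identity would require the additional spanning fact that these vectors generate $\C(x)^{n_r}$ as $\vec f$ ranges over tuples with $\deg F$ fixed modulo $n_r$; the block computation above avoids this, so I would take it as the actual proof.
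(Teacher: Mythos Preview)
Your proof is correct and follows essentially the same approach as the paper: both reduce to the block-diagonal structure (with $1\times1$ blocks at the fixed points of $k\mapsto 1+K-k$ and $2\times2$ blocks on the two-element orbits), handle the $1\times1$ blocks trivially, and resolve the $2\times2$ blocks via the Gauss-sum identity $g_\psi g_{\bar\psi}=\psi(-1)q$ together with a sign cancellation. The only cosmetic difference is that the paper computes the inverse of each $2\times2$ block via the adjugate/determinant formula and compares entrywise to $\Gamma(\tfrac{g^2}{q^2x},K)$, whereas you multiply $B(t)B(t^{-1})$ directly; your parity argument for the sign (the exponent $K(K+1)+2k$ is even) is in fact a bit cleaner than the paper's.
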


The formula should hold because, when we apply the functional equation twice, we transform a multiple Dirichlet series to itself. The lemma does not follow immediately from the functional equation \eqref{GlobalFE} because we lose information when we multiply by a vector. Instead we verify the lemma by a direct computation.

\begin{proof} 

The matrix $\Gamma(x,K)$ is conjugate under a permutation matrix to a block diagonal matrix with $2 \times 2 $ and $1\times 1$ blocks. The $1\times 1$ blocks have unique entry $\left(\frac{ qx}{g)}\right)^{1-n_i}$ with inverse $\left(\frac{ qx}{g}\right)^{n_i-1}$ which can be obtained from $\left(\frac{ qx}{g}\right)^{1-n_i}$ by the substitution $x \mapsto \frac{ g}{q^2x^2}$.

The two-by-two block consisting of the columns and rows $k,l$ where $k+l \equiv 1 + K \bmod n_i$ has determinant

$$ \left( \frac{qx}{g} \right)^{ 2- (K+1-2k) \% n_i - (K+1-2\ell)\% n_i}\frac{ (1-q)^2}{ \left( 1 -  \left(\frac{qx}{g}\right)^{n_i}\right)^2} $$ $$- (\xi \chi^{M_{ii}})^{ (k+\ell) (1+K)} (-1) g_{ (\xi \chi^{M_{ii}} )^{2k-K-1}}(1,T)  g_{ (\xi \chi^{M_{ii}} )^{2\ell-K-1}}(1,T)\left( \frac{qx}{g}\right)^2  \frac{ \left( 1- \left( \frac{qx}{g} \right)^{-n_i}\right)^2}{ \left(1 - q \left( \frac{qx}{g} \right)^{n_i} \right)^2 } $$

The exponent $(k+\ell) (1+K)$ simplifies to $(1+K)^2 \equiv 1+K \bmod 2$ while $g_{ (\xi \chi^{M_{ii}} )^{2k-K-1}}  g_{ (\xi \chi^{M_{ii}} )^{2\ell-K-1}}$ simplifies to $ q (\xi \chi^{M_{ii}})^{ k-\ell} (-1)$ and the characters cancel since $k-\ell \equiv k+\ell \equiv 1+K \bmod 2$. The expressions $K+1-2k$ and $K+1-2\ell$ are additive inverses mod $n_i$ and not both zero so taking $\% n_i$ and summing gives $n_i$. This gives the determinant 

$$ \left( \frac{qx}{g} \right)^{ 2- n_i}\frac{ (1-q)^2}{ \left( 1 -  \left(\frac{qx}{g}\right)^{n_i}\right)^2} - q\left( \frac{qx}{g}\right)^2  \frac{ \left( 1- \left( \frac{qx}{g} \right)^{-n_i}\right)^2}{ \left(1 - q \left( \frac{qx}{g} \right)^{n_i} \right)^2 } = \frac{  \left(\frac{qx}{g}\right)^{2-n_i} \left( 1 -q \left( \frac{qx}{g}\right)^{-n_i} \right)}{ \left(1 - q \left( \frac{qx}{g} \right)^{n_i} \right). }$$

Dividing the $k,k$ entry $ \left(\frac{qx}{g}\right)^{1 - (K +1-2k)\% n_i} \frac{1-q}{1-q\left(\frac{q x}{g}\right)^{n_i}}$ by that determinant, we obtain the $\ell,\ell$ entry of the inverse matrix
$$ \left(\frac{qx}{g}\right)^{n_i -1 - (K +1-2k)\% n_i} \frac{1-q}{1-q\left(\frac{q x}{g}\right)^{-n_i}}$$

The substitution $x \mapsto \frac{ g^2}{q^2x}$ applied to the $\ell,\ell$ entry  $ \left(\frac{qx}{g}\right)^{1 - (K +1-2l)\% n_i} \frac{1-q}{1-q\left(\frac{q x}{g}\right)^{n_i}}$ gives $$ \left(\frac{qx}{g}\right)^{((K+1-2\ell)\%n_i -1 } \frac{1-q}{1-q\left(\frac{q x}{g}\right)^{-n_i}}$$ and these are equal since $(K+1-2\ell)\%n_i  = n_i - (K +1-2k)\%n_i$. An identical argument works for the $k,k$ entry.

Dividing minus the $k,\ell$ entry $(\xi \chi^{M_{ii}})^{\ell (1+K)}(-1) g_{(\xi \chi^{M_{ii}})^{2k-K -1}}\left(\frac{qx}{g}\right) \frac{1-\left(\frac{qx}{g}\right)^{-n_i}}{1-q\left(\frac{qx}{g}\right)^{n_i}} $ by the determinant gives the $k,\ell$ entry of the inverse matrix

$$ - (\xi \chi^{M_{ii}})^{\ell (1+K)}(-1) g_{(\xi \chi^{M_{ii}})^{2k-K -1}} \left(\frac{qx}{g}\right)^{-1 +n_i}  \frac{1-\left(\frac{qx}{g}\right)^{-n_i}}{1-q\left(\frac{qx}{g}\right)^{-n_i}} $$

$$ =  (\xi \chi^{M_{ii}})^{\ell (1+K)}(-1) g_{(\xi \chi^{M_{ii}})^{2k-K -1}} \left(\frac{qx}{g}\right)^{-1 }  \frac{1-\left(\frac{qx}{g}\right)^{n_i}}{1-q\left(\frac{qx}{g}\right)^{-n_i}}$$
while the substitution $x \mapsto \frac{ g^2}{q^2x}$ applied to the $k,\ell$ entry gives
$$(\xi \chi^{M_{ii}})^{\ell (1+K)}(-1) g_{(\xi \chi^{M_{ii}})^{2k-K -1}} \left(\frac{qx}{g}\right)^{-1} \frac{1-\left(\frac{qx}{g}\right)^{n_i}}{1-q\left(\frac{qx}{g}\right)^{-n_i}} $$ and these are equal. An identical argument works for the $\ell,k$ entry. \end{proof}

The local scattering matrix $(\Gamma_{\pi, k, \ell}(x, K))_{k, \ell}$ has the same property. 

We will need some further notation to separate out the contribution of one prime $\pi$ to the series $D(x, \vec{f}, k)$. The part of $D$ coprime to $\pi$, with a twist by the character $\res{\pi^\ell}{*}_{\xi \chi^{M_{rr}}}$ for $\ell \bmod n_r$, is 
\begin{equation}
D^{(\pi)}(x, \vec{f}, k, \pi^\ell) = \sum_{\substack{ f_r \in \F_q[t]^+ \\ \deg f_r \equiv k \bmod n \\ \pi \nmid f_r}} a(\vec{f},f_r) \res{\pi^\ell}{f_r}_{\xi \chi^{M_{rr}}} x^{\deg f_r}.
\end{equation}

The following lemma gives a formula to combine the $\pi$-parts and $\pi$-coprime parts of the series $D$.

\begin{lemma}\label{LemmaFactorization}
For any prime $\pi$, we have the following equality of vectors:
\begin{equation} \label{Factorization}
\begin{split}
&\varepsilon_\pi \left( (\xi \chi^{M_{rr}})^{(k-\ell\deg \pi)(\ell \deg \pi)} (-1)  D^{(\pi)}(x, \vec{f}^{(\pi)}, k-\ell \deg \pi, \pi^{2\ell}F_\pi^{-1}) \right)_{k, \ell} \\
&\cdot \left( \res{F^{(\pi)}}{\pi^\ell}_{\xi \chi^{M_{rr}}}^{-1} D_\pi(x, \vec{f}_\pi, \ell) \right)_\ell \\
&= \left( D(x, \vec{f}, k) \right)_k
\end{split}
\end{equation}
where $\varepsilon_\pi$ is the $n$th root of unity defined in equation \eqref{LocalRoot}.
\end{lemma}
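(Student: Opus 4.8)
The plan is to expand the left-hand vector entry $D(x,\vec{f},k)=\sum_{\deg f_r\equiv k \bmod n_r}a(\vec{f},f_r)x^{\deg f_r}$ termwise, use the twisted multiplicativity axiom to peel off the $\pi$-part of each coefficient, and then reorganize the resulting double sum so that the $\pi$-power part and the $\pi$-coprime part separate. Concretely, write $f_i=f_i^{(\pi)}(f_i)_\pi$ for $i\le r-1$ and $f_r=f_r^{(\pi)}\pi^{d}$ with $d=v_\pi(f_r)$. Since $f_1^{(\pi)}\cdots f_r^{(\pi)}$ is prime to $\pi$ and $(f_1)_\pi\cdots (f_{r-1})_\pi\pi^{d}$ is a power of $\pi$, Axiom~\ref{Axiom1} (with the convention $(f_r)_\pi=\pi^{d}$) gives
\[
a(\vec{f},f_r)=a(\vec{f}^{(\pi)},f_r^{(\pi)})\,a(\vec{f}_\pi,\pi^{d})\prod_{i=1}^r\prod_{j=i}^r\res{f_i^{(\pi)}}{(f_j)_\pi}_\chi^{M_{ij}}\res{(f_i)_\pi}{f_j^{(\pi)}}_\chi^{M_{ij}}.
\]
I would then sort the residue symbols by the index pair $(i,j)$: the pairs with $i,j\le r-1$ assemble exactly into the root of unity $\varepsilon_\pi$ of \eqref{LocalRoot}; the pairs $(i,r)$ with $i\le r-1$ give, after using the standing hypothesis $p_{ri}=0$ (so $M_{ir}\equiv -n_{ri}(M_{rr}+\tfrac n2)\bmod n$, hence $\res{\,\cdot\,}{\,\cdot\,}_\chi^{M_{ir}}=\res{\,\cdot\,}{\,\cdot\,}_{\xi\chi^{M_{rr}}}^{-n_{ri}}$, together with $\prod_i(f_i^{(\pi)})^{n_{ri}}=F^{(\pi)}$ and $\prod_i((f_i)_\pi)^{n_{ri}}=F_\pi$), the factors $\res{F^{(\pi)}}{\pi^{d}}_{\xi\chi^{M_{rr}}}^{-1}$ and $\res{F_\pi}{f_r^{(\pi)}}_{\xi\chi^{M_{rr}}}^{-1}$; and the diagonal pair $(r,r)$ gives $\res{f_r^{(\pi)}}{\pi^{d}}_\chi^{M_{rr}}\res{\pi^{d}}{f_r^{(\pi)}}_\chi^{M_{rr}}$.

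Next I would apply $n$th power reciprocity to the $(r,r)$ term, turning it into $\chi(-1)^{M_{rr}(\deg f_r^{(\pi)})(d\deg\pi)}\res{\pi^{d}}{f_r^{(\pi)}}_\chi^{2M_{rr}}$, and rewrite $\res{\pi^{d}}{f_r^{(\pi)}}_\chi^{2M_{rr}}=\res{\pi^{2d}}{f_r^{(\pi)}}_{\xi\chi^{M_{rr}}}$ (valid since $\xi^2=1$, so $\res{\pi^{2d}}{f_r^{(\pi)}}_\xi=1$). Combining this with the factor $\res{F_\pi}{f_r^{(\pi)}}_{\xi\chi^{M_{rr}}}^{-1}$ and recalling $F=F^{(\pi)}F_\pi$ merges the $\pi$-coprime residue symbols into $\res{\pi^{2d}F_\pi^{-1}}{f_r^{(\pi)}}_{\xi\chi^{M_{rr}}}$, leaving an extra sign $\chi(-1)^{M_{rr}(\deg f_r^{(\pi)})(d\deg\pi)}=(\xi\chi^{M_{rr}})^{(\deg f_r^{(\pi)})(d\deg\pi)}(-1)$, using $\xi(-1)=1$ since $q\equiv1\bmod4$. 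Substituting back, writing $x^{\deg f_r}=x^{d\deg\pi}x^{\deg f_r^{(\pi)}}$, and grouping the $f_r$-sum according to $\ell:=d\bmod n_r$, the key observation is that every remaining $d$-dependence collapses to an $\ell$-dependence: $\res{F^{(\pi)}}{\pi^{d}}_{\xi\chi^{M_{rr}}}^{-1}=\res{F^{(\pi)}}{\pi^{\ell}}_{\xi\chi^{M_{rr}}}^{-1}$ and $\res{\pi^{2d}F_\pi^{-1}}{f_r^{(\pi)}}_{\xi\chi^{M_{rr}}}=\res{\pi^{2\ell}F_\pi^{-1}}{f_r^{(\pi)}}_{\xi\chi^{M_{rr}}}$ because $\xi\chi^{M_{rr}}$ has order $n_r$; the constraint $\deg f_r\equiv k\bmod n_r$ becomes $\deg f_r^{(\pi)}\equiv k-\ell\deg\pi\bmod n_r$; and the sign depends only on these residues mod $2$ when $n_r$ is even, and is trivial when $n_r$ is odd (since then $(\xi\chi^{M_{rr}})(-1)=1$). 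The inner sum over $\pi$-powers with $d\equiv\ell$ is $D_\pi(x,\vec{f}_\pi,\ell)$ and the inner sum over $\pi$-coprime $f_r^{(\pi)}$ is $D^{(\pi)}(x,\vec{f}^{(\pi)},k-\ell\deg\pi,\pi^{2\ell}F_\pi^{-1})$, so the whole expression is precisely the matrix--vector product in \eqref{Factorization}.

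The main obstacle is the residue-symbol bookkeeping, and in particular arranging that the sign produced by reciprocity on the $(r,r)$ term lands exactly as the prefactor $(\xi\chi^{M_{rr}})^{(k-\ell\deg\pi)(\ell\deg\pi)}(-1)$: this is where the assumption $q\equiv1\bmod4$ (giving $\xi(-1)=1$) and a short parity case split on $n_r$ are needed, and it is the one place where something other than pure reorganization happens. Matching the $i\le r-1$ symbols with $\varepsilon_\pi$, $F^{(\pi)}$ and $F_\pi$, reducing the $\xi\chi^{M_{rr}}$-valued symbols modulo $n_r$, and interchanging the order of summation are all routine.
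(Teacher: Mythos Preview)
Your proposal is correct and follows essentially the same route as the paper's proof: apply twisted multiplicativity to split each coefficient into its $\pi$-part and $\pi$-coprime part, use the character identity $\chi^{M_{ir}}=(\xi\chi^{M_{rr}})^{-n_{ri}}$ to package the cross-terms as $\res{F^{(\pi)}}{(f_r)_\pi}_{\xi\chi^{M_{rr}}}^{-1}$ and $\res{F_\pi}{f_r^{(\pi)}}_{\xi\chi^{M_{rr}}}^{-1}$, simplify the $(r,r)$ residue pair via reciprocity and the hypothesis $q\equiv 1\bmod 4$, and then observe that all $d$-dependence reduces modulo $n_r$. The paper records the same decomposition (its equation \eqref{RemovePrime}) and the same simplification of the $(r,r)$ term; your parity case split on $n_r$ is just an explicit unpacking of the paper's remark that the sign depends only on $v_\pi(f_r)\bmod n_r$ and $\deg f_r^{(\pi)}\bmod n_r$.
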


\begin{proof}
By twisted multiplicativity and the character identity $(\xi \chi^{M_{rr}})^{-n_{ri}} = \chi^{M_{ri}}$, 
\begin{equation} \label{RemovePrime}
\begin{split}
a(f_1, \ldots f_{r-1}, f_r) = & \varepsilon_\pi \res{f_r^{(\pi)}}{(f_r)_\pi}_{\chi}^{M_{rr}}\res{(f_r)_\pi}{f_r^{(\pi)}}_{\chi}^{M_{rr}} \\
&\res{F_\pi}{f_r^{(\pi)}}_{\xi \chi^{M_{rr}}}^{-1} a(f_1^{(\pi)}, \ldots f_r^{(\pi)}) \\
& \res{F^{(\pi)}}{(f_r)_\pi}_{\xi \chi^{M_{rr}}}^{-1} a((f_1)_\pi, \ldots (f_r)_\pi).
\end{split}
\end{equation}
Note that $\varepsilon_\pi$ is independent of $f_r$, the second line only depends on $f_r^{(\pi)}$, and the third line only depends on $(f_r)_\pi$. Using power reciprocity and the $q \equiv 1 \bmod 4$ assumption, the residues in the first line can be simplified to $(\xi \chi^{M_{rr}})^{(\deg f_r^{(\pi)})(\deg (f_r)_\pi)}(-1) \res{(f_r)_\pi}{f_r^{(\pi)}}^2_{\xi \chi^{M_{rr}}}$. This depends on both $(f_r)_\pi$ and $f_r^{(\pi)}$, but really only on $v_{\pi}(f_r) \bmod n_r$, $\res{\pi}{f_r^{(\pi)}}_{\xi \chi^{M_{rr}}}$, and $\deg f_r^{(\pi)} \bmod n_r$.

Therefore 
\begin{equation*}
\begin{split}
&\varepsilon_\pi \sum_{\ell=0}^{n-1} \, (\xi \chi^{M_{rr}})^{(k-\ell\deg \pi)(\ell \deg \pi)} (-1) D^{(\pi)}(x, \vec{f}^{(\pi)}, k-\ell \deg \pi, \pi^{2 \ell} F_\pi^{-1}) \res{F^{(\pi)}}{\pi^\ell}_{\xi \chi^{M_{rr}}}^{-1} D_\pi(x, \vec{f}_\pi, \ell) \\
& =\sum_{\ell=0}^{n-1} \, \sum_{\substack{ f_r^{(\pi)} \in \F_q[t]^+ \\ \deg f_r^{(\pi)} \equiv k-\ell \deg \pi \bmod n_r \\ \pi \nmid f_r^{(\pi)}}} \, \sum_{\substack{(f_r)_\pi = \pi^d \\ d\geq 0 \\ d \equiv \ell \bmod n_r}} \, \varepsilon_\pi (\xi \chi^{M_{rr}})^{(k-\ell\deg \pi)(\ell \deg \pi)} (-1) \res{\pi^{2\ell}}{f_r^{(\pi)}}_{\xi \chi^{M_{rr}}} \\ 
& \qquad \res{F^{(\pi)}}{\pi^\ell}_{\xi \chi^{M_{rr}}}^{-1} \res{F_\pi}{f_r^{(\pi)}}_{\xi \chi^{M_{rr}}}^{-1} a(\vec{f}^{(\pi)}, f_r^{(\pi)}) a(\vec{f}_\pi, (f_r)_\pi) x^{\deg f_r^{(\pi)}+\deg (f_r)_\pi}\\
&= \sum_{\substack{f_r \in \F_q[t]^+ \\ \deg f_r \equiv k \bmod n_r}} a(\vec{f}, f_r) x^{\deg f_r} = D(x, \vec{f}, k).
\end{split}
\end{equation*}
This proves formula \eqref{Factorization}.
\end{proof}

Note that the vector $\left( \res{F^{(\pi)}}{\pi^\ell}_{\xi \chi^{M_{rr}}}^{-1} D_\pi(x, \vec{f}_\pi, \ell) \right)_\ell$ satisfies the same functional equation \eqref{LocalFE} as $\left(D_\pi(x, \vec{f}, \ell) \right)_\ell$, because of equation \eqref{LocalSimplification}.

We will need a matrix identity which generalizes Lemma \ref{LemmaFactorization}, using a collection of axiomatic Dirichlet series constructed from different matrices. Fix $m \in \mathbb Z$; let $M_{00}=\cdots=M_{0 \,(r-1)}=0$, and $M_{0r}= m(M_{rr} +n/2)\%n$. Let $M_{m(M_{rr} +n/2)}$ be the $r+1 \times r+1$ symmetric matrix constructed by augmenting $M$ with a zeroth row and column, with entries $M_{0j}$. The $n_{r0}$ defined in equation \eqref{nij} for the matrix $M_{m(M_{rr} +n/2)}$ will be congruent to $-m$ modulo $n_{r}$. We define
\begin{align*}
&D(x, (f_0, \vec{f}), k; M_{m(M_{rr} +n/2)}) = \sum_{\substack{f_r \in \F_q[T]^+ \\ \deg f_r \equiv k \bmod n}} a(f_0, \vec{f}, f_r; M_{m(M_{rr} +n/2)}) x^{\deg f_r} \\
&D_\pi(x, (f_0, \vec{f}), k; M_{m(M_{rr} +n/2)}) = \sum_{\substack{j\geq 0 \\ j \equiv k \bmod n}} a(f_0, \vec{f}, \pi^j; M_{m(M_{rr} +n/2)}) x^{j\deg \pi} \\
&D^{(\pi)}(x, (f_0, \vec{f}), k, \pi^\ell; M_{m(M_{rr} +n/2)}) = \sum_{\substack{ f_r \in \F_q[t]^+ \\ \deg f_r \equiv k \bmod n \\ \pi \nmid f_r}} a(f_0, \vec{f},f_r; M_{m(M_{rr} +n/2)}) \res{\pi^\ell}{f_r}_{\xi \chi^{M_{rr}}} x^{\deg f_r}
\end{align*}
where the coefficients $a(f_0, \vec{f}, f_r; M_{m(M_{rr} +n/2)})$ are constructed axiomatically from matrix $M_{m(M_{rr} +n/2)}$. This notation makes the dependence on the underlying matrix explicit. We will continue to omit writing down this matrix when it is the original fixed matrix $M$. 

\begin{lemma}\label{LemmaStrongFactorization}
For any prime $\pi$ and $v \in \Z$, we have the following equality of $n_r \times n_r$ matrices:
\begin{equation} \label{StrongFactorization}
\begin{split}
&\varepsilon_\pi \left( (\xi \chi^{M_{rr}})^{(k-\ell\deg \pi)\ell \deg \pi} (-1)  D^{(\pi)}(x, \vec{f}^{(\pi)}, k-\ell \deg \pi, \pi^{2\ell+v}F_\pi^{-1}) \right)_{k, \ell} \\
&\cdot \left((\xi \chi^{M_{rr}})^{\ell m \deg \pi}(-1) \res{F^{(\pi)}}{\pi^{\ell-m}}_{\xi \chi^{M_{rr}}}^{-1}D_\pi(x, \pi, \vec{f}_\pi, \ell-m; M_{(v+2m)(M_{rr}+n/2)}) \right)_{\ell,m} \\
&= \left( (\xi \chi^{M_{rr}})^{k m \deg \pi}(-1)D(x, \pi, \vec{f}, k-m\deg \pi; M_{(v+2m)(M_{rr}+n/2)}) \right)_{k,m}.
\end{split}
\end{equation}
\end{lemma}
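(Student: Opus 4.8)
Write-up proposal.

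The plan is to deduce this identity from Lemma~\ref{LemmaFactorization} itself, applied separately for each residue $m$ to the augmented matrix $M':=M_{(v+2m)(M_{rr}+n/2)}$ together with the $(r+1)$-tuple $(\pi,f_1,\dots,f_{r-1})$ in place of $(f_1,\dots,f_{r-1})$. Two preliminary remarks make this legitimate. First, the proof of Lemma~\ref{LemmaFactorization} uses only twisted multiplicativity (Axiom~\ref{Axiom1}), the $n$th power reciprocity law, and the standing hypothesis $q\equiv 1\bmod 4$; it never uses that the polynomials involved are squarefree, so it applies verbatim to the tuple $(\pi,\vec{f})$ even though $\pi f_1\cdots f_{r-1}$ need not be squarefree. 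Second, the hypothesis $p_{ri}=0$ of this section persists for $M'$: for $i\geq 1$ nothing changes, and $p_{r0}=0$ because $M'_{0r}$ is, by construction, a multiple of $M_{rr}+n/2$ modulo $n$.

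Next I would record the dictionary between the data attached to $(M',(\pi,\vec{f}))$ and to $(M,\vec{f})$. One has $M'_{rr}=M_{rr}$; the $n$th root of unity $\varepsilon_\pi$ is unchanged, since the zeroth slot $\pi$ contributes nothing to it ($M'_{00}=0$ and $M'_{0j}=0$ for $1\leq j\leq r-1$); the polynomial playing the role of $F$ for $(M',(\pi,\vec{f}))$ is $\pi^{n_{r0}}F$ with $n_{r0}\equiv-(v+2m)\bmod n_r$ by the remark preceding the lemma, so its $\pi$-coprime part is still $F^{(\pi)}$ and its $\pi$-part is $\pi^{n_{r0}}F_\pi$; and a coefficient $a(1,\vec{f}^{(\pi)},f_r;M')$ with a $1$ in the zeroth slot equals $a(\vec{f}^{(\pi)},f_r;M)$ by the uniqueness in \cite[Theorem~1.1]{s-amds}, so that the series $D^{(\pi)}$, $D_\pi$, $D$ formed from $(1,\vec{f}^{(\pi)},\cdots)$ reduce to the corresponding ones over $M$. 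Finally, since $D^{(\pi)}(x,\vec{f}^{(\pi)},k,\pi^{j})$ depends on $j$ only modulo $n_r$ (its defining twist is $\res{\pi}{f_r}^{j}_{\xi\chi^{M_{rr}}}$ and $\xi\chi^{M_{rr}}$ has order $n_r$), the argument $\pi^{2\ell}(\pi^{n_{r0}}F_\pi)^{-1}=\pi^{2\ell-n_{r0}}F_\pi^{-1}$ may be rewritten as $\pi^{2\ell+v+2m}F_\pi^{-1}$.

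With this dictionary, Lemma~\ref{LemmaFactorization} for $(M',(\pi,\vec{f}))$ becomes, for each $m$, the vector identity
\begin{equation*}
\begin{split}
&\sum_\ell \varepsilon_\pi\,(\xi\chi^{M_{rr}})^{(k-\ell\deg\pi)\ell\deg\pi}(-1)\, D^{(\pi)}(x,\vec{f}^{(\pi)},k-\ell\deg\pi,\pi^{2\ell+v+2m}F_\pi^{-1})\, \res{F^{(\pi)}}{\pi^{\ell}}_{\xi\chi^{M_{rr}}}^{-1} D_\pi(x,\pi,\vec{f}_\pi,\ell;M') \\
&= D(x,\pi,\vec{f},k;M').
\end{split}
\end{equation*}
Because this holds for every $k$ and the summand is periodic of period $n_r$ in $\ell$, I would reindex the sum by $\ell\mapsto\ell-m$ and then replace $k$ by $k-m\deg\pi$: this turns $\pi^{2\ell+v+2m}F_\pi^{-1}$ back into $\pi^{2\ell+v}F_\pi^{-1}$, restores the second argument $k-\ell\deg\pi$ of $D^{(\pi)}$, produces the arguments $\ell-m$ and $k-m\deg\pi$ of $D_\pi$ and $D$, and leaves the character prefactor $(\xi\chi^{M_{rr}})^{(k-\ell\deg\pi)(\ell-m)\deg\pi}(-1)$. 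Comparing term by term with the $(k,m)$ entry of the claimed product, the only remaining discrepancy is in the power of $(\xi\chi^{M_{rr}})(-1)$: multiplying the two matrix entries gives exponent $(k-\ell\deg\pi)\ell\deg\pi+\ell m\deg\pi$ together with the column factor $km\deg\pi$, and its difference from $(k-\ell\deg\pi)(\ell-m)\deg\pi$ equals, up to sign, $m\,(\deg\pi)(\deg\pi-1)\,\ell$, which is even because $(\deg\pi)(\deg\pi-1)$ is; as $(\xi\chi^{M_{rr}})(-1)=\pm1$, the two powers agree and the matrix identity follows entry by entry.

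The substance of the argument --- the twisted-multiplicativity-plus-reciprocity computation splitting off the prime $\pi$ --- is inherited wholesale from Lemma~\ref{LemmaFactorization}; what remains is bookkeeping. I expect the fussiest part to be the dictionary for the augmented matrix: keeping straight that $n_{r0}$ is only congruent to $-(v+2m)$ modulo $n_r$ rather than equal to it, that the exponent of $\pi$ in the third slot of $D^{(\pi)}$ is meaningful only modulo $n_r$ and may legitimately be negative, and carrying out the reindexing $\ell\mapsto\ell-m$, $k\mapsto k-m\deg\pi$ with the signs in the right places so that the $(\xi\chi^{M_{rr}})(-1)$ factors cancel correctly.
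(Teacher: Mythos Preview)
Your proposal is correct and follows essentially the same approach as the paper: apply Lemma~\ref{LemmaFactorization} to the augmented data $(M_{(v+2m)(M_{rr}+n/2)},(\pi,\vec{f}))$ for each $m$, simplify using the dictionary you describe, reindex via $\ell\mapsto\ell-m$ and $k\mapsto k-m\deg\pi$, and then assemble over $m$ into a matrix identity. Your explicit verification that the stray power of $(\xi\chi^{M_{rr}})(-1)$ is $\ell m\deg\pi(\deg\pi-1)$, hence even, is exactly the ``rearranging powers of $(\xi\chi^{M_{rr}})(-1)$'' step that the paper leaves implicit.
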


\begin{proof}
Lemma \ref{LemmaFactorization} for the matrix $M_{(v+2m)(M_{rr}+n/2)}$ gives:
\begin{equation*}
\begin{split}
&\varepsilon_\pi \left( (\xi \chi^{M_{rr}})^{(k-\ell\deg \pi)\ell \deg \pi} (-1)  D^{(\pi)}(x, 1, \vec{f}^{(\pi)}, k-\ell \deg \pi, \pi^{2\ell+v+2m}F_\pi^{-1}; M_{(v+2m)(M_{rr}+n/2)}) \right)_{k, \ell} \\
&\cdot \left( \res{F^{(\pi)}}{\pi^\ell}_{\xi \chi^{M_{rr}}}^{-1}D_\pi(x, \pi, \vec{f}_\pi, \ell; M_{(v+2m)(M_{rr}+n/2)}) \right)_\ell \\
&= \left( D(x, \pi, \vec{f}, k; M_{(v+2m)(M_{rr}+n/2)}) \right)_k
\end{split}
\end{equation*}
with the same $\varepsilon_\pi$ used for $M$. Simplifying $D^{(\pi)}(x, 1, \vec{f}^{(\pi)}, k-\ell \deg \pi, \pi^{2\ell+v+2m}F_\pi^{-1}; M_{(v+2m)(M_{rr}+n/2)})$ to $D^{(\pi)}(x, \vec{f}^{(\pi)}, k-\ell \deg \pi, \pi^{2\ell+v+2m}F_\pi^{-1})$, making the substitutions $k \mapsto k-m\deg \pi$ and $\ell \mapsto \ell-m$, and rearranging powers of $(\xi\chi^{M_{rr}})(-1)$, we have:
\begin{equation*}
\begin{split}
&\varepsilon_\pi \left( (\xi\chi^{M_{rr}})^{(k-\ell\deg \pi)\ell \deg \pi} (-1)  D^{(\pi)}(x, \vec{f}^{(\pi)}, k-\ell \deg \pi, \pi^{2\ell+v}F_\pi^{-1}) \right)_{k, \ell} \\
&\cdot \left((\xi \chi^{M_{rr}})^{\ell m \deg \pi}(-1)\res{F^{(\pi)}}{\pi^{\ell-m}}_{\xi \chi^{M_{rr}}}^{-1} D_\pi(x, \pi, \vec{f}_\pi, \ell-m; M_{(v+2m)(M_{rr}+n/2)}) \right)_\ell \\
&= \left( (\xi \chi^{M_{rr}})^{k m \deg \pi}(-1)D(x, \pi, \vec{f}, k-m\deg \pi; M_{(v+2m)(M_{rr}+n/2)}) \right)_k .
\end{split}
\end{equation*}
The first matrix is independent of $m$. Extending the latter two vectors to matrices indexed by $m$ gives the result. 
\end{proof}

The simplest case of Lemma \ref{LemmaStrongFactorization} is for $\vec{f}=\vec{1}$ and $\deg \pi =1$:
\begin{equation} \label{StrongFactorizationExample}
\begin{split}
&\left( (\xi \chi^{M_{rr}})^{(k-\ell)\ell} (-1)  D^{(\pi)}(x, \vec{1}, k-\ell, \pi^{2\ell+v}) \right)_{k, \ell} \\
&\cdot \left((\xi \chi^{M_{rr}})^{\ell m}(-1) D_\pi(x, \pi, \vec{1}, \ell-m; M_{(v+2m)(M_{rr}+n/2)}) \right)_{\ell,m} \\
&= \left( (\xi \chi^{M_{rr}})^{k m}(-1)D(x, \pi, \vec{1}, k-m; M_{(v+2m)(M_{rr}+n/2)}) \right)_{k,m}.
\end{split}
\end{equation}
These three matrices can be computed explicitly. The second matrix is evaluated using the formulas for local generating series \eqref{LocalKubotaD1}, \eqref{LocalKubotaD2}. For $m=\ell$, 
\begin{equation*} 
D_\pi(x, \pi, \vec{1}, \ell-m; M_{(v+2m)(M_{rr}+n/2)}) = \left( 1- q^{-1}\left(\frac{qx}{g}\right)^{n_r}\right)^{-1}.
\end{equation*}
For $m \equiv 1-v-\ell \bmod n_r$,
\begin{equation*} 
\begin{split}
 &D_\pi(x, \pi, \vec{1}, \ell-m; M_{(v+2m)(M_{rr}+n/2)}) \\
 &= \left( 1- q^{-1}\left(\frac{qx}{g}\right)^{n_r}\right)^{-1} \left(\frac{g_{(\xi \chi^{M_{rr}})^{1-v-2m}}(1, \pi)}{q}\right) \left(\frac{q x}{g}\right)^{(1-v-2m)\%n_r}.
 \end{split}
\end{equation*}
And for $ m \not\equiv \ell, 1-v-\ell \bmod n_r$, $D_\pi(x, \pi, \vec{1}, \ell-m; M_{(v+2m)(M_{rr}+n/2)}) = 0$. In addition, there are special entries in the matrix when $m \equiv \ell \equiv 1-v-\ell \bmod n_r$. In this case $D_\pi(x, \pi, \vec{1}, \ell-m; M_{(v+2m)(M_{rr}+n/2)}) = 1$.

The matrix $\left((\xi \chi^{M_{rr}})^{\ell m}(-1) D_\pi(x, \pi, \vec{1}, \ell-m; M_{(v+2m)(M_{rr}+n/2)}) \right)_{\ell,m}$ is conjugate under a permutation matrix to a block diagonal matrix of $2 \times 2$ blocks, where each nontrivial $2 \times 2$ block has determinant equal to $(\xi \chi^{M_{rr}})^{1-v}(-1)\left( 1- q^{-1}\left(\frac{qx}{g}\right)^{n_r}\right)^{-1}$. Therefore the inverse matrix has entries which are monomials in $x$, with no poles. 

The third matrix is related to the second one by Axiom \ref{Axiom4}. Note that all $\pi$ of degree $1$ are related by a linear change of variables $T \mapsto T-c$. If a set of coefficients $a(f_1(T), \ldots f_n(T))$ satisfy the five axioms, then so do $a(f_1(T-c), \ldots f_n(T-c))$. By the uniqueness part of \cite[Theorem 1.1]{s-amds}, the coefficients are invariant under $T \mapsto T-c$. Therefore the matrix $\left( (\xi \chi^{M_{rr}})^{k m}(-1)D(x, \pi, \vec{1}, k-m; M_{(v+2m)(M_{rr}+n/2)}) \right)_{k,m}$ is the same for all $\pi$ of degree $1$. Summing these matrices over all $\pi$ of degree $1$ is the same as multiplying by $q$. Thus, by Axiom \ref{Axiom4}, if we replace $q$ with $1/q$, replace each $q$-Weil number $\alpha$ with $1/\bar{\alpha}$, and replace $x$ with $qx$, we transform back and forth between $\left( (\xi \chi^{M_{rr}})^{k m}(-1)D(x, \pi, \vec{1}, k-m; M_{(v+2m)(M_{rr}+n/2)}) \right)_{k,m}$ and $\left((\xi \chi^{M_{rr}})^{\ell m}(-1) D_\pi(x, \pi, \vec{1}, \ell-m; M_{(v+2m)(M_{rr}+n/2)}) \right)_{\ell,m}$.

This means that, for $m\equiv k \bmod n_r$ 
\begin{equation*}
D(x, \pi, \vec{1}, k-m; M_{(v+2m)(M_{rr}+n/2)}) = \left( 1- q\left(\frac{qx}{g}\right)^{n_r}\right)^{-1}. 
\end{equation*}
For $m \equiv 1-v-k \bmod n_r$, 
\begin{equation*} 
\begin{split}
&D(x, \pi, \vec{1}, k-m; M_{(v+2m)(M_{rr}+n/2)}) \\
&= \left( 1- q \left(\frac{qx}{g}\right)^{n_r}\right)^{-1} g_{(\xi \chi^{M_{rr}})^{1-v-2m}}(1, \pi) \left(\frac{q x}{g}\right)^{(1-v-2m)\%n_r}.
\end{split}
\end{equation*}
And for $ m \not\equiv k, 1-v-k \bmod n_r$, $D(x, \pi, \vec{1}, \ell-m; M_{(v+2m)(M_{rr}+n/2)}) = 0$. In addition, there are special entries in the matrix when $m \equiv k \equiv 1-v-k \bmod n_r$. In this case, $D(x, \pi, \vec{1}, k-m; M_{(v+2m)(M_{rr}+n/2)})=1$.

This matrix is also conjugate to a block-diagonal matrix of $2 \times 2$ blocks, where each nontrivial $2 \times 2$ block has determinant $(\xi \chi^{M_{rr}})^{1-v}(-1)\left(1- q\left(\frac{qx}{g}\right)^{n_r}\right)^{-1}$. The inverse matrix has entries which are monomials in $x$, with no poles.

The first matrix $\left( (\xi \chi^{M_{rr}})^{(k-\ell)\ell} (-1)  D^{(\pi)}(x, \vec{1}, k-\ell, \pi^{2\ell+v}) \right)_{k, \ell}$ can be computed from the other two. It has the interesting property of being mapped to its inverse by the change of variables $q \mapsto 1/q$ $\alpha \mapsto 1/\bar{\alpha}$, $x \mapsto qx$. 

We now give functional equations generalizing \eqref{GlobalFE} and \eqref{LocalFE} for the matrices appearing in Lemma \ref{LemmaStrongFactorization}. 

\begin{lemma}\label{LemmaMatrixFE}
For $\pi$ prime, suppose that $\pi f_1\cdots f_{r-1}$ is squarefree, or more generally that the functional equations \eqref{GlobalFE} hold for $\left(D(x, \pi, \vec{f}, k; M_{m(M_{rr}+n/2)} \right)_k$ for all $m$. Then for $v \in \Z$,
\begin{equation}\label{MatrixFE}
\begin{split}
&\left( (\xi \chi^{M_{rr}})^{k m \deg \pi}(-1)D(x, \pi, \vec{f}, k-m\deg \pi; M_{(v+2m)(M_{rr}+n/2)}) \right)_{k,m} \\
&= \left( \frac{qx}{g} \right)^{\deg F} \left( \Gamma_{k, \ell}(x, \deg F - v \deg \pi) \right)_{k, \ell} \\
& \phantom{=}\cdot \left( (\xi \chi^{M_{rr}})^{\ell m \deg \pi}(-1)  D\left(\frac{g^2}{q^2x}, \pi, \vec{f}, \ell-m\deg \pi; M_{(v+2m)(M_{rr}+n/2)} \right) \right)_{\ell, m} \\
& \phantom{=}\cdot \left(\Delta_{m,m}(x, v)^{\deg \pi} \right)_{m,m} 
\end{split}
\end{equation}
where $\left( \Gamma_{k, \ell}(x, \deg F - v\deg \pi) \right)_{k, \ell}$ is the $n_r \times n_r$ scattering matrix defined in Proposition \ref{PropGlobalFE}, and $\left(\Delta_{m,m}(x,v)^{\deg \pi} \right)_{m,m}$ is a diagonal matrix with entries $\Delta_{m,m}(x,v)^{\deg \pi}=\left(\frac{qx}{g}\right)^{((-v-2m)\% n_r)\deg \pi}$.
\end{lemma}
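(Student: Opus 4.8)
The plan is to obtain \eqref{MatrixFE} one column at a time, by applying the single‑variable functional equation \eqref{GlobalFE} to the augmented tuple $(\pi,\vec f)=(\pi,f_1,\dots,f_{r-1})$ for each of the matrices $M_{(v+2m)(M_{rr}+n/2)}$, and then reconciling the scattering matrices by a shift of origin in the index lattice.

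First I would fix $m$ and invoke the hypothesis, which for $\pi f_1\cdots f_{r-1}$ squarefree is exactly Proposition~\ref{PropGlobalFE} applied to the augmented tuple (the running assumption $p_{r0}=0$ being automatic because $M_{0r}$ is a multiple of $M_{rr}+n/2$ modulo $n$): the vector $\bigl(D(x,\pi,\vec f,k;M_{(v+2m)(M_{rr}+n/2)})\bigr)_k$ satisfies \eqref{GlobalFE}, with the role of $F$ played by $\widehat F=\pi^{\,n_{r0}}F$. Here $n_{r0}$ is the exponent from \eqref{nij} for the augmented matrix, and by the remark accompanying the definition of $M_{m(M_{rr}+n/2)}$ we have $n_{r0}\equiv-(v+2m)\bmod n_r$ with $0\le n_{r0}<n_r$, so $n_{r0}=(-v-2m)\% n_r$. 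Hence $\deg\widehat F=\deg F+\bigl((-v-2m)\% n_r\bigr)\deg\pi$, which gives simultaneously $\deg\widehat F\equiv\deg F-v\deg\pi-2m\deg\pi\pmod{n_r}$ and $\left(\frac{qx}{g}\right)^{\deg\widehat F}=\left(\frac{qx}{g}\right)^{\deg F}\,\Delta_{m,m}(x,v)^{\deg\pi}$.

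Next, in the functional equation for this fixed $m$ I would substitute $k\mapsto k-m\deg\pi$ for the index of the output vector, reindex the summation variable by $\ell\mapsto\ell-m\deg\pi$, multiply the $k$‑th entry by the sign $(\xi\chi^{M_{rr}})^{km\deg\pi}(-1)$, and insert $1=\bigl((\xi\chi^{M_{rr}})^{\ell m\deg\pi}(-1)\bigr)^2$ into each summand. After this bookkeeping the left side is exactly the $(k,m)$‑entry of the matrix on the left of \eqref{MatrixFE}, the vector on the right is exactly the $m$‑th column of $\bigl((\xi\chi^{M_{rr}})^{\ell m\deg\pi}(-1)\,D(\frac{g^2}{q^2x},\pi,\vec f,\ell-m\deg\pi;M_{(v+2m)(M_{rr}+n/2)})\bigr)_{\ell,m}$, and the prefactor is $\left(\frac{qx}{g}\right)^{\deg F}\Delta_{m,m}(x,v)^{\deg\pi}$. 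Thus everything reduces to the single entrywise identity of scattering matrices
\[
(\xi\chi^{M_{rr}})^{(k-\ell)m\deg\pi}(-1)\,\Gamma_{k-m\deg\pi,\;\ell-m\deg\pi}\!\left(x,\deg\widehat F\right)=\Gamma_{k,\ell}\!\left(x,\deg F-v\deg\pi\right),\qquad k,\ell\bmod n_r .
\]

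I expect this identity to be the only real work, and I would check it case‑by‑case from the formulas in Proposition~\ref{PropGlobalFE}, writing $K:=\deg F-v\deg\pi$ and using $\deg\widehat F\equiv K-2m\deg\pi\pmod{n_r}$. On the diagonal $k=\ell$ the sign is $1$ and the claim is $\bigl(\deg\widehat F+1-2(k-m\deg\pi)\bigr)\% n_r=(K+1-2k)\% n_r$, immediate from the congruence; the ``special'' diagonal entries $\left(\frac{qx}{g}\right)^{1-n_r}$ are handled the same way since their value is independent of the last argument. For the off‑diagonal entries one first checks that the supports agree and that the Gauss‑sum subscripts $2(k-m\deg\pi)-\deg\widehat F-1$ and $2k-K-1$ are congruent modulo $n_r=\ord(\xi\chi^{M_{rr}})$, both again by the congruence; the only remaining discrepancy is a power of the sign $(\xi\chi^{M_{rr}})(-1)$, so it suffices to show
\[
(k-\ell)m\deg\pi+(\ell-m\deg\pi)(1+\deg\widehat F)\equiv\ell(1+K)\pmod 2 .
\]
If $n_r$ is odd then $(\xi\chi^{M_{rr}})(-1)=1$ and there is nothing to prove. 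If $n_r$ is even then $(-v-2m)\% n_r\equiv -v\pmod 2$, so $\deg\widehat F\equiv K\pmod 2$, and on the support of an off‑diagonal entry $k+\ell\equiv1+K\pmod 2$; the difference of the two sides above is then $m\deg\pi\,(k-\ell-1-K)\equiv m\deg\pi\cdot(-2\ell)\equiv0\pmod2$. Finally, assembling the column‑wise identities for $m=0,\dots,n_r-1$, pulling the common scalar $\left(\frac{qx}{g}\right)^{\deg F}$ out front, and absorbing the residual column factors $\Delta_{m,m}(x,v)^{\deg\pi}$ into right multiplication by the diagonal matrix $\bigl(\Delta_{m,m}(x,v)^{\deg\pi}\bigr)_{m,m}$ yields \eqref{MatrixFE}. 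The main obstacle is precisely the entrywise scattering‑matrix identity above, and the key observation that makes it go through is the $n_r$ even/odd dichotomy together with the support congruence $k+\ell\equiv1+K$.
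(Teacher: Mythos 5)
Your proposal is correct and follows essentially the same route as the paper: apply the Kubota functional equation \eqref{KubotaFE}/\eqref{GlobalFE} to the augmented tuple $(\pi,\vec f)$ with matrix $M_{(v+2m)(M_{rr}+n/2)}$, use $n_{r0}=(-v-2m)\%n_r$ so that $\deg\widehat F=\deg F+((-v-2m)\%n_r)\deg\pi$ accounts for the $\Delta_{m,m}$ factor and reduces the $\Gamma$-argument to $\deg F-v\deg\pi$ mod $n_r$, then shift indices by $m\deg\pi$, multiply by the sign, and assemble the columns. The paper states this more tersely, leaving the entrywise scattering-matrix identity (including the parity check on the power of $(\xi\chi^{M_{rr}})(-1)$) implicit, whereas you verify it explicitly; your verification is sound.
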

Note that the $m=0$ column of the matrix identity \eqref{MatrixFE} recovers the functional equation \eqref{GlobalFE} for $\left(D(x, \pi, \vec{f}, k; M_{v(M_{rr}+n/2)} \right)_k$. 
\begin{proof}
Equation \ref{KubotaFE} for the matrix $M_{(v+2m)(M_{rr}+n/2)}$ gives:
\begin{equation*}
\begin{split}
&D(x, \pi, \vec{f}, k-m\deg \pi; M_{(v+2m)(M_{rr}+n/2)}) \\
& = \left(\frac{qx}{g}\right)^{\deg F + ((-v-2m)\%n_r)\deg \pi + 1 - (\deg F -v\deg \pi +1-2k)\% n_r} \frac{1-q}{1-q\left(\frac{q x}{g}\right)^{n_r}} \\
& \cdot D\left(\frac{g^2}{q^2x}, \pi, \vec{f}, k-m\deg \pi; M_{(v+2m)(M_{rr}+n/2)}\right) \\
&+(\xi \chi^{M_{rr}})^{(\ell- m\deg \pi)(1+\deg F-v\deg \pi)}(-1) g_{(\xi \chi^{M_{rr}})^{2k-\deg F +v\deg \pi-1}} \left(\frac{qx}{g}\right)^{\deg F+ ((-v-2m)\%n_r)\deg \pi + 1} \\
&\cdot \frac{1-\left(\frac{qx}{g}\right)^{-n_r}}{1-q\left(\frac{qx}{g}\right)^{n_r}} D\left(\frac{g^2}{q^2x}, \pi, \vec{f}, \ell - m \deg \pi; M_{(v+2m)(M_{rr}+n/2)} \right)
\end{split}
\end{equation*}
where $\ell=1+\deg F -v\deg \pi -k$. Multiplying through by $(\xi \chi^{M_{rr}})^{km\deg \pi}(-1)$, and rewriting these functional equations in matrix form gives the desired statement.
\end{proof}

\begin{lemma}\label{LemmaLocalMatrixFE}
Fix $\pi$ prime, and suppose that $\pi \nmid f_1\cdots f_{r-1}$, or more generally that the functional equations \eqref{LocalFE} hold for $\left(\res{F^{(\pi)}}{\pi^k}^{-1} D_{\pi}^*(x, \pi, \vec{f}_{\pi}, k; M_{m(M_{rr}+n/2)}) \right)_k$ for all $m$. Then for $v \in \Z$,
\begin{equation}\label{LocalMatrixFE}
\begin{split}
&\left( (\xi \chi^{M_{rr}})^{k m \deg \pi}(-1)\res{F^{(\pi)}}{\pi^{k-m}}_{\xi \chi^{M_{rr}}}^{-1} D_{\pi}(x, \pi, \vec{f}_{\pi}, k-m; M_{(v+2m)(M_{rr}+n/2)}) \right)_{k,m} \\
&= \left(\frac{qx}{g}\right)^{v_{\pi}(F) \deg \pi} \left( \res{F^{(\pi)}}{\pi}_{\xi \chi^{M-{rr}}}^{k+\ell} \Gamma_{\pi, k, \ell}(x, v_{\pi}(F)-v) \right)_{k, \ell} \\
&\phantom{=} \cdot \left( (\xi \chi^{M_{rr}})^{\ell m \deg \pi}(-1) \res{F^{(\pi)}}{\pi^{\ell-m}}_{\xi \chi^{M_{rr}}}^{-1} D_{\pi}\left(\frac{g^2}{q^2x}, \pi, \vec{f}_{\pi}, \ell-m; M_{(v+2m)(M_{rr}+n/2)} \right) \right)_{\ell, m}\\
&\phantom{=}\cdot\left(\Delta_{m,m}(x, v)^{\deg \pi} \right)_{m,m} 
\end{split}
\end{equation}
where $\left( \res{F^{(\pi)}}{\pi}_{\xi \chi^{M-{rr}}}^{k+\ell} \Gamma_{\pi, k, \ell}(x, v_{\pi}(F)-v \right)_{k, \ell}$ is the $n_r \times n_r$ scattering matrix defined in Proposition \ref{LocalFE}, as applied to $M_{v(M_{rr}+n/2)}$, and $\left(\Delta_{m,m}(x, v)^{\deg \pi} \right)_{m,m} $ is as defined in Lemma \ref{LemmaMatrixFE}.
\end{lemma}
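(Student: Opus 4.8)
The plan is to follow the proof of Lemma \ref{LemmaMatrixFE} essentially verbatim, with the local single-variable functional equation \eqref{LocalFE} of Proposition \ref{PropLocalFE} playing the role of the global one \eqref{KubotaFE}. Concretely, for each fixed $m$ I would apply \eqref{LocalFE} to the twisted vector $\left(\res{F^{(\pi)}}{\pi^{k}}_{\xi\chi^{M_{rr}}}^{-1} D_{\pi}(x, \pi, \vec{f}_{\pi}, k; M_{(v+2m)(M_{rr}+n/2)})\right)_{k}$, which by hypothesis satisfies \eqref{LocalFE} (in the base case $\pi \nmid f_1\cdots f_{r-1}$ this is the content of the remark following Lemma \ref{LemmaFactorization}, together with \eqref{LocalSimplification}). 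The integer $n_{r0}$ attached to the augmented matrix $M_{(v+2m)(M_{rr}+n/2)}$ is congruent to $-(v+2m) \bmod n_r$, so the valuation parameter that enters \eqref{LocalFE} for this matrix is $v_{\pi}(F)+n_{r0}\deg\pi \equiv v_{\pi}(F)-(v+2m)\deg\pi \bmod n_r$. Reindexing $k\mapsto k-m$, $\ell\mapsto\ell-m$ on both sides then converts the scattering-matrix parameter into $v_{\pi}(F)-v$, which is independent of $m$, and splits off the diagonal factor $\Delta_{m,m}(x,v)^{\deg\pi}=\left(\frac{qx}{g}\right)^{((-v-2m)\%n_r)\deg\pi}$, exactly as in the proof of Lemma \ref{LemmaMatrixFE}.

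Next I would multiply the resulting scalar identity through by $(\xi\chi^{M_{rr}})^{km\deg\pi}(-1)$ and assemble over $m$ into the $n_r\times n_r$ matrix identity \eqref{LocalMatrixFE}: the three index-shifted vectors become the three matrices in the statement, the scattering matrix is independent of $m$ and factors out on the left, and the $\Delta$-matrix sits on the right since $m$ is the column index. The phase bookkeeping is identical to the global case: one uses $\ell\equiv 1+v_{\pi}(F)-v\deg\pi-k \bmod n_r$ together with $k^2\equiv k\bmod 2$ to check that the powers of $(\xi\chi^{M_{rr}})(-1)$ match up, exactly as in Lemma \ref{LemmaMatrixFE}, and the $q\equiv 1 \bmod 4$ hypothesis (in force throughout this section) keeps the quadratic reciprocity terms trivial, as it did in the proof of Lemma \ref{LemmaFactorization}.

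The one point requiring more care than in Lemma \ref{LemmaMatrixFE} is the residue-symbol twist: the local scattering matrix in \eqref{LocalFE} carries the extra factor $\res{F^{(\pi)}}{\pi}_{\xi\chi^{M_{rr}}}^{k+\ell}$, which has no analogue in \eqref{KubotaFE}. One must verify that, after applying \eqref{LocalFE} to the twisted series $\res{F^{(\pi)}}{\pi^{k}}_{\xi\chi^{M_{rr}}}^{-1} D_{\pi}(\cdots)$, the twists $\res{F^{(\pi)}}{\pi^{k-m}}_{\xi\chi^{M_{rr}}}^{-1}$ on the left- and right-hand matrices combine with this scattering-matrix factor to reproduce precisely the pattern displayed in \eqref{LocalMatrixFE}. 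This is a direct consequence of the multiplicativity of $\res{\cdot}{\pi}_{\xi\chi^{M_{rr}}}$ in its first argument and requires no new ideas; it is the main obstacle only in the sense of being the one piece of bookkeeping not inherited line for line from the global proof. Once it is checked, rewriting everything in matrix form completes the argument.
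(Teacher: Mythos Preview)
The proposal is correct and takes essentially the same approach as the paper, whose proof is literally the single sentence ``This lemma follows from Proposition \ref{PropLocalFE} in the same way that Lemma \ref{LemmaMatrixFE} follows from Proposition \ref{PropGlobalFE}.'' Your write-up is considerably more detailed than that: you identify the correct reindexing $k\mapsto k-m$, $\ell\mapsto\ell-m$, the emergence of the diagonal factor $\Delta_{m,m}(x,v)^{\deg\pi}$ from the augmented valuation $v_\pi(F_{\mathrm{aug}})\equiv v_\pi(F)-(v+2m)$, and you flag the residue-symbol factor $\res{F^{(\pi)}}{\pi}^{k+\ell}$ as the only ingredient with no direct global analogue --- all of which the paper leaves implicit.
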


This lemma follows from Proposition \ref{PropLocalFE} in the same way that Lemma \ref{LemmaMatrixFE} follows from Proposition \ref{PropGlobalFE}.

Our goal is to prove that the same functional equations hold for all $\vec{f}$, whether or not $f_1\cdots f_{r-1}$ is squarefree. We first prove a technical statement showing the compatibility of local and global functional equations.

\begin{lemma} \label{LocalImpliesGlobal}
Fix $\vec{f}$, $\pi$ prime, $v\equiv -v_{\pi}(F) \bmod n_r$. Suppose $D(x, \pi, \vec{f}^{(\pi)}, k-m\deg \pi; M_{(v+2m)(M_{rr}+n/2)})$ is a rational function of $x$ with denominator dividing $1-q\left(\frac{qx}{g}\right)^{n_r}$ for all $k, m$, and that the global matrix functional equation \eqref{MatrixFE} holds for$\left( (\xi \chi^{M_{rr}})^{k m \deg \pi}(-1)D(x, \pi, \vec{f}^{(\pi)}, k-m\deg \pi; M_{(v+2m)(M_{rr}+n/2)}) \right)_{k,m}$. Suppose also that $D_{\pi}(x, \vec{f}_\pi, k)$ is a rational function of $x$ with denominator dividing $1-q^{-\deg \pi}\left(\frac{qx}{g}\right)^{n_r\deg \pi}$ for all $k$, and that the local vector functional equation \eqref{LocalFE} holds for $\left( D_{\pi}(x, \vec{f}_\pi, k) \right)_k$. Then $D(x, \vec{f}, k)$ is a rational function of $x$ with denominator dividing $1-q\left(\frac{qx}{g}\right)^{n_r}$ for all $k$, and the global vector functional equation \eqref{GlobalFE} holds for $\left( D(x, \vec{f}, k) \right)_k$.
\end{lemma}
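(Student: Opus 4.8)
The plan is to prove \ref{LocalImpliesGlobal} by combining the $\pi$-adic factorization of $D(x,\vec{f},k)$ with the functional equations that its two factors already satisfy. Fix $\pi$ and $v\equiv -v_\pi(F)\bmod n_r$ as in the statement. By Lemma \ref{LemmaFactorization},
\[ \left( D(x,\vec{f},k)\right)_k \;=\; \varepsilon_\pi\, A(x)\, \vec{b}(x), \]
where $A(x)=\left((\xi\chi^{M_{rr}})^{(k-\ell\deg\pi)(\ell\deg\pi)}(-1)\,D^{(\pi)}(x,\vec{f}^{(\pi)},k-\ell\deg\pi,\pi^{2\ell}F_\pi^{-1})\right)_{k,\ell}$ is the matrix of $\pi$-coprime twisted subseries and $\vec{b}(x)=\left(\res{F^{(\pi)}}{\pi^\ell}^{-1}D_\pi(x,\vec{f}_\pi,\ell)\right)_\ell$. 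The remark after Lemma \ref{LemmaFactorization} already gives that, granting the hypothesis that $\eqref{LocalFE}$ holds for $\left(D_\pi(x,\vec{f}_\pi,k)\right)_k$, the vector $\vec{b}(x)$ satisfies the same local functional equation $\eqref{LocalFE}$. So the real work is to produce a global-shaped functional equation for the matrix $A(x)$, after which one multiplies out.

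For $A(x)$, I would apply Lemma \ref{LemmaStrongFactorization} with the tuple there taken to be $\vec{f}^{(\pi)}$ and the parameter there taken to be our $v$. Since $\vec{f}^{(\pi)}$ is coprime to $\pi$ we have $(\vec{f}^{(\pi)})_\pi=\vec{1}$, the root of unity $\varepsilon_\pi$ of that lemma equals $1$, and the first factor of $\eqref{StrongFactorization}$ is exactly the matrix $A(x)$ above: here one uses that $\pi^{2\ell+v}$ and $\pi^{2\ell}F_\pi^{-1}$ induce the same twist in $D^{(\pi)}$ because $2\ell+v\equiv 2\ell-v_\pi(F)\bmod n_r$, which is where the hypothesis on $v$ enters. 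The identity then reads $A(x)\,\widetilde{B}(x)=C(x)$, with $C(x)=\left((\xi\chi^{M_{rr}})^{km\deg\pi}(-1)D(x,\pi,\vec{f}^{(\pi)},k-m\deg\pi;M_{(v+2m)(M_{rr}+n/2)})\right)_{k,m}$ and $\widetilde{B}(x)$ the accompanying local matrix built from the series $D_\pi(x,\pi,\vec{1},\,\cdot\,;M_{(v+2m)(M_{rr}+n/2)})$ together with the residue twists $\res{F^{(\pi)}}{\pi^{\ell-m}}^{-1}$. The hypothesis supplies the matrix functional equation $\eqref{MatrixFE}$ for $C(x)$, with global scattering matrix $\Gamma(x,\deg F^{(\pi)}-v\deg\pi)$ and diagonal correction $\Delta(x,v)^{\deg\pi}$; Lemma \ref{LemmaLocalMatrixFE}, applied to the pure $\pi$-power tuple (so that the relevant $v_\pi(F)$ is $0$), supplies $\eqref{LocalMatrixFE}$ for $\widetilde{B}(x)$ with local scattering matrix $\left(\res{F^{(\pi)}}{\pi}^{k+\ell}\Gamma_{\pi,k,\ell}(x,-v)\right)$ and the same $\Delta(x,v)^{\deg\pi}$; and, as computed after $\eqref{StrongFactorizationExample}$, $\widetilde{B}(x)$ is invertible with Laurent-polynomial inverse. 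Solving $A=C\widetilde{B}^{-1}$, the two copies of $\Delta(x,v)^{\deg\pi}$ cancel and one obtains
\[ A(x)=\left(\frac{qx}{g}\right)^{\deg F^{(\pi)}}\Gamma\!\left(x,\deg F^{(\pi)}-v\deg\pi\right)A\!\left(\frac{g^2}{q^2x}\right)\left(\res{F^{(\pi)}}{\pi}^{k+\ell}\Gamma_{\pi,k,\ell}(x,-v)\right)^{-1}, \]
with $A(x)$ rational with denominator dividing $1-q(qx/g)^{n_r}$ (inherited from $C(x)$ and the polynomial $\widetilde{B}(x)^{-1}$).

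To conclude, substitute this and the functional equation of $\vec{b}(x)$ into $\left(D(x,\vec{f},k)\right)_k=\varepsilon_\pi A(x)\vec{b}(x)$. The factor $\left(\res{F^{(\pi)}}{\pi}^{k+\ell}\Gamma_{\pi,k,\ell}(x,-v)\right)^{-1}$ coming from $A(x)$ cancels the factor $\left(\res{F^{(\pi)}}{\pi}^{k+\ell}\Gamma_{\pi,k,\ell}(x,v_\pi(F))\right)$ coming from $\vec{b}(x)$, because $-v\equiv v_\pi(F)\bmod n_r$ and $\Gamma_{\pi,\cdot,\cdot}(x,\cdot)$ depends on its last argument only modulo $n_r$; the two $\varepsilon_\pi$'s cancel by Lemma \ref{LemmaFactorization} evaluated at $g^2/q^2x$; the prefactors combine as $(qx/g)^{\deg F^{(\pi)}}(qx/g)^{v_\pi(F)\deg\pi}=(qx/g)^{\deg F}$; and $\deg F^{(\pi)}-v\deg\pi\equiv\deg F\bmod n_r$ promotes $\Gamma(x,\deg F^{(\pi)}-v\deg\pi)$ to $\Gamma(x,\deg F)$. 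This is precisely $\eqref{GlobalFE}$. For rationality, $\left(D(x,\vec{f},k)\right)_k$ is a priori rational with denominator dividing the product of those of $A(x)$ and $\vec{b}(x)$; but the extra factor $1-q^{-\deg\pi}(qx/g)^{n_r\deg\pi}$ contributes no genuine pole, since none of its roots $x_0$ is a root of $1-q(qx/g)^{n_r}$, and at such $x_0$ the right-hand side of $\eqref{GlobalFE}$ is regular — one checks that $g^2/q^2x_0$ is a root of neither denominator and that $\Gamma(x,\deg F)$ is regular at $x_0$ — hence so is the left-hand side. Therefore $D(x,\vec{f},k)$ is rational with denominator dividing $1-q(qx/g)^{n_r}$.

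The step I expect to be the main obstacle is the matrix manipulation in the second paragraph: tracking the roots of unity $(\xi\chi^{M_{rr}})^{\bullet}(-1)$, the residue-symbol twists $\res{F^{(\pi)}}{\pi}^{\bullet}$, and the index substitutions $k\mapsto k-m\deg\pi$ and $\ell\mapsto\ell-m$ through the product $C\widetilde{B}^{-1}$, and verifying that the local scattering data collapses to the single diagonal matrix $\Delta(x,v)^{\deg\pi}$ and then cancels without residue. The conceptual content is the observation recorded after Proposition \ref{PropLocalFE} — that $x\mapsto g^2/q^2x$ commutes with the local-to-global change of variables converting $\Gamma$ into $\Gamma_{\pi}$ — which is exactly what forces the local and global scattering matrices appearing above to line up and annihilate one another; the remainder is careful but routine algebra.
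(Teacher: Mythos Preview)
Your derivation of the functional equation is correct and tracks the paper's argument closely: you assemble the identity $D=\varepsilon_\pi\, C\,\widetilde{B}^{-1}\,\vec{b}$ from Lemmas~\ref{LemmaFactorization} and~\ref{LemmaStrongFactorization}, apply the matrix functional equations \eqref{MatrixFE} and \eqref{LocalMatrixFE} together with the vector equation \eqref{LocalFE}, and let the local scattering matrices cancel because $-v\equiv v_\pi(F)\bmod n_r$. The only cosmetic difference is that you group as $A\cdot\vec{b}$ with $A=C\widetilde{B}^{-1}$, whereas the paper groups as $C\cdot(\widetilde{B}^{-1}\vec{b})$; for the functional equation this is immaterial. (A minor remark: there are not ``two $\varepsilon_\pi$'s'' to cancel --- the constant $\varepsilon_\pi$ simply passes through and is reabsorbed when you rewrite $\varepsilon_\pi A(g^2/q^2x)\vec{b}(g^2/q^2x)$ as $D(g^2/q^2x,\vec{f},\cdot)$.)

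The rationality argument, however, has a genuine gap. You claim that for every root $x_0$ of $1-q^{-\deg\pi}(qx/g)^{n_r\deg\pi}$ the point $y_0=g^2/(q^2x_0)$ is a root of neither denominator. This fails precisely for the roots with $(qx_0/g)^{n_r}=q$ (i.e.\ the $\zeta=1$ roots among $(qx_0/g)^{n_r}=\zeta q$, $\zeta^{\deg\pi}=1$): then $(qy_0/g)^{n_r}=q^{-1}$, so $y_0$ \emph{is} a root of $1-q(qy/g)^{n_r}$, and $D(y_0,\vec{f},\ell)$ may well have a pole there. At such $x_0$ the right-hand side of \eqref{GlobalFE} is not visibly regular, so the bootstrapping via \eqref{GlobalFE} does not exclude a pole of $D$ at $x_0$.

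The paper sidesteps this by exploiting the other grouping. It first shows that the vector $\widetilde{B}^{-1}\vec{b}$ is in fact a polynomial: this product has denominator dividing only $1-q^{-\deg\pi}(qx/g)^{n_r\deg\pi}$ (since $\widetilde{B}^{-1}$ is a matrix of monomials), and it satisfies a functional equation in $x\mapsto g^2/(q^2x)$ with a \emph{diagonal monomial} scattering matrix $\bigl((qx/g)^{v_\pi(F)\deg\pi}\Delta_{m,m}(x,v)^{-\deg\pi}\bigr)_{m,m}$. A pole at any root of $1-q^{-\deg\pi}(qx/g)^{n_r\deg\pi}$ would then be carried by this monomial functional equation to a pole at a point with $|(qx/g)^{n_r}|=q^{-1}$, where no pole is allowed --- contradiction. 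Once $\widetilde{B}^{-1}\vec{b}$ is polynomial, $D=\varepsilon_\pi C\,(\widetilde{B}^{-1}\vec{b})$ inherits its denominator solely from $C$, which divides $1-q(qx/g)^{n_r}$. Replacing your last paragraph by this argument fixes the proof.
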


\begin{proof}
Lemma \ref{LemmaStrongFactorization} applied to $\vec{f}^{(\pi)}$ gives 
\begin{equation*}
\begin{split}
&\left( (\xi \chi^{M_{rr}})^{(k-\ell\deg \pi)\ell \deg \pi} (-1)  D^{(\pi)}(x, \vec{f}^{(\pi)}, k-\ell \deg \pi, \pi^{2\ell+v}) \right)_{k, \ell} \\
&\cdot \left((\xi \chi^{M_{rr}})^{\ell m \deg \pi}(-1) \res{F^{(\pi)}}{\pi^{\ell-m}}_{\xi \chi^{M_{rr}}}^{-1}D_\pi(x, \pi, \vec{1}, \ell-m; M_{(v+2m)(M_{rr}+n/2)}) \right)_{\ell,m} \\
&= \left( (\xi \chi^{M_{rr}})^{k m \deg \pi}(-1)D(x, \pi, \vec{f}^{(\pi)}, k-m\deg \pi; M_{(v+2m)(M_{rr}+n/2)}) \right)_{k,m}.
\end{split}
\end{equation*}
The second matrix is computed explicitly in the discussion following Lemma \ref{LemmaStrongFactorization}, for $F^{(\pi)}=1$ and $\deg \pi =1$. The computation can be extended to all $\pi$ and $F^{(\pi)}$ using Axiom \ref{Axiom3}. The result is that this matrix is conjugate to a block-diagonal matrix of $2 \times 2$ blocks, where each nontrivial $2 \times 2$ block has determinant  equal to $(\xi \chi^{M_{rr}})^{(1-v)\deg \pi}(-1)\left( 1- q^{-\deg \pi}\left(\frac{qx}{g}\right)^{n_r \deg \pi}\right)$. Its inverse is a matrix of monomials in $x$.

Lemma \ref{LemmaFactorization} applied to $\vec{f}$ gives 
\begin{equation*}
\begin{split}
&\varepsilon_\pi \left( (\xi \chi^{M_{rr}})^{(k-\ell\deg \pi)(\ell \deg \pi)} (-1)  D^{(\pi)}(x, \vec{f}^{(\pi)}, k-\ell \deg \pi, \pi^{2\ell-v_{\pi}(F)}) \right)_{k, \ell} \\
&\cdot \left( \res{F^{(\pi)}}{\pi^\ell}_{\xi \chi^{M_{rr}}}^{-1} D_\pi(x, \vec{f}_\pi, \ell) \right)_\ell \\
&= \left( D(x, \vec{f}, k) \right)_k.
\end{split}
\end{equation*}

We can combine the two equations:
\begin{equation} \label{LocalGlobalCombo}
\begin{split}
& \left( D(x, \vec{f}, k) \right)_k \\
&=\varepsilon_\pi \left( (\xi \chi^{M_{rr}})^{k m \deg \pi}(-1)D(x, \pi, \vec{f}^{(\pi)}, k-m\deg \pi; M_{(v+2m)(M_{rr}+n/2)}) \right)_{k,m}  \\
&\cdot \left((\xi \chi^{M_{rr}})^{\ell m \deg \pi}(-1) \res{F^{(\pi)}}{\pi^{\ell-m}}_{\xi \chi^{M_{rr}}}^{-1}D_\pi(x, \pi, \vec{1}, \ell-m; M_{(v+2m)(M_{rr}+n/2)}) \right)_{\ell,m}^{-1} \\
&\cdot \left( \res{F^{(\pi)}}{\pi^\ell}_{\xi \chi^{M_{rr}}}^{-1} D_\pi(x, \vec{f}_\pi, \ell) \right)_\ell.
\end{split}
\end{equation}

By hypothesis, the vector in line four of \eqref{LocalGlobalCombo} 
satisfies a functional equation with scattering matrix $\left(\frac{qx}{g} \right)^{v_{\pi}(F) \deg \pi} \left( \res{F^{(\pi)}}{\pi}_{\xi \chi^{M-{rr}}}^{k+\ell} \Gamma_{\pi, k, \ell}(x, v_{\pi}(F)) \right)_{k, \ell}$. By Lemma \ref{LemmaLocalMatrixFE}, the matrix with inverse in line three of \eqref{LocalGlobalCombo}
satisfies a functional equation with scattering matrix $\left( \res{F^{(\pi)}}{\pi}_{\xi \chi^{M-{rr}}}^{k+\ell} \Gamma_{\pi, k, \ell}(x, -v) \right)_{k, \ell}$ and a diagonal matrix $\left(\Delta_{m,m}(x, v)^{\deg \pi} \right)_{m,m}$ on the right. Since $v\equiv -v_{\pi}(F) \bmod n$, we have $ \Gamma_{\pi, k, \ell}(x, v_{\pi}(F))=\Gamma_{\pi, k, \ell}(x, -v)$. Thus the product of lines three and four satisfies a functional equation in $x\mapsto \frac{g^2}{q^2 x}$, with scattering matrix $\left(\left(\frac{qx}{g} \right)^{v_{\pi}(F) \deg \pi} \Delta_{m,m}(x, v)^{-\deg \pi} \right)_{m,m}$. This scattering matrix is a diagonal matrix of monomials in $x$.

The product of lines three and four of \eqref{LocalGlobalCombo} is a priori a vector of rational functions with denominators dividing $1-q^{-\deg \pi}\left(\frac{qx}{g}\right)^{n_r\deg \pi}$. However, if any of these functions had any poles at $\left(\frac{qx}{g}\right)^{n_r}=q$, the functional equation would transform them to poles at $\left(\frac{qx}{g}\right)^{n_r}=q^{-1}$, which cannot exist. Therefore the product must in fact be a vector of polynomials. Then from our hypothesis on the rationality of $D(x, \pi, \vec{f}^{(\pi)}, k-m\deg \pi; M_{(v+2m)(M_{rr}+n/2)})$, we have that $\left( D(x, \vec{f}, k) \right)_k$ is a vector of rational functions with denominators dividing $1-q\left(\frac{qx}{g}\right)^{n_r}$.

By hypothesis, $\left( (\xi \chi^{M_{rr}})^{k m \deg \pi}(-1)D(x, \pi, \vec{f}^{(\pi)}, k-m\deg \pi; M_{(v+2m)(M_{rr}+n/2)}) \right)_{k,m}$ satisfies a functional equation in $x\mapsto \frac{g^2}{q^2 x}$, with scattering matrix $\left( \frac{qx}{g} \right)^{\deg F^{(\pi)}} \left( \Gamma_{k, \ell}(x, \deg F^{(\pi)} - v \deg \pi) \right)_{k, \ell}$ and a diagonal matrix $\left(\Delta_{m,m}(x, v)^{\deg \pi} \right)_{m,m}$ on the right. So all together, $\left( D(x, \vec{f}, k) \right)_k$ satisfies a functional equation in $x\mapsto \frac{g^2}{q^2 x}$, with scattering matrix
\begin{equation*}
\left(\frac{qx}{g}\right)^{\deg F^{(\pi)}+v_{\pi}(F) \deg \pi} \left( \Gamma_{k, \ell}(x, \deg F^{(\pi)} - v \deg \pi) \right)_{k, \ell} = \left(\frac{qx}{g}\right)^{\deg F} \left( \Gamma_{k, \ell}(x, \deg F)\right)_{k, \ell}
\end{equation*}
as desired.
\end{proof}

Local generating series $D_{\pi}(x, \vec{f}_\pi, k)$ satisfying the functional equation \eqref{LocalFE} have been constructed in many cases by combinatorial means. Then Lemma \ref{LocalImpliesGlobal} can be used to show that the global series assembled from these local ones by twisted multiplicativity satisfy the functional equation \eqref{GlobalFE}. The method of proof is an induction of the number of square factors of $f_1 \cdots f_{r-1}$. This type of proof is carried out in \cite{ChintaGunnellsJAMS} over number fields and in \cite{HFriedlander23} over the rational function field. It allows global Weyl group multiple Dirichlet series to be constructed from their local parts. The axiomatic approach does not require a separate combinatorial construction of the local parts, and it verifies the local and global functional equations simultaneously. Our main theorem is proven by a different type of induction. 

Let $D(x)$ be a power series in $x$, whose coefficients are linear combinations of $q$-Weil numbers. We say that $D(x)$ is ``sharp'' if each coefficient of $x^j$ in $D(x)$ is a linear combination of $q$-Weil numbers of weights greater than $j$, and ``flat'' if each coefficient of $x^j$ in $D(x)$ is a linear combination of $q$-Weil numbers of weights less than $j$. Sharp series and flat series are closed under linear combinations, products, and inverses.

By Axiom \ref{Axiom5}, for $d_1+\cdots+d_{r-1} \geq 2$, any series of the form
\begin{equation*}
q^{\frac{-\sum d_i-1}{2}} \sum_{\deg f_1 = d_1, \ldots \deg f_{r-1}=d_{r-1}} D(x, \vec{f}, k)
\end{equation*} 
is sharp, while any series of the form 
\begin{equation*}
q^{\frac{-\sum d_i+1}{2}}D_\pi(x, (\pi^{d_1}, \ldots \pi^{d_{r-1}}), k)
\end{equation*} is flat.

\begin{theorem} \label{TheoremFE}
For all $\vec{f}=(f_1, \ldots f_{r-1}) \in (\F_q[T]^+)^{r-1}$, $D(x, \vec{f}, k)$ is a rational function of $x$ with denominator dividing $1-q\left(\frac{q x}{g}\right)^{n_r}$ and the global functional equation \eqref{GlobalFE} is satisfied. $D_{\pi}(x, \vec{f}, k)$ is a rational function of $x$ with denominator dividing $1-q^{-\deg \pi} \left( \frac{qx}{g}\right)^{n_r \deg \pi}$ and the local functional equation \eqref{LocalFE} is satisfied.
\end{theorem}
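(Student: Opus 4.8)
The plan is to prove the two statements simultaneously by induction, treating $D(x,\vec f,k)$ by induction on $\deg(f_1\cdots f_{r-1})$ --- with the squarefree case furnished by Proposition~\ref{PropGlobalFE} --- and $D_\pi(x,\vec f,k)$ by induction on $v_\pi(f_1\cdots f_{r-1})$ --- with the case $v_\pi(f_1\cdots f_{r-1})\le 1$ furnished by Proposition~\ref{PropLocalFE}. By \eqref{LocalSimplification} and the remark following Lemma~\ref{LemmaFactorization}, in the local case we may assume each $f_i$ is a power of $\pi$, and by Axiom~\ref{Axiom3} it suffices to treat $\pi$ linear, the general-$\pi$ statement following by the substitution $x\mapsto x^{\deg\pi}$ as in the verification of Proposition~\ref{PropLocalFE}. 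At each stage $A$ of the induction we first establish the local functional equation and then deduce the global one.

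For the deduction of the global from the local functional equation, suppose $f_1\cdots f_{r-1}$ is not squarefree and choose a prime $\pi$ with $\pi^2\mid f_1\cdots f_{r-1}$, so that the tuple $(\pi,f_1^{(\pi)},\dots,f_{r-1}^{(\pi)})$ has total degree at most $\deg(f_1\cdots f_{r-1})-\deg\pi$. One checks that every augmented matrix $M_{m(M_{rr}+n/2)}$ appearing in Lemma~\ref{LemmaStrongFactorization} still satisfies the hypothesis $p_{r\bullet}=0$, since $\gcd(n,M_{rr}+n/2)$ divides $m(M_{rr}+n/2)\%n$ and each original $M_{ri}$; so by the inductive hypothesis the functional equation \eqref{GlobalFE} holds for $D(x,\pi,\vec f^{(\pi)},k;M_{m(M_{rr}+n/2)})$ for every $m$, and Lemma~\ref{LemmaMatrixFE} upgrades this to the matrix equation \eqref{MatrixFE}. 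Since $v_\pi(f_1\cdots f_{r-1})$ is unchanged, the local equation \eqref{LocalFE} for $D_\pi(x,\vec f_\pi,k)$ is available at the current stage, and Lemma~\ref{LocalImpliesGlobal} then yields \eqref{GlobalFE} and the rationality claim for $D(x,\vec f,k)$. For single-prime-power tuples $\vec f=(\pi^{d_1},\dots,\pi^{d_{r-1}})$ one again applies Lemma~\ref{LocalImpliesGlobal} with this $\pi$; there $(\pi,\vec 1)$ is squarefree, so the required input is the base case.

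It remains to prove the local functional equation at a stage $A=\sum_i d_i\ge 2$ for $D_\pi(x,(\pi^{d_1},\dots,\pi^{d_{r-1}}),k)$ with $\pi$ linear; this is the heart of the matter and proceeds in the style of the inductive steps of Propositions~\ref{PropCoeff1} and \ref{PropCoeff2}. We introduce the series $\bar D_\pi$ prescribed to obey \eqref{LocalFE} with the scattering matrix of Proposition~\ref{PropLocalFE} and the correct initial values, and compare the suitably normalised global sum $\sum_{\deg f_i=d_i}D(x,\vec f,k)$, which is \emph{sharp} by Axiom~\ref{Axiom5}, with the suitably normalised local term $D_\pi(x,(\pi^{d_1},\dots),k)$, which is \emph{flat}; here one uses that for a linear prime $\pi'$ the series $D_{\pi'}\big(x,((\pi')^{d_1},\dots,(\pi')^{d_{r-1}}),k\big)$ is independent of $\pi'$ by invariance of axiomatic coefficients under $T\mapsto T-c$, so summing over all $q$ of them gives $q\,D_\pi(x,(\pi^{d_1},\dots),k)$. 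Applying Lemma~\ref{LemmaFactorization} (and its augmented-matrix form, Lemma~\ref{LemmaStrongFactorization}) to every $\vec f$ in the sum, and using the global inductive hypothesis together with the explicitly evaluated matrices of the discussion following Lemma~\ref{LemmaStrongFactorization}, one writes $\sum_{\deg f_i=d_i}D(x,\vec f,k)$ as $q\,D(x,(\pi^{d_1},\dots),k)$ --- which by Lemma~\ref{LemmaFactorization} with $\vec f^{(\pi)}=\vec 1$ is a known invertible matrix applied to $\big(D_\pi(x,(\pi^{d_1},\dots),\ell)\big)_\ell$ --- plus terms coming from tuples with a prime split off, each of which has strictly smaller degree (hence is controlled by the inductive hypothesis) or reduces to a family with $\vec f=\vec 1$. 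Matching sharp and flat parts forces the flat side to equal the flat part of the prescribed formula, inverting the known matrix extracts $D_\pi(x,(\pi^{d_1},\dots),\ell)=(\bar D_\pi)_\ell$, and the rationality and pole-location assertions follow since otherwise the functional equation would carry a pole at $(qx/g)^{n_r}=q$ to an impossible pole at $(qx/g)^{n_r}=q^{-1}$, exactly as in the proof of Lemma~\ref{LocalImpliesGlobal}. The main obstacle is precisely this last comparison: one must certify carefully that, after the factorization lemmas are applied, every contribution other than the single flat term carrying $D_\pi(x,(\pi^{d_1},\dots),k)$ either genuinely descends to an earlier induction stage --- recalling that $D^{(\pi)}$ is a character-twisted partial sum, so one must pass through orthogonality in the twist parameter to relate it to the inductively known global series --- or belongs to the explicitly computed $\vec f=\vec 1$ families, and that the weights of the underlying $q$-Weil numbers, tracked through the change of variables relating local and global coefficients, keep the sharp and flat ranges disjoint for all $A\ge 2$; everything else is assembled from the Kubota functional equations of Proposition~\ref{PropGlobalFE}, twisted multiplicativity, and the matrix identities of Lemmas~\ref{LemmaFactorization}--\ref{LemmaLocalMatrixFE}.
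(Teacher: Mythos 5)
Your proposal follows essentially the same route as the paper's proof: a double induction on $\deg(f_1\cdots f_{r-1})$ and on $v_\pi(f_1\cdots f_{r-1})$ with base cases from Propositions \ref{PropGlobalFE} and \ref{PropLocalFE}, Lemma \ref{LocalImpliesGlobal} for the global step at non-prime-power tuples, and a sharp/flat comparison of the normalized sum $q^{(-A-1)/2}\sum_{\deg f_i=d_i}D(x,\vec{f},k)$ against the normalized local term $q^{(-A+1)/2}D_\pi(x,(\pi^{d_1},\ldots,\pi^{d_{r-1}}),k)$ --- mediated by Lemmas \ref{LemmaFactorization}--\ref{LemmaLocalMatrixFE} and the explicitly computed, monomially invertible $\vec{f}=\vec{1}$ matrices --- to extract the local functional equation for powers of a linear prime, after which Lemma \ref{LocalImpliesGlobal}, Axiom \ref{Axiom3}, and \eqref{LocalSimplification} finish the argument exactly as in the paper. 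The one caveat is your opening claim that at each stage the local equation is established first: the sharp/flat comparison needs the global functional equation for \emph{all} non-prime-power $\vec{f}$ of total degree $A$ as an input (so that $\sum_{\deg f_i=d_i}D(x,\vec{f},k)$ minus the prime-power contribution is known to transform correctly), so the true order is global for non-prime-powers, then local for prime powers, then global for prime powers --- which is what your second and third paragraphs in fact implement.
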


\begin{proof}
The proof is by induction on $\deg f_1 + \cdots + \deg f_{r-1}$ and $v_{\pi} f_1+\cdots+v_{\pi} f_{r-1}$. Note that the matrix $M$ is not fixed in the argument, though we always assume $\gcd(n, M_{rr}+n/2) | M_{ri}$, for all $i$. At each step in the induction, the global functional equation is proven for all matrices $M$ and all $\vec{f}$ with $\deg f_1 + \cdots + \deg f_{r-1}$ fixed, and the local functional equation is proven for all matrices $M$, all $\pi$, and all $\vec{f}$ with $v_{\pi} f_1+\cdots+v_{\pi} f_{r-1}$ fixed.

If $\deg f_1 + \cdots + \deg f_{r-1} =0$ or $1$, then the rationality property and global functional equation hold by Proposition \ref{PropGlobalFE}. If $v_\pi f_1 + \cdots + v_\pi f_{r-1} =0$ or $1$, then the rationality property and local functional equation hold by Proposition \ref{PropLocalFE}. 

For the inductive step, suppose that the global rationality property and functional equation are known for all $M$ and all $\vec{f}$ with $\deg f_1 + \cdots + \deg f_{r-1} < A$, and the local rationality property and functional equation are known for all $M$ and all $\vec{f}$ with $v_\pi f_1 + \cdots + v_\pi f_{r-1} < A$. If $\vec{f}=(f_1,\ldots f_{r-1})$ has $\deg f_1 + \cdots + \deg f_{r-1} = A$ and $f_1\cdots f_{r-1}$ is squarefree, then the global rationality property and functional equation are known for $\vec{f}$ by Proposition \ref{GlobalFE}. If $\pi^2 | f_1\cdots f_{r-1}$ for some prime $\pi$, we apply Lemma \ref{LocalImpliesGlobal}. The vector $(\pi, \vec{f}^{(\pi)})$ has $\deg \pi + \deg f_1^{(\pi)} + \cdots + f_{r-1}^{(\pi)}< A$, so by induction, the global rationality property and functional equation hold for each extended matrix $M_{(v+2m)(M_{rr}+n/2)}$ with $v \equiv -v_{\pi}(F)$ and $m=0, \ldots n-1$. Moreover, unless $\pi$ is linear and each $f_i$ is a power of $\pi$, we have $v_\pi f_1+\cdots + v_\pi f_{r-1} < A$, so the local rationality property and functional equation hold for $\vec{f}$ at $\pi$. Therefore by Lemma \ref{LocalImpliesGlobal}, the global rationality property and functional equation hold for $\vec{f}$. This establishes the global rationality property and functional equation for all $\vec{f}$ except when the $f_i$ are all powers of the same linear polynomial.  

We now consider the sum
\begin{equation}\label{TwoSums}
\sum_{\deg f_1 = d_1, \ldots \deg f_{r-1}=d_{r-1}} \left(D(x, \vec{f}, k)\right)_k - \sum_{\deg \pi =1} \left(D(x, (\pi^{d_1}, \ldots \pi^{d_{r-1}}), k)\right)_k
\end{equation}
which satisfies the global rationality property and functional equation \eqref{GlobalFE}. Note that, by Axiom \ref{Axiom3}, the second sum is $q\left(D(x, (\pi^{d_1}, \ldots \pi^{d_{r-1}}), k)\right)_k$ for an arbitrary degree 1 monic polynomial $\pi$. 

We use Lemma \ref{LemmaFactorization} to write the second sum as 
\begin{equation*}
\begin{split}
& q\left(D(x, (\pi^{d_1}, \ldots \pi^{d_{r-1}}), k)\right)_k \\
&= q\left((\xi \chi^{M_{rr}})^{(k-\ell)\ell} (-1)  D^{(\pi)}(x, \vec{1}, k-\ell, \pi^{2\ell + v}) \right)_{k, \ell} \left(D_\pi(x, (\pi^{d_1}, \ldots \pi^{d_{r-1}}), \ell) \right)_\ell
\end{split}
\end{equation*}
where $v\equiv -d_1 n_{r1} - \cdots - d_{r-1} n_{r \, (r-1)} \bmod n_r$. Further, by equation \eqref{StrongFactorizationExample}, 
\begin{equation*}
\begin{split}
&q\left((\xi \chi^{M_{rr}})^{(k-\ell)\ell} (-1)  D^{(\pi)}(x, \vec{1}, k-\ell, \pi^{2\ell + v}) \right)_{k, \ell} \\
&= \left( (\xi \chi^{M_{rr}})^{k m}(-1)D(x, \pi, \vec{1}, k-m; M_{(v+2m)(M_{rr}+n/2)}) \right)_{k,m} \\
&\cdot \left((\xi \chi^{M_{rr}})^{\ell m}(-1) D_\pi(x, \pi, \vec{1}, \ell-m; M_{(v+2m)(M_{rr}+n/2)}) \right)_{\ell,m}^{-1}
\end{split}
\end{equation*}
and so we may multiply \eqref{TwoSums} by $q^{\frac{-A-1}{2}}\left( (\xi \chi^{M_{rr}})^{k m}(-1)D(x, \pi, \vec{1}, k-m; M_{(v+2m)(M_{rr}+n/2)}) \right)_{k,m}^{-1}$ to obtain 
\begin{equation}\label{NewTwoSums}
\begin{split}
&\left( (\xi \chi^{M_{rr}})^{k m}(-1)D(x, \pi, \vec{1}, k-m; M_{(v+2m)(M_{rr}+n/2)}) \right)_{k,m}^{-1} \left( q^{\frac{-A-1}{2}} \sum_{\deg f_i = d_i} D(x, \vec{f}, k)\right)_k \\
&- \left((\xi \chi^{M_{rr}})^{k m}(-1) D_\pi(x, \pi, \vec{1}, k-m; M_{(v+2m)(M_{rr}+n/2)}) \right)_{k,m}^{-1} \left(q^{\frac{-A+1}{2}} D_\pi(x, (\pi^{d_1}, \ldots \pi^{d_{r-1}}), k)\right)_k.
\end{split}
\end{equation}
By the discussion following Lemma \ref{LemmaFactorization}, $q^{\frac{-A-1}{2}}\left( (\xi \chi^{M_{rr}})^{k m}(-1)D(x, \pi, \vec{1}, k-m; M_{(v+2m)(M_{rr}+n/2)}) \right)_{k,m}^{-1}$ is a matrix of monomials in $x$. From the rationality property of \eqref{TwoSums}, we have that \eqref{NewTwoSums} is a vector of rational functions with denominators dividing $1-q\left(\frac{q x}{g}\right)^{n_r}$.

We know that formula \eqref{TwoSums} satisfies a functional equation in $x \mapsto  \frac{g^2}{q^2 x}$ with scattering matrix $\left(\frac{qx}{g}\right)^{\deg F}\left( \Gamma_{k, \ell}(x, \deg F)\right)_{k, \ell}$. By Lemma \ref{LemmaMatrixFE}, $\left( (\xi \chi^{M_{rr}})^{k m}(-1)D(x, \pi, \vec{1}, k-m; M_{(v+2m)(M_{rr}+n/2)}) \right)_{k,m}$ satisfies a functional equation with scattering matrix $\left( \Gamma_{k, \ell}(x, -v) \right)_{k, \ell}$ and diagonal matrix $\left(\Delta_{m,m}(x,v)\right)_{m,m}$ on the right. Since $v \equiv -\deg F \bmod n$, we have $\Gamma_{k, \ell}(x, \deg F)=\Gamma_{k, \ell}(x, -v)$. Thus, \eqref{NewTwoSums} satisfies a functional equation with scattering matrix $\left(\left(\frac{qx}{g}\right)^{\deg F}\Delta_{m,m}(x,v)^{-1}\right)_{m,m}$. This scattering matrix is a diagonal matrix of monomials in $x$.  

If any of the entries of \eqref{NewTwoSums} had any poles at $\left(\frac{qx}{g}\right)^{n_r}=q^{-1}$, the functional equation would transform them to poles at $\left(\frac{qx}{g}\right)^{n_r}=q$, which cannot exist. Therefore \eqref{NewTwoSums} must in fact be a vector of polynomials.

By the discussion preceding this theorem, the series $D(x, \pi, \vec{1}, k-m; M_{(v+2m)(M_{rr}+n/2)})$ is sharp (except for a constant term of $1$ when $k=m$), and $q^{\frac{-A-1}{2}} \sum D(x, \vec{f}, k)$ is sharp. Furthermore, $D_\pi(x, \pi, \vec{1}, k-m; M_{(v+2m)(M_{rr}+n/2)})$ is flat (except for a constant term of $1$ when $k=m$), and $q^{\frac{-A+1}{2}} D_\pi(x, (\pi^{d_1}, \ldots \pi^{d_{r-1}}), k)$ is flat. It follows that the full first summand of \eqref{NewTwoSums} is sharp and the full second summand is flat. Therefore, there is no cancellation between these summands. 

We conclude that each summand of \eqref{NewTwoSums} has entries which are polynomials in $x$. After multiplying the first summand by $q^{\frac{A+1}{2}}\left( (\xi \chi^{M_{rr}})^{k m}(-1)D(x, \pi, \vec{1}, k-m; M_{(v+2m)(M_{rr}+n/2)}) \right)_{k,m}$, we see that each $\sum D(x, \vec{f}, k)$ is a rational function of $x$ with denominator dividing $1-q\left(\frac{q x}{g}\right)^{n_r}$. After multiplying the second summand by $q^{\frac{A-1}{2}} \left((\xi \chi^{M_{rr}})^{k m}(-1) D_\pi(x, \pi, \vec{1}, k-m; M_{(v+2m)(M_{rr}+n/2)}) \right)_{k,m}$, we see that each $D_\pi(x, (\pi^{d_1}, \ldots \pi^{d_{r-1}}), k)$ is a rational function of $x$ with denominator dividing $1-q^{-1} \left( \frac{qx}{g}\right)^{n_r}$.

Since mapping $x \mapsto  \frac{g^2}{q^2 x}$ and multiplying by powers of $\frac{qx}{g}$ preserve both sharp and flat polynomials, the two summands of \eqref{NewTwoSums} must satisfy the same functional equation individually. 

Consider the second summand of \eqref{NewTwoSums}. This satisfies a functional equation with scattering matrix $\left(\left(\frac{qx}{g}\right)^{\deg F}\Delta_{m,m}(x,v)^{-1}\right)_{m,m}$.
By Lemma \ref{LemmaLocalMatrixFE}, $\left((\xi \chi^{M_{rr}})^{k m}(-1) D_\pi(x, \pi, \vec{1}, k-m; M_{(v+2m)(M_{rr}+n/2)}) \right)_{k,m}$ has a functional equation with scattering matrix $\left( \Gamma_{\pi, k, \ell}(x, -v) \right)_{k, \ell}$ and diagonal matrix $\left(\Delta_{m,m}(x,v)\right)_{m,m}$ on the right. Combining these functional equations, and recalling that $v\equiv -\deg F \bmod n_r$, we find that $\left(D_\pi(x, (\pi^{d_1}, \ldots \pi^{d_{r-1}}), k)\right)_k$ satisfies a functional equation with scattering matrix
\begin{equation*}
\left(\frac{qx}{g}\right)^{\deg F} \left( \Gamma_{\pi, k, \ell}(x, \deg F) \right)_{k, \ell}
\end{equation*}
as desired. 

The local rationality property and functional equation of $\left(D_\pi(x, (\pi^{d_1}, \ldots \pi^{d_{r-1}}), k)\right)_k$ imply the global rationality property and functional equation of $\left(D(x, (\pi^{d_1}, \ldots \pi^{d_{r-1}}), k)\right)_k$ by Lemma \ref{LocalImpliesGlobal}. This finishes the verification of the global rationality property and functional equations for all $\vec{f}$ with $\deg f_i=d_i$. 

For $\pi$ of degree $1$, we have already verified the local rationality property and functional equation of $\left(D_\pi(x, (\pi^{d_1}, \ldots \pi^{d_{r-1}}), k)\right)_k$. To extend to $\pi$ of arbitrary degree, we use Axiom \ref{Axiom3}, which expresses the coefficients of $\left(D_\pi(x, (\pi^{d_1}, \ldots \pi^{d_{r-1}}), k)\right)_k$ as a sum of the $\deg \pi$th powers of the same Weil numbers appearing in the coefficients of $\left(D_T(x, (T^{d_1}, \ldots T^{d_{r-1}}), k)\right)_k$, times $\left(\frac{\pi'}{\pi}\right)_\chi^{\sum_{i=1}^{r} d_i M_{ii}}$ where $d_r$ is the coefficient of $x^r$. This carries over to the functional equations. Set $d=v_{\pi}(F)= d_1 n_{r1} + \cdots + d_{r-1} n_{r \, (r-1)}$. The scattering matrix $\left(\Gamma_{\pi, k, \ell}(x, d)\right)_{k, \ell}$ is obtained from $\left(\Gamma_{T, k, \ell}(x, d) \right)_{k, \ell}$ by raising $x$ and each Weil number appearing in each entry to the power $\deg \pi$ and multiplying the $k,\ell$ entry by $\left(\frac{\pi'}{\pi}\right)_\chi^{(k-\ell)M_{ii}}$. Examining the formulas in Proposition \ref{LocalFE}, we see that $\Gamma_{\pi,k,k}(x, d)$ is obtained from $\Gamma_{T,k,k}(x, d)$ by raising $x$ and each Weil number to the power $\deg \pi$ while for $k\neq \ell$, $\Gamma_{\pi,k,\ell}(x, d)$ is obtained from $\Gamma_{T,k,\ell}(x, d)$ by raising $x$ and each Weil number to the power $\deg \pi$, except in the $g_{(\xi \chi^{M_{rr}})^{2k- d-1}}(1,\pi)$ term, but this term exactly cancels the $\left(\frac{\pi'}{\pi}\right)_\chi^{(k-\ell)M_{ii}}$ factor since $2k - d- 1 \equiv k-\ell \bmod n$. 

Further, the local rationality property and functional equation extend to $\left(D_\pi(x, \vec{f}, k)\right)_k$ with $v_\pi(f_i)=d_i$ by equation \eqref{LocalSimplification} and by including the necessary residue symbols in the scattering matrix entries. This finishes the verification of the local rationality property and functional equations for all $\vec{f}$ with $v_\pi f_i=d_i$. 
\end{proof}

\subsection{Multivariable Functional Equation}\label{ss-k-multi}

The functional equation of Theorem \ref{TheoremFE} applies to single-variable generating series, but this leads to a functional equation for the multivariable series which are our main objects of study. 

Let $a(f_1, \ldots f_r)$ be the axiomatic multiple Dirichlet series coefficients determined by the data of $q$, $\chi$, $M$ as before. We continue with the same assumptions on these parameters. Write $\vec{x}=(x_1, \ldots x_r) \in \C^r$, $\vec{k}=(k_1, \ldots k_r) \in \Z^r$ and let 
\begin{align}
&Z(\vec{x}, \vec{k}) = \sum_{\substack{f_1, \ldots f_r \in \F_q[T]^+ \\ \deg f_i \equiv k_i \bmod n}} a(f_1,\ldots f_r) x_1^{\deg f_1} \cdots x_r^{\deg f_r} \\
&Z_{\pi}(\vec{x}, \vec{k}) = \sum_{\substack{j_1, \ldots j_r \geq 0 \\ j_i \equiv k_i \bmod n}} a(\pi^{j_1}, \ldots \pi^{j_r}) x_1^{j_1\deg \pi} \cdots x_r^{j_r \deg \pi}.
\end{align}

We now introduce notation relevant for the multivariable functional equations. Fix $i \in \{1, \ldots r\}$ and assume that $\gcd(n, M_{ii}+n/2) | M_{ij}$, for all $j$. For fixed $k_1, \ldots k_{i-1}, k_{i+1}, \ldots k_r$, let $K=k_1 n_{i1} + \cdots +k_{i-1} n_{i \, (i-1)} + k_{i+1} n_{i \, (i+1)} +\cdots + k_r n_{ir}$, and let $v_i=k_{i+1} M_{i \,(i+1)} +\cdots + k_r M_{i r}$.

Define a transformation $\sigma_{i; q, \chi, M}: \C^r \to \C^r$ as follows:
\begin{equation} \label{sigma-i-kubota}
\sigma_{i; q, \chi, M}(\vec{x})_j = \left\lbrace \begin{array}{cc} 
\frac{g^2}{q^2x_i} & j=i \\
x_j\left(\frac{q x_i}{g}\right)^{n_{ij}} & j \neq i \end{array} \right. .
\end{equation}

For any power series $D(x)$, let $S^{k, n} D(x)$ be the series consisting of all terms in $D(x)$ whose powers of $x$ are congruent to $k$ mod $n$. This can be computed as $\frac1n\sum\limits_{j=0}^{n-1} \zeta^{-jk} D(\zeta^j x)$ where $\zeta$ is a primitive $n$-th root of unity. $S^{k, n}$ is a $\C$-linear operation on power series, which satisfies $S^{k, n} x^K D(x) = x^K S^{k-K, n}D(x)$, and more generally
\begin{equation*}
S^{k, n} (D_1 D_2) = \sum_{j=0}^{n-1} (S^{k-j, n}D_1)(S^{j, n}D_2).
\end{equation*}
If $D(x)$ is the power series of a rational function, then so is $S^{k, n} D(x)$.

We need to define an expansion operation that transforms the $n_r \times n_r$ scattering matrices from the previous section into $n\times n$ scattering matrices, using the fact that $n_r$ divides $n$. Let $m$ be a natural number dividing $n$. Let $\Gamma(x,K)$ be an $m\times m$ matrix with entries power series in a variable $x$, depending on an integer $K$, such that the exponent of $x$ in each nonzero term in the entry $\Gamma_{k,\ell} (x,K)$ is congruent to $k + \ell- K \bmod m$. Then let $E_m^n \Gamma(x,K)$ be the $n\times n$ matrix whose entries are given by 
\begin{equation*}
(E_m^n \Gamma)_{k,\ell}(x,K) = S^{k+\ell-K, n} \Gamma_{k \% m, \ell \% m} (x,K).
\end{equation*} 

\begin{theorem}\label{TheoremMultiFE}
Assume that $q \equiv 1 \bmod 4$, and $M$ is such that $\gcd(n, M_{ii}+n/2) | M_{ij}$, for all $j$. The vector $(Z(\vec{x}, (k_1, \ldots k_r))_{k_i}$ satisfies a functional equation 
\begin{equation}\label{eq-TheoremMultiFE}
(Z(\vec{x}, (k_1, \ldots k_r)))_{k_i} = \left( \chi^{v_i (k_i+\ell_i)}(-1) E_{n_i}^n \Gamma_{k_i, \ell_i}(x_i, K) \right)_{k_i, \ell_i} (Z(\sigma_i(\vec{x}), (k_1, \ldots k_{i-1}, \ell_i, k_{i+1}, \ldots k_r)))_{\ell_i}
\end{equation}
where $\left( E_{n_i}^n \Gamma_{k, \ell}(x, K) \right)_{k, \ell}$ is the $n \times n$ scattering matrix obtained by applying the expansion operation above to the scattering matrix from Proposition \ref{PropGlobalFE}, and Theorem \ref{TheoremFE}, namely
\begin{equation*}
\begin{split}
E_{n_i}^n\Gamma_{k,\ell}(x, K) =& q^{(n-n_i-(n-n_i+1+K-k-\ell)\% n+(1+K-k-\ell)\%n_i)/n_i} \left(\frac{qx}{g}\right)^{n-n_i+1 - (n-n_i+1+K-k-\ell)\% n}\\
&\frac{1-q}{1-q^{n/n_i}\left(\frac{q x}{g}\right)^n}
\end{split}
\end{equation*}
when $k \equiv \ell \bmod n_i$,
\begin{equation*}
\begin{split}
E_{n_i}^n\Gamma_{k,\ell}(x, K)=&(\xi \chi^{M_{rr}})^{\ell(1+K)}(-1) g_{(\xi \chi^{M_{rr}})^{2k-K -1}} \left(\frac{qx}{g}\right) \\
&\frac{q^{((k+\ell-K-1)\%n)/n_i}\left(\frac{qx}{g}\right)^{(k+\ell-K-1)\%n}-q^{((k+\ell-K-1+n_i)\%n)/n_i} \left(\frac{qx}{g}\right)^{-n_i+(k+\ell-K-1+n_i)\%n}}{1-q^{n/n_i}\left(\frac{qx}{g}\right)^n} 
\end{split}
\end{equation*}
for $\ell \equiv 1+K -k \bmod n_i$, and all other entries equal to $0$. In addition, there are special entries when $k \equiv \ell \equiv 1+K -k-n_i \bmod n$. In this case 
\begin{equation*}
\Gamma_{k,\ell}(x, K)=\left(\frac{qx}{g}\right)^{1-n_i}.
\end{equation*}
\end{theorem}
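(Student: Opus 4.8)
The plan is to obtain \eqref{eq-TheoremMultiFE} by singling out the variable $x_i$, viewing $Z(\vec x,\vec k)$ as a sum over the remaining polynomials $(f_j)_{j\neq i}$ of one-variable subseries in $x_i$, applying the single-variable functional equation of Theorem~\ref{TheoremFE} term by term, and reassembling. Since Section~\ref{SectionKubota} is written with the last index distinguished, the first move is to relabel so that $i$ plays the role of $r$; this is harmless because the axiomatic coefficients $a(f_1,\dots,f_r;q,\chi,M)$ transform, under a simultaneous permutation of the variables and of the rows and columns of $M$, by an explicit product of reciprocity factors --- a consequence of the uniqueness in \cite[Theorem~1.1]{s-amds}. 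Moving the index $i$ into the last slot reverses exactly the pairs $(i,j)$ with $j>i$ in the twisted-multiplicativity product, so by $\res{f_i}{f_j}_\chi=\chi(-1)^{\deg f_i\deg f_j}\res{f_j}{f_i}_\chi$ it introduces the scalar $\chi(-1)^{\deg f_i\sum_{j>i}M_{ij}\deg f_j}=\chi^{k_iv_i}(-1)$ (using $\deg f_j\equiv k_j\bmod n$ and $\chi(-1)^2=1$). I will carry this scalar along; together with the analogous scalar for the transformed side it produces the prefactor $\chi^{v_i(k_i+\ell_i)}(-1)$ in \eqref{eq-TheoremMultiFE}.

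\textbf{Reduction to the one-variable equation.} With $i$ now distinguished and $\vec f=(f_j)_{j\neq i}$, $F=\prod_{j\neq i}f_j^{n_{ij}}$, write
\begin{equation*}
Z(\vec x,\vec k)=\sum_{\substack{\deg f_j\equiv k_j\,(n)\\ j\neq i}}\Big(\prod_{j\neq i}x_j^{\deg f_j}\Big)\,S^{k_i,n}D(x_i,\vec f,k_i),
\end{equation*}
the operator $S^{k_i,n}$ being needed because $D$ collects $f_i$ in a residue class mod $n_i$ while $Z$ imposes one mod $n$, and $n_i\mid n$. Because we sum over \emph{all} $\vec f$, it is essential to invoke Theorem~\ref{TheoremFE} (not merely Proposition~\ref{PropGlobalFE}): it gives, for every $\vec f$, the identity \eqref{GlobalFE} with scattering matrix $\Gamma_{k,\ell}(x_i,\deg F)$ and prefactor $(qx_i/g)^{\deg F}$. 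Now $\deg F=\sum_{j\neq i}n_{ij}\deg f_j\equiv\sum_{j\neq i}n_{ij}k_j=K\bmod n$ (a fortiori mod $n_i$), so $\Gamma_{k,\ell}(x_i,\deg F)=\Gamma_{k,\ell}(x_i,K)$ is independent of $\vec f$, and the only $\vec f$-dependence left is the monomial $(qx_i/g)^{\deg F}=\prod_{j\neq i}(qx_i/g)^{n_{ij}\deg f_j}$. Substituting \eqref{GlobalFE}, interchanging the $\vec f$-sum with the finite matrix sum, and combining $\prod_{j\neq i}x_j^{\deg f_j}$ with this monomial yields $\prod_{j\neq i}\big(x_j(qx_i/g)^{n_{ij}}\big)^{\deg f_j}=\prod_{j\neq i}\sigma_i(\vec x)_j^{\deg f_j}$, while $x_i$ is replaced by $g^2/(q^2x_i)=\sigma_i(\vec x)_i$; this is precisely how the transformation $\sigma_i$ of \eqref{sigma-i-kubota} materializes.

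\textbf{Resolving the projections.} It remains to distribute the outer $S^{k_i,n}$ over the matrix product, i.e.\ over $\Gamma_{k_i,\ell}(x_i,K)\cdot D(g^2/(q^2x_i),\vec f,\ell)$ summed over $\ell$ mod $n_i$, using $S^{k,n}(D_1D_2)=\sum_m(S^{k-m,n}D_1)(S^{m,n}D_2)$ and $S^{k,n}(x^KD)=x^KS^{k-K,n}D$. Since each entry $\Gamma_{k_i,\ell}(x_i,K)$ has all of its $x_i$-exponents in a single residue class mod $n_i$, projecting it into residue classes mod $n$ is by definition the expansion $E_{n_i}^n\Gamma$; the complementary projection lands on $D(g^2/(q^2x_i),\vec f,\cdot)$, refining its index mod $n_i$ to an index $\ell_i$ mod $n$. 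Summing $\prod_{j\neq i}\sigma_i(\vec x)_j^{\deg f_j}\,D(\sigma_i(\vec x)_i,\vec f,\ell_i)$ over all $(f_j)_{j\neq i}$ reconstitutes $Z(\sigma_i(\vec x),(k_1,\dots,\ell_i,\dots,k_r))$ for the relabelled coefficients, hence up to the reciprocity scalar $\chi^{\ell_iv_i}(-1)$; collecting this with the scalar $\chi^{k_iv_i}(-1)$ from the left-hand side gives $\chi^{v_i(k_i+\ell_i)}(-1)$ and yields \eqref{eq-TheoremMultiFE}. The explicit entry formulas for $E_{n_i}^n\Gamma$ are then checked by a direct computation: apply $S^{\bullet,n}$ to the diagonal, off-diagonal and special entries of $\Gamma$ from Proposition~\ref{PropGlobalFE}, expand $(1-q(qx/g)^{n_i})^{-1}$ as a geometric series, and read off the surviving exponents (the floor and programmer's-modulo expressions in the statement are exactly this bookkeeping). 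Throughout, Theorem~\ref{TheoremFE} guarantees each $D$ is rational in $x_i$, so these are identities of vectors of rational functions in $x_i$ with the other $x_j$ as parameters, and the negative powers of $x_i$ coming from $g^2/(q^2x_i)$ cause no difficulty.

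\textbf{Local version and main difficulty.} The local statement Theorem~\ref{TheoremLocalMultiFE} is proved verbatim, replacing Proposition~\ref{PropGlobalFE} by Proposition~\ref{PropLocalFE}, $D$ by $D_\pi$, $Z$ by $Z_\pi$, and applying $E_{n_i}^n$ to the local scattering matrix. I expect the main obstacle to be the projection step: keeping the two layers of congruence --- mod $n_i$ from the Kubota/Gauss-sum structure of the coefficients, and mod $n$ from the ambient multiple Dirichlet series --- straight through the non-commuting interaction of $S^{\bullet,n}$ with matrix multiplication, and confirming that this interaction is exactly repackaged by $E_{n_i}^n$, while keeping the reciprocity factors that produce the $v_i$-twist consistent. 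The verification of the entry formulas for $E_{n_i}^n\Gamma$ is then a routine if lengthy exponent computation.
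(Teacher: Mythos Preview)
Your proposal is correct and follows essentially the same route as the paper: reduce to the distinguished-index case via a permutation (introducing the reciprocity scalar $\chi^{k_iv_i}(-1)$), express $Z$ as a sum of $S^{k_i,n}D(x_i,\vec f,k_i)$, apply the single-variable functional equation from Theorem~\ref{TheoremFE} term by term, absorb $(qx_i/g)^{\deg F}$ into $\sigma_i(\vec x)_j$, and reparametrize the double sum over $(j,\ell \bmod n_i)$ as a single sum over $\ell_i \bmod n$ using that $D(g^2/(q^2x_i),\vec f,\ell)$ has exponents in a fixed class mod $n_i$. Your treatment of the $\deg F$ versus $K$ distinction is in fact slightly more careful than the paper's, and your identification of the expansion $E_{n_i}^n$ with the projection step is exactly the paper's mechanism.
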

\begin{proof} First we will prove the statement for $i=r$. We can write $Z( \vec{x}, (k_1, \ldots k_r))$ (with $k_r$ mod $n$) in terms of the $D(x_r, (f_1, \ldots f_{r-1}), k_r)$ (with $k_r$ mod $n_r$) from the previous section as follows:
\begin{equation*}
Z( \vec{x}, (k_1, \ldots k_r)) = \sum_{\substack{f_1, \ldots f_{r-1} \in \F_q[T]^+ \\ \deg f_i \equiv k_i \bmod n}}   x_1^{\deg f_1}\cdots x_{r-1}^{\deg f_{r-1}} S^{k_r, n} D(x_r, (f_1, \ldots f_{r-1}), k_r).
\end{equation*}
Here we understand $k_r$ as an integer mod $n$. After applying the functional equation \eqref{KubotaFE}, we obtain
\begin{equation*}
\begin{split}
&Z( \vec{x}, (k_1, \ldots k_r)) = \sum_{\substack{f_1, \ldots f_{r-1} \in \F_q[T]^+ \\ \deg f_i \equiv k_i \bmod n}}  x_1^{\deg f_1}\cdots x_{r-1}^{\deg f_{r-1}} S^{k_r, n} \sum_{\ell_r=1}^{n_r} \left(\frac{qx_r}{g}\right)^K
\Gamma_{k_r, \ell_r}(x_r, K) D(\frac{g^2}{q^2 x_r}, (f_1, \ldots f_{r-1}), \ell_r) \\
&=\sum_{\substack{f_1, \ldots f_{r-1} \in \F_q[T]^+ \\ \deg f_i \equiv k_i \bmod n}}  x_1^{\deg f_1}\cdots x_{r-1}^{\deg f_{r-1}} \sum_{\ell_r=1}^{n_r} \sum_{j=0}^n \left(S^{k_r-j, n} \left(\frac{qx_r}{g}\right)^K
\Gamma_{k_r, \ell_r}(x_r, K)\right)\left(S^{j, n} D(\frac{g^2}{q^2 x_r}, (f_1, \ldots f_{r-1}), \ell_r) \right).
\end{split}
\end{equation*}

Because $D(\frac{g^2}{q^2 x_r}, (f_1, \ldots f_{r-1}), \ell_r)$ is a rational function whose power series expression only contains powers of $x_r$ congruent to $-\ell_r$ modulo $n_r$, the terms of the $j$ sum vanish except when $j \equiv -\ell_r \bmod n_r$. We can thus reparametrize the double sum over $j$ and $\ell_r \bmod n_r$ as a single sum over $\ell_r \bmod n$, with $j=-\ell_r$. This leads to the formula 
\begin{equation*}
\begin{split}
Z( \vec{x}, (k_1, \ldots k_r)) = &\sum_{\substack{f_1, \ldots f_{r-1} \in \F_q[T]^+ \\ \deg f_i \equiv k_i \bmod n}}  \left(\left(\frac{qx_r}{g}\right)^{n_{r 1}} x_1\right)^{\deg f_1}\cdots \left(\left(\frac{qx_r}{g}\right)^{n_{r \, (r-1)}} x_{r-1}\right)^{\deg f_{r-1}} \\
&\sum_{\ell_r=1}^{n} \left(S^{k_r+\ell_r-K, n} 
\Gamma_{k_r, \ell_r}(x_r, K)\right)\left(S^{-\ell_r, n} D(\frac{g^2}{q^2 x_r}, (f_1, \ldots f_{r-1}), \ell_r) \right)
\end{split}
\end{equation*}
which is equivalent to the desired vector functional equation
\begin{equation}
(Z(\vec{x}, (k_1, \ldots k_r)))_{k_r} = \left( E_{n_r}^n \Gamma_{k_r, \ell_r}(x_r, K) \right)_{k_r, \ell_r} (Z(\sigma_r(\vec{x}), (k_1, \ldots k_{r-1}, \ell_r)))_{\ell_r}.
\end{equation}
This completes the proof when $i=r$. 

For an arbitrary $i$ with $\gcd(n, M_{ii}+n/2) | M_{ij}$, for all $j$, let $\rho$ be the matrix representing the permutation $(i \, r \, r-1 \, \ldots \, i+1)$. If $a(f_1, \ldots f_r; M)$ are axiomatic coefficients for the matrix $M$, then we have
\begin{equation*} 
a(\rho(f_1, \ldots f_r); \rho M \rho^{-1}) = a(f_1, \ldots f_r; M) \chi(-1)^{\deg f_i (M_{i \, (i+1)} \deg f_{i+1} + \cdots + M_{ir}\deg f_r)}.
\end{equation*}
It is straightforward to check that this expression satisfies the axioms for the conjugate matrix $\rho M \rho^{-1}$. The power of $\chi(-1)$ is introduced because of the reciprocity law, since the order of the $f_j$ terms changes. Thus 
\begin{equation*}
(\chi(-1)^{k_i v_i} Z(\vec{x}, (k_1, \ldots k_r); M))_{k_i} = Z(\rho(\vec{x}), \rho(k_1, \ldots k_r); \rho M \rho^{-1})_{k_i}.
\end{equation*}
The $\sigma_r$ functional equation on the right side corresponds to a $\sigma_i$ functional equation on the left. To obtain the $\sigma_i$ functional equation for $Z(\vec{x}, (k_1, \ldots k_r); M)_{k_i}$ on its own, conjugate the scattering matrix by a diagonal matrix whose entry at the $(k_i, k_i)$ position is $\chi(-1)^{k_i v_i}$. This introduces the powers of $\chi(-1)^{v_i}$ into the formulas for the scattering matrix entries.
\end{proof}

We also have the analogous local functional equation.

\begin{theorem} \label{TheoremLocalMultiFE}
Assume that $q \equiv 1 \bmod 4$, $n$ is even, and $M$ is such that $\gcd(n, M_{ii}+n/2) | M_{ij}$, for all $j$. The vector $(Z_{\pi}(\vec{x}, (k_1, \ldots k_r))_{k_i}$ satisfies a functional equation 
\begin{equation}
(Z_{\pi}(\vec{x}, (k_1, \ldots k_r))_{k_i} = (\chi^{v_i (k_i+\ell_i)\deg \pi}(-1) E_{n_i}^n\Gamma_{\pi, k_i, \ell_i}(x_i, K))_{k_i, \ell_i} (Z_{\pi}(\sigma_i(\vec{x}), (k_1, \ldots k_{i-1}, \ell_i, k_{i+1, \ldots k_r}))_{\ell_i}
\end{equation}
where $\left( E_{n_i}^n \Gamma_{\pi, k, \ell}(x, K) \right)_{k, \ell}$ is the $n \times n$ scattering matrix obtained by applying the expansion operation above to the scattering matrix from Proposition \ref{PropLocalFE} and Theorem \ref{TheoremFE}, namely
\begin{equation*}
\begin{split}
E_{n_i}^n\Gamma_{k,\ell}(x, K) = & q^{(n-n_i-(n-n_i+1+K-k-\ell)\% n+(1+K-k-\ell)\%n_i)\deg \pi/n_i} \left(\frac{qx}{g}\right)^{(n-n_i+1 - (n-n_i+1+K-k-\ell)\% n)\deg \pi} \\
&\frac{1-q^{\deg \pi}}{1-q^{n\deg \pi/n_i}\left(\frac{q x}{g}\right)^{n\deg \pi}}
\end{split}
\end{equation*}
when $k \equiv \ell \bmod n_i$,
\begin{equation*}
\begin{split}
&E_{n_i}^n\Gamma_{k,\ell}(x, K)=(\xi \chi^{M_{rr}})^{\ell(1+K)\deg \pi}(-1) \frac{g_{(\xi \chi^{M_{rr}})^{2k-K -1}}(1, \pi) }{q^{\deg \pi}} \left(\frac{qx}{g}\right)^{\deg \pi} \\
&\frac{q^{((k+\ell-K-1)\%n)\deg \pi/n_i}\left(\frac{qx}{g}\right)^{ ((k+\ell-K-1)\%n)\deg \pi}-q^{((k+\ell-K-1+n_i)\%n)\deg \pi/n_i} \left(\frac{qx}{g}\right)^{(-n_i+(k+\ell-K-1+n_i)\%n)\deg \pi}}{1-q^{n \deg \pi /n_i}\left(\frac{qx}{g}\right)^{n \deg \pi}} 
\end{split}
\end{equation*}
for $\ell \equiv 1+K -k \bmod n_i$, and all other entries equal to $0$. In addition, there are special entries when $k \equiv \ell \equiv 1+K -k-n_i \bmod n$. In this case 
\begin{equation*}
\Gamma_{k,\ell}(x, K)=\left(\frac{qx}{g}\right)^{(1-n_i)\deg \pi}.
\end{equation*}
\end{theorem}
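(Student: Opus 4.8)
The plan is to mirror exactly the argument used for the global multivariable functional equation in the proof of Theorem \ref{TheoremMultiFE}, replacing the global single-variable series $D$ by the local series $D_\pi$ throughout. First I would treat the case $i=r$. Starting from the observation that
\begin{equation*}
Z_\pi(\vec{x}, (k_1,\ldots,k_r)) = \sum_{\substack{j_1,\ldots,j_{r-1}\geq 0 \\ j_m \equiv k_m \bmod n}} x_1^{j_1\deg\pi}\cdots x_{r-1}^{j_{r-1}\deg\pi}\, S^{k_r,n} D_\pi(x_r, (\pi^{j_1},\ldots,\pi^{j_{r-1}}), k_r),
\end{equation*}
where the inner $k_r$ is read mod $n$, I would apply the local single-variable functional equation from Theorem \ref{TheoremFE}, i.e.\ the vector identity \eqref{LocalFE} with scattering matrix $\left(\frac{qx_r}{g}\right)^{v_\pi(F)\deg\pi}\left(\res{F^{(\pi)}}{\pi}_{\xi\chi^{M_{rr}}}^{k+\ell}\Gamma_{\pi,k,\ell}(x_r, v_\pi F)\right)_{k,\ell}$ from Proposition \ref{PropLocalFE}. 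Since $\vec f_\pi = (\pi^{j_1},\ldots,\pi^{j_{r-1}})$ has $v_\pi(F) = j_1 n_{r1}+\cdots+j_{r-1}n_{r\,(r-1)} \equiv k_1 n_{r1}+\cdots+k_{r-1}n_{r\,(r-1)} = K \bmod n_r$, and $F^{(\pi)} = 1$ here, the residue-symbol twists disappear and the scattering matrix depends only on $K$ mod $n_r$, just as in the global case. Then I would use the multiplicativity of the projection operators, $S^{k,n}(D_1 D_2) = \sum_{j} (S^{k-j,n}D_1)(S^{j,n}D_2)$, together with the fact that $D_\pi(\frac{g^2}{q^2x_r}, \cdot, \ell_r)$ as a rational function of $x_r$ only contains exponents $\equiv -\ell_r \bmod n_r$ (by Theorem \ref{TheoremFE}, since the denominator divides $1-q^{-\deg\pi}(qx_r/g)^{n_r\deg\pi}$ and the numerator exponents are controlled mod $n_r$), to collapse the double sum over $j$ and $\ell_r \bmod n_r$ into a single sum over $\ell_r \bmod n$ with $j = -\ell_r$. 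The factors $\left(\frac{qx_r}{g}\right)^{n_{rm}}$ appearing in the exponent bookkeeping get absorbed into the monomials $x_m^{j_m\deg\pi}$, producing exactly the substitution $\sigma_r$ of \eqref{sigma-i-kubota} on the variables, and the expansion operator $E_{n_r}^n$ is precisely the reorganization $(E_m^n\Gamma)_{k,\ell}(x,K) = S^{k+\ell-K,n}\Gamma_{k\%m,\ell\%m}(x,K)$ applied to the local scattering matrix from Proposition \ref{PropLocalFE}. Substituting the explicit formulas from that proposition into the definition of $E_{n_i}^n$ and simplifying $(1-(v_\pi F+1-2k)\%n_r)$-type exponents into the stated closed forms with $\%n$ and $\%n_i$ is a routine but lengthy computation.

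For general $i$ with $\gcd(n, M_{ii}+n/2)\mid M_{ij}$ for all $j$, I would use the same conjugation trick as in Theorem \ref{TheoremMultiFE}: let $\rho$ be the permutation matrix for the cycle $(i\ r\ r{-}1\ \cdots\ i{+}1)$, and use that the axiomatic coefficients transform under $\rho$ by $a(\rho(f_1,\ldots,f_r); \rho M\rho^{-1}) = a(f_1,\ldots,f_r; M)\,\chi(-1)^{\deg f_i(M_{i\,(i+1)}\deg f_{i+1}+\cdots+M_{ir}\deg f_r)}$. This identity holds for $Z_\pi$ exactly as for $Z$ because it is a statement about individual coefficients, and it follows from the reciprocity law (reordering the $f_j$ in the twisted-multiplicativity product introduces the power of $\chi(-1)$). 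Conjugating the $\sigma_r$ local functional equation by the diagonal matrix with $(k_i,k_i)$-entry $\chi(-1)^{k_i v_i\deg\pi}$ — note the extra $\deg\pi$ compared to the global case, since local coefficients at $\pi^j$ carry degree $j\deg\pi$ — yields the powers $\chi^{v_i(k_i+\ell_i)\deg\pi}(-1)$ in the stated scattering matrix. One also has to note that the case $i=r$ does not require $q\equiv 1\bmod 4$ for the local statement beyond what was already used to prove Theorem \ref{TheoremFE}, so the hypothesis is inherited rather than newly invoked.

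The main obstacle, as in the global case, is purely bookkeeping: verifying that the closed-form entries of $E_{n_i}^n\Gamma_{\pi,k,\ell}$ displayed in the statement agree with what one gets by mechanically applying $S^{k+\ell-K,n}$ to the local scattering matrix of Proposition \ref{PropLocalFE}. The subtlety is that $\Gamma_\pi$ has a diagonal piece, an off-diagonal piece (indexed by $\ell\equiv 1+v_\pi F -k$), and the special entries when $k\equiv\ell\equiv 1+v_\pi F-k\bmod n_r$; the projection operator, which sifts exponents mod $n$, splits each of these into its residues mod $n/n_i$ worth of terms, and the geometric-series denominators $1-q^{-\deg\pi}(qx/g)^{n_r\deg\pi}$ get repackaged as $1-q^{n\deg\pi/n_i}(qx/g)^{n\deg\pi}$ after summing the $n/n_i$ residue classes. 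Matching the powers of $q$ (the exponents like $(n-n_i-(n-n_i+1+K-k-\ell)\%n+(1+K-k-\ell)\%n_i)\deg\pi/n_i$) requires carefully tracking how the single-variable exponent of $x$ — which on the local side always comes with a factor $\deg\pi$ and with $q^{\deg\pi}$ weights — distributes across residue classes, and checking that the ``special entry'' case (when a diagonal and an off-diagonal contribution coincide) produces $\left(\frac{qx}{g}\right)^{(1-n_i)\deg\pi}$ rather than a spurious sum. All of this is parallel to the global computation already carried out in the proof of Theorem \ref{TheoremMultiFE}, with every $q$ replaced by $q^{\deg\pi}$ and every Weil number $\alpha$ (in particular each finite-field Gauss sum) replaced by its $\deg\pi$-th power, so no genuinely new difficulty arises.
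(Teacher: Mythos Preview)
Your proposal is correct and follows exactly the approach the paper indicates: the paper's proof consists of the single sentence ``The proof is the same as that of Theorem \ref{TheoremMultiFE}, using the local functional equation in Theorem \ref{TheoremFE},'' and your write-up is a faithful expansion of precisely that strategy, including the reduction to $i=r$, the application of \eqref{LocalFE}, the $S^{k,n}$ bookkeeping, and the permutation-conjugation trick with the extra $\deg\pi$ in the $\chi(-1)$ power.
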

The proof is the same as that of Theorem \ref{TheoremMultiFE}, using the local functional equation in Theorem \ref{TheoremFE}.

\section{Dirichlet Functional Equation}\label{SectionDirichlet}
\subsection{Single Variable Functional Equation}
In this section, we prove a functional equation for axiomatic multiple Dirichlet series based on functional equations of Dirichlet L-functions. Unlike the functional equation constructed in the previous section, this functional equation usually relates multiple Dirichlet series constructed from two different matrices. The one exception is when the relevant Dirichlet characters are quadratic, where this functional equation relates a multiple Dirichlet series to itself, and matches the previously constructed functional equation. This special case will be discussed at the end of this section.

Again, we begin with a single-variable subseries and its functional equation. We still take $q$ an odd prime power (with no assumption that $q\equiv 1 \bmod 4$ in this section), $\chi$ a character of even order $n$, and $M=(M_{ij})$ an $r \times r$ integer matrix $M=(M_{ij})$, whose entries we assume lie in $[0, n)$. For this section, we make the assumption that $M_{rr} =0$. This implies $n_r=2$, each $n_{ri}=0$ or $1$, and $p_{ri}=(-M_{ri})\% \frac{n}{2}$.

The formula for coefficients $a(f_1, \ldots f_r)$ with $f_1 \cdots f_{r-1}$ squarefree becomes substantially simpler in this case. It matches formula \eqref{SquarefreeCoefficients}, which had the stronger hypothesis of $f_1 \cdots f_{r}$ squarefree.

\begin{prop} \label{PropDirichletCoeff}
Assume $M_{rr}=0$. Fix $f_1, \ldots ,f_{r-1}$ with $f_1\cdots f_{r-1}$ squarefree. Then for all $f_r$,
\begin{equation}
a(f_1, \ldots f_r; q, \chi, M)=\prod_{i=1}^{r-1} \res{f_i'}{f_i}_{\chi}^{M_{ii}} \, \prod_{i=1}^{r-1}\prod_{j=i+1}^{r} \res{f_i}{f_j}_{\chi}^{M_{ij}}.
\end{equation}
\end{prop}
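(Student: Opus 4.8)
The strategy is to derive this as a special case of Proposition \ref{PropCoeff4}, exactly as the surrounding text indicates. When $M_{rr}=0$, the character $\xi\chi^{M_{rr}}=\xi$ has order $n_r=2$, so $\gcd(n,M_{rr}+n/2)=\gcd(n,n/2)=n/2$, which forces each $n_{ri}\in\{0,1\}$ and $p_{ri}=(-M_{ri})\%\tfrac n2$. First I would substitute these values into formula \eqref{Coeff4}. The Gauss-sum denominator $g_{\xi\chi^{M_{rr}}}^{\deg f_r}=g_\xi^{\deg f_r}$ combines with the prefactor $\xi(-1)^{\deg f_r(\deg f_r-1)/2}$; by Pellet's formula (Lemma \ref{Pellet}) and the classical identity $g_\xi^2=\xi(-1)q$ these powers of $q$ and $\xi(-1)$ will ultimately cancel against the internal sum. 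The key point is that when $n_r=2$ the exterior sum over $u$ with $u^{n_r}=u^2\mid f_r$ together with the function field Gauss sum $g_\xi(F,f_r/u^2)$ should collapse: I expect that after applying the Gauss-sum evaluation Lemma \ref{GaussSumEval} (with $\chi$ there replaced by $\xi$, $n$ replaced by $2$) and performing the interchange of summation over $u$, everything telescopes to the single residue symbol $\res{F}{f_r}_\xi^{-1}$ times $\res{f_1^{p_{r1}}\cdots f_{r-1}^{p_{r(r-1)}}}{f_r}_\chi^{-1}$, which by the character identity \eqref{CharacterEquality} (i.e.\ $\chi^{-p_{ri}}(\xi\chi^{M_{rr}})^{-n_{ri}}=\chi^{M_{ri}}$, now reading $\chi^{-p_{ri}}\xi^{-n_{ri}}=\chi^{M_{ri}}$) is precisely $\prod_{i=1}^{r-1}\res{f_i}{f_r}_\chi^{M_{ri}}$. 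Combined with $\varepsilon=\prod_{i}\res{f_i'}{f_i}_\chi^{M_{ii}}\prod_{i<j\le r-1}\res{f_i}{f_j}_\chi^{M_{ij}}$ this yields exactly the claimed formula.

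An alternative, and perhaps cleaner, route that avoids re-deriving the coefficient formula is to verify directly that the right-hand side $\prod_{i=1}^{r-1}\res{f_i'}{f_i}_\chi^{M_{ii}}\prod_{i<j}\res{f_i}{f_j}_\chi^{M_{ij}}$ satisfies the axioms as a function of $f_r$ once $f_1\cdots f_{r-1}$ is fixed and squarefree, and appeal to uniqueness. Concretely: twisted multiplicativity in $f_r$ follows since $f_r\mapsto\prod_{i<r}\res{f_i}{f_r}_\chi^{M_{ir}}$ is a Dirichlet character modulo $f_1\cdots f_{r-1}$, and a product of a character with the correct twisted-multiplicativity cocycle is automatically consistent with Axiom \ref{Axiom1}; normalization (Axiom \ref{Axiom2}) is immediate for $f_r$ linear; and for $f_r=\pi^d$ a prime power with $\pi\nmid f_1\cdots f_{r-1}$ the formula gives $\res{f_1}{\pi}_\chi^{dM_{1r}}\cdots$, a single Weil number of weight $0$, which satisfies Axioms \ref{Axiom3}, \ref{Axiom5}; Axiom \ref{Axiom4} reduces to a character-sum count that vanishes unless the relevant character is trivial, matching the geometry. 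However, since the formula is not valid for all tuples (only for $f_1\cdots f_{r-1}$ squarefree), one cannot invoke \cite[Theorem 1.1]{s-amds} verbatim, so this route still requires an induction on $\deg f_r$ parallel to the proof of Proposition \ref{PropCoeff2}. For that reason I would present the first route: deduce everything from Proposition \ref{PropCoeff4}, which is already proven in full generality.

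The main obstacle is the bookkeeping in the $u$-sum collapse: one must check that for $n_r=2$ the weighted sum $\sum_{u^2\mid f_r}q^{\deg u}g_\xi(F,f_r/u^2)$, after dividing by $g_\xi^{\deg f_r}$ and inserting $\xi(-1)^{\deg f_r(\deg f_r-1)/2}$, equals $\res{F}{f_r}_\xi^{-1}$ on the nose — including the delicate cases where $f_r$ or $f_r/u^2$ is a perfect square (an $n_r$-th power for $n_r=2$), where the totient term $\phi(\cdot)$ in Lemma \ref{GaussSumEval} enters and must be shown to cancel. I expect this to be exactly the computation the authors allude to when they write that ``\eqref{Coeff4} recovers \eqref{SquarefreeCoefficients}'' via Lemma 2.4 of \cite{s-amds}; indeed, a cleaner way to handle it is to note that for $f_1\cdots f_{r-1}$ squarefree and $n_r=2$ the quantity $F=f_1^{n_{r1}}\cdots f_{r-1}^{n_{r(r-1)}}$ is itself squarefree (each exponent is $0$ or $1$) and $\gcd(F,f_r/u^2)$ need not be trivial, but one can reduce to the coprime case by twisted multiplicativity of $g_\xi$ (Lemma \ref{GaussSumTwistedMult}) and then invoke Lemma \ref{GaussSumLifting} together with Pellet's formula to turn each $g_\xi(F,\cdot)$ into the desired residue symbol. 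I would organize the write-up around that reduction.
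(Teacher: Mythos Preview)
Your overall strategy---specialize Proposition~\ref{PropCoeff4} with $M_{rr}=0$, $n_r=2$, then invoke the character identity~\eqref{CharacterEquality}---matches the paper's exactly. But your plan for collapsing the $u$-sum is more complicated than needed and, as written, has a gap.

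The paper's argument rests on two observations you are missing. First, before touching the $u$-sum, dispose of the case $\gcd(F,f_r)\neq 1$: if a prime $\pi$ divides both $F$ and $f_r$, then formula~\eqref{Coeff3Prime} forces $a(f_1,\ldots,f_r)=0$, and the right-hand side vanishes as well since $\pi\mid f_i$ for some $i$ with $n_{ri}=1$, which forces $M_{ir}\not\equiv 0\bmod n$ and hence $\res{f_i}{f_r}_\chi^{M_{ir}}=0$. So one may assume $\gcd(F,f_r)=1$. Second---and this is the real shortcut---once coprimality holds, write
\[
g_\xi(F,f_r/u^2)=\res{F}{f_r/u^2}_\xi^{-1}\,g_\xi(1,f_r/u^2)=\res{F}{f_r}_\xi^{-1}\,g_\xi(1,f_r/u^2)
\]
and use the elementary fact (stated in the paper just after~\eqref{QuadGaussSum}) that the \emph{quadratic} Gauss sum $g_\xi(1,h)$ vanishes whenever $h$ is not squarefree. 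This instantly collapses the entire $u$-sum to the single term where $f_r/u^2$ is squarefree, after which the leftover scalar $q^{\deg u}\xi(-1)^{\deg u}/g_\xi^{2\deg u}$ equals $1$ by Gauss's evaluation $g_\xi^2=\xi(-1)q$. No telescoping, no totient terms, no case analysis on parity.

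Your route via Lemma~\ref{GaussSumEval} might be forced through, but it is substantially longer and you flag the details as unresolved. Your ``cleaner way'' via Lemma~\ref{GaussSumLifting} does not work as stated: that lemma requires its second argument to be squarefree, which $f_r/u^2$ is not for most $u$ in the sum. And Lemma~\ref{GaussSumTwistedMult} cannot ``reduce to the coprime case''---coprimality is a hypothesis of that lemma, not a conclusion. The fix is precisely the two observations above: remove the non-coprime case first, then use the vanishing of $g_\xi(1,\cdot)$ on non-squarefree arguments.
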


\begin{proof}
We specialize Proposition \ref{PropCoeff4}. Recall the notation of that Proposition: $F=f_1^{n_{ri}}\cdots f_{r-1}^{n_{r \, (r-1)}}$ and
\begin{equation*}
\varepsilon= \prod_{i=1}^{r-1} \res{f_i'}{f_i}_{\chi}^{M_{ii}} \, \prod_{i=1}^{r-2}\prod_{j=i+1}^{r-1} \res{f_i}{f_j}_{\chi}^{M_{ij}}.
\end{equation*} 
If $M_{rr}=0$, then  $F$ is squarefree and $f_1^{p_{r1}}\cdots f_{r-1}^{p_{r \, (r-1)}}$ is $n$-th power free. Moreover, if a prime $\pi$ divides $F$ and $f_r$, then by formula \eqref{Coeff3Prime}, $a(f_1, \ldots f_r)$ vanishes. Thus, the formula holds for $\gcd(F, f_r) \neq 1$ and we may assume that $\gcd(F, f_r)=1$. 

The Gauss sums in equation \eqref{Coeff4} are quadratic in this case. We recall the following evaluations, for $\xi$ the unique character of order $2$ on $\F_q^*$. For $\gcd(F, f_r)=1$, we have $g_{\xi}(F, f_r) = \res{F}{f_r}_{\xi}^{-1} g_{\xi}(1, f_r)$. Gauss showed that
\begin{equation} \label{QuadGaussSum}
g_{\xi} = \left\lbrace \begin{array}{cc} \sqrt{q} & q \equiv 1 \bmod 4 \\ i\sqrt{q} & q \equiv -1 \bmod 4 \end{array}\right. .
\end{equation}
And finally, for arbitrary $f_r$, 
\begin{equation*}
g_{\xi}(1, f_r) = \left\lbrace \begin{array}{cc} \xi(-1)^{\deg f_r(\deg f_r-1)/2} g_{\xi}^{\deg f_r} & f_r \text{ squarefree} \\ 0 & \text{ otherwise} \end{array}\right. .
\end{equation*}
The first case is by Lemma \ref{GaussSumLifting}, and the second is an elementary evaluation. Therefore in equation \eqref{Coeff4}, the only term which contributes to the sum over $u$ is where $f_r/u^2$ is squarefree. Fixing this $u$ value, we obtain
\begin{equation*} 
\begin{split}
a(f_1, \ldots f_r) &= \varepsilon \res{f_1^{p_{r1}}\cdots f_{r-1}^{p_{r \, (r-1)}}}{f_r}^{-1}_{\chi} \res{F}{f_r}_{\xi}^{-1} \frac{\xi(-1)^{\deg f_r(\deg f_r -1)/2}}{g_{\xi}^{\deg f_r}} q^{\deg u} g_{\xi}(1, f_r/u^2) \\
&=\varepsilon \res{f_1^{p_{r1}}\cdots f_{r-1}^{p_{r \, (r-1)}}}{f_r}^{-1}_{\chi} \res{F}{f_r}_{\xi}^{-1} \frac{q^{\deg u} \xi(-1)^{\deg u}}{g_{\xi}^{2 \deg u}} .
\end{split}
\end{equation*}
The last factor is $1$ by \eqref{QuadGaussSum}. For each $i$, we have $\chi^{-p_{ri}}\xi^{-n_{ri}} = \chi^{M_{ri}}$ by equation \eqref{CharacterEquality}. This leads to 
\begin{equation*}
\res{f_1^{p_{r1}}\cdots f_{r-1}^{p_{r \, (r-1)}}}{f_r}^{-1}_{\chi} \res{F}{f_r}_{\xi}^{-1} = \prod_{i=1}^{r-1} \res{f_i}{f_r}_{\chi}^{M_{ri}}
\end{equation*}
which completes the proof. 
\end{proof}

We will set $e_{ij}=1$ if $M_{ij}\neq 0$, $e_{ij}=0$ if $M_{ij}=0$. For the rest of this section, we set $\tilde{F}=f_1^{M_{1r}}\cdots f_{r-1}^{M_{(r-1) \, r}}$, and $F = f_1^{e_{1r}}\cdots f_{r-1}^{e_{(r-1) \, r}}$. Note that this definition of $F$ does not match the definition used in Sections \ref{SectionCoeffs}, \ref{SectionKubota}, or in the proof of Proposition \ref{PropDirichletCoeff}. The reason for the redefinition is that this $F$ plays the same role in the Dirichlet functional equation that the previous $F$ played in the Kubota functional equation. 

For fixed $\vec{f}=(f_1, \ldots f_{r-1})$ and prime $\pi$, we define 
\begin{align}
&D(x, \vec{f}) = \sum_{f_r \in \F_q[T]^+} a(\vec{f}, f_r) x^{\deg f_r} \\
&D_{\pi}(x, \vec{f}) = \sum_{d \geq 0} a(\vec{f}, \pi^d)x^{d \deg \pi} .
\end{align}

Then for $f_1\cdots f_{r-1}$ squarefree, Proposition \ref{PropDirichletCoeff} implies that 
\begin{equation} \label{DirichletGlobalEval}
D(x, \vec{f}) = \varepsilon L\left(x, \res{\tilde{F}}{*}_{\chi}\right) .
\end{equation} 
Where $L\left(x, \res{\tilde{F}}{*}_{\chi}\right)$ is the Dirichlet $L$-function 
\begin{equation*}
L\left(x, \res{\tilde{F}}{*}_{\chi}\right)=\sum_{f_r \in \F_q[T]^+} \res{\tilde{F}}{f_r}_{\chi} x^{\deg f_r} .
\end{equation*}
By power reciprocity, $L\left(x, \res{\tilde{F}}{*}_{\chi}\right)= L\left(\chi(-1)^{\deg \tilde{F}} x, \res{*}{\tilde{F}}_{\chi}\right)$.
We are assuming that all $M_{ij}$ are between $0$ and $n-1$, so $\tilde{F}$ is $n$-th power free. $F$, which is the conductor of the $L$-function, is squarefree. Unless $\tilde{F}=1$, $L\left(x, \res{*}{\tilde{F}}_{\chi}\right)$ is a polynomial of degree $\deg F-1$. In the special case $\tilde{F}=1$, $L\left(x, \res{*}{\tilde{F}}_{\chi}\right) = (1-qx)^{-1}$ is the function field zeta function.

Locally, Proposition \ref{PropDirichletCoeff} implies that 
\begin{equation} \label{DirichletLocalEval}
D_{\pi}(x, \vec{f}) = \varepsilon \sum_{d \geq 0} \res{\tilde{F}}{\pi}_{\chi}^d x^{d \deg \pi} = \frac{\varepsilon}{1-\res{\tilde{F}}{\pi}_{\chi} x^{\deg \pi}} = \frac{\varepsilon}{1-\res{\pi}{\tilde{F}}_{\chi} (\chi(-1)^{\deg \tilde{F}} x)^{\deg \pi}}.
\end{equation} 
This becomes just $\varepsilon$ if $\pi | \tilde{F}$. 

The Dirichlet $L$-functions have a symmetry in $x \mapsto \frac{1}{qx}$. Let $b=1$ if $n|\deg \tilde{F}$ and $b=0$ otherwise, so that the Euler factor at infinity is $(1-x)^{-b}$. Then 
\begin{equation}
(1-x)^{-b} L\left(x, \res{*}{\tilde{F}}_{\chi} \right)=\omega_1 (\sqrt{q}x)^{\deg F -1-b} \left(1-\frac{1}{qx}\right)^{-b}L\left(\frac{1}{qx}, \res{*}{\tilde{F}}_{\bar{\chi}}\right)
\end{equation}
where the root number $\omega_1$ is 
\begin{equation*}
\omega_1= \left( \sum_{\nu \bmod F} \res{\nu}{\tilde{F}}_{\chi} \exp \left(\nu/F \right) \right) \left\lbrace \begin{array}{cc} q^{(1- \deg F)/2}g_{\chi^{\deg \tilde{F}}}^{-1} & n \nmid \deg \tilde{F} \\ q^{- \deg F/2} & n | \deg \tilde{F} \end{array} \right. .
\end{equation*}

We can expand the root number further using the twisted multiplicativity of of the Gauss sum:
\begin{equation*}
\begin{split}
\sum_{\nu \bmod F} \res{\nu}{\tilde{F}}_{\chi} \exp \left(\nu/F\right) &= \prod_{\substack{1 \leq i \leq r-1 \\ M_{ir} \neq 0}} g_{\chi^{M_{ir}}}(1, f_i) \prod_{\substack{1 \leq i<j \leq r-1 \\ M_{ir}, M_{jr} \neq 0}} \res{f_i}{f_j^{M_{jr}}}_\chi \res{f_j}{f_i^{M_{ir}}}_\chi \\
&=\omega  \prod_{\substack{1 \leq i \leq r-1 \\ M_{ir} \neq 0}} g_{\chi^{M_{ir}}}^{\deg f_i} \res{f_i'}{f_i}_\chi^{M_{ir}+n/2} \prod_{\substack{1 \leq i<j \leq r-1 \\ M_{ir}, M_{jr} \neq 0}} \res{f_i}{f_j}_{\chi}^{M_{ir}+M_{jr}} 
\end{split}
\end{equation*}
where 
\begin{equation} \label{RootNumberSign}
\omega =\omega(\vec{f}; M)= \prod_{\substack{1 \leq i \leq r-1 \\ M_{ir} \neq 0}} \xi(-1)^{\deg f_i(\deg f_i-1)/2} \prod_{\substack{1 \leq i<j \leq r-1 \\ M_{ir}, M_{jr} \neq 0}} \chi(-1)^{(\deg f_i)(\deg f_j) M_{ir} } = \pm 1.
\end{equation}

Introduce a new $r\times r$ integer matrix $M'$ with entries as follows: $M'_{rr}=M_{rr}=0$, $M'_{ir}=-M_{ir}$ for $i\neq r$, $M'_{ii}=M_{ii}+M_{ir}+\frac{n}{2}$ for $i \neq r$ and $M_{ir} \neq 0$, $M'_{ii}=M_{ii}$ for $M_{ir} = 0$, $M'_{ij}=M_{ij}$ for $i\neq j \neq r$ if $M_{ir}=0$ or $M_{jr}=0$, $M'_{ij}=M_{ij}+M_{ir}+M_{jr}$ for $i\neq j \neq r$ if $M_{ir}\neq0$ and $M_{jr}\neq 0$. By Proposition \ref{PropDirichletCoeff} the axiomatic coefficients $a(f_1, \ldots f_{r-1}, f_r; q, \chi, M')$ constructed from the matrix $M'$ are as follows
\begin{equation*}
a(f_1, \ldots f_{r-1}, f_r; M')= \varepsilon \prod_{i=1}^{r-1} \res{f_i}{f_r}_{\chi}^{-M_{ir}} \prod_{\substack{1 \leq i \leq r-1 \\ M_{ir} \neq 0}} \res{f_i'}{f_i}_{\chi}^{M_{ir}+n/2}
\prod_{\substack{1 \leq i<j \leq r-1 \\ M_{ir}, M_{jr} \neq 0}} \res{f_i}{f_j}_{\chi}^{M_{ir}+M_{jr}}.
\end{equation*}
And the single-variable series for $M'$ is 
\begin{equation*}
D(x, \vec{f}; M')= \epsilon \prod_{\substack{1 \leq i \leq r-1 \\ M_{ir} \neq 0}} \res{f_i'}{f_i}_{\chi}^{M_{ir}+n/2}
\prod_{\substack{1 \leq i<j \leq r-1 \\ M_{ir}, M_{jr} \neq 0}} \res{f_i}{f_j}_{\chi}^{M_{ir}+M_{jr}} L \left(\chi(-1)^{\deg \tilde{F}} x, \res{*}{\tilde{F}}_{\bar{\chi}}\right) .
\end{equation*}

Then the above calculations lead to the following functional equation, which relates $D(x, \vec{f}; M)$ to $D(x, \vec{f}; M')$
\begin{prop} \label{PropDirichletFE}
Suppose that $M_{rr}=0$ and $f_1 \cdots f_{r-1}$ is squarefree. Then $D(x, \vec{f}; M)$ is a rational function of $x$ with denominator dividing $1-qx$ if $\tilde{F}$ is an $n$th power in $\F_q[T]$, and a polynomial otherwise. It satisfies the functional equation 
\begin{equation} \label{DirichletFE}
\begin{split}
D(x, \vec{f}; M) =& \omega \, g_{\chi^{\deg \tilde{F}}}^{b-1} \, (\chi(-1)^{\deg \tilde{F}} x)^{\deg F-1} \prod_{\substack{1 \leq i \leq r-1 \\ M_{ir} \neq 0}} g_{\chi^{M_{ir}}}^{\deg f_i} \, \left(\frac{x-1}{1-qx}\right)^b D\left(\frac{1}{qx}, \vec{f}; M'\right)
\end{split}
\end{equation}
with $\omega$ defined in equation \eqref{RootNumberSign}.
\end{prop}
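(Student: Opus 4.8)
The plan is to reduce the statement entirely to the classical functional equation of the Dirichlet $L$-function over $\F_q[T]$ and then match the elementary prefactors on the two sides. First I would record the rationality claim: by \eqref{DirichletGlobalEval} we have $D(x,\vec f; M) = \varepsilon\, L\!\left(x, \res{\tilde F}{*}_\chi\right)$, and since $\tilde F$ is $n$-th power free and $f_1\cdots f_{r-1}$ is squarefree, the character $\res{\tilde F}{*}_\chi$ is primitive of conductor exactly $F$; hence $L$ is a polynomial of degree $\deg F-1$ when $\tilde F\neq 1$, while if $\tilde F$ is an $n$-th power the character is trivial and $L\!\left(x,\res{\tilde F}{*}_\chi\right) = (1-qx)^{-1}$, which gives the asserted denominator.

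Next I would invoke the functional equation for $L\!\left(x,\res{*}{\tilde F}_\chi\right)$ recalled just above the proposition, applied with the argument rescaled by $\chi(-1)^{\deg\tilde F}$ so as to use the reciprocity identity $L\!\left(x,\res{\tilde F}{*}_\chi\right)=L\!\left(\chi(-1)^{\deg\tilde F}x,\res{*}{\tilde F}_\chi\right)$. The one genuine computational input is the expansion of the Gauss-sum factor $\sum_{\nu\bmod F}\res{\nu}{\tilde F}_\chi \exp(\nu/F)$ appearing in the root number $\omega_1$: using the twisted multiplicativity of function-field Gauss sums (Lemma \ref{GaussSumTwistedMult}) together with the Hasse--Davenport lifting relation (Lemma \ref{GaussSumLifting}), this factors as $\omega\,\prod_{M_{ir}\neq 0} g_{\chi^{M_{ir}}}^{\deg f_i}\res{f_i'}{f_i}_\chi^{M_{ir}+n/2}\,\prod_{M_{ir},M_{jr}\neq 0}\res{f_i}{f_j}_\chi^{M_{ir}+M_{jr}}$, with $\omega$ the sign of \eqref{RootNumberSign}.

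Then I would substitute the evaluation of $D(1/(qx),\vec f; M')$: by Proposition \ref{PropDirichletCoeff} applied to the matrix $M'$, this equals $\varepsilon$ times exactly the product of residue symbols $\prod \res{f_i'}{f_i}_\chi^{M_{ir}+n/2}\prod\res{f_i}{f_j}_\chi^{M_{ir}+M_{jr}}$ produced by the root-number expansion, times $L\!\left(\chi(-1)^{\deg\tilde F}/(qx),\res{*}{\tilde F}_{\bar\chi}\right)$. Plugging everything into the classical functional equation, the common factor $\varepsilon$ and the residue-symbol products cancel, leaving $\omega\,\prod g_{\chi^{M_{ir}}}^{\deg f_i}$; the powers of $\sqrt q$ collapse, since $q^{(1-\deg F)/2}$ (resp.\ $q^{-\deg F/2}$) cancels against $(\sqrt q x)^{\deg F-1-b}$, producing $(\chi(-1)^{\deg\tilde F}x)^{\deg F-1}$ and, when $b=1$, an extra $q^{-1}(1-1/(qx))^{-1}$ which recombines with the $(1-x)^{b}$ coming from the infinity Euler factor to give $\left(\tfrac{x-1}{1-qx}\right)^{b}$; and the Gauss-sum normalization contributes $g_{\chi^{\deg\tilde F}}^{b-1}$. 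This is precisely \eqref{DirichletFE}.

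The main obstacle is purely bookkeeping of signs and of characters versus their conjugates: one must track the $\chi(-1)^{\deg\tilde F}$ twists from reciprocity (which collapse to $1$ exactly when $b=1$, since then $\chi^{\deg\tilde F}$ is trivial), the parity factors $\xi(-1)^{\deg f_i(\deg f_i-1)/2}$ that Lemma \ref{GaussSumLifting} absorbs into $\omega$, and the shift $M_{ir}+n/2$ in $\res{f_i'}{f_i}_\chi^{M_{ir}+n/2}$ --- which is exactly what the lifting lemma produces and exactly what reappears in the $M'$-coefficients, so these cancel rather than accumulate. Verifying this last matching, i.e.\ that the residue-symbol prefactor of $D(x,\vec f;M')$ coincides termwise with the one coming out of $\omega_1$, is the one place requiring real care; the rest follows the displayed formulas mechanically.
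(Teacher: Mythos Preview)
Your proposal is correct and follows essentially the same approach as the paper: the paper derives Proposition~\ref{PropDirichletFE} from the computations displayed immediately before its statement, namely the evaluation \eqref{DirichletGlobalEval}, the classical functional equation of $L\!\left(x,\res{*}{\tilde F}_\chi\right)$, the twisted-multiplicativity/Hasse--Davenport expansion of the root-number Gauss sum, and the identification of $D(x,\vec f;M')$ via Proposition~\ref{PropDirichletCoeff}. Your outline matches this step for step, including the cancellation of the residue-symbol prefactor of $D(\cdot;M')$ against the one emerging from $\omega_1$.
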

\noindent Since we are assuming $f_1 \cdots f_{r-1}$ is squarefree, the only way $\tilde{F}$ can be an $n$th power is if it is $1$. But we will show in Theorem \ref{TheoremFE} that the statement of the proposition actually holds for all $f$, without the squarefree hypothesis. $F$ will still be the conductor, determining the degree of the polynomial, even when it is not squarefree. 

Next we give the corresponding local functional equation. The local series $D_{\pi}(x, \vec{f}; M')$, for $f_1\cdots f_{r-1}$ squarefree, is as follows:
\begin{equation*}
D_{\pi}(x, \vec{f}, M')= \frac{\varepsilon \prod\limits_{\substack{1 \leq i \leq r-1 \\ M_{ir} \neq 0}} \res{f_i'}{f_i}_{\chi}^{M_{ir}+n/2}
\prod\limits_{\substack{1 \leq i<j \leq r-1 \\ M_{ir}, M_{jr} \neq 0}} \res{f_i}{f_j}_{\chi}^{M_{ir}+M_{jr}}}{1-\res{\pi}{\tilde{F}}^{-1}_{\chi} (\chi(-1)^{\deg \tilde{F}} x)^{\deg \pi}}.
\end{equation*}
The denominator becomes $1$ if $\pi | \tilde{F}$.

We will state the local functional equation only when $\vec{f}$ consists of powers of $\pi$. 
\begin{prop} \label{PropDirichletLocalFE}
Suppose that $M_{rr}=0$ and $f_1 \cdots f_{r-1}=1$ or $\pi$. Then $D_\pi(x, \vec{f}; M)$ is a rational function of $x$ with denominator dividing $1-x^{\deg \pi}$ if $n | v_\pi \tilde{F}$, and a polynomial otherwise. It satisfies the functional equation 
\begin{equation} \label{DirichletLocalFE}
\begin{split}
D_{\pi}(x, \vec{f}; M) &= \prod_{\substack{1 \leq i \leq r-1 \\ M_{ir} \neq 0}} \res{\pi'}{\pi}_{\chi}^{-v_\pi f_i(M_{ir}+n/2)} \omega_\pi \left(\frac{-(-g_{\chi^{v_{\pi} \tilde{F}}})^{\deg \pi}}{q^{\deg\pi}}\right)^{b_{\pi}-1} (\chi(-1)^{v_\pi\tilde{F}}qx)^{\deg F-\deg \pi} \\
&\prod_{\substack{1 \leq i \leq r-1 \\ M_{ir} \neq 0}} \left(\frac{-(-g_{\chi^{M_{ir}}})^{\deg \pi}}{q^{\deg \pi}}\right)^{v_\pi f_i} \left( \frac{(qx)^{\deg \pi}-1}{1-x^{\deg \pi}}\right)^{b_{\pi}} D_{\pi}\left(\frac{1}{qx}, \vec{f}; M'\right)
\end{split}
\end{equation}
where $b_{\pi}=1$ if $n | v_{\pi} \tilde{F}$ and $0$ otherwise, and 
\begin{equation} \label{LocalRootNumberSign}
\omega_\pi =\omega_\pi(\vec{f}; M)= \prod_{\substack{1 \leq i \leq r-1 \\ M_{ir} \neq 0}} \xi(-1)^{\deg \pi (v_\pi f_i)(v_\pi f_i-1)/2} \prod_{\substack{1 \leq i<j \leq r-1 \\ M_{ir}, M_{jr} \neq 0}} \chi(-1)^{\deg \pi(v_\pi f_i)(v_\pi f_j) M_{ir} } = \pm 1.
\end{equation}
\end{prop}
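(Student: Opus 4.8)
The plan is to prove Proposition~\ref{PropDirichletLocalFE} by direct computation, since under the hypothesis $f_1\cdots f_{r-1}=1$ or $\pi$ every generating series in sight has an explicit closed form. First I would observe that $\vec f$ then consists entirely of powers of $\pi$, so $\tilde F=\pi^{v_\pi\tilde F}$ with $v_\pi\tilde F=\sum_i(v_\pi f_i)M_{ir}$, and $F$ is a power of $\pi$ with $\deg F\le\deg\pi$. This leaves three exhaustive cases: (A) $\vec f=\vec 1$; (B) exactly one $f_i=\pi$ with $M_{ir}=0$; (C) exactly one $f_i=\pi$ with $M_{ir}\ne 0$. In cases (A) and (B) we have $\tilde F=1$, hence $n\mid v_\pi\tilde F$, $b_\pi=1$, and by \eqref{DirichletLocalEval} (and its $M'$-analogue displayed just before the proposition) $D_\pi(x,\vec f;M)=\varepsilon(1-x^{\deg\pi})^{-1}$ and $D_\pi(x,\vec f;M')=\varepsilon'(1-x^{\deg\pi})^{-1}$; in case (C) we have $\pi\mid\tilde F$, $b_\pi=0$, $\deg F=\deg\pi$, and both series collapse to the constants $\varepsilon$ and $\varepsilon'$. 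This settles the rationality/polynomiality assertions at once, and reduces \eqref{DirichletLocalFE} to an identity between explicit rational functions.

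Next I would substitute $x\mapsto 1/(qx)$ into $D_\pi(\cdot,\vec f;M')$ and match it against the prefactor in \eqref{DirichletLocalFE}. The $x$-dependent bookkeeping is elementary: in cases (A), (B) the factor $(\chi(-1)^{v_\pi\tilde F}qx)^{\deg F-\deg\pi}\bigl((qx)^{\deg\pi}-1\bigr)^{b_\pi}(1-x^{\deg\pi})^{-b_\pi}$ converts $\varepsilon'(1-(qx)^{-\deg\pi})^{-1}$ back into a constant multiple of $(1-x^{\deg\pi})^{-1}$, while in case (C) every $x$-dependent factor is trivial. Thus \eqref{DirichletLocalFE} collapses to a constant identity $\varepsilon=C_\pi\,\varepsilon'$, where $C_\pi$ collects $\omega_\pi$, the $\res{\pi'}{\pi}_\chi$-powers, the Gauss-sum ratios, and the powers of $q$. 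Here $\omega_\pi=1$ in all three cases, since each $(v_\pi f_i)(v_\pi f_i-1)=0$ and at most one $f_i$ is $\pi$. The crux is that the Gauss-sum and $q$-power factors cancel: in cases (A), (B) the factor $\bigl(-(-g_{\chi^{v_\pi\tilde F}})^{\deg\pi}/q^{\deg\pi}\bigr)^{b_\pi-1}$ is trivial because $b_\pi-1=0$ and the remaining products run over $i$ with $M_{ir}\ne0$, for which $v_\pi f_i=0$; in case (C) the exponent $b_\pi-1=-1$ makes $\bigl(-(-g_{\chi^{v_\pi\tilde F}})^{\deg\pi}/q^{\deg\pi}\bigr)^{-1}$ cancel the single surviving factor $\bigl(-(-g_{\chi^{M_{ir}}})^{\deg\pi}/q^{\deg\pi}\bigr)^{v_\pi f_i}$ together with its power of $q$ (note $v_\pi\tilde F=M_{ir}$). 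After this cancellation one has $C_\pi=1$ in cases (A), (B), so the claim is $\varepsilon=\varepsilon'$, true since $M'_{ii}=M_{ii}$ and $M'_{ij}=M_{ij}$ whenever $M_{ir}=0$; and $C_\pi=\res{\pi'}{\pi}_\chi^{-(M_{ir}+n/2)}$ in case (C), so the claim is $\varepsilon=\res{\pi'}{\pi}_\chi^{-(M_{ir}+n/2)}\varepsilon'$, true since $M'_{ii}=M_{ii}+M_{ir}+n/2$.

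The main obstacle is not any single step but the bookkeeping: the prefactor in \eqref{DirichletLocalFE} carries $\omega_\pi$, several $\res{\pi'}{\pi}_\chi$-powers, two Gauss-sum ratios with different exponents, powers of $q$ and of $\chi(-1)$, and the hidden minus signs inside $-g_{\chi^m}$, so the risk is a sign or root-of-unity slip rather than a conceptual gap. If a more uniform argument is preferred over the three-case split, one can instead always write $g_{\chi^m}(1,\pi)=\xi(-1)^{\deg\pi(\deg\pi-1)/2}\res{\pi'}{\pi}_{\xi\chi^m}g_{\chi^m}^{\deg\pi}$ by Lemma~\ref{GaussSumLifting} and clear the stray $\xi(-1)^{\deg\pi(\deg\pi-1)/2}\res{\pi'}{\pi}_\xi$ using Pellet's formula (Lemma~\ref{Pellet}), reducing the whole statement to an identity of $n$th roots of unity that is immediate from the definition of $M'$; but the case computation above is the most transparent route.
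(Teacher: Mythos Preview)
Your proposal is correct and takes essentially the same approach as the paper, which simply states that the proposition ``is checked by direct computation when $\vec{f}=(1,\ldots,1)$ or $\vec{f}=(1,\ldots,\pi,\ldots,1)$'' without supplying the details. Your three-case split and the verification that the prefactor collapses to $\res{\pi'}{\pi}_\chi^{-(M_{ir}+n/2)}$ in case~(C) (matching $\varepsilon'/\varepsilon$ via $M'_{ii}=M_{ii}+M_{ir}+n/2$) is exactly the computation the paper has in mind.
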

\noindent Again, Theorem \ref{TheoremDirichletFE} will establish that this statement holds for all $\vec{f}=(\pi^{d_1}, \ldots \pi^{d_{r-1}})$. Proposition \ref{PropDirichletLocalFE} is checked by direct computation when $\vec{f}=(1, \ldots 1)$ or $\vec{f} = (1, \ldots \pi, \ldots 1)$. Some terms in Equation \eqref{DirichletLocalFE} trivialize in these cases, but this is the correct form of the functional equation for general $\vec{f}$.


Note that the form of the functional equation \eqref{DirichletFE} depends on $\deg f_1, \ldots \deg f_{r-1}$ but not on $f_1, \ldots f_{r-1}$ themselves. Furthermore, the local and global functional equations \eqref{DirichletFE} and \eqref{DirichletLocalFE} are compatible under the axioms. Axiom \ref{Axiom5} gives the change of variables between these two functional equations. Axiom \ref{Axiom3} explains the extra term involving $\res{\pi'}{\pi}_\chi$ in \eqref{DirichletLocalFE}, which arises from the discrepancy between $M'$ and $M$.

The series $D(x,\vec{f})$ has an Euler product expression. We can see this from the explicit formulas above when $f_1\cdots f_{r-1}$ is squarefree, but it holds for arbitrary $\vec{f}$ because of the twisted multiplicativity axiom with $M_{rr}=0$. We will need the analogue of Lemma \ref{LemmaFactorization} to separate out the contribution of one prime. Let 
\begin{equation}
    D^{(\pi)}(x, \vec{f}, \pi^{\ell}; q, \chi M) = \sum_{\substack{f_r \in \F_q[T]^+\\ \pi \nmid f_r}} a(\vec{f}, f_r; q, \chi, M) \res{\pi^{\ell}}{f_r}_{\chi} x^{\deg f_r}
\end{equation}
and for $\pi \nmid h$, let 
\begin{equation}
    D_{\pi}(x, \vec{f}, h; q, \chi, M) = \sum_{d \geq 0} a(\vec{f}, \pi^d; q, \chi, M) \res{h}{\pi^d}_{\chi} x^{d \deg \pi}.
\end{equation}
Note that $D_{\pi}(x, \vec{f}, h) = D_{\pi}\left(\res{h}{\pi}_{\chi}^{1/\deg\pi} x, \vec{f}\right)$, so this latter expression is just a change of variables of the $D_{\pi}(x, \vec{f})$ defined earlier.

\begin{lemma} \label{LemmaDirichletFactorization}
For any prime $\pi$, we have the following equality
\begin{equation}
    D(x, \vec{f}) = \varepsilon_\pi D_{\pi}(x, \vec{f}_{\pi}, \tilde{F}^{(\pi)}) D^{(\pi)}(x, \vec{f}^{(\pi)}, \tilde{F}_{\pi})
\end{equation}
where $\varepsilon_\pi=\varepsilon_\pi(\vec{f}; M)$ is defined in equation \eqref{LocalRoot}.
\end{lemma}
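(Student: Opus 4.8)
The plan is to imitate the proof of Lemma~\ref{LemmaFactorization}, but the argument is much simpler here because the hypothesis $M_{rr}=0$ of this section removes the only term that would couple the $\pi$-part and $\pi$-coprime part of $f_r$. The starting point is the unique factorization of each monic $f_r$ as $f_r=\pi^{d}f_r^{(\pi)}$ with $d=v_\pi(f_r)\ge 0$ and $\pi\nmid f_r^{(\pi)}$, together with the analogous splittings $f_i=(f_i)_\pi f_i^{(\pi)}$ of the fixed polynomials. Since $\gcd\big(f_1^{(\pi)}\cdots f_{r-1}^{(\pi)}f_r^{(\pi)},\ (f_1)_\pi\cdots (f_{r-1})_\pi\,\pi^{d}\big)=1$, twisted multiplicativity (Axiom~\ref{Axiom1}) applied to the tuples $(\vec f^{(\pi)},f_r^{(\pi)})$ and $(\vec f_\pi,\pi^{d})$ gives
\[
a(\vec f,f_r)=a(\vec f^{(\pi)},f_r^{(\pi)})\,a(\vec f_\pi,\pi^{d})\prod_{i=1}^{r}\prod_{j=i}^{r}\res{f_i^{(\pi)}}{(f_j)_\pi}_\chi^{M_{ij}}\res{(f_i)_\pi}{f_j^{(\pi)}}_\chi^{M_{ij}},
\]
where $(f_r)_\pi$ is understood to be $\pi^{d}$.

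Next I would sort the double product of residue symbols into three groups. The factors with $i,j\le r-1$ reassemble into exactly the unit $\varepsilon_\pi$ of \eqref{LocalRoot}. The factor with $i=j=r$ is $\res{f_r^{(\pi)}}{\pi^{d}}_\chi^{M_{rr}}\res{\pi^{d}}{f_r^{(\pi)}}_\chi^{M_{rr}}=1$ because $M_{rr}=0$; this is the one place the hypothesis of the section enters, and it is precisely what lets the two halves of the sum decouple. The remaining factors, those with $i\le r-1$ and $j=r$, regroup using multiplicativity of the $n$th power residue symbol in each argument: writing $(f_r)_\pi=\pi^{d}$ and $(f_i)_\pi=\pi^{v_\pi(f_i)}$ one gets $\prod_{i=1}^{r-1}\res{f_i^{(\pi)}}{\pi^{d}}_\chi^{M_{ir}}=\res{\tilde F^{(\pi)}}{\pi}_\chi^{d}$ since $\tilde F^{(\pi)}=\prod_i (f_i^{(\pi)})^{M_{ir}}$, and $\prod_{i=1}^{r-1}\res{(f_i)_\pi}{f_r^{(\pi)}}_\chi^{M_{ir}}=\res{\pi}{f_r^{(\pi)}}_\chi^{\sum_i M_{ir}v_\pi(f_i)}=\res{\tilde F_\pi}{f_r^{(\pi)}}_\chi$ since $\tilde F_\pi=\prod_i (f_i)_\pi^{M_{ir}}$.

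Finally I would substitute this into $D(x,\vec f)=\sum_{f_r}a(\vec f,f_r)x^{\deg f_r}$ and re-index the sum over $f_r$ as a double sum over $d\ge 0$ and monic $f_r^{(\pi)}$ with $\pi\nmid f_r^{(\pi)}$, using $x^{\deg f_r}=x^{d\deg\pi}\,x^{\deg f_r^{(\pi)}}$. The factor $\varepsilon_\pi$ pulls out, and the double sum factors as a product of two single sums: the $d$-sum is $\sum_{d\ge0}a(\vec f_\pi,\pi^{d})\res{\tilde F^{(\pi)}}{\pi}_\chi^{d}x^{d\deg\pi}=D_\pi(x,\vec f_\pi,\tilde F^{(\pi)})$ (here one uses $\res{h}{\pi^{d}}_\chi=\res{h}{\pi}_\chi^{d}$ in the definition of $D_\pi(x,\cdot,h)$, and that $\tilde F^{(\pi)}$ is coprime to $\pi$), while the $f_r^{(\pi)}$-sum is $\sum_{\pi\nmid f_r^{(\pi)}}a(\vec f^{(\pi)},f_r^{(\pi)})\res{\tilde F_\pi}{f_r^{(\pi)}}_\chi x^{\deg f_r^{(\pi)}}=D^{(\pi)}(x,\vec f^{(\pi)},\tilde F_\pi)$. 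This is the claimed identity. The argument is pure bookkeeping with no estimates or induction; the only point requiring care — and the one I would flag as the main obstacle — is checking that the cross-term residue symbols regroup exactly into the two twists $\res{\tilde F^{(\pi)}}{\pi}_\chi^{d}$ and $\res{\tilde F_\pi}{f_r^{(\pi)}}_\chi$ appearing in the definitions of $D_\pi(x,\cdot,\tilde F^{(\pi)})$ and $D^{(\pi)}(x,\cdot,\tilde F_\pi)$, which rests on the definition $\tilde F=f_1^{M_{1r}}\cdots f_{r-1}^{M_{(r-1)\,r}}$ and the multiplicativity of the residue symbol in both arguments.
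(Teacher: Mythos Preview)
Your proof is correct and follows exactly the same approach as the paper's: apply twisted multiplicativity to the splitting $f_r=(f_r)_\pi f_r^{(\pi)}$, observe that $M_{rr}=0$ kills the coupling term, regroup the remaining residues into $\varepsilon_\pi$, $\res{\tilde F^{(\pi)}}{(f_r)_\pi}_\chi$, and $\res{\tilde F_\pi}{f_r^{(\pi)}}_\chi$, then split the sum over $f_r$. The paper's proof is a two-line sketch of precisely this argument, so your version is simply a careful expansion of it.
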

\begin{proof}
By the twisted multiplicativity axiom, we have
\begin{equation*}
a(f_1, \ldots f_{r-1}, f_r) = \varepsilon_\pi a((f_1)_\pi,\ldots (f_{r-1})_\pi, (f_r)_\pi) \res{\tilde{F}^{(\pi)}}{(f_r)_{\pi}}_{\chi}  a(f_1^{(\pi)},\ldots f_{r-1}^{(\pi)}, f_r^{(\pi)}) \res{\tilde{F}_\pi}{f_r^{(\pi)}}_{\chi} .
\end{equation*}
Then the lemma follows by splitting the sum over $f_r$ in $D(x, \vec{f})$ into a sum over $(f_r)_\pi$ and a sum over $f_r^{(\pi)}$.
\end{proof}

For a given $r\times r$ matrix $M$, we will employ an augmented matrix $M_v$ with an additional zeroth row and column. The extra entries are given by $M_{00}=\cdots=M_{0\, (r-1)}=0$ and $M_{0r}=v \% n$.

\begin{prop} \label{LocalDirichletImpliesGlobal} Suppose $D(x, (\pi, \vec{f}^{(\pi)}); M_{v_{\pi} \tilde{F}})$ is a polynomial, or a rational function with denominator dividing $1-qx$ if $\tilde{F}$ is an $n$th power, and the functional equation \eqref{DirichletFE} holds for this series. Further, suppose $D_{\pi}(x, \vec{f}_{\pi}; M)$ is a polynomial, or a rational function with denominator dividing $1-x^{\deg \pi}$ if $\tilde{F}_\pi$ is an $n$th power, and the local functional equation \eqref{DirichletLocalFE} holds for this series. Then $D(x, \vec{f}; M)$ is a polynomial, or a rational function with denominator dividing $1-qx$ if $\tilde{F}$ is an $n$th power, and the functional equation \eqref{DirichletFE} holds for this series.
\end{prop}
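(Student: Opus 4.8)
The argument will run parallel to that of Lemma~\ref{LocalImpliesGlobal}, but scalar rather than vector/matrix-valued, since in the Dirichlet setting the single-variable subseries carry no $k$-parameter (equivalently $n_r=2$). Fix the prime $\pi$, write $V=v_\pi\tilde F$, and recall that $M_V$ is the augmented $(r+1)\times(r+1)$ matrix with zeroth row and column zero except $(M_V)_{0r}=V\%n$. I would apply Lemma~\ref{LemmaDirichletFactorization} to $\vec f$ at $\pi$, obtaining
\begin{equation*}
D(x,\vec f;M)=\varepsilon_\pi\, D_\pi(x,\vec f_\pi,\tilde F^{(\pi)};M)\, D^{(\pi)}(x,\vec f^{(\pi)},\tilde F_\pi;M),
\end{equation*}
and apply it again to the tuple $(\pi,\vec f^{(\pi)})$ at $\pi$ for the matrix $M_V$, where $\pi$ sits in the zeroth slot and is the only $\pi$-power. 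One checks that here $\varepsilon_\pi(\pi,\vec f^{(\pi)};M_V)=1$, and that deleting the trivial zeroth variable identifies the prime-to-$\pi$ factor with the same $D^{(\pi)}(x,\vec f^{(\pi)},\tilde F_\pi;M)$ as above (using $\res{\pi^{V\%n}}{\,\cdot\,}_\chi=\res{\tilde F_\pi}{\,\cdot\,}_\chi$), so that
\begin{equation*}
D\bigl(x,(\pi,\vec f^{(\pi)});M_V\bigr)=D_\pi(x,(\pi,1,\dots,1),\tilde F^{(\pi)};M_V)\, D^{(\pi)}(x,\vec f^{(\pi)},\tilde F_\pi;M).
\end{equation*}
Dividing one relation by the other gives
\begin{equation*}
D(x,\vec f;M)=\varepsilon_\pi\,\frac{D_\pi(x,\vec f_\pi,\tilde F^{(\pi)};M)}{D_\pi(x,(\pi,1,\dots,1),\tilde F^{(\pi)};M_V)}\;D\bigl(x,(\pi,\vec f^{(\pi)});M_V\bigr),
\end{equation*}
reducing the proposition to the local ratio together with the hypotheses on $D(x,(\pi,\vec f^{(\pi)});M_V)$.

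\textbf{Rationality.} I would then evaluate the denominator: since the non-last part of $(\pi,1,\dots,1)$ is squarefree, Proposition~\ref{PropDirichletCoeff} gives $a(\pi,1,\dots,1,\pi^d;M_V)=\res{\pi}{\pi^d}_\chi^{V\%n}$, so $D_\pi(x,(\pi,1,\dots,1),\tilde F^{(\pi)};M_V)$ equals $1$ when $n\nmid V$ and $(1-\res{\tilde F^{(\pi)}}{\pi}_\chi x^{\deg\pi})^{-1}$ when $n\mid V$. Using the hypothesis on $D_\pi(x,\vec f_\pi;M)$ together with the rescaling identity $D_\pi(x,\vec f_\pi,\tilde F^{(\pi)};M)=D_\pi(\res{\tilde F^{(\pi)}}{\pi}_\chi^{1/\deg\pi}x,\vec f_\pi;M)$, the numerator is a polynomial when $n\nmid V$ and has denominator dividing $1-\res{\tilde F^{(\pi)}}{\pi}_\chi x^{\deg\pi}$ when $n\mid V$; in either case the local ratio is a polynomial in $x$. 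Since the two $\pi$-power conditions $n\mid V$ and $n\mid(V\%n)$ coincide, $\tilde F$ is an $n$th power in $\F_q[T]$ iff $\pi^{V\%n}\tilde F^{(\pi)}$ is, so the rationality claim for $D(x,\vec f;M)$ (a polynomial, or denominator dividing $1-qx$ exactly when $\tilde F$ is an $n$th power) follows at once from the corresponding hypothesis on $D(x,(\pi,\vec f^{(\pi)});M_V)$, there being no new poles from the polynomial factor.

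\textbf{Functional equation and the main obstacle.} For the functional equation, $D_\pi(x,(\pi,1,\dots,1),\tilde F^{(\pi)};M_V)$ satisfies \eqref{DirichletLocalFE} by the base case Proposition~\ref{PropDirichletLocalFE} (its non-last part being $\pi$), while $D_\pi(x,\vec f_\pi,\tilde F^{(\pi)};M)$ satisfies \eqref{DirichletLocalFE} by hypothesis; both send $x\mapsto 1/(qx)$ and pass to the primed matrix, and the data entering their scattering factors ($b_\pi$, $g_{\chi^{v_\pi\tilde F}}$, $\chi(-1)^{v_\pi\tilde F}$) depends on $v_\pi\tilde F$ only mod $n$, which equals $V$ for both because $(M_V)_{0r}=V\%n$. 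Hence the local ratio obeys a functional equation in $x\mapsto 1/(qx)$ whose scattering factor is the quotient of two explicit expressions in the Gauss sums $g_{\chi^{M_{ir}}}$ and in roots of unity. The crux is to verify that multiplying this quotient by the scattering factor of \eqref{DirichletFE} for $D(x,(\pi,\vec f^{(\pi)});M_V)$, and reassembling the primed-side factors by Lemma~\ref{LemmaDirichletFactorization} for $M'$, reproduces exactly the scattering factor of \eqref{DirichletFE} for $D(x,\vec f;M)$; the ingredients are the identity $F=F^{(\pi)}F_\pi$ (lining up the $x$-monomials in the conductor degrees), the regrouping $g_{\chi^{M_{ir}}}^{v_\pi f_i}\,g_{\chi^{M_{ir}}}^{\deg f_i^{(\pi)}}=g_{\chi^{M_{ir}}}^{\deg f_i}$, and reconciling the signs $\omega$, $\omega_\pi$, $\varepsilon_\pi$. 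I expect this last reconciliation to be the main obstacle, because the prime-transform $M\mapsto M'$ does not commute with augmentation — e.g.\ $(M_V)'_{00}=(V\%n)+n/2$, whereas the naively augmented $M'$ has zeroth diagonal entry $0$ — so passing from $D(1/(qx),(\pi,\vec f^{(\pi)});(M_V)')$ back to $D(1/(qx),\vec f;M')$ introduces extra factors of $\res{\pi'}{\pi}_\chi$ and quadratic signs. These are exactly the terms already present in \eqref{DirichletLocalFE} (the remark after Proposition~\ref{PropDirichletLocalFE} attributes the $\res{\pi'}{\pi}_\chi$ factor there to precisely this $M'$--$M$ discrepancy), and one verifies via Pellet's formula (Lemma~\ref{Pellet}) and the twisted multiplicativity of Gauss sums (Lemma~\ref{GaussSumTwistedMult}) that they collapse into the root number of \eqref{DirichletFE}.
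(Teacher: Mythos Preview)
Your proposal is correct and follows essentially the same approach as the paper: apply Lemma~\ref{LemmaDirichletFactorization} to both $\vec f$ (with matrix $M$) and $(\pi,\vec f^{(\pi)})$ (with the augmented matrix $M_{v_\pi\tilde F}$) to express $D(x,\vec f;M)$ as a polynomial local factor times $D(x,(\pi,\vec f^{(\pi)});M_{v_\pi\tilde F})$, then repeat on the $M'$ side and compare. Your identification of the main obstacle---that $(M_V)'$ is not simply $M'$ augmented, introducing extra $\res{\pi'}{\pi}_\chi$ and quadratic-sign factors that must be reconciled with $\omega(\vec f;M)$ via Pellet's formula and residue manipulations---matches exactly the lengthy sign computation the paper carries out in detail (equation~\eqref{TotalSign} and the surrounding simplification).
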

\begin{proof}
By Lemma \ref{LemmaDirichletFactorization} we have 
\begin{equation*}
    D(x, \vec{f}; M) = \varepsilon_\pi(\vec{f}; M) D_{\pi}(x, \vec{f}_{\pi}, \tilde{F}^{(\pi)}; M) D^{(\pi)}(x, \vec{f}^{(\pi)}, \tilde{F}_{\pi}; M)
\end{equation*}
and also
\begin{equation*}
\begin{split}
    D(x, (\pi, \vec{f}^{(\pi)}); M_{v_\pi \tilde{F}}) &= D_{\pi}(x, (\pi, \vec{1}), \tilde{F}^{(\pi)}; M_{v_\pi \tilde{F}}) D^{(\pi)}(x, (1,\vec{f}^{(\pi)}), \tilde{F}_{\pi}; M_{v_\pi \tilde{F}}) \\
    &= \left(1-\res{\tilde{F}^{(\pi)}}{\pi}_{\chi} x^{\deg \pi}\right)^{-b_{\pi}}D^{(\pi)}(x, \vec{f}^{(\pi)}, \tilde{F}_{\pi}; M).
\end{split}
\end{equation*}
In the second line, $b_{\pi}$ is $1$ if $n$ divides $v_{\pi}\tilde{F}$ and zero otherwise. The local series at $\pi$ is evaluated as in equation \eqref{DirichletLocalEval}, while the series away from $\pi$ simplifies because the first parameter is $1$. 

Therefore, 
\begin{equation} \label{DirichletLocalGlobal1}
    D(x, \vec{f}; M)= \varepsilon_\pi(\vec{f}; M) \left(1-\res{\tilde{F}^{(\pi)}}{\pi}_{\chi} x^{\deg \pi}\right)^{b_{\pi}} D_{\pi}(x, \vec{f}_{\pi}, \tilde{F}^{(\pi)}; M)D(x, (\pi, \vec{f}^{(\pi)}); M_{v_\pi \tilde{F}}).
\end{equation}
Since $D_{\pi}(x, \vec{f}_{\pi}, \tilde{F}^{(\pi)}; M)=D_{\pi}\left(\res{\tilde{F}^{(\pi)}}{\pi}_\chi^{1/\deg \pi} x, \vec{f}_{\pi}; M\right)$, our hypothesis on the rationality of $D_{\pi}(x, \vec{f}_{\pi}; M)$ implies that $\left(1-\res{\tilde{F}^{(\pi)}}{\pi}_{\chi} x^{\deg \pi}\right)^{b_{\pi}} D_{\pi}(x, \vec{f}_{\pi}, \tilde{F}^{(\pi)}; M)$ is a polynomial. Thus our hypothesis on the rationality of $D(x, (\pi, \vec{f}^{(\pi)}); M_{v_\pi \tilde{F}})$ implies that $D(x, \vec{f}; M)$ is a polynomial, or a rational function with denominator dividing $1-qx$ if $\tilde{F}$ is an $n$th power. 

To prove the functional equation, we would like to compare equation \eqref{DirichletLocalGlobal1} to a similar expression for $D\left(\frac{1}{qx}, \vec{f}; M'\right)$. By Lemma \ref{LemmaDirichletFactorization}, we have  
\begin{equation*}
    D\left(\frac{1}{qx}, \vec{f}; M'\right) = \varepsilon_\pi(\vec{f}; M') D_{\pi}\left(\frac{1}{qx}, \vec{f}_{\pi}, (\tilde{F}^{(\pi)})^{-1}; M'\right) D^{(\pi)}\left(\frac{1}{qx}, \vec{f}^{(\pi)}, \tilde{F}_{\pi}^{-1}; M'\right).
\end{equation*} 

The matrix $(M_{v_\pi \tilde{F}})'$ is $M'$ augmented with a zeroth row and column. If $v_\pi \tilde{F} \equiv 0 \bmod n$, the extra entries are all $0$. If $v_\pi \tilde{F} \not\equiv 0 \bmod n$, then the extra entries are $((M_{v_\pi \tilde{F}})')_{00}=v_\pi \tilde{F} + n/2$, $((M_{v_\pi \tilde{F}})')_{0i}=0$ if $M_{ir}=0$ and $((M_{v_\pi \tilde{F}})')_{0i}=v_\pi \tilde{F}+M_{ir}$ otherwise, and $((M_{v_\pi(\tilde{F}})')_{0r}=-v_\pi \tilde{F}$. By another application of Lemma \ref{LemmaDirichletFactorization}, we have
\begin{equation*}
\begin{split}
    &D\left(\frac{1}{qx}, (\pi, \vec{f}^{(\pi)}); (M_{v_\pi \tilde{F}})'\right) \\
    &= \varepsilon_\pi((\pi, \vec{f}^{(\pi)}); (M_{v_\pi\tilde{F}})') D_{\pi}\left(\frac{1}{qx}, (\pi, \vec{1}), (\tilde{F}^{(\pi)})^{-1}; (M_{-v_\pi \tilde{F}})'\right)
    D^{(\pi)}\left(\frac{1}{qx}, (1,\vec{f}^{(\pi)}), \tilde{F}_{\pi}^{-1}; (M_{-v_\pi \tilde{F}})'\right) \\
    &= \varepsilon_\pi((\pi, \vec{f}^{(\pi)}); (M_{v_\pi\tilde{F}})') \res{\pi'}{\pi}_\chi^{(v_\pi \tilde{F} +n/2)(1-b_\pi)}
    \left(1-\res{\tilde{F}^{(\pi)}}{\pi}_{\chi}^{-1} (qx)^{-\deg \pi}\right)^{-b_{\pi}}D^{(\pi)}\left(\frac{1}{qx}, \vec{f}^{(\pi)}, \tilde{F}_{\pi}^{-1}; M'\right).
\end{split}
\end{equation*}
Again, the local series at $\pi$ is evaluated as in equation \eqref{DirichletLocalEval}, while the series away from $\pi$ simplifies because the first parameter is $1$. 

Therefore, 
\begin{equation} \label{DirichletLocalGlobal2}
    \begin{split}
    D\left(\frac{1}{qx}, \vec{f}; M'\right) = &\frac{\varepsilon_\pi(\vec{f}; M')}{\varepsilon_\pi((\pi, \vec{f}^{(\pi)}); (M_{v_\pi\tilde{F}})')} \res{\pi'}{\pi}_\chi^{(v_\pi \tilde{F} +n/2)(b_\pi-1)} \left(1-\res{\tilde{F}^{(\pi)}}{\pi}_{\chi}^{-1} (qx)^{-\deg \pi}\right)^{b_{\pi}} \\
    & D_{\pi}\left(\frac{1}{qx}, \vec{f}_{\pi}, (\tilde{F}^{(\pi)})^{-1}; M'\right) D\left(\frac{1}{qx}, \left(\pi, \vec{f}^{(\pi)}\right); (M_{v_\pi \tilde{F}})'\right).
    \end{split}
\end{equation}

The known functional equation of $D_{\pi}(x, \vec{f}_{\pi}, \tilde{F}^{(\pi)}; M)$ is as follows: 
\begin{equation*}
\begin{split}
&\frac{D_{\pi}(x, \vec{f}_{\pi}, \tilde{F}^{(\pi)}; M)}{D_{\pi}\left(\frac{1}{qx}, \vec{f}_{\pi}, (\tilde{F}^{(\pi)})^{-1}; M'\right) } = \prod_{\substack{1 \leq i \leq r-1 \\ M_{ir} \neq 0}} \res{\pi'}{\pi}_{\chi}^{-v_\pi f_i(M_{ir}+n/2)} \omega_\pi(\vec{f}_\pi; M) \left(\frac{-(-g_{\chi^{v_{\pi} \tilde{F}}})^{\deg \pi}}{q^{\deg\pi}}\right)^{b_{\pi}-1}  \\
& \res{\tilde{F}^{(\pi)}}{\pi}_\chi^{v_\pi F-1} (\chi(-1)^{v_\pi\tilde{F}}qx)^{\deg F_\pi-\deg \pi}  \prod_{\substack{1 \leq i \leq r-1 \\ M_{ir} \neq 0}} \left(\frac{-(-g_{\chi^{M_{ir}}})^{\deg \pi}}{q^{\deg \pi}}\right)^{v_\pi f_i} \left( \frac{\res{\tilde{F}^{(\pi)}}{\pi}_\chi(qx)^{\deg \pi}-1}{1-\res{\tilde{F}^{(\pi)}}{\pi}_\chi x^{\deg \pi}}\right)^{b_{\pi}}.
\end{split}
\end{equation*}

The known functional equation of $D(x, (\pi, \vec{f}^{(\pi)}); M_{v_\pi \tilde{F}})$ is as follows:
\begin{equation*}
\begin{split}
\frac{D(x, (\pi, \vec{f}^{(\pi)}); M_{v_\pi \tilde{F}})}{D\left( \frac{1}{qx}, \left(\pi, \vec{f}^{(\pi)}\right); (M_{v_\pi \tilde{F}})'\right)} = & \omega((\pi, \vec{f}^{(\pi)}); M_{v_\pi \tilde{F}}) \, g_{\chi^{\deg \tilde{F}}}^{b-1} (\chi(-1)^{\deg \tilde{F}} x)^{\deg F^{(\pi)}+(1-b_\pi)\deg \pi -1} \\
&g_{\chi^{v_\pi \tilde{F}}}^{\deg \pi(1-b_\pi)} \prod_{\substack{1 \leq i \leq r-1 \\ M_{ir} \neq 0}} g_{\chi^{M_{ir}}}^{\deg f_i^{(\pi)}} \, \left( \frac{x-1}{1- qx}\right)^b
\end{split}
\end{equation*}
where the values of $\deg \tilde{F}$ mod $n$, $b$ and $b_\pi$ are the same for the pair $(\pi, \vec{f}^{(\pi)})$, $M_{v_\pi \tilde{F}}$ as they are for $\vec{f}$, $M$. 

Dividing Equation \eqref{DirichletLocalGlobal1} by \eqref{DirichletLocalGlobal2}, applying the local and global functional equations, and algebraically simplifying, we obtain:

\begin{equation*}
\begin{split}
\frac{D(x, \vec{f}; M)} {D\left(\frac{1}{qx}, \vec{f}; M'\right)} &=\frac{\varepsilon_\pi(\vec{f}; M)\varepsilon_\pi((\pi, \vec{f}^{(\pi)}); (M_{v_\pi\tilde{F}})')}{\varepsilon_\pi(\vec{f}; M')} 
\res{\tilde{F}^{(\pi)}}{F_\pi}_\chi \res{\tilde{F}^{(\pi)}}{\pi}_\chi^{b_\pi-1} \\
& \res{\pi'}{\pi}_\chi^{(n/2)(1-b_\pi-v_\pi F)} (-1)^{(\deg \pi+1)(v_\pi F+ b_{\pi}-1)} \omega_\pi(\vec{f}_\pi; M) \omega((\pi, \vec{f}^{(\pi)}); M_{v_\pi \tilde{F}})  \\
& \chi(-1)^{(\deg \tilde{F})(\deg F^{(\pi)}+(1-b_\pi)\deg \pi -1)+(\deg \tilde{F}_\pi)(v_\pi F-1)} \\
&x^{\deg F-1} g_{\chi^{\deg \tilde{F}}} ^{b-1} \prod_{\substack{1 \leq i \leq r-1 \\ M_{ir} \neq 0}} g_{\chi^{M_{ir}}} ^{\deg f_i} \, \left( \frac{x-1}{1-qx}\right)^b.
\end{split}
\end{equation*}

Using $\deg \pi \equiv (\deg \pi)^2 \bmod 2$, the total power of $\chi(-1)$ in the third line is equivalent to 
\begin{equation*}
(\deg \tilde{F})(\deg F -1)+(\deg \tilde{F}^{(\pi)})((1-b_\pi)\deg \pi-\deg F_\pi).
\end{equation*}

Using Lemma \ref{Pellet} to evaluate $\res{\pi'}{\pi}_\chi^{n/2}$, we have 
\begin{equation*}
\res{\pi'}{\pi}_\chi^{(n/2)(1-b_\pi-v_\pi F)} (-1)^{(\deg \pi+1)(b_\pi-1+v_\pi F)} = \xi(-1)^{(b_\pi-1+v_\pi F)\deg \pi(\deg \pi -1)/2}.
\end{equation*}

By comparing the entries of $M'$ and $M$, we find that
\begin{equation*}
\begin{split}
\frac{\varepsilon_\pi(\vec{f}; M)}{\varepsilon_\pi(\vec{f}; M')} &= \prod_{\substack{1 \leq i \leq r-1 \\ M_{ir} \neq 0}} \res{f_i^{(\pi)}}{(f_i)_\pi}_{\chi}^{-M_{ir}-n/2} \res{(f_i)_\pi}{f_i^{(\pi)}}_{\chi}^{-M_{ir}-n/2} \prod_{\substack{1 \leq i<j \leq r-1 \\ M_{ir}, M_{jr} \neq 0}} \res{f_i^{(\pi)}}{(f_j)_\pi}_{\chi}^{-M_{ir}-M_{jr}} \res{(f_i)_\pi}{f_j^{(\pi)}}_{\chi}^{-M_{ir}-M_{jr}} \\
&=  \res{F^{(\pi)}}{\tilde{F}_\pi}_\chi^{-1} \res{F_\pi}{ \tilde{F}^{(\pi)}}_\chi^{-1} \prod_{\substack{1 \leq i \leq r-1 \\ M_{ir} \neq 0}} \xi(-1)^{\deg (f_i)_\pi \, \deg f_i^{(\pi)}} \\
&\phantom{=} \prod_{\substack{1 \leq i<j \leq r-1 \\ M_{ir}, M_{jr} \neq 0}} \chi(-1)^{M_{ir}(\deg f_i^{(\pi)} \,\deg (f_j)_\pi + \deg (f_i)_{\pi} \,\deg f_j^{(\pi)})}.
\end{split}
\end{equation*}
The last equality is obtained using the reciprocity law and combining the product of residue symbols. Also, from the entries of $(M_{v_\pi\tilde{F}})'$, we find that 
\begin{equation*}
\varepsilon_\pi((\pi, \vec{f}^{(\pi)}); (M_{v_\pi\tilde{F}})')=\prod_{\substack{1 \leq i \leq r-1 \\ M_{ir} \neq 0}} \res{\pi}{f_i^{(\pi)}}_\chi^{(v_\pi \tilde{F} + M_{ir})(1-b_\pi)} = \res{\tilde{F}_\pi}{F^{(\pi)}}_\chi \res{\pi}{\tilde{F}^{(\pi)}}_\chi^{1-b_\pi}.
\end{equation*}

We conclude that 
\begin{equation*}
\begin{split}
\frac{D(x, \vec{f}; M)} {D\left(\frac{1}{qx}, \vec{f}; M'\right)} &=\pm g_{\chi^{\deg \tilde{F}}} ^{b-1} \, (\chi(-1)^{\deg \tilde{F}} x)^{\deg F-1} \prod_{\substack{1 \leq i \leq r-1 \\ M_{ir} \neq 0}} g_{\chi^{M_{ir}}} ^{\deg f_i} \, \left( \frac{x-1}{1-qx}\right)^b
\end{split}
\end{equation*}
where the $\pm$ symbol is given explicitly by

\begin{equation} \label{TotalSign}
\begin{split}
& \prod_{\substack{1 \leq i \leq r-1 \\ M_{ir} \neq 0}} \xi(-1)^{\deg (f_i)_\pi \, \deg f_i^{(\pi)}} \prod_{\substack{1 \leq i<j \leq r-1 \\ M_{ir}, M_{jr} \neq 0}} \chi(-1)^{M_{ir}(\deg f_i^{(\pi)} \,\deg (f_j)_\pi + \deg (f_i)_{\pi} \,\deg f_j^{(\pi)})}  \\
&\xi(-1)^{(b_\pi-1+v_\pi F)\deg \pi(\deg \pi -1)/2} \chi(-1)^{(\deg F^{(\pi)})(\deg \tilde{F}_\pi)}\\
&\prod_{\substack{1 \leq i \leq r-1 \\ M_{ir} \neq 0}} \xi(-1)^{\deg \pi (v_\pi f_i)(v_\pi f_i-1)/2} \prod_{\substack{1 \leq i<j \leq r-1 \\ M_{ir}, M_{jr} \neq 0}} \chi(-1)^{\deg \pi(v_\pi f_i)(v_\pi f_j) M_{ir} } \\
& \xi(-1)^{(\deg \pi)(\deg \pi -1)(1-b_\pi)/2} \prod_{\substack{1 \leq i \leq r-1 \\ M_{ir} \neq 0}} \chi(-1)^{\deg \pi(\deg f_i^{(\pi)})v_\pi \tilde{F}} \\
&\prod_{\substack{1 \leq i \leq r-1 \\ M_{ir} \neq 0}} \xi(-1)^{\deg f_i^{(\pi)}(\deg f_i^{(\pi)}-1)/2} \prod_{\substack{1 \leq i<j \leq r-1 \\ M_{ir}, M_{jr} \neq 0}} \chi(-1)^{(\deg f_i^{(\pi)})(\deg f_j^{(\pi)}) M_{ir} }.
\end{split}
\end{equation}

The powers of $\chi(-1)$ in the second and fourth lines cancel out, and again using $\deg \pi \equiv (\deg \pi)^2 \bmod 2$, the total power of $\chi(-1)$ in \eqref{TotalSign} is 
\begin{equation*}
\begin{split}
    &\sum_{\substack{1 \leq i<j \leq r-1 \\ M_{ir}, M_{jr} \neq 0}} M_{ir}(\deg (f_i)_\pi \, \deg (f_j)_\pi+ \deg (f_i)_\pi \, \deg f_j^{(\pi)} + \deg f_i^{(\pi)} \, \deg (f_j)_\pi + \deg f_i^{(\pi)}) \, \deg f_j^{(\pi)} \\
    &=\sum_{\substack{1 \leq i<j \leq r-1 \\ M_{ir}, M_{jr} \neq 0}} M_{ir}\deg f_i \, \deg f_j.
\end{split}
\end{equation*}

The power of $\xi(-1)$ in the fourth line cancels part of the power in the second line, and again using $\deg \pi \equiv (\deg \pi)^2 \bmod 2$, the total power of $\xi(-1)$ in \eqref{TotalSign} is 
\begin{equation*}
\begin{split}
& \frac12 \sum_{\substack{1 \leq i \leq r-1 \\ M_{ir} \neq 0}} \deg (f_i)_\pi (\deg (f_i)_\pi-\deg \pi)+\deg (f_i)_\pi(\deg \pi -1) + 2 \deg (f_i)_\pi \deg f_i^{(\pi)}+\deg f_i^{(\pi)}(\deg f_i^{(\pi)}-1) \\
& =\sum_{\substack{1 \leq i \leq r-1 \\ M_{ir} \neq 0}} \deg f_i(\deg f_i-1)/2.
\end{split}
\end{equation*}

Thus we find that the expression \eqref{TotalSign} matches $\omega(\vec{f}; M)$, completing the proof.
\end{proof}

We can now prove the functional equations of $D(x, \vec{f}; M)$  and $D_\pi(x, \vec{f}; M)$ for arbitrary $\vec{f}$.

\begin{theorem} \label{TheoremDirichletFE}
Suppose $M_{rr}=0$. For any $\vec{f}=(f_1, \ldots f_{r-1})$ the series $D(x, \vec{f}; M)$ is a rational function of $x$ with denominator dividing $1-qx$ if $\tilde{F}$ is an $n$th power in $\F_q[T]$ and a polynomial otherwise, and satisfies the global functional equation \eqref{DirichletFE}. For any $\pi$ prime and $\vec{f}=(\pi^{d_1}, \ldots \pi^{d_{r-1}})$, the series $D_\pi(x, \vec{f}; M)$ is a rational function of $x$ with denominator dividing $1-x^{\deg \pi}$ if $\tilde{F}$ is an $n$th power in $F_q[T]$ and a polynomial otherwise, and satisfies the local functional equation \eqref{DirichletLocalFE}. 
\end{theorem}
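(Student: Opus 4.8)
The plan is to mirror the inductive proof of Theorem \ref{TheoremFE}, feeding in the Dirichlet inputs (Propositions \ref{PropDirichletFE} and \ref{PropDirichletLocalFE}, Lemma \ref{LemmaDirichletFactorization}, and Proposition \ref{LocalDirichletImpliesGlobal}) in place of the Kubota ones. I would argue by simultaneous induction, on $A = \deg f_1 + \cdots + \deg f_{r-1}$ for the global statement and on $A = v_\pi f_1 + \cdots + v_\pi f_{r-1}$ for the local statement, asserting at each stage the conclusion for \emph{every} $r \times r$ integer matrix $M$ with $M_{rr} = 0$ and, in the local case, every prime $\pi$. All auxiliary matrices that arise, namely $M'$ and the augmentations $M_v$ and $(M_v)'$ of Proposition \ref{LocalDirichletImpliesGlobal}, again have vanishing entry in the relevant diagonal slot, so they are covered by the inductive hypothesis. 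The base cases $A = 0$ and $A = 1$ are precisely Propositions \ref{PropDirichletFE} and \ref{PropDirichletLocalFE}: there $f_1 \cdots f_{r-1}$ is squarefree (resp. a power of $\pi$ of valuation at most $1$), and $D(x, \vec f; M)$ is, up to the root of unity $\varepsilon$, a Dirichlet $L$-function (resp. its local Euler factor), for which \eqref{DirichletFE} (resp. \eqref{DirichletLocalFE}) is the classical functional equation.

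For the inductive step of the global statement: if $f_1 \cdots f_{r-1}$ is squarefree we are done by Proposition \ref{PropDirichletFE}. Otherwise choose a prime $\pi$ with $\pi^2 \mid f_1 \cdots f_{r-1}$. The pair $(\pi, \vec f^{(\pi)})$ has total degree $\deg \pi + \sum_i \deg f_i^{(\pi)} < A$, so by induction $D\bigl(x, (\pi, \vec f^{(\pi)}); M_{v_\pi \tilde F}\bigr)$ has the asserted rationality and satisfies \eqref{DirichletFE}; and unless $\pi$ is linear with every $f_i$ a power of $\pi$, we also have $\sum_i v_\pi f_i < A$, so $D_\pi(x, \vec f_\pi; M)$ satisfies \eqref{DirichletLocalFE}. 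Proposition \ref{LocalDirichletImpliesGlobal} then gives the global statement for $\vec f$. This handles every $\vec f$ with $\deg f_i = d_i$ except the $q$ tuples $(\pi^{d_1}, \ldots, \pi^{d_{r-1}})$ with $\pi$ linear, which by invariance of the axiomatic coefficients under $T \mapsto T - c$ all produce the same series.

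To dispose of this remaining linear-prime-power case I would run the sharp/flat argument from the proof of Theorem \ref{TheoremFE}. The difference $\sum_{\deg f_i = d_i} D(x, \vec f; M) - q\, D(x, (T^{d_1}, \ldots, T^{d_{r-1}}); M)$ has, by the previous paragraph, all of its terms satisfying \eqref{DirichletFE}, hence so does the difference. Iterating Lemma \ref{LemmaDirichletFactorization} at the prime $\pi = T$ and inserting the explicit evaluations of the auxiliary series $D_T(x, (T, \vec 1), \ast; M_v)$ and $D(x, (T, \vec 1), \ast; M_v)$ (which, since $n_r = 2$, are just products of the zeta or $L$-function of trivial conductor with simple monomial or unit factors) lets one isolate $D_T(x, (T^{d_1}, \ldots, T^{d_{r-1}}); M)$ after dividing by a factor that is a monomial in $x$, exactly as the matrices $D(x, \pi, \vec 1, k-m; M_{\cdots})$ are divided out in Theorem \ref{TheoremFE}. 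By Axiom \ref{Axiom5} the rescaled global sum is sharp and the rescaled local series is flat, so there is no cancellation; the pole argument (a pole at the zero of $1-qx$, resp. $1-x^{\deg\pi}$, would be sent by the functional equation to an impossible pole) forces the isolated expression to be a polynomial, and then sharp/flat forces each piece to be a polynomial satisfying the functional equation separately. Reading off the local piece yields \eqref{DirichletLocalFE} for $\vec f = (\pi^{d_1}, \ldots, \pi^{d_{r-1}})$ with $\pi$ linear; Axiom \ref{Axiom3} propagates this to $\pi$ of arbitrary degree (Weil numbers raised to the $\deg \pi$ power, the extra $\res{\pi'}{\pi}_\chi$ twist absorbed because $\chi^{-p_{ri}}\xi^{-n_{ri}} = \chi^{M_{ri}}$), and feeding the resulting local functional equation back into Proposition \ref{LocalDirichletImpliesGlobal} completes the global statement for the last tuples.

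The main obstacle I expect is the sign and character bookkeeping in the sharp/flat step: one must carry the involution $M \mapsto M'$ and the augmentations $M_v, (M_v)'$ through Lemma \ref{LemmaDirichletFactorization} and verify that the monomial factor being divided out is genuinely a unit (no poles) in both the case $n \mid v_\pi \tilde F$ and the case $n \nmid v_\pi \tilde F$, so that the rationality claim survives. Because $n_r = 2$ the scattering "matrix" is one-dimensional and the linear algebra of Theorem \ref{TheoremFE} collapses to scalars, but in exchange one must reconcile the root-number sign $\omega(\vec f; M)$ and the powers of $\chi(-1)$, $\xi(-1)$, $g_{\chi^{\deg \tilde F}}$, $g_{\chi^{M_{ir}}}$ with the splitting $\deg f_i = \deg (f_i)_\pi + \deg f_i^{(\pi)}$ — essentially the same computation already carried out inside the proof of Proposition \ref{LocalDirichletImpliesGlobal}, now invoked one level down in the induction.
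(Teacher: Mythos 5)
Your proposal is correct and follows essentially the same route as the paper: the same double induction over all matrices with $M_{rr}=0$, the same base cases from Propositions \ref{PropDirichletFE} and \ref{PropDirichletLocalFE}, Proposition \ref{LocalDirichletImpliesGlobal} for the non-squarefree step, and the sharp/flat separation (after clearing the factor $(1-qx)^b$ coming from $D^{(\pi)}(x,\vec 1,\pi^v)$) for the remaining linear-prime-power tuples, finished by Axiom \ref{Axiom3} for higher-degree primes. The only cosmetic deviations are that the paper reads the global functional equation for the last tuples directly off the polynomial identity \eqref{LongDirichletFE} rather than re-invoking Proposition \ref{LocalDirichletImpliesGlobal}, and the cleared factor is $(1-qx)^b$ rather than a monomial.
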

\begin{proof}
The proof is by induction on $\deg f_1+\cdots + \deg f_{r-1}$ and $v_{\pi} f_1 + \cdots + v_\pi f_{r-1}$. Note that the matrix $M$ is not fixed in the argument, though we always assume $M_{rr}=0$. At each step in the induction, the global functional equation and rationality statement are proven for all matrices $M$ and all $\vec{f}$ with $\deg f_1 + \cdots + \deg f_r$ fixed, and the local functional equation and rationality statement are proven for all matrices $M$, all $\pi$, and all $\vec{f}$ supported on $\pi$ with $v_{\pi} f_1 + \cdots + v_\pi f_{r-1}$ fixed. 

If $\deg f_1+\cdots + \deg f_{r-1}=0$ or $1$ then the global functional equation and rationality statement hold by Proposition \ref{PropDirichletFE}. If $v_{\pi} f_1 + \cdots + v_\pi f_{r-1}=0$ or $1$ then the local functional equation and rationality statement hold by Proposition \ref{PropDirichletLocalFE}. 

For the inductive step: suppose that the global functional equation and rationality statement are known for all $M$ and all $f$ with $\deg f_1+\cdots + \deg f_{r-1}<A$, and the local functional equation and rationality statement are known for all $\vec{f}$ supported on $\pi$ with $v_{\pi} f_1 + \cdots + v_\pi f_{r-1}<A$. If $\vec{f}=(f_1, \ldots f_{r-1})$ has $\deg f_1+\cdots + \deg f_{r-1}=A$ and $f_1\cdots f_{r-1}$ is squarefree, then the global functional equation and rationality statement are known for $\vec{f}$ by Proposition \ref{PropDirichletFE}. If $\pi^2 | f_1\cdots f_{r-1}$ for some prime $\pi$, we apply Lemma \ref{LocalDirichletImpliesGlobal}. The vector $(\pi, \vec{f}^{(\pi)})$ has $\deg \pi + \deg f_1^{(\pi)} + \cdots + f_{r-1}^{(\pi)}< A$, so by induction, the global functional equation and rationality statement hold for the series $D(x, (\pi, \vec{f}^{(\pi)}); M_{v_\pi{\tilde{F}}})$ with extended matrix $M_{v_\pi{\tilde{F}}}$. Moreover, unless $\pi$ is linear and each $f_i$ is a power of $\pi$, we have $v_\pi f_1+\cdots + v_\pi f_{r-1} < A$, so the local functional equation and rationality statement hold for the series $D_\pi(x, \vec{f}_\pi; M)$. Therefore by Lemma \ref{LocalDirichletImpliesGlobal}, the global functional equation and rationality statement hold for $\vec{f}$. This establishes the global functional equation and rationality statement for all $\vec{f}$ except when the $f_i$ are all powers of the same linear polynomial. 

We now consider the sum 
\begin{equation}
\sum_{\deg f_1 = d_1, \ldots \deg f_{r-1}=d_{r-1}} D(x, \vec{f}) - \sum_{\deg \pi =1} D(x, (\pi^{d_1}, \ldots \pi^{d_{r-1}})).
\end{equation}
Set $v=d_1 M_{1r} + \cdots d_{r-1} M_{(r-1) \, r}$. We know that this sum is a rational function with denominator dividing $1-qx$ if $n|v$, and a polynomial otherwise. Because the global functional equation depends only on $d_i$, not $f_i$ themselves, this sum satisfies the global functional equation \eqref{DirichletFE}. Note that, by Axiom \ref{Axiom3}, the second sum is $qD(x, (\pi^{d_1}, \ldots \pi^{d_{r-1}}))$ for an arbitrary monic linear polynomial $\pi$. 

We use Lemma \ref{LemmaDirichletFactorization} to write the second sum as 
\begin{equation*}
q D(x, (\pi^{d_1}, \ldots \pi^{d_{r-1}})) = q D_\pi(x, (\pi^{d_1}, \ldots \pi^{d_{r-1}}))D^{(\pi)}(x, \vec{1}, \pi^v).
\end{equation*}
If $n \nmid v$, then $D^{(\pi)}(x, \vec{1}, \pi^v)$ is the Dirichlet $L$-function $L(x, \res{\pi^v}{*}_\chi)$. This $L$-function with conductor $\pi$ of degree $1$ is simply equal to $1$. If $n | v$, then $D^{(\pi)}(x, \vec{1}, \pi^v)$ is the function field zeta function with the Euler factor at $\pi$ removed. This evaluates to $\frac{1-x}{1-qx}$. 

If we set $b=1$ if $n \mid v$ and $b=0$ otherwise, then the functional equation \eqref{DirichletFE}, multiplied by $(1-qx)^b q^{\frac{-A-1}{2}}$, gives the following:
\begin{equation} \label{LongDirichletFE}
\begin{split}
&(1-qx)^b q^{\frac{-A-1}{2}}\sum_{\deg f_1 = d_1, \ldots \deg f_{r-1}=d_{r-1}} D(x, \vec{f}; M) \\&
-(1-x)^b q^{\frac{-A+1}{2}} D_\pi(x, (\pi^{d_1}, \ldots \pi^{d_{r-1}}); M) \\
&=\frac{\omega g_{\chi^{v}} ^{b-1}}{\chi(-1)^v x} \prod_{\substack{1 \leq i \leq r-1 \\ M_{ir} \neq 0}} (\chi(-1)^v x g_{\chi^{M_{ir}}} )^{d_i} \left(x-1\right)^b q^{\frac{-A-1}{2}}\sum_{\deg f_1 = d_1, \ldots \deg f_{r-1}=d_{r-1}} D\left(\frac{1}{qx}, \vec{f}; M'\right) \\
&\phantom{=}-\frac{\omega g_{\chi^{v}} ^{b-1}}{\chi(-1)^v x} \prod_{\substack{1 \leq i \leq r-1 \\ M_{ir} \neq 0}} (\chi(-1)^v x g_{\chi^{M_{ir}}} )^{d_i} \left(x-\frac{1}{q}\right)^b q^{\frac{-A+1}{2}} D_\pi\left(\frac{1}{qx}, (\pi^{d_1}, \ldots \pi^{d_{r-1}}); M'\right).
\end{split}
\end{equation}
Moreover, this is an equality of polynomials in $x$. Note that the same logic of the previous paragraph also applies to $q D(x, (\pi^{d_1}, \ldots \pi^{d_{r-1}}); M')$, and the value of $v$ for this series is congruent to the negative of the previous $v$ value modulo $n$. 

Recall that we call a power series in $x$ ``sharp'' if each coefficient of $x^j$ is a linear combination of $q$-Weil numbers of weights greater than $j$ and ``flat'' if each coefficient of $x^j$ is a linear combination of $q$-Weil numbers of weights less than $j$. By Axioms \ref{Axiom4} and \ref{Axiom5}, the terms in the first line of \eqref{LongDirichletFE} are sharp, and the terms in the second line are flat. Since the first and second lines sum to a polynomial in $x$ and there is no cancellation between them, we conclude that each one is a polynomial in $x$. Again by Axioms \ref{Axiom4} and \ref{Axiom5}, the terms in the third line of \eqref{LongDirichletFE} are sharp, and the terms in the fourth line are flat. Thus we conclude that the first line equals the third line, and the second line equals the fourth line. 

Because the second line of \eqref{LongDirichletFE} is a polynomial, $(1-qx)^b D(x, (\pi^{d_1}, \ldots \pi^{d_r}); M)$ is a polynomial, which verifies the global rationality statement when $\vec{f}=(\pi^{d_1}, \ldots \pi^{d_{r-1}})$ and $\deg \pi = 1$, the last remaining case with $\deg f_i=d_i$. The equality of the second and fourth lines is equivalent to 
\begin{equation*} 
\begin{split}
& D(x, (\pi^{d_1}, \ldots \pi^{d_{r-1}}); M) =\frac{\omega g_{\chi^{v}} ^{b-1}}{\chi(-1)^v x} \prod_{\substack{1 \leq i \leq r-1 \\ M_{ir} \neq 0}} (\chi(-1)^v x g_{\chi^{M_{ir}}} )^{d_i} \left(\frac{x-1}{1- qx}\right)^b D\left(\frac{1}{qx}, (\pi^{d_1}, \ldots \pi^{d_{r-1}}); M'\right)
\end{split}
\end{equation*}
which verifies the global functional equation \eqref{DirichletFE} in this last remaining case. 

The fact that the second line of \eqref{LongDirichletFE} is a polynomial also verifies the local rationality statement for $\deg \pi=1$, $\vec{f}=(\pi^{d_1}, \ldots \pi^{d_{r-1}})$. The equality of the second and fourth lines can be written with the local series as 
\begin{equation*} 
\begin{split}
&D_\pi(x, (\pi^{d_1}, \ldots \pi^{d_{r-1}}); M) \\
&=\omega_\pi \frac{(g_{\chi^{v}}  q^{-1})^{b-1}}{\chi(-1)^v qx} \prod_{\substack{1 \leq i \leq r-1 \\ M_{ir} \neq 0}} (\chi(-1)^v x g_{\chi^{M_{ir}}} )^{d_i} \left(\frac{qx-1}{1- x}\right)^b D_\pi\left(\frac{1}{qx}, (\pi^{d_1}, \ldots \pi^{d_{r-1}}); M'\right)
\end{split}
\end{equation*}
where
\begin{equation*}
\omega_\pi= \prod_{\substack{1 \leq i \leq r-1 \\ M_{ir} \neq 0}} \xi(-1)^{d_i(d_i-1)/2} \prod_{\substack{1 \leq i<j \leq r-1 \\ M_{ir}, M_{jr} \neq 0}} \chi(-1)^{d_i d_j M_{ir} } = \pm 1.
\end{equation*}
This is the local functional equation when $\vec{f}=(\pi^{d_1}, \ldots \pi^{d_{r-1}})$ and $\deg \pi = 1$. 

To extend to $\pi$ of higher degree, we apply Axiom \ref{Axiom3}. The effect replacing $\pi$ of degree $1$ with $\pi$ of arbitrary degree is to replace $x$ by $x^{\deg \pi}$, take all $q$-Weil numbers to the power of $\deg \pi$ (with the Gauss sum $g_{\chi} $ understood as $-1$ times the Weil number $-g_{\chi} $), divide the left by $\res{\pi'}{\pi}_\chi^{\sum d_i M_{ii}}$ and the right by $\res{\pi'}{\pi}_\chi^{\sum d_i M_{ii}'}$. Note that all these operations preserve rationality, so we verify the local rationality statement for arbitrary $\pi$. After performing these operations on the functional equation and simplifying, we obtain
\begin{equation*} 
\begin{split}
&D_\pi(x, (\pi^{d_1}, \ldots \pi^{d_{r-1}}); M) =\prod_{\substack{1 \leq i \leq r-1 \\ M_{ir} \neq 0}} \res{\pi'}{\pi}_{\chi}^{-d_i (M_{ir}+n/2)} \omega_\pi \left(\frac{-(-g_{\chi^{v}} )^{\deg \pi}}{q^{\deg \pi}}\right)^{b_\pi-1} (\chi(-1)^v q x)^{-\deg \pi}\\ & \prod_{\substack{1 \leq i \leq r-1 \\ M_{ir} \neq 0}} \left((\chi(-1)^{v} q x)^{\deg \pi} \left(\frac{-( g_{\chi^{M_{ir}}} )^{\deg \pi}}{q^{\deg \pi}}\right)\right)^{d_i} \left(\frac{(qx)^{\deg \pi}-1}{1-x^{\deg \pi}}\right)^{b_\pi} D_\pi\left(\frac{1}{qx}, (\pi^{d_1}, \ldots \pi^{d_{r-1}}); M'\right)
\end{split}
\end{equation*}
with notation as in \eqref{DirichletLocalFE}. This is the desired local functional equation for arbitrary $\pi$.
\end{proof}

\subsection{Multivariable Functional Equation}\label{ss-multivariable-functional-equation}

As with the Kubota functional equations, we deduce functional equations for the multivariable series based on the single-variable functional equations. We again set $\vec{x}=(x_1, \ldots x_r) \in \C^r$, $\vec{k}=(k_1, \ldots k_r) \in \Z^r$ and let 
\begin{align}
&Z(\vec{x}, \vec{k}; q, \chi, M) = \sum_{\substack{f_1, \ldots f_r \in \F_q[T]^+ \\ \deg f_i \equiv k_i \bmod n}} a(f_1,\ldots f_r; q, \chi, M) x_1^{\deg f_1} \cdots x_r^{\deg f_r} \\
&Z_{\pi}(\vec{x}, \vec{k}; q, \chi, M) = \sum_{\substack{d_1, \ldots d_r \geq 0 \\ d_i \equiv k_i \bmod n}} a(\pi^{d_1}, \ldots \pi^{d_r}; q, \chi, M) x_1^{d_1\deg \pi} \cdots x_r^{d_r \deg \pi}.
\end{align}
We now introduce notation relevant for the multivariable functional equations. Fix $i \in \{1, \ldots r\}$, and assume that $M_{ii}=0$. Set $e_{ij}=1$ if $M_{ij} \neq 0$, $e_{ij}=0$ if $M_{ij} = 0$. For fixed $k_1, \ldots k_{i-1}, k_{i+1}, \ldots k_r$, let $v=k_1 M_{1i} + \cdots +k_{r} M_{ri}$, $v_i= k_{i+1} M_{i \, (i+1)}+ \cdots + k_r M_{ir}$, $K=k_1 e_{1i} + \cdots + k_{r} e_{ri}$, $b=1$ if $n|v$, $b=0$ otherwise, and 
\begin{equation*}
    \omega = \prod_{\substack{1 \leq j \leq r \\ M_{ji} \neq 0}} \xi(-1)^{k_j(k_j-1)/2} \prod_{\substack{1 \leq h<j \leq r \\ M_{hi}, M_{ji} \neq 0}} \chi(-1)^{k_h k_j M_{hi} }.
\end{equation*}
Define a transformation $\sigma_{i; q, \chi, M}:\C^r \to \C^r$ as follows:
\begin{equation} \label{sigma-i-dirichlet}
(\sigma_{i; q, \chi, M}(\vec{x}))_j=\left\lbrace \begin{array}{cc}
\frac{1}{qx_i} & j=i \\ (g_{\chi^{M_{ij}}}x_i)^{e_{ij}}x_j & j \neq i \end{array} \right. .
\end{equation}
Define a transformation $\tau_i$ on $r \times r$ symmetric integer matrices as follows: 
\begin{equation}
\begin{split}
&(\tau_i(M))_{ii}=M_{ii}=0 \\
&(\tau_i(M))_{ij}=-M_{ij} \text{ for all } j\neq i \\
&(\tau_i(M))_{jj} = M_{jj}+e_{ji}(M_{ji}+n/2) \text{ for all } j\neq i \\
&(\tau_i(M))_{hj} = M_{hj}+e_{hi}e_{ij}(M_{hi}+M_{ij}) \text{ for all } h\neq j \neq i .
\end{split}
\end{equation}

Recall that $S^{k,n}$ is defined as the operation which transforms a power series $D(x)$ into the sum of all terms in $D(x)$ whose powers of $x$ are congruent to $k$ mod $n$. Properties of this operation are listed before Theorem \ref{TheoremMultiFE}.

We can now prove the multivariable global functional equation.
\begin{theorem}\label{dirichlet-multivariable-global}
The vector $(Z(\vec{x}, (k_1, \ldots k_r); M)_{k_i}$ satisfies a functional equation
\begin{equation}
\begin{split}
Z(\vec{x}, (k_1, \ldots k_r); M)_{k_i} = &\omega g_{\chi^v}^{b-1}\left(\chi^{(v+v_i)(k_i+\ell_i)}(-1)\Theta_{k_i, \ell_i}(x_i, K, v) \right)_{k_i, \ell_i} \\
&Z(\sigma_i(\vec{x}), (k_1, \ldots k_{i-1}, \ell_i, k_{i+1}, \ldots k_r); \tau_i(M))_{\ell_i}
\end{split}
\end{equation}
where the $n\times n$ scattering matrix $(\Theta_{k, \ell}(x, K, v))_{k, \ell}$ has entries as described below. For $n \nmid v$, we have $\Theta_{k, \ell}(x, K, v)= x^{-1}$ if $k+\ell \equiv K-1 \bmod n$, and $\Theta_{k, \ell}(x, K, v)=0$ otherwise. 
For $n\mid v$, we have 
\begin{equation*}
\Theta_{k,\ell}(x, K, v)= x^{-1} \left(\frac{q^{n-1}x^n-1}{1-q^nx^n}\right)
\end{equation*}
if $k+\ell \equiv K-1 \bmod n$ and  
\begin{equation*}
\Theta_{k,\ell}(x, K, v)= x^{-1} \left(\frac{(q^{-1}-1)(qx)^{\left(k+\ell-K+1\right) \%n}}{1-q^nx^n}\right)
\end{equation*}
otherwise. Here $\omega$, $b$, $v$, $v_i$, and $K$ are defined as above; they depend on $M$, $i$ and $k_j$ for $j \neq i$. 
\end{theorem}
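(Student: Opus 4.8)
The strategy is the one used for Theorem~\ref{TheoremMultiFE}: I would deduce the multivariable identity from the single-variable functional equation~\eqref{DirichletFE} (Theorem~\ref{TheoremDirichletFE}) by applying it inside the projection operators $S^{k,n}$, first handling $i=r$ and then reducing the general $i$ to this case by a coordinate permutation. For $i=r$, pulling out the sum over $f_1,\dots,f_{r-1}$ with prescribed degree residues gives
\begin{equation*}
Z(\vec x,(k_1,\dots,k_r);M)=\sum_{\substack{f_1,\dots,f_{r-1}\in\F_q[T]^+\\ \deg f_i\equiv k_i \bmod n}} x_1^{\deg f_1}\cdots x_{r-1}^{\deg f_{r-1}}\,S^{k_r,n}D(x_r,(f_1,\dots,f_{r-1});M),
\end{equation*}
so it suffices to insert~\eqref{DirichletFE} into each $D(x_r,\vec f;M)$ and reorganize.

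Next I would apply~\eqref{DirichletFE}, which rewrites $D(x_r,\vec f;M)$ as an explicit $\vec f$-dependent prefactor times $D(\tfrac1{qx_r},\vec f;M')$, where the matrix $M'$ of Section~\ref{SectionDirichlet} is checked entry by entry to equal $\tau_r(M)$. The prefactor is $\omega(\vec f;M)\,g_{\chi^{\deg\tilde F}}^{\,b-1}\,(\chi(-1)^{\deg\tilde F}x_r)^{\deg F-1}\prod_{M_{jr}\neq0}g_{\chi^{M_{jr}}}^{\deg f_j}\,(\tfrac{x_r-1}{1-qx_r})^b$. The point is that $\deg\tilde F\equiv v$ and $\deg F\equiv K\bmod n$, and $b=1$ iff $n\mid v$, so $g_{\chi^{\deg\tilde F}}^{\,b-1}=g_{\chi^v}^{\,b-1}$, $\chi(-1)^{(\deg\tilde F)(\deg F-1)}=\chi(-1)^{v(K-1)}$, and (after collecting the powers of $\xi(-1)$) $\omega(\vec f;M)=\omega$ are scalars independent of $\vec f$. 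The two genuinely $\vec f$-dependent pieces — the monomial $x_r^{\deg F-1}=x_r^{-1}\prod_{i<r}x_r^{\,e_{ir}\deg f_i}$ and the product $\prod_{M_{jr}\neq0}g_{\chi^{M_{jr}}}^{\deg f_j}$ — combine with $\prod_{i<r}x_i^{\deg f_i}$ to produce exactly $x_r^{-1}\prod_{i<r}(\sigma_r(\vec x))_i^{\deg f_i}$, which is the monomial occurring in $Z(\sigma_r(\vec x),\cdot\,;M')$.

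It then remains to evaluate $S^{k_r,n}\!\big[x_r^{\deg F-1}(\tfrac{x_r-1}{1-qx_r})^b D(\tfrac1{qx_r},\vec f;M')\big]$. Using $S^{k,n}x^Kg(x)=x^KS^{k-K,n}g(x)$ to move the monomial out, the product rule $S^{k,n}(D_1D_2)=\sum_j(S^{k-j,n}D_1)(S^{j,n}D_2)$, and $S^{m,n}\tfrac1{1-qx}=\tfrac{q^{m\%n}x^{m\%n}}{1-q^nx^n}$, the series $D(\tfrac1{qx_r},\vec f;M')=\sum_{f_r}a(\vec f,f_r;M')(qx_r)^{-\deg f_r}$ gets split according to the residue $\ell_r$ of $\deg f_r$ modulo $n$; since all its $x_r$-powers are $\equiv-\ell_r$, this reparametrizes the double sum as a single sum over $\ell_r\bmod n$ with $k_r+\ell_r\equiv K-1$ for the ``main'' term. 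Reassembling and summing over $\vec f$ yields the $i=r$ case with scattering matrix $\omega\,g_{\chi^v}^{b-1}\big(\chi^{v(k_r+\ell_r)}(-1)\,\Theta_{k_r,\ell_r}(x_r,K)\big)$, where $\Theta$ is $x_r^{-1}$ when $b=0$ and its geometric-series deformations (the $\tfrac{q^{n-1}x^n-1}{1-q^nx^n}$ and $\tfrac{(q^{-1}-1)(qx)^{(k+\ell-K+1)\%n}}{1-q^nx^n}$ expressions) when $b=1$, arising from $S^{m,n}$ of the rational factor with denominator $1-qx_r$. For general $i$ with $M_{ii}=0$, conjugating by $\rho=(i\;r\;r{-}1\;\cdots\;i{+}1)$ and using $a(\rho\vec f;\rho M\rho^{-1})=a(\vec f;M)\chi(-1)^{\deg f_i(M_{i(i+1)}\deg f_{i+1}+\cdots+M_{ir}\deg f_r)}$ converts the $\sigma_r$-equation for $\rho M\rho^{-1}$ into the $\sigma_i$-equation for $M$ (with $\tau_r(\rho M\rho^{-1})$ corresponding to $\tau_i(M)$), after conjugating the scattering matrix by $\operatorname{diag}(\chi(-1)^{k_iv_i})$; this supplies the extra $\chi^{v_i(k_i+\ell_i)}(-1)$, giving the $\chi^{(v+v_i)(k_i+\ell_i)}(-1)$ of the statement.

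\textbf{Main obstacle.} I expect the only real difficulty to be the bookkeeping of the root-of-unity factors: confirming that $\omega(\vec f;M)$ together with the various powers of $\chi(-1)$ and $\xi(-1)$ generated by~\eqref{DirichletFE}, by the reduction to $i=r$, and by the $S^{m,n}$-manipulations all collapse to the constants $\omega$, $g_{\chi^v}^{b-1}$, $\chi^{(v+v_i)(k_i+\ell_i)}(-1)$ in the statement — in particular handling the parity of $\deg f_j$ in the $\xi(-1)^{\deg f_j(\deg f_j-1)/2}$ terms when $q\equiv 3\bmod4$ — and verifying that $S^{m,n}$ of $(\tfrac{x_r-1}{1-qx_r})^b$ reproduces the $b=1$ entries of $\Theta$ exactly. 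All of this is routine in the style of Section~\ref{ss-k-multi}, but it is where the care must be spent.
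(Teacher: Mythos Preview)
Your proposal is correct and follows essentially the same approach as the paper: reduce to $i=r$, insert the single-variable functional equation \eqref{DirichletFE} into $S^{k_r,n}D(x_r,\vec f;M)$, pull out the constant prefactors $\omega$, $g_{\chi^v}^{b-1}$, $\chi^{v(K-1)}(-1)$, absorb $x_r^{\deg F-1}\prod g_{\chi^{M_{jr}}}^{\deg f_j}$ into the $\sigma_r$-variables, split the $b=1$ case via the product rule to get $\Theta_{k,\ell}(x_r,K)=x_r^{-1}S^{k+\ell-K+1,n}\!\big(\tfrac{x_r-1}{1-qx_r}\big)$, and then conjugate by the cyclic permutation to obtain general $i$ with the extra factor $\chi^{v_i(k_i+\ell_i)}(-1)$. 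The obstacle you flag (bookkeeping of the $\xi(-1)$ and $\chi(-1)$ powers) is indeed the only place requiring care, and the paper treats it exactly as you anticipate.
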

This functional equation is equivalent to \cite[Theorem 6]{HaseLiu24}, though our method of proof is analytic rather than geometric.

\begin{proof}
First we prove the result when $i=r$. We have
\begin{equation*}
\begin{split}
Z(\vec{x}, (k_1, \ldots k_r); M) =\sum_{\substack{f_1, \ldots f_{r-1} \in \F_q[T]^+ \\ \deg f_i \equiv k_i \bmod n}} x_1^{\deg f_1}\cdots x_{r-1}^{\deg f_{r-1}} S^{k_r, n} D(x_r, (f_1, \ldots f_{r-1}); M) .
\end{split}
\end{equation*}
We can then apply the global functional equation of Theorem \ref{TheoremDirichletFE} and rearrange to obtain
\begin{equation*}
\begin{split}
Z(\vec{x}, (k_1, \ldots k_r); M) =& \omega \, g_{\chi^v}^{b-1} \sum_{f_1, \ldots f_{r-1}} \prod_{i=1}^{r-1} (g_{\chi^{M_{ir}}}^{e_{ir}} x_r^{e_{ir}}x_i)^{\deg f_i} x_r^{-1} \chi^{v(K-1)}(-1) \\
&\cdot S^{k_r-K+1, n}  \left( \frac{x_r-1}{1- q x_r}\right)^b D \left(\frac{1}{q x_r}, (f_1, \ldots f_{r-1}); \tau_r(M) \right)  .
\end{split}
\end{equation*}
Suppose that $n \nmid v$, so $b=0$. We find that  
\begin{equation*}
Z(\vec{x}, (k_1, \ldots k_r); M)=\omega \, g_{\chi^v}^{-1} \chi^{v(K-1)}(-1) x_r^{-1} Z(\sigma_r(\vec{x_r}), (k_1, \ldots k_{r-1}, K-1-k_r); \tau_r(M)) 
\end{equation*}
which is the desired result since $\Theta_{k, \ell}(x_r, K, v)= x_r^{-1}$ if $k+\ell \equiv K-1 \bmod n$, and $\Theta_{k, \ell}(x_r, K, v)=0$ otherwise.

Now suppose that $n|v$, so $b=1$. This expression for $Z(\vec{x}, (k_1, \ldots k_r); M)$ becomes
\begin{equation*}
\begin{split}
&\omega \sum_{f_1, \ldots f_{r-1}} \prod_{i=1}^{r-1} (g_{\chi^{M_{ir}}}^{e_{ir}} x_r^{e_{ir}}x_i)^{\deg f_i} x_r^{-1} \sum_{\ell_r=0}^{n-1} S^{k_r+\ell_r -K+1, n}  \left( \frac{x_r-1}{1- q x_r}\right) S^{-\ell_r} D \left(\frac{1}{q x_r}, (f_1, \ldots f_{r-1}); \tau_r(M) \right) \\
&=\omega x_r^{-1} \sum_{\ell_r=0}^{n-1} S^{k_r+\ell_r -K+1, n}  \left( \frac{x_r-1}{1- q x_r}\right) Z(\sigma_r(\vec{x_r}), (k_1, \ldots k_{r-1}, \ell_r); \tau_r(M)) 
\end{split}
\end{equation*}
which is the desired result since $\Theta_{k, \ell}(x_r, K, v)= x_r^{-1} S^{k_r+\ell_r -K+1, n}  \left( \frac{x_r-1}{1- q x_r}\right)$.

This proves the theorem for $i=r$. The extension of the proof to arbitrary $i$ is exactly the same as in the proof of Theorem \ref{TheoremMultiFE}; reordering the variables simply introduces an additional $\chi^{v_i(k_i+\ell_i)}(-1)$ into the entries of the scattering matrix.
\end{proof}

We also have the analogous local functional equation.

\begin{theorem} \label{dirichlet-multivariable-local}
The vector $(Z_\pi(\vec{x}, (k_1, \ldots k_r); M)_{k_i}$ satisfies a functional equation
\begin{equation}
\begin{split}
Z_\pi(\vec{x}, (k_1, \ldots k_r); M)_{k_i} =& \omega^{\deg \pi} \left(\frac{-(-g_{\chi^v})^{\deg \pi}}{q^{\deg \pi}}\right)^{b-1} (-1)^{(\deg \pi+1)K} \res{\pi'}{\pi}_\chi^{-v-Kn/2} \\
&(\chi^{(v+v_i)(k_i+\ell_i)\deg \pi}(-1) \Theta_{\pi, k_i, \ell_i}(x_i, K, v))_{k_i, \ell_i} \\
&Z_\pi(\sigma_i(\vec{x}), (k_1, \ldots k_{i-1}, \ell_i, k_{i+1}, \ldots k_r); \tau_i(M))_{\ell_i}
\end{split}
\end{equation}
where the $n\times n$ scattering matrix $(\Theta_{k, \ell}(x, K, v))_{k, \ell}$ has entries as described below. For $n \nmid v$, we have $\Theta_{\pi, k, \ell}(x, K, v)= (qx)^{-\deg \pi}$ if $k+\ell \equiv K-1 \bmod n$, and $\Theta_{\pi, k, \ell}(x, K, v)=0$ otherwise. 
For $n\mid v$, we have 
\begin{equation*}
\Theta_{\pi,k,\ell}(x, K, v)= (qx)^{-\deg \pi} \left(\frac{q^{\deg \pi}x^{n\deg \pi}-1}{1-x^{n\deg \pi}}\right)
\end{equation*}
if $k+\ell \equiv K-1 \bmod n$ and  
\begin{equation*}
\Theta_{\pi, k,\ell}(x, K, v)=  (qx)^{-\deg \pi} \left(\frac{(q^{\deg \pi}-1)x^{\deg \pi \left(k+\ell-K+1\right) \%n}}{1-x^{n\deg \pi}}\right)
\end{equation*}
otherwise. Here $\omega$, $b$, $v$, $v_i$, and $K$ are defined as above; they depend on $M$, $i$ and $k_j$ for $j \neq i$. 
\end{theorem}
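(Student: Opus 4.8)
The plan is to follow the proof of Theorem~\ref{dirichlet-multivariable-global} essentially verbatim, with the global single-variable functional equation of Theorem~\ref{TheoremDirichletFE} replaced by its local counterpart~\eqref{DirichletLocalFE}. As in the proof of Theorem~\ref{TheoremMultiFE}, it suffices to treat $i=r$: for general $i$ one conjugates $M$ by the permutation matrix of the cycle $(i\ r\ r-1\ \cdots\ i+1)$, and by exactly the argument at the end of the proof of Theorem~\ref{TheoremMultiFE} this introduces the twist $\chi^{v_i(k_i+\ell_i)\deg\pi}(-1)$ into each scattering-matrix entry.

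For $i=r$ I would write
\[
Z_\pi(\vec x,(k_1,\dots,k_r);M)=\sum_{\substack{d_1,\dots,d_{r-1}\ge 0\\ d_j\equiv k_j\bmod n}} x_1^{d_1\deg\pi}\cdots x_{r-1}^{d_{r-1}\deg\pi}\, S^{k_r,n} D_\pi\bigl(x_r,(\pi^{d_1},\dots,\pi^{d_{r-1}});M\bigr),
\]
apply the local functional equation~\eqref{DirichletLocalFE} with $M'=\tau_r(M)$, and regroup. The monomials $\prod_{j<r}x_j^{d_j\deg\pi}$ together with the factors $(\chi(-1)^{v_\pi\tilde{F}}qx_r)^{\deg F-\deg\pi}$ and $\prod_{M_{jr}\neq 0}\bigl(-(-g_{\chi^{M_{jr}}})^{\deg\pi}/q^{\deg\pi}\bigr)^{d_j}$ reassemble, after cancelling the powers of $q$, into $\prod_{j<r}\bigl((\sigma_r(\vec x))_j\bigr)^{d_j\deg\pi}$ with $\sigma_r$ as in~\eqref{sigma-i-dirichlet}, while $x_r\mapsto 1/(qx_r)$. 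The leftover factor $\bigl((qx_r)^{\deg\pi}-1\bigr)^{b_\pi}/\bigl(1-x_r^{\deg\pi}\bigr)^{b_\pi}$ is run through the projection $S^{k_r+\ell_r-K+1,n}$ exactly as the factor $(x_r-1)/(1-qx_r)$ is in the global proof; splitting into the cases $n\nmid v$ (so $b_\pi=b=0$, a single surviving term, $\Theta_{\pi,k,\ell}=(qx)^{-\deg\pi}$) and $n\mid v$ (so $b_\pi=b=1$, a geometric-type sum) produces precisely the stated $n\times n$ scattering matrix $(\Theta_{\pi,k,\ell}(x,K))_{k,\ell}$. Finally, to pass from $\pi$ linear to $\pi$ of arbitrary degree one invokes Axiom~\ref{Axiom3} as in the last step of the proof of Theorem~\ref{TheoremDirichletFE}; this is what inserts the $\deg\pi$-th powers of $x$ and of the Weil numbers throughout.

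The only genuinely new bookkeeping — and the main obstacle — is checking that the scalar prefactors produced by~\eqref{DirichletLocalFE}, which are indexed by the exponents $d_j=v_\pi f_j$ and by $v_\pi\tilde{F}$, $v_\pi F$, collapse to the uniform, $k_j$-dependent prefactor $\omega^{\deg\pi}\bigl(-(-g_{\chi^v})^{\deg\pi}/q^{\deg\pi}\bigr)^{b-1}(-1)^{(\deg\pi+1)K}\res{\pi'}{\pi}_\chi^{-v-Kn/2}$ in the theorem. For this I would use the congruences $d_j\equiv k_j\bmod n$, which give $v_\pi\tilde{F}\equiv v$ and $v_\pi F\equiv K$ modulo $n$ (hence $b_\pi=b$) and, since $n$ is even, $\sum_{M_{jr}\neq 0}d_j\equiv K$ modulo $2$; the fact that $g_{\chi^m}$ and the character $\xi\chi^m$ depend on $m$ only modulo $n$; and Lemma~\ref{Pellet} to rewrite $\res{\pi'}{\pi}_\chi^{n/2}$ in terms of $\xi(-1)$. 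With these, the sign $\prod_{M_{jr}\neq 0}(-(-1)^{\deg\pi})^{d_j}$ emerging from the change of variables becomes $(-1)^{(\deg\pi+1)K}$, the residue-symbol powers $\prod\res{\pi'}{\pi}_\chi^{-d_j(M_{jr}+n/2)}$ combine with those from Axiom~\ref{Axiom3} into $\res{\pi'}{\pi}_\chi^{-v-Kn/2}$, and $\omega_\pi$ of~\eqref{LocalRootNumberSign} becomes $\omega^{\deg\pi}$; every other step is formally identical to the global case.
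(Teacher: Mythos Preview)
Your proposal is correct and matches the paper's approach exactly: the paper's proof of Theorem~\ref{dirichlet-multivariable-local} consists of the single sentence ``The proof is the same as that of Theorem~\ref{dirichlet-multivariable-global}, using the local functional equation in Theorem~\ref{TheoremDirichletFE},'' and your write-up fills in precisely those details. One small simplification: the final step invoking Axiom~\ref{Axiom3} to pass from linear $\pi$ to arbitrary $\pi$ is unnecessary, since Theorem~\ref{TheoremDirichletFE} already establishes the single-variable local functional equation~\eqref{DirichletLocalFE} for $\pi$ of any degree, so you can work with general $\pi$ from the outset.
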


The proof is the same as that of Theorem \ref{dirichlet-multivariable-global}, using the local functional equation in Theorem \ref{TheoremDirichletFE}.

\subsection{When the Functional Equations Agree}

The functional equations in Theorems \ref{TheoremMultiFE} and \ref{dirichlet-multivariable-global} (or Theorems \ref{TheoremLocalMultiFE} and \ref{dirichlet-multivariable-local} in the local setting) appear to be quite different from each other. But there is a special situation where their hypotheses overlap, and in this case we can check that the functional equations match. We assume that $q\equiv 1 \bmod 4$, that $M_{ii}=0$, and that $n/2|M_{ij}$ for all $j$. This is the only possible case where the hypotheses on the matrix $M$ used in this section and the previous section overlap. This hypothesis will imply that the single-variable subseries in $x_i$ behave like quadratic Dirichlet $L$-functions, or Kubota $L$-series involving quadratic Gauss sums. Because of Gauss's evaluation of the quadratic Gauss sums \eqref{QuadGaussSum}, the Dirichlet and Kubota series match up to a change of variables, so they satisfy equivalent functional equations. 

In this case, we have $n_i=2$, and $e_{ij}=n_{ij}=1$ if $M_{ij}=n/2$ and $e_{ij}=n_{ij}=0$ if $M_{ij}=0$ so the definitions of $F$ and $K$ used in Sections \ref{SectionKubota} and \ref{SectionDirichlet} agree. The transformations $\sigma_i$ underlying the functional equation given in \eqref{sigma-i-kubota} and \eqref{sigma-i-dirichlet} agree:
\begin{equation*} 
\sigma_i(\vec{x})_j = \left\lbrace \begin{array}{cc} 
\frac{1}{qx_i} & j=i \\
x_j\left(\frac{q x_i}{g_\xi}\right)^{n_{ij}} & j \neq i \end{array} \right. .
\end{equation*}
This is straightforward to verify since we have $g_\xi^2=q$ in this case. It is also straightforward to check that $\tau_i(M)=M$. 

Finally, we check the equality of the global scattering matrices:
\begin{equation*}
\left( \chi^{v_i (k+\ell)}(-1) E_{n_i}^n \Gamma_{k, \ell}(x, K) \right)_{k, \ell} = \omega g_{\chi^v}^{b-1}\left(\chi^{(v+v_i)(k+\ell)}(-1)\Theta_{k, \ell}(x, K, v) \right)_{k, \ell}.
\end{equation*}
Because each $M_{ij}$ is $n/2$ or $0$, and $q\equiv 1 \bmod 4$, we have $\chi^{v_i(k+\ell)}(-1)=\chi^{(v+v_i)(k+\ell)}(-1)=\omega=1$. So it suffices to check that $E_{2}^n \Gamma_{k, \ell}(x, K) = g_{\chi^v}^{b-1} \Theta_{k, \ell}(x, K, v)$. 

Note that $v=0$, $b=1$ if $K$ is even and $v=n/2$, $b=0$ if $K$ is odd. If $K$ is even,
\begin{equation*}
(\Gamma_{k, \ell}(x, K))_{k, \ell} = \begin{pmatrix}
    \frac{1-q}{1-q^2x^2} & \frac{qx-x^{-1}}{1-q^2x^2} \\\frac{qx-x^{-1}}{1-q^2x^2} & \frac{1-q}{1-q^2x^2}
\end{pmatrix}
\end{equation*}
so $E_{2}^n \Gamma_{k, \ell}(x_i, K) = S^{k+\ell-K, n} \left(\frac{qx-x^{-1}}{1-q^2x^2}\right)$ for $k+\ell$ odd, and $E_{2}^n \Gamma_{k, \ell}(x_i, K) = S^{k+\ell-K, n} \left(\frac{1-q}{1-q^2x^2} \right)$ for $k+\ell$ even. This matches 
\begin{equation*}
    \Theta_{k, \ell}(x, K, v)=x^{-1} S^{k + \ell-K+1, n} \left( \frac{x-1}{1-qx}\right) = S^{k + \ell-K, n} \left(\frac{1-x^{-1}}{1-qx} \right)= S^{k + \ell-K, n} \left(\frac{1-q+qx-x^{-1}}{1-q^2x^2} \right).
\end{equation*}
If $K$ is odd, 
\begin{equation*}
(\Gamma_{k, \ell}(x, K))_{k, \ell} = \begin{pmatrix}
    \left(\frac{qx}{g_\xi} \right)^{-1} & 0 \\ 0 & \left(\frac{qx}{g_\xi} \right)^{-1}
\end{pmatrix}
\end{equation*}
so $E_{2}^n \Gamma_{k, \ell}(x_i, K) = \left(\frac{qx}{g_\xi} \right)^{-1}$ if $k+\ell\equiv K-1 \bmod n$, $E_{2}^n \Gamma_{k, \ell}(x_i, K) = 0$ otherwise. This matches the definition of $\omega g_{\xi}^{-1} \Theta_{k, \ell}(x, K, v)$ in this case. 

We have verified that the global functional equations of Theorems \ref{TheoremMultiFE} and \ref{dirichlet-multivariable-global} match in the overlapping case. The local functional equations can be checked similarly. All these formulas also agree with the functional equations of \cite{WhiteheadThesis}.

\section{Groupoid Structure} \label{SectionGroup}
\subsection{Arithmetic root systems and multiple Dirichlet series}

For Weyl group multiple Dirichlet series, the functional equations form a group, the Weyl group of a finite root system, which acts on $\mathbb C^r$. The functional equation corresponding to a group element can be expressed as a linear relationship between the value of the series at a point and the value at the image of that point under the group element. More generally, one must consider linear relations between the values of a tuple of series at a point and the values at its image under a group element, with the exact linear relations expressed using a scattering matrix. 

While the functional equation of Kubota type fits into this framework, the functional equation of Dirichlet type does not, since it sometimes relates the values of a multiple Dirichlet series to the values of an apparently totally different multiple Dirichlet series. Instead, the functional equations form the algebraic structure of a groupoid -- we have finitely many different series, each a holomorphic function on $\mathbb C^r$, and functional equations relating a pair of series give isomorphisms between the corresponding copies of $\mathbb C^r$, relating the values of the first series at a point and the second series at its image under the isomorphism.

The relevant groupoids were constructed, in a different context, by Istv\'an Heckenberger, as the Weyl groupoids of arithmetic root systems, forming part of the study of Nichols algebras of diagonal type. In the next section we will explain the connection between multiple Dirichlet series and Nichols algebras and explore the relationship between the functional equations and the Weyl groupoid, but for now we only need to explain the notion of an arithmetic root system at a formal level.

In a specialization of \cite[\S2]{HeckenbergerRank2}, let $\hchi \colon \mathbb Z^r \times \mathbb Z^r \to \mathbb \mu_\infty$ be a bicharacter where $\mu_{\infty}$ is the group of roots of unity of $\mathbb C$, i.e. $\hchi (a+b,c ) = \hchi(a,c) \hchi (b,c)$ and $\hchi(a, b+c) = \hchi (a,b) \hchi(a,c)$ for all $a,b,c\in \mathbb Z^r$. (In fact, one can check that the only part of $\hchi$ relevant for the construction is its restriction to the diagonal $\hchi(a,a)$, but the full bicharacter is relevant for its original application.)

Let $E= (e_1,\dots, e_r)$ be an ordered basis of $\mathbb Z^r$. We say $E$ is admissible at $i$ if $\hchi(e_i,e_i)\neq 1$ and $E$ is admissible if it is admissible at $i$ for all $i$. For each $j\in \{1,\dots,r\}$ with $j\neq i$, let $m_{ij} = \min ( m \in \mathbb Z^{\geq 0} \mid \hchi(e_i,e_i)^{m+1}=1 \textrm{ or } \hchi(e_i,e_i)^m \hchi(e_i,e_j) \hchi(e_j,e_i) =1\}$. For $E$ admissible at $i$, let $s_{i,E}\colon \mathbb Z^r \to \mathbb Z^r$ be the unique map with $s_{i,E}(e_i) = -e_i$ and $s_{i,E} (e_j)= e_j + m_{ij} e_i$ if $j\neq i$. 

For $E$ any fixed ordered basis of $\mathbb Z^r$, let $B_{\hchi,E}$ be the smallest set of ordered bases of $\mathbb Z^r$ such that $E \in B_{\hchi,E}$ and for any $E'\in B_{\hchi, E}$ if $E'$ is admissible at $i$ then $s_{i,E'}(E')\in B_{\hchi,E}$.

We now discuss the Weyl groupoid of a multiple Dirichlet series, following \cite[p. 262, third paragraph]{HeckenbergerYamabe} and not the original definition of Heckenberger. First define an equivalence relation  on $B_{\hchi,E}$ where $(e_1,\dots,e_r)$ and $(e_1',\dots,e_r')$ are equivalent if and only if $\hchi(e_i',e_i')= \hchi(e_i, e_i)$ for all $i$ and   $\hchi(e_i',e_j')\hchi(e_j',e_i')= \hchi(e_i, e_j)\hchi(e_j, e_i)$ for all $i,j$. The objects of Weyl groupoids are the equivalence classes for this equivalence relation. The morphisms from the equivalence class of $E'= (e_1',\dots, e_r')$ to the equivalence class of $E''$ are the set of $r\times r$ matrices $N$ such that $( \sum_{j=1}^n N_{1j} e_j', \dots, \sum_{j=1}^n N_{1j} e_j')$ is equivalent to $E''$. This manifestly depends only on the equivalence class of $E''$ but less obviously depends only on the equivalence class of $E'$. Composition of morphisms is given by matrix composition.


Let us consider an illustrative example.

\begin{example}\label{cartan-example} For $\mathfrak g$ a simple Lie algebra, we can identify $\mathbb Z^r$ with the lattice of roots, which admits an integer-valued pairing $\langle, \rangle$. Fixing a root of unity $v$ and letting $\qq=v^2$, we can set $\hchi (a,b) =  v^{\langle a,b \rangle}$.  Let $\alpha_1,\dots,\alpha_n$ be the simple roots of $\mathfrak g$ and let $E $ be the basis $(\alpha_1,\dots,\alpha_n)$. We have $\hchi(\alpha_i,\alpha_i) = \qq^{ \frac{\langle \alpha_i, \alpha_i \rangle }{2} } $ and \[\hchi(\alpha_i,\alpha_j) \hchi(\alpha_j,\alpha_i) = \qq^{ \langle \alpha_i,\alpha_j\rangle}\] so that  \[\hchi(\alpha_i,\alpha_i)^m \hchi(\alpha_i,\alpha_j) \hchi(\alpha_j,\alpha_i)=1\] for $m = - 2 \frac{\langle \alpha_i,\alpha_j\rangle}{\langle \alpha_i,\alpha_i \rangle}$. If $\qq$ avoids some finite list of roots of unity depending on $\mathfrak g$, we will not have $\hchi(\alpha_i,\alpha_i)^{\ell}=1$ for any $\ell \leq m$ so that in fact $m_{ij}= - 2 \frac{\langle \alpha_i,\alpha_j\rangle}{\langle \alpha_i,\alpha_i \rangle}$. Then $s_{i,E}$ is the reflection around the root $\alpha_i$ in the Weyl group of $\mathfrak g$. Using this, we see that $B_{\hchi,E}$ is the set of bases of the root lattice given by all possible sets of simple roots.

All bases in $B_{\hchi, E}$ are equivalent, so the Weyl groupoid has one object, and the isotropy group of that object is the Weyl group. 

The variable $\qq$ is denoted by $q$ in \cite{AndruskiewitschAngiono, HeckenbergerClassification}. To avoid ambiguity with $q$ the cardinality of a finite field, or, later, $q$ in the sense of quantum groups, we use a different font for $\qq$. \end{example}

Thus $B_{\hchi,E}$ is a generalization of the set of all possible collections of simple roots in the root lattice of a simple Lie algebra.

An \emph{arithmetic root system} is defined~\cite[Definition 1]{HeckenbergerRank2} to be a triple $(\mathbf \Delta, \hchi, E)$, where $\mathbf \Delta = \bigcup_{E \in  B_{\hchi,E}} E$, satisfying the assumptions that  $B_{\hchi,E}$ is finite and all bases in $B_{\hchi,E}$ are admissible.

The relationship of this to multiple Dirichlet series is as follows. For a bicharacter $\hchi$, fix a root of unity $\gene$ of even order such that every value taken by $\hchi$ is a power of $\gene$. Then define  $M^{\hchi, E, \gene}_{ij} $ to be the smallest nonnegative integer $m$ with $\gene^m = \hchi(e_i,e_j) \hchi(e_j,e_i)$ if $i \neq j$ or the smallest nonnegative integer $m$ with $\gene^m = - \hchi(e_i,e_i)$ if $i=j$. The basic theorem relating the functional equations to the reflections $s_{i,E}$ of the Weyl groupoid is as follows.

\begin{theorem}\label{translation-key-step} Let $\hchi \colon \mathbb Z^r \times \mathbb Z^r \to \mu_{\infty}$ be a bicharacter, $E$ a basis of $\mathbb Z^r$, and $\gene$ a root of unity of even order $n$ such that every value taken by $\hchi$ is a power of $\gene$. Fix $i\in \{1,\dots, r\}$ and assume $E$ is admissible at $i$.  Fix a finite field $\mathbb F_q$ and a character $\chi \colon \mathbb F_q^\times \to \mathbb C^\times$ of order $n$.

\begin{enumerate}

\item If $\hchi(e_i,a) \hchi(a,e_i) $ is a power of $\hchi(e_i,e_i)$ for all $a \in \mathbb Z^r $ and $q\equiv 1\bmod 4$, then the exponent of $x_i$ in each monomial with nonvanishing coefficient in $ \left(1 - q^{n/n_i}  \left( \frac{qx_i }{ g}\right)^n \right)Z(\vec{x}, (k_1, \ldots k_r); q, \chi, M^{\hchi,E, \gene})$ is bounded in terms of the other exponents and we have the functional equation 
\[ (Z(\vec{x}, (k_1, \ldots k_r); q, \chi, M^{\hchi,E, \gene} ))_{k_i} \] \[= \left( \chi^{v_i (k_i+\ell_i)}(-1) E_{n_i}^n \Gamma_{k_i, \ell_i}(x_i, K) \right)_{k_i, \ell_i} (Z(\sigma_i(\vec{x}),  (k_1, \ldots , k_{i-1} , \ell_i, k_{i+1}, \ldots, k_r ); q, \chi, M^{\hchi,s_{i,E}(E) , \gene} ))_{\ell_i} \]
with $K, v_i , \sigma_i, \Gamma$ and $E_{n_i}^n$ defined as in \S\ref{ss-k-multi}.

\item If $\hchi(e_i,e_i) =-1$ then the exponent of $x_i$ in each monomial with nonvanishing coefficient in $ \left(1 - q^n x_i^n \right)^n Z(\vec{x}, (k_1, \ldots k_r); q, \chi, M^{\hchi,E, \gene}$ is bounded in terms of the other exponents and we have the functional equation
\[ (Z(\vec{x}, (k_1, \ldots k_r); q, \chi, M^{\hchi,E, \gene} ))_{k_i} \] \[= \omega g_{\chi^v}^{b-1}\left(\chi^{(v+v_i)(k_i+\ell_i)}(-1)\Theta_{k_i, \ell_i}(x_i, K, v) \right)_{k_i, \ell_i}  (Z(\sigma_i(\vec{x}), (k_1, \ldots , k_{i-1} , \ell_i, k_{i+1}, \ldots, k_r ); q, \chi, M^{\hchi,s_{i,E}(E) , \gene} ))_{\ell_i}\]
with $v, v_i, K, b, \omega , \sigma_i$, and $\Theta_{k,\ell}$ defined as in \S\ref{ss-multivariable-functional-equation}.
\end{enumerate}

Moreover, in either case we have a relationship between $\sigma_i$ and $s_{i,E}$. Consider the unique bilinear pairing  $\langle , \rangle_E \colon \mathbb Z^r \times (\mathbb C^\times)^r \to \mathbb R$  with $\langle e_j, \vec{x}\rangle= \log (  \sqrt{q}|x_j| )$ for all $j \in E$. 

Then for all $\vec{x} \in (\mathbb C^\times)^r $ and $a \in \mathbb Z^r$ we have
\begin{equation}\label{sigma-s-eq} \langle s_{i,E}(a), \vec{x} \rangle_E = \langle a, \sigma_i(\vec{x} )\rangle_E. \end{equation}

\end{theorem}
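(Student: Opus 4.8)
The statement has three parts: the two functional equations and the geometric identity \eqref{sigma-s-eq}. The plan is to show that the two functional equations are nothing but Theorems \ref{TheoremMultiFE} and \ref{dirichlet-multivariable-global} (together with their rationality statements, Theorems \ref{TheoremFE} and \ref{TheoremDirichletFE}), once one translates the bicharacter hypotheses into statements about the matrix $M^{\hchi,E,\gene}$, and then to verify \eqref{sigma-s-eq} by a direct computation comparing the two sides on a basis.

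First I would record the translation between the bicharacter data and the matrix. By definition $\gene^{M^{\hchi,E,\gene}_{ij}} = \hchi(e_i,e_j)\hchi(e_j,e_i)$ for $i\neq j$ and $\gene^{M^{\hchi,E,\gene}_{ii}} = -\hchi(e_i,e_i)$, and since $\gene$ has order $n$ the entries are well-defined mod $n$. The hypothesis in part (1) that $\hchi(e_i,a)\hchi(a,e_i)$ is a power of $\hchi(e_i,e_i)$ for all $a$, applied to $a=e_j$, says $\hchi(e_i,e_j)\hchi(e_j,e_i) = \hchi(e_i,e_i)^{t}$ for some integer $t$; raising to powers of $\gene$ and using $\gene^{M_{ii}}=-\hchi(e_i,e_i)$ this translates exactly into the congruence $M_{ij} \equiv t(M_{ii}+n/2) \bmod n$ for some $t$, i.e. $\gcd(n,M_{ii}+n/2)\mid M_{ij}$, which is precisely the hypothesis of Theorem \ref{TheoremMultiFE}. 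Moreover, chasing the definition of $m_{ij}$ through the same substitution — noting $\hchi(e_i,e_i)$ has order $n_i = n/\gcd(n,M_{ii}+n/2)$ and matching ``$\hchi(e_i,e_i)^{m+1}=1$'' with the bound $m+1=n_i$ and ``$\hchi(e_i,e_i)^m\hchi(e_i,e_j)\hchi(e_j,e_i)=1$'' with $m(M_{ii}+n/2) \equiv -M_{ij}\bmod n$ — shows $m_{ij} = n_{ij}$, the quantity defined in \eqref{nij}. Hence $s_{i,E}$ is the reflection sending $e_i\mapsto -e_i$ and $e_j \mapsto e_j + n_{ij}e_i$. One then checks that the matrix $M^{\hchi, s_{i,E}(E),\gene}$ computed from the new basis is the one appearing on the right-hand side of the functional equation — in case (1) this is $M$ itself (the Kubota functional equation relates the series to itself), which follows because $s_{i,E}$ only changes basis vectors by multiples of $e_i$ and $\hchi$ is a bicharacter, so the relevant products $\hchi(e',e'')\hchi(e'',e')$ are unchanged; in case (2), where $\hchi(e_i,e_i)=-1$ forces $M_{ii}=0$ and $n_i=2$, one checks directly that $M^{\hchi,s_{i,E}(E),\gene} = \tau_i(M)$, matching the transformation $\tau_i$ of \S\ref{ss-multivariable-functional-equation} (here $m_{ij}=e_{ij}$ since $\hchi(e_i,e_i)$ has order $2$). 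With these identifications in place, parts (1) and (2) are restatements of Theorems \ref{TheoremMultiFE} and \ref{dirichlet-multivariable-global}, and the boundedness of the $x_i$-exponents is the rationality assertion of Theorems \ref{TheoremFE} and \ref{TheoremDirichletFE} combined with the explicit denominators.

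For the identity \eqref{sigma-s-eq} I would argue that both sides are linear in $a\in\mathbb Z^r$, so it suffices to check it on the basis $E=(e_1,\dots,e_r)$, and then use the explicit formulas for $\sigma_i$ in \eqref{sigma-i-kubota}, \eqref{sigma-i-dirichlet} (which agree in the overlapping case, and in both cases have the shape $\sigma_i(\vec x)_i = c/x_i$ with $|c| = q^{-1}$ coming from $|g^2/q^2| = q^{-1}$ or $|1/q|=q^{-1}$, and $\sigma_i(\vec x)_j = (\text{unit})\, x_j x_i^{m_{ij}}$ for $j\neq i$ since $|g_{\chi^{M_{ij}}}|=\sqrt q$ enters only through $|x_i|$ when $e_{ij}=1$, and the exponent is $n_{ij}=m_{ij}$). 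For $a = e_i$: $\langle s_{i,E}(e_i),\vec x\rangle_E = \langle -e_i,\vec x\rangle_E = -\log(\sqrt q|x_i|)$, while $\langle e_i, \sigma_i(\vec x)\rangle_E = \log(\sqrt q |\sigma_i(\vec x)_i|) = \log(\sqrt q\cdot q^{-1}|x_i|^{-1}) = -\log(\sqrt q|x_i|)$; these agree. For $a=e_j$, $j\neq i$: $\langle s_{i,E}(e_j),\vec x\rangle_E = \langle e_j + m_{ij}e_i,\vec x\rangle_E = \log(\sqrt q|x_j|) + m_{ij}\log(\sqrt q|x_i|)$, while $\langle e_j,\sigma_i(\vec x)\rangle_E = \log(\sqrt q|\sigma_i(\vec x)_j|) = \log(\sqrt q |x_j|\,(\sqrt q|x_i|)^{m_{ij}}) = \log(\sqrt q|x_j|) + m_{ij}\log(\sqrt q|x_i|)$, using $|\sigma_i(\vec x)_j| = |x_j|\cdot q^{m_{ij}/2}|x_i|^{m_{ij}}$ because the unit prefactor has absolute value $q^{m_{ij}/2}$ (a product of $m_{ij}$ Gauss sums of absolute value $\sqrt q$, or nothing when $m_{ij}=0$). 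These agree, completing the check on the basis and hence proving \eqref{sigma-s-eq}.

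I expect the main obstacle to be the bookkeeping in the second paragraph: precisely matching $m_{ij}$ with $n_{ij}$ (resp.\ $e_{ij}$) and verifying that the basis change $s_{i,E}$ induces exactly $M\mapsto M$ (resp.\ $M\mapsto\tau_i(M)$) on matrices, since this requires carefully tracking the definitions \eqref{ni}, \eqref{nij}, the order of $\hchi(e_i,e_i)$, and the sign conventions hidden in $\gene^{M_{ii}} = -\hchi(e_i,e_i)$ through the minimum in the definition of $m_{ij}$. The part (2) matrix computation in particular must reproduce all four cases of $\tau_i$, including the off-diagonal formula $(\tau_i(M))_{hj} = M_{hj} + e_{hi}e_{ij}(M_{hi}+M_{ij})$, which should come out of expanding $\hchi(e_h + m_{hi}e_i,\, e_j + m_{ij}e_i)\hchi(\cdots)$ using bilinearity; the diagonal correction $(\tau_i(M))_{jj} = M_{jj} + e_{ji}(M_{ji}+n/2)$ should likewise come from $-\hchi(e_j + m_{ij}e_i, e_j + m_{ij}e_i)$ together with $\gene^{M_{ii}} = -\hchi(e_i,e_i)$ and the order-$2$ constraint. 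Everything else — linearity reductions, absolute-value computations for \eqref{sigma-s-eq}, and citing the already-proved theorems — is routine.
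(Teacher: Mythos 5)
Your proposal is correct and follows essentially the same route as the paper's proof: reduce parts (1) and (2) to Theorems \ref{TheoremMultiFE} and \ref{dirichlet-multivariable-global} via the identifications $m_{ij}=n_{ij}$ (resp. $e_{ij}$) and $M^{\hchi,s_{i,E}(E),\gene}=M^{\hchi,E,\gene}$ (resp. $\tau_i(M^{\hchi,E,\gene})$), obtain boundedness from the rationality statements, and verify \eqref{sigma-s-eq} on the basis by the same logarithm computation. The bookkeeping you flag as the main obstacle is exactly what the paper isolates into Lemmas \ref{kubota-bicharacter-identity} and \ref{dirichlet-bicharacter-identity}, and your sketched verifications of those identities are sound.
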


\begin{proof}In case (1), the hypothesis that  $\hchi(e_i,e_j) \hchi(e_j,e_i) $ is a power of $\hchi(e_i,e_i)$ implies that   $M^{\hchi,E,\gene}_{ij}$ is an integer multiple of $M^{\hchi, E,\gene}_{ii}+ \frac{n}{2}$ for all $j$. This is the hypothesis of the functional equation proved in Theorem \ref{TheoremMultiFE}. The difference between the stated equation and the equation in Theorem \ref{TheoremMultiFE} is that Theorem \ref{TheoremMultiFE} relates $Z(\vec{x}, (k_1, \ldots k_r); q, \chi, M^{\hchi,E, \gene})$ to itself rather than to the apparently different series $Z(\vec{x}, (k_1, \ldots k_r); q, \chi, M^{\hchi,s_i(E), \gene})$. However, we will see in Lemma \ref{kubota-bicharacter-identity}(2) that in fact $M^{\hchi,s_i(E), \gene}= M^{\hchi,E, \gene}$ so these two series are the same. The boundedness of exponents follows from the rationality property of Theorem \ref{TheoremFE} since multiplying a rational function by its denominator produces a polynomial and the exponents of monomials appearing in a polynomial are bounded.

In case (2) the hypothesis that $\hchi(e_i,e_i)=-1$ implies that $M^{\hchi, E,\gene}_{ii}=0$, which is the standing assumption made in \S\ref{ss-multivariable-functional-equation}, allowing us to apply Theorem \ref{dirichlet-multivariable-global}. The difference between the stated equation and the equation of Theorem \ref{dirichlet-multivariable-global} is that Theorem \ref{dirichlet-multivariable-global} has $\tau_i(M^{\hchi,E, \gene})$ instead of  $M^{\hchi,s_{i,E}(E) , \gene} $. So we must check these two matrices are equal, which is done in Lemma \ref{dirichlet-bicharacter-identity}(2) below. The boundedness of exponents follows from the rationality property of Theorem \ref{TheoremDirichletFE}.

To check \eqref{sigma-s-eq}, it suffices to handle the case $a = e_j$ since the $e_i$ form a basis for $\mathbb Z_r$. We handle each case in turn.

If $i\neq j$, in case (1) then we will see in Lemma \ref{kubota-bicharacter-identity}(1) that $m_{ij} =n_{ij}$.  Using this, we have
\begin{equation*}\langle e_j , \sigma_i(\vec{x} )\rangle_E = \log \Bigl( \sqrt{q} \Bigl| x_j \Bigl(\frac{q x_i}{g}\Bigr)^{n_{ij}}\Bigr| \Bigr) = \log ( \sqrt{q} |x_j| (\sqrt{q} |x_i|)^{n_{ij}}) =  \log ( \sqrt{q} |x_j| (\sqrt{q} |x_i|)^{m_{ij}})  \end{equation*}
Similarly, in case (2), we will see in Lemma \ref{dirichlet-bicharacter-identity}(2) that $m_{ij}=e_{ij}$. Using this, we have
\begin{equation*}\langle e_j , \sigma_i(\vec{x} )\rangle_E = \log \Bigl( \sqrt{q} \Bigl| x_j \Bigl( (g_ {\chi^{M_{ij}}}  x_i \Bigr)^{e_{ij}}\Bigr| \Bigr) = \log ( \sqrt{q} |x_j| (\sqrt{q} |x_i|)^{e_{ij}}) = \log ( \sqrt{q} |x_j| (\sqrt{q} |x_i|)^{m_{ij}})   \end{equation*}
In either case we therefore have
\begin{equation*}\langle e_j , \sigma_i(\vec{x} )\rangle_E =  \log ( \sqrt{q} |x_j| (\sqrt{q} |x_i|)^{m_{ij}})  = \langle e_j, \vec{x} \rangle_E + m_{ij} \langle e_i, \vec{x} \rangle_E= \langle e_j+  m_{ij} e_i, \vec{x} \rangle_E = \langle s_{i,E}(e_j),\vec{x} \rangle_E  \end{equation*} as desired. 

For $j=i$, in case (1) we have \begin{equation*}\langle e_j , \sigma_i(\vec{x} )\rangle_E = \log \Bigl ( \sqrt{q} \Bigl| \frac{ g^2}{ q^2 x_i }\Bigr| \Bigr) = \log \frac{1}{ \sqrt{q} |x_i|} \end{equation*}
and in case (2) we have
\begin{equation*}\langle e_j , \sigma_i(\vec{x} )\rangle_E = \log \Bigl ( \sqrt{q} \Bigl| \frac{ 1}{ q x_i }\Bigr| \Bigr) = \log \frac{1}{ \sqrt{q} |x_i|}\end{equation*}
so in either case we have
\begin{equation*} \langle e_j , \sigma_i(\vec{x} )\rangle_E  = \log \frac{1}{ \sqrt{q} |x_i|}= - \langle e_i, \vec{x} \rangle_E = \langle -e_i, \vec{x}\rangle_E =\langle s_{i,E}(e_i) , \vec{x} \rangle_E. \end{equation*}
This completes the verification of \eqref{sigma-s-eq}.\end{proof}

Two bases $E$ and $E'$ are equivalent in the equivalence relation defining the Weyl groupoid if and only if $M^{\hchi,E,\gene} = M^{\hchi, E',\gene}$, so the objects of the Weyl groupoid correspond to choices of matrices $M$ and therefore to multiple Dirichlet series.  It follows that if $(\mathbf \Delta, \hchi, E)$ is an arithmetic root system such that each  $e\in \mathbf \Delta$ satisfies either condition (1) or (2) of Theorem \ref{translation-key-step} then each morphism in the Weyl groupoid from one series to another corresponds to a combination of functional equations that we may use to relate the two Dirichlet series. In particular, the multiple Dirichlet series $Z(s_1,\dots, s_r; q, \chi, M^{\hchi,E, \gene})$ has a finite groupoid of functional equations, relating finitely many different series, but such that each of the series has a functional equation to itself which inverts each of the variables. This turns out to be the crucial condition to obtain desired properties like meromorphic continuation.

Here we can take advantage of the classification of arithmetic root systems proven in \cite{HeckenbergerClassification}, though we find the organization of the classification in \cite{AndruskiewitschAngiono} enlightening. The following theorem shows the arithmetic root systems to which Theorem \ref{translation-key-step} applies, and thus to which our subsequent results Theorem \ref{rational-fn} and \ref{uniquely-determined} will apply.

\begin{theorem} \label{ClassificationTheorem} Every root of the following arithmetic root systems, in the notation of \cite{AndruskiewitschAngiono}, satisfies either condition (1) or (2) of Theorem \ref{translation-key-step}:

\begin{itemize}
    \item All those of Cartan type (i.e. agreeing with the root system of a simple Le algebra over $\mathbb C$): $\mathtt A_\theta,\mathtt   B_\theta, \mathtt C_\theta, \mathtt D_\theta,\mathtt E_\theta,\mathtt  F_4,$ and $\mathtt  G_2$
    \item All those of super type (i.e. agreeing with the root system of a simple super Lie algebra over $\mathbb C$): $\mathtt A(j\mid\theta-j) , \mathtt B(j\mid \theta-j), \mathtt C(j \mid \theta-j), \mathtt D(j \mid \theta-j), \mathtt D(2,1;\alpha) ,\mathtt  F(4),$ and $ \mathtt G(3) $
    \item $\mathtt{wk}(4)$, agreeing with the root system of a simple Lie algebra over a field of characteristic $2$
    \item $\mathtt{g}(1,6)$ when its parameter $\zeta$ is a third root of unity and not a sixth root of unity, $\mathtt{g}(2,3),$ $ \mathtt{g}(3,3),$ $\mathtt{g}(4,3), \mathtt{g}(3,6),$ $ \mathtt{g}(2,6) ,$ $ \mathtt{el}(5,3),$ $ \mathtt{g}(8,3), $ $\mathtt{g}(4,6) ,$ $ \mathtt{g}(6,6), $ and $\mathtt{g}(8,6)$, all agreeing with the root system of simple super Lie algebras over fields of characteristic $3$
    \item $\mathtt{brj}(2,5)$ and $\mathtt{el}(5;5)$, both agreeing with the root systems of simple super Lie algebras over fields of characteristic $5$
    \item $\mathtt{ufo}(1), \mathtt{ufo}(2), \mathtt{ufo}(6) , $ and $ \mathtt{ufo}(12)$, called UFOs because they do not correspond in any known way to Lie algebras 
    \end{itemize}

    \end{theorem}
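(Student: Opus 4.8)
The plan is to reduce Theorem \ref{ClassificationTheorem} to a purely combinatorial condition on generalized Dynkin diagrams, settle the infinite Cartan and super families conceptually, and dispatch the remaining finitely many exotic systems by inspection of the classification of \cite{HeckenbergerClassification}, organized as in \cite{AndruskiewitschAngiono}.

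First I would reformulate conditions (1) and (2) of Theorem \ref{translation-key-step} intrinsically. Since $\mathbf{\Delta} = \bigcup_{E \in B_{\hchi,E}} E$, every root $\beta$ occurs as some basis vector $e_i$ of some $E \in B_{\hchi,E}$. Because $\hchi$ is a bicharacter and the $e_j$ form a $\mathbb Z$-basis, the bicharacter hypothesis of condition (1) at $e_i$ holds precisely when $\hchi(e_i,e_j)\hchi(e_j,e_i) \in \langle \hchi(e_i,e_i)\rangle$ for every $j$ --- equivalently, when $M^{\hchi,E,\gene}_{ij}$ is a multiple of $M^{\hchi,E,\gene}_{ii}+n/2$ for all $j$, which is the hypothesis of Theorem \ref{TheoremMultiFE} --- and condition (2) holds precisely when $\hchi(e_i,e_i) = -1$, i.e. $M^{\hchi,E,\gene}_{ii}=0$, which is the standing assumption of Theorem \ref{dirichlet-multivariable-global}. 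Both are properties of the generalized Dynkin diagram attached to $E$. Call such a diagram \emph{good} if every vertex either carries the label $-1$ or has all incident edge labels equal to powers of that vertex's label. Theorem \ref{ClassificationTheorem} then says exactly that for each system in the list, every diagram in its Weyl-equivalence class is good; a priori one must check all diagrams in the class, since goodness is not obviously preserved by a single reflection.

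Next, the Cartan-type systems are handled as in Example \ref{cartan-example}: by definition their diagrams satisfy $\hchi(e_i,e_j)\hchi(e_j,e_i) = \hchi(e_i,e_i)^{a_{ij}}$ for the finite-type Cartan matrix $(a_{ij})$, so condition (1) holds at every vertex, and since a reflection of a finite Cartan-type diagram is again Cartan-type with the same Cartan matrix, the whole class is good. For the super-type systems I would use the explicit generalized Dynkin diagrams recorded in \cite{AndruskiewitschAngiono}: each is obtained from a Cartan-type diagram by assigning the label $-1$ to certain isotropic vertices, and one reads off from the tables that at each non-$(-1)$ vertex every incident edge label is still a power of that vertex's label; thus marked vertices satisfy condition (2) and unmarked ones condition (1). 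Since reflections of super-type diagrams are again super-type, the whole class is good.

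The hard part will be the remaining entries: $\mathtt{wk}(4)$; the characteristic-$3$ systems $\mathtt{g}(1,6)$ (for $\zeta$ in the stated range), $\mathtt{g}(2,3),\dots,\mathtt{g}(8,6)$, and $\mathtt{el}(5,3)$; the characteristic-$5$ systems $\mathtt{brj}(2,5)$ and $\mathtt{el}(5;5)$; and the UFOs $\mathtt{ufo}(1),\mathtt{ufo}(2),\mathtt{ufo}(6),\mathtt{ufo}(12)$. These form a finite list, each with finitely many generalized Dynkin diagrams in its Weyl-equivalence class, which I would enumerate using the reflection rules and the tables of \cite{HeckenbergerClassification} and then check for goodness vertex by vertex. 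There is no conceptual shortcut here, only careful bookkeeping; in particular one must verify that the restriction on $\zeta$ in $\mathtt{g}(1,6)$ is exactly what excludes edge labels (for instance $-\zeta$ at a vertex of odd order) that would violate condition (1) at a non-$(-1)$ vertex, and one should cross-check that the systems omitted from the list are precisely those possessing some non-good diagram.
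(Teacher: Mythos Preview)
Your proposal is correct and takes essentially the same approach as the paper: reduce conditions (1) and (2) to the combinatorial ``goodness'' condition on generalized Dynkin diagrams (each vertex is either labeled $-1$ or has all incident edge labels powers of its own label), then verify this by inspection of the tables in \cite{AndruskiewitschAngiono}. Your write-up is more structured than the paper's---you separate out the Cartan and super families and note explicitly that one must check every diagram in the Weyl-equivalence class---but the paper simply says ``examine the Dynkin diagrams and check that each node is either labeled by $-1$ or else connects only to edges with labels an integer power of its vertex label. This is easy to do.''
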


    \begin{proof} This can be checked by examining the generalized Dynkin diagrams provided for each arithmetic root system in \cite{AndruskiewitschAngiono}. The generalized Dynkin diagram associated to a pair $(\hchi,E)$ has one node for each $i$ from $1$ to $r$. Each node is labeled by $\hchi(e_i,e_i)$. Two nodes are connected by an edge if $\hchi(e_i,e_j) \hchi(e_j,e_i)\neq 1$ and, in this case, the edge is labeled by $\hchi(e_i,e_j) \hchi(e_j,e_i)$. Thus, to check that each root satisfies (1) or (2), it suffices to examine the Dynkin diagrams and check that each node is either labeled by $-1$ or else connects only to edges with labels an integer power of its vertex label.  This is easy to do.

    The relationship of the root systems to Lie algebras are also taken from \cite{AndruskiewitschAngiono}.
    \end{proof}

The multiple Dirichlet series arising from these arithmetic root systems include many of those appearing previously in the literature, or at least $\mathbb F_q(T)$-analogues of them.

We will see shortly in Example \ref{wgmds} that the multiple Dirichlet series arising from arithmetic root systems of Cartan type give the Weyl group multiple Dirichlet series. The Weyl group multiple Dirichlet series are known to have a direct relationship to the arithmetic of the algebraic groups with the same root system, as Whittaker coefficients of Eisenstein series on metaplectic covers of those groups \cite{McNamara, PatnaikPuskas, Chen}. It would be interesting to give a similar uniform description of the multiple Dirichlet series arising from arithmetic root systems of super type involving the corresponding super Lie algebra. 

For the multiple Dirichlet series arising from arithmetic root systems of Lie algebras and super Lie algebras over fields of characteristic $2,3,$ and $5$, a challenge is that these multiple Dirichlet series fail to be defined exactly over fields $\mathbb F_q$ of characteristic matching the Lie algebra characteristic, since their definition requires multiplicative characters of $\mathbb F_q$ of order a power of the Lie algebra characteristic. So one cannot consider the Lie algebra as a Lie algebra over $\mathbb F_q(T)$ in the same way that the metaplectic Eisenstein series construction requires considering algebraic groups over the relevant global field.

To deduce properties like meromorphic continuation from the functional equations in a uniform way, one needs the following result of Heckenberger.

\begin{prop}\label{cone-union-lemma} Let $(\mathbf \Delta, \hchi, E)$ be an arithmetic root system. For each basis $E$ of $\mathbb Z^r$, let $C_E$ be the cone in $\mathbb R^r$ consisting of vectors whose dot product with each vector in $E$ is nonnegative. Then the interiors of the $C_{E'}$ are disjoint from each other and \[ \bigcup_{E' \in B_{\hchi,E}} C_{E'} = \mathbb R^r .\] \end{prop}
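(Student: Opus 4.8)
\textbf{Proof proposal for Proposition \ref{cone-union-lemma}.}

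The plan is to import this statement essentially verbatim from Heckenberger's work on Weyl groupoids of arithmetic root systems, since it is a purely combinatorial fact about the geometry of the chambers $C_{E'}$ and does not involve multiple Dirichlet series at all. The key structural input is that the set $B_{\hchi,E}$ of admissible bases, together with the reflections $s_{i,E'}$, forms a connected simplicial complex of chambers: each chamber $C_{E'}$ has exactly $r$ walls (the hyperplanes orthogonal to the $r$ basis vectors of $E'$), and crossing the $i$th wall of $C_{E'}$ leads to the chamber $C_{s_{i,E'}(E')}$, which by definition of $B_{\hchi,E}$ again lies in our set. Concretely, I would first record that for $E$ admissible at $i$, the bases $E$ and $s_{i,E}(E)$ generate the same lattice and the cones $C_E$ and $C_{s_{i,E}(E)}$ share the facet lying in the hyperplane $e_i^\perp$, lying on opposite sides of it; this is immediate from the formula $s_{i,E}(e_i)=-e_i$, $s_{i,E}(e_j)=e_j+m_{ij}e_i$ and the fact that $m_{ij}\geq 0$.

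The disjointness of interiors is the easier half. I would argue that if $x\in \mathbb R^r$ lies in the interior of both $C_{E'}$ and $C_{E''}$ for distinct $E',E''\in B_{\hchi,E}$, then one can connect $E'$ to $E''$ by a sequence of reflections (since $B_{\hchi,E}$ is by construction the smallest set closed under admissible reflections and containing $E$, it is connected under the reflection moves once one checks that admissibility holds throughout — this is part of the hypothesis that $(\mathbf\Delta,\hchi,E)$ is an arithmetic root system, meaning \emph{all} bases in $B_{\hchi,E}$ are admissible). Along such a path, at the first step where the chamber changes, $x$ would have to lie in the shared wall hyperplane $e_i^\perp$, contradicting that $x$ is in the interior. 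Making this rigorous requires the standard chamber-geometry argument: two chambers of a "thin chamber system" with a common interior point coincide. I would cite \cite{HeckenbergerYamabe} or \cite{HeckenbergerRank2} for the precise statement rather than reproving it.

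For the covering statement $\bigcup_{E'\in B_{\hchi,E}} C_{E'}=\mathbb R^r$, the standard approach is a connectedness-plus-closedness argument in $\mathbb R^r$: the union $U=\bigcup C_{E'}$ is closed (it is a locally finite union of closed cones — local finiteness itself needs the finiteness of $B_{\hchi,E}$, which holds since $(\mathbf\Delta,\hchi,E)$ is an arithmetic root system), and one shows $U$ has no boundary inside $\mathbb R^r$ by checking that every wall of every chamber $C_{E'}$ is shared with the adjacent chamber $C_{s_{i,E'}(E')}$, so that a point on the topological boundary of $U$ would have to lie on a lower-dimensional face, a set of codimension $\geq 2$; since $\mathbb R^r\setminus U$ would then be open with boundary of codimension $\geq 2$, and $U$ is nonempty, connectedness of $\mathbb R^r$ forces $U=\mathbb R^r$. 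The main obstacle — and the reason I would lean on a citation rather than a self-contained proof — is precisely the verification that every wall is genuinely shared, i.e. that the reflection move is always available (admissibility everywhere) and that there is no "missing chamber" at a wall; this is exactly the content of Heckenberger's theory and is where the hypothesis of being a \emph{bona fide} arithmetic root system (finite, all bases admissible) is used essentially. So concretely, I expect the write-up to be: quote the two facts about shared facets, invoke finiteness of $B_{\hchi,E}$, and then cite \cite[the relevant proposition]{HeckenbergerYamabe} for the tiling conclusion, noting that this is the geometric counterpart of the fact that the Weyl groupoid acts simply transitively on chambers.
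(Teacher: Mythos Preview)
Your plan to import the result from Heckenberger is exactly what the paper does, so at the level of strategy you are aligned. However, the paper's execution is considerably more direct than yours: it cites \cite[Proposition 2]{HeckenbergerRank2}, which states precisely that $\{v\in\mathbb R^r \mid v\cdot e\neq 0 \textrm{ for all } e\in\mathbf\Delta\}$ is the \emph{disjoint union} of the open cones $\{v\mid v\cdot e_i>0\textrm{ for all }i\}$ over $E'\in B_{\hchi,E}$. This single citation delivers both the disjointness of interiors and the fact that their union is the complement of the root hyperplanes. The covering of $\mathbb R^r$ then follows in two lines: since $\mathbf\Delta$ is finite, the closure of the hyperplane complement is $\mathbb R^r$; since $B_{\hchi,E}$ is finite and each $C_{E'}$ is the closure of its interior, the union of the $C_{E'}$ equals the closure of the union of their interiors. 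No wall-sharing or codimension-$2$ argument is needed.

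Your more hands-on disjointness sketch has a gap as written. You say that along a reflection path from $E'$ to $E''$, ``at the first step where the chamber changes, $x$ would have to lie in the shared wall hyperplane $e_i^\perp$.'' But if $x\in\operatorname{int}(C_{E'})$ then $x\cdot e_i>0$, so after the reflection $s_{i,E'}$ we have $x\cdot(-e_i)<0$, meaning $x$ lies strictly on the $E'$ side of the wall, not on the wall itself. The path of \emph{bases} gives no control on where $x$ sits relative to later chambers in the chain; $x$ could leave and re-enter chamber interiors along the way, and nothing forces it onto a wall. The clean way around this is exactly what Heckenberger's Proposition 2 provides: the open chambers are the connected components of the root-hyperplane complement, so two distinct components cannot share an interior point. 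Since you already intend to cite Heckenberger for the hard part, you may as well cite this statement directly and skip the chamber-system scaffolding.
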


In the classical root system case, the $C_{E'}$ are the Weyl chambers, and the assertion that the union of the Weyl chambers fill the space is standard. The key step in the proof of the lemma is that cones related by a simple reflection share a face, which may also be familiar in the Weyl chamber case.

\begin{proof} \cite[Proposition 2]{HeckenbergerRank2} states that  $\{ v \in \mathbb R^r \mid v \cdot e \neq 0 \textrm{ for all }e\in \mathbf \Delta\}$ is the disjoint union of the sets $\{ v\in \mathbb R^r \mid v\cdot e_i>0 \textrm{ for all }i \}$, i.e. the interiors of the $C_{E'}$. Since the $C_{E'}$ are the closures of their interiors, and there are finitely many of them, their union is the closure of the union of their interiors. Since $\mathbf \Delta$ is finite, the closure of $\{ v \in \mathbb R^r \mid v \cdot e \neq 0 \textrm{ for all }e\in \mathbf \Delta\}$ is $\mathbb R^r$.\end{proof}

We are now ready to state the a result on the meromorphic continuation of multiple Dirichlet series, or, more precisely, the rationality of these series.

\begin{theorem} \label{rational-fn} Let $(\mathbf \Delta, \hchi, E)$ be an arithmetic root system such that each  $i\in \mathbf \Delta$ satisfies either condition (1) or (2) of Theorem \ref{translation-key-step}. Let $\gene$ be a root of unity such that every value taken by $\psi$ is a power of $\gene$. Fix a finite field $\mathbb F_q$ with $q \equiv 1\bmod 4$ and a character $\chi \colon \mathbb F_q^\times \to \mathbb C^\times$ of order the order of $\gene$. Then for all $E' \in B_{\hchi,E}$, the function $Z ( \vec{x}; q, \chi, M^{\hchi, E',\gene}) $ is in fact a rational function of the variables $x_1,\dots,x_r$.
\end{theorem}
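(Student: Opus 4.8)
The plan is to leverage the functional equations of Theorem \ref{translation-key-step} together with the geometric input of Proposition \ref{cone-union-lemma}. Fix $E' \in B_{\hchi,E}$ and write $M = M^{\hchi,E',\gene}$. By Theorem \ref{translation-key-step}, for each $i$ at which $E'$ is admissible (which, since $(\mathbf \Delta, \hchi, E)$ is an arithmetic root system, is every $i$), the series $Z(\vec{x}; q, \chi, M)$ satisfies a functional equation relating it to $Z(\sigma_i(\vec{x}); q, \chi, M^{\hchi, s_{i,E'}(E'), \gene})$, with a scattering matrix whose entries are rational functions of $x_i$ (explicitly given in \S\ref{ss-k-multi} or \S\ref{ss-multivariable-functional-equation}), and moreover the boundedness-of-exponents statement shows that after multiplying by the appropriate denominator the $x_i$-degree is bounded in terms of the other variables. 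Since $B_{\hchi,E}$ is finite, there are only finitely many matrices $M^{\hchi,E'',\gene}$ in play, and the functional equations organize them into the Weyl groupoid.

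First I would set up the convergence input: there is some common polydisc $\{|x_j| < \delta\}$ on which every $Z(\vec{x}; q, \chi, M^{\hchi,E'',\gene})$ converges absolutely (this follows from crude bounds on the coefficients coming from Axiom \ref{Axiom5} / the Hase--Liu convergence region, or just from the fact that each coefficient is a bounded-degree sum of $q$-Weil numbers of controlled weight). Next, I would argue that each $Z(\vec{x}; q, \chi, M^{\hchi, E'',\gene})$ is a rational function in each variable $x_j$ separately, with denominator dividing a fixed polynomial: this is exactly the content of Theorems \ref{TheoremFE} and \ref{TheoremDirichletFE} applied to the single-variable subseries $D(x_j, \vec{f})$, since $Z$ is built from these by summing over the remaining polynomials and the denominators there ($1 - q(qx/g)^{n_j}$ or $1-qx_j$) are uniform in $\vec{f}$. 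A power series that is, for each variable separately, a rational function with a uniform denominator is a rational function (clear denominators one variable at a time; the result is a power series that is polynomial in each variable separately and hence, being a formal power series, a genuine polynomial). Strictly speaking one should be slightly careful that the "uniform denominator in $x_j$" claim uses that the denominator does not depend on the degrees of the other $f_i$'s — this is true because $n_j$ and the relevant exponents depend only on $M$, not on $\vec f$.

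Actually the cleanest route avoids even invoking the multivariable geometry: the single-variable rationality statements in Theorems \ref{TheoremFE} and \ref{TheoremDirichletFE} already give, for the subseries in $x_j$, a rational function with denominator $1 - q^{n/n_j}(qx_j/g)^n$ (resp. a polynomial, resp. denominator $1-qx_j$) \emph{independent of} the other polynomials $f_i$, $i \neq j$. Summing over all $f_i$ with $i \neq j$ and collecting by congruence class mod $n$ in the remaining variables, $Z(\vec x; q,\chi,M)$ is therefore a rational function of $x_j$ with a denominator depending only on $M$. Doing this for each $j = 1, \dots, r$ in turn, and using that a formal power series in $x_1,\dots,x_r$ which becomes a polynomial after multiplication by $\prod_j (\text{denom}_j(x_j))$ must itself equal a rational function with that denominator, we conclude $Z(\vec x; q,\chi,M)$ is rational. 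The role of Proposition \ref{cone-union-lemma} and the full groupoid is then not needed for rationality per se, but would be needed to pin down the precise pole locus (the walls of the cones $C_{E''}$) — I expect the main subtlety, and the one place to be careful in writing this up, is justifying the passage from "rational in each variable separately with uniform denominator" to "globally rational," i.e. checking there is no issue with the ring $\C[[x_1,\dots,x_r]] \cap \C(x_1,\dots,x_r)$, and ensuring the single-variable denominators produced by Theorems \ref{TheoremFE} and \ref{TheoremDirichletFE} are genuinely independent of the suppressed polynomials — which they are, since they are determined by $n_j$ and the matrix $M$ alone.
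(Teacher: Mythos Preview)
Your ``cleanest route'' has a genuine gap at exactly the step you flagged as needing care: the passage from ``rational in each variable separately with uniform denominator'' to ``globally rational'' is \emph{false} in general. The single-variable rationality of Theorems \ref{TheoremFE} and \ref{TheoremDirichletFE} tells you that for each fixed tuple $(d_i)_{i\neq j}$ of exponents, the $x_j$-series $\sum_{d_j} c_{\vec d}\,x_j^{d_j}$ is rational with denominator dividing $P_j(x_j)$; but the \emph{degree of the numerator} grows with the other $d_i$'s (indeed the functional equation forces $\deg_{x_j}$ to be roughly $\sum_{i\neq j} m_{ji} d_i$). So after multiplying by $\prod_j P_j(x_j)$ you obtain a formal power series in which, for each fixed monomial in the other variables, the coefficient is a polynomial in $x_j$ of unbounded degree. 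This is not enough: the series $\sum_{n\geq 0} x^n y^{n^2}$ has this property for both variables with $P_1=P_2=1$, yet is not rational. In particular your claim that ``a power series that is polynomial in each variable separately [is] a genuine polynomial'' is wrong.

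The paper's proof cannot bypass the Weyl groupoid combinatorics for exactly this reason. It introduces the notion of a power series being ``$v,E'$-bounded'' (a lower bound on a weighted degree $\sum_i d_i\, v\cdot e_i'$ after clearing some polynomial denominator), observes that this holds trivially for $v$ in the cone $C_{E'}$, and then uses the functional equations of Theorem \ref{translation-key-step} to propagate $v$-boundedness across simple reflections from $E'$ to $s_{i,E'}(E')$. Proposition \ref{cone-union-lemma} (the cones $C_{E''}$ cover $\mathbb R^r$) is essential: one picks $v$ in the interior of $-C_{E'}$, finds $E^*$ with $v\in C_{E^*}$, and chains reflections from $E^*$ back to $E'$ to conclude $v,E'$-boundedness, which for $v$ in $-C_{E'}$ forces polynomiality. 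The point is that a \emph{single} direction $v$ in the negative chamber bounds all monomials simultaneously, whereas your per-variable bounds are mutually incompatible linear constraints that need not cut out a bounded region. Your first sketch gestured toward this structure but never used it; the second sketch abandoned it, and that is where the argument breaks.
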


Typically in the multiple Dirichlet series literature one proves that a multiple Dirichlet series is meromorphic by proving holomorphicity on some domain, using the functional equations to extend meromorphicity to a larger domain, finally use convexity to obtain meromorphicity on the whole space. In the function field case, using growth estimates, one can conclude that the function is rational.  The starting point, holomorphicity on some domain, is not at all obvious for axiomatic multiple Dirichlet series but was established by Hase-Liu~\cite[Theorem 1]{HaseLiu24} using geometric means. However, to make the argument work one must in addition show meromorphic continuation to a domain which overlaps with its translate under the functional equation, so that the functional equation gives an equality of meromorphic functions on the intersection reason. Hase-Liu checks this for functional equations of Dirichlet type but not of Kubota type. The additional verification is possible but we do not take this approach here.

Instead, we use an argument that is only possible in the function field setting, where we directly show rationality by manipulating a formal power series. This argument does not use any analysis, but morally has the same structure as the complex-analytic argument.

\begin{proof} For $v \in \mathbb R^r$ a vector, we say that a formal power series $Z(\vec{x})$ is $v,E'$-bounded if there exists a polynomial $F$ in $x_1,\dots, x_r$ such that the set of sums $\sum_{i=1}^r d_i v \cdot e_i'$ such that  $\prod_{i=1}^r x_i^{d_i}$ appears with nonzero coefficient in $F(\vec{x})Z ( \vec{x})$ is bounded below.

For $v \in C_{E'}$ so that $v \cdot e_i' \geq 0$ for all $i$, the power series $Z ( \vec{x}; q, \chi, M^{\hchi, E',\gene})$ is automatically $v,E'$-bounded, taking $F=1$, since only monomials of nonnegative degrees appear in it.

For $v$ in the interior of $- C_{E'}$, so that $v \cdot e_i'<0$ for all $i$, if $Z ( \vec{x}; q, \chi, M^{\hchi, E',\gene})$ is $v,E'$-bounded then $F(\vec{x})Z ( \vec{x}; q, \chi, M^{\hchi, E',\gene})$ is a polynomial and hence $Z ( \vec{x}; q, \chi, M^{\hchi, E',\gene})$ is a rational function.

We can always take $v$ in the interior of $-C_{E'}$ and then, by Proposition \ref{cone-union-lemma}, $v$ will lie in $C_{E^*}$ for some $E^*\in B_{\hchi,E}$. By the previous two observations, it suffices to prove that if $Z ( \vec{x}; q, \chi, M^{\hchi, E^*,\gene})$ is $v,E^*$-bounded for one $E^* \in B_{\hchi,E}$ then $Z ( \vec{x}; q, \chi, M^{\hchi, E',\gene})$ is $v,E'$-bounded for all $E' \in B_{\hchi,E}$. Since, by definition of $B_{\hchi,E}$, $E'$ is mapped to $E^*$ by a finite composition of simple reflections, it suffices by induction to prove that if  $Z ( \vec{x}; q, \chi, M^{\hchi, s_{i,E'}(E'),\gene})$ is $v,s_{i,E'}(E')$-bounded then $Z ( \vec{x}; q, \chi, M^{\hchi, E',\gene})$ is $v,E'$-bounded.

If  $Z ( \vec{x}; q, \chi, M^{\hchi, s_{i,E'}(E'),\gene})$ is  $v,s_{i,E'}(E')$-bounded then, replacing $F(\vec x)$ by $ \prod_{ \gamma_1,\ldots,\gamma_r \in \langle \gene\rangle} F(\gamma_1x_1,\ldots, \gamma_r x_r)$, the same is true for  $Z ( \vec{x}, \vec{k}; q, \chi, M^{\hchi, s_{i,E'}(E'),\gene})$ for all $\vec{k}$.

Now we check that $Z ( \sigma_i( \vec{x}), \vec{k}; q, \chi, M^{\hchi, s_{i,E'}(E'),\gene})$ is $ (v, E')$-bounded.  For integers $d_1,\dots, d_r$, the composition of the monomial $\prod_{j=1}^{d_r} (x_j)$ with $\sigma_i$ is a scalar multiple of the monomial \[ x_i^{-d_i} \prod_{j \neq i} (x_j  x_i^{n_{ji}})^{d_j} = x_i^{-d_i +  \sum_{j\neq i} n_{ji}} \prod_{j \neq i} x_j^{d_j}.\] Since the map on the set of monomials that sends $\prod_{j=1}^{d_r} (x_j)$ to $x_i^{-d_i +  \sum_{j\neq i} n_{ji}} \prod_{j\neq i} x_j^{d_j}$ is a bijection, the monomial $\prod_{j=1}^r x_j^{d_j}$ has nonzero coefficient in $ F(\vec{x})Z ( \vec{x}, \vec{k}; q, \chi, M^{\hchi, s_{i,E'}(E'),\gene})$ if and only if the monomial  $x_i^{-d_i +  \sum_{j\neq i} n_{ji} d_j} \prod_{i\neq j} x_j^{d_j}$ has nonzero coefficient in  $F(\sigma_i(\vec{x}))Z ( \sigma_i(\vec{x}), \vec{k}; q, \chi, M^{\hchi, s_{i,E'}(E'),\gene})$.  For the set of tuples $d_1,\dots, d_r$ satisfying either of these two equivalent conditions, we have 
\[ (-d_i +  \sum_{j\neq i} n_{ji} d_j)  v \cdot e_i' + \sum_{j \neq i}  d_j v \cdot e_{j'} = d_i v\cdot (-e_i') + \sum_{j\neq i} d_j v \cdot (e_{j'} + m_{ij} e_{i'}) = \sum_{j=1}^r d_j v\cdot s_{i,E'}(e_i') \] 
and the quantities $\sum_{j=1}^r d_j v\cdot s_{i,E'}(e_i')$ are bounded below since $Z ( \vec{x}, \vec{k}; q, \chi, M^{\hchi, s_{i,E'}(E'),\gene})$ is $v,s_{i,E'}(E')$-bounded, hence the quantities $(-d_i +  \sum_{j\neq i} n_{ji} d_j)  v \cdot e_i' + \sum_{j \neq i}  d_j v \cdot e_{j'}$ are bounded below and thus $Z ( \sigma_i( \vec{x}), \vec{k}; q, \chi, M^{\hchi, s_{i,E'}(E'),\gene} )$ is $ (v, E')$-bounded.

It is straightforward to check that multiplication by a rational function and summation both preserve the property of being $v,E'$-bounded. (For the rational function case, we multiply $F$ by the denominator of the rational function. This is the only step in the argument for which the inclusion of $F$ in the definition of $v,E'$-bounded is necessary.)

Because multiplication by a rational function and summation preserve the property of being $v,E'$-bounded, it follows from the functional equation of Theorem \ref{translation-key-step} that $Z ( \vec{x}, \vec{k}; q, \chi, M^{\hchi, E',\gene})$ is $v,E'$-bounded. Summing over $\vec{k}$, we see that $Z ( \vec{x}; q, \chi, M^{\hchi, E',\gene})$ is $v,E'$-bounded, completing the induction step.

(We could have simplified the proof of the induction step by making the induction hypothesis that $Z ( \vec{x}, \vec{k}; q, \chi, M^{\hchi, E',\gene})$ is $(v,E')$-bounded for all $\vec{k}$, at the cost of complicating the explanation of the structure of the induction argument.) \end{proof}

Now we state the theorem that our multiple Dirichlet series are uniquely determined by their functional equations.

\begin{theorem}\label{uniquely-determined} Let $(\mathbf \Delta, \hchi, E)$ be an arithmetic root system such that each  root in $ \mathbf \Delta$ satisfies either condition (1) or (2) of Theorem \ref{translation-key-step}. Let $\gene$ be a root of unity of even order $n$ such that every value taken by $\psi$ is a power of $\gene$. Fix a finite field $\mathbb F_q$ with $q \equiv 1 \bmod 4$ and a character $\chi \colon \mathbb F_q^\times \to \mathbb C^\times$ of order $n$.

There exists a unique collection of formal power series $\tilde{Z} (\vec{x},\vec{k}; E')$ for each $ E'\in B_{\hchi,E}$ and $\vec{k} \in \{0,\dots, n-1\}^r$, satisfying all the following conditions:

\begin{enumerate}
    \item For each $\vec{k}, E'=(e_1,\dots,e_r)$, and $i\in\{1,\dots, r\}$ such that $\hchi(e_i,a) \hchi(a,e_i)$ is a power of $\hchi(e_i,e_i)$ for all $a\in \mathbb Z^r$, the exponent of $x_i$ in each monomial appearing with nonvanishing coefficient in $ \left(1 - q  \left( \frac{qx_i }{g}\right)^n \right)\tilde{Z} (\vec{x},\vec{k}; E')$ (where $g=g_{\xi \chi^{M_{ii}^{\hchi, E', \gene}}}$) is bounded in terms of the other exponents  and we have the functional equation 
    \[ (\tilde{Z} (\vec{x}, (k_1, \ldots k_r);E' )_{k_i} \] \[= \left( \chi^{v_i (k_i+\ell_i)}(-1) E_{n_i}^n \Gamma_{k_i, \ell_i}(x_i, K) \right)_{k_i, \ell_i} ( \tilde{Z} (\sigma_i(\vec{x}), (k_1, \ldots , k_{i-1} , \ell_i, k_{i+1}, \ldots, k_r );  s_{i, E'}(E')  ))_{\ell_i} \]
    \item For each $\vec{k}, E'=(e_1,\dots,e_r)$, and $i\in\{1,\dots, r\}$ such that $\hchi(e_i,e_i)=-1$, the exponent of $x_i$ in each monomial appearing with nonvanishing coefficient in $ \left(1 - q^n x_i^n \right)\tilde{Z} (\vec{x},\vec{k}; E')$ is bounded in terms of the other exponents and we have the functional equation 
\[ ( \tilde{Z} (\vec{x}, (k_1, \ldots k_r); E' )_{k_i} \] \[= \omega g_{\chi^v}^{b-1}\left(\chi^{(v+v_i)(k_i+\ell_i)}(-1)\Theta_{k_i, \ell_i}(x_i, K, v) \right)_{k_i, \ell_i}  ( \tilde{Z} (\sigma_i(\vec{x}), (k_1, \ldots , k_{i-1} , \ell_i, k_{i+1}, \ldots, k_r) ; s_{i, E'}(E')  ))_{\ell_i}\]
 \item $Z( \vec{x}, (0,\dots,0), E)$ has constant term $1$.
 \end{enumerate} 
Those power series are given by
\begin{equation}\label{uniqueness-eq} \tilde{Z} (\vec{x},\vec{k}; E')= Z(\vec{x}, \vec{k} ; q, \chi, M^{\hchi,E',\gene}).\end{equation} 
\end{theorem}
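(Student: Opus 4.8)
\textbf{Proof proposal for Theorem \ref{uniquely-determined}.}

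The plan is to prove existence and uniqueness separately. Existence is essentially immediate: by Theorem \ref{translation-key-step}, the collection $\tilde{Z}(\vec{x},\vec{k};E') := Z(\vec{x},\vec{k};q,\chi,M^{\hchi,E',\gene})$ satisfies functional equations (1) and (2) — noting that the boundedness of exponents is exactly the rationality property, since multiplying a rational function by a polynomial denominator yields a polynomial — while condition (3) is the normalization Axiom \ref{Axiom2}, which gives $a(1,\dots,1)=1$. Note also that two bases $E'$ are equivalent in the Weyl groupoid iff $M^{\hchi,E',\gene}$ coincide, so \eqref{uniqueness-eq} is well-defined on equivalence classes; distinct classes may in principle give the same matrix, but since the right-hand side depends only on the matrix this causes no inconsistency. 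The heart of the theorem is uniqueness: we must show that (1), (2), (3) pin down every coefficient of every $\tilde{Z}(\vec{x},\vec{k};E')$.

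For uniqueness, I would argue by induction, extracting one coefficient at a time. Suppose $\tilde{Z}$ and $\tilde{Z}'$ both satisfy the three conditions; set $W = \tilde{Z} - \tilde{Z}'$. Then $W$ satisfies the homogeneous versions of functional equations (1) and (2) — the same scattering matrices, no inhomogeneous term — together with $W(\vec{x},\vec{0};E)$ having zero constant term. I want to conclude $W=0$. The key geometric input is Proposition \ref{cone-union-lemma}: the cones $C_{E'}$ for $E'\in B_{\hchi,E}$ tile $\mathbb{R}^r$, and adjacent cones are related by a simple reflection. Pick a monomial $\prod x_j^{d_j}$ and a vector $v$ in the interior of $-C_E$ (so $v\cdot e_j < 0$ for the starting basis $E$); one shows that if $W(\vec{x},\vec{k};E')$ had a nonzero coefficient somewhere, then by applying the homogeneous functional equations and using the scattering-matrix inverse (Lemma \ref{scattering-matrix-inverse}, whose local analogue is noted to hold as well), one can track a nonzero coefficient of minimal ``$v$-degree'' $\sum_j d_j\, v\cdot e_j'$ as one moves between cones. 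The transformation rule for $v$-degrees under $\sigma_i$ computed in the proof of Theorem \ref{rational-fn} — namely $\sum_j d_j\, v\cdot s_{i,E'}(e_j') = (\text{new }v\text{-degree})$ — shows that applying a reflection permutes these degrees according to the action of $s_{i,E'}$ on $v$. Since the reflections act on $v$ by moving it through all the cones and eventually into $C_{E'}$ for the relevant $E'$, and since $W(\vec{x},\vec{k};E')$ is a power series with only nonnegative exponents, the minimal $v$-degree nonzero coefficient, when transported to the cone containing $v$, would have to be a coefficient of $\vec{x}^{\,\vec 0}$ — but condition (3) forces the constant term of $W(\vec{x},\vec{0};E)$ to vanish, and then the functional equations force the constant terms in all the $\vec k\neq \vec 0$ components, and in all bases $E'$, to vanish as well (the constant term is determined by the value at the ``origin'' which is fixed or permuted trivially). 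This contradiction yields $W=0$.

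The main obstacle, and the step I would be most careful about, is making the ``track the minimal-degree coefficient through the tiling'' argument precise, particularly the bookkeeping of which component $\vec k$ and which basis $E'$ one lands in, and verifying that the scattering matrices are genuinely invertible over the relevant ring of (Laurent) power series so that no information is lost when passing a coefficient across a wall between two cones — this is where Lemma \ref{scattering-matrix-inverse} and its local counterpart are indispensable. One must also handle the fact that the functional equations relate \emph{vectors} indexed by $k_i$, so the inversion happens componentwise in a block structure; the identity $(\Gamma(x,K))^{-1} = \Gamma(g^2/(q^2x),K)$ is precisely what guarantees that applying a reflection and then its inverse returns the original series with no spurious denominators, so the ``minimal $v$-degree coefficient'' is well-defined throughout. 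A secondary subtlety is that $B_{\hchi,E}$ may be infinite a priori, but under the hypothesis that $(\mathbf\Delta,\hchi,E)$ is an arithmetic root system it is finite, so the tiling is by finitely many cones and the induction on reflection-distance terminates; I would invoke finiteness of $B_{\hchi,E}$ explicitly at the start of the uniqueness argument.
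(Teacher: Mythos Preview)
Your existence argument and broad uniqueness strategy (difference $W=\tilde Z-\tilde Z'$, cone decomposition from Proposition~\ref{cone-union-lemma}, contradiction via the normalized constant term) align with the paper. The gap is in how you propose to move between cones. You assert that ``applying a reflection permutes these degrees according to the action of $s_{i,E'}$ on $v$'' and want to track a single minimal-$v$-degree coefficient through the tiling; but the scattering-matrix entries are nonconstant rational functions of $x_i$, so the functional equation does not induce a bijection on monomials, and no individual coefficient can be followed. What survives is only an \emph{inequality}: with the other exponents $d_j$ fixed, if $d^{\mathrm{min}}$ and $d^{'\mathrm{min}}$ denote the minimal $x_i$-exponents at which $W$ is nonzero for $E'$ and $s_{i,E'}(E')$ respectively, then $d^{\mathrm{min}}+d^{'\mathrm{min}}\le \tilde d:=\sum_{j\ne i} d_j m_{ij}$. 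Proving this (the paper's Lemma~\ref{uniquely-determined-step}, via the single-variable Lemmas~\ref{uniquely-determined-sv-d} and~\ref{uniquely-determined-sv-k}) is where the real work lies, and is where Lemma~\ref{scattering-matrix-inverse} is actually used --- not to guarantee ``no information is lost'' across a wall, but to bound degrees after applying the inverse scattering matrix in the Kubota case.

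The paper then replaces your tracking-and-induction with a concavity argument. On each cone set $F_{E'}(v)=\min\sum_j d_j\,v\cdot e_j'$ over all disagreeing $(\vec d,\vec k)$; this is concave on $C_{E'}$ as a minimum of linear functions. The inequality above makes the $F_{E'}$ glue to a continuous function $F$ on $\mathbb R^r$ and shows that concavity persists across each wall. But $F\ge 0$ everywhere, $F(0)=0$, and $F(v)>0$ for $v$ in the interior of $C_E$ (by condition~(3)), which contradicts concavity on the segment from $v$ to $-v$. No induction on reflection distance, and no explicit transport of a coefficient to the origin, is involved.
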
  

Before proving Theorem \ref{uniquely-determined}, we prove some lemmas that involve a single functional equation relating a pair of multiple Dirichlet series. Lemmas \ref{uniquely-determined-sv-d} and \ref{uniquely-determined-sv-k} will involve a single-variable functional equation while Lemma \ref{uniquely-determined-step}, proven using the previous two, will involve a multivariable functional equation. In Lemmas \ref{uniquely-determined-sv-d} and Lemmas \ref{uniquely-determined-sv-k} we will not state the single-variable functional equations as we did when we originally proved them, but rather in a form that is easier to derive from the multivariable functional equations.

We could add to Theorem \ref{uniquely-determined} the condition that the exponent of $x_i$ in each monomial with non-vanishing coefficient in $\tilde{Z}(\vec{x}, \vec{k}; E')$ is congruent to $k_i$ mod $n$, a property that is satisfied, by definition, by $ Z(\vec{x}, \vec{k} ; q, \chi, M^{\hchi,E',\gene})$. This would simplify the proof, but only slightly, so we prefer to avoid imposing this condition.

\begin{lemma}\label{uniquely-determined-sv-d}  Fix a finite field $\mathbb F_q$ and a character $\chi \colon \mathbb F_q^\times \to \mathbb C^\times$ of even order $n$. Let $v, v_i,$ and $K$ be integers and $\omega \in \mathbb C^\times$. Set $b=1$ if $n\mid v$ and $b=0$ otherwise.  Let $\tilde{d}$ be an integer. Let $(S(x,k))_k$ and $(S'(x,k))_k$ be $n$-dimensional vectors of power series in $x$ such that $(1-q^n x^n) S(x,k)$ and $(1-q^n x^n) S'(x,k)$ are polynomials in $x$. Suppose that
\begin{equation}\label{ud-sv-d-fe} (S(x,k))_k = x^{\tilde{d}} \omega g_{\chi^v}^{b-1}\left(\chi^{(v+v_i)(k+\ell)}(-1)\Theta_{k, \ell}(x, K, v) \right)_{k, \ell}  \left(S'(\frac{1}{qx} ,\ell)\right)_\ell.\end{equation} 
Let $d^{\mathrm{min}}$ be the minimum integer $d$ such that the coefficient of $x^d$ in $S(x,k)$ is nonzero for at least one $k$. Let $d^{'\mathrm{min}}$ be the minimum integer $d$ such that the coefficient of $x^d$ in $S'(x,k)$ is nonzero for at least one $k$.  
If $d^{\mathrm{min}}<\infty$ then \begin{equation} \label{di-bound-d} d^{\mathrm{min}}+d^{'\mathrm{min}} \leq \tilde{d}. \end{equation} \end{lemma}

\begin{proof} Since $d^{\mathrm{min}}<\infty$,  $S(x,k) $ is nonzero for at least one $k$, so by the invertibility of the Fourier transform there is $\eta\in \mu_n$ such that 
\begin{equation}\label{d-Snonzero}  \sum_{k =0}^{n-1} \eta^{k} S(x,k) \neq 0.\end{equation}

The functional equation \eqref{ud-sv-d-fe} together with the formula for $\Theta_{k,\ell}(x, K, v)$ gives
\[\sum_{k =0}^{n-1} \eta^{k} S(x,k)  =\sum_{\ell=0}^{n-1} S'(\frac{1}{qx} ,\ell) \sum_{k=0}^{n-1} \eta^k x^{\tilde{d}}   \omega g_{\chi^v}^{b-1} \chi^{(v+v_i)(k+\ell)}(-1)  \Theta_{k,\ell}(x, K, v) \] \[ =   \sum_{\ell =0}^{n-1} \eta^{-\ell}S' (\frac{1}{qx} ,\ell)  x^{ \tilde{d}}  \omega \times \begin{cases} \eta^{K-1}  g_{\chi^v}^{-1} \chi^{(v+v_i)(K-1) }(-1)x^{-1} & \mathrm{if } n\nmid v\\ \eta^{K-1}   \chi^{(v+v_i) (K-1)} (-1) x^{-1} \frac{1 -\eta \chi^{v+v_i}(-1)  x}{ 1- \eta \chi^{v+v_i}(-1)  qx } & \mathrm{if } n \mid v\end{cases}.  \]
Let $\eta' = \eta \chi^{v+v_i}(-1)$. 

 Now we know that $(1-q^n x^n) \sum_{\ell =0}^{n-1} \eta^{-\ell}S' (x,\ell)$ is a polynomial in $x$. Hence $(1-x^{-n} ) \sum_{\ell =0}^{n-1} \eta^{-\ell}S' (\frac{1}{qx} ,\ell)$ is a polynomial in $x^{-1}$. Since we obtain  $\sum_{k =0}^{n-1} \eta^{k} S(x,k)$ by multiplying by a polynomial in $x,x^{-1}$ and possibly dividing by $(1- \eta' qx)$, we conclude that $(1-x^{-n})(1-\eta' qx) \sum_{k =0}^{n-1} \eta^{k} S(x,k)$ is a polynomial in $x, x^{-1}$. On the other hand, we also know that $(1-q^n x^n)  \sum_{k =0}^{n-1} \eta^{k} S(x,k)$ is a polynomial in $x,x^{-1}$. It follows that $(1- \eta' q x)  \sum_{k =0}^{n-1} \eta^{k} S(x,k)$ is a polynomial in $x,x^{-1}$, where $(1-\eta' q x)$ is the greatest common divisor of $(1-x^{-n})(1-\eta' qx) $ and $(1-q^nx^n)$ in the ring $\mathbb C[x,x^{-1}]$. 

Hence we obtain an identity of polynomials in $x,x^{-1}$:
\[ (1-\eta' qx) \sum_{k =0}^{n-1} \eta^{k} S(x,k) =  \sum_{\ell =0}^{n-1} \eta^{-\ell}S' (\frac{1}{qx} ,\ell)  x^{ \tilde{d}}  \omega \times \begin{cases} \eta^{K-1}  g_{\chi^v}^{-1} \chi^{(v+v_i)(K-1) }(-1) x^{-1} (1- \eta' q x)  & \textrm{if } n\nmid v\\ \eta^{K-1}   \chi^{(v+v_i)  (K-1)}(-1) x^{-1} (1 -\eta' x) & \textrm{if } n \mid v\end{cases}.\]
Since the coefficient of $x^d$ in $S'(x,\ell)$ is zero for $d< d^{'\mathrm{min}}$, the coefficient of $x^d$ in  $\sum_{\ell =0}^{n-1} \eta^{-\ell}S' (\frac{1}{qx} ,\ell)$ is zero for $d> - d^{'\mathrm{min}}$ and thus the coefficient of $x^d$ in  $\sum_{\ell =0}^{n-1} \eta^{-\ell}S' (\frac{1}{qx} ,\ell)  x^{ \tilde{d}}$ is zero for $d> \tilde{d}- d^{'\mathrm{min}}$. Since both $x^{-1} (1 -\eta' x)$ and $x^{-1} (1 -\eta' q x)$ have degree $\leq 0$ in $x$, the coefficient of $x^d$ in $(1-\eta' qx) \sum_{k =0}^{n-1} \eta^{k} S(x,k) $ is zero for $d > \tilde{d}- d^{'\mathrm{min}}$.

However, \eqref{d-Snonzero} implies that the coefficient of $d$ in $(1-\eta' qx) \sum_{k =0}^{n-1} \eta^{k} S(x,k) $ is nonzero for some $d$. By assumption, we have $d \geq d^{\mathrm{min}}$, and we have just seen that $d \leq \tilde{d}- d^{'\mathrm{min}}$. Combining these, we obtain \eqref{di-bound-d}.
\end{proof}

\begin{lemma}\label{uniquely-determined-sv-k}  Fix a finite field $\mathbb F_q$ and a character $\chi \colon \mathbb F_q^\times \to \mathbb C^\times$ of even order $n$. Let $n_i$ be a divisor of $n$ and $M_{ii}$ an integer such that $\gcd( M_{ii} + \frac{n}{2}, n) =\frac{n}{n_i}$.  Let $K$ and $v$ be integers. Let $c\in \mathbb C^\times$ and let $\tilde{d}$ be an integer. Let $(S(x,k))_k$ and $(S'(x,k))_k$ be $n$-dimensional vectors of power series in $x$ such that $\left(1- q^{n/n_i} \left( \frac{q x}{g}\right)^{n} \right) S(x,k)$ and $(1- q^{n/n_i} \left( \frac{q x}{g}\right)^{n} ) (S'(x,k))$ (where $g=g_{\xi \chi^{M_ii}}$) are polynomials in $x$. Suppose that
\begin{equation}\label{ud-sv-k-fe} (S(x,k))_k = c x^{\tilde{d}} \left( \chi^{v (k+\ell)}(-1) E_{n_i}^n \Gamma_{k, \ell}(x, K) \right)_{k, \ell}   \Bigl(S'(\frac{g^2}{q^2x} ,\ell)\Bigr)_\ell.\end{equation} 
Let $d^{\mathrm{min}}$ be the minimum integer $d$ such that the coefficient of $x^d$ in $S(x,k)$ is nonzero for at least one $k$. Let $d^{'\mathrm{min}}$ be the minimum integer $d$ such that the coefficient of $x^d$ in $S'(x,k)$ is nonzero for at least one $k$.  
If $d^{\mathrm{min}}<\infty$ then \begin{equation} \label{di-bound-k} d^{\mathrm{min}}+d^{'\mathrm{min}} \leq \tilde{d}. \end{equation} \end{lemma}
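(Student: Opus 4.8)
The plan is to mirror the proof of Lemma \ref{uniquely-determined-sv-d}, replacing the Dirichlet scattering matrix $\Theta$ by the Kubota scattering matrix $E_{n_i}^n\Gamma$, and replacing the involution $x\mapsto \frac1{qx}$ by $x\mapsto \frac{g^2}{q^2x}$. Since $d^{\mathrm{min}}<\infty$, some $S(x,k)$ is nonzero, so by the invertibility of the finite Fourier transform there is a root of unity $\eta\in\mu_n$ with $\sum_{k=0}^{n-1}\eta^k S(x,k)\neq 0$. The key structural fact, established in the proof of Theorem \ref{TheoremMultiFE}, is that $E_{n_i}^n\Gamma_{k,\ell}(x,K)$ is supported only on pairs $(k,\ell)$ with $k+\ell\equiv 1+K\bmod n_i$ or $k\equiv\ell\bmod n_i$; after applying the Fourier transform in the free index, the action collapses into essentially a scalar operation on each $\eta$-component — multiplication by a Laurent polynomial in $x$ (coming from the numerators of the $E_{n_i}^n\Gamma$ entries, which are differences of monomials in $\frac{qx}{g}$) together with possible division by the single denominator factor $1-q^{n/n_i}\left(\frac{qx}{g}\right)^n$.

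First I would feed \eqref{ud-sv-k-fe} through the Fourier transform: write $\sum_k \eta^k S(x,k)$ in terms of $\sum_\ell (\eta')^{-\ell} S'\!\left(\frac{g^2}{q^2x},\ell\right)$ for an appropriate twisted root of unity $\eta'=\eta\chi^v(-1)^{\pm 1}$ (exactly as $\eta'=\eta\chi^{v+v_i}(-1)$ arises in Lemma \ref{uniquely-determined-sv-d}), with coefficient $c\,x^{\tilde d}$ times the Fourier-transformed scattering data. The crucial bookkeeping point is that this transformed scattering data, as a function of $x$, is a Laurent polynomial divided by $1-q^{n/n_i}\left(\frac{qx}{g}\right)^n$; moreover the substitution $x\mapsto\frac{g^2}{q^2x}$ sends this denominator to a unit times $1-q^{n/n_i}\left(\frac{qx}{g}\right)^{-n}$. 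Then, exactly as in the previous lemma, I would clear denominators: $(1-q^{n/n_i}\left(\frac{qx}{g}\right)^n)S(x,k)$ is a polynomial, $(1-q^{n/n_i}\left(\frac{qx}{g}\right)^{-n})\sum_\ell(\eta')^{-\ell}S'\!\left(\frac{g^2}{q^2x},\ell\right)$ is a polynomial in $x^{-1}$, and taking the gcd of $1-q^{n/n_i}\left(\frac{qx}{g}\right)^n$ with $(1-q^{n/n_i}\left(\frac{qx}{g}\right)^{-n})\cdot(\text{extra linear factor})$ in $\mathbb C[x,x^{-1}]$ shows that after multiplying $\sum_k\eta^k S(x,k)$ by a single explicit factor of degree $\leq 1$ we get a Laurent polynomial identity.

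Once I have the Laurent polynomial identity, the degree count is identical to Lemma \ref{uniquely-determined-sv-d}: the coefficient of $x^d$ in $\sum_\ell(\eta')^{-\ell}S'\!\left(\frac{g^2}{q^2x},\ell\right)\,x^{\tilde d}$ vanishes for $d>\tilde d-d^{'\mathrm{min}}$ because $S'(x,\ell)$ has no terms below degree $d^{'\mathrm{min}}$; the low-degree multiplicative factors have degree $\leq 0$ in $x$ (here I would need to check that the numerator entries of $E_{n_i}^n\Gamma$, such as $\left(\frac{qx}{g}\right)^{n-n_i+1-(\ldots)\%n}$ and the difference $\left(\frac{qx}{g}\right)^{(k+\ell-K-1)\%n}-\left(\frac{qx}{g}\right)^{-n_i+(k+\ell-K-1+n_i)\%n}$ appearing with the prefactor $\frac{qx}{g}$, all have degree $\leq \tilde d$-compatible bounds — this is where a short explicit verification is required, but it parallels the $x^{-1}(1-\eta'x)$ bound in the Dirichlet case); hence the coefficient of $x^d$ on the left vanishes for $d>\tilde d-d^{'\mathrm{min}}$, while nonvanishing of $\sum_k\eta^k S(x,k)$ forces some nonzero coefficient at $d\geq d^{\mathrm{min}}$, giving $d^{\mathrm{min}}+d^{'\mathrm{min}}\leq\tilde d$.

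The main obstacle will be the explicit degree bookkeeping in the Kubota scattering matrix: unlike the Dirichlet case where $\Theta_{k,\ell}$ has a very clean form, the entries of $E_{n_i}^n\Gamma_{k,\ell}(x,K)$ carry several $\%n$ and $\%n_i$ exponents and $q$-power prefactors, so one must verify carefully that after the Fourier transform the net contribution is a Laurent polynomial of the correct degree range (top degree in $x$ at most some constant plus $\tilde d$, and — more importantly for the argument — that multiplying by the gcd factor $(1-\eta' q^{1/n_i}\left(\frac{qx}{g}\right))$-type term produces no positive-degree growth beyond what the count tolerates). I expect this to reduce, after the collapse, to precisely the scalar situation of Lemma \ref{uniquely-determined-sv-d} with $n$ replaced by $n$, $n_i\mid n$ controlling the denominator, and $\frac{g^2}{q^2}$ in place of $\frac1q$; so the cleanest write-up is to state the needed properties of $E_{n_i}^n\Gamma$ (support, denominator, and the behavior of numerators under $x\mapsto g^2/(q^2x)$) as an observation extracted from \S\ref{ss-k-multi}, and then run the Dirichlet-case argument verbatim.
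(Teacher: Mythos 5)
Your overall framework (Fourier transform, clear denominators via a gcd argument in $\mathbb C[x,x^{-1}]$, then count degrees) matches the first half of the paper's proof, but there are two genuine gaps, one of which is fatal to the argument as written.

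First, the Kubota scattering matrix does not collapse to a scalar under the Fourier transform. The entries $E_{n_i}^n\Gamma_{k,\ell}(x,K)$ depend on $k$ and $\ell$ separately (through $g_{(\xi\chi^{M_{ii}})^{2k-K-1}}$ and $(\xi\chi^{M_{ii}})^{\ell(1+K)}(-1)$), not only on $k+\ell$, so a single character sum $\sum_k\eta^k S(x,k)$ does not diagonalize the action. What the paper does instead is a \emph{partial} Fourier transform: it fixes $\eta\in\mu_n$ and $\kappa\in\{0,\dots,n_i-1\}$ and works with the $n_i$-dimensional vector $\bigl(\sum_{k\equiv\kappa\bmod n_i}\eta^kS(x,k)\bigr)_\kappa$, on which the original $n_i\times n_i$ matrix $\Gamma(\eta'x,K)$ acts; this matrix is genuinely non-diagonal (block-diagonal with $2\times 2$ blocks), and the rest of the argument must be carried out at the level of vectors.

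Second, and more importantly, your claimed degree bound is wrong, and this is exactly the point where the Kubota case diverges from the Dirichlet case. After clearing the denominator $1-q\left(\frac{\eta'qx}{g}\right)^{n_i}$, the entries of $\Gamma(\eta'x,K)$ are Laurent polynomials of degree $\leq 1$ in $x$ (e.g.\ the cleared off-diagonal entry is proportional to $\frac{qx}{g}-\left(\frac{qx}{g}\right)^{1-n_i}$, and the cleared diagonal entry $\left(\frac{qx}{g}\right)^{1-(K+1-2k)\%n_i}(1-q)$ attains degree $1$ when $(K+1-2k)\%n_i=0$), \emph{not} degree $\leq 0$ as in the $x^{-1}(1-\eta'x)$ factors of Lemma \ref{uniquely-determined-sv-d}. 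The degree count therefore yields only $d^{\mathrm{min}}+d^{'\mathrm{min}}\leq\tilde d+1$, which is weaker than \eqref{di-bound-k} by exactly one, and no "short explicit verification" will recover the missing unit. The paper closes this gap with a second, essentially new argument: assuming \eqref{di-bound-k} fails, the cleared polynomial $\bigl(1-q\left(\frac{\eta'qx}{g}\right)^{n_i}\bigr)\sum_{k\equiv\kappa\bmod n_i}\eta^kS(x,k)$ is forced to be concentrated in the single degree $d=d^{\mathrm{min}}$ and nonzero there; one then inverts the functional equation using Lemma \ref{scattering-matrix-inverse} (namely $(\Gamma(x,K))^{-1}=\Gamma(\frac{g^2}{q^2x},K)$) to express $\sum_{\ell\equiv\lambda\bmod n_i}\eta^{-\ell}S'(x,\ell)$ as a nontrivial linear combination of columns of $\Gamma((\eta')^{-1}x,K)$ times a rational function with denominator $1-q\left(\frac{g}{q(\eta')^{-1}x}\right)^{n_i}$, and checks that this can never be a polynomial, contradicting the hypothesis on $S'$. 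Your proposal contains no analogue of this step, so as it stands it proves only the weaker bound $\tilde d+1$.
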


\begin{proof} Since $d^{\mathrm{min}}<\infty$,  $S(x,k) $ is nonzero for at least one $k$, so by the invertibility of the Fourier transform there is $\eta\in \mu_n$ and $\kappa \in \{0,\dots,n_i-1\}$ such that 
\begin{equation}\label{k-Snonzero}  \sum_{k \equiv \kappa \bmod n_i}  \eta^{k} S(x,k) \neq 0.\end{equation}
The functional equation \eqref{ud-sv-k-fe} together with the definition of $E_{n_i}^n$ gives
\begin{equation*}
\begin{split}
\left(\sum_{k \equiv \kappa \bmod n_i}  \eta^{k} S(x,k) \right)_\kappa &= c x^{\tilde{d}} \Bigl(\sum_{\lambda=0}^{n_i-1} \sum_{\substack{k \equiv \kappa \bmod n_i \\ \ell \equiv \lambda \bmod n_i}} \eta^{k} \chi^{v(k+\ell)}(-1) S^{k+\ell-K, n} \Gamma_{\kappa, \lambda}(x,K) S'(\frac{g^2}{q^2x} ,\ell) \Bigr)_\kappa\\
&=c x^{\tilde{d}} \eta^{'K}   x^{\tilde{d}} \left( \Gamma_{\kappa, \lambda}( \eta'  x, K) \right) _{\kappa, \lambda } \Bigl( \sum_{\ell \equiv \lambda \bmod n_i} \eta^{-\ell} S'(\frac{g^2}{q^2x} ,\ell)\Bigr)_\lambda
\end{split}
\end{equation*}
where $\kappa$ and $\lambda$ range from $0$ to $n_i-1$ and $\eta' = \chi^v(-1) \eta$ is also an $n$th root of unity. For each $\lambda$, we know $(1- q^{n/n_i} \left( \frac{q x}{g}\right)^{n} )  \sum_{\ell \equiv \lambda \bmod n_i} \eta^{-\ell} S'(x ,\ell)$ is a polynomial in $x$, so  $\label{entries-to-obtain} (1- q^{n/n_i} \left( \frac{g}{qx }\right)^{n} ) \sum_{\ell \equiv \lambda \bmod n_i} \eta^{-\ell} S'(\frac{g^2}{q^2x} ,\ell)$ is a polynomial in $x^{-1}$.  Since $\sum_{k \equiv \kappa \bmod n_i}  \eta^{k} S(x,k)$ is obtained from these values by applying a matrix whose entries are polynomials in $x,x^{-1}$ divided by $1- q \left(\frac{ \eta'  qx}{g} \right)^{n_i}$, we can conclude that $(1- q^{n/n_i} \left( \frac{g}{qx }\right)^{n} )  (1- q \left(\frac{ \eta' qx}{g} \right)^{n_i}) \sum_{k \equiv \kappa \bmod n_i}  \eta^{k} S(x,k)$ is a polynomial in $x, x^{-1}$. On the other hand, we also know that $(1- q^{n/n_i} \left( \frac{qx}{g }\right)^{n} )  \sum_{k \equiv \kappa \bmod n_i}  \eta^{k} S(x,k)$ is a polynomial in $x$. Therefore, it follows that $(1 - q \left( \frac{ \eta' qx}{g} \right)^{n_i} )   \sum_{k \equiv \kappa \bmod n_i}  \eta^{k} S(x,k)$ is a polynomial in $x,x^{-1}$, where $(1 - q \left( \frac{\eta'  qx}{g} \right)^{n_i} )$ is the greatest common divisor of $(1- q^{n/n_i} \left( \frac{g}{qx }\right)^{n} )  (1- q \left(\frac{ \eta'  qx}{g} \right)^{n_i})$  and $(1 - q \left( \frac{qx}{g} \right)^{n_i} ) $ in the ring $\mathbb C[x,x^{-1}]$.

Thus we obtain an identity of vectors of polynomials
\[  \left(1 - q \left( \frac{\eta'  qx}{g} \right)^{n_i} \right)      \Bigl( \sum_{k \equiv \kappa \bmod n_i}  \eta^{k} S(x,k) \Bigr)_\kappa\] \[= \left(1 - q \left( \frac{\eta' qx}{g} \right)^{n_i} \right)    c x^{\tilde{d}} \eta^{'K} (  \Gamma_{\kappa,\lambda }(  \eta' x, K))_{\kappa, \lambda } \Bigl( \sum_{\ell \equiv \lambda \bmod n_i} \eta^{-\ell} S'(\frac{g^2}{q^2x} ,\ell)\Bigr)_\lambda.\]
We know the coefficient of $x^d$ in $S'(x,\ell)$ vanishes for $d< d^{'\mathrm{min}}$. Hence the coefficient of $x^d$ in $S'(\frac{g^2}{q^2x} ,\ell)$ vanishes for $d>- d^{'\mathrm{min}}$. After multiplying by $1- q \left( \frac{  \eta' q x}{g}\right)^{n_i}$, the entries of the scattering matrix $\Gamma(\eta' x,K)$ are polynomials in $x, x^{-1} $ of degree $\leq 1$ in $x$. Combined with the $x^{\tilde{d}}$ factor, it follows that the coefficient of $x^d$ in $\left(1 - q \left( \frac{ \eta' qx}{g} \right)^{n_i} \right)      \sum_{k \equiv \kappa \bmod n_i}  \eta^{k} S(x,k)$ vanishes for $d> \tilde{d}+1 -d^{'\mathrm{min}}$. If \eqref{di-bound-k} fails so that $\tilde{d}- d^{'\mathrm{min}}<d^{\mathrm{min}}$, we conclude that the coefficient of $x^d$ in $\left(1 - q \left( \frac{\eta' qx}{g} \right)^{n_i} \right)      \sum_{k \equiv \kappa \bmod n_i}  \eta^{k} S(x,k)$  vanishes for $d \geq d^{\mathrm{min}}+1$. However, we already know that the coefficient of $x^d$ vanishes for $d< d^{\mathrm{min}}$. Hence this coefficient is nonzero only for $d= d^{\mathrm{min}}$, and by \eqref{k-Snonzero} must in fact be nonzero for this $d$.

We now use the inverse matrix to express $ \sum_{\ell \equiv \lambda \bmod n_i} \eta^{-\ell} S'(x,\ell)$ in terms of  $\sum_{k \equiv \kappa \bmod n_i}  \eta^{k} S(x,k)$. By Lemma \ref{scattering-matrix-inverse}, the inverse matrix $(\Gamma(\eta' x, K))^{-1}$ is $\Gamma( \frac{g^2}{q^2 \eta'  x}, K)$. This implies 
\[ \Bigl( \sum_{\ell \equiv \lambda \bmod n_i} \eta^{-\ell} S'(\frac{g^2}{q^2x} ,\ell)\Bigr)_\lambda = c^{-1} x^{-\tilde{d}} (\eta')^{-K} \left( \Gamma_{\lambda,\kappa}(  \frac{g^2}{q^2 \eta' x}, K) \right) _{\lambda, \kappa }  \left(\sum_{k \equiv \kappa \bmod n_i}  \eta^{k} S(x,k) \right)_{\kappa} .\]
which after the substitution $x \mapsto \frac{g^2}{q^2 x}$ gives
\[  \Bigl( \sum_{\ell \equiv \lambda \bmod n_i} \eta^{-\ell} S'(x ,\ell)\Bigr)_\lambda = c^{-1} \left( \frac{q^2}{g^2} \right)^{\tilde{d}} x^{\tilde{d}} (\eta')^{-K} \left( \Gamma_{\lambda,\kappa}( (\eta')^{-1} x, K) \right) _{\lambda, \kappa }  \left(\sum_{k \equiv \kappa \bmod n_i}  \eta^{k} S(\frac{q^2}{g^2x},k) \right)_{\kappa} .\]



Using that the coefficient of  $x^d $ in  $\left( 1 - q \left( \frac{ \eta' qx }{g }\right)^{n_i} \right) \sum_{\ell \equiv \lambda \bmod n_i} \eta^{-\ell}S(x,\ell) $ is nonzero only for $d = d^{\mathrm{min}}$, we see that the coefficient of $x^d$ in  $\left( 1 - q \left( \frac{g }{q (\eta')^{-1} x }\right)^{n_i} \right) \sum_{\ell \equiv \lambda \bmod n_i} \eta^{-\ell} S(\frac{g^2}{q^2x} ,\ell)$ is nonzero only for $d = -d^{\mathrm{min}}$ (and is in fact nonzero for this $d$ for some $\lambda$).

By assumption, $ \left(\left(1- q^{n/n_i} \left( \frac{\eta' q x}{g}\right)^{n} \right)  \sum_{k \equiv \kappa \bmod n_i}  \eta^{-k} S'(x,k) \right)_\kappa $ is a vector of polynomials in $x$.

By the functional equation and the above analysis of $\left( 1 - q  \left( \frac{ \eta' qx }{g }\right)^{n_i} \right) \sum_{\ell \equiv \lambda \bmod n_i} \eta^{-\ell}S(x,\ell)$, this vector is a nontrivial $\mathbb C$-linear combination of the columns of $\Gamma((\eta')^{-1} x,K)$ times $$ x^{ \tilde{d}- d^{\mathrm{min}} } \frac{ 1 - q^{n/n_i} \left( \frac{ qx}{g }\right)^n }{ 1 - q \left( \frac{g }{q (\eta')^{-1} x}\right)^{n_i} } .$$ To check this is impossible, it suffices to check that a nontrivial $\mathbb C$-linear combination of the columns of $\Gamma((\eta')^{-1} x,K)$ is nontrivial mod $1 - q\left( \frac{g }{q (\eta')^{-1}x}\right)^{n_i}$, as then multiplying by a polynomial relatively prime to $1 - q\left( \frac{g }{(\eta')^{-1} q x}\right)^{n_i}$ and then dividing by $1 - q\left( \frac{g }{ (\eta')^{-1}q x}\right)^{n_i}$  will always produce a non-polynomial rational function.

The $\kappa,\lambda$'th entry of $\Gamma(x,K)$ is a sum of powers of $x$ congruent to $k +\ell-K$ modulo $n_i$, which implies different columns of the scattering matrix will not cancel, even modulo $1 - q\left( \frac{g }{(\eta')^{-1} qx_i}\right)^{n_i}$, so it suffices to check that each column of the scattering matrix is nonzero modulo $1 - q\left( \frac{g }{(\eta')^{-1} qx_i}\right)^{n_i}$, which follows from the explicit descriptions of, for example, the diagonal entries of $\Gamma((\eta')^{-1} x,K)$, which are all nonzero modulo $1 - q\left( \frac{g }{(\eta')^{-1}qx_i}\right)^{n_i}$.\end{proof}

\begin{lemma}\label{uniquely-determined-step} Let $E= (e_1,\dots, e_r)$ be a basis of $\mathbb Z^r$, $\hchi\colon \mathbb Z^r \times \mathbb Z^r \to \mu_\infty$ a bicharacter, and $i \in \{1,\dots, r\}$ such that $e_i$ satisfies either condition (1) or (2) of Theorem \ref{translation-key-step}. Let $\gene$ be a root of unity of even order $n$ such that every value taken by $\psi$ is a power of $\gene$. Fix a finite field $\mathbb F_q$ and a character $\chi \colon \mathbb F_q^\times \to \mathbb C^\times$ of order $n$.

Let $\tilde{Z} (\vec{x}, \vec{k}; E) $ and $\tilde{Z} (\vec{x}, \vec{k}; s_{i,E}(E))$ be two tuples of formal power series indexed by $\vec{k} \in \{0,\dots, n-1\}^r$ satisfying the functional equation stated in Theorem \ref{uniquely-determined}.


Let $d_1,\dots, d_{i-1}, d_{i+1} ,\dots, d_r$ be a tuple of nonnegative integers. Let $d^{\mathrm{min}}$ be the minimum integer $d$ such that the coefficients of $x_i^d \prod_{j \neq i}  x_j^{d_j}$ in $\tilde{Z} (\vec{x}, \vec{k}; E) $ and $Z(\vec{x}, \vec{k} ,q, \chi, M^{\hchi,E,\gene})$ differ for at least one $\vec{k}$. Let $d^{'\mathrm{min}}$ be the minimum integer $d'$ such that the coefficients of  of $x_i^{d'} \prod_{j \neq i}  x_j^{d_j}$ in $\tilde{Z} (\vec{x}, \vec{k}; s_i(E) ) $ and $Z(\vec{x}, \vec{k} ,q, \chi, M^{\hchi,s_{i,E}(E),\gene})$ differ for at least one $\vec{k}$. 

If $d^{\mathrm{min}}<\infty$ then \begin{equation} \label{di-bound} d^{\mathrm{min}}+d^{'\mathrm{min}} \leq \sum_{j\neq i }  d_j m_{ij}.\end{equation}
\end{lemma}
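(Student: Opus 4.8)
\textbf{Proof proposal for Lemma \ref{uniquely-determined-step}.}

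The plan is to reduce the multivariable statement to one of the two single-variable lemmas just proven, Lemma \ref{uniquely-determined-sv-d} or Lemma \ref{uniquely-determined-sv-k}, depending on whether $e_i$ satisfies condition (1) (Kubota case) or condition (2) (Dirichlet case) of Theorem \ref{translation-key-step}. First I would fix the tuple $d_1,\dots,d_{i-1},d_{i+1},\dots,d_r$ and extract from each of the four power series --- $\tilde Z(\vec x,\vec k; E)$, $Z(\vec x,\vec k; q,\chi,M^{\hchi,E,\gene})$, $\tilde Z(\vec x,\vec k; s_{i,E}(E))$, and $Z(\vec x,\vec k; q,\chi,M^{\hchi,s_{i,E}(E),\gene})$ --- the single-variable slice in the variable $x_i$ obtained by restricting the exponents of the other variables to $d_1,\dots,d_{i-1},d_{i+1},\dots,d_r$. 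Concretely, set $S(x,k)$ to be the coefficient-extraction of $x_i^{d}\prod_{j\neq i}x_j^{d_j}$ summed into a single power series in $x=x_i$, applied to the \emph{difference} $\tilde Z(\vec x,\vec k;E)-Z(\vec x,\vec k;q,\chi,M^{\hchi,E,\gene})$; similarly set $S'(x,k)$ from the difference on the $s_{i,E}(E)$ side. Both $\tilde Z$ and the honest multiple Dirichlet series $Z$ satisfy the \emph{same} functional equation (by hypothesis on $\tilde Z$ and by Theorem \ref{translation-key-step} for $Z$), and the functional equation is $\mathbb C$-linear in the series, so the difference satisfies the same functional equation with zero inhomogeneous term. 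Taking the $x_i$-slice of the multivariable functional equation \eqref{eq-TheoremMultiFE} (resp. its Dirichlet analogue) and tracking how $\sigma_i$ acts on the non-$i$ monomials --- each $x_j$ for $j\neq i$ is multiplied by a constant times $x_i^{n_{ij}}$ (resp. $x_i^{e_{ij}}$), shifting the $x_i$-degree of the slice by $\sum_{j\neq i}d_j n_{ij}=\tilde d$ (resp. $\sum_{j\neq i}d_j e_{ij}=\tilde d$, which by Lemma \ref{kubota-bicharacter-identity}/\ref{dirichlet-bicharacter-identity} equals $\sum_{j\neq i}d_j m_{ij}$) --- converts the multivariable functional equation into exactly the single-variable functional equation \eqref{ud-sv-k-fe} (resp. \eqref{ud-sv-d-fe}) with $\tilde d=\sum_{j\neq i}d_j m_{ij}$.

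The next step is to verify the rationality/polynomiality hypotheses of the single-variable lemma. For the honest multiple Dirichlet series, the rationality of the $x_i$-slice with denominator dividing $1-q^{n/n_i}(qx/g)^n$ (resp. $1-q^nx^n$) is exactly the content of the boundedness statements in Theorem \ref{translation-key-step}, which come from Theorem \ref{TheoremFE} (resp. Theorem \ref{TheoremDirichletFE}). For $\tilde Z$, the same denominator bound is part of the hypothesis in conditions (1) and (2) of Theorem \ref{uniquely-determined}. Hence $(1-q^{n/n_i}(qx/g)^n)S(x,k)$ and the corresponding expression for $S'(x,k)$ are polynomials, as required. Then, assuming $d^{\mathrm{min}}<\infty$, Lemma \ref{uniquely-determined-sv-k} (resp. \ref{uniquely-determined-sv-d}) applies and yields $d^{\mathrm{min}}+d^{'\mathrm{min}}\le \tilde d=\sum_{j\neq i}d_j m_{ij}$, which is precisely \eqref{di-bound}.

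I expect the main obstacle to be bookkeeping rather than conceptual: one must make sure that taking the $x_i$-slice genuinely commutes with the functional equation, i.e. that the operator $S^{k,n}$-type projections and the monomial substitution $\sigma_i$ interact correctly, and that the scalar prefactors (powers of $qx_i/g$, Gauss sums, roots of unity $\chi(-1)$, the $\omega$ and $g_{\chi^v}^{b-1}$ factors) all land in the constant $c$ and the $x^{\tilde d}$ factor of the single-variable lemmas without spurious $x_i$-dependence. A secondary point requiring care is that the scattering matrices $\Gamma_{k,\ell}$ and $\Theta_{k,\ell}$ appearing in the multivariable functional equations of Theorems \ref{TheoremMultiFE} and \ref{dirichlet-multivariable-global} are, after the $E_{n_i}^n$ expansion, literally the matrices $\Gamma_{k,\ell}$, $\Theta_{k,\ell}$ named in Lemmas \ref{uniquely-determined-sv-k} and \ref{uniquely-determined-sv-d}, so the identification must be made verbatim. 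Finally one should double-check the identity $\sum_{j\neq i}d_j n_{ij}=\sum_{j\neq i}d_j m_{ij}$ (and its Dirichlet counterpart with $e_{ij}$) using the bicharacter lemmas, since that is what converts the shift $\tilde d$ coming from $\sigma_i$ into the root-system quantity $\sum_{j\neq i}d_j m_{ij}$ on the right-hand side of \eqref{di-bound}.
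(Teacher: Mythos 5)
Your proposal follows essentially the same path as the paper: subtract the honest series from $\tilde Z$ on each side (using linearity of the functional equation), take a single-variable slice in $x_i$, identify the slice of the multivariable functional equation with the single-variable functional equation of Lemma \ref{uniquely-determined-sv-k}/\ref{uniquely-determined-sv-d} taking $\tilde{d}=\sum_{j\neq i} d_j m_{ij}$, verify the polynomiality hypothesis from the boundedness assumption, and read off the bound.

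One real but small detail is glossed over. The single-variable lemmas take vectors indexed by a single residue $k\in\{0,\dots,n-1\}$, while your $S(x,k)$ is implicitly a function of the whole tuple $\vec{k}$; the multivariable functional equation \eqref{eq-TheoremMultiFE} only relates different values of $k_i$ at a \emph{fixed} choice of $k_1,\dots,k_{i-1},k_{i+1},\dots,k_r$. So one must first fix the non-$i$ residues, and then the quantities $d^{\mathrm{min}}$, $d^{'\mathrm{min}}$ arising from the restricted problem need not equal the ones in the lemma. The paper resolves this by choosing $k_1,\dots,k_{i-1},k_{i+1},\dots,k_r$ so that the multivariable $d^{\mathrm{min}}$ is actually achieved for that choice (hence the restricted $d^{\mathrm{min}}$ equals the global one) and then observes that restricting the range of $\vec{k}$ can only \emph{increase} $d^{'\mathrm{min}}$, so the inequality from the single-variable lemma implies \eqref{di-bound}. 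Your write-up should include that choice and that one-line monotonicity observation; otherwise the conclusion does not literally follow from the single-variable lemma.
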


Lemma \ref{uniquely-determined-step} implies two weaker statements that we will also use. The first is that if $d^{\mathrm{min}}<\infty$ then $d^{'\mathrm{min}}<\infty$. The second is that if $\tilde{Z} (\vec{x}, \vec{k}; s_{i,E}(E) ) =Z(\vec{x}, \vec{k} ;q, \chi, M^{\hchi,s_i(E),\gene})$ then $\tilde{Z} (\vec{x}, \vec{k}; E) = Z(\vec{x}, \vec{k}; q, \chi, M^{\hchi,E,\gene})$. Both of these have shorter proofs, but we will prove them all together as the strongest statement is needed.

\begin{proof} After subtracting  $Z(\vec{x}, \vec{k} , q, \chi, M^{\hchi,E,\gene})$ from $\tilde{Z} (\vec{x}, \vec{k}; E) $, and the same for $s_{i,E}(E)$, since the functional equation is linear and is known to hold for the multiple Dirichlet series $Z$, the functional equation still holds for the new values of $\tilde{Z} (\vec{x}, \vec{k}; E) $ and $\tilde{Z} (\vec{x}, \vec{k}; s_{i,E}(E))$. Furthermore $d^{\mathrm{min}}$ becomes the minimum integer $d$ such that the coefficient of $x_j^d \prod_{j \neq i}  x_j^{d_j}$ in $\tilde{Z} (\vec{x}, \vec{k}; s_{i,E}(E) ) $ is nonzero for at least one $\vec{k}$ and $d^{'\mathrm{min}}$ becomes the minimum integer $d'$ such that the coefficient of $x_j^{d'} \prod_{j \neq i}  x_j^{d_j}$ in $\tilde{Z} (\vec{x}, \vec{k}; s_{i,E}(E) ) $ is nonzero for at least one $\vec{k}$. 

We will work with this definition of $d^{\mathrm{min}}$ and $d^{'\mathrm{min}}$ and show \eqref{di-bound}.

To do this, we fix $k_1,\dots, k_{i-1}, k_{i+1},\dots, k_r$ such that the coefficient of $x_j^{d_{\mathrm{min}}} \prod_{j \neq i}  x_j^{d_j}$ in $\tilde{Z} (\vec{x},\vec{k}; s_{i,E}(E) ) $ is nonzero for at least one $k_i$. We view \[ \tilde{Z} (\vec{x}, k_1,\dots,k_{i-1},k,k_{i+1},\dots, k_r ; E) \] as a power series in $x_1,\dots,x_{i-1} ,x_{i+1},\dots,x_r$ with coefficients power series of $x_i$ and set $S(x_i, k)$ to be the coefficient of $ \prod_{j \neq i}  x_j^{d_j}$ in \[\tilde{Z} (\vec{x}, k_1,\dots,k_{i-1},k,k_{i+1},\dots, k_r ; E).\] Similarly, we set $S'(x_i,k)$ to be the coefficient of $ \prod_{j \neq i}  x_j^{d_j}$ in \[ \tilde{Z} (\vec{x}, k_1,\dots,k_{i-1},k,k_{i+1},\dots, k_r ; s_{i,E}(E) ).\]

We check that $S$ and $S'$ satisfy the assumptions of Lemma \ref{uniquely-determined-sv-k} in case (1) and satisfy the assumptions of Lemma \ref{uniquely-determined-sv-d} in case (2), taking $\tilde{d} = \sum_{j\neq i }  d_j m_{ij}$. Having done this, \eqref{di-bound-k} or \eqref{di-bound-d} respectively will imply  \eqref{di-bound}, since restricting attention to the fixed values $k_1,\dots, k_{i-1}, k_{i+1},\dots, k_r$ preserves $d^{\mathrm{min}}$ and can only increase $d^{'\mathrm{min}}$. 

The boundedness of terms with nonvanishing coefficients assumption in the functional equation exactly implies the polynomiality assumption in Lemma \ref{uniquely-determined-sv-k} or \ref{uniquely-determined-sv-d}. To verify the functional equation itself, a key is that $\sigma_i$ sends $x_i$ to $\frac{ g^2}{q^2x_i}$ in case (1), matching the functional equation in Lemma \ref{uniquely-determined-sv-k}, and $\sigma_i$ sends $x_i$ to $\frac{1}{qx_i}$ in case (2), matching the functional equation in Lemma \ref{uniquely-determined-sv-d}. Furthermore, $\sigma_i$ multiplies each $x_j$ variable for $j\neq i$ by $x_i^{m_{ij}}$ times a constant. The $x_i^{m_{ij}}$ terms give a multiplicative factor of $x_i^{ \sum_{j\neq i }  d_j m_{ij}} = x_i^{\tilde{d}}$ while the constant may be absorbed into $c$ in Lemma \ref{uniquely-determined-sv-k} or $\omega$ in Lemma \ref{uniquely-determined-sv-d}. So we indeed obtain the functional equations from these lemmas. \end{proof}




\begin{proof}[Proof of Theorem \ref{uniquely-determined}] The fact that $Z(\vec{x}, \vec{k}; q, \chi, M^{\hchi,E,\gene})$ satisfy these equations is the content of Theorem \ref{translation-key-step}, so we need only prove uniqueness.  In other words, we fix a tuple of formal power series $ \tilde{Z} (\vec{x},\vec{k}; E)$, assume the functional equations, and must prove \eqref{uniqueness-eq}.

A natural strategy of proof, previously used in the multiple Dirichlet series literature, is an induction on the degree of monomials: We check that if the coefficients of all monomials of total degree $<d$ in the two sides of \eqref{uniqueness-eq} agree then the same is true for monomials of total degree $\leq d$. For a variant, we can consider a weighted total degree where different variables are assigned different weights. The induction could be equivalently viewed as an argument by contradiction where we examine the monomial whose coefficients in the two sides of \eqref{uniqueness-eq} differ of the least total degree and derive a contradiction.

Our proof is philosophically similar but more indirect: Rather than considering the least total degree we define a function on $\mathbb R^r$ whose value at each point can be interpreted as the minimum of a weighted total degree. Examining the minimizers at each point, we will derive a contradiction by checking that this function cannot be concave and also must be concave.

Assume for contradiction that \eqref{uniqueness-eq} is false for some $ E'\in B_{\hchi,E}, \vec{k} \in \{0,\dots, n-1\}^r$. It is straightforward to see that for each $E'\in B_{\hchi,E}$, there is some $\vec{k} \in \{0,\dots, n-1\}^r$ for which \eqref{uniqueness-eq} fails, since by Lemma \ref{uniquely-determined-step}, if \eqref{uniqueness-eq} fails for $s_{i,E'}(E)$ for some $\vec{k}$, then it also fails for $E'$, for some $\vec{k}$.

For each $E'=(e_1,\dots,e_r)\in B_{\hchi,E}$, we define a function $F_{E'}$ on $C_{E'}$: For $v \in C_{E'} $, define $F_{E'}(v)$ to be the minimum value of $\sum_{j=1}^r d_j  v\cdot e_j $ over the set of pairs of $\vec{k} \in \{0,\dots, n-1\}^r$, $ \vec{d} \in \mathbb Z^r$ such that the coefficients of $\prod_{i=1}^r x_i^{d_i}$ in $\tilde{Z} (\vec{x},\vec{k}; E')$ and $Z(\vec{x}, \vec{k} ;q, \chi, M^{\hchi,E',\gene}) $ are not equal.

 There are at most finitely many values of $\vec{d}$ that can contribute to the minimization in the definition of $F_{E'}$, since every infinite subset of $\mathbb N^r$ contains two different elements, one of which has all entries greater than or equal to the corresponding entries of the other.  Because \eqref{uniqueness-eq} fails for each $E'$, the set minimized over is not empty. Hence $F_{E'}$ is a well-defined piecewise linear function, and, in particular, is continuous. Since the minimum of an arbitrary set of linear functions is concave, $F_{E'}$ is a concave function. 

We will show that there is a single continuous function $F$ on $\mathbb R^r$ whose restriction to $C_{E'}$ is $F_{E'}$ for each $E'$. We will show $F$ cannot be concave and then derive a contradiction from this by showing that $F$ must be concave. 

Fix two adjacent cones $C_{E'}$ and $C_{ s_{i,E'}(E')}$. Let us check that $F_{E'}$ and $F_{s_{i,E'}(E')}$ agree on the intersection of $C_{E'}$ and $C_{ s_{i,E'}(E')}$. Fix a point $v$ in the boundary. Assume the minimum in the definition of $F_{E'}(v)$ is obtained by some pair $\vec{k},\vec{d}$.  Then it follows from Lemma \ref{uniquely-determined-step} that the coefficients of some monomial $\prod_{j=1}^r x_j^{d_j'}$ in $\tilde{Z} (\vec{x},\vec{k}'; s_{i,E'}(E'))$ and $Z(\vec{x}, \vec{k}' ; q, \chi, M^{\hchi, s_{i,E'}(E'),\gene}) $  are different, with $d_j'=d_j$ for all $j\neq i$ (because $d^{\mathrm{min}}<\infty$ and so $d^{'\mathrm{min}}<\infty$). Then since $v$ lies on the boundary, $v \cdot e_i=0$ and thus $v \cdot e_j = v \cdot s_{i,E'} (e_j)$ for all $j$.  Hence  \[ F_{ s_{i,E'}(E')} (v) \leq \sum_{j=1}^r d_j' v \cdot e_j = \sum_{j=1}^r d_j v \cdot e_j  = F_{E'}(v) .\] A symmetrical argument gives $ F_{E'}(v) \leq F_{ s_{i,E'}(E')} (v) $ and hence $ F_{E'}(v) = F_{ s_{i,E'}(E')} (v) $.

Now consider two cones $C_{E'}$ and $C_{E^*}$ that intersect. Again we check that $F_{E'}$ and $F_{E^*}$ agree on the intersection of $C_{E'}$ and $C_{E^*}$. To do this, observe that their interesction is a lower-dimensional face. Since a neighborhood of that face is connected, we can find a sequence of cones, each adjacent to the next, starting with $C_{E'}$ and ending with $C_{E^*}$ that all contain that face. Since each function agrees with the next on that face, $F_{E'}$ and $F_{E^*}$ agree as well.

Since each $v \in \mathbb R^r$ is in $C_{E'}$ for some $E'$ by Proposition \ref{cone-union-lemma}, and the value $F_{E'}(v)$ is independent of the chosen $E'$, there exists a unique function $F$ with $F (v) = F_{E'}(v)$ for all $v\in C_{E'}$. Since $F$ is continuous restricted to each $C_{E'}$, and the $C_{E'}$ are finitely many closed sets that cover the space, $F$ is continuous.

We saw above that $F_{E'}(v)\geq 0$ for all $v$ and $F_{E'}(0)=0$.  From Proposition \ref{cone-union-lemma} it follows that $F(v) \geq 0$ for all $v$ and $F(0)=0$. On the other hand, since we assumed that the constant coefficient of $\tilde{Z} (\vec{x},\vec{k}; E)$ is $1$, which agrees with the constant coefficient of  $Z(\vec{x}, \vec{k} q, \chi, M^{\hchi,E,\gene}) $, we have $F(v)>0$ for any $v$ in the interior of the cone $C_{E}$. S This implies that $F$ is not a concave function, by considering $0 = \frac{1}{2} v+ \frac{1}{2} (-v)$. 

The final step will be similar to the argument that $F$ is continuous, but more subtle. Since $F$ is not concave, there must be two points where the inequality for concave functions fails. We can perturb those points so the line segment between them does not intersect the codimension $\geq 2$ faces of the $C_{E'}$ and the inequality still fails. Since we have already checked concavity in the interior of each cone, the line segment cannot lie in the interior of a cone and thus must intersect a boundary between cones. If the line segment intersects multiple boundaries, we can pass to a shorter line segment intersecting a single boundary where the inequality fails. Thus there are points $v$ in the interior of a cone $C_{E'}$ and $w$ in the interior of an adjacent cone $C_{s_i(E')}$ such that $F( \lambda v + (1-\lambda ) w) < \lambda F(v) + (1-\lambda) F(w)$ for some $\lambda \in [0,1]$. Without loss of generality, $\lambda v + (1-\lambda) w \in C_{E'}$. We will derive a contradiction from this.

Let the minimum in the definition of $F(\lambda v + (1-\lambda) w) )$ be attained at $\vec{k}, \vec{d}$. We clearly have $F(v) \leq \sum_{i=1}^r d_i  v \cdot e_i$. Certainly also $d_i = d^{\mathrm{min}}$ in the sense of Lemma \ref{uniquely-determined-step}. Thus by Lemma \ref{uniquely-determined-step}, the coefficients in in $\tilde{Z} (\vec{x},\vec{k}'; s_{i,E'}(E'))$ and $Z(\vec{x}, \vec{k}' ;q, \chi, M^{\hchi, s_{i,E'}(E'),\gene}) $ of the monomial $x_i^{ d^{'\mathrm{min}}} \prod_{j \neq i} x_i^{d_i}$ differ for $d^{'\mathrm{min}} \leq \prod_{j\neq i} d_{j} m_{ij} -d_i$. 

Thus we have
\[ F(w) \leq  d^{'\mathrm{min}} w \cdot s_{i,E} (e_i) + \sum_{j \neq i}  d_j w \cdot s_{i, E} (e_j) = - d^{'\mathrm{min}}  w \cdot e_i + \sum_{j \neq i} d_j w \cdot (e_j + m_{ij} e_i)\] \[ = (\sum_{j \neq i} d_j m_{ij}  - d^{' \textrm {min}} ) w\cdot e_i + \sum_{j\neq i } d_j w \cdot e_j  \leq  d_i  w \cdot e_i + \sum_{j\neq i } d_j w \cdot e_j = \sum_{j=1}^r d_j w \cdot e_j \]
since  $\sum_{j \neq i} d_j m_{ij}  - d^{' \textrm {min}} \geq d_i $ by  Lemma \ref{uniquely-determined-step} and $w\cdot e_i \leq 0$.

It follows that
\[\lambda F(v) + (1-\lambda) F(w) \leq \lambda \sum_{j=1}^r d_j v \cdot e_j + (1-\lambda) \sum_{j=1}^r d_j w \cdot e_j = \sum_{j=1}^r d_j  ( \lambda v + (1-\lambda )w ) \cdot e_j =F(\lambda v + (1-\lambda) w)\] giving the desired contradiction and completing the proof. \end{proof}

\subsection{Examples, prior work, and moments}

In this subsection, we list some examples of axiomatic multiple Dirichlet series for which our functional equations imply meromorphicity, focusing on those that appeared already in prior work (or for which a number field analogue appeared), or those potentially useful for moment computations.

The moments of $L$-functions one hopes to study with multiple Dirichlet series are of the form \[\sum_{\substack{ f_{t+1},\dots, f_r \in \mathbb F_q[T]^+\\ \deg f_i = d_i }} \prod_{i=1}^{t} L(s_i, \chi^i_{ f_{t+1},\dots,f_r}) \] where $\chi^i_{f_{t+1},\dots,f_r}$ is a Dirichlet character depending on the variables $f_{T+1},\dots, f_r$, defined using some combination of power residue symbols. One studies these using the series
\[\sum_{\substack{ f_{t+1},\dots, f_r \in \mathbb F_q[T]^+}} \prod_{i=1}^{t} L(s_i, \chi^i_{ f_{t+1},\dots,f_r}) \prod_{i=t+1}^r q^{ - s_i \deg f_i} \] \[ =\sum_{\substack{ f_{t+1},\dots, f_r} \in \mathbb F_q[T]^+}\Bigl( \sum_{ f_{1},\dots, f_t \in \mathbb F_q[T]^+ }\prod_{i=1}^{t} \chi^i_{f_{t+1},\dots, f_r} ( f_i ) q^{ - s_i \deg f_i} \Bigr) \prod_{i=t+1}^r q^{ - s_i \deg f_i}  \] \[= \sum_{\substack{ f_1,\dots, f_r \in \mathbb F_q[t]^+}} \prod_{i=1}^{t} \chi^i_{f_{t+1},\dots, f_r} ( f_i )  \prod_{i=1}^r q^{ - s_i \deg f_i}\]
which can be analyzed by comparison with a multiple Dirichlet series whose coefficients agree with $\prod_{i=1}^{t} \chi^i_{f_{t+1},\dots, f_r} ( f_i )$ for $f_1,\dots, f_r$ relatively prime. These arise from axiomatic multiple Dirichlet series associated to a matrix $M$ such that $M_{ij}=0$ if $i,j \leq t$ or $i,j >t$. Such series have coefficients of the form $\prod_{ i=1}^t \prod_{j=t+1}^r \left( \frac{f_i}{f_j} \right)_\chi^{M_{ij}}$ so that we can take $\chi^i_{f_{t+1},\dots,f_r} (f) = \prod_{j=t+1}^r \left(\frac{f}{f_i}\right)_{\chi}^{M_{ij}}$.

Thus, to find axiomatic multiple Dirichlet series of relevance to moments, we can examine the classification of arithmetic root systems for Dynkin diagrams that form a bipartite graph with all vertices labeled with $-1$, as this ensures that the matrix $M$ satisfies the vanishing condition.

    \begin{example}\label{brubaker-example} We consider the multiple Dirichlet series arising from the arithmetic root system $\mathtt{g}(2,3)$. According to \cite[\S8.3.3]{AndruskiewitschAngiono}, this root system has bases with four different generalized Dynkin diagrams. 
\begin{align}\label{eq:dynkin-g(2,3)}
\begin{aligned}
&\xymatrix{ \overset{-1}{\underset{\ }{\circ}}\ar  @{-}[r]^{\overline{\zeta}}  &
	\overset{\zeta}{\underset{\ }{\circ}} \ar  @{-}[r]^{\zeta}  & \overset{-1}{\underset{\
		}{\circ}}}
& &\xymatrix{ \overset{-1}{\underset{\ }{\circ}}\ar  @{-}[r]^{\zeta}  &
	\overset{-1}{\underset{\ }{\circ}} \ar  @{-}[r]^{\zeta}  & \overset{-1}{\underset{\
		}{\circ}}}
\\
&\xymatrix{ \overset{-1}{\underset{\ }{\circ}}\ar  @{-}[r]^{\overline{\zeta}}  &
	\overset{-\overline{\zeta}}{\underset{\ }{\circ}} \ar  @{-}[r]^{\overline{\zeta}}  & \overset{-1}{\underset{\ }{\circ}}}
&
&\xymatrix@R8pt{ & \overset{\zeta}{\underset{\ }{\circ}} \ar@{-}[rd]^{\overline{\zeta}} &
	\\
	\overset{\zeta}{\underset{\ }{\circ}} \ar@{-}[rr]^{\overline{\zeta}} \ar@{-}[ru]^{\overline{\zeta}} & & \overset{-1}{\underset{\ }{\circ}}}.
\end{aligned}
\end{align}
Here $\zeta$ is a primitive third root of unity. Of greatest interest to us is the top-right diagram, as this one has each node labeled with $-1$. Set $\gene$ to be a primitive sixth root of unity whose square is $\zeta$. The matrix associated to the top-right diagram is \[ M_1= \begin{pmatrix} 0 & 2 & 0 \\ 2 & 0 & 2 \\ 0 & 2 & 0 \end{pmatrix}. \] Taking $q \equiv1 \bmod 12$ and letting  $\chi$ be a character of $\mathbb F_q^\times$ of order $6$, so that $\chi^2$ is a character of order $3$, the coefficients $a(f_1,f_2,f_3;q,\chi, M_1)$ agree for $f_1,f_2,f_3$ relatively prime with \begin{equation}\label{compare-Z1} \left( \frac{f_1}{f_2} \right)_{\chi^2} \left( \frac{f_3}{f_2} \right)_{\chi^2}\end{equation} and hence should be applicable to the moment of cubic Dirichlet $L$-functions \[\sum_{\substack { f_2 \in \mathbb F_q[T]^+\\ \deg f_2 =d}} L\left(s, \left( \frac{}{f_2} \right)_{\chi^2} \right)^2 .\]

A similar moment of cubic Dirichlet $L$-functions, over $\mathbb Q(\zeta_3)$, was considered by Brubaker in \cite{BrubakerThesis}. The series associated to $\mathtt{g}(2,3)$ is a direct function field analogue of a multiple Dirichlet series studied by Brubaker. In fact, Brubaker defines five  multiple Dirichlet series $Z_1,Z_3, Z_4, Z_5, Z_6$, and functional equations relating them, with $Z_1$ of immediate relevance to moments of cubic Dirichlet $L$-functions.

Correspondingly, \cite[\S8.3]{AndruskiewitschAngiono} divides the bases of the arithmetic root system $\mathtt{g}(2,3)$ into five types and describes the reflections in the Weyl groupoid relating them. Given a basis $E'=(e_1,\dots,e_r)$, the bicharacter $\hchi$ can be expressed as a matrix $\mathfrak q$ with $\mathfrak q_{ij} = \hchi(e_i,e_j)$. Such a matrix determines the generalized Dynkin diagram. \cite[\S8.3.3]{AndruskiewitschAngiono} considers four matrices $\mathfrak q^{(i)}$, corresponding to the Dynkin diagrams in \eqref{eq:dynkin-g(2,3)} numbered left to right and top to bottom, so $\mathfrak q^{(1)}$ is the top-left, $\mathfrak q^{(2)}$ is the top-right, $\mathfrak q^{(3)}$ is the bottom-left, and $\mathfrak q^{(4)}$ is the bottom-right.  There is also the matrix $\tau( \mathfrak q^{(1)})$ arising from $q^{(1)}$ by reversing the order of the rows and columns. Equivalently, this arises from the top-left Dynkin diagram, but with the nodes labeled from right to left instead of from left to right. The five types of bases producing these five possible matrices are labeled $a_1,\ldots, a_5$, with $a_1$ corresponding to $\mathfrak q^{(1)}$, $a_2$ corresponding to $\mathfrak q^{(2)}$, $a_3$ corresponding to $\mathfrak q^{(4)}$, $a_4$ corresponding to $\mathfrak q^{(3)}$, and $a_5$ corresponding to $\tau(\mathfrak q^{(1)})$. (Note that there is a misprint in \cite[\S8.3.3]{AndruskiewitschAngiono} which states that $a_3$ corresponds to $\mathfrak q^{(3)}$ and $a_4$ to $\mathfrak q^{(4)}$.)

The relations between these bases are described by the following diagram based on \cite[\S8.3.1]{AndruskiewitschAngiono}.

\begin{center}
	\begin{tabular}{c c c c c c c c c c c c}
		&&
		& $\overset{Z_4}{\underset{a_1}{\bullet}}$
		& \hspace{-5pt}\raisebox{3pt}{$\overset{1}{\rule{40pt}{0.5pt}}$}\hspace{-5pt}
		& $\overset{Z_1}{ \underset{a_2}{\bullet}}$
		& \hspace{-5pt}\raisebox{3pt}{$\overset{2}{\rule{40pt}{0.5pt}}$}\hspace{-5pt}
		& $\overset{Z_3} {\underset{a_3}{\bullet}}$ & &
		\\
		&& & {\scriptsize 3} \vline\hspace{5pt} & & {\scriptsize 3} \vline\hspace{5pt} & & &&
		\\
		&& &  $\overset{Z_6} {\underset{a_4}{\bullet}}$
		&\hspace{-5pt}\raisebox{3pt}{$\overset{1}{\rule{40pt}{0.5pt}}$}\hspace{-5pt}
		& $\overset{Z_5}{\underset{a_5}{\bullet}}$
		& & & &
	\end{tabular}
\end{center}

Here the edge labeled $1$ between the vertices labeled $a_1$ and $a_2$ denotes that for a basis $E'$ of type $a_1$, $s_{1,E'}(E') $ has type $a_2$, and for a basis $E'$ of type $a_2$, $s_{1,E'}(E') $ has type $a_1$. The same is true for the other edges. Vertices without an edge labeled by a particular $j \in \{1,2,3\}$ indicate that, for $E'$ of the corresponding type, $s_{j,E'}(E')$ is a basis of the same type. For the multiple Dirichlet series induced by the arithmetic root system, an edge indicates a functional equation of Dirichlet type while the lack of an edge indicates a functional equation of Kubota type. We have placed labels above the vertices stating the corresponding series from \cite{BrubakerThesis}. For example, the $Z_1$ above $a_2$ indicates that a function field analogue of Brubaker's $Z_1$ arises from bases of type $a_2$.   

To see why particular series correspond to particular diagrams, we first review some notation from \cite[\S2.1]{BrubakerThesis}. Brubaker considers a cubic character $\left(\frac{r}{d} \right)_3$, as well as a quadratic character $\left(\frac{r}{d} \right)_2$ and their product, a sextic character, $\left(\frac{r}{d} \right)_6=\left(\frac{r}{d} \right)_3\left(\frac{r}{d} \right)_2$. From our perspective, the cubic character will correspond to $\left(\frac{r}{d} \right)_{\chi^2}$ and the quadratic character to $\left(\frac{r}{d} \right)_{\chi^3}$ so the sextic character corresponds to their product $\left(\frac{r}{d} \right)_{\chi^5}$. Brubaker also defines Gauss sums $g(m,d)= g_3(m,d)$ and $g_6(m,d)$ by summing these characters against an additive character, so that $g_3(m,d)$ corresponds to $g_{\chi^2}(m,d)$ and $g_6(m,d)$ corresponds to $g_{\chi^5}(m,d)$.  He writes $\chi_d(r)$ as an alternate form for $\left(\frac{r}{d} \right)_3$ but we will avoid this because of the conflict of notation with our $\chi$.

Also, note that Brubaker's series involve characters $\psi_1,\psi_2$ with conductor dividing $9$. These exist to deal with primes lying above $3$. In the context of $\mathbb F_q[T]$ with $q \equiv 1\bmod 12$, those primes do not exist, so the characters $\psi_1,\psi_2$ are not necessary, and thus we simply ignore them when they appear in Brubaker's formulas.

The series $Z_1(s_1,s_2,w)$ which has the heuristic form \cite[\S1.3]{BrubakerThesis}  \[ \sum_{d,m,n} \frac{ \left(\frac{mn}{d} \right)_3  }{ \mathbb N d^w \mathbb Nm^{s_1} \mathbb N n^{s_2} }\]   which matches \eqref{compare-Z1} after sending $f_1$ to $m$, $f_3$ to $n$, and $f_2$ to $d$. As we already saw, \eqref{compare-Z1} arises from the top-right diagram of \eqref{eq:dynkin-g(2,3)}, i.e. from vertices of type $a_2$.

The series $Z_6(s_1,s_2,w)$ has the heuristic form  \[ \sum_{d,m,n} \frac{\overline{G_6(1,d)}  \overline{ \left(\frac{mn}{d} \right)_3 }  }{ \mathbb N d^w \mathbb Nm^{s_1} \mathbb N n^{s_2} }\] where the numerator $\overline{G_6(1,d)}  \overline{  \left(\frac{mn}{d} \right)_3 }$  corresponds to $\overline{ g_{\chi^5} (1, f_2) } \overline{ \left(\frac{f_1 f_3 }{f_2} \right)_{\chi^2}} $. By Lemma \ref{GaussSumLifting}, $\overline{ g_{\chi^5} (1, f_2) }$ is proportional to  $ \left(\frac{f_2'}{f_2}\right)_{\chi^{5+3}}= \left(\frac{f_2'}{f_2}\right)_{\chi^{2}}$ and its complex conjugate to  $\left(\frac{f_2'}{f_2}\right)_{\chi^{-2}}= \left(\frac{f_2'}{f_2}\right)_{\chi^{4}}$.  Furthermore  $\overline{ \left(\frac{f_1 f_3 }{f_2} \right)_{\chi^2}}=  \left(\frac{f_1 f_3 }{f_2} \right)_{\chi^4}$.
Thus $Z_6$ corresponds to a matrix \[ M_6 = \begin{pmatrix} 0 & 4 & 0 \\ 4 & 4 & 4 \\ 0 & 4 & 0 \end{pmatrix}\] where the off-diagonal $4$s arise from $\left(\frac{f_1 f_3 }{f_2} \right)_{\chi^4}$ and the diagonal $4$ from $\left(\frac{f_2'}{f_2}\right)_{\chi^{4}}$. This matches the bottom-left diagram of \eqref{eq:dynkin-g(2,3)} since $\overline{\zeta}=\gene^4$, i.e. matches bases of type $a_4$.

The series $Z_3(s_1,s_2,w)$ has the heuristic form \[ \sum_{d,m,n} \frac{G_3(d,mn) }{ \mathbb N d^w \mathbb Nm^{s_1} \mathbb N n^{s_2} }.\] The numerator $G_3(d,mn)$ matches in the function field setting $g_{\chi^2} (f_2,f_1f_3)$ which by Lemma \ref{GaussSumLifting} is proportional to  \[\left( \frac{ (f_1f_3)'}{f_1f_3} \right)_{\chi^{2+3}}  \left( \frac{ f_2}{ f_1 f_3} \right)_{\chi^2}^{-1} =\left( \frac{ (f_1f_3)'}{f_1f_3} \right)_{\chi^{5}}  \left( \frac{ f_2}{ f_1 f_3} \right)_{\chi^4} \] 
\[=\left( \frac{ f_1' f_3 + f_1 f_3' }{f_1} \right)_{\chi^{5}} \left( \frac{ f_1' f_3 + f_1 f_3' }{f_3 } \right)_{\chi^{5}} \left( \frac{ f_2}{ f_1} \right)_{\chi^4} \left( \frac{ f_2}{  f_3} \right)_{\chi^4} \] 
\[= \left(\frac{f_1'}{f_1} \right)_{\chi^5}  \left(\frac{f_3}{f_1} \right)_{\chi^5}  \left(\frac{f_1}{f_3} \right)_{\chi^5}  \left(\frac{f_3'}{f_3} \right)_{\chi^5}  \left( \frac{ f_2}{ f_1} \right)_{\chi^4} \left( \frac{ f_2}{  f_3} \right)_{\chi^4}  \]
\[ = \left(\frac{f_1'}{f_1} \right)_{\chi^5}  \left(\frac{f_1}{f_3} \right)_{\chi^{10}}  \left(\frac{f_3'}{f_3} \right)_{\chi^5}  \left( \frac{ f_1}{ f_2} \right)_{\chi^4} \left( \frac{ f_2}{  f_3} \right)_{\chi^4} \] since $q \equiv 1\bmod 12$. This corresponds to the matrix 
\[ M_3 =\begin{pmatrix} 5 & 4 & 4 \\ 4 & 0 & 4 \\ 4 & 4 & 5 \end{pmatrix}\] since $\chi^{10}=\chi^4$. This matches the bottom-right diagram in \eqref{eq:dynkin-g(2,3)} since $\gene^4 = \overline{\zeta}$ and $-\gene^5=\zeta$. 

The series $Z_4$ \cite[(3.4)]{BrubakerThesis} has the heuristic form 
\[ \sum_{d,m,n} \frac{G_3(mn^2, d) }{ \mathbb N d^w \mathbb Nm^{s_1} \mathbb N n^{s_2} }.\] The numerator $G_3( mn^2,d)$ matches in the function field setting $g_{\chi^2} ( f_1 f_3^2, f_2)$ which by Lemma \ref{GaussSumLifting} is proportional to \[ \left( \frac{f_2'}{f_2}\right)_{\chi^{2+3}} \left( \frac{ f_1 f_3^2}{f_2} \right)_{\chi^2}^{-1} = \left( \frac{f_2'}{f_2}\right)_{\chi^{5}} \left( \frac{ f_1 }{f_2} \right)_{\chi^4}\left( \frac{ f_3 }{f_2} \right)_{\chi^2} .\]
This corresponds to the matrix \[ M_4 =\begin{pmatrix} 0 & 4 & 0 \\ 4 & 5 & 2 \\ 0 & 2 & 0 \end{pmatrix}\] which matches the top-left diagram in \eqref{eq:dynkin-G-super} since $\gene^4=\overline{\zeta}, \gene^2=\zeta$, and $-\gene^5= \zeta$. The series $Z_5$ is identical with $n$ and $m$ switched, which corresponds to switching $f_1$ and $f_3$ and thus reversing the order of the rows and columns.

\cite[\S8.3.1]{AndruskiewitschAngiono} indicates the presence of ten functional equations relating the function field analogues of $Z_1,Z_3,Z_4,Z_5,Z_6$, five of Dirichlet type and five of Kubota type. Brubaker considers most of these functional equations. In~\cite[\S1.4]{BrubakerThesis} he considers four particularly important functional equations $A,B,C,D$ relating $Z_1, Z_3, Z_6$. In our setting, $B$ corresponds to the functional equation of Kubota type in the variable $f_2$ relating $a (f_1,f_2,f_3; q, \chi, M_6)$ to itself and $C$ corresponds to the functional equation of Dirichlet type in the variable $f_2$ relating $a (f_1,f_2,f_3; q, \chi, M_1)$ to $a (f_1,f_2,f_3; q, \chi, M_3)$ (i.e. the edge between $a_2$ and $a_3$ in the graph). The others are a bit more complicated: $A$ corresponds to the composition of the functional equation of Dirichlet type in the variable $f_1$ with the functional equation of Dirichlet type in the variable $f_3$ which together relate $a (f_1,f_2,f_3; q, \chi, M_1)$ to $a (f_1,f_2,f_3; q, \chi, M_6)$ (i.e. either of the two paths from $a_2$ to $a_4$ by edges labeled $1$ and $3$). Finally, $D$ corresponds to the composition of the functional equation of Kubota type in the variable $f_1$ with the functional equation of Kubota type in the variable $f_3$ which each relate $a (f_1,f_2,f_3; q, \chi, M_6)$ to itself. 

Other functional equations appear elsewhere. \cite[(3.4),(3.5),(3.6)]{BrubakerThesis} relate (by definition) the series $Z_4, Z_5, Z_6$ to $Z_1$. The equations \cite[(3.4) and (3.5)]{BrubakerThesis} relating $Z_4$ and $Z_5$ respectively to $Z_1$ correspond to functional equations of Dirichlet type in $f_1$ and $f_3$ respectively, while combining \cite[(3.6)]{BrubakerThesis} relating $Z_6$ to $Z_1$ with equations \cite[(3.4)]{BrubakerThesis} relating $Z_1$ and $Z_4$ gives a relation between $Z_6$ and $Z_4$ corresponding to the Dirichlet functional equation in $f_3$. A symmetrical construction relates $Z_6$ and $Z_5$ via a Dirichlet functional equation in $f_1$. Combined with $C$, this gives all the expected functional equations of Dirichlet type.

\cite[Proposition 6.2]{BrubakerThesis} gives a functional equation for $Z_4$ in the $w$ variable that corresponds to a functional equation of Dirichlet type in $f_2$. \cite[\S6.3]{BrubakerThesis} gives the analogue for $Z_5$. \cite[Proposition 6.9]{BrubakerThesis} gives a functional equation for $Z_6$ in the $w$ variable, corresponding to the final functional equation of Kubota type in the variable $f_2$. The remaining functional equations of Kubota type are in the variables $f_1$ and $f_3$ and apply to the series corresponding to $M_3$, i.e. these should give functional equations of $Z_3$ in the $s_1$ and $s_2$ variables whose composition is $D$. In \cite[\S6.5.1]{BrubakerThesis}, Brubaker makes progress towards constructing these functional equations but does not fully derive them, instead obtaining only a complicated formula \cite[(6.20)]{BrubakerThesis} for $Z_3$ which is sufficient to prove the desired meromorphic continuation.  Together, these give all the functional equations that appear in \cite{BrubakerThesis}, so all these functional equations may be predicted from the arithmetic root system.  \end{example}

 \begin{example}\label{wgmds} One can check that the multiple Dirichlet series arising from the arithmetic root systems of Cartan type $\mathtt A_\theta, \mathtt B_\theta, \mathtt C_\theta, \mathtt D_\theta, \mathtt E_\theta, \mathtt F_4,$ and $ \mathtt G_2$ are the (untwisted) Weyl group multiple Dirichlet series. Weyl group multiple Dirichlet series over function fields were defined for arbitrary root systems by Holley Friedlander in \cite{HFriedlander23}, after prior work on specific root systems and in the number field case. It is possible to check that Friedlander's series, in the untwisted case, agree with the multiple Dirichlet series we have defined after a simple change of variables. 

For $\hchi, E$ arising from an arithmetic root system of Cartan type (i.e. arising from a root system and parameter $v$ via the construction of Example \ref{cartan-example}), we have an identity of series in variables $s_1,\dots,s_r$
\begin{equation}\label{comparison-to-friedlander} Z^* (\vec{s}; \vec{1} ) = \sum_{f_1,\dots, f_r \in \mathbb F_q[T]^+ } a( f_1,\dots, f_r; q, \chi, M^{\hchi, E})  \prod_{i=1}^r ( g_{ \chi^{M_{ii}} \xi} q^{-s_i})^{ \deg f_i} \end{equation}
where $Z^*(\vec{s}; \vec{1} )$ are the normalized untwisted Weyl group multiple Dirichlet series defined in \cite[p. 347] {HFriedlander23}. Here $\vec{1}$ is used to represent a vector of $r$ ones, and we set the twisting parameter $\vec{m}$ of \cite{HFriedlander23}  to $\vec{1}$ to get the untwisted series. 

More precisely, the series $Z^*(\vec{s}; \vec{1})$ depends on some data, which we fix as follows: Take $n$ to be the order of $\mathfrak q=v^2$, which is always a divisor of our $n$.  If $\mathfrak q = \gene^d$ then the embedding $\epsilon\colon \mu_n(\mathbb F_q)\to \mathbb C^\times$ of \cite[\S2]{HFriedlander23} needs to be chosen so that the character $\chi: \mathbb F_q^\times \to \mu_n(\mathbb F_q)^\times \to \mathbb C^\times$ of \cite[\S2]{HFriedlander23} agrees with $\chi^e$. We sketch the computations needed to verify this below.

One can also relate the coefficients $H(\vec{f};\vec{1} )$ defined in \cite{HFriedlander23} to our $ a( \vec{f} ; q, \chi, M^{\hchi, E})$, but this relation is a little complicated as $H(\vec{f} ;\vec{1})$ are not the coefficients of the normalized series $Z^* (\vec{s}; \vec{1})$ but instead another series that differs from it by some zeta factors.  Doing this involves checking that the twisted multiplicativity relation \cite[(16)]{HFriedlander23} agrees with our twisted multiplicativity axiom, and using the local functional equation \cite[(15)]{HFriedlander23} where the argument we sketch below uses the global functional equation.

These conventions make the $n$th power residue symbol $\left( \frac{f}{g} \right)$ of \cite{HFriedlander23} agree with our $\left(\frac{f}{g}\right)_{\chi^e}$. Note that \[ \gene^{M_{ii}} = - \hchi(e_i,e_i) = -\mathfrak q^{\frac{\langle \alpha_i,\alpha_i \rangle }{2}} = \gene^{ d \frac{\langle \alpha_i,\alpha_i \rangle}{2} + \frac{n}{2}}\] so that \[\chi^{M_{ii}} \xi = (\chi^e)^{ \frac{ \langle \alpha_i,\alpha_i \rangle}{2}} =\epsilon^{\frac{ \langle \alpha_i,\alpha_i \rangle}{2}} =\epsilon^{||\alpha_i||^2 }\]  where one adopts as in  \cite{HFriedlander23} the convention that short roots have length $1$ so that $||\alpha_i||^2=\frac{ \langle \alpha_i,\alpha_i\rangle}{2}$. The quantity $n(\alpha_i)$ of \cite{HFriedlander23} matches our $n_i$.

For integers $I_1,\dots I_r$ with $0 \leq I_i < n_i$ for all $i$, \cite{HFriedlander23} defines a series $Z^* ( s_1,\dots,s_r;1,\dots,1, I_1,\dots,I_r)$ by taking all terms in $Z^* ( s_1,\dots,s_r;1,\dots,1)$ with power of $q^{-s_i}$ congruent to $I_i$ mod $n_i$. Thus \eqref{comparison-to-friedlander} is equivalent to
\begin{equation}\label{comparison-to-friedlander-2}  Z^* ( \vec{s} ;\vec{1} , \vec{I} )= \sum_{ \substack{ k_1,\dots, k_r \in \{0,\dots,n-1\} \\ k_i \equiv I_i \bmod n_i}} Z ( g_{ \chi^{M_{11}} \xi} q^{-s_1}, \dots, g_{ \chi^{M_{rr}} \xi} q^{-s_r}; \vec{k} ) .\end{equation}   
To check \eqref{comparison-to-friedlander-2}, the key observation is that the functional equation \cite[(20)]{HFriedlander23} is equivalent to the functional equation using our scattering matrix:
\begin{equation}\label{comparison-vfe} (Z^* ( \vec{s} ;\vec{1} ,\vec{I} ))_{I_i} = ( \Gamma_{I_i, I_i'}( g_{\chi^{M_ii}\xi} q^{-s_i}, J_i (1,\dots,1,\vec{I} )))_{I_i, I_i'}  (Z^*( \sigma_i \vec{s}; \vec{1}, I_1,\dots, I_{i-1}, I_i' , I_{i+1},\dots, I_r))_{I_{i'} }\end{equation}
where we adopt the following notation from \cite{HFriedlander23}: $c(j,i)$ is the $j,i$ coefficient of the Cartan matrix of the root system, $ J_i (1,\dots,1,\vec{I} )=-\sum_{j \neq i} c(j,i) I_j$, and $(\sigma_i \vec{s})_j =s_j - c(j,i) s_i-1$. 

We can check \eqref{comparison-to-friedlander-2} using \eqref{comparison-vfe}. Indeed, the functional equation with the $n_i \times n_i$ scattering matrix implies a functional equation with an $n\times n$ scattering matrix for the series obtained from  $Z^* ( s_1,\dots,s_r;1,\dots,1, I_1,\dots,I_r)$ by taking all terms where the power of $q^{-s_i}$ is congruent to $k_i$ mod $n$. This exactly agrees with our multivariate functional equation of Kubota type  after the change of variables $x_i \mapsto g_{ \chi^{M_{ii}} \xi} q^{-s_i}$. To check this agreement, one must note that the $\chi^{ v_i(k_i+\ell_i)}(-1)$ factor in our multivariate functional equation may be ignored in this case by the congruence assumption on $q$, that $J_i(1,\dots,1,\vec{I} )$ is congruent to $-K$ mod $n_i$ since $c(j,i)$ agrees with $-n_{ji}$, and that the change of variables interchanges our $\sigma_i$ and the $\sigma_i$ of \cite{HFriedlander23}. The multivariate functional equation this way may be used to apply Proposition \ref{uniquely-determined} to the series obtained from $Z^*$ by taking all terms where the power of $q^{-s_i}$ is congruent to $k_i$ mod $n$ after an inverse change of variables, thereby proving \eqref{comparison-to-friedlander-2}. 

To check \eqref{comparison-vfe}, observe that \cite[(20)]{HFriedlander23} has two terms, with the $P$ terms corresponding to the diagonal entries of the scattering matrix, the $Q$ terms corresponding to the entries with $\ell \equiv 1+K-k \bmod n_i$, and the sum of these two terms corresponding to the special entries with $k\equiv \ell \equiv 1+K-k\bmod n_i$. The formula for $P$ agrees with our formula for the diagonal entry, the formula for $Q$ agrees with our formula for the off-diagonal entry, and the sum of these agrees with our formula for the special entries. The $m_i$ terms in \cite[(20)]{HFriedlander23} may be ignored as we have set these variables to $1$.



 For these series to be related to moments, it is necessary to have $\qv^{ \frac{\langle \alpha_i, \alpha_i \rangle}{2}}=-1$ for all $i$. Since we always have $\langle \alpha_i, \alpha_i \rangle=2$ for short roots $\alpha_i$, this gives $\qv^2=-1$. In the $\mathtt B_\theta,\mathtt C_\theta$ case we have $\langle \alpha_i, \alpha_i \rangle =4$ for long roots $\alpha_i$, so these have $q^{ \frac{\langle \alpha_i, \alpha_i \rangle}{2}}=1\neq -1$ and are not directly helpful for moments, and the $\mathtt G_2$ arithmetic root system reduces to $\mathtt A_2$ if $\qv^2=-1$, so only the root systems $\mathtt A_\theta, \mathtt D_\theta, E_\theta$ are relevant.

 Since all values taken by $\psi$ are powers of $\qv$, we may take $\gene=\qv$ which has order $4$, so $M_{ij}$ is either $0$ or $4$ for all $i,j$, equal to $1$ if $i$ and $j$ are connected by an edge in the Dynkin diagram so that $\hchi(e_i,e_j) \hchi(e_j,e_i) = \qv^{2 \langle \alpha_i,\alpha_j\rangle}= \qv^2$ and $0$ if $i$ and $j$ are not connected by an edge (including if $i=j$). Since $\gene$ has order $2$, we may take $\chi$ to be a quadratic character, and the relevant moments are moments of quadratic $L$-functions. In particular, the $\mathtt A_2, \mathtt A_3,$ and $\mathtt D_4$ Dynkin diagrams correspond to the first, second, and third moments of quadratic $L$-functions respectively. These multiple Dirichlet series, and their applications to moments, have been well studied in prior work \cite{DiaconuGoldfeldHoffstein, Diaconu, DiaconuWhitehead}.\end{example}

\begin{example}\label{super-An} The multiple Dirichlet series arising from the arithmetic root systems $\mathtt A(j\mid j)$ and $\mathtt A(j \mid j-1)$ are relevant to moments of Dirichlet $L$-functions of arbitrary order. In this case, the root system depends on two parameters $\theta$ and $j$ with $j \leq \frac{\theta+1}{2}$, in addition to a root of unity $\qq\neq \pm 1$,  and is written $\mathtt A(j\mid \theta-j)$.

The relevant Dynkin diagrams are defined in \cite[\S5]{AndruskiewitschAngiono} in terms of a subset $\mathbb J$ of $\{1,\dots, \theta\}$ to consist of nodes $1,\dots, \theta$ with the node $i$ labeled by $q_{ii}$ and edges between nodes $i$ and $i+1$ labeled by $\tilde{q}_{i(i+1)}$ where $q_{ii}$ and $\tilde{q}_{i(i+1)}$ satisfy certain rules. Specifically, we have $q_{ii}=-1$ and $\tilde{q}_{(i-1)i} = \tilde{q}_{i(i+1)}^{-1}$ if $i\in \mathbb J$ and $\tilde{q}_{(i-1)i } = \tilde{q}_{ii}^{-1}= \tilde{q}_{i(i+1)}$ if $i\notin\mathbb J$. These rules fix all $q_{ii}$ and $\tilde{q}_{i(i+1)}$ by backwards induction once we add the initial condition  $\tilde{q}_{(\theta-1)\theta}= \qq$ if $\theta\in \mathbb J$ and $q_{\theta\theta}=\tilde{q}_{(\theta-1)\theta}^{-1} = \qq$ if $\theta\notin \mathbb J$.

The set of Dynkin diagrams associated to $\mathtt A(j\mid \theta+1-j)$ are those arising from $\mathbb J  \subseteq \{1,\dots, \theta\}$ such that, writing $\mathbb J = \{i_1,\dots, i_k\}$ with $i_1< \dots < i_k$, we have $ \left| \sum_{l=1}^k (-1)^l i_l\right|$ equal to either $j$ or $\theta+1-j$.

In particular, the rules ensure that all edges are labeled by either $\qq$ or $\qq^{-1}$ and a node $i$ is labeled by $-1$ if $i\in \mathbb J$ and either $\qq$ or $\qq^{-1}$ if $i\notin \mathbb J$.  This can be used to verify that each root satisfies condition (1) or (2) of \ref{translation-key-step}. For moments, we are interested in the case that $\mathbb J = \{1,\dots,\theta\}$ in which case \[  \sum_{l=1}^k (-1)^l i_l = \sum_{l=1}^{\theta} (-1)^l l = (-1)^\theta \left\lceil \frac{\theta}{2} \right\rceil \] which occurs as one of the diagrams associated to $\mathtt A(j\mid \theta+1-j)$ if and only if $j = \lceil \frac{\theta}{2} \rceil $.

In this case, the edge labels $\tilde{q}_{(i-1) i}$ alternate between the values $\qq$ and $\qq^{-1}$ and the vertex labels are all $-1$. If we set $\gene=\qq$ then the matrix $M$ has the form \[ \begin{pmatrix} 0 & 1 \\ 1 & 0 \end{pmatrix} \textrm{if } \theta=2\]\[ \begin{pmatrix} 0 & n-1 & 0  \\ n-1 & 0 & 1 \\ 0 & 1 & 0  \end{pmatrix} \textrm{if } \theta=3\]
\[ \begin{pmatrix} 0 & 1 & 0 & 0 \\ 1 & 0 & n-1 & 0 \\ 0 & n-1 & 0 & 1 \\ 0 & 0 & 1 & 0  \end{pmatrix} \textrm{if } \theta=4\]
and so on. (If $\qq$ has odd order, we can set $\gene$ to a square root of $\qq$ and double the matrix entries).

For $\theta \leq 2$ the moments of $L$-functions to which the axiomatic multiple Dirichlet series with these values of $M$ relate are standard, but they become stranger as $\theta$ grows. For $f_1,\dots, f_\theta$ relatively prime we have
\[a(f_1,\dots,f_\theta; q, \chi, M) = \prod_{i=1}^{\theta-1} \left( \frac{f_i}{f_{i+1}} \right)_{\chi}^{ (-1)^{\theta-1-i}}\]
which up to a possible sign is
\[ \prod_{i=1}^{\theta-1} \left( \frac{f_{ i+ \frac{ 1 + (-1)^i}{2}}}{f_{i+\frac{ 1 - (-1)^i}{2}}} \right)_{\chi}^{ (-1)^{\theta-1-i}}\] where we have swapped the two polynomials appearing in the power residue symbol whenever $i$ is even to ensure that the one with even index always appears on the bottom. These coefficients occur when calculating the moment of $L$-functions
\[ \sum_{\substack{f_2,f_4,\dots, f_{2\lfloor \frac{\theta}{2} \rfloor} \in \mathbb F_q[t]^+\\ \deg f_i =d_i}} \prod_{i \in \{1,3,\dots,2\lfloor \frac{\theta-1}{2}\rfloor  +1\}}  L \left( s_i, \left( \frac{*}{f_{i-1} }\right)_{\chi^{(-1)^{\theta-i}}} \left( \frac{*}{f_{i+1}}\right)_{\chi^{(-1)^{\theta-1-i}}}\right) \] where we adopt the convention that $f_0 = f_{ 2\lfloor \frac{\theta}{2} \rfloor+2}=1$ so these terms can be ignored when they appear.  

For $\theta=2$ this is the first moment
\[ \sum_{\substack{f_2\in \mathbb F_q[t]^+\\ \deg f_2= d_2}} L \left(s_1, \left( \frac{*}{f_2} \right)_{\chi} \right).\]
For $\theta=3$ this is the second moment
\[ \sum_{\substack{f_2\in \mathbb F_q[t]^+\\ \deg f_2= d_2}} L \left(s_1, \left( \frac{*}{f_2} \right)_{\chi^{-1} } \right)L \left(s_3, \left( \frac{*}{f_2} \right)_{\chi}\right)\] which if $s_1=s_3$ specializes to the second moment of the absolute value 
\begin{equation}\label{second-absolute-moment} \sum_{\substack{f_2\in \mathbb F_q[t]^+\\ \deg f_2= d_2}} \left| L \left(s_1, \left( \frac{*}{f_2} \right)_{\chi^{-1} } \right)\right|^2.\end{equation}
For $\theta=4$ this is the second moment
\[ \sum_{\substack{f_2,f_4\in \mathbb F_q[t]^+\\ \deg f_2= d_2 \\ \deg f_4=d_4 }} L \left(s_1, \left( \frac{*}{f_2} \right)_{\chi } \right)L \left(s_3, \left( \frac{*}{f_2} \right)_{\chi^{-1}}  \left( \frac{*}{f_4} \right)_{\chi} \right)\] and for $\theta=5$ this is the third moment
\[ \sum_{\substack{f_2,f_4\in \mathbb F_q[t]^+\\ \deg f_2= d_2 \\ \deg f_4=d_4 }} L \left(s_1, \left( \frac{*}{f_2} \right)_{\chi^{-1} } \right)L \left(s_3, \left( \frac{*}{f_2} \right)_{\chi}  \left( \frac{*}{f_4} \right)_{\chi^{-1} } \right) L \left( s_5, \left( \frac{*}{f_4} \right)_{\chi}\right).\]

The special case $\theta=2$, i.e. $\mathtt A(1\mid 1)$, is the matrix $M$ discussed in the Introduction. Its associated axiomatic multiple Dirichlet series was already observed in \cite[Proposition 4.1]{s-amds} to agree with a multiple Dirichlet series defined by Chinta and Mohler~\cite{ChintaMohler}. The series of \cite{ChintaMohler} was itself a function field analogue of a series defined by Friedberg, Hoffstein, and Lieman~\cite{FriedbergHoffsteinLieman} to study the first moments of Dirichlet $L$-functions.

A number field analogue of the moment \eqref{second-absolute-moment} was studied by Diaconu in \cite{Diaconu2004} using a multiple Dirichlet series. The paper concerns a two-variable Dirichlet series, but briefly mentions \cite[Remark on p. 677]{Diaconu2004} that this series is a specialization of a three-variable Dirichlet series. It is reasonable to guess that this three-variable series is a number field analogue of the axiomatic multiple Dirichlet series arising from the root system $\mathtt A(2\mid 1)$ (i.e. the case $\theta=3$), satisfying corresponding functional equations, similar to the calculations in Example \ref{brubaker-example} for the Dirichlet series studied in \cite{BrubakerThesis}. However, since the functional equations of the three-variable series are not described in \cite{Diaconu2004}, it would not be possible to verify this without deriving these functional equations ourselves, which we will not attempt to do here. \end{example}

The next three examples do not seem to have appeared in the literature before:

\begin{example} \label{new-example-1} We now consider the multiple Dirichlet series arising from the arithmetic root system $\mathtt{g}(4,3)$. According to \cite[\S8.5.3]{AndruskiewitschAngiono} this root system has ten Dynkin diagrams, of which one is

\begin{align}\label{eq:dynkin-g(4,3)}
\begin{aligned}
 \xymatrix{ \overset{-1}{\underset{\ }{\circ}}\ar  @{-}[r]^{\overline{\zeta}}  &
	\overset{-1}{\underset{\ }{\circ}} \ar  @{-}[r]^{\zeta}  & \overset{-1}{\underset{\
		}{\circ}}
	\ar  @{-}[r]^{\zeta}  & \overset{-1}{\underset{\ }{\circ}}}
\end{aligned}
\end{align}
where $\zeta$ is a primitive third root of unity. If we let $\gene$ be a primitive sixth root of unity with $\gene^2=\zeta$, then this Dynkin diagram gives a matrix $M$ of the form
\[ M = \begin{pmatrix} 0 & -2 & 0 & 0 \\ -2 & 0  & 2 & 0 \\ 0 & 2 & 0 & 2 \\ 0 & 0 & 2 & 0 \end{pmatrix} .\]
Taking $q \equiv1 \bmod 12$ and letting  $\chi$ be a character of $\mathbb F_q^\times$ of order $6$, so that $\chi^2$ is a character of order $3$, the coefficients $a(f_1,f_2,f_3,f_4;q,\chi, M)$ agree for $f_1,f_2,f_3,f_4$ relatively prime with \[\left( \frac{f_1}{f_2} \right)_{\overline{\chi}^2} \left( \frac{f_3}{f_2} \right)_{\chi^2} \left( \frac{f_3}{f_4} \right)_{\chi^2}\] and hence should be applicable to the moment of cubic Dirichlet $L$-functions \[\sum_{\substack { f_2,f_4 \in \mathbb F_q[T]^+\\ \deg f_2 =d_2 \\ \deg f_4=d_4 }} L\left(s_1, \left( \frac{*}{f_2} \right)_{\bar{\chi}^2} \right) L\left(s_3, \left( \frac{*}{f_2 f_4} \right)_{\chi^2}  \right)  .\]
 Equivalently by reciprocity, the coefficients agree with  \[\left( \frac{f_2}{f_1} \right)_{\overline{\chi}^2} \left( \frac{f_2}{f_3} \right)_{\chi^2} \left( \frac{f_4}{f_3} \right)_{\chi^2}\]and hence should be applicable to the moment of cubic Dirichlet $L$-functions \[\sum_{\substack { f_1,f_3 \in \mathbb F_q[T]^+\\ \deg f_1 =d_1 \\ \deg f_3=d_3 }} L\left(s_2, \left( \frac{*}{f_3 } \right)_{\chi^2}  \right)  L\left(s_4, \left( \frac{*}{f_1^2 f_3} \right)_{\overline{\chi}^2}\right) .\]
\end{example}

\begin{example} \label{new-example-2}
    We consider multiple Dirichlet series arising from the arithmetic root system $\mathtt G(3)$. According to \cite[\S5.6.4]{AndruskiewitschAngiono}, this root system has four generalized Dynkin diagrams, of which one is
    
    \begin{align}\label{eq:dynkin-G-super}
\begin{aligned}
a_2 \longmapsto&\xymatrix{ \overset{-1}{\underset{1}{\circ}} \ar  @{-}[r]^{\qq}  &
\overset{-1}{\underset{2}{\circ}} \ar  @{-}[r]^{\qq^{-3}}  & \overset{\,\, \qq^3}{\underset{3}{\circ}},}
\end{aligned}
\end{align}
Here $\qq$ is any root of unity other than $\pm 1$. In this diagram, two nodes are labeled by $-1$ and one is labeled with $\qq^{3}$. If $\qq$ is a primitive sixth root of unity so that $\qq^3=-1$ then all nodes are labeled with $-1$. In this case, if we set $\gene=\qq$ then we have \[ M= \begin{pmatrix} 0 & 1 & 0 \\ 1 & 0 & 3 \\ 0 & 3 & 0 \end{pmatrix}.\]
Taking $q\equiv1 \bmod 12$ and letting  $\chi$ be a character of $\mathbb F_q^\times$ of order $6$, the coefficients $a(f_1,f_2,f_3;q,\chi, M)$ agree for $f_1,f_2,f_3$ relatively prime with $\left( \frac{f_1}{f_2} \right)_{\chi} \left( \frac{f_3}{f_2} \right)_{\chi}^3$ and hence should be applicable to the moment \[\sum_{\substack { f_2 \in \mathbb F_q[T]^+\\ \deg f_2 =d}} L\left(s_1, \left( \frac{*}{f_2} \right)_{\chi} \right) L\left(s_3, \left( \frac{*}{f_2} \right)_{\chi}^3 \right)  \] which is a joint moment combining $L$-functions of Dirichlet characters of order $2$ and $6$.

\end{example}

\begin{example} \label{new-example-3} The multiple Dirichlet series arising from the arithmetic root systems $\mathtt D(j \mid j)$ and $\mathtt D(j \mid j+1)$ are relevant to moments in the special case where the parameter $\qq$ is a primitive fourth root of unity. In this case, the root system depends on two parameters $\theta$ and $j$ with $j \leq \theta$, in addition to a root of unity $\qq\neq \pm 1$,  and is written $\mathtt D(j\mid \theta-j)$.

The relevant Dynkin diagrams build on those described earlier in Example \ref{super-An}, and are defined in \cite[\S5.3.9 and \S5.3.10]{AndruskiewitschAngiono} in terms of a subset $\mathbb J$ of $\{1,\dots, \theta-1\}$ and a symbol $c$, $\tilde{c}$, or $d$. If $j = \theta/2$ and the symbol is $d$, a line is put over the set $\mathbb J$ for reasons not relevant to us. When the symbol is $c$, the Dynkin diagram consists of the Dynkin diagram on nodes $1,\dots, \theta-1$ associated to the set $\mathbb J$ as defined in Example \ref{super-An} with an additional node $\theta$ labeled by $\qq^2$ and an edge between $\theta-1$ and $\theta$ labeled by $\qq^{-2}$. When the symbol is $\tilde{c}$, the Dynkin diagram is exactly the same but with the roles of $\theta-1$ and $\theta$ switched. (This corresponds to switching two variables in the multiple Dirichlet series and thus is not relevant to understanding which moments can be studied by this arithmetic root system). The Dynkin diagrams with symbol $d$ are described in \cite[(5.25) and (5.26)]{AndruskiewitschAngiono} and are not relevant to us except that it is possible to check in these cases that all roots satisfy the conditions (1) or (2) of Theorem \ref{translation-key-step}.

The set of Dynkin diagrams associated to $\mathtt D(j\mid \theta+1-j)$ are those arising from $\mathbb J  \subseteq \{1,\dots, \theta-1\}$ and a symbol $c, \tilde{c}$, or $d$ such that, writing $\mathbb J = \{i_1,\dots, i_k\}$ with $i_1< \dots < i_k$, if the symbol is $c$ or $\tilde{c}$ then we have  $ \left| \sum_{l=1}^k (-1)^l i_l\right|=j$ and if the symbol is $d$ then we have $ \left| \sum_{l=1}^k (-1)^l i_l\right|=\theta-j$.

 Just as in Example \ref{super-An}, the case relevant to moments will be when  $\mathbb J = \{1,\dots,\theta-1\}$  in which case \[  \sum_{l=1}^k (-1)^l i_l = \sum_{l=1}^{\theta-1} (-1)^l l = (-1)^{\theta-1} \left\lceil \frac{\theta-1 }{2} \right\rceil \] which occurs as one of the diagrams if and only if $j = \lceil \frac{\theta-1}{2} \rceil $.

In this case, the edge labels $\tilde{q}_{(i-1) i}$ alternate between the values $\qq$ and $\qq^{-1}$, with a final edge label of $\qq^{-2}$ and a final node label of $\qq^2$. To be relevant to moments, the last node label must also be $-1$, which occurs when $\qq$ is a primitive $4$th root of unity. If we set $\gene=\qq$ then the matrix $M$ has the form \[ \begin{pmatrix} 0 & 1 & 0 \\ 1 & 0 & 2 \\ 0 & 2 & 0 \end{pmatrix} \textrm{if } \theta=3\]\[ \begin{pmatrix} 0 & 3 & 0 & 0  \\ 3 & 0 & 1 & 0  \\ 0 & 1 & 0 &  2 \\ 0 & 0 & 2 & 0 \end{pmatrix} \textrm{if } \theta=4\]
and so on.

Fixing a character $\chi$ of order $4$, we observe that for $\theta =3$, i.e. the arithmetic root system $\mathtt D(1 \mid 2)$, the $a(f_1,f_2,f_3;q,\chi, M)$ agree for $f_1,f_2,f_3$ relatively prime with $\left( \frac{f_1}{f_2} \right)_{\chi} \left( \frac{f_3}{f_2} \right)_{\chi}^2$ and hence should be applicable to the moment \[\sum_{\substack { f_2 \in \mathbb F_q[T]^+\\ \deg f_2 =d}} L\left(s_3, \left( \frac{*}{f_2} \right)_{\chi} \right) L\left(s_1, \left( \frac{*}{f_2} \right)_{\chi}^2 \right)  \] which is a joint moment combining $L$-functions of Dirichlet characters of order $2$ and $4$. Higher values of $\theta$ will give more unusual moments, though all combining characters of orders $2$ and $4$.
\end{example}

\subsection{Bicharacter and scattering matrix calculations}

In this subsection are some elementary calculations that were used earlier but not proven until now.

\begin{lemma}\label{kubota-bicharacter-identity} Let $\psi\colon \mathbb Z^r \times \mathbb Z^r \to \mu_\infty$ be a bicharacter, $E$ a basis of $\mathbb Z^r$, and $\gene$ a root of unity such that every value taken by $\psi$ is a power of $\gene$. Fix $i \in \{1,\dots, r\}$ and assume that $E$ is admissible at $i$.  If $\psi(e_i,a) \psi(a,e_i)$ is a power of $\psi(e_i,e_i)$ for all $a\in \mathbb Z^r$. Then

\begin{enumerate}
    \item  For all $j$, $m_{ij}$ is the least nonnegative integer value of $m$ such that $\hchi(e_i,e_i)^m \hchi(e_i,e_j) \hchi(e_j,e_i) =1$. In particular, $m_{ij}=n_{ij}(M^{\hchi, E, \gene})$.

    \item We have $M^{\hchi,s_{i,E}(E) , \gene} = M^{\hchi,E, \gene}$.

\end{enumerate}

\end{lemma}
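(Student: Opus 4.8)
The plan is to prove the two claims in order, with (1) feeding directly into (2). For (1), recall that by definition $m_{ij} = \min\{ m \in \mathbb Z^{\geq 0} \mid \hchi(e_i,e_i)^{m+1}=1 \text{ or } \hchi(e_i,e_i)^m \hchi(e_i,e_j) \hchi(e_j,e_i) =1\}$, while the quantity I want it to equal is $m_{ij}' := \min\{ m \in \mathbb Z^{\geq 0} \mid \hchi(e_i,e_i)^m \hchi(e_i,e_j) \hchi(e_j,e_i) =1\}$. Clearly $m_{ij} \leq m_{ij}'$. For the reverse inequality I would use the hypothesis that $\hchi(e_i,e_j)\hchi(e_j,e_i)$ is a power of $\hchi(e_i,e_i)$, say $\hchi(e_i,e_j)\hchi(e_j,e_i) = \hchi(e_i,e_i)^{-c}$ for some integer $c$; then the equation $\hchi(e_i,e_i)^m \hchi(e_i,e_j) \hchi(e_j,e_i)=1$ becomes $\hchi(e_i,e_i)^{m-c}=1$, which, since $\hchi(e_i,e_i)$ has finite order $d_i$ (and $d_i>1$ by admissibility at $i$), is solvable with a nonnegative solution $m\leq d_i - 1$. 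In particular the "or" clause $\hchi(e_i,e_i)^{m+1}=1$ cannot be triggered strictly before $m_{ij}'$ is reached: if $\hchi(e_i,e_i)^{m+1}=1$ for some $m<m_{ij}'$, then $m+1 \geq d_i$, but then $m \geq d_i - 1 \geq m_{ij}'$ by the bound just obtained, a contradiction. Hence the two minima agree, $m_{ij}=m_{ij}'$. Finally, unwinding the definition of $M^{\hchi,E,\gene}$: $M^{\hchi,E,\gene}_{ii}$ is the least $m\geq 0$ with $\gene^m = -\hchi(e_i,e_i)$, so $\hchi(e_i,e_i) = -\gene^{M_{ii}} = \gene^{M_{ii}+n/2}$; and $M^{\hchi,E,\gene}_{ij}$ is the least $m\geq 0$ with $\gene^m = \hchi(e_i,e_j)\hchi(e_j,e_i)$. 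Thus $m_{ij}'$ is the least $m\geq 0$ with $\gene^{m(M_{ii}+n/2)+M_{ij}}=1$, i.e.\ the least $m$ with $m(M_{ii}+n/2) \equiv -M_{ij} \bmod n$, which is exactly the defining property of $n_{ij}(M^{\hchi,E,\gene})$ from equation \eqref{nij} — note that admissibility at $i$ forces $\hchi(e_i,e_i)\neq 1$, i.e.\ $M_{ii}+n/2\not\equiv 0$, so $n_i>1$ and the range constraint $0\leq n_{ij}<n_i$ is consistent with taking the minimum. Hence $m_{ij} = n_{ij}(M^{\hchi,E,\gene})$.

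For (2), write $E' = s_{i,E}(E) = (e_1',\dots,e_r')$ with $e_i' = -e_i$ and $e_j' = e_j + m_{ij} e_i$ for $j\neq i$. I must show $M^{\hchi,E',\gene} = M^{\hchi,E,\gene}$ entry by entry, which reduces to showing $\hchi(e_j',e_j') = \hchi(e_j,e_j)$ for all $j$ and $\hchi(e_j',e_k')\hchi(e_k',e_j') = \hchi(e_j,e_k)\hchi(e_k,e_j)$ for all $j\neq k$, since the matrix entries are determined by these symmetrized bicharacter values. The $i$-th diagonal entry is immediate since $\hchi(-e_i,-e_i) = \hchi(e_i,e_i)$. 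For $j\neq i$, expand $\hchi(e_j',e_j') = \hchi(e_j + m_{ij}e_i, e_j + m_{ij}e_i)$ using bilinearity: this equals $\hchi(e_j,e_j)\,\bigl(\hchi(e_i,e_j)\hchi(e_j,e_i)\bigr)^{m_{ij}}\,\hchi(e_i,e_i)^{m_{ij}^2}$. By part (1), $\hchi(e_i,e_i)^{m_{ij}}\hchi(e_i,e_j)\hchi(e_j,e_i) = 1$, so $\bigl(\hchi(e_i,e_j)\hchi(e_j,e_i)\bigr)^{m_{ij}} = \hchi(e_i,e_i)^{-m_{ij}^2}$, and the product collapses to $\hchi(e_j,e_j)$, as desired. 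For the off-diagonal symmetrized values with $i\notin\{j,k\}$, a completely parallel bilinear expansion of $\hchi(e_j + m_{ij}e_i,\, e_k + m_{ik}e_i)$ together with its transpose, repeatedly invoking the identity $\hchi(e_i,e_i)^{m_{ij}}\hchi(e_i,e_j)\hchi(e_j,e_i)=1$ (and the same with $k$), will cancel all the correction factors; the case where one of $j,k$ equals $i$ uses $e_i' = -e_i$ and the same identity. These are routine but slightly lengthy symmetric-bicharacter manipulations that I would carry out in the final write-up.

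The main obstacle is part (1): the subtlety is ruling out the "or $\hchi(e_i,e_i)^{m+1}=1$" clause in the definition of $m_{ij}$ from being the operative one and producing a smaller value than $m_{ij}'$. This is precisely where the hypothesis "$\hchi(e_i,a)\hchi(a,e_i)$ is a power of $\hchi(e_i,e_i)$" is essential — it guarantees that the equation $\hchi(e_i,e_i)^m\hchi(e_i,e_j)\hchi(e_j,e_i)=1$ has a solution within a single period of the cyclic group generated by $\hchi(e_i,e_i)$, so the period-termination clause never fires early. Everything after that — the translation to $n_{ij}$ via the explicit formula for $M^{\hchi,E,\gene}$ in part (1), and the telescoping bilinear cancellations in part (2) — is bookkeeping. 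I expect the whole lemma to be short, perhaps half a page, once the inequality $m_{ij}\geq m_{ij}'$ is nailed down.
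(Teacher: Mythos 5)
Your proposal is correct and follows essentially the same route as the paper's proof: part (1) is handled by bounding the least solution of $\hchi(e_i,e_i)^m\hchi(e_i,e_j)\hchi(e_j,e_i)=1$ within one period of $\hchi(e_i,e_i)$ (using the power hypothesis) so that the ``or'' clause never fires first, and part (2) is the same bilinear expansion collapsing via the identity $\hchi(e_i,e_i)^{m_{ij}}\hchi(e_i,e_j)\hchi(e_j,e_i)=1$. The off-diagonal computations you defer are indeed routine and work exactly as you indicate.
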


\begin{proof} To prove part (1), note that the least value of $m$ such that $\hchi(e_i,e_i)^m \hchi(e_i,e_j) \hchi(e_j,e_i) =1$ is necessarily between $0$ and the order of $\hchi(e_i,e_i)$ minus $1$, and thus is less than or equal to the least value of $m$ such that $\hchi(e_i,e_i)^{m+1}=1$, and hence is equal to $m_{ij}$. By definition we have $ \hchi(e_i,e_i) = - \gene^{M_{ii}} = \gene^{ M_{ii}+\frac{n}{2}} $ and $\hchi(e_i,e_j) = \gene^{M_{ij}}$ so the least value of $m$ such that $\hchi(e_i,e_i)^m \hchi(e_i,e_j) \hchi(e_j,e_i) =1$  is the least value of $m$ such that $m ( M_{ii} + \frac{n}{2}) + M_{ij} \equiv 0 \bmod n$ which is $n_{ij}$ by definition (as long as such a value exists, which it does by assumption).

Part (1) implies in particular that we have $\hchi(e_i,e_i)^{m_{ij}} \hchi(e_i,e_j) \hchi(e_j,e_i) =1$. Now observe that
\begin{equation}\label{bi-ii} \hchi( s_{i,E}(e_i), s_{i,E} (e_i)) =\hchi( -e_i,-e_i)=\hchi(e_i,-e_i)^{-1} = \hchi(e_i,e_i) \end{equation} and thus $M_{ii}^{\hchi,s_{i,E}(E) , \gene}=M_{ii}^{\hchi,E, \gene}$.
For $j \neq i$ we have
\[ \hchi( s_{i,E}(e_i), s_{i,E} (e_j))= \hchi(-e_i, e_j + m_{ij}e_i)= \hchi(e_i,e_j+m_{ij} e_i)^{-1} = \hchi(e_i,e_j)^{-1} \hchi(e_i,e_i)^{- m_{ij}} = \hchi(e_j,e_i)\]
and symmetrically
\[ \hchi( s_{i,E}(e_j), s_{i,E} (e_i)) = \hchi(e_i,e_j) \]
so that
\[ \hchi( s_{i,E}(e_i), s_{i,E} (e_j)) \hchi( s_{i,E}(e_j), s_{i,E} (e_i))=  \hchi(e_i,e_j)\hchi(e_j,e_i)\]
and hence $M_{ij}^{\hchi,s_{i,E}(E) , \gene}=M_{ij}^{\hchi,E, \gene}$. Finally for $j,k \neq i$ we have
\[ \hchi( s_{i,E}(e_j), s_{i,E} (e_k))=\hchi(e_j + m_{ij}e_i,e_k + m_{ik}e_i) = \hchi(e_j,e_k) \hchi(e_i,e_k)^{m_{ij} }  \hchi(e_j,e_i)^{m_{ik}} \hchi(e_i,e_i)^{m_{ij}m_{ik}}  \]
which if $j=k$ implies
\[  \hchi( s_{i,E}(e_j), s_{i,E} (e_j)) =\hchi(e_j,e_j) \hchi(e_i,e_j)^{m_{ij} }\hchi(e_j,e_i)^{m_{ij}} \hchi(e_i,e_i)^{m_{ij}^2} = \hchi(e_j,e_j) \]
so that $M_{jj}^{\hchi,s_{i,E}(E) , \gene}=M_{jj}^{\hchi,E, \gene}$ while if $j \neq k$ we have
\[ \hchi( s_{i,E}(e_j), s_{i,E} (e_k))\hchi( s_{i,E}(e_k), s_{i,E} (e_j))\] \[=\hchi(e_j,e_k)  \hchi(e_k,e_j) \hchi(e_i,e_k)^{m_{ij} }  \hchi(e_k,e_i)^{m_{ij} } \hchi(e_j,e_i)^{m_{ik}} \hchi(e_i,e_j)^{m_{ik} }\hchi(e_i,e_i)^{2m_{ij}m_{ik}} = \hchi(e_j,e_k)  \hchi(e_k,e_j)\]
so that $M_{jk}^{\hchi,s_{i,E}(E) , \gene}=M_{kj}^{\hchi,E, \gene}$, completing part (2). \end{proof}

\begin{lemma}\label{dirichlet-bicharacter-identity} Let $\psi\colon \mathbb Z^r \times \mathbb Z^r \to \mu_\infty$ be a bicharacter, $E$ a basis of $\mathbb Z^r$, and $\gene$ a root of unity such that every value taken by $\psi$ is a power of $\gene$. Fix $i \in \{1,\dots, r\}$ and assume that $\psi(e_i,e_i)=-1$. Then

\begin{enumerate}
    \item  For all $j$ we have
    \begin{equation*} m_{ij} = \begin{cases} 0 & \textrm{if }M_{ij}^{\hchi,E,\gene} =0 \\ 1 & \textrm{if }M_{ij}^{\hchi,E,\gene} \neq 0 \end{cases} =e_{ij}.\end{equation*}

    \item We have
    \[  M^{\hchi,s_{i,E}(E) , \gene} \equiv \tau_i ( M^{\hchi,E, \gene}) \bmod n \]
with $\tau_i$ defined as in \S\ref{ss-multivariable-functional-equation}.
    
    
\end{enumerate}\end{lemma}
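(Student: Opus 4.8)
The plan is to verify the two assertions of Lemma~\ref{dirichlet-bicharacter-identity} by the same bicharacter bookkeeping used in the proof of Lemma~\ref{kubota-bicharacter-identity}, now specialized to the case $\hchi(e_i,e_i)=-1$. First I would prove part (1). Since $\hchi(e_i,e_i)=-1$, by definition $m_{ij}$ is the least nonnegative integer $m$ such that $(-1)^{m+1}=1$ or $(-1)^m \hchi(e_i,e_j)\hchi(e_j,e_i)=1$. The condition $(-1)^{m+1}=1$ first holds at $m=1$, so $m_{ij}\in\{0,1\}$. If $\hchi(e_i,e_j)\hchi(e_j,e_i)=1$ then $m=0$ works, giving $m_{ij}=0$; and by the definition of $M^{\hchi,E,\gene}$ this happens exactly when $M^{\hchi,E,\gene}_{ij}=0$ (for $i\neq j$), and it always holds for $i=j$ since $\hchi(e_i,e_i)^2=1$, matching $M^{\hchi,E,\gene}_{ii}=0$. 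Otherwise $m_{ij}=1$, matching $M^{\hchi,E,\gene}_{ij}\neq 0$. In all cases $m_{ij}=e_{ij}$, which is the stated formula.

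For part (2), I would compute $\hchi(s_{i,E}(e_h),s_{i,E}(e_k))\hchi(s_{i,E}(e_k),s_{i,E}(e_h))$ and $-\hchi(s_{i,E}(e_h),s_{i,E}(e_h))$ for all $h,k$, exactly as in Lemma~\ref{kubota-bicharacter-identity}(2), substituting $m_{ij}=e_{ij}$ and $\hchi(e_i,e_i)=-1$. The case $h=k=i$: $\hchi(-e_i,-e_i)=\hchi(e_i,e_i)=-1$, so $M^{\hchi,s_{i,E}(E),\gene}_{ii}=0=(\tau_i(M^{\hchi,E,\gene}))_{ii}$. The case $h=i$, $k\neq i$: as in the earlier lemma, $\hchi(s_{i,E}(e_i),s_{i,E}(e_k))=\hchi(e_i,e_k)^{-1}\hchi(e_i,e_i)^{-m_{ik}}=\hchi(e_k,e_i)\hchi(e_i,e_i)^{-m_{ik}}$; since $\hchi(e_i,e_i)=-1$ this introduces an extra sign $(-1)^{e_{ik}}$ relative to the Kubota case. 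Taking the product with the symmetric term and converting to exponents of $\gene$, one gets $M^{\hchi,s_{i,E}(E),\gene}_{ik}\equiv -M^{\hchi,E,\gene}_{ik}\bmod n$, matching $(\tau_i(M^{\hchi,E,\gene}))_{ik}=-M^{\hchi,E,\gene}_{ik}$ (here the sign $\hchi(e_i,e_i)^{-m_{ik}}$ contributes the $n/2$ that flips $\gene^{M_{ik}}$ to $\gene^{-M_{ik}}$ when $m_{ik}=e_{ik}=1$, and contributes nothing when $e_{ik}=0$, in which case $M_{ik}=0=-M_{ik}$ anyway). The case $h=k=j\neq i$: expanding $\hchi(e_j+e_{ij}e_i,e_j+e_{ij}e_i)$ gives $\hchi(e_j,e_j)\cdot\hchi(e_i,e_j)^{e_{ij}}\hchi(e_j,e_i)^{e_{ij}}\cdot\hchi(e_i,e_i)^{e_{ij}^2}$; the middle factor is $(\hchi(e_i,e_j)\hchi(e_j,e_i))^{e_{ij}}=\gene^{e_{ij}M_{ij}}$ and the last is $(-1)^{e_{ij}}$, so negating and reading off exponents yields $M^{\hchi,s_{i,E}(E),\gene}_{jj}\equiv M^{\hchi,E,\gene}_{jj}+e_{ij}(M^{\hchi,E,\gene}_{ij}+n/2)\bmod n$, which is exactly $(\tau_i(M^{\hchi,E,\gene}))_{jj}$. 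The case $h,k\neq i$ distinct: expanding $\hchi(e_h+e_{ih}e_i,e_k+e_{ik}e_i)\hchi(e_k+e_{ik}e_i,e_h+e_{ih}e_i)$ collects $\hchi(e_h,e_k)\hchi(e_k,e_h)$ times $(\hchi(e_i,e_k)\hchi(e_k,e_i))^{e_{ih}}(\hchi(e_i,e_h)\hchi(e_h,e_i))^{e_{ik}}$ times $\hchi(e_i,e_i)^{2e_{ih}e_{ik}}=1$; the middle factor is $\gene^{e_{ih}M_{ik}+e_{ik}M_{ih}}$, and since whenever $e_{ih}e_{ik}\neq 0$ both $e_{ih}=e_{ik}=1$, this equals $\gene^{e_{ih}e_{ik}(M_{ih}+M_{ik})}$, giving $M^{\hchi,s_{i,E}(E),\gene}_{hk}\equiv M^{\hchi,E,\gene}_{hk}+e_{ih}e_{ik}(M^{\hchi,E,\gene}_{ih}+M^{\hchi,E,\gene}_{ik})\bmod n$, matching the last clause of $\tau_i$.

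I expect the main obstacle to be purely notational rather than conceptual: keeping careful track of the sign $\hchi(e_i,e_i)=-1$ each time it appears raised to a power $e_{ij}$ or $e_{ij}^2$ or $2e_{ih}e_{ik}$, and checking that in every branch the resulting power of $-1$ correctly accounts for the $n/2$ shifts in the definition of $\tau_i$ (which only appear in the diagonal clause) and for the case distinctions implicit in the $e_{ij}$. A small point worth spelling out is that when $e_{ij}=0$ the corresponding entry $M_{ij}^{\hchi,E,\gene}$ vanishes, so the formulas $-M_{ij}$, $M_{jj}+e_{ij}(M_{ij}+n/2)$, and $M_{hk}+e_{ih}e_{ik}(M_{ih}+M_{ik})$ all reduce consistently; I would note this once to avoid repeating it in each case. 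Since everything reduces to the identities $m_{ij}=e_{ij}$ from part (1) and the multiplicativity of $\hchi$, no genuinely new ideas beyond those in Lemma~\ref{kubota-bicharacter-identity} are needed, and the proof is a short computation.
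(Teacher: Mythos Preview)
Your approach is the same as the paper's: direct case-by-case expansion of $\hchi(s_{i,E}(e_h),s_{i,E}(e_k))$ using bilinearity, with $m_{ij}=e_{ij}$ and $\hchi(e_i,e_i)=-1$ substituted throughout. Part~(1) and the cases $h=k=i$, $h=k=j\neq i$, and $h\neq k$ with $h,k\neq i$ are handled correctly and match the paper's computations.

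The one place your write-up is muddled is the case $h=i$, $k\neq i$. The equality $\hchi(e_i,e_k)^{-1}\hchi(e_i,e_i)^{-m_{ik}}=\hchi(e_k,e_i)\hchi(e_i,e_i)^{-m_{ik}}$ that you quote ``as in the earlier lemma'' is false here: it relied on the Kubota identity $\hchi(e_i,e_i)^{m_{ik}}\hchi(e_i,e_k)\hchi(e_k,e_i)=1$, which you do not have when $\hchi(e_i,e_i)=-1$. Likewise the explanation that ``the sign $\hchi(e_i,e_i)^{-m_{ik}}$ contributes the $n/2$ that flips $\gene^{M_{ik}}$ to $\gene^{-M_{ik}}$'' is not right: in the product the two copies of $\hchi(e_i,e_i)^{-m_{ik}}$ cancel, since $\hchi(e_i,e_i)^{-2m_{ik}}=1$. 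The correct computation (which the paper does, splitting into $M_{ik}=0$ and $M_{ik}\neq 0$) is simply
\[
\hchi(s_{i,E}(e_i),s_{i,E}(e_k))\,\hchi(s_{i,E}(e_k),s_{i,E}(e_i))
=\hchi(e_i,e_k)^{-1}\hchi(e_k,e_i)^{-1}\,\hchi(e_i,e_i)^{-2m_{ik}}
=\bigl(\hchi(e_i,e_k)\hchi(e_k,e_i)\bigr)^{-1},
\]
yielding $M^{\hchi,s_{i,E}(E),\gene}_{ik}\equiv -M^{\hchi,E,\gene}_{ik}\bmod n$ with no $n/2$ shift involved. Your conclusion is correct; just replace the garbled intermediate reasoning with this direct product computation.
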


\begin{proof}To check part (1), $\psi(e_i,e_i)^2=1$ ensures that we have $m_{ij}$ equal to either $0$ or $1$ for all $j$, with $m_{ij}=0$ if and only if $\hchi(e_i,e_j)\hchi(e_j,e_i)=1$, i.e. if and only if $M_{ij}^{\hchi,E,\gene} =0$, and $m_{ij}=1$ otherwise. This agrees with the definition of $e_{ij}$.

We now calculate $M^{\hchi,s_{i,E}(E) , \gene}$ to verify part (2). We have $M_{ii}^{\hchi,s_{i,E}(E) , \gene}=M_{ii}^{\hchi,E, \gene}$ by a calculation identical to \eqref{bi-ii}. If $i\neq j$ and $M_{ij}=0$ we have
\[ \hchi( s_{i,E}(e_i), s_{i,E} (e_j))= \hchi(-e_i, e_j )= \hchi(e_i,e_j)^{-1}  \] 
and symmetrically $ \hchi( s_{i,E}(e_j), s_{i,E} (e_i))=\hchi(e_j,e_i)^{-1}$ so
\[ \hchi( s_{i,E}(e_i), s_{i,E} (e_j)) \hchi( s_{i,E}(e_j), s_{i,E} (e_i)) = \hchi(e_i,e_j)^{-1}\hchi(e_i,e_j)^{-1} =1\] and thus $M^{\hchi,s_i(E),\gene}_{ij}=0$.
If $i\neq j$ and $M_{ij}^{\hchi,E,\gene} \neq 0$ then 
\[ \hchi( s_{i,E}(e_i), s_{i,E} (e_j))= \hchi(-e_i, e_j +e_i) = \hchi(e_i,e_i)^{-1} \hchi(e_i,e_j)^{-1} \]
and symmetrically $\hchi( s_{i,E}(e_j), s_{i,E} (e_i))= \hchi(e_i,e_i)^{-1} \hchi(e_j,e_i)^{-1}$ so that
\[ \hchi( s_{i,E}(e_i), s_{i,E} (e_j)) \hchi( s_{i,E}(e_j), s_{i,E} (e_i)) = \hchi(e_i,e_i)^{-2}  \hchi(e_i,e_j)^{-1}\hchi(e_j,e_i)^{-1}=\hchi(e_i,e_j)^{-1}\hchi(e_j,e_i)^{-1}\]
so $M^{\hchi,s_i(E),\gene}_{ij} \equiv - M_{ij}^{\hchi,E,\gene}\bmod n$. For $j \neq i$ we have
\[  \hchi( s_{i,E}(e_j), s_{i,E} (e_j)) =\hchi(e_j,e_j) \hchi(e_i,e_j)^{m_{ij} }\hchi(e_j,e_i)^{m_{ij}} \hchi(e_i,e_i)^{m_{ij}^2}\]
and this gives $M^{\hchi,s_i(E),\gene}_{jj} = M_{jj}^{\hchi,E,\gene}$ if $M_{ij}^{\hchi,E,\gene}=0$ so that $m_{ij}=0$ or  $$M^{\hchi,s_i(E),\gene}_{jj}  \equiv M_{jj}^{\hchi,E,\gene} + M_{ij}^{\hchi,E,\gene}+ \frac{n}{2} \bmod n $$ if $M_{ij}^{\hchi,E,\gene}\neq 0$ so that $m_{ij}=1$.

For $i \neq j \neq k$ we have 
\[ \hchi( s_{i,E}(e_j), s_{i,E} (e_k))\hchi( s_{i,E}(e_k), s_{i,E} (e_j))\] \[=\hchi(e_j,e_k)  \hchi(e_k,e_j) \hchi(e_i,e_k)^{m_{ij} }  \hchi(e_k,e_i)^{m_{ij} } \hchi(e_j,e_i)^{m_{ik}} \hchi(e_i,e_j)^{m_{ik} }\hchi(e_i,e_i)^{2m_{ij}m_{ik}} \]
\[=\hchi(e_j,e_k)  \hchi(e_k,e_j) \hchi(e_i,e_k)^{m_{ij} }  \hchi(e_k,e_i)^{m_{ij} } \hchi(e_j,e_i)^{m_{ik}} \hchi(e_i,e_j)^{m_{ik}} \]
which gives $M^{\hchi,s_i(E),\gene}_{jk} = M_{jk}^{\hchi,E,\gene}$ if either $M_{ij}^{\hchi,E,\gene}=0$ or $M_{ik}^{\hchi,E,\gene}=0$ since then every factor past the first two is trivial or 
\[M^{\hchi,s_i(E),\gene}_{jk} \equiv  M_{jk}^{\hchi,E,\gene}+  M_{ik}^{\hchi,E,\gene}+  M_{ij}^{\hchi,E,\gene} \bmod n \] if $M_{ij}^{\hchi,E,\gene},M_{ik}^{\hchi,E,\gene}\neq 0$.

In all cases, this agrees with the definition of $\tau_i(M)$.\end{proof}

\section{Topology and Quantum Algebra} \label{SectionTopology}

\subsection{Overview}

Nichols algebras of diagonal type are certain non-commutative algebras parameterized by a basis $E$ of $\mathbb Z^r$ and a bicharacter $\hchi \colon \mathbb Z^r \times \mathbb Z^r \to \mathbb C^\times$ (though we will be interested only in the case where $\hchi$ is valued in $\mu_\infty$). The concepts of Weyl groupoid and arithmetic root system was developed to understand the structure of Nichols algebras of diagonal type. In particular, the classification of arithmetic root systems gives the classification of finite-dimensional Nichols algebras of diagonal type. We will briefly review the general notion of Nichols algebra in \S\ref{nichols-intro}, but for now we state that for a bicharacter $\chi$, the Nichols algebra $\mathfrak{B} (V_{E,\hchi})$ is the quotient of the free algebra $\mathbb C \langle x_1,\dots, x_r \rangle$ on generators $x_1,\dots, x_r$ by the kernel of the unique $\mathbb C$-linear map $\mathbb C \langle x_1,\dots, x_r \rangle\to \mathbb C \langle x_1,\dots, x_r \rangle$ that for any sequence $s_1,\dots,s_n \in \{1,\dots,r\}$ sends $x_{s_1} \dots x_{s_n}$ to \[ \sum_{\tau \in S_n}    x_{s_{\tau^{-1}(1)}}  \dots  x_{s_{\tau^{-1}(n)}}\prod_{\substack{ 1\leq i < j \leq n \\ \tau(i)>\tau(j)}} \hchi( e_{s_i}, e_{s_j}) .\]
While this map is not an algebra homomorphism, its kernel is a two-sided ideal, so the quotient algebra is well-defined. (In fact, it is a homomorphism for a modified algebra structure on the target, the quantum shuffle product, explaining why its kernel is a two-sided ideal).

An important special case of Nichols algebras are closely related to quantum groups. Specifically, for $E, \hchi$ as in Example \ref{cartan-example}, i.e. $E$ the set of simple roots of a simple Lie algebra $\mathfrak g$ and $\hchi ( \alpha_i,\alpha_j) = v^{\langle \alpha_i,\alpha_j \rangle}$, the associated Nichols algebra is the positive part of Lusztig's small quantum group with parameter $\qv$, i.e. the algebra generated by the positive roots~\cite[Theorem 15.2]{Rosso1998}. (The parameter $\qv$ is most commonly denoted by $q$ in the quantum groups literature, but is sometimes called $\qv$. In the case of $\mathfrak{sl}_2$, the literature sometimes uses the notation $q$ for $\qv^2$. To avoid confusion with the finite field size $q$ and $\qq=\qv^2$ in the literature on Nichols algebras of diagonal type, we use the notation $\qv$.) Quantum groups themselves are deformations of the universal enveloping algebras of Lie algebras, so the positive part is (part of) a deformation of the algebra generated by the positive roots inside the universal enveloping algebra of $\mathfrak g$. Thus, in this section, quantum groups will be used as a representative case and the algebra generated by the positive roots of a Lie algebra will be used as a toy model.

Earlier, we saw that there is a formal relationship between the reflections in a Weyl groupoid and the functional equations of an axiomatic multiple Dirichlet series, and using this, we were able to construct meromorphic multiple Dirichlet series from certain arithmetic root systems. This raises the question: Is there a direct relationship between the functional equations of a multiple Dirichlet series arising from an arithmetic root system and the Weyl groupoid of a Nichols algebra arising from the same arithmetic root system?

We give a partial answer to this question in the following way.

First, we show in \S\ref{ss:connections-cohomology} that the coefficients of the axiomatic multiple Dirichlet series are the trace of Frobenius acting on certain cohomology groups. These cohomology groups have multiple descriptions, including topological ones, and, following Kapranov and Schechtman, as graded components of the $\Ext$ groups of the corresponding Nichols algebra.

Second, we show in Theorem \ref{homological-relation-theorem} that the functional equations relating the coefficients imply that certain equations relating the dimensions of these cohomology groups hold. In particular, this implies the following linear relations between the dimensions of different graded components of the $\Ext$ groups of the Nichols algebra.

\begin{cor}\label{nichols-relation-theorem} Let $r$ be a natural number, $\hchi\colon \mathbb Z^r \times \mathbb Z^r \to\mu_\infty$ a bicharacter, and $E=(e_1,\dots,e_r)$ an ordered basis of $\mathbb Z^r$.

Fix $i$ from $1$ to $r$. Assume $E$ is admissible at $i$. Fix nonnegative integers $d_0,\dots, d_{i-1}, d_{i+1},\dots, d_r$. Let $\tilde{d}= \sum_{k \neq i} d_k m_{ik}$. Let $ \mathfrak B( V_{\hchi,E} )$ be the Nichols algebra of the diagonal braided vector space defined using $\hchi$ and $E$ as in \S\ref{nichols-intro}.
Then
\begin{enumerate}
    \item If $\hchi(e_i,a)\hchi(a,e_i)$ is a power of $\hchi(e_i,e_i) $ for all $a\in \mathbb Z^r$, let
    \[ h^j_{d_i} =  \dim \Gr^{ -\sum_{i=1}^r d_i e_i}  \operatorname{Ext}^{j } _{ \mathfrak B( V_{\hchi,E} )} (\mathbf 1, \mathbf 1) .\]
Let $n_i$ be the order of $\hchi(e_i,e_i)$. Recall that $m \% n_i$ denotes the smallest nonnegative integer congruent to $m$ modulo $n_i$.  Then we have 
\begin{equation}\label{nichols-kubota-general} h^j_{d_i} - h^{j-1}_{d_i - (2d_i-1-\tilde{d})\%n_i} = h^{j}_{\tilde{d}+1-d_i - (\tilde{d}+1-2d_i)\% n_i} - h^{j-1}_{\tilde{d}+1-d_i-n_i} \end{equation}
    unless $2d_i \equiv 1 + \tilde{d}\bmod n_i$, in which case 
    \begin{equation}\label{nichols-kubota-special} h^j_{d_i} = h^j_{\tilde{d}+1-d_i-n_i}.\end{equation}

    \item If $\hchi(e_i,e_i)=-1$, let 
    \[ h^{j,E}_{d_i} = \dim \Gr^{ -\sum_{i=1}^r d_i e_i}  \operatorname{Ext}^{j } _{ \mathfrak B( V_{\hchi,E} )} (\mathbf 1, \mathbf 1) \] and
        \[ h^{j, s_{i,E}(E)}_{d_i}= \dim \Gr^{ -\sum_{i=1}^r d_i e_i}  \operatorname{Ext}^{j } _{ \mathfrak B( V_{\hchi,s_{i,E}(E) } )} (\mathbf 1, \mathbf 1) .\] 
Then
    \begin{equation}\label{nichols-dirichlet-general} h^{j,E}_{d_i}= h^{j,s_{i,E}(E)}_{\tilde{d}-d_i-1} \end{equation} unless $\prod_{k\neq i}( \hchi(e_i,e_k) \hchi(e_k,e_i))^{d_k}=1$, in which case
\begin{equation}\label{nichols-dirichlet-special} h^{j,E}_{d_i}- h^{j-1,E}_{d_i-1} = h^{j,s_{i,E}(E)}_{\tilde{d}-d_i} - h^{j-1,s_{i,E}(E)}_{\tilde{d}-d_i-1}  .\end{equation}
    
\end{enumerate}

\end{cor}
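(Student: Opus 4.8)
\textbf{Proof proposal for Corollary \ref{nichols-relation-theorem}.}

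The plan is to deduce this statement as a purely formal consequence of Theorem \ref{homological-relation-theorem}, which already records the linear relations among the dimensions of the relevant cohomology groups, together with the identification (to be established in \S\ref{ss:connections-cohomology}, following Kapranov--Schechtman) of these cohomology groups with the graded components $\Gr^{-\sum_i d_i e_i} \Ext^j_{\mathfrak B(V_{\hchi,E})}(\mathbf 1,\mathbf 1)$ of the $\Ext$-algebra of the Nichols algebra. So the corollary is really a translation exercise: once the dictionary between the three incarnations of the cohomology (perverse-sheaf cohomology on $\mathbb A^{\hat d}$, intersection cohomology of local systems on $\operatorname{Conf}_{d_1,\dots,d_r}(\mathbb C)$, and Nichols-algebra $\Ext$) is in place, Theorem \ref{homological-relation-theorem} says exactly \eqref{nichols-kubota-general}--\eqref{nichols-dirichlet-special} after relabelling. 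First I would fix the bigrading conventions so that the cohomological degree $j$ corresponds to the homological index in $\Ext^j$ and the multidegree $\vec d = (d_1,\dots,d_r)$ corresponds to the graded piece indexed by $-\sum d_i e_i$; with $m=(0)$ and the $1\times 1$ cases from Section \ref{SectionCoeffs} as sanity checks, this fixes all signs and shifts.

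Next I would carry out the case analysis matching the hypotheses of Theorem \ref{translation-key-step}. In case (1), where $\hchi(e_i,a)\hchi(a,e_i)$ is a power of $\hchi(e_i,e_i)$, the relevant functional equation is the Kubota-type one of Theorem \ref{TheoremMultiFE}, and by Lemma \ref{kubota-bicharacter-identity} we have $m_{ij}=n_{ij}$ and $M^{\hchi,s_{i,E}(E),\gene}=M^{\hchi,E,\gene}$, so the Nichols algebra does not change; this is why \eqref{nichols-kubota-general} and \eqref{nichols-kubota-special} relate $h^j_{d_i}$ to $h^j_{d'}$ for the \emph{same} $E$. The shift $\tilde d = \sum_{k\neq i} d_k m_{ik}$ plays the role of $\deg F \bmod n_i$ in the scattering matrix $\Gamma_{k,\ell}(x,K)$ of Proposition \ref{PropGlobalFE}, and the two cases $2d_i \not\equiv 1+\tilde d$ versus $2d_i \equiv 1+\tilde d \bmod n_i$ correspond precisely to the generic entries versus the special diagonal entries of that scattering matrix. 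Reading off the coefficient of the appropriate monomial in the functional equation, weighting by the Frobenius eigenvalues via Axioms \ref{Axiom4} and \ref{Axiom5}, and then passing from trace-of-Frobenius to Betti numbers (using the purity that makes the relevant cohomology pure of the expected weight, so that alternating sums become honest sums up to the parity sign $(-1)^j$) yields the stated linear relations. In case (2), where $\hchi(e_i,e_i)=-1$, one uses the Dirichlet-type functional equation of Theorem \ref{dirichlet-multivariable-global}, Lemma \ref{dirichlet-bicharacter-identity} to identify $m_{ij}=e_{ij}$ and $\tau_i(M^{\hchi,E,\gene}) = M^{\hchi,s_{i,E}(E),\gene}$, and the scattering matrix $\Theta_{k,\ell}(x,K)$ of Theorem \ref{dirichlet-multivariable-global}; here the two Nichols algebras $\mathfrak B(V_{\hchi,E})$ and $\mathfrak B(V_{\hchi,s_{i,E}(E)})$ are genuinely different, which is why $h^{j,E}$ and $h^{j,s_{i,E}(E)}$ both appear, and the dichotomy $\prod_{k\neq i}(\hchi(e_i,e_k)\hchi(e_k,e_i))^{d_k}=1$ is exactly the condition $n\mid v$ (equivalently $b=1$) that switches between the two forms of $\Theta$.

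The main obstacle will be bookkeeping rather than conceptual: namely, verifying that the translation from the scattering-matrix identities to dimension identities is exactly right, including the weight/purity argument needed to replace traces of Frobenius by dimensions. Concretely, the functional equation is an identity of rational functions whose coefficients are sums of $q$-Weil numbers; to extract a statement about Betti numbers I need the sheaves $K_{d_1,\dots,d_r}$ (or the intersection cohomology of the corresponding local systems on configuration space) to be pure, so that each Betti number is the multiplicity of a Weil number of a prescribed weight, and the sharp/flat decomposition used in the proof of Theorem \ref{TheoremFE} then isolates individual Betti numbers on each side. The shifts by $(2d_i-1-\tilde d)\%n_i$ and $n_i$ in \eqref{nichols-kubota-general}, and by $1$ in \eqref{nichols-dirichlet-special}, come from the exponents of $x_i$ appearing in the scattering matrix entries (the $\bigl(\tfrac{qx}{g}\bigr)^{\pm 1}$, $\bigl(\tfrac{qx}{g}\bigr)^{1-n_i}$, $\tfrac{x-1}{1-qx}$ factors), so the final step is a careful matching of monomial exponents against homological shifts. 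I would also record the reduction $d^{\mathrm{min}}<\infty$-type finiteness (only finitely many $\vec d$ contribute) so that all the sums are finite, and note that the statement for all $j$ at once follows because the functional equation is an identity of the full rational functions, not just their leading terms. Given Theorem \ref{homological-relation-theorem} this corollary then follows immediately by specializing the homological relations to the Nichols-algebra description of the cohomology.
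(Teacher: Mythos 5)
Your proposal is correct and takes essentially the same route as the paper: the corollary is deduced directly from Theorem \ref{homological-relation-theorem} with $M=M^{\hchi,E,\gene}$, using the identification of the perverse-sheaf cohomology with the graded Nichols-algebra $\Ext$ groups (Corollary \ref{K-to-nichols}, via the Kapranov--Schechtman correspondence) together with Lemmas \ref{kubota-bicharacter-identity} and \ref{dirichlet-bicharacter-identity} to match $m_{ik}$ with $n_{ik}$ (resp.\ $e_{ik}$) and $\tau_i(M^{\hchi,E,\gene})$ with $M^{\hchi,s_{i,E}(E),\gene}$. The extra material you include on purity, weight-extraction, and the scattering-matrix bookkeeping belongs to the proof of Theorem \ref{homological-relation-theorem} itself, which the corollary is entitled to cite, so nothing further is needed.
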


In Theorem \ref{nichols-determination-theorem}, we show that for many Nichols algebras, Corollary \ref{nichols-relation-theorem} uniquely determines the Betti numbers $\dim \Gr^{ -\sum_{i=1}^r d_i e_i}  \operatorname{Ext}^{j } _{ \mathfrak B( V_{\hchi,E} )} (\mathbf 1, \mathbf 1)$, giving a combinatorial algorithm to compute those Betti numbers, though not necessarily an explicit formula for them.

This includes the positive part of Lusztig's small quantum group. Let us comment on the relationship of this part of Corollary \ref{nichols-relation-theorem}. In that case, these Betti numbers may be computed as a consequence of work of Drupieski, Nakano, and Ngo~\cite[Theorem 5.6.1]{Drupieski2012}, which computes these cohomology groups under the assumption that the order $\ell$ of $v^2$ is odd, greater than the Coxeter number of $\mathfrak g$, coprime to $n+1$ if $\mathfrak g \cong \mathfrak{sl}_{n+1}$, and coprime to $3$ if $\mathfrak g$ has type $E_6$ or $G_2$. It is easy to deduce Corollary \ref{nichols-relation-theorem} from their formulas. Corollary \ref{nichols-relation-theorem} and Theorem \ref{nichols-determination-theorem} generalize this result since they work for arbitrary $\ell$, but they are weaker in that they do not compute the algebra structure on the cohomology and apply only to the Ext groups of the trivial module. It would likely be difficult to generalize the methods of \cite[Theorem 5.6.1]{Drupieski2012} to handle arbitrary $\ell$. On the other hand, generalizing the method of Corollary \ref{nichols-relation-theorem} to the Ext groups of arbitrary irreducible representations might be possible if a theory of axiomatic twisted Weyl group multiple Dirichlet series were developed. Computing the algebra structure would require a geometric perspective on the functional equations.

 Various similar cohomology groups in the setting of quantum groups have also been computed, such as the cohomology of the associated graded of a certain filtration on the positive part of Lusztig's small quantum group~\cite[Proposition 2.3.1]{Ginzburg1993} or the cohomology of the positive part of the De Koncini-Kac integral form~\cite{Vigre2010}. The cohomology of the full quantum group has been computed also for roots of unity of small order~\cite{BKPP}.

Corollary \ref{nichols-relation-theorem} is a result purely on the homological algebra of an algebra with explicit generators and relations. Our proof of it, however, relies on the functional equations for axiomatic multiple Dirichlet series proven in the remainder of the paper, and thus indirectly relies on a huge amount of machinery: the theory of \'{e}tale cohomology, including perverse sheaves and Deligne's proof of the Weil conjectures, and also the theory of metaplectic Eisenstein series.

This, of course, raises the question of whether there exists a proof of Corollary \ref{nichols-relation-theorem} that is purely algebraic, and does not reduce to an arithmetic situation. There are two possible approaches to the proof.

First, one could compute the Betti numbers $\dim \Gr^{ -\sum_{i=1}^r d_i e_i}  \operatorname{Ext}^{j } _{ \mathfrak B( V_{\hchi,E} )} (\mathbf 1, \mathbf 1)$ by an independent argument, perhaps giving generators and relations for this cohomology algebra, and use these computations to verify Corollary \ref{nichols-relation-theorem}. For example, in certain quantum group cases this can be done using \cite[Theorem 5.6.1]{Drupieski2012}, but beyond the case of quantum groups, there are few Nichols algebras whose cohomology has been succesfully computed.

Second, one could directly prove Corollary \ref{nichols-relation-theorem} -- in particular, in case (2) this would involve proving a relation between cohomology groups of two different algebras without computing the cohomology groups of either. This would have the apparent disadvantage that it does not lead to a calculation of the Betti numbers, but in fact Theorem \ref{nichols-determination-theorem} shows that Corollary \ref{nichols-relation-theorem} does uniquely determine the Betti numbers in the case of an arithmetic root system where every root satisfies one of the two hypotheses. This would have multiple advantages: First, it would give some information even for those $\hchi$ where it is not feasible to compute all the Betti numbers, and second, it would give some additional conceptual understanding of how Corollary \ref{nichols-relation-theorem}, and thus the functional equations, relate to the Weyl group.

We give both positive and negative results on this second strategy by considering a certain toy model.   This model consists of the $\Ext$ groups of the subalgebra of the universal enveloping algebra $U(\mathfrak g)$ of a simple Lie algebra $\mathfrak g$ generated by the positive roots, or, equivalently, the Lie algebra cohomology of the Lie algebra $\mathfrak n$ of positive roots. The Lie algebra has a grading arising from the action of the Cartan subalgebra which induces a grading on the Lie algebra cohomology. This Lie algebra cohomology was computed by Kostant, and Kostant's formula has an evident symmetry under the Weyl group, giving a proof of a statement similar to Corollary \ref{nichols-relation-theorem} in this case by the first strategy. This leaves open the question of finding a proof by the second strategy, i.e. of finding a direct explanation for this Weyl group symmetry.

This symmetry could be easily explained if the Weyl group had an action on the Lie algebra $\mathfrak n$ of positive roots, as then the Weyl group would act on the cohomology. This action would be compatible with the grading and thus imply that, for $w$ in the Weyl group, the dimension of the $\alpha$-graded component of $H^i$ would equal the dimension of the $w(\alpha)$-graded component of $H^i$. However, while the Weyl group acts on $\mathfrak g$, it does not stabilize $\mathfrak n$, and hence does not act on $\mathfrak n$.

However, we are able to obtain relations between the dimensions of different cohomology groups using the Weyl group in a more subtle way. Elements of the Weyl group send the Lie algebra $\mathfrak n$ to other Lie subalgebras of $\mathfrak g$. In particular, the reflection around a simple root sends $\mathfrak n$ to a Lie algebra $\mathfrak n'$ whose intersection with $\mathfrak n$ has codimension $1$ in either. We can relate the Lie algebra cohomology of $\mathfrak n$ and $\mathfrak n'$ indirectly, via the Lie algebra cohomology of their intersection. This relation, combined with the direct relation between the cohomology of $\mathfrak n$ with $\mathfrak n'$ obtained from the isomorphism between them arising from the reflection in the Weyl group, gives an ``action" of the Weyl group on the Lie algebra cohomology of $\mathfrak n$ and implies certain equalities between dimensions of cohomology groups. 

The objects needed for this strategy appear also in the setting of quantum groups, where one has a braid group acting on the quantum group deforming the Weyl group action on the Lie algebra. A standard generator of the braid group sends the subalgebra generated by the positive roots to the subalgebra generated by all but one positive root and one negative root. 

In the setting of Nichols algebras of diagonal type, again the same objects appear. The Weyl groupoid in \cite{HeckenbergerWeyl} is defined to be generated by simple refelctions that relate a pair of Nichols algebras which can be together embedded into a larger algebra and share a large common subalgebra.

However, to make the argument work in this setting, a certain lemma relating the cohomology of the positive and negative parts of $u_v (\mathfrak{sl}_2)$ with coefficients in a complex of $u_v (\mathfrak{sl}_2)$-modules generalizing a relation that holds for complexes of finite-dimensional representations of $\mathfrak{sl}_2$. The obstruction is simply that this lemma is false for some complexes of finite-dimensional modules over $u_v (\mathfrak{sl}_2),$ as we show in Example \ref{counter-example}.  However, after taking graded Euler characteristics, i.e. alternating sums of dimensions of graded components of cohomology groups, a similar relation becomes true for a complex of finite-dimensional representations satisfying a mild finiteness hypothesis, and this allows us to prove algebraically a relation between Euler characteristics of cohomology of Nichols algebras.

It is believable that an additional algebraic idea will lead to an algebraic proof of Corollary \ref{nichols-relation-theorem}. However, it is possible that this obstruction occurs because the functional equations demonstrate some geometric structure that is not visible purely algebraically.

In addition to Corollary \ref{nichols-relation-theorem}, our results imply similar relations among cohomology groups in other settings, which are defined in different ways but turn out to be isomorphic. These include cohomology groups of certain perverse sheaves on symmetric powers of the affine line (or complex plane) arising from local systems on the configuration space of points on the plane. These local systems are associated to representations of the braid group that are induced representations of one-dimensional representations from a certain subgroup, the colored braid group. One could equally well ask for a direct proof of Corollary \ref{nichols-relation-theorem} using the topological or algebraic-geometric theory of perverse sheaves, but avoiding arithmetic arguments.

Such a proof would also have advantages. It might clarify the meaning of Corollary \ref{nichols-relation-theorem} in terms of the topology of configuration spaces. It also might upgrade the relation of Betti numbers to an explicit isomorphism of cohomology groups. If done in the algebraic geometry setting, this isomorphism could even be Frobenius-equivariant, which would mean it implies the original functional equation (which a non-Frobenius-equivariant isomorphism does not).

Progress on this exists in case (2), where Hase-Liu~\cite{HaseLiu24} gave a geometric proof of the functional equation -- it should be possible to derive Corollary \ref{nichols-relation-theorem} directly from geometric statements in his paper, without passing through the functional equation. It would be interesting to find an analogous geometric proof in case (1).

\subsection{Nichols algebras and their Weyl groups}\label{nichols-intro}

A \emph{braided vector space} is a vector space $V$ (always for us over the field $\mathbb C$) with an invertible linear transformation $\sigma \colon V \otimes V \to V \otimes V$, the \emph{braiding}, satisfying \[ (\sigma \otimes \operatorname{id})  \circ (\operatorname{id} \otimes \sigma) \circ (\sigma \otimes \operatorname{id})= (\operatorname{id} \otimes \sigma) \circ (\sigma \otimes \operatorname{id}) \circ (\operatorname{id} \otimes \sigma). \]

The braided vector spaces of interest to us are the \emph{diagonal braided vector spaces}, which may be parameterized by a basis $E=(e_1,\dots, e_n)$ for $\mathbb Z^r$ and a bicharacter $\hchi \colon \mathbb Z^r \times \mathbb Z^r \to \mathbb C$. The diagonal braided vector space $V_{E,\hchi}$ is the vector space with basis $x_1,\dots, x_n$ together with the transformation $\sigma$ given by $\sigma(x_i \otimes x_j) = \hchi(e_i,e_j) x_j \otimes x_i$. 

We also need the diagonal vector space $V'_{E,\hchi}$ where the braiding is negated, i.e. in $V'_{E,\hchi}$ we have $\sigma(x_i \otimes x_j) = - \hchi(e_i,e_j) x_j \otimes x_i$.

A braided vector space is \emph{graded} if the underlying vector space is graded and $\sigma$ respects the induced grading on $V \otimes V$. The diagonal braided vector spaces are $\mathbb Z^r$-graded, with $x_i$ given grade $e_i$. (Graded braided vector spaces are a special case of the more general theory of braided Yetter-Drinfeld modules, of which we only need the $\mathbb Z^r$-gradings.)

The \emph{braid group} on $n$ strands $B_n$ is the group with generators $\sigma_1,\dots, \sigma_{n-1}$ and relations $\sigma_i \sigma_{i+1} \sigma_i=\sigma_{i+1} \sigma_i \sigma_{i+1}$ for $i=1\dots, n-2$ and $\sigma_i \sigma_j = \sigma_j \sigma_i$ for $|i-j|>1$. Elements of the braid group are represented visually as braids, where $\sigma_i$ is represented by the $i$th strand passing over the $i+1$st strand. This can also be used to realize the braid group as the fundamental group of the space parameterizing configurations of $n$ unordered points in the plane. There is a natural homomorphism from the braid group to the symmetric group $S_n$, sending $\sigma_i$ to the transposition $(i,i+1)$.

Braided vector spaces define representations of the braid group. Specifically, $V^{\otimes n}$ is a representation of the braid group on $n$ strands where $\sigma_i$ acts by $\operatorname{id}^{\otimes (i-1)} \otimes \sigma \otimes \operatorname{id}^{\otimes (n-i-1)} $. 

The homomorphism $B_n \to S_n$ admits a set-theoretic section, the \emph{Matsumoto lift}. This may be defined by writing $\tau \in S_n$ as a product of transpositions $(i,i+1)$ in an expression of minimal length, and then replacing each copy of the transposition $(i,i+1)$ with the braid $\sigma_i$ to obtain the lift $\tilde{\tau} \in B_n$. Algebraically, one can check that this is well-defined. Visually, this corresponds to a braid where the $i$th strand crosses over the $j$th strand if and only if $i<j$ but $\tau(i)> \tau(j)$, and crosses exactly once in this case, and one can see topologically that all such braids are equivalent.

The \emph{quantum symmetrizer} is a map $V^{\otimes n} \to V^{\otimes n}$ that sends a vector $v \in V^{\otimes n}$ to $\sum_{\tau \in S_n} \tilde{\tau}(v)$. For each $n$ the kernel of the quantum symmetrizer is a subspace $K_n$ of $V^{\otimes n}$, and it is possible to check that if $v \in V^{\otimes n}$ is in $K_n$ and $w \in V^{\otimes m}$ then $v\otimes w$ and $w \otimes v$ both lie in $K_{n+m}$. (The proof proceeds by factoring each permutation in $S_{n+m}$ into a permutation in $S_n$ composed with a permutation that does not change the order of the first $n$ numbers, and checking that the Matsumoto lift of the composition is in this case the composition of the Matsumoto lifts.) It follows that $\bigoplus_{n=0}^{\infty} K_n$ is an ideal in the tensor algebra $T(V) = \bigoplus_{n=0}^{\infty} V^{\otimes n}$, with multiplication given by tensor product. The quotient $\bigoplus_{n=0}^{\infty}  V^{\otimes n}/ K_n$ is known as the \emph{Nichols algebra} $\mathfrak{B}(V)$ of the braided vector space $V$. It has multiple equivalent definitions.

If $V$ is a graded  braided vector space then $V^{\otimes n}$ is graded as a representation of $B_n$, so in particular the quantum symmetrizer respects the grading, inducing a grading on the Nichols algebra $\mathfrak B(V)$. We will be interested in the cohomology of the Nichols algebra in the sense of the $\operatorname{Ext}$ groups $\operatorname{Ext}^j ( \mathbb C, \mathbb C)$ (for $\mathbb C$ the trivial module). These $\operatorname{Ext}$ groups admit a grading induced by the grading of the Nichols algebra.

In the special case of a diagonal braided vector space $V_{E,\hchi}$, the braid $\sigma_i$ sends \[x_{s_1} \otimes \dots \otimes x_{s_{i-1}} \otimes  x_{s_i} \otimes x_{s_{i+1} } \otimes x_{s_{i+2}} \otimes \dots \otimes x_{s_n}\] to 
\[\hchi(e_{s_i}, e_{s_{i+1}}) x_{s_1} \otimes \dots \otimes x_{s_{i-1}} \otimes  x_{s_{i+1} } \otimes x_{s_{i} } \otimes x_{s_{i+2}} \otimes \dots \otimes x_{s_n}\] from which it follows that the Matsumoto lift $\tilde{\tau}$ sends $x_{s_1} \otimes \dots \otimes x_{s_n}$ to \[ x_{s_{\tau^{-1}(1)}} \otimes \dots \otimes x_{s_{\tau^{-1}(n)}} \prod_{\substack{ 1\leq i < j \leq n \\ \tau(i)>\tau(j)}} \hchi( e_{s_i}, e_{s_j})  .\] In other words $\mathfrak{B}(V_{E,\hchi})$ may be presented as the free algebra on $x_1,\dots, x_r$ modulo the kernel of the map 
\[x_{s_1} \dots x_{s_n}\mapsto \sum_{\tau \in S_n}    x_{s_{\tau^{-1}(1)}}  \dots  x_{s_{\tau^{-1}(n)}}\prod_{\substack{ 1\leq i < j \leq n \\ \tau(i)>\tau(j)}} \hchi( e_{s_i}, e_{s_j}) .\]

\begin{example} Continuing Example \ref{cartan-example}, let $E$ be the set of simple roots of a simple Lie algebra $\mathfrak g$ and $\hchi ( \alpha_i,\alpha_j) = v^{\langle \alpha_i,\alpha_j \rangle}$. Then $\mathfrak{B}(V_{E,\hchi})$ is the positive part of Lusztig's small quantum group with parameter $v$ \cite[Theorem 15.2]{Rosso1998}.\end{example}

With these algebras defined, the relevance of the reflections $s_{i,E}$ to them can now be explained. Heckenberger~\cite[\S4]{HeckenbergerWeyl} defined for each $i$ with $e_i$ admissible an algebra $( \mathfrak{B}(V_{\hchi,E}^{\mathrm{op}}) \# H_i^{\mathrm{cop}})^{\mathrm{op}}$ containing $\mathfrak{B}(V_{\hchi,E})$ as a subalgebra, and proved that it also contains $\mathfrak {B}(V_{\hchi,s_{i,E}(E)})$~\cite[Proposition 1 on p. 180]{HeckenbergerWeyl}. This result was the motivation for the definition of $s_{i,E}$ and the Weyl groupoid.

\subsection{Multiple Dirichlet series, perverse sheaves, and Nichols algebras}\label{ss:connections-cohomology}

We first review the geometric construction of axiomatic multiple Dirichlet series from \cite[\S1]{s-amds}.

Let $d_1,\dots,d_{r}$ be natural numbers and $q$ a prime power so that $\mathbb F_q$ is a finite field. View $\mathbb A^{d_i}$ over $\mathbb F_q$ as the moduli space of monic polynomials of degree $d_i$, so that $\prod_{i=1}^{r} \mathbb A^{d_i} $ is a moduli space of tuples $(f_1,\dots,f_{r})$ of monic polynomials. On $\prod_{i=1}^{r} \mathbb A^{d_i}$, define the polynomial function \[F_{d_1,\dots,d_r}= \prod_{i=1}^r \Res (f_i', f_i) ^{M_{ii} }  \prod_{1\leq i< j \leq r}  \Res(f_i, f_j)^{ M_{ij}}.\]

 Write $\hat{d}$ for $d_1+\dots+d_r$. so that $\mathbb A^{\hat{d}}= \prod_{i=1}^{r} \mathbb A^{d_i}$.

Let $U$ be the open set of tuples $f_1,\dots, f_r$ of squarefree coprime polynomials, so that $F$ is nonzero on $U$, and let $u$ be the open immersion of $U$ into $\prod_{i=1}^{r} \mathbb A^{d_i} $. Then we define the Kummer sheaf $\mathcal L_\chi( F_{d_1,\dots,d_r})$ using the Kummer exact sequence and the character $\chi \colon \mu_{q-1} \to \mathbb C^\times \cong \overline{\mathbb Q}_\ell^\times,$ and then take the intermediate extension $u_{*!}$, obtaining
\[ K_{d_1,\dots,d_r} = u_{!*} (\mathcal L_\chi(F_{d_1,\dots,d_r}) [\hat{d}]) [-\hat{d}].\]

By \cite[Theorem 1.1 ]{s-amds}, the multiple Dirichlet coefficients $a(f_1,\dots, f_r;q,\chi,M)$ are given by the trace of Frobenius acting on the stalk of the complex $K_{d_1,\dots, d_r}$. It follows from \cite[Lemma 2.16]{s-amds} that the stalk of the complex $K_{d_1,\dots,d_r}$ at $T^{d_1},\dots, T^{d_r}$ is  $H^*( \mathbb A^{d_1}  \times \dots \times \mathbb A^{ d_r }_{\overline{\mathbb F}_q}, K_{d_1,\dots, d_r} )$ so that
\[  a(T^{d_1},\dots, T^{d_r}; q,\chi, M) =  \sum_j (-1)^j \tr( \Frob_q, H^j( \mathbb A^{d_1}  \times \dots \times \mathbb A^{ d_r }_{\overline{\mathbb F}_q}, K_{d_1,\dots, d_r}) ).\]

It also follows from the Lefschetz fixed point formula that
\[ \sum_{\substack{f_1,\dots, f_{r} \in \F_q[T]^+ \\ \deg f_i = d_i}}   a(\vec{f}; q,\chi, M) =  \sum_j (-1)^j \tr( \Frob_q, H^j_c( \mathbb A^{d_1} \times   \dots \times \mathbb A^{d_r }_{\overline{\mathbb F}_q}, K_{d_1,\dots, d_r}) ).\]

In \S\ref{ss:dimensions-cohomology} we will use the functional equations for the coefficients $ a(T^{d_1},\dots, T^{d_r}; q,\chi, M)$ to extract information about the cohomology groups. We could also use the functional equations for the full series to extract information about the compactly-supported cohomology groups. Poincaré duality makes these two pieces of information equivalent, and which one we use is a matter of taste.

In this section, we relate the cohomology groups 
\begin{equation}\label{cohomology-by-definition} H^j( \mathbb A^{d_1 }\times  \dots \times \mathbb A^{ d_r }_{\overline{\mathbb F}_q}, K_{d_1,\dots, d_r})= H^{j-\hat{d}} ( \mathbb A^{d_1 } \times  \dots \times \mathbb A^{ d_r }_{\overline{\mathbb F}_q}, u_{!*} (\mathcal L_{\chi}(F_{d_1,\dots,d_r}) [\hat{d}])) \end{equation} defined using algebraic geometry in characteristic $p$, successively to cohomology groups of different types: First, an analogous construction in the setting of algebraic geometry over $\mathbb C$, then a purely topological construction, and then a purely algebraic construction using Nichols algebras. These relations are expressed in \eqref{cohomology-by-definition}, Lemma \ref{0-to-p}, \eqref{0-to-top}, Lemma \ref{zero-to-top}, \eqref{top-to-KS}, and Lemma \ref{ks-to-nichols}.

We begin by observing that the construction of $K_{d_1,\dots,d_r}$ makes sense over any field containing the $q-1$st roots of unity in which $\ell$ is invertible, and in particular makes sense over $\mathbb C$, since its ingredients, the polynomial $F_{d_1,\dots, d_r}$, Kummer sheaves, and intermediate extension, all make sense over such fields. Thus, we need an argument to relate the cohomology of perverse sheaves in characteristic $p$ with the cohomology of perverse sheaves in characteristic $0$. Such an argument is provided by \cite[Appendix]{Chang}, in the setting of intermediate extensions of lisse sheaves on the configuration space of points in $\mathbb A^1$. Thus, we first relate $u_{!*} (\mathcal L_{\chi}(F) [\hat{d}])$ to a lisse sheaf on the configuration space (which we also need to do later to make the connection to Nichols algebras).

Let $\pi \colon \mathbb A^{d_1} \times \dots \times \mathbb A^{d_r}\to \mathbb A^{\hat{d}}$ be the map given by multiplication of polynomials, a finite morphism. Let $\operatorname{Conf}_{\hat{d}} \subseteq \mathbb A^{\hat{d}}$ be the open set of squarefree polynomials, equivalently, the moduli space of unordered tuples of $\hat{d}$ points in $\mathbb A^1$. Let $\overline{u}$ be the open immersion $\operatorname{Conf}_{\hat{d}} \to \mathbb A^{\hat{d}}$.

The inverse image of $\operatorname{Conf}_{\hat{d}}$ under $\pi$ is $U$. Let $\pi^\circ$ be the restriction of $\pi$ to $U$, a finite \'{e}tale morphism $U \to \operatorname{Conf}_{\hat{d}}$.

\begin{lemma}\label{0-to-p} We have
\[ H^{j-\hat{d}} ( \mathbb A^{d_1 } \times  \dots \times \mathbb A^{ d_r }_{\overline{\mathbb F}_q}, u_{!*} (\mathcal L_{\chi}(F_{d_1,\dots,d_r}) [\hat{d}])) \] \[= H^{j-\hat{d}} ( \mathbb A^{\hat{d} }_{\overline{\mathbb F}_q},  \overline{u} _{!*}  \pi^\circ_* (\mathcal L_{\chi}(F_{d_1,\dots,d_r}) [\hat{d}])).\]
 \[\cong H^{j-\hat{d}} ( \mathbb A^{\hat{d} }_{\mathbb C},  \overline{u} _{!*}  \pi^\circ_* (\mathcal L_{\chi}(F_{d_1,\dots,d_r}) [\hat{d}]))\]
\[ = H^{j-\hat{d}} ( \mathbb A^{d_1 } \times \dots \times \mathbb A^{d_r }_{\mathbb C},   {u} _{!*}   (\mathcal L_{\chi}(F_{d_1,\dots,d_r}) [\hat{d}])).\] \end{lemma}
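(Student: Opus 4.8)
The plan is to prove Lemma \ref{0-to-p} by chaining together three natural isomorphisms, each justified by a standard functoriality or comparison principle. The statement asserts the equality of four cohomology groups, so I would treat them as three separate claims.

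First I would establish the initial equality
\[ H^{j-\hat{d}} ( \mathbb A^{d_1 } \times  \dots \times \mathbb A^{ d_r }_{\overline{\mathbb F}_q}, u_{!*} (\mathcal L_{\chi}(F_{d_1,\dots,d_r}) [\hat{d}])) = H^{j-\hat{d}} ( \mathbb A^{\hat{d} }_{\overline{\mathbb F}_q},  \overline{u} _{!*}  \pi^\circ_* (\mathcal L_{\chi}(F_{d_1,\dots,d_r}) [\hat{d}])). \]
The key point here is that $\pi$ is a finite morphism, so $\pi_* = \pi_!$ is exact for the perverse $t$-structure and commutes with intermediate extension: since $\pi$ restricts to the finite étale $\pi^\circ$ over $\operatorname{Conf}_{\hat d}$ and to the finite $\pi$ over $\mathbb A^{\hat d}$, one has $\pi_* ( u_{!*} \mathcal L) = \overline{u}_{!*} (\pi^\circ_* \mathcal L)$ as perverse sheaves on $\mathbb A^{\hat d}$. (This uses that $U = \pi^{-1}(\operatorname{Conf}_{\hat d})$ and that $\mathcal L_\chi(F_{d_1,\dots,d_r})[\hat d]$ restricted to $U$ is the pullback of the corresponding sheaf on $\operatorname{Conf}_{\hat d}$ — or rather, that $\pi^\circ_*$ of it is a lisse sheaf there.) Then the Leray spectral sequence for $\pi$, which degenerates because $\pi$ is finite, gives the equality of cohomology groups. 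This is essentially the same manipulation used in \cite{s-amds} to pass between the two presentations of the perverse sheaf, so I would cite that if possible.

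The second, middle isomorphism
\[ H^{j-\hat{d}} ( \mathbb A^{\hat{d} }_{\overline{\mathbb F}_q},  \overline{u} _{!*}  \pi^\circ_* (\mathcal L_{\chi}(F_{d_1,\dots,d_r}) [\hat{d}])) \cong H^{j-\hat{d}} ( \mathbb A^{\hat{d} }_{\mathbb C},  \overline{u} _{!*}  \pi^\circ_* (\mathcal L_{\chi}(F_{d_1,\dots,d_r}) [\hat{d}])) \]
is the heart of the lemma and the main obstacle. The point is that $\pi^\circ_* \mathcal L_\chi(F_{d_1,\dots,d_r})$ is a lisse $\overline{\mathbb Q}_\ell$-sheaf on the configuration space $\operatorname{Conf}_{\hat d}$, corresponding to a representation of the étale fundamental group; since $\operatorname{Conf}_{\hat d}$ and the maps involved are all defined over $\mathbb Z[1/\ell, \zeta_{q-1}]$ (or a suitable localization), one can spread out and invoke the comparison of \cite[Appendix]{Chang}, which is precisely designed for intermediate extensions of lisse sheaves on configuration spaces of points in $\mathbb A^1$ and shows the characteristic-$p$ and characteristic-$0$ intersection cohomology agree. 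I would need to check that the local monodromy representation $\pi^\circ_*\mathcal L_\chi(F_{d_1,\dots,d_r})$ — equivalently the representation of the colored braid group determined by $\chi$ and the matrix $M$ — is literally the same in both characteristics (both are read off from the combinatorial data $\chi, M$ via tame cyclic monodromy around the discriminant divisor, which is insensitive to characteristic as long as $p \nmid$ the relevant orders), so that \cite{Chang} applies verbatim. The hard part is verifying the precise hypotheses of \cite{Chang} match our setup, but this should be routine.

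Finally, the last equality
\[ H^{j-\hat{d}} ( \mathbb A^{\hat{d} }_{\mathbb C},  \overline{u} _{!*}  \pi^\circ_* (\mathcal L_{\chi}(F_{d_1,\dots,d_r}) [\hat{d}])) = H^{j-\hat{d}} ( \mathbb A^{d_1 } \times \dots \times \mathbb A^{d_r }_{\mathbb C},   {u} _{!*}   (\mathcal L_{\chi}(F_{d_1,\dots,d_r}) [\hat{d}])) \]
is simply the first step run in reverse over $\mathbb C$: finiteness of $\pi$ over $\mathbb C$, compatibility of $\pi_* = \pi_!$ with intermediate extension, and degeneration of the Leray spectral sequence. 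No new ideas are needed. I would write the proof so that the first and third steps are dispatched together as ``the same argument over any base field where $\ell$ is invertible and $\mu_{q-1}$ is present,'' and devote the bulk of the exposition to pinning down the application of \cite{Chang} in the second step.
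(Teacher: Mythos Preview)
Your proposal is correct and matches the paper's proof essentially step for step: the first and last equalities via finiteness of $\pi$ plus Leray, and the middle comparison via \cite[Appendix]{Chang} applied to the lisse sheaf $\pi^\circ_*\mathcal L_\chi(F_{d_1,\dots,d_r})$ on configuration space. The only cosmetic difference is that the paper spreads out over $\overline{\mathbb Z}_p$ (fixing an embedding $\overline{\mathbb Q}_p\hookrightarrow\mathbb C$ compatible with $\chi\leftrightarrow\gene$) rather than over $\mathbb Z[1/\ell,\zeta_{q-1}]$, and cites specifically \cite[Theorem A.2]{Chang}.
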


Note that, since $\pi^\circ$ is finite \'etale, $\pi^\circ_* \mathcal L_{\chi}(F_{d_1,\dots,d_r})$ is a lisse sheaf on $\operatorname{Conf}_{\hat{d}}$.

\begin{proof} The first equality follows from
\[ H^{j-\hat{d}} ( \mathbb A^{\hat{d} }_{\overline{\mathbb F}_q}, u_{!*} (\mathcal L_{\chi}(F_{d_1,\dots,d_r}) [\hat{d}])) \] \[= H^{j-\hat{d}} ( \mathbb A^{\hat{d} }_{\overline{\mathbb F}_q},  \pi_* u_{!*} (\mathcal L_{\chi}(F_{d_1,\dots,d_r}) [\hat{d}])) \] \[= H^{j-\hat{d}} ( \mathbb A^{\hat{d} }_{\overline{\mathbb F}_q},  \overline{u} _{!*}  \pi^\circ_* (\mathcal L_{\chi}(F_{d_1,\dots,d_r}) [\hat{d}])).\]
which relies on the Leray spectral sequence and the finiteness of $\pi$.

The last equality follows from an identical argument in characteristic $0$.

For the second equality, i.e. to relate the characteristic $0$ and characteristic $p$ situations, we need a lisse sheaf on $\operatorname{Conf}_{\hat{d}, \overline{\mathbb Z}_p}$. The construction of $\pi^\circ_* \mathcal L_{\chi}(F_{d_1,\dots,d_r})$ makes sense over the ring $\overline{\mathbb Z}_p$ because the polynomial $F$, the Kummer sheaf construction, and the map $\pi^\circ$ may all be defined over this setting. We identify the residue field $\overline{\mathbb F}_p$ with $\overline{\mathbb F}_q$ and we embed the fraction field $\overline{\mathbb Q_p}$ into $\mathbb C$. These choices give an identification between $\mu_{q-1} \subset \mathbb F_q$ and $\mu_{q-1} \subset \mathbb C$. We can always choose this isomorphism to be compatible with data mentioned earlier: $\chi \colon \mu_{q-1} (\mathbb F_q) \to \mathbb C^\times$ composed with the identification $\mu_{q-1}(\mathbb F_q)\to \mu_{q-1}(\mathbb C)$ should send the standard generator $e^{2 \pi i \frac{1}{q-1}}$ of $\mu_{q-1}(\mathbb C)$ to the fixed root of unity $\gene$. The sheaf $\pi^\circ_* \mathcal L_{\chi}(F_{d_1,\dots,d_r})$ is lisse because $\pi^\circ_*$ is finite \'{e}tale.

Having done this, we apply \cite[Theorem A.2]{Chang} and obtain
\[H^{j-\hat{d}} ( \mathbb A^{\hat{d} }_{\overline{\mathbb F}_q},  \overline{u} _{!*}  \pi^\circ_* (\mathcal L_{\chi}(F_{d_1,\dots,d_r}) [\hat{d}])) \cong H^{j-\hat{d}} ( \mathbb A^{\hat{d} }_{\mathbb C},  \overline{u} _{!*}  \pi^\circ_* (\mathcal L_{\chi}(F_{d_1,\dots,d_r}) [\hat{d}])).\qedhere \] \end{proof} 

We are now free to use the comparison of \'{e}tale and singular cohomology to express $H^{j-\hat{d}} ( \mathbb A^{d_1 } \times \dots \times \mathbb A^{d_r }_{\mathbb C},   {u} _{!*}   (\mathcal L_{\chi}(F_{d_1,\dots,d_r}) [\hat{d}]))$ as singular cohomology groups of an intersection cohomology complex in the topological sense. When we do this, it is natural to express things in a more topological language. Since there is a natural bijection between polynomials and their multisets of roots, we can express $\mathbb A^{d_1 } \times \dots \times \mathbb A^{d_r }(\mathbb C)$ as $\operatorname{Sym}^{d_1} (\mathbb C) \times \dots \times \operatorname{Sym}^{d_r}(\mathbb C)$. Similarly the space $U(\mathbb C)$ may be represented as $\operatorname{Conf}_{d_1,\dots, d_r} (\mathbb C)$, the space parameterizing configurations of $\hat{d}$ points in the plane with a partition of the points into parts of size $d_1,\dots, d_r$. Visually, this can be viewed as the space of configurations of points of $r$ colors in the plane, with $d_i$ points of the $i$th color.

Local systems on $\operatorname{Conf}_{d_1,\dots, d_r} (\mathbb C) $ are equivalent to representations of its fundamental group, which is the colored braid group: Group elements are braids with $d_i$ strands of the $i$th color, where the sequence of colors of strands at the starting and ending configuration are both some fixed sequence. Equivalently, the colored braid group is the inverse image of $S_{d_1} \times \dots \times S_{d_r}$ under the natural homomorphism from the braid group on $\hat{d}$ strands to $S_{\hat{d}}$.  For $W$ a representation of the colored braid group, write $\mathcal L\{W\}$ for the corresponding local system.

The local system $\mathcal L_{\chi}(F_{d_1,\dots,d_r})$ has rank one and thus corresponds to a one-dimensional representation of the colored braid group, which must factor through its abelianization. The abelianization of the colored braid group is $\mathbb Z^{r (r+1)/2}$, generated by the braids where a strand of the $i$th color makes a loop around a strand of the $j$th color for $i<j$ and the braids where one strand of the $i$th color passes another strand of the $i$th color. (More precisely, if some of the $d_i$ are $\leq 1$, the rank of the abelianization is lower as we drop some of the generators. When specifying a representation by specifying the action of each generator, we simply ignore the generators that do not appear for our choice of $d_1,\dots,d_r$.)

\begin{lemma}\label{colored-braid-calculation} Let $W$ be the unique one-dimensional representation of the colored braid group on  $d_i$ strands of the $i$th color for $i$ from $1$ to $r$ whose value on the braid where a strand of the $i$th color makes a loop around a strand of the $j$th color for $i<j$ is $\gene^{M_{ij}}$ and whose value on the braid where one strand of the $i$th color passes another strand of the $i$th color is $\gene^{M_{ii}}$.

Then $\mathcal L_\chi (F_{d_1,\dots, d_r}) \cong \mathcal L\{W\}$. \end{lemma}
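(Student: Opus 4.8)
The plan is to identify both sides as rank-one local systems on $\operatorname{Conf}_{d_1,\dots,d_r}(\mathbb C)$ and then check they agree on a generating set of the abelianized colored braid group. Since $\mathcal L_\chi(F_{d_1,\dots,d_r})$ is defined as the pullback of the Kummer sheaf along $F_{d_1,\dots,d_r}$ restricted to $U(\mathbb C)=\operatorname{Conf}_{d_1,\dots,d_r}(\mathbb C)$, and the Kummer sheaf $\mathcal L_\chi$ on $\mathbb G_m$ has monodromy $\chi$ around $0$ (where we normalize $\chi$ so that the standard generator of $\pi_1(\mathbb G_m)$ — a small counterclockwise loop — acts by $\gene$, matching the identification fixed just before the lemma), the local system $\mathcal L_\chi(F_{d_1,\dots,d_r})$ corresponds to the character of $\pi_1(\operatorname{Conf}_{d_1,\dots,d_r}(\mathbb C))$ given by $\gamma \mapsto \gene^{\operatorname{wind}(F\circ\gamma)}$, where $\operatorname{wind}$ denotes the winding number of $F_{d_1,\dots,d_r}\circ\gamma$ around $0$. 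So the statement reduces to computing this winding number on the two types of generators.

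First I would fix notation: a tuple $(f_1,\dots,f_r)$ of coprime squarefree monic polynomials is the same as a configuration of $\hat d$ distinct colored points, with $f_i=\prod_{\alpha\in S_i}(T-\alpha)$. Recall $F_{d_1,\dots,d_r}=\prod_i \Res(f_i',f_i)^{M_{ii}}\prod_{i<j}\Res(f_i,f_j)^{M_{ij}}$. Up to a nonzero scalar, $\Res(f_i,f_j)=\prod_{\alpha\in S_i,\beta\in S_j}(\alpha-\beta)$ and $\Res(f_i',f_i)$ equals, up to sign and leading-coefficient factors, the discriminant of $f_i$, i.e. $\prod_{\alpha\neq\alpha'\in S_i}(\alpha-\alpha')$ (product over ordered pairs). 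Thus $\log F_{d_1,\dots,d_r}$ is, up to a locally constant function, $\sum_{i<j}M_{ij}\sum_{\alpha\in S_i,\beta\in S_j}\log(\alpha-\beta)+\sum_i M_{ii}\sum_{\{\alpha,\alpha'\}\subset S_i}\log(\alpha-\alpha')^2$, and each winding-number computation amounts to tracking how many times a single factor $(\alpha-\beta)$ winds around $0$ along the loop.

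Next I would evaluate on the two generators. For the generator $\gamma$ in which one point $\beta$ of color $j$ makes a small counterclockwise loop around a point $\alpha$ of color $i$ with $i<j$ (all other points fixed, and $\alpha,\beta$ returning to their start), only the single factor $(\alpha-\beta)$ of $\Res(f_i,f_j)$ winds once around $0$, contributing $M_{ij}$ to the winding number of $F$; all other factors return to their original position without encircling $0$. Hence the monodromy of $\mathcal L_\chi(F_{d_1,\dots,d_r})$ on this generator is $\gene^{M_{ij}}$, matching $W$. For the generator in which one point $\alpha'$ of color $i$ passes another point $\alpha$ of color $i$ — a half-twist, so that as a braid it is the generator "one strand of color $i$ passes another strand of color $i$" — the factor $(\alpha-\alpha')$ in the discriminant of $f_i$ changes argument by $\pi$, but since the discriminant involves $(\alpha-\alpha')^2$ (equivalently both ordered factors $(\alpha-\alpha')$ and $(\alpha'-\alpha)$), the total change of argument of this contribution is $2\pi$, i.e. winding number $1$, contributing $M_{ii}$; again all other factors contribute nothing. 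So the monodromy is $\gene^{M_{ii}}$, again matching $W$. Since these generators (for those indices with $d_i\geq 1$, resp.\ $d_i\geq 1$ and $d_j\geq 1$, resp.\ $d_i\geq 2$) generate the abelianization of the colored braid group, and both local systems are rank one hence factor through the abelianization, they are isomorphic.

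The main obstacle I anticipate is bookkeeping the sign/orientation conventions: one must be careful that the "counterclockwise loop" generating $\pi_1(\mathbb G_m)$ on which $\mathcal L_\chi$ acts by $\gene$ is matched consistently with the étale-to-topological normalization of the Kummer sheaf fixed in the proof of Lemma \ref{0-to-p} (the choice sending $e^{2\pi i/(q-1)}$ to $\gene$), and that the half-twist braid generator is the \emph{square root} of the full loop generator, so that it contributes $M_{ii}$ rather than $M_{ii}/2$ or $2M_{ii}$ — this is exactly where the $(\alpha-\alpha')^2$ in the discriminant (as opposed to $(\alpha-\alpha')$) does the work. A secondary, purely cosmetic point is that $\Res(f_i,f_j)$ and $\operatorname{Disc}(f_i)$ differ from the naive root products by leading-coefficient powers and global signs, but since $f_i$ are monic these are constants and do not affect winding numbers, so they can be discarded at the outset.
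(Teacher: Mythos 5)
Your proposal is correct and follows essentially the same route as the paper's proof: both identify the two sides as rank-one local systems determined by characters of the abelianized colored braid group, and both compute the monodromy on the two types of generators via winding numbers of $F_{d_1,\dots,d_r}$, using that only the single relevant difference factor winds and that the squared difference in the discriminant converts the half-twist into winding number one. Your extra care about the orientation/normalization conventions is a reasonable addition but does not change the argument.
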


\begin{proof} The action of a generator of the abelianization of the braid group on the representation corresponding to the local system $\mathcal L_{\chi}(F_{d_1,\dots, d_r})$, is the monodromy of $\mathcal L_{\chi}(F_{d_1,\dots, d_r})$ over the corresponding paths in $\operatorname{Conf}_{d_1,\dots, d_r} (\mathbb C) $.

Following a path in $\operatorname{Conf}_{d_1,\dots, d_r} (\mathbb C) $ where a point of the $i$th color loops clockwise around a point of the $j$th color, the resultant $\Res(f_i,f_j)$ winds one time around the origin: The resultant is the product of differences of the roots of $f_i$ corresponding to the points of color $i$ and the roots of $f_j$ corresponding to the points of color $j$. The difference between the specific points that loop around each other has winding number $1$ around the origin, while the other differences have winding number $0$, so the product has winding number $1$. Similarly, the other resultants and discriminants in the definition of $F_{d_1,\dots, d_r}$ have winding number $0$, so $F_{d_1,\dots,d_r}$ has winding number $M_{ij}$ on this path. The $q-1$st root of $F_{d_1,\dots, d_r}$ is then multiplied by $e^{2\pi i \frac{M_{ij}}{q}} $ after following this path, and so the local system has monodromy $\chi ( e^{2\pi i \frac{M_{ij}}{q}}) =\gene^{M_{ij}}$.

Following a path in $\operatorname{Conf}_{d_1,\dots, d_r} (\mathbb C) $ where a point of the $i$th color passes another point of the $i$th color, the discriminant $\Res(f_i',f_i)$ has winding number $1$, since it is (up to sign) the product of the squares of the differences of each pair of points of the $i$th color, and the pair of points that pass each other and thus have their difference negated give the square of their difference a winding number of $1$, while all other squared differences have winding number zero. By the same reasoning, the local system $\mathcal L_\chi( F_{d_1,\dots, d_r})$ has monodromy $\gene^{M_{ii}}$. \end{proof}

It follows from Lemma \ref{colored-braid-calculation} that
\begin{equation}\label{0-to-top} \begin{aligned}  &H^{j-\hat{d}} ( \mathbb A^{d_1 } \times \dots \times \mathbb A^{d_r }_{\mathbb C},   {u} _{!*}   (\mathcal L_{\chi}(F_{d_1,\dots,d_r}) [\hat{d}])) \\ =& H^{j-\hat{d}}  ( \operatorname{Sym}^{d_1} (\mathbb C) \times \dots \times \operatorname{Sym}^{d_r}(\mathbb C), u_{!*} \mathcal L\{W\} [\hat{d}]  ) \end{aligned} \end{equation} with $W$ the representation defined in Lemma \ref{colored-braid-calculation} and $u_{!*}$ the intermediate extension from $\operatorname{Conf}_{d_1,\dots, d_r}(\mathbb C)$ to $ \operatorname{Sym}^{d_1} (\mathbb C) \times \dots \times \operatorname{Sym}^{d_r}(\mathbb C)$

Similarly we have
\begin{lemma}\label{zero-to-top} \[ H^{j-\hat{d}} ( \mathbb A^{d_1 } \times \dots \times \mathbb A^{d_r }_{\mathbb C},   {u} _{!*}   (\mathcal L_{\chi}(F_{d_1,\dots,d_r}) [\hat{d}]))\] \[ = H^{j-\hat{d}}  ( \operatorname{Sym}^{\hat{d} } (\mathbb C),  \overline{u}_{!*} \mathcal L\{\operatorname{ind}_{B_{d_1,\dots,d_r}}^{B_{\hat{d}}} W\} [\hat{d}])  \] where $W$ is the representation defined in Lemma \ref{colored-braid-calculation}, $\operatorname{Ind}_{B_{d_1,\dots,d_r}}^{B_{\hat{d}}}$  denotes the induced representation from the colored braid group to the full braid group, and $\overline{u}_{!*}$ is the intermediate extension from $\operatorname{Conf}_{\hat{d}}(\mathbb C)$ to $\operatorname{Sym}^{\hat{d} } (\mathbb C)$. \end{lemma}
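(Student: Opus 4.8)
\textbf{Proof plan for Lemma \ref{zero-to-top}.}

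The statement to prove is that the intersection cohomology of $\mathcal L\{W\}$ on the product of symmetric powers $\operatorname{Sym}^{d_1}(\mathbb C) \times \cdots \times \operatorname{Sym}^{d_r}(\mathbb C)$ agrees, up to the shift, with the intersection cohomology of $\mathcal L\{\operatorname{Ind}_{B_{d_1,\dots,d_r}}^{B_{\hat d}} W\}$ on the single symmetric power $\operatorname{Sym}^{\hat d}(\mathbb C)$. The plan is to exhibit a finite map between these spaces and transport the intermediate extension along it, in exact analogy with the way Lemma \ref{0-to-p} used the finite morphism $\pi$. First I would consider the natural finite map $p\colon \operatorname{Sym}^{d_1}(\mathbb C) \times \cdots \times \operatorname{Sym}^{d_r}(\mathbb C) \to \operatorname{Sym}^{\hat d}(\mathbb C)$ given by taking the union of the multisets of points (the topological counterpart of polynomial multiplication $\pi$). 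Its restriction over the open locus $\operatorname{Conf}_{\hat d}(\mathbb C)$ of distinct points is precisely the finite covering map $\operatorname{Conf}_{d_1,\dots,d_r}(\mathbb C) \to \operatorname{Conf}_{\hat d}(\mathbb C)$ classifying colorings, which has degree $\binom{\hat d}{d_1,\dots,d_r}$ and corresponds on fundamental groups to the inclusion $B_{d_1,\dots,d_r} \hookrightarrow B_{\hat d}$ of the colored braid group into the full braid group.

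The key steps, in order, are: (i) observe that for a finite covering $f\colon Y \to X$ of the open stratum and a local system $\mathcal L$ on $Y$, one has $f_* \mathcal L \cong \mathcal L\{\operatorname{Ind} W\}$ where $W$ is the monodromy representation of $\mathcal L$ — this is the topological shadow of the fact that pushforward along a finite \'etale cover realizes induction of representations of the fundamental group; (ii) since $p$ is finite, $p_*$ is exact for the perverse $t$-structure and commutes with intermediate extension, so $p_*\big(u_{!*}\mathcal L\{W\}[\hat d]\big) \cong \overline{u}_{!*}\big(p^\circ_* \mathcal L\{W\}[\hat d]\big) \cong \overline{u}_{!*}\mathcal L\{\operatorname{Ind}_{B_{d_1,\dots,d_r}}^{B_{\hat d}} W\}[\hat d]$, where $p^\circ$ is the restriction of $p$ to the open stratum; and (iii) apply the Leray argument: $H^{j-\hat d}(\operatorname{Sym}^{d_1}(\mathbb C)\times\cdots, u_{!*}\mathcal L\{W\}[\hat d]) = H^{j-\hat d}(\operatorname{Sym}^{\hat d}(\mathbb C), p_* u_{!*}\mathcal L\{W\}[\hat d])$ because $p$ is finite (so $Rp_* = p_*$ and there is no spectral sequence to worry about), and then substitute the isomorphism from step (ii). Combining with the chain of identifications already recorded in \eqref{cohomology-by-definition}, Lemma \ref{0-to-p}, and \eqref{0-to-top} completes the proof.

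The verification that commutes with intermediate extension (step (ii)) is the technical heart, but it is entirely parallel to the proof of Lemma \ref{0-to-p}: one uses that a finite morphism is both affine and proper, hence $p_*$ is $t$-exact, and that a $t$-exact functor that is fully faithful on the open stratum preserves the property of having no subobjects or quotients supported on the boundary, which characterizes the intermediate extension. One should be slightly careful about the identification of $p^\circ_*\mathcal L\{W\}$ with $\mathcal L\{\operatorname{Ind} W\}$ in the degenerate cases where some $d_i \leq 1$, but there the colored braid group simply has fewer generators and the induced representation is defined via the same inclusion of subgroups, so no genuine issue arises. The main obstacle I anticipate is purely expository — making precise the passage between the algebraic-geometric statement of the intermediate-extension compatibility (as invoked in Lemma \ref{0-to-p} via \'etale sheaves) and its topological counterpart for constructible sheaves on complex analytic spaces — but since the Riemann–Hilbert correspondence and the comparison of \'etale and singular intersection cohomology are standard, this is a matter of citation rather than new work.
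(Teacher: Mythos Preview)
Your proposal is correct and takes essentially the same approach as the paper. The paper's one-line proof (``pushforward of local systems along covering spaces corresponds to induced representations of their fundamental groups'') compresses exactly the three steps you spell out: the topological analogue of the finite map $\pi$ from Lemma~\ref{0-to-p}, the commutation of its pushforward with intermediate extension, and the identification $p^\circ_*\mathcal L\{W\}\cong \mathcal L\{\operatorname{Ind} W\}$.
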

\begin{proof} This follows from Lemma \ref{colored-braid-calculation} because pushforward of local systems along covering spaces corresponds to induced representations of their fundamental groups. \end{proof}

\begin{lemma}\label{representation-isomorphism} Suppose now that $M = M^{ \hchi,E,\gene}$ for some basis $E$ of $\mathbb Z^r$ and bicharacter $\hchi$. Then the diagonal braided vector space $V'_{\hchi,E}$ defines a representation $V^{ '\otimes (\hat{d})}_{\hchi ,E} $ of the braid group. Let $ \Gr^{ \sum_{i=1}^r d_i e_i} V^{ '\otimes (\hat{d})}_{\hchi ,E}$ be its associated graded component with grade $\sum_{i=1}^r d_i e_i$.

We have an isomorphism of braid group representations
\[  \Gr^{ \sum_{i=1}^r d_i e_i} V^{ '\otimes (\hat{d})}_{\hchi ,E} \cong  \operatorname{Ind}_{B_{d_1,\dots,d_r}}^{B_{\hat{d}}} W. \] \end{lemma}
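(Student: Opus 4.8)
The plan is to exhibit an explicit basis on both sides indexed by the same combinatorial set, and to check that the braid group action matches. Recall that $V'_{\hchi,E}$ has basis $x_1,\dots,x_r$ in degrees $e_1,\dots,e_r$, so $V_{\hchi,E}^{'\otimes \hat d}$ has basis given by the pure tensors $x_{s_1}\otimes\cdots\otimes x_{s_{\hat d}}$ for sequences $(s_1,\dots,s_{\hat d})\in\{1,\dots,r\}^{\hat d}$, and the associated graded component $\Gr^{\sum d_i e_i}$ is spanned by those tensors in which the index $i$ appears exactly $d_i$ times. Such sequences are exactly the cosets $B_{\hat d}/B_{d_1,\dots,d_r}$ once we fix a reference sequence $(1^{d_1}2^{d_2}\cdots r^{d_r})$: a permutation $\tau\in S_{\hat d}$ rearranges the reference sequence, and two permutations give the same rearrangement iff they differ by an element of $S_{d_1}\times\cdots\times S_{d_r}$. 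On the induced representation side, $\operatorname{Ind}_{B_{d_1,\dots,d_r}}^{B_{\hat d}}W$ has, by definition, a basis consisting of $g\otimes w_0$ as $g$ ranges over a set of coset representatives for $B_{d_1,\dots,d_r}$ in $B_{\hat d}$, where $w_0$ spans the one-dimensional $W$. The natural choice of representatives is the set of Matsumoto lifts $\tilde\tau$ of the minimal-length coset representatives $\tau$ for $S_{d_1}\times\cdots\times S_{d_r}$ in $S_{\hat d}$. So on both sides the basis is indexed by $B_{\hat d}/B_{d_1,\dots,d_r}$, equivalently by colored sequences.

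First I would set up the candidate isomorphism $\Phi$ on basis elements: send the coset of $\tilde\tau$ (i.e. $\tilde\tau\otimes w_0$) to $\tilde\tau\cdot(x_1^{\otimes d_1}\otimes x_2^{\otimes d_2}\otimes\cdots\otimes x_r^{\otimes d_r})$, the image of the reference tensor under the braid action on $V_{\hchi,E}^{'\otimes\hat d}$. Since the braid $\sigma_i$ acts on $V'_{\hchi,E}^{\otimes\hat d}$ by multiplying by $-\hchi(e_{s_i},e_{s_{i+1}})$ and swapping the $i,i+1$ tensor factors, the reference tensor and its translates by distinct minimal-length $\tilde\tau$'s all have distinct underlying index sequences, hence are linearly independent; they span $\Gr^{\sum d_i e_i}$ because all colored sequences arise. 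Thus $\Phi$ is a well-defined linear isomorphism.

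Next I would verify $B_{\hat d}$-equivariance. This amounts to checking that, for $g\in B_{\hat d}$ and a coset representative $\tilde\tau$, writing $g\tilde\tau = \tilde\tau'\, h$ with $\tilde\tau'$ a coset representative and $h\in B_{d_1,\dots,d_r}$, we have $g\cdot\Phi(\tilde\tau\otimes w_0) = \Phi(g\cdot(\tilde\tau\otimes w_0)) = W(h)\,\Phi(\tilde\tau'\otimes w_0)$. Unwinding, this is the statement that $h$ acts on the reference tensor $x_1^{\otimes d_1}\otimes\cdots\otimes x_r^{\otimes d_r}$ in $V'_{\hchi,E}^{\otimes\hat d}$ precisely by the scalar $W(h)$. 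Since $h$ lies in the colored braid group, which stabilizes the color sequence of the reference tensor, it does act on that tensor by a scalar; so the content is that this scalar equals $W(h)$ for the specific one-dimensional representation $W$ of Lemma \ref{colored-braid-calculation}. It suffices to check this on the generators of the abelianization of $B_{d_1,\dots,d_r}$: the braid where a strand of color $i$ loops around a strand of color $j$ (for $i<j$), and the braid where one strand of color $i$ passes another of color $i$. For the first, the loop is $\sigma^2$ for an appropriate adjacent transposition, so the scalar is $(-\hchi(e_i,e_j))(-\hchi(e_j,e_i)) = \hchi(e_i,e_j)\hchi(e_j,e_i) = \gene^{M_{ij}}$ by definition of $M^{\hchi,E,\gene}$, matching $W$. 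For the second, a single crossing of two equal-color strands contributes the scalar $-\hchi(e_i,e_i) = \gene^{M_{ii}}$ by definition of $M_{ii}^{\hchi,E,\gene}$, again matching $W$; here the sign flip from $V'$ versus $V$ is exactly what converts $-\hchi(e_i,e_i)$ into $\gene^{M_{ii}}$, consistently with the appearance of $\res{f_i'}{f_i}_{\xi\chi}$ / discriminant monodromy in Lemma \ref{colored-braid-calculation}.

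The main obstacle I anticipate is purely bookkeeping: being careful that the Matsumoto lifts furnish an honest set of coset representatives for $B_{d_1,\dots,d_r}$ in $B_{\hat d}$ (this is where one uses that minimal-length coset representatives in $S_{\hat d}/(S_{d_1}\times\cdots\times S_{d_r})$ lift multiplicatively, i.e. $\widetilde{\tau\sigma} = \tilde\tau\tilde\sigma$ when $\ell(\tau\sigma)=\ell(\tau)+\ell(\sigma)$ with $\sigma$ in the parabolic), and tracking the braiding scalars $-\hchi$ through the reduction $g\tilde\tau = \tilde\tau' h$ so that no stray sign or root of unity is lost. Once the coset-representative compatibility is in place, the equivariance check reduces to the two generator computations above, which are immediate from the definition of $M^{\hchi,E,\gene}$. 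Everything else — well-definedness, bijectivity, linearity — is formal.
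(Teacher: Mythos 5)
Your proof is correct and follows essentially the same route as the paper: both arguments identify $\Gr^{\sum d_i e_i} V^{'\otimes\hat d}_{\hchi,E}$ as a direct sum of lines permuted transitively by $B_{\hat d}$ with stabilizer the colored braid group, and both reduce to the same two generator computations ($-\hchi(e_i,e_i)=\gene^{M_{ii}}$ for a same-color crossing and $\hchi(e_i,e_j)\hchi(e_j,e_i)=\gene^{M_{ij}}$ for a loop). The paper simply invokes the abstract criterion that such a representation is induced from the stabilizer's character, whereas you unwind that criterion explicitly via coset representatives; note that your worry about Matsumoto lifts is moot, since $B_{d_1,\dots,d_r}$ is the full preimage of $S_{d_1}\times\cdots\times S_{d_r}$, so any lifts of coset representatives in the symmetric group are automatically coset representatives in the braid group.
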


\begin{proof} The vector space $V^{ '\otimes (\hat{d})}_{\hchi ,E}$ admits a basis consisting of tensor products of $\hat{d}$-tuples of the $x_i$ with $x_i$ graded as $e_i$. Thus the component $ \Gr^{ \sum_{i=1}^r d_i e_i} V^{ '\otimes (\hat{d})}_{\hchi ,E}$ admits a basis consisting of tensor products where $x_i$ appears exactly $d_i$ times (recalling that the $e_i$ are a basis and hence linearly independent). Each basis vector generates a one-dimensional subspace, whose direct sum is the full vector space, and the action of the braid group sends each one-dimensional subspace to another one-dimensional subspace, with the action on the subspaces given by the homomorphism $B_n \to S_n$ composed with the action of $S_n$ on $n$-tuples of symbols $x_1,\dots, x_r$ where the $i$th symbol occurs $d_i$ times. This condition, that a representation is a sum of subspaces permuted by the elements of a group, is exactly the condition for the representation to be an induced representation: Specifically, the representation is $\operatorname{Ind}_H^G W$ where $W$ is one subspace, $H$ is the stabilizer of that subspace, and $G$ is the full group.

The stabilizer of the subspace generated by $x_1^{ \otimes d_1} \otimes \dots\otimes x_r^{\otimes d_r}$ is the colored braid group on $\hat{d}$ strands, with $d_i$ strands of the $i$th color. A braid where a strand of the $i$th color passes another strand of the $i$th color acts on this subspace as $-\hchi(e_i,e_i) = \gene^{M_{ii}}$. A braid where a strand of the $i$th color loops around a strand of the $j$th color can be expressed as a series of simple braids to bring those two strands next to each other, a simple braid where the strand of the $i$th color crosses over the strand of the $j$th color, a simple braid where the strand of the $j$th color now crosses over the strand of the $i$th color, followed by the inverse of the original sequence of simple braids. Each simple braid induces a scalar multiplication, with the initial sequence and final sequence cancelling, and the two middle steps contributing $-\hchi(e_i,e_j)$ and $-\hchi(e_j,e_i)$ respectively, so this braid acts by multiplication by \[(-\hchi(e_i,e-j)) (-\hchi(e_j,e_i)) = \hchi(e_i,e_j)\hchi(e_j,e_i) = \gene^{M_{ij}}.\]

This identifies the one-dimensional character with the character $W$. \end{proof} 

It follows from Lemma \ref{representation-isomorphism} that

\begin{equation}\label{top-to-KS} \begin{aligned} 
& H^{j-\hat{d}}  ( \operatorname{Sym}^{\hat{d} } (\mathbb C),  \overline{u}_{!*} \mathcal L\{\operatorname{ind}_{B_{d_1,\dots,d_r}}^{B_{\hat{d}}} W\} [\hat{d}]) \\
= & H^{j-\hat{d}}  ( \operatorname{Sym}^{\hat{d} } (\mathbb C),  \overline{u}_{!*} \mathcal L\{ V^{ '\otimes (\hat{d})}_{\hchi ,E}[\sum_{i=1}^r d_i e_i]  \} [\hat{d}] )\\
 = & H^{j-\hat{d}}  ( \operatorname{Sym}^{\hat{d} } (\mathbb C),  \overline{u}_{!*} \mathcal L\{ V^{ '\otimes (\hat{d})}_{\hchi ,E} \} [\hat{d}]) [\sum_{i=1}^r d_i e_i] \end{aligned}\end{equation} since taking the sheaf associated to a local system, intermediate extension, and cohomology all commute with taking a graded component.

The cohomology of the intermediate extension was essentially computed by Kapranov and Schechtman and this leads to the following lemma:

\begin{lemma}\label{ks-to-nichols} We have (where $\mathbf 1$ is the trivial one-dimensional module) \[  \Gr^{ \sum_{i=1}^r d_i e_i}  H^{j-\hat{d}}  ( \operatorname{Sym}^{\hat{d} } (\mathbb C),  \overline{u}_{!*} \mathcal L\{ V^{ '\otimes (\hat{d})}_{\hchi ,E} \} [\hat{d}]) 
\cong  \Biggl(  \Gr^{ -\sum_{i=1}^r d_i e_i}  \operatorname{Ext}^{\hat{d}-j } _{ \mathfrak B( V_{\hchi,E} )} (\mathbf 1, \mathbf 1)  \Biggr)^\vee. \]\end{lemma}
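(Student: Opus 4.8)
\textbf{Proof plan for Lemma \ref{ks-to-nichols}.}

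The plan is to invoke the main theorem of Kapranov--Schechtman \cite{KapranovSchechtman}, which relates the cohomology of an intermediate extension (to $\operatorname{Sym}^{\hat d}(\mathbb C)$) of a local system coming from a braided vector space $V$ to the $\operatorname{Ext}$ and $\operatorname{Tor}$ groups of the Nichols algebra $\mathfrak B(V)$. The essential content is that for a graded braided vector space $V$ with braiding $\sigma$, the local system $\mathcal L\{V^{\otimes \hat d}\}$ on $\operatorname{Conf}_{\hat d}(\mathbb C)$ (the representation being $V^{\otimes \hat d}$ with the braid group acting through $\sigma$), its intermediate extension $\overline u_{!*}$ to the symmetric power, and the cohomology of that intermediate extension, assemble (over all $\hat d$, weighted by grading) into the bar/Koszul complex computing $\operatorname{Tor}^{\mathfrak B(V)}(\mathbf 1, \mathbf 1)$, up to appropriate shifts and a sign twist on the braiding. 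I would state this as the precise input, cite the relevant theorem number in \cite{KapranovSchechtman}, and then just do the bookkeeping to match conventions.

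The key steps, in order, are as follows. First, identify the correct $V$: the Kapranov--Schechtman correspondence uses the braiding as it appears directly in the monodromy of the local system, and in our situation (by Lemma \ref{colored-braid-calculation} and the winding-number computation there) the monodromy of a simple positive crossing of strands of colors $i,j$ is $-\hchi(e_i,e_j)$ rather than $\hchi(e_i,e_j)$ — this is exactly why the paper introduced $V'_{\hchi,E}$, the twist of $V_{\hchi,E}$ by negating the braiding. So the local system $\mathcal L\{V'^{\otimes \hat d}_{\hchi,E}\}$ is the one that arises geometrically, while the Nichols algebra that appears on the homological-algebra side is $\mathfrak B(V_{\hchi,E})$; I would point out that Kapranov--Schechtman's theorem, in the form they state it, already incorporates precisely this sign discrepancy (the half-monodromy versus the quantum symmetrizer), so no extra work is needed beyond invoking it with the right normalization. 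Second, track the cohomological degree and Tate/shift conventions: the intermediate extension is applied to $\mathcal L\{\,\cdot\,\}[\hat d]$, so $H^{j-\hat d}$ of the shifted complex is $H^j$ of the unshifted one; matching this against the Kapranov--Schechtman identification of $H^\bullet$ with $\operatorname{Tor}_\bullet$ (or $\operatorname{Ext}^\bullet$ after dualizing) accounts for the appearance of $\operatorname{Ext}^{\hat d - j}$ and the dual $(\;\cdot\;)^\vee$ on the right-hand side. Third, track the grading: the $\mathbb Z^r$-grading on $V'^{\otimes \hat d}_{\hchi,E}$ (with $x_i$ in degree $e_i$) matches, on the configuration-space side, the decomposition into colored strata, and on the Nichols-algebra side the internal grading of $\mathfrak B(V_{\hchi,E})$; under dualization the grade $\sum_i d_i e_i$ on the left corresponds to the grade $-\sum_i d_i e_i$ on $\operatorname{Ext}$. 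Putting these three matchings together yields the stated isomorphism.

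The main obstacle I expect is not conceptual but bookkeeping-intensive: making the conventions of \cite{KapranovSchechtman} (their grading, their shift, their choice of which side carries the sign twist, and whether they compute $\operatorname{Ext}$ of $\mathbf 1$ or $\operatorname{Tor}$ or cohomology of the algebra in some other normalization) line up exactly with the conventions fixed in \S\ref{ss:connections-cohomology} and \S\ref{nichols-intro} of the present paper. In particular one must be careful that the "quantum symmetrizer" defining the Nichols algebra here uses $\hchi(e_{s_i},e_{s_j})$ while the geometric half-monodromy is $-\hchi$, and verify that this is exactly the discrepancy Kapranov--Schechtman already build into their statement (so that $\mathfrak B(V_{\hchi,E})$, not $\mathfrak B(V'_{\hchi,E})$, is what appears); getting this sign right is the one place an error could creep in. A secondary subtlety is that \cite{KapranovSchechtman} work with representations of the full braid group and the intermediate extension over all of $\operatorname{Sym}^{\hat d}$, which is exactly the situation reached in Lemma \ref{zero-to-top} and \eqref{top-to-KS}, so no further reduction (e.g.\ to the colored configuration space) is required — this is why the previous lemmas were arranged to funnel into the full symmetric power. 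Once the conventions are pinned down, the proof is a one-line citation plus the degree/grading dictionary above.
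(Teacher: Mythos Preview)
Your approach is essentially the paper's: invoke Kapranov--Schechtman to identify the perverse sheaf with (the value of their functor $L_{\hat d}$ on) $\mathfrak B(V_{\hchi,E})$, then chase degrees, gradings, and the $\operatorname{Tor}$/$\operatorname{Ext}$ duality. One point you gloss over but the paper makes explicit: the Kapranov--Schechtman theorem (as cited, their Theorem~3.3.1(d)) computes the \emph{stalk at $0$} of the intermediate extension, not its global cohomology; the paper bridges this by observing that the sheaf is $\mathbb G_m$-equivariant for the scaling action, so a contraction/localization argument identifies $H^*(\operatorname{Sym}^{\hat d}(\mathbb C),\,\cdot\,)$ with the stalk at the origin. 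This is a short standard step, but it is not part of Kapranov--Schechtman's statement and should be inserted where you say ``one-line citation.''
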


\begin{proof} The proof is very similar to \cite[Corollary 3.3.4]{KapranovSchechtman}.

Kaparanov and Schechtman define \cite[Theorem 3.3.1]{KapranovSchechtman} a series of functors $L_n$ from primitive bialgebras to perverse sheaves on $\operatorname{Sym}^n (\mathbb C)$ which combine into a functor $L$ to certain tuples of perverse sheaves. It follows from \cite[Theorem 3.3.3(c)]{KapranovSchechtman} that $ L_{\hat{d}} ( \mathfrak B(V_{\hchi,E} )) =  \overline{u}_{!*} \mathcal L\{ V^{ '\otimes (\hat{d})}_{\hchi ,E} \} [\hat{d}] $.

The scaling action of $\mathbb G_m$ on $\mathbb C$ induces a scaling action of $\mathbb G_m$ on $\mathbb C$. The perverse sheaf $ \overline{u}_{!*} \mathcal L \{ V^{ '\otimes (\hat{d})}_{\hchi ,E} \}[\hat{d}] $ is equivariant under this action because it is locally constant on the diagonal stratification which is $\mathbb G_m$-invariant, so the cohomology $H^{j-\hat{d}}  ( \operatorname{Sym}^{\hat{d} } (\mathbb C),  \overline{u}_{!*} \mathcal L\{ V^{ '\otimes (\hat{d})}_{\hchi ,E} \} [\hat{d}] )$ of this complex is isomorphic to the stalk at $0$ by localization.

By \cite[Theorem 3.3.1(d)]{KapranovSchechtman}, the stalk of $\overline{u}_{!*} \mathcal L\{ V^{ '\otimes (\hat{d})}_{\hchi ,E} \}[\hat{d}] $ at $0$ is isomorphic to the cohomology of the bar complex computing $ \Gr^{ \hat{d}} \operatorname{Tor}_* ^{ \mathfrak B( V_{\hchi,E} )} (\mathbf 1, \mathbf 1)$, normalized so that $\operatorname{Tor}_j$ occurs in degree $-j$.

Matching degrees, \[ H^{j-\hat{d}}  ( \operatorname{Sym}^{\hat{d} } (\mathbb C),  \overline{u}_{!*} \mathcal L\{ V^{ '\otimes (\hat{d})}_{\hchi ,E} \} )= \Gr^{\hat{d}} \operatorname{Tor}_{\hat{d}-j} ^{ \mathfrak B( V_{\hchi,E} )} (\mathbf 1, \mathbf 1).\]

The relevant $\mathbb Z$-grading is related to the $\mathbb Z^r$-grading by sending a degree $\sum_{i=1}^r f_i e_i \in \mathbb Z^r$ to a degree $\sum_{i=1}^r f_i \in \mathbb Z$. In particular, it sends the degree $ \sum_{i=1}^r d_i e_i$ to $\hat{d}$. Thus, taking this Tor group,  \[  \Gr^{ \sum_{i=1}^r d_i e_i} \Gr^{ \hat{d} } \operatorname{Tor}_{\hat{d}-j} ^{ \mathfrak B( V_{\hchi,E} )} (\mathbf 1, \mathbf 1)=  \Gr^{ \sum_{i=1}^r d_i e_i}\operatorname{Tor}_{\hat{d}-j} ^{ \mathfrak B( V_{\hchi,E} )} (\mathbf 1, \mathbf 1).\]

Hence \[  \Gr^{ \sum_{i=1}^r d_i e_i} H^{j-\hat{d}}  ( \operatorname{Sym}^{\hat{d} } (\mathbb C),  \overline{u}_{!*} \mathcal L\{ V^{ '\otimes (\hat{d})}_{\hchi ,E} \} [\hat{d}]) \cong  \Gr^{ \sum_{i=1}^r d_i e_i} \operatorname{Tor}_{\hat{d}-j} ^{ \mathfrak B( V_{\hchi,E} )} (\mathbf 1, \mathbf 1) \] \[
\cong \Biggl(  \Gr^{ -\sum_{i=1}^r d_i e_i} \operatorname{Ext}^{\hat{d}-j } _{ \mathfrak B( V_{\hchi,E} )} (\mathbf 1, \mathbf 1)  \Biggr)^\vee \]
by the duality between $\operatorname{Ext}$ and $\operatorname{Tor}$ (arising from the fact that $\otimes \mathbf 1$ and $\operatorname{Hom}( -, \mathbf 1)$ are equivalent as functors from modules to vector spaces after composing with the dual vector space functor). \end{proof}

\begin{corollary}\label{K-to-nichols}There exists an isomorphism
\[ H^j( \mathbb A^{d_1 }\times  \dots \times \mathbb A^{ d_r }_{\overline{\mathbb F}_q}, K_{d_1,\dots, d_r}) \cong   \Biggl(  \Gr^{ -\sum_{i=1}^r d_i e_i}  \operatorname{Ext}^{\hat{d}-j } _{ \mathfrak B( V_{\hchi,E} )} (\mathbf 1, \mathbf 1)  \Biggr)^\vee\]
\end{corollary}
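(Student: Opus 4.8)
\textbf{Proof plan for Corollary \ref{K-to-nichols}.}

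The plan is to simply concatenate the chain of identifications established in Lemmas \ref{0-to-p}, \ref{zero-to-top}, \ref{representation-isomorphism}, and \ref{ks-to-nichols}, together with the definitional unwinding \eqref{cohomology-by-definition} and the topological/representation-theoretic reformulations \eqref{0-to-top} and \eqref{top-to-KS}. Concretely, I would start from the definition $K_{d_1,\dots,d_r} = u_{!*}(\mathcal L_\chi(F_{d_1,\dots,d_r})[\hat d])[-\hat d]$, so that $H^j(\mathbb A^{d_1}\times\cdots\times\mathbb A^{d_r}_{\overline{\mathbb F}_q}, K_{d_1,\dots,d_r}) = H^{j-\hat d}(\mathbb A^{d_1}\times\cdots\times\mathbb A^{d_r}_{\overline{\mathbb F}_q}, u_{!*}(\mathcal L_\chi(F_{d_1,\dots,d_r})[\hat d]))$. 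Then Lemma \ref{0-to-p} moves this to the analogous group over $\mathbb C$; the comparison of \'etale and singular cohomology (used implicitly to pass to the topological setting) together with \eqref{0-to-top} and Lemma \ref{zero-to-top} rewrites it as $H^{j-\hat d}(\operatorname{Sym}^{\hat d}(\mathbb C), \overline u_{!*}\mathcal L\{\operatorname{Ind}_{B_{d_1,\dots,d_r}}^{B_{\hat d}} W\}[\hat d])$. Lemma \ref{representation-isomorphism} (which requires the hypothesis $M = M^{\hchi,E,\gene}$, so this must be stated as an assumption of the corollary) identifies the induced representation with $\Gr^{\sum_i d_i e_i} V'^{\otimes \hat d}_{\hchi,E}$, and \eqref{top-to-KS} then produces the graded piece of $H^{j-\hat d}(\operatorname{Sym}^{\hat d}(\mathbb C), \overline u_{!*}\mathcal L\{V'^{\otimes\hat d}_{\hchi,E}\}[\hat d])$ indexed by $\sum_i d_i e_i$. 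Finally Lemma \ref{ks-to-nichols}, which is the Kapranov--Schechtman computation, identifies this with $\bigl(\Gr^{-\sum_i d_i e_i}\operatorname{Ext}^{\hat d - j}_{\mathfrak B(V_{\hchi,E})}(\mathbf 1,\mathbf 1)\bigr)^\vee$, completing the chain.

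I would present this as a short sequence of displayed isomorphisms, each labeled with the lemma or displayed equation that justifies it, being careful about one bookkeeping point: the graded component $\Gr^{\sum_i d_i e_i}$ is introduced at the step using Lemma \ref{representation-isomorphism}, but for the cohomology of $K_{d_1,\dots,d_r}$ itself no grading appears, since $K_{d_1,\dots,d_r}$ is built from $F_{d_1,\dots,d_r}$ with the $d_i$ fixed. The resolution is that the source $H^{j-\hat d}(\operatorname{Sym}^{\hat d}(\mathbb C), \overline u_{!*}\mathcal L\{\operatorname{Ind}_{B_{d_1,\dots,d_r}}^{B_{\hat d}}W\}[\hat d])$ already \emph{is} the graded piece in grade $\sum_i d_i e_i$ after the identification of the induced representation with $\Gr^{\sum_i d_i e_i}V'^{\otimes\hat d}_{\hchi,E}$ — i.e. no information is lost, we are merely relabeling. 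This is exactly how \eqref{top-to-KS} is phrased, so the argument is self-consistent; I would just add a sentence making this explicit so the reader does not worry that a $\Gr$ appeared from nowhere.

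The bulk of the work — and potentially the only genuine obstacle — has already been discharged in the preceding lemmas; the corollary itself is a bookkeeping concatenation. The one place where I would be slightly careful is the transition between \'etale cohomology over $\mathbb C$ and singular cohomology of the associated analytic space used to justify \eqref{0-to-top}: one needs that intermediate extension in the perverse $t$-structure commutes with analytification, which is standard (e.g. via the Riemann--Hilbert correspondence or directly, since $\overline u_{!*}$ of a shifted local system is characterized by a support/cosupport condition preserved by analytification) but should be cited rather than reproved. Since all of this is already packaged in the lemmas as stated, I expect no real difficulty: the proof will be three or four lines of displayed math followed by the remark about the grading. The statement of the corollary should be amended to carry the standing hypothesis $M = M^{\hchi,E,\gene}$ inherited from Lemma \ref{representation-isomorphism}.

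\begin{proof}
Assume $M = M^{\hchi,E,\gene}$, as in Lemma \ref{representation-isomorphism}. By the definition of $K_{d_1,\dots,d_r}$,
\[ H^j( \mathbb A^{d_1}\times\cdots\times\mathbb A^{d_r}_{\overline{\mathbb F}_q}, K_{d_1,\dots,d_r}) = H^{j-\hat d}(\mathbb A^{d_1}\times\cdots\times\mathbb A^{d_r}_{\overline{\mathbb F}_q}, u_{!*}(\mathcal L_\chi(F_{d_1,\dots,d_r})[\hat d])). \]
By Lemma \ref{0-to-p} this is isomorphic to $H^{j-\hat d}(\mathbb A^{d_1}\times\cdots\times\mathbb A^{d_r}_{\mathbb C}, u_{!*}(\mathcal L_\chi(F_{d_1,\dots,d_r})[\hat d]))$, which by \eqref{0-to-top} and Lemma \ref{zero-to-top} equals
\[ H^{j-\hat d}(\operatorname{Sym}^{\hat d}(\mathbb C), \overline u_{!*}\mathcal L\{\operatorname{Ind}_{B_{d_1,\dots,d_r}}^{B_{\hat d}}W\}[\hat d]), \]
with $W$ the representation of Lemma \ref{colored-braid-calculation}. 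By Lemma \ref{representation-isomorphism} we have $\operatorname{Ind}_{B_{d_1,\dots,d_r}}^{B_{\hat d}}W \cong \Gr^{\sum_{i=1}^r d_i e_i} V'^{\otimes\hat d}_{\hchi,E}$ as braid group representations, so by \eqref{top-to-KS} the group above equals
\[ \Gr^{\sum_{i=1}^r d_i e_i} H^{j-\hat d}(\operatorname{Sym}^{\hat d}(\mathbb C), \overline u_{!*}\mathcal L\{V'^{\otimes\hat d}_{\hchi,E}\}[\hat d]). \]
Finally, by Lemma \ref{ks-to-nichols} this is isomorphic to $\bigl(\Gr^{-\sum_{i=1}^r d_i e_i}\operatorname{Ext}^{\hat d - j}_{\mathfrak B(V_{\hchi,E})}(\mathbf 1,\mathbf 1)\bigr)^\vee$, as claimed.
\end{proof}
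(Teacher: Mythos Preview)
Your proposal is correct and takes essentially the same approach as the paper: the paper's own proof is a single sentence stating that the corollary follows by combining \eqref{cohomology-by-definition}, Lemma \ref{0-to-p}, \eqref{0-to-top}, Lemma \ref{zero-to-top}, \eqref{top-to-KS}, and Lemma \ref{ks-to-nichols}. You have written out this chain more explicitly, and your observation that the hypothesis $M = M^{\hchi,E,\gene}$ (needed for Lemma \ref{representation-isomorphism}, which underlies \eqref{top-to-KS}) should be made explicit is a valid point that the paper leaves implicit in the ambient setup of the section.
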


\begin{proof}
    This follows by combining \eqref{cohomology-by-definition}, Lemma \ref{0-to-p}, \eqref{0-to-top}, Lemma \ref{zero-to-top}, \eqref{top-to-KS}, and Lemma \ref{ks-to-nichols}.
\end{proof}

\subsection{Relations between cohomology groups from the functional equations}\label{ss:dimensions-cohomology} 

\begin{theorem}\label{homological-relation-theorem} Fix a finite field $\mathbb F_q$ and a character $\chi \colon \mathbb F_q^\times \to \mathbb C^\times$ of order $n$. Let $r$ be a natural number and $M$ an $r\times r$ symmetric matrix of integers in $[0,n)$.  Let $K_{d_1,\dots,d_r}^{M}$ be the shifted perverse sheaf on $\mathbb A^{d_1} \times \dots \times \mathbb A^{d_r} = \mathbb A^{\hat{d}}$ defined using the matrix $M$.

Fix $i$ from $1$ to $r$ and nonnegative integers $d_0,\dots, d_{i-1}, d_{i+1},\dots, d_r$.
Then
\begin{enumerate}
    \item If $\gcd(n, M_{ii} + M_{ij}/2)\mid M_{ij}$ for all $j$, let $n_i = \frac{n}{ \gcd(n, M_{ii} + \frac{n}{2})}$ and for all $j$ let $n_{ij}$ be the unique integer in $[0,n_i)$ with $-M_{ij} \equiv n_{ij} (M_{ii}+\frac{n}{2} ) \bmod n$. Let 
    \[ h^j_{d_i} = \dim H^{\hat{d}-j}(\mathbb A^{\hat{d}}_{\overline{\mathbb F}_q}, K^{M}_{d_1,\dots, d_r})\]  and $\tilde{d} =\sum_{j \neq i} d_j n_{ij}$.
    Then we have 
    \begin{equation}\label{hom-k-g-state} h^j_{d_i} - h^{j-1}_{d_i - (2d_i-1-\tilde{d})\%n_i} = h^{j}_{\tilde{d}+1-d_i - (\tilde{d}+1-2d_i)\% n_i} - h^{j-1}_{\tilde{d}+1-d_i-n_i} \end{equation}
    unless $2d_i \equiv 1 + \tilde{d}\bmod n_i$, in which case 
    \begin{equation}\label{hom-k-s-state} h^j_{d_i} = h^j_{\tilde{d}+1-d_i-n_i}.\end{equation}

    \item If $\hchi(e_i,e_i)=-1$, let \[ \tilde{d} = \sum_{\substack{j \neq i\\ M_{ij}\neq 0 }} d_j .\] let
    \[ h^{j,M}_{d_i} = \dim H^{\hat{d}-j}(\mathbb A^{\hat{d}}_{\overline{\mathbb F}_q}, K^{M}_{d_1,\dots, d_r})\] and
        \[ h^{j, \tau_i(M)}_{d_i}= \dim H^{\hat{d}-j}(\mathbb A^{\hat{d}}_{\overline{\mathbb F}_q}, K^{\tau_i(M)}_{d_1,\dots, d_r})\] with $\tau_i(M)$ defined as in \S\ref{ss-multivariable-functional-equation}.
Then
    \begin{equation}\label{hom-d-g-state} h^{j,M}_{d_i}= h^{j,\tau_i(M)}_{\tilde{d}-d_i-1} \end{equation} unless $\sum_{k\neq i} d_k M_{ik} \equiv 0 \bmod n$, in which case
\begin{equation}\label{hom-d-s-state} h^{j,M}_{d_i}- h^{j-1,M}_{d_i-1} = h^{j,\tau_i(M)}_{\tilde{d}-d_i} - h^{j-1,\tau_i(M)}_{\tilde{d}-d_i-1}  .\end{equation}
    
\end{enumerate}

\end{theorem}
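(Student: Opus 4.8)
The plan is to deduce Theorem \ref{homological-relation-theorem} from the multivariable functional equations of Theorems \ref{TheoremMultiFE} and \ref{dirichlet-multivariable-global} together with the cohomological interpretation of the coefficients from \S\ref{ss:connections-cohomology}. Recall that $a(T^{d_1},\dots,T^{d_r};q,\chi,M) = \sum_j (-1)^j \tr(\Frob_q, H^j(\mathbb A^{\hat d}_{\overline{\mathbb F}_q}, K^M_{d_1,\dots,d_r}))$, and that these cohomology groups are pure of a known weight because $K_{d_1,\dots,d_r}$ is a shifted perverse sheaf which is an intermediate extension of a pure lisse sheaf (so each $H^{\hat d - j}$ is pure of weight $\hat d - j$, hence the trace of Frobenius on it is a sum of $h^j_{d_i}$ algebraic numbers each of absolute value $q^{(\hat d-j)/2}$, with a uniform sign $(-1)^{\hat d - j}$ by purity). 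The first step is therefore to isolate the single variable $x_i$: fix $k_j = d_j \bmod n$ for $j \ne i$, and specialize the multivariable functional equation to the subseries $D_T(x_i, (T^{d_1},\dots,\widehat{T^{d_i}},\dots,T^{d_r}), k_i)$ obtained by setting $f_j = T^{d_j}$. By Axiom \ref{Axiom3} and translation invariance (the uniqueness argument of \cite[Theorem 1.1]{s-amds}), summing over linear $\pi$ is the same as multiplying by $q$, so the coefficients $a(T^{\vec d})$ are up to the fixed factor $\res{\pi'}{\pi}_\chi^{\sum d_i M_{ii}} = 1$ (for $\pi = T$) the Weil-number data we want.

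Next I would extract, from Theorem \ref{TheoremFE} (the single-variable functional equation for $D(x,\vec f,k)$), the scalar identity for the Frobenius traces. In case (2), $M_{ii}=0$ forces $n_i = 2$, the single-variable series $D(x_i, T^{\vec d}, k_i)$ is governed by Proposition \ref{PropDirichletFE}/Theorem \ref{TheoremDirichletFE}, and the functional equation relates $D(x_i, T^{\vec d}; M)$ to $D(1/(qx_i), T^{\vec d}; \tau_i(M))$ with a multiplicative factor that is a monomial in $x_i$ of degree $\tilde d - 1$ (when $n \nmid \sum d_k M_{ik}$) times a root number $\pm 1$, or a monomial times $(x_i-1)/(1-qx_i)$ (when $n \mid \sum d_k M_{ik}$). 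Now apply the Weil-number / purity bookkeeping: $(1-qx_i)^b D(x_i, T^{\vec d}; M)$ is a polynomial in $x_i$ whose coefficient of $x_i^{d_i}$ is $(-1)^{\hat d - d_i - (\sum_{j\neq i} d_j)}$ times a sum of $q$-Weil numbers forming a compatible system; comparing the coefficient of $x_i^{d_i}$ on both sides of the polynomial identity and matching weights (the weight of each Weil number contributing to $h^j$ is determined by $j$) yields exactly \eqref{hom-d-g-state} when $b=0$ and \eqref{hom-d-s-state} when $b=1$ — the $(1-qx_i)$ factor in the $b=1$ case produces the telescoping difference $h^{j,M}_{d_i} - h^{j-1,M}_{d_i-1}$, since multiplying a generating polynomial by $(1-qx)$ subtracts $q$ times the previous coefficient, and $q$ raises Weil weight by $2$, i.e. shifts $j$ by $1$. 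For case (1), the same strategy applies using the Kubota functional equation \eqref{GlobalFE} of Proposition \ref{PropGlobalFE} and Theorem \ref{TheoremFE}: here the relevant change of variables is $x_i \mapsto g^2/(q^2 x_i)$, the scattering matrix is the $n_i \times n_i$ matrix $\Gamma_{k,\ell}(x_i, \tilde d)$, and one reads off the diagonal versus off-diagonal versus special entries of $\Gamma$ to produce the four terms in \eqref{hom-k-g-state} and the collapse \eqref{hom-k-s-state}; the exponents $(2d_i - 1 - \tilde d)\%n_i$ appear precisely as the $(K+1-2k)\%n_i$ powers of $x$ in $\Gamma_{k,k}$, and the $\tilde d + 1 - d_i - n_i$ shift comes from the $(qx/g)^{1-n_i}$ in the special entry.

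The main obstacle will be the careful purity/sign bookkeeping: one must verify that the denominator $1 - q(qx_i/g)^{n_i}$ (resp.\ $1-qx_i$) is exactly accounted for so that after clearing it the coefficients are genuine compatible systems of Weil numbers with the sign predicted by purity of $K_{d_1,\dots,d_r}$, and that the change of variables $x_i \mapsto g^2/(q^2 x_i)$ — which in case (1) mixes a power of $q$ into $x_i$ — interacts correctly with the weight filtration. Concretely, writing $D(x_i, T^{\vec d}; M) = \sum_{d_i} c_{d_i}(q) x_i^{d_i}$ with $c_{d_i}(q)$ a sum of Weil numbers, the weight-$w$ part of $c_{d_i}$ contributes to $H^{\hat d - j}$ with $w = \hat d - j$; the functional equation sends weight $w$ Weil numbers to weight (something)$-w$ Weil numbers, and one needs $\hat d$ to be the correct "center" so that $j \mapsto j$ on the nose in the generic case \eqref{hom-d-g-state}/\eqref{hom-k-g-state} rather than $j \mapsto \hat d - j$. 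This is forced by the self-duality built into Axiom \ref{Axiom4} (Verdier duality) combined with the explicit root-number normalization; I expect this to be a page of moderately delicate but not deep verification, essentially parallel to the proof of Theorem \ref{TheoremFE} itself, after which the identities \eqref{hom-k-g-state}--\eqref{hom-d-s-state} drop out by inspection of the scattering matrices $\Gamma$ and $\Theta$.
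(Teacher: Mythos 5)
Your overall strategy---express $a(T^{d_1},\dots,T^{d_r};q,\chi,M)$ as an alternating trace of Frobenius on $H^*(\mathbb A^{\hat d}_{\overline{\mathbb F}_q},K^M_{d_1,\dots,d_r})$, invoke purity so that weight pins down cohomological degree, and then convert a functional equation in the variable $x_i$ into identities of Betti numbers---is exactly the paper's. But three points need repair. First, the functional equation you should be quoting is the \emph{local} one: the ordinary (non-compactly-supported) cohomology groups appearing in $h^j_{d_i}$ compute the stalk of $K_{d_1,\dots,d_r}$ at $(T^{d_1},\dots,T^{d_r})$, so the relevant generating function is $D_T(x, T^{\vec d}, k)$ and the relevant scattering matrix is $\Gamma_{T,k,\ell}$ from Proposition \ref{PropLocalFE} (resp.\ the local Dirichlet equation \eqref{DirichletLocalFE}), not the global $\Gamma_{k,\ell}$ of Proposition \ref{PropGlobalFE}; the global equation governs $H^j_c$ and the sum over all $f_i$ of degree $d_i$. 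Second, purity of $H^{\hat d-j}$ is not automatic for an intermediate extension on an affine variety: the paper gets it by combining the upper weight bound on stalks of a pure complex with the lower weight bound from Weil II on global cohomology, which only bite simultaneously because the stalk at $(T^{d_1},\dots,T^{d_r})$ coincides with the global cohomology (\cite[Lemma 2.16]{s-amds}). Relatedly, purity does not give you a ``uniform sign $(-1)^{\hat d-j}$'' for the trace; what makes the weight-extraction $\alpha\mapsto (-u)^{w}$ legitimate is that the identity of traces holds over every extension $\mathbb F_{q^e}$, forcing the two sides to carry the same multiset of Weil numbers.

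The most substantive gap is in case (1). After clearing the denominator $1-q^{-1}\left(\tfrac{qx}{g}\right)^{n_i}$ and extracting the coefficient of $u^{\sum_{j\neq i}d_j - j}x^{d_i}$, what you obtain is \emph{not} \eqref{hom-k-g-state} but a four-term ``pre''-identity of the shape
\begin{equation*}
h^j_{d_i}-h^{j-2}_{d_i-n_i} \;=\; h^{j-1}_{\tilde d+1-d_i}-h^{j-1}_{\tilde d+1-d_i-n_i}+h^{j}_{\tilde d+1-d_i-(\tilde d+1-2d_i)\%n_i}-h^{j-2}_{\tilde d+1-d_i-(\tilde d+1-2d_i)\%n_i},
\end{equation*}
because the cleared denominator contributes the extra $-h^{j-2}_{d_i-n_i}$ on the left and the diagonal and off-diagonal entries of $\Gamma$ each contribute a difference of two terms on the right. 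Passing from this to the stated two-term identities \eqref{hom-k-g-state} and \eqref{hom-k-s-state} requires summing an infinite telescoping family of shifted instances of the pre-identity (convergent because $h^j=0$ for $j<0$), and in the generic case one must interleave two such families indexed by $d_i$ and $d_i'=d_i-(2d_i-1-\tilde d)\%n_i$. This does not ``drop out by inspection of the scattering matrix'' and is a genuine missing step; in case (2) no telescoping is needed and your sketch is essentially complete there.
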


\begin{proof} Applying a permutation of $\{1,\dots, r\} $ preserves everything except that the polynomial $F_{d_1,\dots,d_r}$ may be multiplied by $-1$ which has the effect of taking the tensor product of $K_{d_1,\dots,d_r}$ with a one-dimensional Galois representation. This does not affect the Betti numbers. Hence we may without loss of generality assume $i=r$. We now handle each case in turn.

In case (1), consider the local series, specializing to $\pi=T $,
\begin{equation} D_T ( x, T^{\vec{d}}, k ) = \sum_{\substack{ d_r \geq 0 \\ d_r \equiv k \bmod n}} a ( T^{d_1},\dots, T^{d_{r}})  x^{d_r } =\sum_{\substack{ d_r \geq 0 \\ d_r \equiv k \bmod n}} \sum_{j} (-1)^j \operatorname{tr}( \operatorname{Frob}_q, H^j( \mathbb A^{\hat{d}}_{\overline{\mathbb F_q}}, K_{d_1,\dots,d_{r}})x^{d_r}.\end{equation}

Then the local functional equation \eqref{LocalFE}, established in Theorem \ref{TheoremFE}, states that  $D_T^* ( x, T^{\vec{d}}, k )$ is a rational function of $x$ with denominator dividing $1-q^{-1 } \left( \frac{qx}{g(1,T)}\right)^{n_r }$ and 
\begin{equation} 
\left( D_T ( x, T^{\vec{d}}, k ) \right)_k =  \left( \frac{qx}{g} \right)^{\tilde{d}} \left( \Gamma_{T, k, \ell}(x, \tilde{d}) \right)_{k, \ell}  \left(  D_T^* ( \frac{g^2}{q^2x}, T^{\vec{d}}, \ell )\right)_\ell 
\end{equation}
where $ \left( \Gamma_{T, k, \ell}(x, \tilde{d}) \right)_{k, \ell}$ is the $n_r \times n_r$ scattering matrix with
\begin{equation*}
\begin{split}
\Gamma_{T, k,k} =& \left( \frac{qx}{g}\right)^{ 1- (\tilde{d}+1-2k)\% n_r } \frac{1-q^{-1}}{1-q^{-1}\left( \frac{q x}{g}\right)^{n_r}}
\end{split}
\end{equation*}
\begin{equation*}
\begin{split}
\Gamma_{T, k,\ell} =& (\xi \chi^{M_{rr}}) ^{\ell(1+\tilde{d})}(-1)\frac{g_{(\xi \chi^{M_{rr}} )^{2k-\tilde{d} -1}}}{q} \\
 &\left( \frac{qx}{g}\right) \frac{1-\left(\frac{q x}{g}\right)^{-n_r}}{1-q^{-1}\left(\frac{q x}{g}\right)^{n_r}}
 \end{split}
\end{equation*}for $\ell \equiv 1+\tilde{d} -k \bmod n$, and all other entries equal to $0$. In addition, there are special entries in the matrix when $k \equiv \ell \equiv 1+\tilde{d}-k \bmod n$. In this case 
\begin{equation*}
\Gamma_{k,\ell}=\left(\frac{qx}{g}\right)^{1-n_r}
\end{equation*}

We may clear denominators by multiplying both sides by  $1-q^{-1 } \left( \frac{qx}{g}\right)^{n_r} $, which makes the left side, and hence the right side, a polynomial in $x$. After doing this, we have an equality of coefficients of each power of $x$.

Each coefficient of a power of $x$ in the expression on both sides is a signed sum of Weil numbers. Furthermore, the analogous coefficients for the Dirichlet series over $\mathbb F_{q^e}$ are obtained by taking the signed sums of the $e$th powers of the Weil numbers. Since the same identity holds over $\mathbb F_{q^e}$ for each $e$, it follows that the same Weil numbers, with the same multiplicities, must appear in the coefficient of each power of $x$ on both sides. We now introduce a formal variable $u$ and consider the weight-extraction operation that replaces a Weil number of absolute value $q^{ w/2}$ by $(-u)^w$. Weight-extraction is a ring homomorphism from the ring of sets of ordered pairs of Weil numbers and integers to the ring of polynomials in $u$ with integer coefficients.

The cohomology group $H^j( \mathbb A^{\hat{d} }_{\overline{\mathbb F}_q}, K_{d_1,\dots, d_r})$ is pure, i.e. all Frobenius eigenvalues on it have size $q^{j/2}$. This is because $K_{d_1,\dots, d_r}$ is pure so the eigenvalues of Frobenius on its stalk in degree $j$ have size $\leq q^{j/2}$, while the eigenvalues of Frobenius on its global cohomology have size $\geq q^{j/2}$, but these are equal by \cite[Lemma 2.16]{s-amds} and thus all have size exactly $q^{j/2}$.\footnote{This observation was pointed out to one of the authors by Kevin Chang.}

Hence applying weight-extraction to the set of Frobenius eigenvalues on $H^j( \mathbb A^{\hat{d} }_{\overline{\mathbb F}_q}, K_{d_1,\dots, d_r})$, i.e. the set of Weil numbers contributing to $\tr( \Frob_q , H^j( \mathbb A^{\hat{d} }_{\overline{\mathbb F}_q}, K_{d_1,\dots, d_r}))$, produces $(-u)^j \dim H^j( \mathbb A^{\hat{d} }_{\overline{\mathbb F}_q}, K_{d_1,\dots, d_r}))$. Thus applying weight-extraction to the set of ordered pairs of Weil numbers and integers computing $\sum_j (-1)^j \tr( \Frob_q , H^j( \mathbb A^{\hat{d} }_{\overline{\mathbb F}_q}, K_{d_1,\dots, d_r})) $ produces $\sum_j u^j \dim H^j_c( \mathbb A^{\hat{d} }_{\overline{\mathbb F}_q}, K_{d_1,\dots, d_r}))$, i.e., the Poincar\'e polynomial.

Also, weight-extraction sends $q$ to $u^2 $, $g$ to $u$, and $\chi(-1)$ to $1$. Hence the Dirichlet series
\[ W(x,u, \vec{d},k) = \sum_{\substack{ d_r \in \mathbb Z^{\geq 0} \\ d_r \equiv k \bmod n }} \sum_j u^j \dim H^j( \mathbb A^{\hat{d} }_{\overline{\mathbb F}_q}, K_{d_1,\dots, d_r})) x^{d_r} \]
satisfies the functional equation
\begin{equation} 
\left( \left(1-u^{-2}  \left( ux \right)^{n_r}\right)  W (x,u ,  \vec{d}, k) \right)_k = (ux)^{\tilde{d} } \left( \Omega_{k, \ell}(x, u, \tilde{d}) \right)_{k, \ell} \left( W\left(\frac{1}{u^2x},u,  \vec{d}, \ell\right) \right)_{\ell}
\end{equation}
where $\left( \Omega_ {k, \ell}(x, u, \tilde{d}) \right)_{k, \ell}$ is the $n_r \times n_r$ scattering matrix with
\begin{equation*}
\Omega_{k,k} = \left( ux \right)^{1-(\tilde{d}+1-2k)\% n_r }  ( 1-u^{-2} )
\end{equation*}
\begin{equation*}
\begin{split}
\Omega_{k,\ell}=& x\left( 1-\left(ux \right)^{-n_r}\right) \end{split}
\end{equation*}
for $\ell \equiv 1+\tilde{d} -k \bmod n$, and all other entries equal to $0$. In addition, there are special entries in the matrix when $k \equiv \ell \equiv 1+\tilde{d} -k \bmod n$. In this case 
\begin{equation*}
\Omega_ {k,\ell}= \left(1-u^2 \left( ux \right)^{n_r}\right)   \left( ux\right)^{1-n_r } 
\end{equation*}

We can simplify the functional equation further. The renormalized series
\[ \overline{W} (x,u, \vec{d},k) = \sum_{\substack{ d_r \in \mathbb Z^{\geq 0} \\ d_r \equiv k \bmod n }} \sum_j u^j \dim H^j( \mathbb A^{\hat{d} }_{\overline{\mathbb F}_q}, K_{d_1,\dots, d_r})) (u^{-1} x) ^{d_r}, \]
where we have substituted $u^{-1}x$ for $x$, satisfies the functional equation
\begin{equation} 
\left( \left(1-u^{-2} x^{n_r} \right)  \overline{W} (x,u ,  \vec{d}, k) \right)_k = \left( \overline{\Omega}_{k, \ell}(x, u, \tilde{d}) \right)_{k, \ell} \left( \overline{W}  \left(x^{-1} ,u,  \vec{d}, \ell\right) \right)_{\ell}
\end{equation}
where $\left( \overline{\Omega}_ {k, \ell}(x, u, \tilde{d}) \right)_{k, \ell}$ is the $n \times n$ scattering matrix with
\begin{equation*}
\overline{\Omega}_{k,k} =x^{ \tilde{d}+1 - (\tilde{d}+1-2k)\% n_r }  \left( 1-u^{-2} \right)
\end{equation*}
\begin{equation*}
\begin{split}
\overline{\Omega}_{k,\ell}=&u^{-1} x^{\tilde{d}+1}\left( 1-x^{-n_r }\right) \end{split}
\end{equation*}
for $\ell \equiv 1+\tilde{d} -k \bmod n$, and all other entries equal to $0$. In addition, there are special entries in the matrix when $k \equiv \ell \equiv 1+\tilde{d} -k \bmod n$. In this case 
\begin{equation*}
\overline{\Omega}_ {k,\ell}= \left(1-u^{-2} x^{n_r}\right)   x^{\tilde{d}+1-n_r} 
\end{equation*}

Extracting the coefficient of $u^{ d_1 + \dots + d_{r-1}-j}  x^{d_r}$ on each side and recalling that $h^j_{d_r} =  \dim H^{\hat{d}-j} ( \mathbb A^{\hat{d} }_{\overline{\mathbb F}_q}, K_{d_1,\dots, d_r}) $, we see that
\begin{equation}\label{hom-k-g-pre}  h^j_{d_r} - h^{j-2}_{d_r-{n_r}}  =  h^{j-1 }_{\tilde{d} +1 -d_r} -  h^{j-1}_{\tilde{d}+1 -d_r-n_r} + h^j_{ \tilde{d}+1-d_r - ( \tilde{d}+1-2d_r)\%n_r}  -  h^{j-2} _{\tilde{d}+1-d_r - ( \tilde{d}+1-2d_r)\%n_r} \end{equation}
unless $2d_{r} \equiv 1 + \tilde{d} \bmod n_r$, in which case
\begin{equation}\label{hom-k-s-pre} h^j_{d_r} - h^{j-2}_{d_r-n_r} = h^j_{\tilde{d}+1 -d_r-n_r} - h^{j-2}_{ \tilde{d}+1-d_r }. \end{equation}
Both of these equations may be simplified by a telescoping sum. 
If we substitute $j-2k$ for $j$ and $d_r-n_rk$ for $d_r$ into \eqref{hom-k-s-pre} we obtain 
\begin{equation}\label{hom-k-s-2} h^{j-2k}_{d_r-kn_r} - h^{j-2(k+1) }_{d_r-(k+1) n_r} = h^{j-2k} _{\tilde{d}+1 -d_r+ (k-1) n_r} - h^{j-2(k+1) }_{ \tilde{d}+1-d_r+ kn_r } \end{equation}
and then summing \eqref{hom-k-s-2} over $k$ from $0$ to $\infty$ we obtain
\begin{equation}\label{hom-k-s-final} \begin{aligned} h^j_{d_r}=&  ( h^j_{d_r} - h^{j-2}_{d_r-n_r}) + ( h^{j-2}_{d_r-n_r} - h^{j-4}_{d_r-2n_r}) + \dots \\=&(h^j_{\tilde{d}+1 -d_r-n_r} - h^{j-2}_{ \tilde{d}+1-d_r }) + (h^{j-2}_{\tilde{d}+1 -d_r} - h^{j-4}_{ \tilde{d}+1-d_r +n_r  }) + \dots   = h^j_{\tilde{d}+1-d_r-n_r} .\end{aligned}\end{equation}
Note that all $h^j_d$ with $j<0$ vanish so the sum has only finitely many nonvanishing terms and thus our rearrangement is justified. \eqref{hom-k-s-final} gives \eqref{hom-k-s-state}.

If we set $d_{r}' = d_r - (2d_r -1 -\tilde{d})\% n_r$ then we see that $d_r + (\tilde{d}+1 -2d_r) = d_r' + n_r $ and that \[d_r'' = d_r - (2d_r -1 -\tilde{d})\% n_r - (2d_r' -1 -\tilde{d})\% n_r=d_r - (2d_r -1 -\tilde{d})\% n_r - ( 1 +\tilde{d}-2d_r )\% n_r= d_r -n_r \] so we can rewrite \eqref{hom-k-g-pre} as
\begin{equation}\label{hom-k-g-0}  h^j_{d_r} - h^{j-2}_{d_r-{n_r}}  =  h^{j-1 }_{\tilde{d} +1 -d_r} -  h^{j-1}_{\tilde{d}+1 -d_r-n_r} + h^j_{ \tilde{d}+1-d_r' -n_r}  -  h^{j-2} _{\tilde{d}+1-d_r' -n_r} .\end{equation}
Substituting $j-2k$ for $j$ and $d_r-n_r k$ for $d_r$ into \eqref{hom-k-g-0} we obtain 
\begin{equation}\label{hom-k-g-1}  h^{j-2k}_{d_r-kn_r} - h^{j-2k-2 }_{d_r-(k+1) n_r}  =  h^{j-2k-1 }_{\tilde{d} +1 - d_r +kn_r} -  h^{j-2k-1}_{\tilde{d}+1 -d_r+(k-1) n_r} + h^{j-2k}_{ \tilde{d}+1-d_r' +(k-1) n_r}  -  h^{j-2k-2} _{\tilde{d}+1-d_r' +(k-1) n_r} .\end{equation}
Substituting $j-2k-1$ for $j$ and $d_r'-n_r k$ for $d_r$ into \eqref{hom-k-g-0} we obtain 
\begin{equation}\label{hom-k-g-2}  h^{j-2k-1}_{d_r'-kn_r} - h^{j-2k-3 }_{d_r'-(k+1) n_r}  =  h^{j-2k-2 }_{\tilde{d} +1 -d_r' +k n_r} -  h^{j-2k-2}_{\tilde{d}+1 -d_r'+ (k-1) n_r} + h^{j-2k-1}_{ \tilde{d}+1-d_r +k n_r}  -  h^{j-2k-3} _{\tilde{d}+1-d_r +k n_r} .\end{equation}
Summing \eqref{hom-k-g-1} over $k$ from $0$ to $\infty$ together with minus \eqref{hom-k-g-2} over $k$ from $0$ to $\infty$ we obtain
\begin{equation}\label{hom-k-g-final} \begin{aligned} &h^j_{d_r} - h^{j-1}_{d_r'} \\
= &( h^j_{d_r} - h^{j-2}_{d_r-n_r})- (h^{j-1}_{d_r'}- h^{j-3}_{d_r' -n_r}) + (h^{j-2}_{d_r-n_r} - h^{j-4}_{d_r-4n_r}) - ( h^{j-3}_{d_r'+n_r}- h^{j-5}_{d_r'+2n_r}) + \dots \\
= & (h^{j-1 }_{\tilde{d} +1 -d_r} -  h^{j-1}_{\tilde{d}+1 -d_r-n_r} + h^j_{ \tilde{d}+1-d_r' -n_r}  -  h^{j-2} _{\tilde{d}+1-d_r' -n_r}) 
-   ( h^{j-2 }_{\tilde{d} +1 -d_r' } -  h^{j-2}_{\tilde{d}+1 -d_r'- n_r} + h^{j-1}_{ \tilde{d}+1-d_r }  -  h^{j-3} _{\tilde{d}+1-d_r })\\
+ &  (h^{j-3 }_{\tilde{d} +1 - d_r +n_r} -  h^{j-3}_{\tilde{d}+1 -d_r} + h^{j-2}_{ \tilde{d}+1-d_r' }  -  h^{j-4} _{\tilde{d}+1-d_r' }) 
-   ( h^{j-4}_{\tilde{d} +1 -d_r' + n_r} -  h^{j-4}_{\tilde{d}+1 -d_r'} + h^{j-3}_{ \tilde{d}+1-d_r + n_r}  -  h^{j-5} _{\tilde{d}+1-d_r + n_r})+\dots \\
&= -h^{j-1}_{\tilde{d}+1-d_r-n_r} + h^j_{\tilde{d}+1-d_r'-n_r}.
\end{aligned}\end{equation}
Again only finitely many summands are nonzero so rearrangement is justified, and now \eqref{hom-k-g-final} implies \eqref{hom-k-g-state}.

We now consider the second case. Note that the definition of $M'$ matches $\tau_r(M)$. 

Consider the local series
\begin{equation} D_T ( x, T^{\vec{d}}; M  ) = \sum_{d_r=0}^\infty  a ( T^{d_1},\dots, T^{d_{r}})  x^{d_r } ; q, \chi, M) =\sum_{d_r=0}^\infty \sum_{j} (-1)^j \operatorname{tr}( \operatorname{Frob}_q, H^j( \mathbb A^{\hat{d}}_{\overline{\mathbb F_q}}, K^{M}_{d_1,\dots,d_{r}})x^{d_r}.\end{equation}
and similarly
\begin{equation} D_T ( x, T^{\vec{d}}; M'  ) = \sum_{d_r=0}^\infty  a ( T^{d_1},\dots, T^{d_{r}})  x^{d_r } ; q, \chi, M') =\sum_{d_r=0}^\infty \sum_{j} (-1)^j \operatorname{tr}( \operatorname{Frob}_q, H^j( \mathbb A^{\hat{d}}_{\overline{\mathbb F_q}}, K^{M'}_{d_1,\dots,d_{r}})x^{d_r}.\end{equation}

Then the local functional equation \eqref{DirichletLocalFE}, established by Theorem \ref{DirichletFE},  states that
\begin{equation} 
\begin{split}
D_{T}(x, T^{\vec{d}}; M  ) &=  \omega_T \left(\frac{g_{\chi^{\sum_{i=1}^{r-1} d_i M_{ir}}}}{q}\right)^{b_{T}-1} (\chi(-1)^{\sum_{i=1}^{r-1} d_i M_{ir}}qx)^{\tilde{d}-1 } \\
&\prod_{\substack{1 \leq i \leq r-1 \\ M_{ir} \neq 0}} \left(\frac{g_{\chi^{M_{ir}}}}{q}\right)^{d_i} \left( \frac{qx-1}{1-x}\right)^{b_{T}} D_{T}(\frac{1}{qx}, T^{\vec{d}}; M')
\end{split}
\end{equation}
where $b_{T}=1$ if $n \mid \sum_{i=1}^{r-1} d_i M_{ir} $ and $0$ otherwise, and 
\begin{equation} 
\omega_T =\omega_T(T^{\vec{d}} ; M)= \prod_{\substack{1 \leq i \leq r-1 \\ M_{ir} \neq 0}} (-1)^{ d_i (d_i-1)(q-1)/4} \prod_{\substack{1 \leq i<j \leq r-1 \\ M_{ir}, M_{jr} \neq 0}} \chi(-1)^{d_i d_j M_{ir} } = \pm 1.
\end{equation}
The functional equation also states that $D_{T}(x, T^{\vec{d}}; M  )$ and $D_{T}(x, T^{\vec{d}}; M'  )$ are rational functions with denominator dividing $(1-x)^{b_T}$. Hence we may multiply both sides by $(1-x)^{b_T}$ to obtain an equation bewteen polynomials in $x$. After doing this, we have an equality of coefficients of each power of $x$.

We now define the Dirichlet series \[W(x, u, \vec{d}; M) = \sum_{d_r=0}^\infty \sum_j u^j \dim H^j ( \mathbb A_{\overline{\mathbb F}_q}^{\hat{d}}, K_{d_1,\dots,d_r}^M x^{d_r})\] 
Since the above functional equation holds over $\mathbb F_{q^e}$ for each $e$, we may apply the weight-extraction operation, exactly as in the previous argument, to obtain the functional equation
\[ (1-x)^{b_T}  W(x, u, \vec{d}; M) =  \left( \frac{u}{u^2} \right)^{b_{T}-1} (u^2 x)^{\tilde{d}-1 } 
\prod_{\substack{1 \leq i \leq r-1 \\ M_{ir} \neq 0}} \left(\frac{u}{u^2}\right)^{d_i} \left( u^2 x-1\right)^{b_{T}} W( \frac{1}{u^2 x}, u, \vec{d}; M') \]
which further simplifies to
\[ (1-x)^{b_T}  W(x, u, \vec{d}; M) =   u^{-b_T}  (u x)^{\tilde{d}-1 } d_i
 \left( u^2 x-1\right)^{b_{T}} W(\frac{1}{u^2 x}, u, \vec{d}; M') .\]
For the renormalized Dirichlet series
 \[\overline{W}(x, u, \vec{d}; M) = \sum_{d_r=0}^\infty \sum_j u^j \dim H^j ( \mathbb A_{\overline{\mathbb F}_q}^{\hat{d}}, K_{d_1,\dots,d_r}^M (u^{-1} x) ^{d_r})\] 
we obtain
\[ (1-u^{-1} x)^{b_T}  \overline{W} (x, u, \vec{d}; M) =   u^{-b_T}  x^{\tilde{d}-1 } 
  \left( u x-1\right)^{b_{T}} \overline{W} (\frac{1}{x} , u, \vec{d}; M') .\]
Extracting the coefficient of $u^{d_1+\dots + d_{r-1} - j} x^{d_r}$ on each side and recalling that $h^{j,M}_{d_r} =  \dim H^{\hat{d}-j} ( \mathbb A^{\hat{d} }_{\overline{\mathbb F}_q}, K_{d_1,\dots, d_r}^M ) $ and $h^{j,M' }_{d_r} =  \dim H^{\hat{d}-j} ( \mathbb A^{\hat{d} }_{\overline{\mathbb F}_q}, K_{d_1,\dots, d_r}^{M'} ) $, we obtain
\[ h^{j,M}_{d_r}=  h^{j, M'}_{ \tilde{d}-1-d_r}\]
if $b_T=0$ and
\[ h^{j,M}_{d_r} - h^{j-1,M}_{d_r-1} = h^{j, M'}_{ \tilde{d}-d_r} -h^{j-1, M'}_{ \tilde{d}-1- d_r}\]
otherwise. These give \eqref{hom-d-g-state} and \eqref{hom-d-s-state} respectively.
\end{proof}

\begin{proof}[Proof of Corollary \ref{nichols-relation-theorem}] This follows immediately from Theorem \ref{homological-relation-theorem}, taking $M= M^{ \hchi, E,\gene}$, by the following reasoning: In case 1, Corollary \ref{K-to-nichols} shows that $h^j_{d_i}$ as defined here and in Theorem \ref{homological-relation-theorem} agree, and Lemma \ref{kubota-bicharacter-identity} $m_{ik}=n_{ik}$ so that the definitions of $\tilde{d}$ agree.  In case 2, Lemma \ref{dirichlet-bicharacter-identity} shows that $ \tau_i(M^{\hchi,E,\gene}) = (M^{\hchi,E,\gene})'$ so that Corollary \ref{K-to-nichols} gives $h^{j,M}_{d_i}= h^{j,E}_{d_i}$ and $h^{j,\tau_i(M)}_{d_i}= h^{j,s_i(E)}_{d_i}$, and $\tilde{d}$ agrees by Lemma \ref{dirichlet-bicharacter-identity}.\end{proof}

The results of \S\ref{ss:connections-cohomology} allow us to give several other equivalent definitions of $h^j_{d_j}$ and thus several other variants of Theorem \ref{homological-relation-theorem}, but we do not state all of these.

We also raise the question of whether a generalized relation holds.

\begin{question}\label{nichols-relation-question} Let $r$ be a natural number, $\hchi\colon \mathbb Z^r \times \mathbb Z^r \to\mu_\infty$ a bicharacter, and $E=(e_1,\dots,e_r)$ an ordered basis of $\mathbb Z^r$.

Fix $i$ from $1$ to $r$ assume $E$ is admissible at $i$. Fix nonnegative integers $d_0,\dots, d_{i-1}, d_{i+1},\dots, d_r$. Let $\tilde{d}= \sum_{k \neq i} d_k m_{ik}$. Let $B \mathfrak B( V_{\hchi,E} )$ be the Nichols algebra of the diagonal braided vector space defined using $\hchi$ and $E$ as in \S\ref{nichols-intro}. Let
\[ h^{j,E}_{d_i} = \dim \Gr^{ -\sum_{i=1}^r d_i e_i}  \operatorname{Ext}^{j } _{ \mathfrak B( V_{\hchi,E} )} (\mathbf 1, \mathbf 1) \] and
        \[ h^{j, s_{i,E}(E)}_{d_i}= \dim \Gr^{ -\sum_{i=1}^r d_i e_i}  \operatorname{Ext}^{j } _{ \mathfrak B( V_{\hchi,s_{i,E}(E) } )} (\mathbf 1, \mathbf 1) .\] 

Does the following system of equations always hold? We have

   \begin{equation} h^{j,E}_{d_i} = h^{j, s_{i,E}(E)}_{\tilde{d}+1-d_i-n_i},\end{equation}
   unless there exists an integer $e$ with $\prod_{k\neq i}( \hchi(e_i,e_k) \hchi(e_k,e_i))^{d_k} = \hchi(e_i,e_i)^e$, in which case we have
    \begin{equation}h^{j,E}_{d_i} - h^{j-1,E}_{d_i - (2d_i+e -1)\%n_i} = h^{j,s_{i,E}(E)}_{\tilde{d}+1-d_i - (1-e-2d_i)\% n_i} - h^{j-1,s_{i,E}(E)}_{\tilde{d}+1-d_r-n_r},  \end{equation}
unless $2d_i +e \equiv 1 \bmod n_i$, in which case we again have
\begin{equation} h^{j,E}_{d_i} = h^{j, s_{i,E}(E)}_{\tilde{d}+1-d_i-n_i}.\end{equation}

\end{question}

Finally, we show that Corollary \ref{nichols-relation-theorem} uniquely determines the graded dimensions of cohomology groups of Nichols algebras. It will be clear from Theorem \ref{nichols-determination-theorem} that there exists a purely combinatorial algorithm to compute these Betti numbers.

\begin{theorem}\label{nichols-determination-theorem} Let $(\mathbf \Delta, \hchi, E)$ be an arithmetic root system such that each root $e$ in $ \mathbf \Delta$ satisfies either condition (1) or (2) in Corollary \ref{nichols-relation-theorem}. 

For each $E'=(e_1,\dots,e_r)\in B_{\hchi, E}$, let $h_{E'}(j,d_1,\dots, d_r)$ be a function from $r+1$-tuples of integers to nonnegative integers. Assume that $h_{E'}(j,d_1,\dots, d_r)=0$ if any argument is negative, i.e. if $j<0$ or $d_i<0$ for any $i$. 

For each $E' \in B_{\hchi,E}$, $i$ from $1$ to $r$, and nonnegative integers $d_1,\dots, d_{i-1}, d_{i+1},\dots, d_r$, let 
\[ h^{j,E'}_{d_i}= h_{E'} (j, d_1,\dots,d_r)  \textrm{ and } h^{j,s_{i,E'}(E')}_{d_i}= h_{s_{i,E'}(E')} (j, d_1,\dots,d_r) \]
and assume that conditions (1) and (2) below hold.
\begin{enumerate}
    \item If $\hchi(e_i,a)\hchi(a,e_i)$ is a power of $\hchi(e_i,e_i) $ for all $a\in \mathbb Z^r$, let \[ h^j_{d_i}= h_{E'} (j, d_1,\dots,d_r) .\] Then for all $d_i$ we have 
    \begin{equation}\label{asymmetric-kubota-general} h^{j,E'}_{d_i} - h^{j-1,E'}_{d_i - (2d_i-1-\tilde{d})\%n_i} = h^{j,s_{i,E'}(E')}_{\tilde{d}+1-d_i - (\tilde{d}+1-2d_i)\% n_i} - h^{j-1,s_{i,E'}(E')}_{\tilde{d}+1-d_i-n_i} \end{equation}
    unless $2d_i \equiv 1 + \tilde{d}\bmod n_i$, in which case 
    \begin{equation}\label{asymmetric-kubota-special} h^{j,E'}_{d_i} = h^{j,E'}_{\tilde{d}+1-d_i-n_i}.\end{equation}
\item If $\hchi(e_i,e_i)=-1$, then for all $d_i$ we have \eqref{nichols-dirichlet-general} or \eqref{nichols-dirichlet-special} as appropriate.
\end{enumerate}
Finally, assume that $h^{E'}(j,0,\dots,0)=\begin{cases} 1 & \textrm{if } j=0 \\ 0 & \textrm{if } j \neq 0 \end{cases}$ for all $E'$.

Then we have \[ h_{E'}(j, d_1,\dots, d_r) =  \dim \Gr^{ -\sum_{i=1}^r d_i e_i}  \operatorname{Ext}^{j } _{ \mathfrak B( V_{\hchi,E'} )} (\mathbf 1, \mathbf 1) .\]

Moreover, the converse holds, i.e. $\dim \Gr^{ -\sum_{i=1}^r d_i e_i}  \operatorname{Ext}^{j } _{ \mathfrak B( V_{\hchi,E'} )} (\mathbf 1, \mathbf 1)$ satisfies all the above assumptions. \end{theorem}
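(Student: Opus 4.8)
\textbf{Proof plan for Theorem \ref{nichols-determination-theorem}.}

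The statement has two halves: that the displayed tuple of Betti numbers satisfies all the listed conditions, and that it is the unique tuple doing so. The first half is immediate: the relations \eqref{asymmetric-kubota-general}, \eqref{asymmetric-kubota-special}, and the Dirichlet-type relations \eqref{nichols-dirichlet-general}, \eqref{nichols-dirichlet-special} are exactly those proven in Corollary \ref{nichols-relation-theorem}, valid for every root of the arithmetic root system (which is the hypothesis we impose), while the normalization $h^{E'}(j,0,\dots,0) = \begin{cases} 1 & j=0 \\ 0 & j \neq 0\end{cases}$ holds because $\mathfrak B(V_{\hchi,E'})$ is a connected graded algebra, so its degree-zero part is $\mathbf 1$ and $\operatorname{Ext}^j(\mathbf 1,\mathbf 1)$ in internal degree $0$ is $\mathbb C$ if $j=0$ and $0$ otherwise. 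So the real content is uniqueness, and the plan is to prove uniqueness by an inductive argument on a suitably chosen total order, mimicking the structure of the proof of Proposition \ref{uniquely-determined}.

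The first step is to reduce to a statement about a single fixed $E'$ and a single fixed grading vector. Suppose we have two tuples $(h_{E'})$ and $(h'_{E'})$ both satisfying all the conditions; set $\delta_{E'}(j,\vec d) = h_{E'}(j,\vec d) - h'_{E'}(j,\vec d)$. By linearity of all the stated relations, the $\delta_{E'}$ satisfy the homogeneous versions of \eqref{asymmetric-kubota-general}–\eqref{nichols-dirichlet-special} — that is, the same relations with every $h$ replaced by $\delta$ — together with $\delta_{E'}(j,0,\dots,0) = 0$ for all $j, E'$. I would then argue, exactly as in Lemma \ref{uniquely-determined-step} and the proof of Proposition \ref{uniquely-determined}, via a concavity argument on $\mathbb R^r$: define for each $E'$ the piecewise-linear function on the cone $C_{E'}$ whose value at $v$ is the minimum of $\sum_i d_i (v\cdot e'_i)$ over tuples $\vec d$ such that $\delta_{E'}(j,\vec d) \neq 0$ for some $j$ (with value $+\infty$ if no such $\vec d$ exists). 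The homogeneous relations, read in the direction that expresses $\delta_{E'}$ at $d_i$ in terms of $\delta_{s_{i,E'}(E')}$ at $\tilde d - d_i$ or $\tilde d+1-d_i-n_i$, are precisely the combinatorial analogues of the single-variable Lemmas \ref{uniquely-determined-sv-d} and \ref{uniquely-determined-sv-k} that were used to bound $d^{\mathrm{min}}+d^{'\mathrm{min}}$; the telescoping identities \eqref{hom-k-s-final}, \eqref{hom-k-g-final} from the proof of Theorem \ref{homological-relation-theorem} are what one uses to derive from \eqref{asymmetric-kubota-general} the two-term bound relating the minimal degrees on the two sides. One concludes, by Proposition \ref{cone-union-lemma}, that these piecewise-linear functions glue to a single continuous function $F$ on $\mathbb R^r$ that is non-negative, vanishes at $0$, is strictly positive in the interior of $C_E$ unless all $\delta$ vanish, and must be concave — a contradiction unless all $\delta_{E'} \equiv 0$. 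That the minimization set is finite follows, as in Proposition \ref{uniquely-determined}, from the fact that every infinite subset of $\mathbb N^r$ contains two comparable elements; here one also needs that each $\delta_{E'}(j,\vec d)$ is nonzero for only finitely many $j$ at fixed $\vec d$, which holds because the Ext groups of a connected graded algebra vanish above internal degree in each fixed degree.

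The main obstacle is bookkeeping the telescoping step in case (1) at the level of $\delta$'s rather than the level of series: in Theorem \ref{homological-relation-theorem} the relations \eqref{hom-k-g-state}, \eqref{hom-k-s-state} were \emph{derived} from a functional equation, and their combinatorial content — that knowing $\delta_{s_{i,E'}(E')}$ on the reflected grading vectors pins down $\delta_{E'}$ up to the strict inequality needed for the concavity contradiction — has to be extracted directly from the stated relations \eqref{asymmetric-kubota-general}–\eqref{asymmetric-kubota-special}. Concretely, one must check that the summed/telescoped form expresses $h^{j,E'}_{d_i}$ as a $\mathbb Z$-linear combination of $h^{\bullet,s_{i,E'}(E')}_{\bullet}$ at strictly smaller grading values (in the sense of the pairing with $v \in -C_{E'}$), plus lower-order $h^{\bullet,E'}_{\bullet}$ terms whose grading is also strictly smaller; this is the analogue of "restricting to fixed $k_1,\dots,k_{i-1},k_{i+1},\dots,k_r$ preserves $d^{\mathrm{min}}$ and can only increase $d^{'\mathrm{min}}$" from the proof of Lemma \ref{uniquely-determined-step}. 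Once this combinatorial reduction is in place, the concavity argument is essentially identical to the one already carried out for Proposition \ref{uniquely-determined}, and I would organize the write-up to invoke that argument verbatim wherever possible rather than repeating it.
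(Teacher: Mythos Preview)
Your plan is sound and would yield a correct proof, but the paper takes a simpler route that sidesteps the bookkeeping you flag as the ``main obstacle.'' The paper first inducts on $j$: assuming every $h_{E'}(j-1,\vec d)$ is already uniquely determined, each of the relations \eqref{asymmetric-kubota-general}, \eqref{asymmetric-kubota-special}, \eqref{nichols-dirichlet-general}, \eqref{nichols-dirichlet-special} contains exactly one $h^j$ on each side, with the remaining $h^{j-1}$ terms now known constants. So at level $j$ the problem reduces to a pure graph-connectivity statement on vertices $(E',\vec d)$, with an edge from $(E',\vec d)$ to $(s_{i,E'}(E'),d_1,\dots,d_i',\dots,d_r)$ where $d_i'\leq \tilde d - d_i$ is read off from whichever relation applies. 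Connectivity to a vertex with some $d_k<0$ or $\vec d=0$ is then shown by following a straight line from a point in the interior of $C_{E'}$ to a point where $\sum_k d_k(v\cdot e_k)<0$, tracking the induced sequence $\vec d^{\,t}$ across the cones the line meets, and checking the resulting piecewise-linear function along the segment is concave (hence bounded above by the initial linear form, hence eventually negative). This is a more localized version of the concavity machinery than the global $F$ from Proposition \ref{uniquely-determined}.

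Your direct port of the Proposition \ref{uniquely-determined} argument without the $j$-induction also works, but two points in the write-up should be corrected. First, the telescoping identities from Theorem \ref{homological-relation-theorem} are not what you need here. The analogue of Lemma \ref{uniquely-determined-step} follows immediately from minimality: if $d_i^{\min}$ is the least $d_i$ (other $d_k$ fixed) with $\delta_{E'}(j,\vec d)\neq 0$ for some $j$, then at $d_i=d_i^{\min}$ the secondary left-hand term $\delta^{j-1,E'}_{d_i^{\min}-c}$ in \eqref{asymmetric-kubota-general} (respectively $\delta^{j-1,E'}_{d_i^{\min}-1}$ in \eqref{nichols-dirichlet-special}) vanishes since its subscript is below $d_i^{\min}$, so some $\delta_{s_{i,E'}(E')}$ term with index $\leq \tilde d-d_i^{\min}$ must be nonzero. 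Second, the finiteness-in-$j$ claim is both unnecessary and unjustified: your minimization defining $F_{E'}$ is over $\vec d\in\mathbb N^r$ only, so Dickson's lemma suffices, and the Ext-group bound you cite applies to the Nichols Betti numbers, not to an arbitrary tuple satisfying the hypotheses.
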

  
\begin{proof} That $\dim \Gr^{ -\sum_{i=1}^r d_i e_i}  \operatorname{Ext}^{j } _{ \mathfrak B( V_{\hchi,E'} )} (\mathbf 1, \mathbf 1)$ satisfies these relations is the content of \eqref{nichols-relation-theorem}, once one observes that in case (1) we have $h^{j,s_{i,E'}(E')} = h^{j, E'}$ as both of these have the same values of $\hchi(e_i,e_i)$ and $\hchi(e_i,e_j)\hchi(e_j,e_i)$ for all $i,j$ by Lemma \ref{kubota-bicharacter-identity}. Furthermore, that \[\dim \Gr^{ 0}  \operatorname{Ext}^{j } _{ \mathfrak B( V_{\hchi,E'} )} (\mathbf 1, \mathbf 1) = \begin{cases} 1 & \textrm{if } j=0 \\ 0 & \textrm{if } j \neq 0 \end{cases}\] is clear from the fact that $\operatorname{Ext}^0$ is $1$ and that the Ext groups are computed by the bar complex, from which we may see that all higher Ext groups have nonzero grades. Thus $\dim \Gr^{ -\sum_{i=1}^r d_i e_i}  \operatorname{Ext}^{j } _{ \mathfrak B( V_{\hchi,E'} )} (\mathbf 1, \mathbf 1)$ satisfies all the assumptions, and so it suffices to show that a solution is unique. 

We show that each value of $h_{E'}(j,d_1,\dots, d_r)$ is uniquely determined by the conditions by induction on $j$. In other words, we may assume that $h_{E'}(j-1,d_1,\dots, d_r)$ is uniquely determined for all $d_1,\dots,d_r$. (We take $j=-1$ to be the base case, where the unique determination holds by assumption.) Examining each of the relations \eqref{asymmetric-kubota-general}, \eqref{asymmetric-kubota-special}, \eqref{nichols-dirichlet-general}, \eqref{nichols-dirichlet-special}, we see that each one involves a single $h^j$ on the left hand side, a single $h^j$ on the right hand side, possibly some $h^{j-1}$s, and no other terms. Since we have assumed that the $h^{j-1}$ terms are uniquely determined, we see that the $h^j$ term on the left hand side is uniquely determined if and only if the $h^j$ term on the right hand side is, since one term in a linear equation is uniquely determined if all the other terms are.

Thus we may form a graph whose vertices are the set of tuples $\{ E', d_1,\dots,d_r\}$ with $E'\in B_{\hchi,E}$ and $d_1,\dots, d_r$ integers. Two vertices $\{ E', d_1,\dots,d_{i-1},d_i,d_{i+1},\dots,d_r\}$, $\{ s_{i,E'}(E'),d_1,\dots,d_{i-1}, d_i',d_{i+1},\dots,d_r \}$ with $E = (e_1,\dots,e_r)$ are connected for $d_i'$ given by:
\begin{enumerate}
    \item $\hchi(e_i,a)\hchi(a,e_i)$ is a power of $\hchi(e_i,e_i)$ for all $a\in \mathbb Z^r$ and, letting $n_i$ be the order of $\chi(e_i,e_i)$, we have $d_i' =\tilde{d}+1-d_i -(\tilde{d}+1-2d_i) \%n_i $, unless $2d_i \equiv 1 + \tilde{d}\bmod n_i$, which case $d_i'= \tilde{d}+1-d_i - n_i$. (Or, more simply, we may put $d_i' = \tilde{d}-d_i - (\tilde{d}-2d_i)\% n_i$.)
    \item $\hchi(e_i,e_i)=-1$, then $d_i'= \tilde{d}-d_i-1$, unless $\prod_{k\neq i} (\hchi(e_i,e_k) \hchi(e_k,e_i))^{d_k}=1$, in which case $d_i' = \tilde{d}-d_i$.
\end{enumerate}

It suffices to prove that each vertex of the graph is connected to some vertex $(E', d_1,\dots,d_r)$ where either some $d_i$ is negative or all $d_i$ vanish, as these values of $h_{E'}$ are uniquely determined.  In fact, the only thing we will use about these formulas is that $d_i'\leq \tilde{d}-d_i$. 

To do this, fix a vertex $(E', d_1,\dots,d_r)$. If one of $d_1,\dots,d_r$ is negative or  $d_1,\dots,d_r$ are all zero, we are done. Otherwise, choose a point $x$ in the interior of the corresponding cone $C_{E'}$. Choose a point $y$ such that the nonzero linear form $\sum_{k=1}^r d_k v\cdot e_k$ on $\mathbb R^r$ takes a negative value at $y$, and the line between $x$ and $y$ does not intersect the codimension $\geq 2$ faces of the cones. The line between $x$ and $y$ passes through some sequence of cones $E_1=E',\dots, E_m$. Since $E_t$ is adjacent to $E_{t+1}$, it must be connected by some face, say the $i_t$'th face. Define $(d_1^t,\dots,d_r^r)$ inductively by $(d_1^0,\dots, d_r^0)= ( d_1,\dots,d_r)$ and  \[ (d_1^{t+1},\dots, d^r_{t+1})= (d_1^t, \dots, d_{i_t-1}^t, (d_{i_t}^t)', d_{i_t-1}^t,\dots, d_r^t).\]

Consider the piecewise linear function $F$ on the line segment from $x$ to $y$ which is equal to $v\mapsto \sum_{k=1}^r d_{k}^t v\cdot e_k^t $ in the region where the line segment intersects the cone $E_t= (e_1^t,\dots, e_r^t)$. Let us check that $F$ is concave. Since $F$ agrees with the linear form $\sum_{k=1}^r d_k v\cdot e_k$ in the first part of the line segment, this will imply that $F$ bounded above by that linear form in the whole line segment, and in particular negative at $y$, so some $d_k^m$ is negative, giving the desired claim since each $ (E^t, d_1^t,\dots, d_r^r)$ is connected to the next by construction.

To check $F$ is concave, it suffices to check locally at a neighborhood of each point where $F$ fails to be linear, i.e. in a neighborhood of the boundary between the cones $C_{E_t}$ and $C_{E_{t+1}}$ for each $t$.  On $ C_{E_t}$,  $F(v)$ is given by the formula 
$$ F(v) = \sum_{k=1}^r d_{k}^t v\cdot e_k^t$$
and we have $ e_{k}^{t+1} = s_{i, E_t} ( e_k^t)$ so that on $C_{E_{t+1}}$, $F(v)$ is given by the formula

$$ F(v) = \sum_{k=1}^r d_{k}^{t+1} v\cdot e_k^{t+1} = \sum_{k=1}^r d_{k}^{t+1} v\cdot s_{i, E_t} ( e_k^t) = d_i^{t'}v\cdot s_{i, E_t} ( e_i^t) + \sum_{k\neq i}d_{k}^{t} v\cdot s_{i, E_t} ( e_k^t)   = -  d_i^{t'}v\cdot e_i^t  + \sum_{k\neq i}d_{k}^{t} v\cdot (e_k^t + m_{ik} e_i)$$ $$ = (-d_i^{t'} + \sum_{k\neq i} m_{ik } d_k^t)  v \cdot e_k^t +  \sum_{k\neq i}d_{k}^{t} v\cdot e_k^t  . $$

Since $d_i^{t'} \leq \tilde{d}^t - d_i^t = \sum_{k\neq i} m_{ik } d_k^t  - d_i^t$ and $ v \cdot e_k^t \leq 0$ for $v \in C_{E_{t+1}}$,  we get $$F(v) \leq \sum_{k=1}^r d_{k}^t v\cdot e_k^t$$ on $C_{E_{t+1}}$. On the boundary between $C_{E_t}$ and $C_{E_{t+1}}$, we have $v \cdot e_k=0$ so both formulas agree.

Hence, in a neighborhood of the boundary between the cones $C_{E_t}$ and $C_{E_{t+1}}$, $F$ has two linear segments which meet at the boundary point, and one linear segement is above the line formed by extending the other. This means $F$ is concave in this neighborhood. Since this works for an arbitrary boundary point, $F$ is concave everywhere, as desired.\end{proof}

\subsection{A toy model with Lie algebras}\label{ss:Kostant}

Let $\mathfrak g$ be a semisimple Lie algebra over the complex numbers with rank $r$. Fix a Cartan and a Borel subalgebra. Let $\Lambda$ be the root lattice, a rank $r$ lattice. For $\alpha \in \Lambda$, let $\mathfrak g_\alpha$ be the subspace of $\mathfrak g$ with eigenvalue $\alpha$ (which is $1$-dimensional if $\alpha$ is a root, $r$-dimensional if $\alpha=0$, and $0$-dimensional otherwise). Let $\alpha_1,\dots, \alpha_r$ be the simple roots.

 Let $\mathfrak n$ be the nilpotent part of the Borel subalgebra, i.e. the subspace spanned by $\mathfrak g_\alpha$ for all positive roots $\alpha$, equivalently, the sub-Lie-algebra generated by $\mathfrak g_{\alpha_1},\dots, \mathfrak g_{\alpha_r}$. The Lie algebra $\mathfrak n$ is naturally $\Lambda$-graded. Kostant gave the following formula for the Lie algebra cohomology $\LieH^*_{\mathfrak n} ( \mathbf 1) = \Ext^*_{\mathfrak n } (\mathbf 1, \mathbf 1)$ in terms of the Weyl group of $\mathfrak g$.

 \begin{theorem}\label{kostant}(Kostant) We have $ \Gr^\beta \LieH^{k}_{\mathfrak n} ( \mathbf 1) =0 $ unless $\beta = w(\rho)-\rho$ for $\rho$ half the sum of the positive roots and $k= \operatorname{length}(w)$ for some $w$ in the Weyl group, and $\LieH^{k}_{\mathfrak n} ( \mathbf 1) [\beta]$ is $1$-dimensional in that case. \end{theorem}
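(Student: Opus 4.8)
The plan is to compute $\LieH^*_{\mathfrak n}(\mathbf 1)$ from the Chevalley--Eilenberg complex $(\Lambda^\bullet\mathfrak n^*, d)$, combining a soft Euler-characteristic identity with Kostant's Laplacian formula. Since $\mathfrak h$ normalizes $\mathfrak n$, it acts on this complex commuting with $d$, which produces the asserted $\Lambda$-grading on the cohomology; moreover $\mathfrak n^*$ carries the weights $\{-\alpha : \alpha \text{ a positive root}\}$, so a weight $\beta$ can occur in $\Lambda^k\mathfrak n^*$, hence in $\LieH^k_{\mathfrak n}(\mathbf 1)$, only if $-\beta$ is a sum of $k$ distinct positive roots. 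The first step is the Euler-characteristic identity: in the group ring $\mathbb Z[\Lambda]$,
\[
\sum_k (-1)^k \operatorname{ch}\Lambda^k\mathfrak n^* \;=\; \prod_{\alpha>0}\bigl(1-e^{-\alpha}\bigr)\;=\;\sum_{w\in W}(-1)^{\ell(w)}\,e^{\,w\rho-\rho},
\]
the last equality being the Weyl denominator formula; since the Euler characteristic is the same for a complex and its cohomology, the weight $w\rho-\rho$ occurs in $\sum_k(-1)^k\LieH^k_{\mathfrak n}(\mathbf 1)$ with net multiplicity $(-1)^{\ell(w)}$.

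To promote this to the exact statement I would pass to harmonic representatives: equip $\Lambda^\bullet\mathfrak n^*$ with a positive-definite Hermitian form (built from the Killing form and a compact real form, so that the weight grading is orthogonal), let $d^*$ be the adjoint of $d$ and $L = dd^*+d^*d$ the Laplacian, so that $\LieH^k_{\mathfrak n}(\mathbf 1)\cong \mathcal H^k := \ker\bigl(L|_{\Lambda^k\mathfrak n^*}\bigr)$ with $L$ self-adjoint, positive semidefinite and weight-preserving. The technical core is Kostant's identity expressing $L$ through the Casimir operator of $\mathfrak g$ (under the Killing-form identification $\mathfrak n^*\cong\mathfrak n^-$ inside $\Lambda^\bullet\mathfrak g$): it exhibits $L$ on the weight space $\Lambda^k\mathfrak n^*_{-\nu}$, with $\nu=\beta_1+\dots+\beta_k$ a sum of distinct positive roots, as a scalar plus a manifestly nonnegative operator, the scalar being (up to a positive constant) $\|\rho\|^2-\|\rho-\nu\|^2$, which is $\ge 0$ for $\nu$ a sum of distinct positive roots by Kostant's inequality. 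Hence a harmonic form of weight $-\nu$ can exist only when $\nu$ obeys the extremality relation $\langle\nu,\nu\rangle = 2\langle\nu,\rho\rangle$, i.e. $\|\rho-\nu\| = \|\rho\|$. I would cite this identity from Kostant rather than rederive it; an alternative is Lepowsky's argument, which replaces the analytic step by the Bernstein--Gelfand--Gelfand resolution of $\mathbf 1$ together with homological algebra in category $\mathcal O$.

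The remaining input is a purely root-theoretic lemma, which I would prove by induction on $\ell(w)$ using a length-decreasing simple reflection: the elements $\nu$ that are sums of distinct positive roots and satisfy $\langle\nu,\nu\rangle = 2\langle\nu,\rho\rangle$ are exactly the $\rho-w\rho$, $w\in W$; each such $\nu=\rho-w\rho$ has a \emph{unique} expression as a sum of distinct positive roots, namely $\sum_{\alpha\in\Phi_w}\alpha$ with $\Phi_w=\{\alpha>0:w^{-1}\alpha<0\}$; and $|\Phi_w|=\ell(w)$. (That $\rho-w\rho$ satisfies the extremality relation is immediate from $\|w\rho\| = \|\rho\|$; the rest are the standard facts about inversion sets.) Combining the three ingredients then finishes the proof: $\LieH^k_{\mathfrak n}(\mathbf 1)_\beta = 0$ unless $\beta=-\nu$ for some extremal $\nu$, i.e. unless $\beta=w\rho-\rho$; for $\beta=w\rho-\rho$ the weight space $\Lambda^k\mathfrak n^*_\beta$ vanishes for $k\neq\ell(w)$ and is one-dimensional for $k=\ell(w)$, so $\LieH^k_{\mathfrak n}(\mathbf 1)_\beta$ is concentrated in degree $\ell(w)$ and there at most one-dimensional; finally the Euler-characteristic identity, together with the distinctness of the weights $w\rho-\rho$, forces it to be exactly one-dimensional in degree $\ell(w)$. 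The main obstacle is Kostant's Laplacian identity, the one genuinely nontrivial ingredient; the Euler-characteristic identity and the root-system lemma are routine, so for the hard step I would simply invoke the Kostant (or Lepowsky) computation.
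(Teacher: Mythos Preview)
Your proposal is correct and is essentially the classical proof, but it takes a genuinely different route from the paper's. The paper does \emph{not} use the Laplacian, the Casimir, or the BGG resolution. Instead it first proves Proposition~\ref{kostant-fe} directly: for a simple root $\alpha_i$ it sets $\mathfrak n' = w_i(\mathfrak n)$, $\mathfrak n_0 = \mathfrak n \cap \mathfrak n'$, $\mathfrak p = \langle \mathfrak n, \mathfrak n'\rangle$, shows $\mathfrak p/\mathfrak n_0 \cong \mathfrak{sl}_2$, and uses the Hochschild--Serre spectral sequence to write $\LieH^*_{\mathfrak n}(\mathbf 1)$ and $\LieH^*_{\mathfrak n'}(\mathbf 1)$ as $\langle E\rangle$- and $\langle F\rangle$-hypercohomology of $\LieH^*_{\mathfrak n_0}(\mathbf 1)$. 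Elementary finite-dimensional $\mathfrak{sl}_2$ representation theory then gives explicit degree-shifting isomorphisms between graded pieces of $\LieH^*_{\mathfrak n}(\mathbf 1)$ and $\LieH^*_{\mathfrak n'}(\mathbf 1)$; composing with the isomorphism $\mathfrak n \cong \mathfrak n'$ yields Proposition~\ref{kostant-fe}. Kostant's theorem is then deduced by induction on $\ell(w)$, using these isomorphisms together with the trivial bound from the Chevalley--Eilenberg complex (your Lemma~\ref{negative-positive}) and a reduced-word lemma.

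The comparison: your argument is shorter to write down once one is willing to cite Kostant's Laplacian identity (or BGG), and it generalizes immediately to coefficients in an arbitrary finite-dimensional irreducible $\mathfrak g$-module. The paper's argument is more elementary in that it avoids the hard analytic or category-$\mathcal O$ input entirely, relying only on $\mathfrak{sl}_2$ branching; more importantly, it is written precisely to serve as a toy model for the functional-equation method used throughout the paper, illustrating that the ``simple reflection'' relations of Corollary~\ref{nichols-relation-theorem} already determine the Betti numbers in this classical setting. The paper itself notes that its method does \emph{not} recover the full Kostant theorem with nontrivial coefficients, which your approach would.
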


It follows from Theorem \ref{kostant} that the $\beta$-graded piece of the cohomology of $\mathfrak n$ admit the following symmetry under a shifted action of the Weyl group on $\Lambda$:

 \begin{prop}\label{kostant-fe} For  $\langle \beta, \alpha_i  \rangle > - \langle \alpha_i, \alpha_i \rangle $ there exists an isomorphism
 \[ \Gr^\beta \LieH^{k}_{\mathfrak n} ( \mathbf 1)  \to   \Gr^{w_i(\beta)-\alpha_i} \LieH^{k+1}_{\mathfrak n} ( \mathbf 1) \]
 and for $\langle \beta, \alpha_i \rangle < 0$ there exists an isomorphism 
\[ \Gr^{\beta} \LieH^{k+1}_{\mathfrak n} ( \mathbf 1)   \to \Gr^{ w_i(\beta)- \alpha_i}  \LieH^{k}_{\mathfrak n} ( \mathbf 1).    \]

Furthermore if $- \langle \alpha_i, \alpha_i \rangle <\langle \beta, \alpha_i \rangle < 0 $ then both sides of these isomorphisms vanish.

\end{prop}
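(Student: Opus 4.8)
\textbf{Proof proposal for Proposition \ref{kostant-fe}.}

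The plan is to derive everything directly from Kostant's formula (Theorem \ref{kostant}). Recall that the nonvanishing graded pieces of $\LieH^*_{\mathfrak n}(\mathbf 1)$ are exactly those of the form $\Gr^{w(\rho)-\rho}\LieH^{\operatorname{length}(w)}_{\mathfrak n}(\mathbf 1)$, each one-dimensional, as $w$ ranges over the Weyl group. So the problem reduces to a purely combinatorial statement about the map $w \mapsto (w(\rho)-\rho, \operatorname{length}(w))$ on the Weyl group and how it interacts with the shifted reflection $\beta \mapsto w_i(\beta) - \alpha_i$. First I would record the elementary identity $w_i(w(\rho)-\rho) - \alpha_i = (w_i w)(\rho) - \rho$, which follows from $w_i(\rho) = \rho - \alpha_i$ (the defining property of $\rho$: $\langle \rho, \alpha_i^\vee\rangle = 1$, so $w_i(\rho) = \rho - \langle\rho,\alpha_i^\vee\rangle \alpha_i = \rho - \alpha_i$) together with linearity of $w_i$. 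Thus the shifted reflection on weights corresponds under Kostant's bijection to left multiplication by $w_i$ on the Weyl group.

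Next I would translate the hypothesis on $\beta$ into a statement about lengths. Writing $\beta = w(\rho) - \rho$, the condition $\langle \beta, \alpha_i^\vee\rangle > -2$ is equivalent to $\langle w(\rho), \alpha_i^\vee\rangle > -1$, i.e. $\langle w(\rho), \alpha_i^\vee\rangle \geq 0$; since $w(\rho)$ is a regular weight this is the same as $\langle w(\rho), \alpha_i^\vee\rangle > 0$, which is the standard criterion for $\operatorname{length}(w_i w) = \operatorname{length}(w) + 1$. (Here I am using that $\langle\beta,\alpha_i\rangle$ and $\langle\beta,\alpha_i^\vee\rangle$ have the same sign, so the inequalities $\langle\beta,\alpha_i\rangle > -\langle\alpha_i,\alpha_i\rangle$ and $\langle\beta,\alpha_i^\vee\rangle > -2$ are interchangeable.) So on the locus $\langle\beta,\alpha_i\rangle > -\langle\alpha_i,\alpha_i\rangle$, Kostant's formula gives a one-dimensional space in degree $\operatorname{length}(w)$ at $\beta$ and a one-dimensional space in degree $\operatorname{length}(w)+1$ at $w_i(\beta)-\alpha_i = (w_iw)(\rho)-\rho$; choosing any nonzero vector in each yields the first isomorphism. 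The second isomorphism is the same argument with the roles reversed: if $\langle\beta,\alpha_i^\vee\rangle < 0$ then $\langle w(\rho),\alpha_i^\vee\rangle < 0$, hence $\operatorname{length}(w_iw) = \operatorname{length}(w) - 1$, and writing $\beta = w(\rho)-\rho$ in degree $k+1 = \operatorname{length}(w)$ we get a one-dimensional space at $w_i(\beta)-\alpha_i$ in degree $k = \operatorname{length}(w)-1$.

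For the final vanishing claim, suppose $-\langle\alpha_i,\alpha_i\rangle < \langle\beta,\alpha_i\rangle < 0$, equivalently $-2 < \langle\beta,\alpha_i^\vee\rangle < 0$, i.e. $\langle\beta,\alpha_i^\vee\rangle = -1$. If $\Gr^\beta \LieH^*_{\mathfrak n}(\mathbf 1) \neq 0$ then $\beta = w(\rho)-\rho$ for some $w$, so $\langle w(\rho),\alpha_i^\vee\rangle = \langle\beta,\alpha_i^\vee\rangle + \langle\rho,\alpha_i^\vee\rangle = -1 + 1 = 0$, contradicting regularity of $w(\rho)$. Hence $\Gr^\beta \LieH^*_{\mathfrak n}(\mathbf 1) = 0$, and similarly $\Gr^{w_i(\beta)-\alpha_i}\LieH^*_{\mathfrak n}(\mathbf 1) = 0$ since $w_i(\beta)-\alpha_i$ also has $\alpha_i^\vee$-pairing $\langle w_i(\beta),\alpha_i^\vee\rangle - 2 = -\langle\beta,\alpha_i^\vee\rangle - 2 = -1$. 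The one genuinely delicate point — really the only place any care is needed — is bookkeeping the length-change criterion and making sure the regular-weight argument is applied correctly on the boundary case $\langle\beta,\alpha_i^\vee\rangle = -1$; everything else is immediate from Kostant. I do not expect a serious obstacle here, since the statement is essentially a repackaging of the $W$-equivariance already visible in Theorem \ref{kostant}; the content being set up is that this symmetry, unlike a naive $W$-action on $\mathfrak n$, only exists as a chain of isomorphisms via the common subalgebra, which is the theme of the surrounding discussion.
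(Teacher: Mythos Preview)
Your argument is correct as a deduction from Theorem \ref{kostant}: the identity $w_i(w(\rho)-\rho)-\alpha_i=(w_iw)(\rho)-\rho$, the length criterion via $\langle w(\rho),\alpha_i^\vee\rangle$, and the regularity argument for the boundary case are all fine. However, the paper's proof takes a genuinely different route, and the difference matters for the logical structure of the section.

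The paper does not invoke Theorem \ref{kostant} at all in its proof of Proposition \ref{kostant-fe}. Instead it builds the isomorphisms explicitly: it passes to the cohomology of the common ideal $\mathfrak n_0$ via Lemma \ref{hypercohomology-expression}, reduces to a statement about $\langle E\rangle$- versus $\langle F\rangle$-hypercohomology of a complex of $\mathfrak{sl}_2$-modules (Lemma \ref{sl2-isos}), and then composes with the Weyl isomorphism $\mathfrak n\cong\mathfrak n'$ (Corollary \ref{applied-sl2-isos}). Only after Proposition \ref{kostant-fe} is established this way does the paper \emph{deduce} Theorem \ref{kostant} from it. So in the paper's internal logic your argument would be circular; you recognized the theme (``chain of isomorphisms via the common subalgebra'') but did not actually use it.

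What each approach buys: yours is shorter and reduces to standard Weyl-group combinatorics, but it only establishes an abstract equality of dimensions (you ``choose any nonzero vector''), and it cannot be run in the Nichols-algebra setting of \S\ref{ss:Hopf}, where no analogue of Kostant's formula is available a priori. The paper's approach produces \emph{natural} maps \eqref{first-composition}--\eqref{second-composition} and, more importantly, is the template the paper is trying to extract for Corollary \ref{nichols-relation-theorem}; the whole point of \S\ref{ss:Kostant} is to exhibit a mechanism that has a chance of generalizing, and then in \S\ref{ss:Hopf} to locate precisely where that mechanism (Lemma \ref{sl2-isos}) breaks down for $u_{\qv}(\mathfrak{sl}_2)$.
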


We give a proof of Theorem \ref{kostant}  in two steps. First, we prove Proposition \ref{kostant-fe} by producing an explicit isomorphism. We then show there is a unique vector space graded by $\Lambda$ and cohomological degree which admits these symmetries, and that it is given by Theorem \ref{kostant}.

In fact Kostant proved a more general formula, which calculates the cohomology of $\mathfrak n$ with coefficients in an irreducible representation of $\mathfrak g$. The analogue of Proposition \ref{kostant-fe} can still be proven in this context by the same argument, but no longer contains enough information to deduce the analogue of Theorem \ref{kostant}.

Thus the main purpose of the proof is to give context for the algebraic meaning of Corollary \ref{nichols-relation-theorem}. It is possible that this proof gives a template for a purely algebraic proof of Corollary \ref{nichols-relation-theorem} but we will show in the next section that there is an obstruction to a direct translation of this proof.

The method is as follows. Fix a simple root $\alpha_i$. Let $\mathfrak n'$ be the translate of $\mathfrak n$ by the reflection around $\alpha_i$ in the Weyl group. This is the subalgebra of $\mathfrak g$ spanned by $\mathfrak g_{\alpha}$ for all positive roots $\alpha$ except $\alpha_i$ together with $\mathfrak g_{-\alpha_i}$. The reflection around $\alpha_i$ gives an isomorphism $\mathfrak n \to \mathfrak n'$ which is compatible with the grading in the sense that it sends a vector graded by $\beta$ to the vector $w_i(\beta)$ where $w_i$ is the simple reflection associated to $\alpha_i$ in the Weyl group acting on the root lattice $\Lambda$. This induces an isomorphism $\LieH^*_{\mathfrak n} ( \mathbf 1) \to \LieH^*_{\mathfrak n'} ( \mathbf 1)$ which is compatible in the same sense. We will construct another isomorphism $\bigoplus_j \LieH^j_{\mathfrak n} ( \mathbf 1) \to \bigoplus_j  \LieH^j_{\mathfrak n'} ( \mathbf 1)$. The composition of these two will give us an ``action" of the reflection around $\alpha_i$ on $\bigoplus_j \LieH^j_{\mathfrak n} ( \mathbf 1) $. Combining these actions for all $i$ gives enough information to calculate $\bigoplus_j \LieH^j_{\mathfrak n} ( \mathbf 1) $.

To do this, let $\mathfrak n_0$ be the intersection of $\mathfrak n$ and $\mathfrak n'$, i.e. the subalgebra spanned by $\mathfrak g_{\alpha}$ for all positive roots $\alpha$ except $\alpha_i$. Let $\mathfrak p$ be the subalgebra generated by $\mathfrak n$ and $\mathfrak n'$, i.e. the subalgebra spanned by $\mathfrak g_{\alpha}$ for all positive roots $\alpha$ together with $\mathfrak g_{-\alpha_i}$ and $[\mathfrak g_{\alpha_i}, \mathfrak g_{-\alpha_i}]$.

\begin{lemma}\label{lie-algebra-relations}  With $\mathfrak n, \mathfrak n', \mathfrak n_0, \mathfrak p$ as above, $\mathfrak n_0$ is an ideal in $\mathfrak p$. Furthermore, $\mathfrak p/\mathfrak n_0$ is isomorphic to $\mathfrak{sl}_2$, with $\mathfrak n/\mathfrak n_0$ the space generated by the unique positive root $E \in \mathfrak{sl}_2$ and $\mathfrak n'/\mathfrak n_0$ the space generated by the unique negative root $F\in \mathfrak{sl}_2$. \end{lemma}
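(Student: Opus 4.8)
The plan is to work entirely at the level of weight spaces, using only the standard structure theory of $\mathfrak g$. First I would record the explicit descriptions. Writing $\Phi^{+}$ for the positive roots, the simple reflection $w_{i}$ permutes $\Phi^{+}\setminus\{\alpha_{i}\}$ and sends $\alpha_{i}$ to $-\alpha_{i}$; since $\mathfrak n'$ is the $W$-translate of the nilradical $\mathfrak n=\bigoplus_{\alpha\in\Phi^{+}}\mathfrak g_{\alpha}$, this gives
\[ \mathfrak n'=\Bigl(\bigoplus_{\alpha\in\Phi^{+}\setminus\{\alpha_{i}\}}\mathfrak g_{\alpha}\Bigr)\oplus\mathfrak g_{-\alpha_{i}},\qquad \mathfrak n_{0}=\mathfrak n\cap\mathfrak n'=\bigoplus_{\alpha\in\Phi^{+}\setminus\{\alpha_{i}\}}\mathfrak g_{\alpha}. \]
Choosing $e_{i}\in\mathfrak g_{\alpha_{i}}$, $f_{i}\in\mathfrak g_{-\alpha_{i}}$ with $h_{i}:=[e_{i},f_{i}]$ spanning the coroot line, the description of $\mathfrak p$ gives the vector space decomposition $\mathfrak p=\mathfrak n_{0}\oplus\mathfrak g_{\alpha_{i}}\oplus\mathfrak g_{-\alpha_{i}}\oplus\mathbb C h_{i}$. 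All four subspaces are direct sums of root/weight spaces, and $\mathfrak n,\mathfrak n',\mathfrak n_{0}$ are subalgebras (the first two as the nilradical and its conjugate, the third as their intersection).

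Second, to show $\mathfrak n_{0}\trianglelefteq\mathfrak p$: since $\mathfrak n_{0}$ is a subalgebra it suffices to bracket it against the remaining spanning pieces $h_{i},\mathfrak g_{\alpha_{i}},\mathfrak g_{-\alpha_{i}}$. Now $h_{i}$ acts on each $\mathfrak g_{\alpha}$ by a scalar, so $[h_{i},\mathfrak n_{0}]\subseteq\mathfrak n_{0}$. For $\mathfrak g_{\alpha_{i}}$: as $\mathfrak n$ is a subalgebra containing $\mathfrak g_{\alpha_{i}}$ and $\mathfrak n_{0}$ we get $[\mathfrak g_{\alpha_{i}},\mathfrak g_{\alpha}]\subseteq\mathfrak n\cap\mathfrak g_{\alpha+\alpha_{i}}$ for $\alpha\in\Phi^{+}\setminus\{\alpha_{i}\}$, and if this is nonzero then $\alpha+\alpha_{i}\in\Phi^{+}$, while $\alpha+\alpha_{i}\neq\alpha_{i}$ because $\alpha\neq0$, so $\mathfrak g_{\alpha+\alpha_{i}}\subseteq\mathfrak n_{0}$. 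Symmetrically, as $\mathfrak n'$ is a subalgebra containing $\mathfrak g_{-\alpha_{i}}$ and $\mathfrak n_{0}$, we get $[\mathfrak g_{-\alpha_{i}},\mathfrak g_{\alpha}]\subseteq\mathfrak n'\cap\mathfrak g_{\alpha-\alpha_{i}}$, and a nonzero such bracket forces $\alpha-\alpha_{i}\in(\Phi^{+}\setminus\{\alpha_{i}\})\cup\{-\alpha_{i}\}$; the case $\alpha-\alpha_{i}=-\alpha_{i}$ is excluded since $\alpha\neq0$, so again $\mathfrak g_{\alpha-\alpha_{i}}\subseteq\mathfrak n_{0}$. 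Hence $[\mathfrak p,\mathfrak n_{0}]\subseteq\mathfrak n_{0}$. (The same computation shows $\mathfrak n_{0}\trianglelefteq\mathfrak n$ and $\mathfrak n_{0}\trianglelefteq\mathfrak n'$, so the quotients below make sense.)

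Third, for the quotient: $\mathfrak p/\mathfrak n_{0}$ is three-dimensional with basis the images $\bar e_{i},\bar f_{i},\bar h_{i}$. After the standard normalization the $\mathfrak{sl}_{2}$-triple relations $[h_{i},e_{i}]=2e_{i}$, $[h_{i},f_{i}]=-2f_{i}$, $[e_{i},f_{i}]=h_{i}$ hold in $\mathfrak g$, hence in $\mathfrak p/\mathfrak n_{0}$, so $\bar e_{i}\mapsto E,\ \bar f_{i}\mapsto F,\ \bar h_{i}\mapsto H$ is an isomorphism $\mathfrak p/\mathfrak n_{0}\xrightarrow{\ \sim\ }\mathfrak{sl}_{2}$. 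Since $\mathfrak n=\mathfrak n_{0}\oplus\mathfrak g_{\alpha_{i}}$ and $\mathfrak n'=\mathfrak n_{0}\oplus\mathfrak g_{-\alpha_{i}}$ as vector spaces, $\mathfrak n/\mathfrak n_{0}$ and $\mathfrak n'/\mathfrak n_{0}$ are the one-dimensional subalgebras spanned by $\bar e_{i}$ and $\bar f_{i}$, i.e.\ by $E$ and $F$, which is the assertion.

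I expect no serious obstacle here: this is a bookkeeping lemma, and the only points requiring care are (a) invoking the standard fact that a simple reflection preserves $\Phi^{+}\setminus\{\alpha_{i}\}$ setwise — this is exactly what prevents the bracket of $\mathfrak g_{\pm\alpha_{i}}$ with $\mathfrak n_{0}$ from escaping $\mathfrak n_{0}$ — and (b) dispatching the degenerate cases $\alpha\pm\alpha_{i}\in\{0,\pm\alpha_{i}\}$ in the ideal computation, handled above by $\alpha\neq0$. One should also note in passing that $\mathfrak p$ as defined is genuinely a subalgebra, but the only brackets not already internal to $\mathfrak n$ are precisely those computed in the second step.
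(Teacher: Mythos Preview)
Your proof is correct and follows essentially the same root-space bookkeeping as the paper. The only organizational difference is that the paper first checks $\mathfrak n_0 \trianglelefteq \mathfrak n$ and $\mathfrak n_0 \trianglelefteq \mathfrak n'$ separately and then invokes that $\mathfrak p$ is \emph{generated} by $\mathfrak n$ and $\mathfrak n'$ (so the normalizer of $\mathfrak n_0$, being a subalgebra containing both, contains $\mathfrak p$), whereas you use the explicit vector-space description $\mathfrak p = \mathfrak n_0 \oplus \mathfrak g_{\alpha_i} \oplus \mathfrak g_{-\alpha_i} \oplus \mathbb C h_i$ and bracket $\mathfrak n_0$ against each complementary piece directly; the underlying computation $[\mathfrak g_{\pm\alpha_i},\mathfrak g_\alpha]\subseteq\mathfrak n_0$ is identical in both.
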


\begin{proof} First we check that $\mathfrak n_0$ is an ideal in $\mathfrak n$ with quotient $\mathfrak g_{\alpha_i}$. This is because $[\mathfrak g_{\alpha_i}, \mathfrak g_{\alpha}] \subseteq \mathfrak g_{\alpha_i + \alpha} \subseteq \mathfrak n_0$ for any positive root $\alpha$, since $\alpha$ positive and $\alpha_i$ simple implies $\alpha+\alpha_i$ positive but $\alpha+\alpha_i \neq \alpha_i$. Since the roots of $\mathfrak n'$ are the positive roots for a different linear form on $\Lambda$ (lying in an adjacent Weyl chamber) the same argument shows $\mathfrak n_0$ is an ideal in $\mathfrak n'$, with quotient $\mathfrak g_{-\alpha_i}$.

Since $\mathfrak p$ is generated by $\mathfrak n$ and $\mathfrak n'$, it immediately follows that $\mathfrak n_0$ is an ideal in $\mathfrak p$, with quotient generated by $\mathfrak g_{\alpha_i}$ and $\mathfrak g_{-\alpha_i}$. It is standard that $\mathfrak g_{\alpha_i} $ and $\mathfrak g_{-\alpha_i}$ generate a Lie algebra isomorphic to $\mathfrak{sl}_2$, with $\alpha_i$ the unique positive root and $-\alpha_i$ the unique negative root.\end{proof} 

\begin{lemma}\label{hypercohomology-expression} 

There is an isomorphism between $\LieH^{j}_{\mathfrak n} ( \mathbf 1) $ and the hypercohomology $\LieH^j_ {\langle E \rangle} (  \LieH^*_{\mathfrak n_0} ( \mathbf 1 ))$ and similarly an isomorphism between $\LieH^{j}_{\mathfrak n'} ( \mathbf 1) $ and the hypercohomology $\LieH^j_{\langle F \rangle} (  \LieH^*_{\mathfrak n_0} ( \mathbf 1 ))$. These isomorphisms respect the $\Lambda$-grading.

\end{lemma}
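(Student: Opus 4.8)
The plan is to exploit the semidirect-product decomposition $\mathfrak n = \mathfrak n_0 \rtimes \mathfrak g_{\alpha_i}$ supplied by Lemma \ref{lie-algebra-relations}. Write $\mathfrak a = \mathfrak g_{\alpha_i}$, a one-dimensional subalgebra mapping isomorphically onto $\mathfrak n/\mathfrak n_0 = \langle E\rangle$. Since $\mathfrak n_0$ is an ideal with complementary subalgebra $\mathfrak a$, the Chevalley--Eilenberg cochain complex $C^\bullet(\mathfrak n,\mathbf 1)$ is the total complex of the double complex $C^\bullet(\mathfrak a, C^\bullet(\mathfrak n_0,\mathbf 1))$, where $C^\bullet(\mathfrak n_0,\mathbf 1)$ is viewed as a dg-module over $\mathfrak a$ via the adjoint action of $\mathfrak a$ on $\mathfrak n_0$. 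Everything here is $\Lambda$-graded: $\mathfrak n_0$ and $\mathfrak a$ carry the weight grading, hence so do both factors of the double complex, and the differentials together with the $\mathfrak a$-action are homogeneous, the action of $E$ shifting the weight by $\alpha_i$. The same discussion applies verbatim to $\mathfrak n' = \mathfrak n_0 \rtimes \mathfrak g_{-\alpha_i}$, with $\mathfrak a' = \mathfrak g_{-\alpha_i} \cong \langle F\rangle$.

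Next I would run the spectral sequence of this double complex obtained by first taking cohomology in the $\mathfrak n_0$-direction; this is the Hochschild--Serre spectral sequence for the ideal $\mathfrak n_0 \trianglelefteq \mathfrak n$, with $\Lambda$-graded $E_2$-page $E_2^{p,q} = \LieH^p_{\langle E\rangle}\big(\LieH^q_{\mathfrak n_0}(\mathbf 1)\big)$ abutting to $\LieH^{p+q}_{\mathfrak n}(\mathbf 1)$. Because $\langle E\rangle$ is one-dimensional, the $E_2$-page is concentrated in the two columns $p = 0, 1$, so all higher differentials vanish and the sequence degenerates at $E_2$. For each $j$ (and each weight) this yields a short exact sequence of vector spaces
\[ 0 \longrightarrow \LieH^1_{\langle E\rangle}\big(\LieH^{j-1}_{\mathfrak n_0}(\mathbf 1)\big) \longrightarrow \LieH^j_{\mathfrak n}(\mathbf 1) \longrightarrow \LieH^0_{\langle E\rangle}\big(\LieH^{j}_{\mathfrak n_0}(\mathbf 1)\big) \longrightarrow 0, \]
which over $\mathbb C$ splits compatibly with the weight grading, giving $\LieH^j_{\mathfrak n}(\mathbf 1) \cong \LieH^1_{\langle E\rangle}(\LieH^{j-1}_{\mathfrak n_0}(\mathbf 1)) \oplus \LieH^0_{\langle E\rangle}(\LieH^{j}_{\mathfrak n_0}(\mathbf 1))$.

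Finally I would identify the right-hand side with the hypercohomology in the statement. Since $\LieH^\bullet_{\mathfrak n_0}(\mathbf 1)$ is a complex with zero differential, the double complex computing $\LieH^\bullet_{\langle E\rangle}\big(\LieH^\bullet_{\mathfrak n_0}(\mathbf 1)\big)$ decomposes over the internal degree $q$ as a direct sum of copies of $C^\bullet(\langle E\rangle, \LieH^q_{\mathfrak n_0}(\mathbf 1))[-q]$; as $\langle E\rangle$ is one-dimensional only degrees $0$ and $1$ contribute, so $\LieH^j_{\langle E\rangle}\big(\LieH^\bullet_{\mathfrak n_0}(\mathbf 1)\big) = \LieH^0_{\langle E\rangle}(\LieH^j_{\mathfrak n_0}(\mathbf 1)) \oplus \LieH^1_{\langle E\rangle}(\LieH^{j-1}_{\mathfrak n_0}(\mathbf 1))$, which is exactly the graded vector space above. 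The statement for $\mathfrak n'$ follows by the same argument with $\mathfrak a'$ and $F$ in place of $\mathfrak a$ and $E$.

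The one genuinely delicate point is that this identification is a (weight-graded) \emph{vector-space} isomorphism produced by splitting the Hochschild--Serre short exact sequence, not a canonical one: a canonical isomorphism would amount to formality of the dg-$\langle E\rangle$-module $C^\bullet(\mathfrak n_0,\mathbf 1)$, which we neither have nor need for the Lemma as stated. It is worth recording, for use in the following step, that the comparison maps to $\LieH^\bullet_{\mathfrak n_0}(\mathbf 1)$ (namely $\ker E$ as a sub and $\coker E$ as a quotient, each with its weight shift by $\pm\alpha_i$) are the natural ones, since it is precisely through these that the subsequent comparison of $\LieH^\bullet_{\mathfrak n}(\mathbf 1)$ with $\LieH^\bullet_{\mathfrak n'}(\mathbf 1)$ — and ultimately Proposition \ref{kostant-fe} — will be effected.
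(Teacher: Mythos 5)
Your proof is correct and is essentially the paper's argument made explicit: the paper simply invokes the general isomorphism $\LieH^j_{\mathfrak a}(\mathbf 1)\cong \LieH^j_{\mathfrak a/\mathfrak b}(\LieH^*_{\mathfrak b}(\mathbf 1))$ for an ideal $\mathfrak b\subseteq\mathfrak a$ (the Hochschild--Serre / composition-of-derived-functors statement), which is exactly what your double-complex computation and two-column degeneration establish in the case of a one-dimensional quotient. Your remark that the identification is only a non-canonical graded vector-space isomorphism (absent formality of $C^\bullet(\mathfrak n_0,\mathbf 1)$ as a dg-$\langle E\rangle$-module) is a fair point the paper glosses over, and you correctly note that the subsequent arguments use only the natural edge maps to and from $\LieH^*_{\mathfrak n_0}(\mathbf 1)$, so nothing is lost.
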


\begin{proof} Whenever we have a lie algebra $\mathfrak a$ with ideal $\mathfrak b$, we have an isomorphism \[ \LieH^j_{\mathfrak a} ( \mathbf 1) \cong \LieH^j_{ \mathfrak a/\mathfrak b} (  \LieH^*_{\mathfrak b} (\mathbf 1)).\] Applying this isomorphism to $\mathfrak a= \mathfrak n$ or $\mathfrak n'$ and $\mathfrak b= \mathfrak b$ yields, by Lemma \ref{lie-algebra-relations}, the desired isomorphisms. These are natural in the sense that they are compatible with automorphisms of the Lie algebra, which gives the compatibility with grading as the grading arises from a family of automorphisms parameterized by $\mathbb G_m^{ r}$. \end{proof}

Thus we will need to understand how to relate  the hypercohomology groups $\LieH^j_{\langle E \rangle} (M) $ and $\LieH^j_{ \langle F \rangle}  ( M )$ for a complex of $\mathfrak{sl}_2$-representations $M$. Such a complex splits as a sum of representations, but we do not want to take such a splitting immediately. It is more convenient to first write down natural maps in the setting of a general graded complex, then show they are isomorphisms by calculating in the case of a  finite-dimensional representation.

We have the hypercohomology spectral sequences
\[  \LieH^j_{\langle E \rangle} (   \mathcal H^k( M) ) \to \LieH^{j+k}_{\langle E \rangle} ( M)  \]
and
\[  \LieH^j_{\langle F \rangle} (  \mathcal H^k(M)) ) \to \LieH^{p+q}_{\langle E \rangle} ( M)  .\]

The Lie algebras $\langle E \rangle $ and $\langle F \rangle$, being one-dimensional, have cohomology of any representation concentrated in degrees $0$ and $1$, with the cohomology in degree $0$ isomorphic to the invariants and the cohomology in degree $1$ isomorphic to the coinvariants, so the spectral sequences collapse to short exact sequences
\[ 0 \to ( \mathcal H^{k-1} (M)  )_E \to   \LieH^{k}_{\langle E \rangle} ( M) \to   ( \mathcal H^{k} (M)  )^E \to 0 \]
\[ 0 \to ( \mathcal H^{k-1} (M)  )_F \to   \LieH^{k}_{\langle F \rangle} ( M) \to   ( \mathcal H^{k} (M)  )^E \to 0 \]
where superscripts $E,F$ denote invariants and subscripts $E,F$ denote coinvariants.

The natural maps from invariants to the underlying vector space and from the underlying vector space to coinvariants give natural maps
\begin{equation}\label{first-composition}  \LieH^{k}_{\langle E \rangle} ( M) \to   ( \mathcal H^{k} (M)  )^E \to \mathcal H^k(M) \to  ( \mathcal H^{k} (M)  )_F \to   \LieH^{k+1}_{\langle F \rangle} ( M) \end{equation}
\begin{equation}\label{second-composition}  \LieH^{k}_{\langle F \rangle} ( M) \to   ( \mathcal H^{k} (M)  )^F \to \mathcal H^k(M) \to  ( \mathcal H^{k} (M)  )_E \to   \LieH^{k+1}_{\langle E \rangle} ( M)\end{equation}

We say a complex of $\mathfrak{sl}_2$-modules is $\Lambda$-graded if each module in the context admits a $\Lambda$-grading as a vector space, with the action of $\mathfrak {sl}_2$ compatible with the $\Lambda$-grading, where  $\mathfrak{sl}_2$ is $\Lambda$-graded so that $E$ has grade $\alpha_i$ and $F$ has grade $-\alpha_i$.

\begin{lemma}\label{simple-shift}  Let $M$ be a complex of $\Lambda$-graded $\mathfrak{sl}_2$ modules. 
\begin{enumerate}

\item The natural maps $\LieH^{k}_{\langle E \rangle} ( M) \to   ( \mathcal H^{k} (M)  )^E$ and   $ \LieH^{k}_{\langle F \rangle} ( M) \to   ( \mathcal H^{k} (M)  )^E$ respect the $\Lambda$-grading.

\item The map $( \mathcal H^{k-1} (M)  )_E \to   \LieH^{k}_{\langle E \rangle} ( M) $ subtracts $\alpha_i$ from the grading while $( \mathcal H^{k-1} (M)  )_F \to   \LieH^{k}_{\langle F \rangle} ( M) $ adds $\alpha_i$ to the grading.

\end{enumerate}

\end{lemma}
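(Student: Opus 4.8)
The plan is to realize $\LieH^\bullet_{\langle E\rangle}(M)$ via the Chevalley--Eilenberg double complex, carry the $\Lambda$-grading through it explicitly, and then observe that the two maps in question are, up to a harmless identification, the two coordinate maps of the total complex, whose grading behaviour is transparent. The analogous statements for $\langle F\rangle$ will follow by the identical argument with the one difference of the grade of the dual line.

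First I would fix the model and conventions. For the one-dimensional Lie algebra $\langle E\rangle$ the Chevalley--Eilenberg complex of a module $V$ is the two-term complex $V \xrightarrow{\;E\,\cdot\;} \langle E\rangle^* \otimes V$ with $V$ in degree $0$, where $\langle E\rangle^*$ is one-dimensional of grade $-\alpha_i$ (dual to $E$, which has grade $\alpha_i$); thus $\LieH^0_{\langle E\rangle}(V) = V^E$ and $\LieH^1_{\langle E\rangle}(V) = \langle E\rangle^* \otimes V_E$. Applying this termwise to the complex $M$ produces a first-quadrant double complex with columns $M^\bullet$ and $\langle E\rangle^* \otimes M^\bullet$ whose total complex computes $\LieH^\bullet_{\langle E\rangle}(M)$, with $\mathrm{Tot}^k = M^k \oplus (\langle E\rangle^* \otimes M^{k-1})$. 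Since the action of $E$ on each $M^j$ is degree-$\alpha_i$ and $\langle E\rangle^*$ has grade $-\alpha_i$, this total complex is $\Lambda$-graded with $\mathrm{Tot}^k[\beta] = M^k[\beta] \oplus (\langle E\rangle^* \otimes M^{k-1}[\beta+\alpha_i])$. For $\langle F\rangle$ the only change is that $\langle F\rangle^*$ has grade $+\alpha_i$, so $\mathrm{Tot}^k[\beta] = M^k[\beta] \oplus (\langle F\rangle^* \otimes M^{k-1}[\beta-\alpha_i])$.

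Next I would match the two natural maps with the coordinate maps of $\mathrm{Tot}^\bullet$. Filtering the double complex by columns gives the hypercohomology spectral sequence which, as $\langle E\rangle$ is one-dimensional, degenerates at $E_2$ and yields exactly the short exact sequence $0 \to \langle E\rangle^* \otimes (\mathcal H^{k-1}(M))_E \to \LieH^k_{\langle E\rangle}(M) \to (\mathcal H^k(M))^E \to 0$; suppressing the one-dimensional factor $\langle E\rangle^*$ in the subobject recovers the sequence written in the text. The surjection onto $(\mathcal H^k(M))^E$ is the edge map induced by the projection $\mathrm{Tot}^k \to M^k$ onto the first summand, which is grading-preserving by the displayed formula, hence so is the induced map on cohomology; the same argument verbatim handles $F$, giving part (1). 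The injection is induced by the inclusion $\langle E\rangle^* \otimes M^{k-1} \hookrightarrow \mathrm{Tot}^k$ of the second summand (a cocycle representing a class in $\mathcal H^{k-1}(M)$ maps to a total cocycle, and passing to $E$-coinvariants kills precisely the resulting indeterminacy). Under the identification $\langle E\rangle^* \otimes M^{k-1} \cong M^{k-1}$ dropping the one-dimensional factor, the inclusion $\mathrm{Tot}^k[\beta] \supseteq \langle E\rangle^* \otimes M^{k-1}[\beta+\alpha_i]$ shows grade $\beta$ of $(\mathcal H^{k-1}(M))_E$ lands in grade $\beta-\alpha_i$, i.e.\ the map subtracts $\alpha_i$; for $F$ the dual line $\langle F\rangle^*$ has grade $+\alpha_i$, so the map adds $\alpha_i$. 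This gives part (2).

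There is no genuine obstacle here: the lemma is entirely a matter of pinning down the grading conventions on $\langle E\rangle^*$ and $\langle F\rangle^*$ and on the edge maps of the collapsing hypercohomology spectral sequence, after which both assertions reduce to the observation that the two coordinate maps of $\mathrm{Tot}^k = M^k \oplus (\langle E\rangle^* \otimes M^{k-1})$ are strictly graded, the second summand contributing the shift by $\mp\alpha_i$. The one point requiring care is internal consistency — one must use a single orientation convention (say $E^*$ in grade $-\alpha_i$) throughout, so that the identification $\LieH^1_{\langle E\rangle}(V) = \langle E\rangle^* \otimes V_E$ and the description of the edge maps are compatible.
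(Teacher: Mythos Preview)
Your proof is correct and follows essentially the same approach as the paper: both arguments compute the Chevalley--Eilenberg complex for the one-dimensional Lie algebra and observe that the grading shift comes from the dual line $\langle E\rangle^*$ (equivalently $\operatorname{Hom}(\langle E\rangle,-)$) sitting in grade $-\alpha_i$. You spell out the double complex and edge maps a bit more explicitly, whereas the paper works directly with the two-term complex $\mathcal H^k(M)\to\operatorname{Hom}(\langle E\rangle,\mathcal H^k(M))$ for a single module, but the content is identical.
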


\begin{proof} The cohomology of the one-dimensional Lie algebra $\langle E \rangle$ with coefficients in the representation $\mathcal H^{k}(M)$ can be computed from the Chevally-Eilenberg complex $\mathcal H^k( M) \to  \operatorname{Hom}(\langle E\rangle, \mathcal H^{k}(M))$ with the map sending $v \in \mathcal H^k(M)$ to the unique linear form that takes value $[E,v]$ on $E$. The kernel is isomorphic to the invariants in a way that respects the grading. On the other hand, the isomorphism $M \to   \operatorname{Hom}(\langle E\rangle, \mathcal H^{k}(M))$ that sends $w$ to the unique linear form that takes value $w$ on $E$ shifts the grading by minus the grade of $E$, i.e. by $-\alpha$, so the isomorphism from the coinvariants of $M$ to the cokernel also shifts the grade by $-\alpha_i$.

Symmetrically, the same isomorphisms in the case of $\langle F\rangle$ shift the grade by $0$ and $+\alpha_i$. \end{proof}

\begin{lemma}\label{grading-shift} Let $M$ be a complex of $\Lambda$-graded $\mathfrak{sl}_2$ modules. The map \eqref{first-composition} shifts the grading by $+\alpha_i$ while the map \eqref{second-composition} shifts the grading by $-\alpha_i$. \end{lemma}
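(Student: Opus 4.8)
The plan is to deduce Lemma \ref{grading-shift} by composing, arrow by arrow, the grading behaviour of the four maps that make up \eqref{first-composition} and \eqref{second-composition}. Three of these arrows are grading-preserving for formal reasons, and the remaining one is covered by Lemma \ref{simple-shift}.

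First I would treat \eqref{first-composition}. Its first arrow $\LieH^{k}_{\langle E \rangle}(M) \to (\mathcal H^{k}(M))^E$ respects the $\Lambda$-grading by Lemma \ref{simple-shift}(1). The second arrow is the inclusion $(\mathcal H^{k}(M))^E \hookrightarrow \mathcal H^{k}(M)$ of invariants; since the $\mathfrak{sl}_2$-action is compatible with the $\Lambda$-grading, $(\mathcal H^{k}(M))^E$ carries the induced graded-subspace structure, so this inclusion preserves degree. Similarly the third arrow $\mathcal H^{k}(M) \twoheadrightarrow (\mathcal H^{k}(M))_F$ is the projection onto coinvariants, which carries the quotient grading and hence is degree-preserving. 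The fourth arrow $(\mathcal H^{k}(M))_F \to \LieH^{k+1}_{\langle F \rangle}(M)$ adds $\alpha_i$ to the grading by Lemma \ref{simple-shift}(2) (applied with $k$ replaced by $k+1$). Adding the four shifts, $0+0+0+\alpha_i = \alpha_i$, proves the statement for \eqref{first-composition}. The argument for \eqref{second-composition} is the mirror image, interchanging $E$ and $F$: the first three arrows preserve the grading (again by Lemma \ref{simple-shift}(1) together with the sub/quotient conventions for invariants and coinvariants), while the last arrow $(\mathcal H^{k}(M))_E \to \LieH^{k+1}_{\langle E \rangle}(M)$ subtracts $\alpha_i$ by Lemma \ref{simple-shift}(2), for a total shift of $-\alpha_i$.

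There is no real obstacle here; the lemma is a bookkeeping consequence of Lemma \ref{simple-shift}. The only point worth stating carefully is that the two middle maps are indeed the natural inclusion of invariants and projection to coinvariants appearing in the collapsed hypercohomology short exact sequences, so that their degree-preservation is automatic, and that the degree-shifting contributions are entirely concentrated in the outer two maps, exactly as recorded in Lemma \ref{simple-shift}.
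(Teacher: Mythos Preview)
Your proof is correct and follows essentially the same approach as the paper: both track the grading through the four arrows of each composition, invoking Lemma \ref{simple-shift} to see that the first three arrows preserve the grading while the final arrow shifts by $\pm\alpha_i$. Your version is slightly more explicit about why the middle inclusion/projection maps are degree-preserving, but the argument is the same.
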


\begin{proof}  By Lemma \ref{simple-shift}, in \eqref{first-composition}, the last arrow adds $\alpha_i$ to the grading while the first three arrows preserve the grading, so the composition adds $\alpha_i$ to the grading. In \eqref{second-composition}, the last arrow subtracts $\alpha_i$ from the grading while the first three arrows preserve the grading, so the composition subtracts $\alpha_i$ from the grading. \end{proof}

We use $\Gr^\beta V$ to denote the $\beta$-graded piece of a $\Lambda$-graded vector space $V$. It follows from Lemma \ref{grading-shift} that we have natural maps
\begin{equation}\label{first-magic-map}  \LieH^{k}_{\langle E \rangle} ( M) [\beta] \to   \LieH^{k+1}_{\langle F \rangle} ( M) [\beta+\alpha_i]  \end{equation}
\begin{equation}\label{second-magic-map}  \LieH^{k}_{\langle F \rangle} ( M)[\beta+ \alpha_i]  \to   \LieH^{k+1}_{\langle E \rangle} ( M) [\beta] \end{equation}
\begin{lemma}\label{sl2-isos} Let $M$ be a bounded complex of $\Lambda$-graded $\mathfrak{sl}_2$-modules with finite-dimensional cohomology objects. Assume the eigenvalue of $H\in \mathfrak sl_2$ on $\mathcal H^{i} (M)[\beta]$ is equal to $2\langle \beta, \alpha_i \rangle / \langle \alpha_i, \alpha_i \rangle $

For each $\beta$, \eqref{first-magic-map}  is an isomorphism for $\langle \beta, \alpha_i  \rangle > - \langle \alpha_i, \alpha_i \rangle $ and \eqref{second-magic-map} is an isomorphism for $\langle \beta, \alpha_i \rangle < 0$. \end{lemma}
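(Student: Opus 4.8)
The plan is to reduce the statement to a direct computation with finite-dimensional $\mathfrak{sl}_2$-modules, exploiting that for a one-dimensional Lie algebra the hypercohomology spectral sequence is supported in two columns and therefore collapses to the short exact sequences already displayed before the lemma. Since $M$ is bounded with finite-dimensional cohomology, each $\mathcal H^k(M)$ is a finite-dimensional $\mathfrak{sl}_2$-module whose $\Lambda$-grading refines the $H$-eigenspace decomposition: by hypothesis the grade-$\beta$ piece lies in the $H$-eigenvalue $h_\beta := 2\langle\beta,\alpha_i\rangle/\langle\alpha_i,\alpha_i\rangle$ subspace, so $E$ (of grade $\alpha_i$) shifts $h$ by $+2$ and $F$ (of grade $-\alpha_i$) by $-2$, consistently.

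The first step is to record the elementary facts about the one-dimensional Lie algebras: for a finite-dimensional $\mathfrak{sl}_2$-module $V$, $H^0(\langle E\rangle,V)=\ker E$ is spanned by highest-weight vectors, hence has only $H$-eigenvalues $\geq 0$, while $H^1(\langle E\rangle,V)=V/EV$ is spanned by images of lowest-weight vectors, hence has only $H$-eigenvalues $\leq 0$; symmetrically $\ker F = H^0(\langle F\rangle,V)$ has only eigenvalues $\leq 0$ and $V/FV = H^1(\langle F\rangle,V)$ only eigenvalues $\geq 0$. Feeding this into the short exact sequences together with the grading shifts of Lemma \ref{simple-shift} gives, when $h_\beta>-2$: the term $(\mathcal H^{k-1}(M))_E[\beta+\alpha_i]$ has eigenvalue $h_\beta+2>0$ and so vanishes, whence $H^k_{\langle E\rangle}(M)[\beta]\xrightarrow{\ \sim\ }(\mathcal H^k(M))^E[\beta]$; and the term $(\mathcal H^{k+1}(M))^F[\beta+\alpha_i]$ has eigenvalue $h_\beta+2>0$ and so vanishes, whence $(\mathcal H^k(M))_F[\beta]\xrightarrow{\ \sim\ }H^{k+1}_{\langle F\rangle}(M)[\beta+\alpha_i]$. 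By the definition \eqref{first-composition} of the map and the grading count of Lemma \ref{grading-shift}, under these identifications \eqref{first-magic-map} becomes the natural composite $(\mathcal H^k(M))^E[\beta]\hookrightarrow\mathcal H^k(M)[\beta]\twoheadrightarrow(\mathcal H^k(M))_F[\beta]$.

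The second step is to see this composite is an isomorphism. Writing $V=\mathcal H^k(M)$ as a sum of isotypic components $\bigoplus_{m\geq 0}W_m$, the natural maps $\ker E\hookrightarrow V\twoheadrightarrow V/FV$ respect this decomposition, and on $W_m$ the composite is the identity on the multiplicity space tensored with the map from the highest-weight line $V(m)^E$ to $V(m)/FV(m)$; since the highest-weight vector of $V(m)$ is not in $FV(m)$, this is a nonzero map of one-dimensional spaces, hence an isomorphism. Restricting to the $H$-eigenvalue $h_\beta$, both sides have dimension equal to the multiplicity of $V(h_\beta)$ in $V$ and the composite is an isomorphism, which proves that \eqref{first-magic-map} is an isomorphism for $h_\beta>-2$, i.e. $\langle\beta,\alpha_i\rangle>-\langle\alpha_i,\alpha_i\rangle$. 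The argument for \eqref{second-magic-map} is entirely symmetric: when $h_\beta<0$ the terms $(\mathcal H^{k-1}(M))_F[\beta]$ and $(\mathcal H^{k+1}(M))^E[\beta]$ vanish (both have eigenvalue $h_\beta<0$), identifying \eqref{second-magic-map} with the natural composite $(\mathcal H^k(M))^F[\beta+\alpha_i]\hookrightarrow\mathcal H^k(M)[\beta+\alpha_i]\twoheadrightarrow(\mathcal H^k(M))_E[\beta+\alpha_i]$, which the same isotypic-decomposition computation — now using lowest-weight vectors together with $\ker F$ and $V/EV$ — shows is an isomorphism, covering also the degenerate range $-2<h_\beta<0$, where both source and target simply vanish, matching the last clause of Proposition \ref{kostant-fe}.

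The only real work is the bookkeeping: carefully propagating the $\Lambda$-gradings through the short exact sequences and the two four-fold compositions \eqref{first-composition}, \eqref{second-composition}, translating grades into $H$-eigenvalues, and checking that the resulting vanishing statements line up exactly with the ranges $h_\beta>-2$ and $h_\beta<0$. The representation-theoretic core — that the natural map from invariants to coinvariants is an isomorphism on the relevant weight space — is routine once the setup is in place, so I do not expect any genuine obstacle beyond getting the indices right.
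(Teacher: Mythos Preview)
Your argument is correct, and it rests on the same underlying facts as the paper's proof, but you organize the computation differently. The paper immediately splits $M$ as a direct sum of shifts of irreducibles, fixes a single $V_k$, writes down all four hypercohomology groups $\LieH^j_{\langle E\rangle}(V_k)$ and $\LieH^j_{\langle F\rangle}(V_k)$ with their grades, and checks the map case by case (both sides zero, or both one-dimensional with the map visibly sending the generator to itself). You instead stay at the level of the short exact sequences: the eigenvalue constraints on $\ker E$, $V/EV$, $\ker F$, $V/FV$ force the outer terms of the sequences to vanish in the relevant grade, collapsing \eqref{first-magic-map} and \eqref{second-magic-map} to the natural composite $(\ker E)[\beta]\hookrightarrow V[\beta]\twoheadrightarrow (V/FV)[\beta]$ (and its $E\leftrightarrow F$ analogue), which you then check is an isomorphism by the isotypic decomposition. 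Your route does the vanishing once uniformly rather than inside each irreducible, which is a bit cleaner; the paper's route makes the nontriviality of the map more visibly explicit on a single generator. Either way the content is the same $\mathfrak{sl}_2$ bookkeeping.
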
 

\begin{proof} Since $M$ is a sum of shifts of irreducible representations, it suffices to prove this for $M$ a shift of an irreducible representation, and we we may as well assume the shift is $0$.  The $k+1$-dimensional irreducible representation $V_k$ is generated by a vector $v$ such that $F \cdot v=0$, and has a basis $v, E \cdot v,\dots, E^k \cdot v$. Furthermore $v$ is an eigenvector of $H$ with eigenvalue $-k$, and so the grade $\beta_0$ of $v$ satisfies $\langle \beta_0, \alpha _i \rangle= - k \langle \alpha_i,\alpha_i \rangle/2$. 

 The $E$-invariants are generated by $E^k v$ and the $E$-coinvariants by $v $. Since $\LieH^0_{\langle E \rangle} $ is the $E$-invariants and $\LieH^1_{\langle E \rangle}$ is the $E$-coinvariants shifted in grade by $\alpha_i$ (Lemma \ref{simple-shift}),  we have $\LieH^0_{\langle E \rangle}(V_k)  = \mathbb C$ in grade $\beta_0 +k \alpha_i$ and $\LieH^1_{\langle E \rangle} (V_k) = \mathbb C$ in grade $\beta_0 - \alpha_i$ Similarly, the $F$-invariants are generated by $v$ and the $F$-coinvariants by $E ^k v$, so $\LieH^0_{\langle F \rangle}(V_k)  = \mathbb C$ in grade $\beta_0$  and $\LieH^1_{\langle F \rangle} (V_k) = \mathbb C$ in grade $\beta_0+ (k+1) \alpha_i$.

The map $ \LieH^{k}_{\langle E \rangle} ( V_k) [\beta] \to   \LieH^{k+1}_{\langle F \rangle} ( V_k) [\beta+\alpha_i] $ is trivially an isomorphism whenever both sides are $0$. This holds unless $ \beta = \beta_0+ k \alpha_i , k=0$ or $\beta= \beta_0 -\alpha_i , k=1$ or $\beta= \beta_0- \alpha_i, k=-1$. The second two cases have \[ \langle \beta, \alpha_i \rangle = - (k+2) \langle \alpha_i, \alpha_i \rangle/2 \leq - \langle \alpha_i, \alpha_i \rangle  \]  so these can be ignored as they do not satisfy our hypothesis. In the first case, both sides of the map are one-dimensional, generated by $E^k v$. Since the map is defined by taking a cohomology class, viewing as an invariant, then as an element, then as a coinvariant, and then as another class, it sends the class represented by $E^k v$ to the class represented by $E^k v$ and hence is an isomorphism.

Similarly, $ \LieH^{k}_{\langle F \rangle} ( V_k)( [\beta+ \alpha_i]  \to   \LieH^{k+1}_{\langle E \rangle} ( V_k)  [\beta]$ is trivially an isomorphism whenever both sides are zero. This holds unless $\beta =  \beta_0 + k\alpha_i  , k=-1$ or $ \beta = \beta_0-\alpha_i , k=0$ or  $\beta= \beta_0 + k \alpha_i , k=1$. The first and last cases have \[\langle \beta, \alpha_i \rangle = k \langle \alpha_i , \alpha_i \rangle /2  \geq 0\] so these can be ignored as they do not satisfy our hypothesis. In the middle case, both sides are one-dimensional, generated by $v$. Again, this forces the map to be an isomorphism in the remaining case.\end{proof}

\begin{cor}\label{applied-sl2-isos} The natural map
\[ \Gr^\beta \LieH^{k}_{\mathfrak n} ( \mathbf 1)  \to   \Gr^{\beta+\alpha_i}\LieH^{k+1}_{\mathfrak n'} ( \mathbf 1) \]
is an isomorphism  for $\langle \beta, \alpha_i  \rangle > - \langle \alpha_i, \alpha_i \rangle $ and the natural map
\[ \Gr^{\beta+ \alpha_i}\LieH^{k}_{\mathfrak n'} ( \mathbf 1)  \to  \Gr^\beta  \LieH^{k+1}_{\mathfrak n} ( \mathbf 1)  \]
 is an isomorphism for $\langle \beta, \alpha_i \rangle < 0$.\end{cor}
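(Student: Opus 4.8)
The plan is to obtain Corollary \ref{applied-sl2-isos} by concatenating Lemma \ref{hypercohomology-expression} and Lemma \ref{sl2-isos}, applied to the complex $M = \LieH^*_{\mathfrak n_0}(\mathbf 1)$. I would view $M$, with zero differential, as a bounded complex of $\Lambda$-graded $\mathfrak{sl}_2$-modules, where $\mathfrak{sl}_2 \cong \mathfrak p/\mathfrak n_0$ acts through the identification of Lemma \ref{lie-algebra-relations} and the $\Lambda$-grading is the one inherited from the adjoint action of the Cartan of $\mathfrak g$ on $\mathfrak n$ (hence on $\mathfrak n_0$ and on its cohomology). Since $\mathfrak n_0$ is a finite-dimensional Lie algebra, each $\mathcal H^j(M) = \LieH^j_{\mathfrak n_0}(\mathbf 1)$ is finite-dimensional and vanishes for $j$ outside $[0,\dim\mathfrak n_0]$, so $M$ satisfies the finiteness and boundedness requirements of Lemma \ref{sl2-isos}.

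The one point that requires checking is the eigenvalue hypothesis of Lemma \ref{sl2-isos}, namely that $H \in \mathfrak{sl}_2$ acts on $\mathcal H^i(M)[\beta]$ by $2\langle\beta,\alpha_i\rangle/\langle\alpha_i,\alpha_i\rangle$, and that $E$, $F$ carry the grades $\alpha_i$, $-\alpha_i$. In $\mathfrak p/\mathfrak n_0$ the elements $E$ and $F$ are the images of $\mathfrak g_{\alpha_i}$ and $\mathfrak g_{-\alpha_i}$, so they do carry grades $\alpha_i$ and $-\alpha_i$, matching the grading convention on $\mathfrak{sl}_2$ used in Lemma \ref{sl2-isos}; and $H$ is the image of $[\mathfrak g_{\alpha_i},\mathfrak g_{-\alpha_i}] = \mathbb C h_{\alpha_i}$, i.e. of the coroot. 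Because the $\Lambda$-grading on $\LieH^*_{\mathfrak n_0}(\mathbf 1)$ coincides with the weight decomposition for the Cartan action (the Cartan normalizing $\mathfrak n_0$), a homogeneous vector of weight $\beta$ is an $h_{\alpha_i}$-eigenvector with eigenvalue $\beta(h_{\alpha_i}) = 2\langle\beta,\alpha_i\rangle/\langle\alpha_i,\alpha_i\rangle$, exactly the hypothesis.

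With the hypotheses verified, Lemma \ref{sl2-isos} gives that \eqref{first-magic-map} is an isomorphism $\LieH^k_{\langle E\rangle}(M)[\beta] \to \LieH^{k+1}_{\langle F\rangle}(M)[\beta+\alpha_i]$ for $\langle\beta,\alpha_i\rangle > -\langle\alpha_i,\alpha_i\rangle$, and \eqref{second-magic-map} is an isomorphism $\LieH^k_{\langle F\rangle}(M)[\beta+\alpha_i] \to \LieH^{k+1}_{\langle E\rangle}(M)[\beta]$ for $\langle\beta,\alpha_i\rangle < 0$. Transporting these along the grading-preserving isomorphisms $\LieH^j_{\mathfrak n}(\mathbf 1) \cong \LieH^j_{\langle E\rangle}(M)$ and $\LieH^j_{\mathfrak n'}(\mathbf 1) \cong \LieH^j_{\langle F\rangle}(M)$ of Lemma \ref{hypercohomology-expression} produces precisely the two natural maps asserted in the corollary, with their claimed isomorphism ranges; here I take the ``natural maps'' of the statement to be, by definition, those obtained by this transport.

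The whole deduction is short. The only place requiring care — the ``main obstacle'', such as it is — is the bookkeeping of the second paragraph: correctly identifying the $\mathfrak{sl}_2$-module structure and its grading on $\LieH^*_{\mathfrak n_0}(\mathbf 1)$ with the normalization demanded by Lemma \ref{sl2-isos}, in particular matching the $H$-eigenvalue on the weight-$\beta$ part with $2\langle\beta,\alpha_i\rangle/\langle\alpha_i,\alpha_i\rangle$ and matching the grades of $E$ and $F$. Everything else is a direct concatenation of the two preceding lemmas.
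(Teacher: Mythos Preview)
Your proposal is correct and follows essentially the same route as the paper: take $M=\LieH^*_{\mathfrak n_0}(\mathbf 1)$, verify the grading and $H$-eigenvalue compatibilities (the paper does this via the Chevalley--Eilenberg complex and the weight-space grading on $\mathfrak g$, you via the coroot identification $H=h_{\alpha_i}$---these are the same thing), and then combine Lemmas \ref{hypercohomology-expression} and \ref{sl2-isos}. Your write-up is, if anything, slightly more explicit than the paper's about the eigenvalue check and the boundedness/finiteness of $M$.
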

 
 \begin{proof} This follows from Lemmas \ref{hypercohomology-expression} and  \ref{sl2-isos}, taking $M= \LieH^*_{\mathfrak n_0} ( \mathbf 1 )$ once we check that $\LieH^*_{\mathfrak n_0} ( \mathbf 1 )$ satisfies the compatibilities. The compatibility between the $\Lambda$-grading of $\mathfrak{sl}_2$ and the $\Lambda$-grading of $\LieH^*_{\mathfrak n_0} ( \mathbf 1 )$ is clear because $\mathfrak p$ and $\mathfrak n_0$ are both $\Lambda$-graded subspaces of $\mathfrak g$ and thus the action of $\mathfrak p/\mathfrak n_0 \cong \mathfrak{sl}_2$ on the $\Lambda$-graded Chevalley-Eilenberg complex computing $\LieH^*_{\mathfrak n_0} ( \mathbf 1 )$ is compatible with the grading. 
  

 For the compatibility between the grading and the action of $H \in \mathfrak {sl}_2$, we first check the same compatibility for the action of $\mathfrak {sl}_2$ on $\mathfrak n_0$, where it follows from the definition of the weight space grading on the Lie algebra.  Then the same compatibility will hold for the Chevalley-Eilenberg complex and thus its cohomology. \end{proof}

\begin{proof}[Proof of Proposition \ref{kostant-fe}]  The isomorphism $\mathfrak n \to \mathfrak n'$ gives an isomorphism between  $ \Gr^{\beta+\alpha_i }\LieH^{k}_{\mathfrak n'} ( \mathbf 1)  $ and  $ \Gr^{w_i (\beta+\alpha_i )}\LieH^{k}_{\mathfrak n} ( \mathbf 1)  $ and we have $w_i(\beta+ \alpha_i) = w_i (\beta) -\alpha_i$. Composing this isomorphism with the first isomorphism of Corollary \ref{applied-sl2-isos} gives the first isomorphism above, and composing with the second isomorphism of Corollary \ref{applied-sl2-isos} and reversing the order gives the second isomorphism above.
 
When both isomorphisms hold, their composition implies an isomorphism $ \Gr^\beta \LieH^{k}_{\mathfrak n} ( \mathbf 1)  \cong   \Gr^\beta \LieH^{k+2}_{\mathfrak n} ( \mathbf 1)  $ which since Ext groups are concentrated in nonnegative degrees implies by induction that $ \Gr^\beta \LieH^{k}_{\mathfrak n} ( \mathbf 1) =0$ for all $k$. \end{proof}

 \begin{lemma}\label{negative-positive} We have $\Gr^\beta \LieH^{k}_{\mathfrak n} ( \mathbf 1) =0$ unless $\beta$ is a negative integer combination of exactly $k$ positive roots. \end{lemma}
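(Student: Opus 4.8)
The plan is to read the statement off directly from the Chevalley--Eilenberg complex while tracking the $\Lambda$-grading. Recall that the Lie algebra cohomology $\LieH^\ast_{\mathfrak n}(\mathbf 1) = \Ext^\ast_{\mathfrak n}(\mathbf 1,\mathbf 1)$ is computed by the complex $(\Lambda^\bullet \mathfrak n^\ast, d)$, where $\Lambda^k \mathfrak n^\ast$ sits in cohomological degree $k$ and $d$ is the Chevalley--Eilenberg differential, built from the Lie bracket of $\mathfrak n$ and the exterior-algebra structure. Since $\mathfrak n = \bigoplus_{\gamma} \mathfrak g_\gamma$ with $\gamma$ ranging over the positive roots of $\mathfrak g$, each $\mathfrak g_\gamma$ being one-dimensional with grade $\gamma$, the dual $\mathfrak n^\ast = \bigoplus_\gamma \mathfrak g_\gamma^\ast$ is $\Lambda$-graded with $\mathfrak g_\gamma^\ast$ placed in grade $-\gamma$. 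The exterior powers $\Lambda^k \mathfrak n^\ast$ then inherit a $\Lambda$-grading, and because the bracket on $\mathfrak n$ respects the root grading, $d$ is a morphism of $\Lambda$-graded complexes.

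Next I would pin down which grades occur in $\Lambda^k \mathfrak n^\ast$. Choosing a nonzero vector $\xi_\gamma \in \mathfrak g_\gamma^\ast$ for each positive root $\gamma$, a homogeneous basis of $\Lambda^k \mathfrak n^\ast$ is given by the wedge products $\xi_{\gamma_1}\wedge\cdots\wedge\xi_{\gamma_k}$ over $k$-element subsets $\{\gamma_1,\dots,\gamma_k\}$ of the set of positive roots, and $\xi_{\gamma_1}\wedge\cdots\wedge\xi_{\gamma_k}$ has grade $-(\gamma_1+\cdots+\gamma_k)$. Hence $\Gr^\beta \Lambda^k \mathfrak n^\ast = 0$ unless $-\beta$ is a sum of $k$ distinct positive roots of $\mathfrak g$; in particular $-\beta$ is then a sum of exactly $k$ positive roots (counted with multiplicity), i.e.\ $\beta$ is a negative integer combination of exactly $k$ positive roots.

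Finally, since $d$ preserves the grading, each graded piece $\Gr^\beta \LieH^k_{\mathfrak n}(\mathbf 1) = \Gr^\beta\bigl(\ker(d\colon \Lambda^k\mathfrak n^\ast \to \Lambda^{k+1}\mathfrak n^\ast)\bigr)\big/\Gr^\beta\bigl(\operatorname{im}(d\colon \Lambda^{k-1}\mathfrak n^\ast \to \Lambda^k\mathfrak n^\ast)\bigr)$ is a subquotient of $\Gr^\beta \Lambda^k \mathfrak n^\ast$. Therefore $\Gr^\beta \LieH^k_{\mathfrak n}(\mathbf 1)$ vanishes whenever $\Gr^\beta \Lambda^k \mathfrak n^\ast$ does, which is exactly the assertion of the lemma.

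I do not expect a serious obstacle here; the argument is entirely formal once the grading conventions are set. The only point requiring (minor) care is the direction of the grading: one must confirm that the convention under which $\mathfrak n^\ast$, and hence $\LieH^\ast_{\mathfrak n}(\mathbf 1)$, carries the grading with $\mathfrak g_\gamma^\ast$ in grade $-\gamma$ is the same convention used in Theorem \ref{kostant} and Proposition \ref{kostant-fe}, where e.g.\ $\Gr^\beta \LieH^1_{\mathfrak n}(\mathbf 1)$ is supported at $\beta = w_i(\rho)-\rho = -\alpha_i$, consistent with ``a negative combination of exactly one positive root.''
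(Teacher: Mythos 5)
Your proposal is correct and follows essentially the same route as the paper, which also deduces the vanishing from the fact that the Chevalley--Eilenberg complex realizes $\LieH^{k}_{\mathfrak n}(\mathbf 1)$ as a subquotient of $\wedge^k \mathfrak n^{\vee}$, whose graded pieces are supported on negatives of sums of $k$ positive roots. Your version simply spells out the grading bookkeeping in more detail.
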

 
 \begin{proof} This follows from the description of the Chevalley-Eilenberg complex, which writes $\LieH^{k}_{\mathfrak n} ( \mathbf 1) $ as a subquotient of $\wedge^k (\mathfrak n)^\vee$, since $\mathfrak n$ is spanned by the weight spaces of the positive roots so that $\wedge^k (\mathfrak n)$ is spanned by wedge products of weight spaces of $k$ positive roots. \end{proof}
 
 \begin{lemma}\label{reduced-word-criterion} Let $w_{i_1} \dots w_{i_n}$ be a reduced word in a Weyl group. Let $\gamma$ be a vector in the positive Weyl chamber. Then $ \langle  w_{i_1} \dots w_{i_n}\gamma, \alpha_{i_1} \rangle \leq 0$. \end{lemma}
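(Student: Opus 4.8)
The plan is to prove Lemma \ref{reduced-word-criterion} by induction on the length $n$ of the reduced word, exploiting the standard combinatorics of reduced expressions in a Weyl group, in particular the characterization of length in terms of inversion sets. The key input is the classical fact that for a reduced word $w_{i_1}\cdots w_{i_n}$, the simple reflection $w_{i_1}$ satisfies $\ell(w_{i_1}\cdots w_{i_n}) < \ell(w_{i_1} \cdot w_{i_1}\cdots w_{i_n})$, equivalently that $w_{i_1}$ sends the positive root $w_{i_2}\cdots w_{i_n}^{-1}(\text{something})$ --- more usefully, that $(w_{i_1}\cdots w_{i_n})^{-1} \alpha_{i_1} = w_{i_n}\cdots w_{i_2} w_{i_1}\alpha_{i_1} = -w_{i_n}\cdots w_{i_2}\alpha_{i_1}$ is a \emph{negative} root. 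This last assertion is exactly the statement that $\alpha_{i_1}$ lies in the inversion set of the reduced word $(w_{i_1}\cdots w_{i_n})^{-1} = w_{i_n}\cdots w_{i_1}$, which is immediate from the theory of reduced words (the inversion set of a length-$n$ element has exactly $n$ elements, built up one simple root at a time along any reduced expression, and reading the expression $w_{i_n}\cdots w_{i_1}$ right-to-left shows $\alpha_{i_1}$ enters at the last step).

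Granting that, the argument is a one-line computation. Write $w = w_{i_1}\cdots w_{i_n}$. Then
\begin{equation*}
\langle w\gamma, \alpha_{i_1}\rangle = \langle \gamma, w^{-1}\alpha_{i_1}\rangle,
\end{equation*}
using that $w$ acts by orthogonal (or at least pairing-preserving) transformations with respect to $\langle,\rangle$. By the fact recalled above, $w^{-1}\alpha_{i_1}$ is a negative root, hence a nonpositive integer combination of simple roots. Since $\gamma$ lies in the positive (closed) Weyl chamber, $\langle \gamma, \alpha_j\rangle \geq 0$ for every simple root $\alpha_j$, and therefore $\langle \gamma, w^{-1}\alpha_{i_1}\rangle \leq 0$. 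This gives $\langle w\gamma, \alpha_{i_1}\rangle \leq 0$, as desired.

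The only real content is the claim that $w^{-1}\alpha_{i_1}$ is negative when $w_{i_1}\cdots w_{i_n}$ is reduced; everything else is bookkeeping about the chamber condition and the invariance of the pairing. I would either cite this directly from a standard reference on Coxeter groups (it is the statement that $\alpha_{i_1}$ is an inversion of $w_{i_n}\cdots w_{i_1}$, or dually that left-multiplication by $w_{i_1}$ shortens $w$), or give the short inductive proof: if $w^{-1}\alpha_{i_1}$ were positive, then $\ell(w_{i_1} w) = \ell(w)+1$, contradicting that $w = w_{i_1}(w_{i_2}\cdots w_{i_n})$ already has $w_{i_1}$ as the first letter of a reduced expression (so $\ell(w_{i_1}w) = \ell(w_{i_2}\cdots w_{i_n}) = n-1 = \ell(w)-1$). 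I do not anticipate a genuine obstacle here; the main thing to be careful about is making sure the pairing $\langle,\rangle$ used on the root lattice is Weyl-invariant (it is, being the restriction of the invariant form), so that the identity $\langle w\gamma,\alpha_{i_1}\rangle = \langle\gamma, w^{-1}\alpha_{i_1}\rangle$ is legitimate, and that ``positive Weyl chamber'' is taken in the sense of nonnegative pairing with all simple roots.
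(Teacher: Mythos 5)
Your proposal is correct and takes essentially the same route as the paper: both arguments reduce the claim to the fact that $w^{-1}\alpha_{i_1}$ is a negative root when $w_{i_1}\cdots w_{i_n}$ is reduced, and then conclude via the invariance of the pairing and the nonnegativity of $\langle\gamma,\alpha_j\rangle$ for $\gamma$ in the positive chamber. The only difference is that the paper proves this standard Coxeter-theoretic fact inline by the exchange-condition argument (running the proof as a contradiction), whereas you cite it directly, with the same length-based justification as a fallback.
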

 
 \begin{proof} Suppose for contradiction that  $ \langle  w_{i_1} \dots w_{i_n}\gamma, \alpha_{i_1} \rangle > 0$. Then  \[ \langle  \gamma, w_{i_n} \dots w_{i_2}  \alpha_{i_1} \rangle =- \langle  \gamma, w_{i_n} \dots w_{i_1}  \alpha_{i_1} \rangle =-  \langle  w_{i_1} \dots w_{i_n}\gamma, \alpha_{i_1} \rangle < 0\] so $w_{i_n} \dots w_{i_2}  \alpha_{i_1} $ is a negative root. Since $\alpha_i$ is a positive root, there must be some $j\geq 1$ such that $ w_{i_j} \dots w_{i_2} \alpha_{i_1} $ is a positive root but $w_{i_{j+1}} \dots w_{i_2} \alpha_{i_1}$ is a negative root. Since each reflection through a simple root moves the positive Weyl chamber to an adjacent Weyl chamber and thus changes only one positive root, i.e. that simple root, to a negative root, we must have $w_{i_j} \dots w_{i_2} \alpha_{i_1} = \alpha_{i_{j+1}} $ so  \[ w_{i_{j+1}} = w_{i_j} \dots w_{i_2} w_{i_1} w_{i_2}^{-1} \dots w_{i_j}^{-1} =w_{i_j}^{-1} \dots w_{i_2}^{-1} w_{i_1} w_{i_2} \dots w_{i_j}  \] (since each reflection is its own inverse) which implies \[ w_{i_1} \dots w_{i_n} = w_{i_2} \dots w_{i_j} w_{i_{j+1}  }  w_{i_{j+1}} w_{i_{j+2}} \dots w_{i_n} = w_{i_2} \dots w_{i_j} w_{i_{j+2}} \dots w_{i_n},\] contradicting the assumption that $w_{i_1} \dots w_{i_n}$  is reduced. \end{proof}

\begin{proof}[Proof of Theorem \ref{kostant}] 


Fix any $\beta$, and let $w$ be an element of the  Weyl group such that $\langle w^{-1}( \beta+\rho) , \alpha_i \rangle  \geq  0$ for all $i$. This is possible since the set of vectors satisfying these conditions form the positive Weyl chamber, and we can choose an element of the Weyl group that sends any vector into the positive Weyl chamber. Equivalently, we have  $\langle w^{-1}( \beta+\rho) - \rho  , \alpha_i \rangle  \geq  - \langle \alpha_i , \alpha_i \rangle /2 $ 

It suffices to prove that  $ \Gr^\beta \LieH^{k}_{\mathfrak n} ( \mathbf 1)=0 $ unless $\beta = w(\rho)-\rho$ for $\rho$ half the sum of the positive roots and $k= \operatorname{length}(w)$ for this value of $w$, and is $1$-dimensional in that case. Indeed, if the condition holds for this value of $w$ then it clearly holds for some value of $w$, and if it holds for some value of $w$ then $w$ is the unique $w'$ such that $\langle w'^{-1}( \beta+\rho) , \alpha_i \rangle  \geq  0$, since $\beta+\rho = w(\rho)-\rho+\rho=w(\rho)$, and $w(\rho)$ lies in the interior of its Weyl chamber so there is a unique element $w'=w$ sending it to the positive Weyl chamber.

We prove this claim by induction on $\operatorname{length}(w)$.

For the base case where $w$ is the identity element, we assume $\langle \beta + \rho , \alpha_i \rangle \geq 0$ for all $i$, which implies $\langle \beta,\alpha_i\rangle \geq - \langle \alpha_i,\alpha_i \rangle /2$ for all $i$.  By Lemma \ref{negative-positive} if $\Gr^\beta \LieH^{k}_{\mathfrak n} ( \mathbf 1) \neq0$ then $\beta$ is a negative combination of exactly $k$ roots. If $k=0$ it follows that $\beta=0 = w(\rho)-\rho$ for $w$ the identity element, and in this case $\LieH^{k}_{\mathfrak n} ( \mathbf 1) [\beta]=\mathbf 1$ is the $\mathfrak n$-invariants of $\mathbf 1$. If $k>0$ then it follows that $\langle \beta, \alpha_i \rangle<0$ for some $i$ since otherwise $\beta$ would lie in the positive cone and hence could not be a nontrivial combination of negative roots, and thus $$- \langle \alpha_i,\alpha_i \rangle < - \langle \alpha_i,\alpha_i \rangle /2 \leq \langle \beta, \alpha_i \rangle<0$$ so $\LieH^{k}_{\mathfrak n} ( \mathbf 1) [\beta]=0 $ by Proposition \ref{kostant-fe}.

For the induction step, note that we can write $w= w_i w'$ for some $i$ and some $w'$ in the Weyl group where $\operatorname{length} (w') = \operatorname{length}(w)-1$. Let
\[\beta' = w_i^{-1} (\beta+\rho) - \rho= w_i(\beta+\rho)-\rho =w_i(\beta) + w_i(\rho) - \rho= w_i (\beta)-\alpha_i\]
Observe that, by the induction hypothesis applied to $\beta', w'$, since \[w^{'-1} ( \beta'+\rho) = w^{'-1} ( w_i^{-1} (\beta+\rho) ) = w^{-1} (\beta+\rho) \] is in the positive Weyl chamber, the group $\Gr^{\beta'}  \LieH^{k-1}_{\mathfrak n} ( \mathbf 1) $ vanishes unless $\beta'= w'(\rho)-\rho$ and $k-1= \operatorname{length}(w')= \operatorname{length}(w)-1 $, which is equivalent to \[ \beta= w_i(\beta'+\rho) - \rho= w_i ( w'(\rho))-\rho= w(\rho)-\rho\] and $k= \operatorname{length}(w)$, and is one-dimensional in that case. So it suffices to check
\[\Gr^\beta  \LieH^{k}_{\mathfrak n} ( \mathbf 1)  = \Gr^{\beta'} \LieH^{k-1}_{\mathfrak n} ( \mathbf 1) \]
which follows from Proposition \ref{kostant-fe} as long as $\langle \beta, \alpha_i \rangle <0$.

By Lemma \ref{reduced-word-criterion} applied to $\gamma = w^{-1} (\beta+\rho) $ in the positive Weyl chamber and any reduced word for $w$ beginning with $w_i$,  we must have $\langle \beta +\rho, \alpha_i \rangle \leq 0$, which since $\langle \rho,\alpha_i \rangle >0$ implies $\langle \beta, \alpha_i \rangle <0$. \end{proof}

\subsection{Beyond the toy model}\label{ss:Hopf}

In this subsection, we use the notation $M \{\alpha\}$ for a $\Lambda$-graded module $M$ to denote the module $M$ with the grading shifted by $\alpha$, i.e. each homogenous element of grade $\beta$ in $M$ becomes a homogeneous element of grade $\beta+\alpha$ in $M\{\alpha\}$.

One could try to generalize the proof of Proposition \ref{kostant-fe} to the setting of quantum groups. Much of the argument transfers, but there is one crucial obstruction that we now explain.

Here, $\mathfrak n$ is replaced by the positive part of Lusztig's small quantum group $u_{\qv}(\mathfrak g)$. We replace the Lie algebra cohomology with the Ext group of the trivial representation with itself over this algebra. (Since the Lie algebra cohomology of the trivial representation is simply the Ext group of the trivial representation with itself, this is a direct generalization.)
 We replace the Weyl group with the braid group, so $\mathfrak n'$ is replaced by the image of $\mathfrak n'$ under the action of the $i$th standard generator of the braid group under Lusztig's braid group action.  We can replace $\mathfrak p$ with the subalgebra generated by $\mathfrak n$ and $\mathfrak n'$, and $\mathfrak n_0$ by the intersection of these subalgebras.  Lemma \ref{lie-algebra-relations} could be replaced by short exact sequences relating these Hopf algebras to $u_{\qv}(\mathfrak sl_2)$ and its positive and negative parts. The spectral sequences could be replaced by the spectral sequences associated to the short exact sequences of Hopf algebras. 

 A suitable analogue of Lemma \ref{sl2-isos} would be the following statement \ref{key-non-theorem}, which unfortunately is false.

 Let $\Lambda$ be a finitely generated abelian group (which we will take to be a root lattice) and $\alpha_i \in \Lambda$ an element (which we will take to be a root). Let $u_{\qv}(\mathfrak{sl}_2)$ be Lusztig's small quantum group, endowed with a $\Lambda$-grading where $E$ has grade $\alpha_i$, $F$ has grade $-\alpha_i$, and $K$ has grade $0$. We say a complex of $u_{\qv}(\mathfrak{sl}_2)$ models is $\Lambda$-graded if it is endowed with a $\Lambda$-grading as a complex of vector spaces for which the action of $u_q(\mathfrak{sl}_2)$ is compatible with the grading.

 \begin{non-theorem}\label{key-non-theorem} Let ${\qv}$ be a root of unity whose square has order $n_i$. Let $M$ be a complex of $\Lambda$-graded $u_{\qv}(\mathfrak{sl}_2)$-modules with finite-dimensional cohomology objects. Assume the eigenvalue of $K\in \mathfrak u_{\qv}(\mathfrak{sl}_2)$ on $\mathcal H^{i} (M)[\beta]$ is equal to ${\qv}^{ 2\langle \beta, \alpha_i \rangle / \langle \alpha_i, \alpha_i \rangle }$. Let $\langle E \rangle_{\qv}$ and $\langle F \rangle_{\qv}$ be the subalgebras of $u_{\qv}(\mathfrak{sl}_2)$ generated by $E$ and $F$ respectively. Fix $\beta\in \Lambda$ and let $c \in \{1,\dots, n_i\}$ be congruent to $ \frac{2 \langle \beta, \alpha_i\rangle }{ \langle \alpha_i, \alpha_i\rangle} +1$ modulo $n_i$. Fix $j\in \mathbb Z$. If $c\neq n_i$ then we have  
 \[  \dim \Gr^\beta \Ext^j_{ \langle E\rangle_{\qv}} (\mathbf 1, M )  - \dim \Gr^{\beta + (n-c) \alpha_i} \Ext^{j-1}_{ \langle E\rangle_{\qv}} (\mathbf 1, M )  \] \[ =   \dim \Gr^{\beta+ \alpha_i 
 - c\alpha_i  }\Ext^{j}_{ \langle F\rangle_{\qv}} (\mathbf 1, M )  - \dim \Gr^{\beta  + (1-n_i)  \alpha_i} \Ext^{j-1}_{ \langle F\rangle_{\qv}} (\mathbf 1, M )    \] 
 and if $c=n_i$ we have 
 \[\dim  \Gr^\beta \Ext^j_{ \langle E\rangle_{\qv}} (\mathbf 1, M ) [\beta]= \dim  \Gr^{ \beta+ (1-n_i) \alpha_i} \Ext^j_{ \langle F\rangle_{\qv}} (\mathbf 1, M ) .\]
 \end{non-theorem}

 A counterexample is provided by the following:

\begin{example}\label{counter-example} We consider the special case when $M$ is a module, viewed as a complex by placing in degree $0$, and $j=0$. Since cohomology groups in negative degree vanish,  the degree $0$ cohomology with respect to $E$ are the $E$-invariants, and the same is true for $F$, the putative equation becomes

 \[  \dim \Gr^\beta M^E   =   \dim \Gr^{\beta+ \alpha_i 
 - c\alpha_i  }M^F  \] 

The map $\beta \mapsto \beta + \alpha_i - c_\beta \alpha_i$ is a bijection, with inverse $\beta \mapsto \beta  + (n_i - c_\beta-1) \alpha_i$ so summing over $\beta$ we obtain $\dim M^E = \dim M^F$.

But this is absurd. A Verma module will always have a one-dimensional space of invariants for $F$ but typically a two-dimensional space of invariants for $E$.

Explicitly, Let $\qv$ be a root of unity of odd order $n_i$. Then $u_{\qv}(\mathfrak{sl}_2)$ is the algebra with generators $E, F, K, K^{-1} $ and relations $EF - FE = \frac{ K- K^{-1}}{\qv-\qv^-1} , KE = \qv^2 EK, KF= \qv^{-2} FK , E^{n_i} =0, F^{n_i}=0, K^{2n_i}=1$, as well as $K K^{-1} = K^{-1}K=1$.

The Verma module $M$ can be presented as the module freely generated by a vector $x$ with relations $E x =0 $ and $K x = \qv^s x$ for some integer $s \in \{0,\dots, n_i-2\}$.

Even more explicitly, this is the module with basis $x, Fx, \dots, F^{n_i-1} x$ where 
\[ F \cdot F^t x= F^{t+1}x \textrm{ for }t<n_i-1\] \[  F \cdot F^{n_i-1} x=0,\]  
\[K \cdot F^t x = \qv^{s-2t} F^t x,\]
\[ E F^t x = \frac{ \qv^{s+1} + \qv^{-1-s } - \qv^{s+1-2t }  - \qv^{ 2t-s-1}   }{ (\qv- \qv^{-1})^2}   F^{t-1} x   \textrm{ for } t>0\]
\[ F x=0.\]One can verify directly that the relations are satisfied for this module.

We give the generator $x$ a grade $\beta_0 \in \Lambda$ satisfying $\langle \beta_0 ,\alpha_i \rangle = \frac{s}{2} \langle \alpha_i, \alpha_i \rangle$.  Then $M$ is a complex of $\Lambda$-graded $u_{\qv}(\mathfrak{sl}_2)$-modules with finite-dimensional cohomology objects and satisfies the compatibility with the eigenvalue of $K$.

The space of $F$-invariants is generated by $F^{n_i-1} x$. On the other hand, we have $E F^t=0$ if and only if  $ \qv^{s+1} + \qv^{-1-s } - \qv^{s+1-2t }  - \qv^{ 2t-s-1} =0$ which occurs if and only if $t \equiv 0 \bmod n_i$ or $i \equiv s+1 \bmod n_i$, since $\qv$ has odd order so that $ \qv^{s+1} $ and $\qv^{-1-s }$ can't cancel so that $ \qv^{s+1} $ must cancel one of the other two terms.  Since $s \in \{0,\dots,n_i-2\}$ this occurs exactly when $t=0$ or $t=s+1$, so that $M$ has a two-dimensional space of $E$-invariants, giving the desired contradiction.

\end{example}

Therefore it seems one cannot prove the functional equation based solely on the hypercohomology description, without further information on the cohomology of $\mathfrak n_0$.

One claim that can be checked directly is a functional equation for Euler characteristics.


\begin{lemma}\label{Euler-comparison}Let $\qv$ be a root of unity whose square has order $n_i>1$. Let $M$ be $\Lambda$-graded complex of $u_{\qv}(\mathfrak{sl}_2)$-modules such that for each $\beta \in u_{\qv}(\mathfrak{sl}_2)$, the $\beta$-graded part of the cohomology $\Gr^\beta \mathcal H^k(M) $ is finite-dimensional for all $k$ and zero for all but finitely many $k$. Then
\[ \sum_{k \in \mathbb Z} (-1)^k \dim \Gr^\beta \Ext^k_{\langle E \rangle_{\qv}} (\mathbf 1, M ) - \sum_{k \in \mathbb Z} (-1)^k \dim \Gr^{\beta+n_i\alpha_i} \Ext^k_{\langle E \rangle_{\qv}} (\mathbf 1, M )  \] \[= \sum_{k \in \mathbb Z} (-1)^k \dim \Gr^\beta  \mathcal H^k (M)  - \sum_{k\in \mathbb Z} (-1)^k \dim \Gr^{\beta+\alpha_i} \mathcal H^k (M) \]
\end{lemma}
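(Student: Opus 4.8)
The plan is to reduce the statement to an explicit computation on a single finite-dimensional complex, which the grading hypotheses make tractable, and to exploit the fact that the left-hand side is an Euler characteristic and hence insensitive to replacing $M$ by a quasi-isomorphic complex. First I would observe that both sides of the claimed identity depend only on the graded cohomology objects $\Gr^\beta \mathcal H^k(M)$: the right-hand side manifestly does, and the left-hand side does because $\Ext^*_{\langle E\rangle_{\qv}}(\mathbf 1, M)$ is computed by a spectral sequence whose $E_2$-page is $\Ext^p_{\langle E\rangle_{\qv}}(\mathbf 1, \mathcal H^q(M))$, and Euler characteristics are additive across spectral sequences. So after replacing $M$ by the direct sum of its shifted cohomology modules, I may assume $M$ is a single $\Lambda$-graded $u_{\qv}(\mathfrak{sl}_2)$-module concentrated in degree $0$, with each $\Gr^\beta M$ finite-dimensional (the ``all but finitely many $k$'' hypothesis degenerates to a single term). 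The right side then reads $\dim \Gr^\beta M - \dim \Gr^{\beta+\alpha_i} M$, and the left side reads $\dim\Gr^\beta M^E - \dim \Gr^\beta M_E\{-\alpha_i\} - \dim \Gr^{\beta+n_i\alpha_i} M^E + \dim \Gr^{\beta+n_i\alpha_i} M_E\{-\alpha_i\}$, using that $\langle E\rangle_{\qv}\cong \mathbb C[E]/(E^{n_i})$ has cohomology of the trivial module in every nonnegative degree alternating between invariants and coinvariants, with appropriate grading shifts by $\alpha_i$ (this periodicity is the key structural input and should be recorded carefully, as in Lemma \ref{simple-shift}).

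Next I would reduce further to the case where $M$ is a cyclic (or more simply, an indecomposable) $\langle E\rangle_{\qv}$-module, since every finite-dimensional $\mathbb C[E]/(E^{n_i})$-module is a direct sum of modules of the form $\mathbb C[E]/(E^m)$ for $1\le m\le n_i$, each generated in some single grade. Both sides of the identity are additive in such direct sum decompositions once one is careful that the $K$-eigenvalue compatibility hypothesis ties the grade of a generator to $s$ via $\langle \beta_0,\alpha_i\rangle = \tfrac{s}{2}\langle\alpha_i,\alpha_i\rangle$, so the grading shifts $\beta\mapsto \beta+\alpha_i$ and $\beta\mapsto\beta+n_i\alpha_i$ act combinatorially on the finite set of grades appearing. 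For $M_m = \mathbb C[E]/(E^m)$ generated in grade $\beta_0$: the graded pieces of $M_m$ are one-dimensional in grades $\beta_0, \beta_0+\alpha_i,\dots,\beta_0+(m-1)\alpha_i$; the $E$-invariants $M_m^E$ are one-dimensional, in grade $\beta_0+(m-1)\alpha_i$; the $E$-coinvariants $(M_m)_E$ are one-dimensional, in grade $\beta_0$, contributing in grade $\beta_0-\alpha_i$ after the shift $\{-\alpha_i\}$ appearing in $\Ext^1$. Plugging these into both sides and checking the identity grade by grade is then a finite, bounded verification: the right side is the indicator of $\{\beta_0,\dots,\beta_0+(m-1)\alpha_i\}$ minus the indicator of $\{\beta_0+\alpha_i,\dots,\beta_0+m\alpha_i\}$, a telescoping ``box function'' supported on $\beta_0$ and $\beta_0+m\alpha_i$ with opposite signs; the left side is the analogous difference of box functions built from the invariants at $\beta_0+(m-1)\alpha_i$, $\beta_0-\alpha_i+n_i\alpha_i$, etc. The two sides match because $m$ and $n_i$ enter only through the two endpoints, and one checks $\beta_0 + (m-1)\alpha_i$ versus $\beta_0 + m\alpha_i$ against $\beta_0+(n_i-1)\alpha_i$ versus $\beta_0+n_i\alpha_i$ with the correct signs; this is precisely the telescoping computation already carried out in the proof of Theorem \ref{homological-relation-theorem} (the Kubota-side telescoping sums \eqref{hom-k-s-final} and \eqref{hom-k-g-final}), so I would cite that structure rather than redo it.

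The main obstacle is bookkeeping rather than conceptual: one must be scrupulous about which grading shift attaches to $\Ext^j$ versus $\Ext^{j-1}$, and about the role of the $K$-eigenvalue hypothesis, because the indecomposable summands of $M$ as an $\langle E\rangle_{\qv}$-module need \emph{not} be graded subquotients respecting the $u_{\qv}(\mathfrak{sl}_2)$-structure --- only the $\langle E\rangle_{\qv}$-structure and the $\Lambda$-grading are used, and Example \ref{counter-example} warns that the full $u_{\qv}(\mathfrak{sl}_2)$-module structure does \emph{not} give a cleaner statement. So I would emphasize that the decomposition into $M_m$'s is purely as $\Lambda$-graded $\mathbb C[E]/(E^{n_i})$-modules (which is legitimate since $\mathbb C[E]/(E^{n_i})$ is a graded algebra over a field and its graded modules are sums of shifted cyclic modules), and that no compatibility with $F$ or $K$ beyond the stated eigenvalue constraint on $\mathcal H^k(M)[\beta]$ is invoked. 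Once this is set up, the verification is a short finite check and the lemma follows.
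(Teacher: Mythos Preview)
Your approach is correct but takes a genuinely different route from the paper. The paper introduces an auxiliary two-term complex $U=[\langle E\rangle_{\qv}\xrightarrow{\ \cdot E\ }\langle E\rangle_{\qv}\{-\alpha_i\}]$ of \emph{free} $\langle E\rangle_{\qv}$-modules and computes $\sum_k(-1)^k\dim\Gr^{\beta+\alpha_i}\operatorname{RHom}_{\langle E\rangle_{\qv}}(U,M)$ via two filtrations: the stupid filtration with pieces $U^0,U^1$ (both free, so $\operatorname{RHom}(U^i,M)=M$ up to shift, giving the right-hand side immediately) and the cohomology filtration with pieces $\ker d^0\cong\mathbf 1\{(n_i-1)\alpha_i\}$ and $\coker d^0\cong\mathbf 1\{-\alpha_i\}$ (so $\operatorname{RHom}$ of each piece is a shift of $\Ext^*_{\langle E\rangle_{\qv}}(\mathbf 1,M)$, giving the left-hand side). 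No decomposition of $M$, no explicit periodic resolution, no case analysis.

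Your route---reduce to a single module via Euler-characteristic additivity, decompose as a $\Lambda$-graded $\langle E\rangle_{\qv}$-module into shifted cyclics $\mathbb C[E]/(E^m)$, verify on each---also works and is more explicit; the paper's is shorter and uniform in $M$. Two cautions on your write-up: first, your claim that $\Ext^k_{\langle E\rangle_{\qv}}(\mathbf 1,M)$ ``alternates between invariants and coinvariants'' is literally true only for summands on which $E^{n_i-1}=0$ (i.e.\ $m<n_i$); in general the even and odd Ext groups are $\ker E/\operatorname{im}E^{n_i-1}$ and $\ker E^{n_i-1}/\operatorname{im}E$ up to shift, so the free summand $m=n_i$ (with vanishing higher Ext) must be treated separately---easy, but your displayed expression for the left side is not correct before the cyclic decomposition. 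Second, citing Theorem~\ref{homological-relation-theorem} for the telescoping is backwards, since that theorem rests on the arithmetic functional equations while the present lemma is meant to be the purely algebraic ingredient; the telescoping you need is elementary and should just be done directly.
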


\begin{proof} The key player will be the complex $U$ of $\langle E \rangle_{\qv}$-modules defined by $U^0 = \langle E \rangle_{\qv}$,  $U^1 = \langle E \rangle_{\qv}\{-\alpha_i\} $, $U^k = 0 $ for $k\neq 0,1$ and where $d^0 \colon U^0 \to U^1$ is multiplication by $E$. This is a $\Lambda$-graded complex of $\langle E \rangle_{\qv}$-modules, so  $\operatorname{RHom}_{\langle E \rangle_{\qv}} ( U, M) $ carries a $\Lambda$-grading. We calculate \[ \Gr^{\beta+\alpha_i}  \sum_{k \in \mathbb Z}(-1)^k  \dim \operatorname{RHom}_{\langle E \rangle_{\qv}} ( U, M) \]  in two ways. On the one hand, by the spectral sequence associated to the filtration with associated graded pieces $U^0$ and $U^1$ we have
\[  \sum_{k \in \mathbb Z}(-1)^k  \dim \Gr^{\beta+\alpha_i} \operatorname{RHom}_{\langle E \rangle_{\qv}} ( U, M)  \] \[=  \sum_{k \in \mathbb Z}(-1)^k  \dim \Gr^{\beta+\alpha_i}\operatorname{RHom}_{\langle E \rangle_{\qv}} (\langle E \rangle_{\qv} , M)  -  \sum_{k \in \mathbb Z}(-1)^k  \dim \Gr^{\beta+\alpha_i} \operatorname{RHom}_{\langle E \rangle_{\qv}} (\langle E \rangle_{\qv})\{-\alpha_i \}  , M)  \] \[= \sum_{k \in \mathbb Z} (-1)^k \dim  \Gr^{\beta+\alpha_i} \mathcal H^k (M)  - \sum_{k\in \mathbb Z} (-1)^k \dim  \Gr^{\beta+\alpha_i} \mathcal H^k (M \{\alpha_i\} )  \] \[= \sum_{k \in \mathbb Z} (-1)^k \dim \Gr^{\beta+\alpha_i} \mathcal H^k (M) - \sum_{k\in \mathbb Z} (-1)^k \dim \Gr^\beta \mathcal H^k (M) .\]

On the other hand, by the spectral sequence associated to the filtration with associated graded pieces $\ker d^0 $ and $\coker d^0  $ we have
\[  \sum_{k \in \mathbb Z}(-1)^k  \dim  \Gr^{\beta+\alpha_i} \operatorname{RHom}_{\langle E \rangle_{\qv}}  ( U, M)  \] \[ =  \sum_{k \in \mathbb Z}(-1)^k  \dim  \Gr^{\beta+\alpha_i}\operatorname{RHom}_{\langle E \rangle_{\qv}} (\ker d^0 , M)   -  \sum_{k \in \mathbb Z}(-1)^k  \dim  \Gr^{\beta+\alpha_i}\operatorname{RHom}_{\langle E \rangle_{\qv}} (\coker d^0   , M)   \]

Now $\coker d^0 =\langle E \rangle_{\qv}/ E \langle E \rangle_{\qv} \{ -\alpha_i\}  = \mathbf 1\{-\alpha_i\}$ and thus  \[ \operatorname{RHom}_{\langle E \rangle_{\qv}} (\coker d^0   , M)  =   \operatorname{RHom}_{ \langle E \rangle_{\qv}} ( \mathbf 1, M )\{\alpha_i\} \]  so that \[  \sum_{k \in \mathbb Z}(-1)^k  \dim  \Gr^{\beta+\alpha_i}\operatorname{RHom}_{\langle E \rangle_{\qv}} (\coker d^0   , M)   = \sum_{k\in \mathbb Z} (-1)^k  \dim  \Gr^\beta \operatorname{RHom}_{ \langle E \rangle_{\qv}} ( \mathbf 1, M ) \]

Since $\langle E\rangle_{\qv} = \mathbb C[E]/E^{n_i}$, the kernel $\ker d^0$ of multiplication by $E$ is generated by $E^{n-1}$ and thus is isomorphic to $\mathbf 1 \{ (n_i-1)\alpha_i\}$ so that
 \[  \sum_{k \in \mathbb Z}(-1)^k  \dim   \Gr^{\beta+\alpha_i} \operatorname{RHom}_{\langle E \rangle_{\qv}} (\ker d^0   , M) [\beta+\alpha_i]   = \sum_{k\in \mathbb Z} (-1)^k  \dim \Gr^{\beta+ n_i\alpha_i } \operatorname{RHom}_{ \langle E \rangle_{\qv}} ( \mathbf 1, M ) . \]

Combining the equations, and subtracting, gives the formula. \end{proof}

\begin{lemma}\label{Euler-fe} Let $v$ be a root of unity whose square has order $n_i>1$. Let $M$ be $\Lambda$-graded complex of $u_{\qv}(\mathfrak{sl}_2)$-modules such that for each $\beta \in u_{\qv}(\mathfrak{sl}_2)$, the $\beta$-graded part of the cohomology $\mathcal H^k(M) [\beta]$ is finite-dimensional for all $k$ and zero for all but finitely many $k$. Then
\[ \sum_{k \in \mathbb Z} (-1)^k \dim \Gr^\beta \Ext^k_{\langle E \rangle_{\qv}} (\mathbf 1, M )  - \sum_{k \in \mathbb Z} (-1)^k \dim \Gr^{\beta+   n_i \alpha_i}\Ext^k_{\langle E \rangle_{\qv}} (\mathbf 1, M )  \] \[ =  \sum_{k \in \mathbb Z} (-1)^k \dim \Gr^{\beta+(1-n_i) \alpha_i }  \Ext^k_{\langle F \rangle_{\qv}} (\mathbf 1, M )  - \sum_{k \in \mathbb Z} (-1)^k \dim \Gr^{\beta+\alpha_i} \Ext^k_{\langle F \rangle_{\qv}} (\mathbf 1, M ) \]
\end{lemma}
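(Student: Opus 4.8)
The plan is to deduce Lemma~\ref{Euler-fe} from Lemma~\ref{Euler-comparison} together with a symmetric version of Lemma~\ref{Euler-comparison} for the subalgebra $\langle F \rangle_{\qv}$ in place of $\langle E \rangle_{\qv}$. Since $u_{\qv}(\mathfrak{sl}_2)$ has an automorphism (or anti-automorphism) interchanging $E$ and $F$ and sending $K$ to $K^{-1}$, which on the $\Lambda$-grading acts as $\beta \mapsto -\beta$ (more precisely, as the reflection $w_i$ composed with negation, since it should preserve the eigenvalue compatibility after reindexing), one obtains from Lemma~\ref{Euler-comparison} the identity
\begin{equation*}
\sum_{k} (-1)^k \dim \Gr^\beta \Ext^k_{\langle F \rangle_{\qv}} (\mathbf 1, M ) - \sum_{k} (-1)^k \dim \Gr^{\beta - n_i\alpha_i} \Ext^k_{\langle F \rangle_{\qv}} (\mathbf 1, M ) = \sum_{k} (-1)^k \dim \Gr^\beta \mathcal H^k(M) - \sum_{k} (-1)^k \dim \Gr^{\beta - \alpha_i} \mathcal H^k(M),
\end{equation*}
by running the argument of Lemma~\ref{Euler-comparison} with the complex $U$ replaced by $U^0 = \langle F \rangle_{\qv}$, $U^1 = \langle F\rangle_{\qv}\{\alpha_i\}$, $d^0$ multiplication by $F$. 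The key point is that this alternate derivation is entirely parallel: $\coker(\cdot F) = \mathbf 1\{\alpha_i\}$ and $\ker(\cdot F)$ is generated by $F^{n_i-1}$, hence is $\mathbf 1\{-(n_i-1)\alpha_i\}$, giving exactly the shifts appearing above.

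The main step is then purely formal: write $P_E(\beta) = \sum_k (-1)^k \dim \Gr^\beta \Ext^k_{\langle E\rangle_{\qv}}(\mathbf 1, M)$, $P_F(\beta) = \sum_k (-1)^k \dim \Gr^\beta \Ext^k_{\langle F\rangle_{\qv}}(\mathbf 1, M)$, and $Q(\beta) = \sum_k (-1)^k \dim \Gr^\beta \mathcal H^k(M)$. Lemma~\ref{Euler-comparison} states $P_E(\beta) - P_E(\beta + n_i\alpha_i) = Q(\beta) - Q(\beta+\alpha_i)$ and the $F$-version states $P_F(\beta) - P_F(\beta - n_i\alpha_i) = Q(\beta) - Q(\beta - \alpha_i)$. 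Substituting $\beta \mapsto \beta + \alpha_i$ in the $F$-version gives $P_F(\beta+\alpha_i) - P_F(\beta + \alpha_i - n_i\alpha_i) = Q(\beta+\alpha_i) - Q(\beta)$, i.e. $P_F(\beta + (1-n_i)\alpha_i) - P_F(\beta+\alpha_i) = Q(\beta) - Q(\beta+\alpha_i)$. Comparing with the $E$-identity, both equal $Q(\beta) - Q(\beta+\alpha_i)$, so $P_E(\beta) - P_E(\beta+n_i\alpha_i) = P_F(\beta+(1-n_i)\alpha_i) - P_F(\beta+\alpha_i)$, which is exactly the claimed equality.

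The finiteness hypothesis on $M$ guarantees that all the Euler characteristics in sight are well-defined finite alternating sums, and that the various spectral sequences degenerate to give the stated Euler-characteristic identities; this is already handled inside Lemma~\ref{Euler-comparison}, so nothing new is needed. I expect the only real subtlety to be bookkeeping with the grading conventions under the $E \leftrightarrow F$ symmetry: one must check that the anti-automorphism swapping $E$ and $F$ indeed carries the hypothesis ``$K$ acts on $\Gr^\beta \mathcal H^k(M)$ by $\qv^{2\langle\beta,\alpha_i\rangle/\langle\alpha_i,\alpha_i\rangle}$'' to the corresponding hypothesis for the twisted module, so that Lemma~\ref{Euler-comparison} genuinely applies. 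But since Lemma~\ref{Euler-comparison} is ultimately proved by a direct computation with the two-term complex $U$ and resolutions of $\mathbf 1$, one can alternatively just rerun that computation verbatim with $F$ in place of $E$ and avoid invoking the symmetry at all; this is the safe route, and it makes the deduction of Lemma~\ref{Euler-fe} a two-line algebraic manipulation. Hence there is essentially no serious obstacle; the work is in setting up the $F$-analogue cleanly.
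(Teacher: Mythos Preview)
Your proposal is correct and matches the paper's proof essentially line for line: the paper also invokes the $E\leftrightarrow F$, $K\leftrightarrow K^{-1}$ symmetry (which negates the grading) to obtain the $F$-version of Lemma~\ref{Euler-comparison}, shifts $\beta\mapsto\beta+\alpha_i$, negates, and combines with the original Lemma~\ref{Euler-comparison}. Your alternative of rerunning the two-term complex argument directly for $F$ is an equally valid way to get the $F$-version and avoids the symmetry bookkeeping you flagged.
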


\begin{proof} The algebra $u_{\qv}(\mathfrak{sl}_2)$ is symmetric under exchanging $E$ and $F$ and $K$ and $K^{-1}$. This has the effect of negating the grading. Thus, we may exchange $E$ and $F$ in Lemma \ref{Euler-comparison}, also exchanging $\alpha_i$ and $-\alpha_i$, to obtain
\[ \sum_{k \in \mathbb Z} (-1)^k \dim  \Gr^\beta \Ext^k_{\langle F \rangle_{\qv}} (\mathbf 1, M )  - \sum_{k \in \mathbb Z} (-1)^k \dim  \Gr^{\beta-n_i\alpha_i} \Ext^k_{\langle F \rangle_{\qv}} (\mathbf 1, M )\] \[= \sum_{k \in \mathbb Z} (-1)^k \dim  \Gr^\beta \mathcal H^k (M) - \sum_{k\in \mathbb Z} (-1)^k \dim \Gr^{\beta- \alpha_i} \mathcal H^k (M) \]
and shift by $\alpha_i$ to get
\[ \sum_{k \in \mathbb Z} (-1)^k \dim \Gr^{\beta+\alpha_i}  \Ext^k_{\langle F \rangle_{\qv}} (\mathbf 1, M ) - \sum_{k \in \mathbb Z} (-1)^k \dim 
 \Gr^{\beta+ (1-n_i)\alpha_i} \Ext^k_{\langle F \rangle_{\qv}} (\mathbf 1, M )\] \[= \sum_{k \in \mathbb Z} (-1)^k \dim \Gr^{\beta+\alpha_i} \mathcal H^k (M)  - \sum_{k\in \mathbb Z} (-1)^k \dim \Gr^\beta \mathcal H^k (M) \]
Negating, and combining with Lemma \ref{Euler-comparison}, we get the stated equation.\end{proof}

It should be possible to use Lemma \ref{Euler-fe} to prove Euler characteristic analogues of Corollary \ref{applied-sl2-isos} and Proposition \ref{kostant-fe}, and prove the Euler characteristic variant of the functional equation.



Even more generally than quantum groups, one could work with Nichols algebras of diagonal type. Here the Weyl group would be replaced by the Weyl groupoid defined by Heckenberger. In the notation of \cite[p. 180]{HeckenbergerWeyl}, we replace $\mathfrak n$ by a Nichols algebra $\mathcal B(V)$ of diagonal type, replace $\mathfrak p$ by $ (\mathcal B(V)^{\mathrm{op}} \# H_i^{ \mathrm{cop}} )^{\mathrm{op}}$, replace $\mathfrak n'$ by the subalgebra $\mathcal B_i $ (which is another Nichols algebra of diagonal type, associated to a possibly-different vector space), and replace $\mathfrak n_0$ by $\operatorname{ker} y_i^L$. 
It seems likely that these Hopf algebras lie in exact sequences relating them to $\mathfrak{sl}_2(q_{ii})$ and its positive and negative parts, in which case one can apply the spectral sequences associated to a short exact sequence of Hopf algebras.

Again, Lemma \ref{Euler-fe} should be applicable here, and give analogues of Corollary \ref{applied-sl2-isos} and Proposition \ref{kostant-fe}. 

\printbibliography

\end{document}